\newcommand{\noun}[1]{\textsc{#1}}
\providecommand{\tabularnewline}{\\}
\numberwithin{equation}{section} 
\numberwithin{figure}{section} 
  \theoremstyle{plain}
  \newtheorem{thm}{Theorem}[section]
  \theoremstyle{definition}
  \newtheorem{defn}[thm]{Definition}
  \theoremstyle{remark}
  \newtheorem{rem}[thm]{Remark}
  \theoremstyle{plain}
  \newtheorem{lem}[thm]{Lemma}
  \theoremstyle{plain}
  \newtheorem{prop}[thm]{Proposition}
  \theoremstyle{plain}
  \newtheorem{cor}[thm]{Corollary}
 \theoremstyle{definition}
  \newtheorem{example}[thm]{Example}
  \theoremstyle{plain}
  \newtheorem{fact}[thm]{Fact}
  \theoremstyle{plain}
  \newtheorem{conjecture}[thm]{Conjecture}
  \theoremstyle{remark}
  \newtheorem*{rem*}{Remark}
  \theoremstyle{plain}
  \newtheorem*{conjecture*}{Conjecture}
  \theoremstyle{remark}
  \newtheorem{claim}[thm]{Claim}
  \theoremstyle{definition}
  \newtheorem*{problem*}{Problem}
\def\bb{\bullet}
\def\ms{\mathcal{S}}
\def\CC{\mathbb{C}}
\def\RR{\mathbb{R}}
\def\NN{\mathbb{N}}
\def\UU{\mathbb{U}}
\def\QQ{\mathbb{Q}}
\def\ZZ{\mathbb{Z}}
\def\PP{\mathbb{P}}
\def\HH{\mathbb{H}}
\def\DD{\mathbb{D}}
\def\AA{\mathbb{A}}
\def\TT{\mathbb{T}}
\def\oTT{\buildrel \circ \over \TT {}^{^n}}
\def\II{\mathbb{I}}
\def\MM{\mathbb{M}}
\def\LL{\mathbb{L}}
\def\C{\mathcal{C}}
\def\A{\mathcal{A}}
\def\B{\mathcal{B}}
\def\D{\mathcal{D}}
\def\DT{\tilde{\Delta}}
\def\DDT{\tilde{\DD}}
\def\ST{\tilde{\sigma}}
\def\PD{\PP_{\Delta}}
\def\PDT{\PP_{\DT}}
\def\TXT{\tilde{X}_t}
\def\TXL{\tilde{X}^{\lambda}}
\def\TXM{\tilde{\X}_{-}}
\def\E{\mathcal{E}}
\def\F{\mathcal{F}}
\def\G{\mathcal{G}}
\def\H{\mathcal{H}}
\def\I{\mathcal{I}}
\def\J{\mathcal{J}}
\def\K{\mathcal{K}}
\def\L{\mathcal{L}}
\def\M{\mathcal{M}}
\def\N{\mathcal{N}}
\def\O{\mathcal{O}}
\def\Q{\mathcal{Q}}
\def\R{\mathcal{R}}
\def\T{\mathcal{T}}
\def\CP{\mathcal{P}}
\def\V{\mathcal{V}}
\def\W{\mathcal{W}}
\def\X{\mathcal{X}}
\def\Y{\mathcal{Y}}
\def\d{\partial}
\def\e{\epsilon}
\def\van{\tilde{\varphi}}
\def\v{\underline{v}}
\def\tor{\hat{\mathbb{T}}^n_{\v,\e}}
\def\w{\omega}
\def\twt{\tilde{\w}_t}
\def\l{\lambda}
\def\sR{\mathfrak{R}}
\def\ms{\mathcal{S}}
\def\sW{\mathfrak{W}}
\def\sZ{\mathfrak{Z}}
\def\rateq{\buildrel\text{rat}\over\equiv}
\def\homeq{\buildrel\text{hom}\over\equiv}
\def\nrateq{\mspace{7mu}/\mspace{-19mu}\rateq}
\def\nequiv{\mspace{7mu}/\mspace{-19mu}\equiv}
\def\bb{\bullet}
\def\mapf{\buildrel \cong \over \to}
\def\mapg{\buildrel \pi \over \to}
\def\maprho{\buildrel \rho \over \to}
\def\hm{Hom_{_{\text{MHS}}}}
\def\ext{Ext^1_{_{\text{MHS}}}}
\def\dlog{\text{dlog}}
\def\db{\partial_{\mathcal{B}}}
\def\m{\setminus}
\def\emn{\buildrel (N) \over \equiv}
\def\qneq{\buildrel \QQ(n) \over \equiv}
\def\qoeq{\buildrel \QQ(1) \over \equiv}
\def\qteq{\buildrel \QQ(2) \over \equiv}
\def\qtheq{\buildrel \QQ(3) \over \equiv}
\def\udb{\underline{\underline{\db}}}
\def\udt{\underline{\underline{\d_{\text{top}}}}}
\def\aeq{\buildrel *<0 \over \cong}
\def\beq{\buildrel *\leq -n \over \cong}
\def\eqe{\buildrel (4.4) \over =}
\def\eeq{\rTo^{W_{\bb}AJ^{n,r}_Y} }
\def\mapNN{\buildrel N \over \to}
\def\aequiv{\buildrel A \over \equiv}
\def\bcequiv{\buildrel B,C \over \equiv}
\def\ems{\buildrel (6) \over \equiv}
\def\emqt{\buildrel \QQ(3) \over \equiv}
\def\emql{\buildrel \QQ(\ell+1) \over \equiv}
\def\maybe{\buildrel ? \over =}
\def\sH{\mathfrak{H}}
\def\thda{\buildrel \theta \over \dashrightarrow}
\def\qhat{\buildrel (\wedge) \over  E}
\def\byt{\buildrel \text{by } \S 5.1.3 \over \cong}
\def\slt{\buildrel  =1 \over {\overbrace{(ad-bc)}} }
\def\amat{\tiny \begin{pmatrix} a & b \\ c & d \end{pmatrix} \normalsize}
\def\bmat{\tiny \begin{pmatrix} 0 & -1 \\ 1 & -1 \end{pmatrix} \normalsize}
\def\cmat{\tiny \begin{pmatrix} p & q \\ -s & r \end{pmatrix} \normalsize}
\def\cpmat{\tiny \begin{pmatrix} p & q \\ -s' & r' \end{pmatrix} \normalsize}
\def\dmat{\tiny \begin{pmatrix} r & -q \\ s & p \end{pmatrix} \normalsize}
\def\emat{\tiny \begin{pmatrix} 1 & m \\ 0 & 1 \end{pmatrix} \normalsize}
\def\fmat{\tiny \begin{pmatrix} -1 & -m \\ 0 & -1 \end{pmatrix} \normalsize}
\def\gmat{\tiny \begin{pmatrix} 1 &  & *  \\  &  \ddots &  \\  0 & & 1 \end{pmatrix} \normalsize}
\def\hmat{\tiny \begin{pmatrix} 1 & 0 \\ 0 & 1 \end{pmatrix} \normalsize}
\def\imat{\tiny \begin{pmatrix} \bar{1} & \bar{a} \\  \bar{0} & \bar{1} \end{pmatrix} \normalsize}
\def\jmat{\tiny \begin{pmatrix} * & * \\ m_0 & n_0 \end{pmatrix} \normalsize} 
\def\kmat{\tiny \begin{pmatrix} 1 & 1 \\ 0 & 1 \end{pmatrix} \normalsize}
\def\lmat{\tiny \begin{pmatrix} 1 & 0 \\ 1 & 1 \end{pmatrix} \normalsize}
\def\mmat{\tiny \begin{pmatrix} 0 & 1 \\ -1 & 0 \end{pmatrix} \normalsize}
\def\nmat{\tiny \begin{pmatrix} 0 & -\nicefrac{1}{\sqrt{N}} \\ \sqrt{N} & 0 \end{pmatrix} \normalsize}
\def\omat{\tiny \begin{pmatrix} 1 & j \\ 0 & 1 \end{pmatrix} \normalsize}
\def\pmat{\tiny \begin{pmatrix} 1 & 0 \\ j & 1 \end{pmatrix} \normalsize}
\def\qmat{\tiny \begin{pmatrix} -1 & 0 \\ 0 & -1 \end{pmatrix} \normalsize}
\def\rmat{\tiny \begin{pmatrix} 0 & -1 \\ 1 & 0 \end{pmatrix} \normalsize}
\def\smat{\tiny \begin{pmatrix} 1 & N \\ 0 & 1 \end{pmatrix} \normalsize}
\def\tmat{\tiny \begin{pmatrix} 1 & 0 \\  N & 1 \end{pmatrix} \normalsize}
\def\umat{\tiny \begin{pmatrix} 1 & 2 \\  0 & 1 \end{pmatrix} \normalsize}
\def\vmat{\tiny \begin{pmatrix} 1 & 0 \\  2 & 1 \end{pmatrix} \normalsize}
\def\wmat{\tiny \begin{pmatrix} -3 & -8 \\  -1 & -3 \end{pmatrix} \normalsize}
\def\xmat{\tiny \begin{pmatrix} -1 & -4 \\  0 & -1 \end{pmatrix} \normalsize}
\def\ymat{\tiny \begin{pmatrix} -4 & -9 \\  1 & 2 \end{pmatrix} \normalsize}
\def\label{\tiny \begin{matrix} \left\langle \sZ \right\rangle \\  \downarrow \\ \left\{ AJ^{\ell+1,\ell+1}(\left\langle \sZ \right\rangle_y ) \right\}_{y\in Y(N)} \end{matrix} \normalsize}
\def\alabel{\tiny \begin{matrix} \text{"Leray} \\ \{1,\ell\} \\ \text{part"} \end{matrix} \normalsize}
\def\blabel{\tiny \begin{matrix} \text{locally} \\ F d\underline{z}\wedge d\tau \\  \downarrow \\ (F d\underline{z})\otimes d\tau \end{matrix} \normalsize}
\def\FT{\buildrel \text{FT} \over \mapsto}
\def\incl{\buildrel \jmath(N) \over \subset}
\def\EE{\mathbb{E}}
\def\se{\mathfrak{e}}
\def\uhp{\mathfrak{H}}
\def\test{\buildrel \text{test forms} \over {\overbrace{\Gamma_{_{d \text{-closed}}} (F^2A^3_X)}} }
\def\desing{\begin{matrix} \text{desingularization} \\ \text{of }\,|Z| \end{matrix}}
\def\klma{\begin{matrix} \Omega_Z \\ R_Z  \end{matrix}}
\def\klmb{\begin{matrix} \Omega_n \\ R_n \end{matrix}}
\begin{document}

\title{Algebraic $K$-theory of toric hypersurfaces}

\author{Charles F. Doran and Matt Kerr}

\maketitle
\tableofcontents{}

$\pagebreak$

\section*{Introduction}

Writing in 1997 on vanishing of constant terms in powers of Laurent
polynomials%
\footnote{here $\mathbf{T}=\mathbb{G}_{m}$%
}\[
\phi\in\CC[\mathbf{T}^{n}]=\CC[x_{1},x_{1}^{-1},\ldots,x_{n},x_{n}^{-1}],\]
Duistermaat and van der Kallen \cite{DvK} proved the following\vspace{2mm}\\
\textbf{Completion Theorem:} \emph{Given $\phi\in\CC[\mathbf{T}^{n}]$
with $\underline{0}\in Int(Newton(\phi))$, $\exists$ good}%
\footnote{i.e., the complement $\overline{\X}\m\mathbf{T}^{n}$ is a NCD in
$\overline{\X}$%
}\emph{ compactification $\X\supset\mathbf{T}^{n}$ with}

\emph{(a) a holomorphic map $\X\to\mathbb{P}^{1}$ extending $\phi$,}

\emph{(b) $\Omega\in\Omega^{n}(\begin{array}[t]{c}
\underbrace{\X\m\phi^{-1}(\infty)}\\
=:\X_{-}\end{array})$ extending $\bigwedge^{n}\dlog\underline{x}$}.\vspace{2mm}\textbf{
}\\
For a simple example, take $n=2$ and \[
\phi=\prod_{i=1}^{2}\left(x_{i}-\frac{\mu^{2}+1}{\mu}+\frac{1}{x_{i}}\right),\,\,\,\,\,\,\,\,\,\,\mu\in\CC^{*}.\]
In the {}``initial compactification'' $\PP^{1}\times\PP^{1}$($\supset\CC^{*}\times\CC^{*}$),
the level sets $1-t\phi=0$ (see Figure 0.1,%
\begin{figure}
\caption{\protect\includegraphics[scale=0.7]{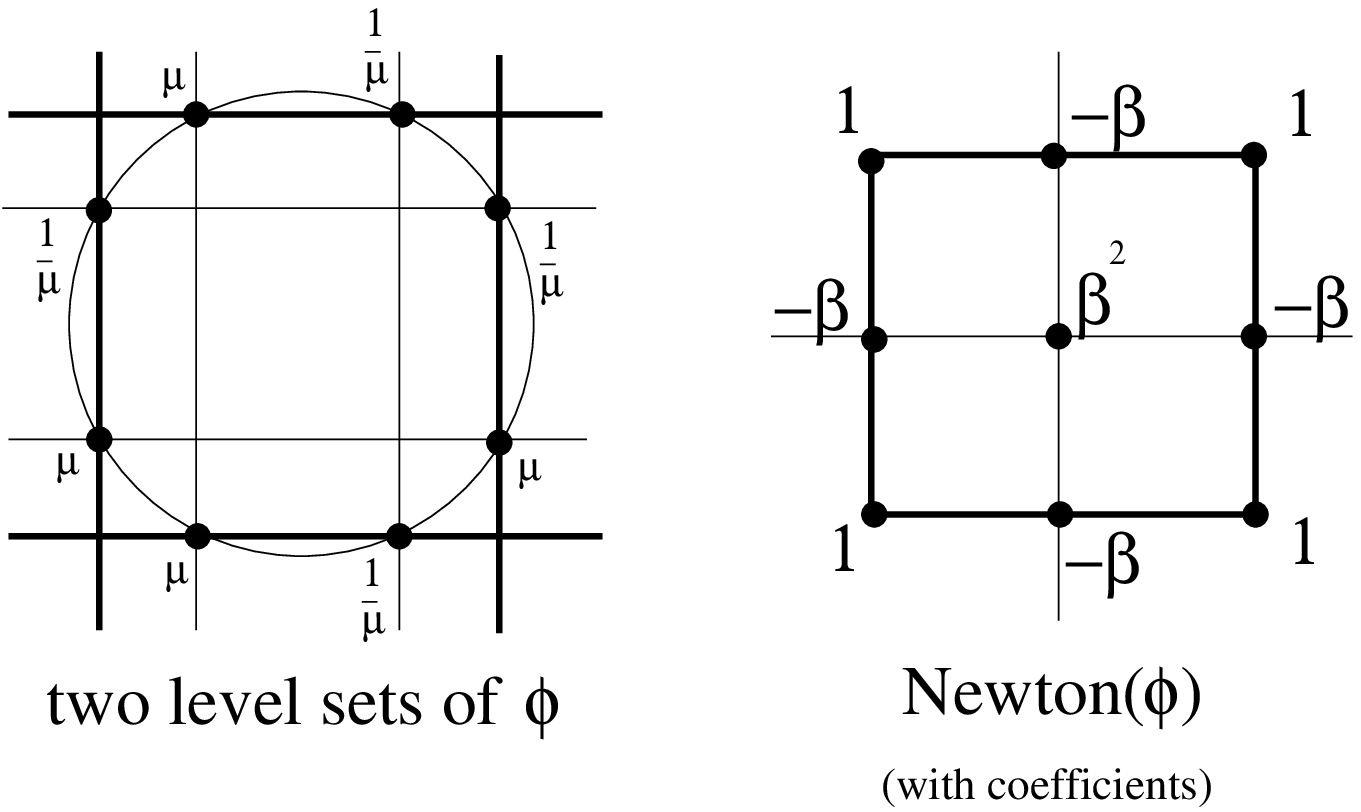}}

\end{figure}
 where $\beta:=\frac{\mu^{2}+1}{\mu}$) complete to a pencil of elliptic
curves, with generic member smooth. For $\phi$ to extend to a well-defined
function we must blow $\PP^{1}\times\PP^{1}$ up at the $8$ points
(marked in the Figure) in the base locus; this yields $\E\rTo_{\nicefrac{1}{\phi}}\PP_{t}^{1}$
as in the Completion Theorem.

What that result does not address at all is the periods of $\Omega$.
Since the Haar form $\frac{1}{(2\pi i)^{n}}\bigwedge^{n}\dlog\underline{x}\,:=\,\frac{dx_{1}}{2\pi ix_{1}}\wedge\cdots\wedge\frac{dx_{n}}{2\pi ix_{n}}$
has only rational periods, one might ask under what circumstances
this remains true for $\Omega$.\vspace{2mm}\\
$\underline{\text{Question 1 (Nori)}}:$ \emph{Write $Hg(\text{---}):=\hm{(}\QQ(0),\text{---})$;
we have $\bigwedge^{n}\dlog\underline{x}\in Hg(H^{n}(\mathbf{T}^{n},\QQ(n)))$.
Is $\Omega\in Hg(H^{n}(\X_{-},\QQ(n)))$?}\vspace{2mm}\\
In the above example, the easiest way to compute periods of $\Omega$
against topological $2$-cycles on $\E_{-}$ is to do a bit of homological
algebra. Writing $E_{0}:=\phi^{-1}(\infty)$, $E_{0}^{[0]}=\widetilde{E_{0}}=\amalg^{4}\PP^{1}$,
$E_{0}^{[1]}=\text{sing}(E_{0})$, we instead can pair $2$-cocycles
in the double-complex of currents\[
\D_{E_{0}^{[1]}}^{\bullet-4}\rTo_{\cdot(2\pi i)}^{\text{Gysin}}F^{1}\D_{E_{0}^{[0]}}^{\bullet-2}\rTo_{\cdot(2\pi i)}^{\text{Gysin}}\begin{array}[t]{c}
F^{2}\D_{\E}^{\bullet}\\
\tiny\text{(deg. 0)}\end{array}\]
against $2$-cycles in \[
C_{\bullet}^{\text{top}}(E_{0}^{[1]};\QQ)\lTo^{\text{intersect}}C_{\bullet}^{\text{top}}(E_{0}^{[0]};\QQ)_{\#}\lTo^{\text{intersect}}C_{\bullet}^{\text{top}}(\E;\QQ)_{\#}\]
(where {}``$\#$'' means chains and their boundaries properly intersect
relevant substrata). If $L_{1}=\{(x,y)=(\mu,0)\}$ and $L_{2}=\{(x,y)=(\frac{1}{\mu},0)\}$
are the sections of $\E$ and $\Gamma=\{\text{path from }(\mu,0)\text{ to }(\frac{1}{\mu},0)\text{ on }\widetilde{E_{0}}\}$,
then we can pair\[
\left\langle \left(\{1,-1,1,-1\},\{\frac{dx}{x},-\frac{dy}{y},-\frac{dx}{x},\frac{dy}{y}\},\Omega\right)\,,\,\left(\{0,0,0,0\},\{\Gamma,0,0,0\},L_{1}-L_{2}\right)\right\rangle \,=\]
\[
\int_{L_{1}-L_{2}}\Omega\,+\,2\pi i\int_{\Gamma}\frac{dx}{x}\,\,=\,\,-4\pi i\log\mu.\]
So the answer is yes precisely when $\E$ has no nontorsion section,
or equivalently when\[
\mu\text{ is a root of unity}.\]
This points the way toward some sort of arithmetic restriction on
$\phi$. (Indeed, the condition on $\mu$, not that on the sections,
is the one which generalizes.)

Now assume $K\subset\bar{\QQ}$ is a number field, and take $\phi\in K[\mathbf{T}^{n}]$.
If the celebrated Hodge and Bloch-Beilinson conjectures are assumed
to hold, an equivalent problem is\vspace{2mm} \\
$\underline{\text{Question 2}}:$ \emph{Does the {}``toric symbol''
$\{x_{1},\ldots,x_{n}\}\in H_{\M}^{n}(\mathbf{T}^{n},\QQ(n))$,}%
\footnote{\emph{$H_{\M}^{n}(\mathbf{T}^{n},\QQ(n))\cong K_{n}^{\text{alg}}(\mathbf{T}^{n})_{\QQ}^{(n)}\cong CH^{n}(\mathbf{T}^{n},n)_{(\QQ)}$}%
}\emph{ or some other symbol with fundamental class $[\bigwedge^{n}\dlog\underline{x}]\in H^{n}(\mathbf{T}^{n},\QQ(n))$,
extend to $\Xi\in H_{\M}^{n}(\X_{-},\QQ(n))$?}\vspace{2mm} \\
So the question about periods of the {}``extended Haar form'' is
replaced by a question about algebraic $K$-theory. If one doesn't
assume the conjectures then of course this is a stronger criterion
than that in Nori's question; but in fact there are very concrete
sufficient conditions for an affirmative answer.

To state these conditions we first fix the specific compactifications
we will use (for $n\leq4$). The \emph{Newton polytope} $\Delta:=Newton(\phi)$
is the convex hull in $\RR^{n}$ of the exponent vectors of all nonzero
monomials appearing in $\phi$. Assume this (hence $\phi$) is \emph{reflexive},
i.e. its polar polytope $\Delta^{\circ}\subset\RR^{n}$ has only integral
vertices; and demand that $1-t\phi(\underline{x})$ be $\Delta$-regular%
\footnote{a mild genericity condition (cf. \cite{Ba1} or $\S1.1$ below)%
} for general $t$. (We actually make a weaker, but more technical,
assumption in Theorem 1.7 for $n\leq3$.) Associated to the fan on
$\Delta^{\circ}$ is a (compact) toric Fano $n$-fold $\PP_{\Delta}\supset\mathbf{T}^{n}$
where the components of the {}``divisor at $\infty$'' $\mathbb{D}=\PP_{\Delta}\m\mathbf{T}^{n}$
correspond to the facets of $\Delta$. This is usually too singular,
and we replace it by $\PP_{\tilde{\Delta}}$,%
\footnote{$\tilde{\mathbb{D}}$ will denote the new divisor at infinity (not
a desingularization).%
} the toric variety associated to the fan on a maximal projective triangulation
of $\Delta^{\circ}$. (In the example, $\PP^{1}\times\PP^{1}=\PP_{\Delta}=\PP_{\tilde{\Delta}}$.)
Taking Zariski closure of the level sets \[
1-t\phi(\underline{x})=0\]
then leads to a $1$-parameter family of $\tilde{\X}$ of anticanonical
hypersurfaces $\tilde{X}_{t}\subset\PP_{\tilde{\Delta}}$, i.e. Calabi-Yau
$(n-1)$-folds. (Again, as in the example, $\tilde{\X}$ is nothing
but $\PP_{\tilde{\Delta}}$ blown up along {[}successive proper transforms
of] the components of the base locus.%
\footnote{Our actual definition of $\tilde{\X}$ in $\S\S1-2$ is slightly different
from that used here; note that $\tilde{\X}$ replaces $\X$ in Questions
1-2. %
}) If we define $\tilde{\pi}:=\frac{1}{\phi}:\,\tilde{\X}\to\PP_{t}^{1}$,
two more properties all these families have in common is:

\begin{itemize}
\item the local system $R^{n-1}\tilde{\pi}_{*}\QQ$ has maximal unipotent
monodromy%
\footnote{for $n=4$ an extra assumption is needed for this.%
} about $t=0$ (cf. $\S2.1$)
\item the relative dualizing sheaf $\omega_{\nicefrac{\tilde{\X}}{\PP^{1}}}:=K_{\tilde{\X}}\otimes\tilde{\pi}^{-1}\theta_{\PP^{1}}^{1}$
has \[
\deg\omega_{\nicefrac{\tilde{\X}}{\PP^{1}}}=1\,\,\,\,\,\,\text{(cf. }\S8.3\text{)}.\]

\end{itemize}
We write $\L\subset\PP^{1}$ for the discriminant locus of $\tilde{\pi}$,
and $\tilde{D}:=\tilde{\mathbb{D}}\cap\tilde{X}_{t}$ for the base
locus of the family.

Also writing in 1997, F. Rodriguez-Villegas \cite{RV} introduced
the arithmetic condition on $\phi$ for $n=2$, that forces the toric
symbol $\xi:=\{x_{1},x_{2}\}$ in Question 2 to extend. Namely, by
decorating the integral points in $\Delta$ with the corresponding
coefficients (in some field $K\subset\CC$) of monomials in $\phi$,
the coefficients along each edge of $\Delta$ yield a $1$-variable
polynomial. If these {}``edge polynomials'' are cyclotomic, then
all Tame symbols of $\xi$ are torsion and Villegas says $\phi$ is
\emph{tempered}. In $\S1$ of this paper, Villegas's definition is
extended to $n\leq4$ in order to prove Theorem $1.7$, which is a
stronger version of the following 

\begin{thm}
Let $\phi\in K[\mathbf{T}^{n}]$ ($n\leq4$, $K$ a number field)
be reflexive, tempered, and regular. (For $n=4$ assume also that
$K$ is totally real and that the components of the $1$-skeleton
of $\tilde{\DD}$ are rational $/K$.) Then Question $2$ (and therefore
Question 1) has a positive answer.
\end{thm}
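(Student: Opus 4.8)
The plan is to reduce the existence of the extension $\Xi\in H^n_{\M}(\tilde\X_-,\QQ(n))$ to a concrete construction built from the combinatorics of $\Delta$, using the localization sequence in higher Chow groups. Recall that $\tilde\X_- = \tilde\X \setminus \tilde\pi^{-1}(\infty)$, and the divisor $\tilde\pi^{-1}(\infty)$ is the total transform of the divisor at infinity $\tilde\DD$; its components map, via the Gysin/localization sequence, to lower-weight motivic cohomology of the strata of $\tilde\DD \cap \tilde X_t$ (the base locus $\tilde D$) together with the toric boundary strata. First I would set up the localization exact sequence
\[
\cdots \to H^{n-1}_{\M}(\text{boundary strata},\QQ(n-1)) \mapa H^n_{\M}(\tilde\X_-,\QQ(n)) \to H^n_{\M}(\mathbf{T}^n,\QQ(n)) \to \cdots
\]
so that the obstruction to lifting the toric symbol $\{x_1,\dots,x_n\}$ is precisely its image under the connecting ("Tame symbol"/residue) map into $\bigoplus H^{n-1}_{\M}$ of the codimension-one boundary pieces. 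The temperedness hypothesis is exactly engineered so that these iterated residues — which by the toric description are computed from the edge/face polynomials of $\Delta$ — are \emph{torsion}, hence zero in $\QQ$-coefficients. So the core of the argument is: (i) identify the residues of $\{x_1,\dots,x_n\}$ along each toric divisor and each component of the base locus with explicit lower symbols determined by the edge polynomials; (ii) invoke the tempered condition (the $n\leq 4$ extension of Villegas, established in $\S1$) to see these all vanish rationally; (iii) conclude that $\{x_1,\dots,x_n\}$, possibly after correcting by a decomposable symbol supported on the boundary (this is the "or some other symbol" clause in Question 2), lies in the image, giving $\Xi$.

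The second block of work is to handle the dimension-by-dimension induction $n=2,3,4$, since the strata of $\tilde\DD$ and of the base locus are themselves toric (or toric-hypersurface) varieties of lower dimension, and the residue of the toric symbol restricted to them is again a toric-type symbol to which the inductive hypothesis — in the guise of the $n-1$ case of Theorem 1.7 — applies. For $n=4$ the extra hypotheses enter here: total reality of $K$ and rationality $/K$ of the $1$-skeleton components of $\tilde\DD$ are needed to control $H^1_{\M}$ and $H^2_{\M}$ (i.e. units and $K_2$) of the surface strata, where otherwise a genuinely nontrivial regulator/Beilinson obstruction could appear; total reality kills the relevant piece by a real-places sign/parity argument, and rationality ensures the $1$-skeleton curves are $\PP^1$'s with no exotic $K_1$. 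I would carry out the $n=2$ base case explicitly (matching the Rodriguez-Villegas picture: cyclotomic edge polynomials $\Leftrightarrow$ all Tame symbols torsion $\Leftrightarrow$ $\xi$ extends across the elliptic-surface boundary), then propagate upward.

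The third step is to verify that the extended class $\Xi$ does have the right fundamental class, i.e. that under the cycle class map $H^n_{\M}(\tilde\X_-,\QQ(n)) \to H^n(\tilde\X_-,\QQ(n))$ it maps to $[\bigwedge^n \dlog\underline x] = [\Omega]$ (using the Completion Theorem to know $\Omega$ exists and restricts correctly to $\mathbf{T}^n$). This is essentially automatic from compatibility of cycle maps with the localization sequences used above — the correction symbol from step (i)(iii) is supported on a divisor and therefore does not change the class in $H^n$ of the open part — but it must be checked, and it is what closes the loop back to Question 1 (an affirmative answer to Question 2 implies the period statement of Question 1 because the regulator of an algebraic class is automatically of Hodge type, i.e. lands in $Hg(H^n(\tilde\X_-,\QQ(n)))$).

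I expect the main obstacle to be step (i): pinning down the iterated residue map on motivic cohomology along the full stratification of $\tilde\pi^{-1}(\infty)$ — which involves both the toric boundary $\tilde\DD$ and the exceptional/base-locus divisors introduced in forming $\tilde\X$ from $\PP_{\tilde\Delta}$ — and matching it combinatorially with the edge polynomials, so that "tempered" becomes exactly the vanishing condition. The bookkeeping of which strata contribute, and the fact that the definition of $\tilde\X$ used in $\S\S1$--$2$ differs slightly from the informal blow-up description, means the residue computation is where all the real content (and most of the risk) lies; everything downstream is formal homological algebra plus the inductive application of the lower-dimensional cases.
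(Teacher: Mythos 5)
Your high-level picture is right: one writes a localization / local-global spectral sequence computing $H^n_\M(\tilde\X_-,\QQ(n))$ from $CH^n(\TXM^*,n)\cong CH^n(\mathbf{T}^n,n)$ together with lower-degree motivic cohomology of the boundary strata, and shows the relevant differentials vanish on the toric symbol. And temperedness is indeed exactly the engineered vanishing of the $d_1$-residues (this is the content of Proposition 1.4). But the structure of the argument diverges from the paper at two points, one of which is a genuine error.

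\textbf{There is no dimension-by-dimension induction.} You propose to dispatch the residues on lower-dimensional boundary strata by appealing to the $n-1$ case of the theorem "in the guise of the inductive hypothesis." This is a category error: what has to vanish is a specific class in $CH^{n-i}(D^*_\sigma,n-i)$ (the iterated residue of the coordinate symbol), whereas the $n-1$ case of the theorem produces an \emph{extension} of a toric symbol on a different, lower-dimensional toric-hypersurface family — these are not the same kind of statement, and the former does not follow from the latter. In the paper there is no induction: for each fixed $n\leq 4$ one runs a single local-global spectral sequence with $E_1^{i,-j}(n)$ built from $\oplus_{\tilde\sigma\in\tilde\Delta(i)}CH^{n-i}(D^*_{\tilde\sigma}\times\AA^1,j-i)$. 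For $n=2,3$ the relevant group $E_\infty^{0,-n}(n)$ is already $\ker(d_1)$, so temperedness alone (via Prop. 1.4) gives $\Xi$; for $n=4$ one also needs $\ker(d_2)$, which leads to the second point.

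\textbf{The $n=4$ mechanism is specific, not a "sign/parity" argument.} The extra hypotheses are used to kill the $E_2^{2,-5}(4)$ term, which is a subquotient of $\oplus_{\tilde\sigma\in\tilde\Delta(2)}CH^2((D^*_{\tilde\sigma}\times\AA^1)_K,3)$. Rationality of the $1$-skeleton guarantees each $D^*_{\tilde\sigma}$ is a Zariski-open in $\AA^1_K$, so $CH^2(D^*_{\tilde\sigma},3)\cong CH^2(\AA^1_K,3)\cong CH^2(\mathrm{Spec}\,K,3)\cong K_3^{ind}(K)\otimes\QQ$, and total reality of $K$ is used precisely because $K_3^{ind}(K)\otimes\QQ=0$ for a totally real number field (Borel). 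A vague "real-places sign/parity argument" does not identify this ingredient, and without it the $n=4$ case does not close. Also note that the "correction by a decomposable boundary symbol" you invoke for the fundamental-class step is not needed: the construction produces a class restricting to $\xi$ on the torus, so the cycle class is automatically $[\bigwedge^n\dlog\underline{x}]$ without modification.
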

For example, for $n=3,$ given a reflexive $\Delta\subset\RR^{3}$
with only triangular facets, $\phi:=$\{characteristic Laurent polynomial
of the vertex set of $\Delta$\} will satisfy the Theorem.

The upshot is that we get in each case a family $\Xi_{t}:=\Xi|_{\tilde{X}_{t}}\in CH^{n}(\tilde{X}_{t},n)$
of Milnor $K_{2}$ (resp. $K_{3},\, K_{4}$) classes on elliptic curves
(resp. $K3$ surfaces, $CY$ $3$-folds). In $\S2$ we show that these
classes are always nontorsion by evaluating their image under the
Abel-Jacobi map (or {}``rational regulator map'')\[
AJ^{n,n}:\begin{array}[t]{c}
H_{\M}^{n}(\tilde{X}_{t},\QQ(n))\\
\shortparallel\\
CH^{n}(\tilde{X}_{t},n)\end{array}\to\mspace{-20mu}\begin{array}[t]{c}
H_{\D}^{n}(\tilde{X}_{t},\QQ(n))\\
\shortparallel\\
H^{n-1}(\tilde{X}_{t},\CC/\QQ(n))\end{array}\]
against a family of topological cycles $\tilde{\varphi}_{t}$ vanishing
at $t=0$. This yields the formula (Theorem 2.2)\begin{equation}
\Psi(t):= \left\langle \tilde{\varphi}_t , AJ(\Xi_t ) \right\rangle \equiv (2\pi i)^{n-1}\left\{ \log(t)+\sum_{m\geq 1}\frac{[\phi^m]_0}{m}t^m \right\} \, \, \, \, \, \text{mod } \QQ(n)\\
\end{equation}(where $[\cdot]_{0}$ takes the constant term).

A fundamental goal of writing this paper has been to broaden the relevancy
of (generalized) algebraic cycles and (generalized) normal functions
beyond their traditional context of Hodge theory and motives. In particular,
we want to persuade the reader that higher cycles are not just to
be sought out in the context of the Beilinson conjectures, but instead
also are behind things like solutions of inhomogeneous Picard-Fuchs
(IPF) equations --- even ones arising in string theory. Already in
the context of open mirror symmetry in \cite{MW}, the domainwall
tension for $D$-branes wrapped on the quintic mirror has been interpreted
as the Poincar\'e normal function associated to a family of algebraic
$1$-cycles. This yields not only the solution of an IPF equation,
but also data on {}``counting holomorphic disks'' on the real quintic
$\subset\PP^{4}$. The higher cycles we consider in this paper are
instead related to the local mirror symmetry setting, and their associated
{}``regulator periods'' $\Psi(t)$ furnish the mirror map in that
context. Hence for $n=2$, assuming a conjectural {}``central charge
formula'' of Hosono \cite{Ho}, we obtain information on the asymptotics
of instanton numbers $\{n_{d}\}$ for $K_{\PP_{\Delta^{\circ}}}$.
This story is worked out in $\S3$, with explicit comuptations connecting
the exponential growth rate of the $\{n_{d}\}$ to limits of $AJ$
mappings in $\S4$.

The {}``higher normal functions'' $V(t)$ obtained from our generalized
cycles, on the other hand, provide solutions to certain IPF equations
(cf. $\S2.3$). While we don't know if these play any distinguished
role in local mirror symmetry, they do play a central part in the
Ap\'ery-Beukers irrationality proofs of $\zeta(2)$ and $\zeta(3)$,
and provide a missing link for completing the {}``algebro-geometrization''
of these proofs begun by Beukers, Peters, and Stienstra \cite{Bk,BP,Pe,PS}.
We will try to convey this link below, but for a complete discussion/proof
the reader is referred to \cite{Ke2}.

Another number-theoretic phenomenon on which our construction sheds
light is the {}``modularity'' of the logarithmic Mahler measure
\begin{equation}
m(t^{-1}-\phi):=\frac{1}{(2\pi i)^n}\int_{|x_1|=\cdots =|x_n|=1} \log |t^{-1}-\phi|\bigwedge^n \dlog \underline{x}. \\
\end{equation} Specifically, several authors \cite{RV,Be1,MOY,S} have noted computationally
that (for $n=2,3$) pullbacks of $(0.2)$ by the inverse of the mirror
map frequently yield Eisenstein-Kronecker-Lerch series. In Corollary
2.7, $\Psi(t)$ is related to $(0.2)$, and in $\S8$ we use $AJ$
computations (done in $\S\S5-7$) for Beilinson's Eisenstein symbol
to $prove$ a general result on pullbacks of $\Psi$ by automorphic
functions (Theorem 8.3). This completely explains the observations
on Mahler measures.

One more noteworthy application of Theorem 0.1 is to the splitting
of the MHS on the cohomology $H^{n-1}(\tilde{X}_{0})$ of the {}``large
complex structure'' singular fiber. In fact, whenever Question $1$
has a positive answer, taking Poincar\'e residue of $\Omega\in\hm{(}\QQ(0),H^{n}(\tilde{\X}_{-},\QQ(n)))$
yields \[
Res(\Omega)\in\hm{(}\QQ(0),H_{n-1}(\tilde{X}_{0},\QQ))\]
hence (dually) a morphism \begin{equation}
H^{n-1}(\tilde{X}_0, \QQ(j)) \to \QQ(j) \\
\end{equation} of MHS for any $j$. Now the cycle $\Xi$ produced by the Theorem
obviously does $not$ extend through $\tilde{X}_{0}$. Given a second
cycle $\mathfrak{Z}\in CH^{j}(\tilde{\X}\m\cup_{i}X_{t_{i}},2j-n)$
(all $t_{i}\in\L\m\{0\}$) which $does$ extend,%
\footnote{If $2j=n$ one must also $assume$ that $[\iota_{X_{0}}^{*}\mathfrak{Z}]=0\in H^{2j}(\tilde{X}_{0})$. %
} together with a family $\omega\in\Gamma\left(\PP^{1},\tilde{\pi}_{*}\omega_{\nicefrac{\tilde{\X}}{\PP^{1}}}\right)$
of holomorphic forms, one has the associated (multivalued) normal
function\[
\nu(t)=\left\langle AJ(\mathfrak{Z}|_{\tilde{X}_{t}}),\,\omega(t)\right\rangle \]
over $\PP^{1}\m\L$. If we normalize $\omega$ so that $\widehat{\omega(0)}:=\text{im}\{\omega(0)\}\in H_{n-1}(\tilde{X}_{0},\CC)$
is just $[Res(\Omega)]$,%
\footnote{e.g., one could just take $\omega=\nabla_{\delta_{t}}[AJ_{\tilde{X}_{t}}(\Xi_{t})]$%
} then the splitting $(0.3)$ gives {}``meaning'' to \begin{equation}
\lim_{t\to 0}\nu(t) \in \CC/\QQ(j) ;\\
\end{equation} that is, nontriviality of $(0.4)$ implies nontriviality of $AJ(\mathfrak{Z}|_{\tilde{X}_{t}})$
as a section of the sheaf of generalized Jacobians $J^{j,\,2j-n}(\tilde{X}_{t}).$
This {}``splitting principle'' will be elaborated upon in a future
work.

In the remainder of this Introduction, we want to convey some of the
main ideas behind these applications (including the ones not done
in this paper) through three key examples \begin{equation}
\phi = \frac{(x-1)^2 (y-1)^2}{xy} , \, \,\, n=2,\\
\end{equation}\begin{equation}
\phi = \frac{(x-1)(y-1)(z-1)[(x-1)(y-1)-xyz]}{xyz} , \, \, \, n=3,\\
\end{equation}\begin{equation}
\phi = \frac{x^5 + y^5 + z^5 + w^5 +1}{xyzw}, \, \, \, n=4,\\
\end{equation} all of which satisfy the strengthened version (Theorem 1.7) of Thm.
0.1.

Begin by considering the sequence\small\[
-4,-4,-12,-48,-240,-1356,-8428,-56000,-392040,-2859120,\ldots\]
\normalsize of genus zero local instanton numbers $\{n_{d}\}_{d\geq1}$
for $K_{\PP^{1}\times\PP^{1}}$ \cite{CKYZ}. The related Gromov-Witten
invariants $\{N_{d}\}$ count (roughly speaking) the contribution
to the {}``number of rational curves of degree $d$'' on a CY $3$-fold
made by an embedded $\PP^{1}\times\PP^{1}$ (when there is one). They
have, according to \cite{MOY}, exponential growth rate \begin{equation}
\lim_{d\to \infty } \left| \frac{n_{d+1}}{n_d} \right| = \lim_{d\to \infty} \left| \frac{N_{d+1}}{N_d} \right| = e^{\frac{8}{\pi} G}, \\
\end{equation} where $G:=1-\frac{1}{3^{2}}+\frac{1}{5^{2}}-\frac{1}{7^{2}}+\cdots$
is Catalan's constant. The exponent of $(0.8)$ also appears as a
special value of a hypergeometric integral in a formula \begin{equation}
\frac{8}{\pi} G = \log(16)-\sum_{n\geq 1}\frac{\binom{2n}{n}^2}{16^n n} = -\lim_{\e \to 0} \left\{ \int_{\e}^{\frac{1}{16}} \,_2 F_1(\frac{1}{2},\frac{1}{2};1;4t)\frac{dt}{t} - \log(\e) \right\}\\
\end{equation} essentially known to Ramanujan. The surprising fact is that a family
of higher cycles, in $K_{2}^{\text{alg}}$ of a family of elliptic
curves, is behind $(0.8)$ and $(0.9)$. In order to illustrate how
this works, we shall first offer a brief review of the relevant $AJ$
maps.

To begin with, recall Griffiths's $AJ$ map \cite{G} for $1$-cycles
homologous to zero on a smooth projective $3$-fold $X/\CC$. Writing%
\footnote{Here $q_{i}\in\QQ$, and except where otherwise indicated all cycle
groups and intermediate Jacobians in this paper are taken $\otimes\QQ$.
Also note that $Z^{p}(X)$ denotes complex codimension $p$ algebraic
cycles, while $Z_{\text{top}}^{p}(X)$ (resp. $C_{\text{top}}^{p}(X)$)
means real codimension $p$ (piecewise) smooth topological cycles
(resp. chains). %
}\[
Z=\sum q_{i}C_{i}\in Z_{\text{hom}}^{2}(X),\,\,\,\,\,\,\square:=\PP^{1}\m\{1\},\]
we want to know whether $Z$ is \emph{rationally equivalent to zero}:\[
Z\rateq0\,\,\,\Longleftrightarrow\,\,\,\begin{array}{c}
\exists W\in Z^{2}(X\times\square)\text{ (properly intersecting }X\times\{0,\infty\}\text{)}\\
\text{with }W\cdot(X\times\{0\})-W\cdot(X\times\{\infty\})=Z.\end{array}\]
The map%
\footnote{$A_{X}^{k}=\oplus_{p+q=k}A_{X}^{p,q}$ denotes $C^{\infty}$ $k$-forms
on $X$.%
}\[
Z_{\text{hom}}^{2}(X)\rTo^{\widetilde{AJ}}J^{2}(X):=\frac{H^{3}(X,\CC)}{F^{2}H^{3}(X,\CC)+H^{3}(X,\QQ(2))}\cong\frac{\{F^{2}H^{3}(X,\CC)\}^{\vee}}{\text{im}\{H_{3}(X,\QQ(2))\}}\]
\[
\mspace{200mu}\cong\frac{\{\test/d[\Gamma(F^{2}A_{X}^{2})]\}^{\vee}}{\left\{ \int_{Z_{3}^{\text{top}}(X;\QQ(2))}(\,\,\cdot\,\,)\right\} }\]
induced by\[
Z\longmapsto(2\pi i)^{2}\int_{\d^{-1}Z}(\,\,\cdot\,\,),\]
where $\d^{-1}Z\in C_{3}^{\text{top}}(X;\QQ)$ is any (piecewise smooth)
$3$-chain bounding on $Z$, descends modulo $\rateq$ to yield\[
AJ:\, CH_{\text{hom}}^{2}(X)\to J^{2}(X).\]
This is the type of $AJ$-map which yields the normal functions considered
in \cite{MW}, and detects classes in $K_{0}(X)^{(2)}\cong CH^{2}(X)$.

Now suppose we have an elliptic curve \[
E\subset\PP_{\Delta}=\text{ toric Fano surface,}\]
and would like to detect classes in \[
K_{2}(E)\begin{array}[t]{c}
\cong\\
\tiny\text{\text{"de-loop"}}\end{array}K_{0}(E\times\mspace{-10mu}\begin{array}[t]{c}
\underbrace{\check{C}}\\
\text{nodal}\\
\text{affine}\\
\text{curve}\end{array}\mspace{-10mu}\times\check{C})\,\,\cong\,\, CH^{2}(\begin{array}[t]{c}
\underbrace{E\times\square^{2},E\times\d\square^{2}})\\
X\end{array},\]
where the right-hand term is a $relative$ Chow group and \[
\d\square^{2}:=(\{0,\infty\}\times\square)\cup(\square\times\{0,\infty\})\subset\square^{2}.\]
The {}``relative cycles'' $Z=\sum q_{i}C_{i}\in Z^{2}(X)$ are just
those whose component curves $C_{i}$ properly intersect%
\footnote{all coskeleta of: i.e. components of $E\times\d\square^{2}$, and
intersections of these components.%
} $E\times\d\square^{2}$ and satisfy $Z\cdot(E\times\d\square^{2})=0$,
and relative rational equivalences are defined similarly.%
\footnote{$W\in Z^{2}(E\times\square^{3})$ must intersect $E\times\d\square^{3}$
properly and have $W\cdot(E\times\d\square^{2}\times\square)=0$. %
} Writing \[
\II^{2}\,\,:=\,\,(\{1\}\times\CC^{*})\cup(\CC^{*}\times\{1\})\,\,\subset\,\,(\CC^{*})^{2},\]
\[
X^{\vee}:=(E\times(\CC^{*})^{2},\, E\times\II^{2})\]
for the {}``Lefschetz dual'' variety, the test forms live on $X^{\vee}$;
and \[
J^{2}(X):=\frac{H^{3}(X,\CC)}{F^{2}H^{3}(X,\CC)+H^{3}(X,\QQ(2))}\cong\frac{\{F^{2}H^{3}(X^{\vee},\CC)\}^{\vee}}{\text{im}\{H_{3}(X^{\vee},\QQ(2))\}}\]
\[
\mspace{100mu}\cong\frac{\{H^{1}(E,\CC)\otimes\dlog z_{1}\wedge\dlog z_{2}\}^{\vee}}{\text{im}\{H_{1}(E,\QQ)\otimes S^{1}\times S^{1}\}}\cong Hom\left(H^{1}(E,\QQ),\,\CC/\QQ(2)\right).\]
To produce a map \[
AJ:\,\, CH^{2}(X)\to J^{2}(X),\]
one first notes that $H^{i}(\square,\d\square)=\left\{ \begin{array}{cc}
\QQ(0), & i=1\\
0, & \text{otherwise}\end{array}\right.$ $\implies$ \[
Hg^{2}(H^{4}(X))\cong Hg^{2}(H^{2}(E)\otimes\QQ(0)^{\otimes2})=\{0\}\,\,\,\,\implies\]
$CH^{2}(X)=CH_{\text{hom}}^{2}(X)$. Hence for any $Z\in Z^{2}(X)$,
we \emph{essentially}%
\footnote{for a more precise statement see \cite[sec. 5.8]{KLM} and references
cited therein.%
} have\[
Z=\d\Gamma\,\,\,\,\,\,\,\text{in}\,\,\,\,\,\,\,\, C_{\bullet}^{\text{top}}(E\times(\CC^{*})^{2},\, E\times\II^{2}).\]
We can then consider on test forms in $\Gamma_{_{d\text{-closed}}}(A_{E}^{1})$
\begin{equation}
AJ_X(Z):= \int_{\Gamma} (\, \cdot \, ) \wedge \frac{dz_1}{z_1} \wedge \frac{dz_2}{z_2} \in J^2(X), \
\end{equation} which we now turn to computing in one example.

The Laurent polynomial $(0.5)$ has Newton polytope as shown in Figure
0.2,%
\begin{figure}

\caption{\protect\includegraphics[scale=0.7]{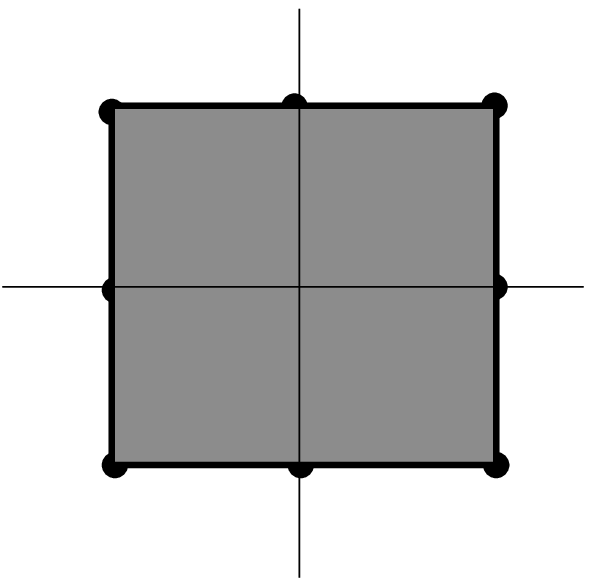}}

\end{figure}
 which corresponds to $\PP_{\Delta}=\PP^{1}\times\PP^{1}$. A projective
description of the fibers of $\X\rTo^{\pi}\PP_{t}^{1}$ is then \begin{equation}
E_t \, := \, \{\mathsf{XYZW}=t(\mathsf{X}-\mathsf{W})^2(\mathsf{Y}-\mathsf{W})^2\}\subset \PP^1_{\mathsf{X}:\mathsf{W}}\times \PP^1_{\mathsf{Y}:\mathsf{Z}}, \\
\end{equation} and after a minimal desingularization at $t=\infty$, $\pi$ has
singular fibers as in Figure 0.3.%
\begin{figure}

\caption{\protect\includegraphics[scale=0.7]{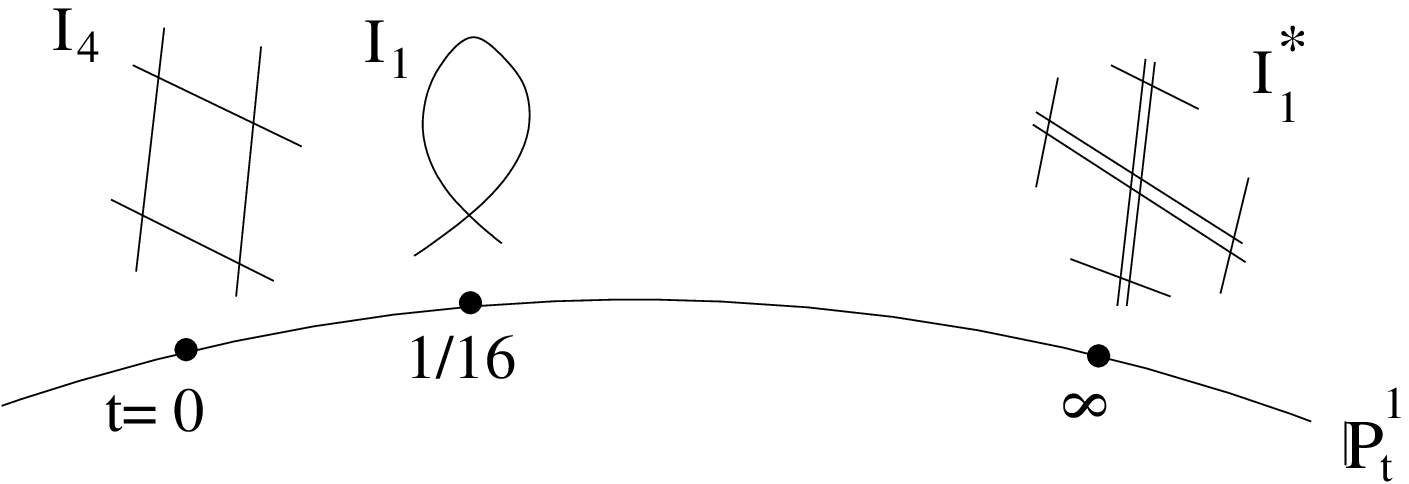}}

\end{figure}
 Now consider the pair of meromorphic functions \[
x:=\frac{\mathsf{X}}{\mathsf{W}},\,\,\, y:=\frac{\mathsf{Y}}{\mathsf{Z}}\,\,\in\CC(E_{t})^{*}\]
arising from the toric coordinates; their divisors\[
(x)=2[b]-2[d],\,\,\,\,\,\,(y)=2[a]-2[c]\]
are supported on marked $4$-torsion points (see Figure 0.4%
\begin{figure}

\caption{\protect\includegraphics[scale=0.7]{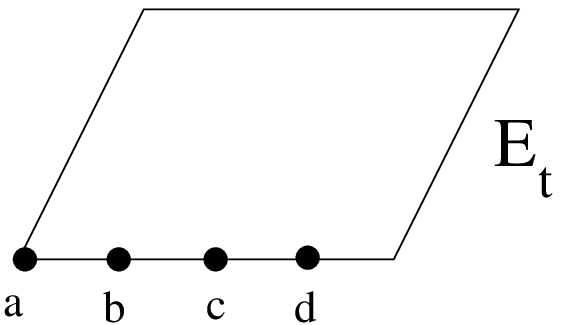}}

\end{figure}
), and in fact $\X$ is nothing but the modular family over $X_{1}(4)$.%
\footnote{we use the notation $\overline{Y}_{1}(4)$ for this in $\S\S5-8$.%
} Most importantly,\[
x=0\text{ or }\infty\,\,\,\,\,\implies\,\,\,\,\,\mathsf{X}\text{ or }\mathsf{W}=0\,\,\,\,\,\begin{array}{c}
\text{use}\\
\Longrightarrow\\
\text{(0.11)}\end{array}\,\,\,\,\,\mathsf{Y}=\mathsf{Z}\,\,\,\,\,\implies\,\,\,\,\, y=1,\]
\[
y=0\text{ or }\infty\,\,\,\,\,\implies\,\,\,\cdots\,\,\,\implies\,\,\,\,\, x=1.\]
Recalling that $1\notin\square$, if we consider the {}``graph''%
\footnote{in the sense of calculus, not combinatorics!%
} of the symbol $\{x,y\}$\[
Z_{t}:=\{(e,x(e),y(e))\,|\, e\in E_{t}\}\in Z^{2}(E\times\square_{z_{1}}\times\square_{z_{2}}),\]
then $Z_{t}\cdot(E\times\d\square^{2})=0$\[
\implies\,\,\,\, Z_{t}\in CH^{2}(X),\]
i.e. $Z_{t}$ is a relative cycle. Interestingly, this example appears
in \cite{Co} as the degeneration of a Ceresa cycle on the Jacobian
of a nonhyperelliptic genus $3$ curve, as that curve acquires $2$
successive nodes.

To construct an explicit $3$-chain $\Gamma_{t}$ bounding on $Z_{t}$,
we use a procedure similar to that in \cite{Bl3} which was generalized
in \cite{Ke1,KLM}. First look at the picture of $Z_{t}\subset E_{t}\times\square\times\square$
in Figure 5.%
\begin{figure}

\caption{\protect\includegraphics[scale=0.7]{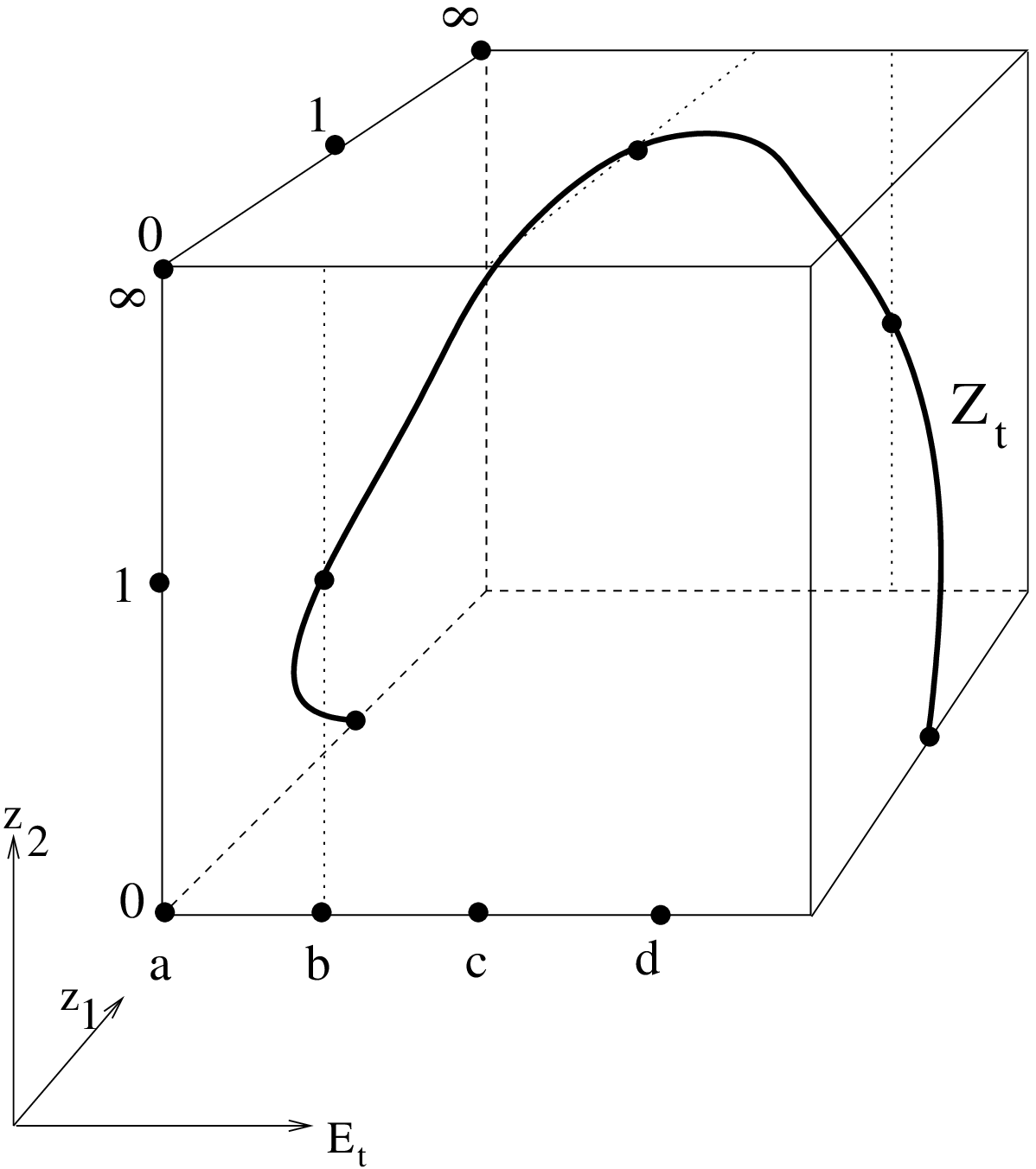}}

\end{figure}
 For a first approximation of $\Gamma$, {}``squash'' $Z_{t}$ to
$\{1\}$%
\footnote{recall that for purposes of bounding $Z_{t}$, $E_{t}\times\II^{2}$
is a sort of {}``topological trashcan''.%
} in the $z_{1}$-coordinate and write down the membrane \begin{equation}
\left\{ (e,\overrightarrow{1.x(e)},y(e)) \, | \, e\in E \right\}\\
\end{equation} which it traces out. The path $\overrightarrow{1.x(e)}\,\subset\,\PP^{1}\m T_{z_{1}}$
can be chosen continuously in $e\in E\m T_{x}$, where $T_{x}:=\{e\in E_{t}\,|\, x(e)\in\RR^{\leq0}\cup\{\infty\}\}$
is the cut in the branch of $\log(x)$. Along $T_{x}$ we have a problem,
namely that $(0.12)$ has $\{(e,S_{x}^{1},y(e))\,|\, e\in T_{x}\}$
as an additional (and unwanted) boundary component. So we squash this
component to $\{1\}$ in the $z_{2}$-coordinate and continue on,
obtaining at last\[
\Gamma_{t}=\left\{ (e,\overrightarrow{1.x(e)},y(e))\right\} _{e\in E_{t}}+\left\{ (e,S_{z}^{1},\overrightarrow{1.y(e)})\right\} _{e\in T_{x}}+\left\{ (e,S_{z_{1}}^{1},S_{z_{2}}^{1})\right\} _{e\in\d^{-1}(T_{x}\cap T_{y})}.\]
Thus $(0.10)$ becomes \[
\int_{\Gamma_{t}}\omega_{E}\wedge\dlog z_{1}\wedge\dlog z_{2}\,\,=\]
\[
\int_{E}\omega_{E}\wedge\log x\dlog y\,-\,2\pi i\int_{T_{x}}\omega_{E}\log y\,-4\pi^{2}\int_{\d^{-1}(T_{x}\cap T_{y})}\omega_{E}\,=\]
\[
(\begin{array}[t]{c}
\underbrace{\log x\dlog y-2\pi i\log y\delta_{T_{x}}}\\
=:R\{x,y\}\in\D^{1}(E_{t})\end{array}-4\pi^{2}\delta_{\d^{-1}(T_{X}\cap T_{y})})(\omega_{E}),\]
 where $\D^{1}$ denotes $1$-currents; in fact, there is nothing
preventing us from taking {[}Poincar\'e duals of] topological $1$-cycles
$\gamma$ as our test forms, and so\[
CH^{2}(E_{t},2):=CH^{2}(X_{t})\rTo^{AJ_{\text{(rel)}}}Hom(H_{1}(E_{t},\QQ),\,\CC/\QQ(2))\]
is induced (on our cycle) by \begin{equation}
Z_t \longmapsto \left\{ \gamma \mapsto \int_{\gamma} R\{x,y\} \right\} . \\
\end{equation} Explicit computation on a particular choice of $\gamma_{t}$ (using
not much more than residue theory; see $\S2.1$) yields $(0.1)$,
which in this case is \begin{equation}
\Psi(t) = \int_{\gamma_t} R\{x,y\} \qteq 2\pi i\left\{ \log t + \sum_{m\geq 1} \frac{\binom{2m}{m}^2}{m} t^m \right\}. \\
\end{equation} Nontriviality of the family of cycles then follows from non-constancy
of the {}``regulator period'' $\Psi$. Both $(0.8)$ and $(0.9)$
are obtained by computing its value $\Psi(\frac{1}{16})$ at the {}``conifold
point'', by pulling back the current $R\{x,y\}$ along a desingularization
of the nodal rational curve $E_{\frac{1}{16}}$. (See the {}``$D_{5}$''
computation in $\S4.3$.) In particular, the relation to the asymptotics
of the $\{N_{d}\}$ (cf. $(0.8)$) comes from the conjectural \emph{mirror
theorem}%
\footnote{the $N_{d}$ here is actually $N_{2d}^{\left\langle K_{\PP^{1}\times\PP^{1}}\right\rangle }$
in $\S3.3$.%
}\[
\frac{1}{(2\pi i)^{2}}-\sum_{d\geq1}d^{3}N_{d}Q^{d}=\frac{\Y(t)}{\left({}_{2}F_{1}(\frac{1}{2},\frac{1}{2};1;4t)\right)^{3}}\]
in which \begin{equation}
\text{the r.h.s. blows up at }\frac{1}{16}\text{, and}\\
\end{equation}\begin{equation}
\text{the mirror map }Q(t)=\exp\left\{\frac{\Psi(t)}{2\pi i}\right\}.\\
\end{equation} $(0.16)$ is based on an analysis ($\S3.1$) of periods on the (open
$CY$ 3-fold) mirror manifold of $K_{\PP^{1}\times\PP^{1}}$, which
generalizes nicely to higher dimensions (for periods on certain open
$CY$ 4- and 5-folds).

As suggested above, the family of cycles $\{Z_{t}\in CH^{2}(X_{t},2)\}$
can be canonically constructed on the universal family $\E_{1}(4)\to Y_{1}(4)=\Gamma_{1}(4)\diagdown\uhp$
of elliptic curves with a marked $4$-torsion point. (Similar constructions
are possible in any level $\geq3$ and even in higher dimension, by
working on \emph{Kuga varieties}, or fiber products of such universal
families; this construction is recalled in $\S5$.) Using fiberwise
double Fourier series for currents on $\E_{1}(4)$, we obtain a very
different expression for the regulator period $\left\langle \tilde{\varphi},AJ(Z)\right\rangle $
as a function of $\tau\in\uhp$,\[
\tilde{\Psi}(\tau)\qteq2\pi i\left\{ \frac{2\pi i}{4}\tau-4\sum_{\mu\geq1}\frac{q_{0}^{\mu}}{\mu}\left(\sum_{r|\mu}r^{2}\chi_{-4}(r)\right)\right\} ,\]
where $q_{0}=e^{\frac{2\pi i}{4}\tau}$. (See Theorem $7.7$ and formulas
$(7.11)$, $(7.16)$ for the general result.) This must coincide with
$(0.14)$ in the sense that\[
\tilde{\Psi}(\tau(t))\qteq\Psi(t),\]
where $\tau(t)=\frac{4}{2\pi i}\log t+t\CC[[t]]$ is the period map.
The rich interactions between the genus $0$ case of the modular/Kuga
construction and the toric construction, including a complete classification
of the elliptic curve families where the constructions coincide, are
explained in $\S8$.

Before turning to our next example Laurent polynomial $(0.6)$, we
give a brief outline of how the $AJ$-formulas $(0.10)$, $(0.13)$
for $CH^{2}(E,2)$ generalize to the setting\[
AJ_{X}^{p,n}:\, CH^{p}(X,n)\to\begin{array}[t]{c}
\underbrace{H_{\H}^{2p-n}(X,\QQ(p))}.\\
\tiny\text{absolute Hodge cohomology}\end{array}\]
(The reader is encouraged to look at \cite[sec. 5]{KLM} and \cite[sec. 8]{KL},
however.) Here $X$ is smooth (quasi-projective) and the higher Chow
groups satisfy \[
\begin{array}[t]{c}
\underbrace{H_{\M}^{2p-n}(X,\QQ(p))}\\
\tiny\text{motivic cohomology}\end{array}\cong\mspace{-10mu}\begin{array}[t]{c}
CH^{p}(X,n)\\
\shortparallel\\
CH^{p}(X\times\square^{n},X\times\d\square^{n})\end{array}\mspace{-10mu}\cong Gr_{\gamma}^{p}K_{n}(X)_{\QQ},\]
where $\d\square^{n}:=\{\underline{z}\in\square^{n}\,|\,\text{some }z_{i}=0\text{ or }\infty\}\subset\square^{n}$.
When $X$ is singular these isomorphisms fail, but one still has\[
AJ_{X}^{p,n}:\, H_{\M}^{2p-n}(X,\QQ(p))\to H_{\H}^{2p-n}(X,\QQ(p))\]
which is treated using hyper-resolutions in \cite[sec. 8]{KL}. 

Recall that the higher Chow groups were defined \cite{Bl4} as the
homology of the complex \[
Z^{p}(X,\bullet):=\frac{\left\{ \begin{array}{c}
\text{"admissible" cycles in }X\times\square^{\bullet}\text{: components}\\
\text{properly intersect all coskeleta of }X\times\d\square^{\bullet}\end{array}\right\} }{\left\{ \text{"degenerate" cycles}\right\} }\]
with differential $\db$ taking the alternating sum of the restrictions
to {}``facets'' of $X\times\d\square^{\bullet}$. The KLM formula
for $AJ^{p,n}$ on $X$ smooth projective (and some quasi-projective
cases) is given simply as a map of complexes \begin{equation}
Z_{\RR}^p(X,-\bullet)\to C_{\D}^{2p+\bullet}(X,\QQ(p)):=C^{2p+\bullet}_{\text{top}}(X;\QQ(p))\oplus F^p\D^{2p+\bullet}(X)\oplus \D^{2p+\bullet -1}(X),\\
\end{equation} where $Z_{\RR}^{p}(X,-\bullet)\subset Z^{p}(X,-\bullet)$ is a quasi-isomorphic
subcomplex.%
\footnote{The proper intersection condition is extended to include certain real
semi-algebraic subsets of $X\times\square^{\bullet}$ in order to
make the formulas $(0.18$-$20)$ well-defined (e.g., the intersections
of $T_{z_{i}}$'s). The (cone) differential on the r.h. complex in
$(0.17)$ sends $(a,b,c)\mapsto(-\d a,-d[b],d[c]-b+\delta_{a})$.%
} $(0.17)$ is defined on an irreducible $\RR$-admissible cycle $Z\subset X\times\square^{n}$
by%
\footnote{here $T_{Z}$ is a $C^{\infty}$ chain, while $\Omega_{Z}$ and $R_{Z}$
are currents.%
} \begin{equation}
Z \longmapsto (2\pi i)^{p-n}\left( (2\pi i)^nT_Z,\Omega_Z,R_Z\right) .\\
\end{equation} Writing \\
\xymatrix{
& & & 
\square^n_{(z_1,\ldots,z_n)}
& 
\left\{ \desing
\right\} 
\ar [l]_{\pi_{\square} \mspace{50mu}} 
\ar [d]^{\pi_X} 
\\
& & & &
X \, ,
}\\
\\
\begin{equation}
\begin{matrix}
T_n:=\bigcap_{i=1}^n T_{z_i} := \bigcap_{i=1}^n\left\{ z_i\in(\RR^{\leq 0}\cup \{\infty \})\right\}\in C^n_{\text{top}}(\square^n)
\\ \\
\Omega_n :=\bigwedge^n \dlog z_i := \frac{dz_1}{z_1} \wedge \cdots \wedge \frac{dz_n}{z_n} \in F^n\D^n(\square^n)
\\ \\
R_n:=R\{z_1,\ldots ,z_n\}:= \mspace{200mu}
\\
\sum_{i=1}^n(\pm 2\pi i)^{i-1}\log(z_i)\frac{dz_{i+1}}{z_{i+1}} \wedge \cdots \wedge \frac{dz_n}{z_n} \cdot \delta_{T_{z_1}\cap\cdots \cap T_{z_{i-1}}} \in \D^{n-1}(\square^n),
\end{matrix}
\\
\end{equation} the KLM (normal) currents are defined by \begin{equation}
T_Z := \pi_X \left\{ Z\cdot (X\times T_n) \right\} \, , \, \, \, \left\{ \klma \right\} :={\pi_X}_* {\pi_{\square}}^* \left\{ \klmb \right\} . \\
\end{equation}

Suppose we are given a higher Chow cycle, i.e. a $\db$-closed precycle
(=admissible cycle) $Z\in Z_{\RR}^{p}(X,n)$. Then\[
d[R_{Z}]=\Omega_{Z}-(2\pi i)^{n}\delta_{T_{Z}},\]
or just $-(2\pi i)^{n}\delta_{T_{Z}}$ if $\dim X<p$ or $p<n$. So
for a symbol $\{\mathbf{f}\}=\{f_{1},\ldots,f_{n}\}\in Z^{n}(U,n)$
(where $f_{i}\in\mathcal{O}^{*}(U)$ and $U$ is smooth quasiprojective
of $\dim<n$), $R_{\{\mathbf{f}\}}=R\{f_{1},\ldots,f_{n}\}$ (as in
$(0.19)$) satisfies \begin{equation}
d[R_{\{\mathbf{f}\}}]=-(2\pi i)^n \delta_{T_{f_1}\cap \cdots \cap T_{f_n}} =: -(2\pi i)^n \delta_{T_{\mathbf{f}}} .\\
\end{equation} In Theorem 0.1, $\Xi_{t}\in Z^{n}(\tilde{X}_{t},n)$ is $\db$-closed
and $\dim(\tilde{X}_{t})=n-1$; hence \[
R_{\Xi_{t}}':=R_{\Xi_{t}}+(2\pi i)^{n}\delta_{\d^{-1}T_{Z}}\in\D^{n-1}(\tilde{X}_{t})\]
is $d$-closed and defines a lift%
\footnote{multivalued if $t$ is allowed to vary%
} of $AJ(\Xi_{t})\in H^{n-1}(\tilde{X}_{t},\CC/\QQ(n))$ to $H^{n-1}(\tilde{X}_{t},\CC)$.
We are interested in the higher normal function \begin{equation}
V(t):= \left\langle [R_{\Xi_t}'], [\omega_t]\right\rangle \\
\end{equation} associated to $\Xi$ and a section $\omega\in\Gamma(\PP^{1},\omega_{\nicefrac{\tilde{\X}}{\PP^{1}}})$
of the dualizing sheaf. If $D_{\text{PF}}^{\omega}$ is the Picard-Fuchs
operator associated to $\omega$ (which kills its periods over topological
cycles), then nonvanishing of \[
D_{\text{PF}}^{\omega}V(t)=:g_{\Xi,\omega}(t)\in\CC(\PP^{1})\]
implies generic nontriviality of $AJ(\Xi_{t})$. This gives a connection
to inhomogeneous Picard-Fuchs equations, explained in $\S2.3$. One
way to evaluate $(0.22)$ is to observe that the restriction of $\Xi_{t}$
to $\tilde{X}_{t}^{*}:=\tilde{X}_{t}\cap\mathbf{T}^{n}$ is $\rateq$
(by a $\db$-coboundary) to the toric symbol $\{x_{1},\ldots,x_{n}\}|_{\tilde{X}_{t}^{*}}$,
and so \[
[R_{\Xi_{t}}'|_{\tilde{X}_{t}^{*}}]\equiv[R\{x_{1}|_{\tilde{X}_{t}^{*}},\ldots,x_{n}|_{\tilde{X}_{t}^{*}}\}+(2\pi i)^{n}\delta_{\Gamma_{t}}]\in H^{n-1}(\tilde{X}_{t}^{*},\CC)\]
for some $\Gamma_{t}\in C_{n-1}^{\text{top}}(\tilde{X}_{t},\tilde{D};\QQ)$.
When we can arrange for $\Gamma_{t}$ to vanish (which is true in
the calculation below), a careful analytic argument with KLM currents
demonstrates that \begin{equation}
V(t) = \int_{\tilde{X}_t} R\{ x_1|_{\tilde{X}_t},\ldots ,x_n|_{\tilde{X}_t} \}\wedge \omega_t.\\
\end{equation}

What originally got us thinking about higher normal functions was
the following integral from a paper \cite{Bk} of Beukers: \begin{equation}
\R(\lambda) = \int_0^1 \int_0^1 \int_0^1 \frac{dX\, dY\, dZ}{1-(1-XY)Z-\lambda XYZ(1-X)(1-Y)(1-Z)} , \\
\end{equation} with $\R(0)=2\zeta(3)$. This is the unique linear combination of
the generating series of the two sequences $\{a_{m}\},\{b_{m}\}$
used by Ap\'ery to prove irrationality of $\zeta(3)$, with larger
radius of convergence than those series. (This leads to Beukers's
simpler, geometrically motivated proof.) Substituting $X=\frac{x}{x-1}$,
$Y=\frac{y}{y-1}$, $Z=\frac{z}{z-1}$, $(0.24)$ becomes\[
\int\int\int_{T:=T_{x}\cap T_{y}\cap T_{z}}\frac{\dlog x\wedge\dlog y\wedge\dlog z}{\lambda-\frac{(x-1)(y-1)(z-1)(1-x-y+xy-xyz)}{xyz}}\,\,=\]
\begin{equation}
\int_T \frac{\bigwedge^3 \dlog x_i}{\lambda - \phi(\underline{x})} =: \int_T (2\pi i)^3 \hat{\omega}_{\lambda}, \\
\end{equation} where $\phi$ is as in $(0.6)$ and (writing $t=\lambda^{-1}$) $\hat{\omega}_{\lambda}\in\Omega^{3}(\PP_{\tilde{\Delta}})\left\langle \log\tilde{X}_{t}\right\rangle $
($\Delta$ is shown in the Figure%
\begin{figure}

\caption{\protect\includegraphics[scale=0.7]{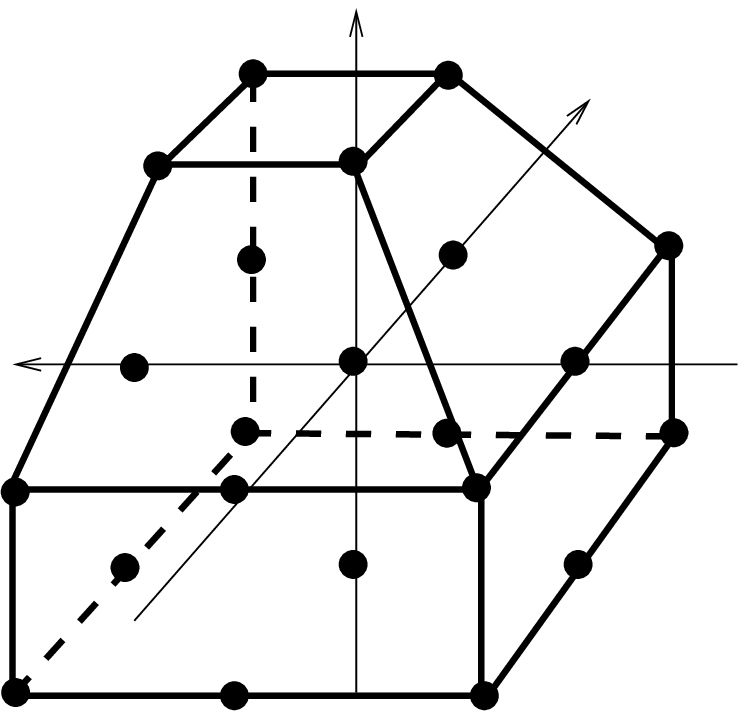}}

\end{figure}
). Differentiating $\hat{\omega}_{\lambda}$ as a $current$ on $\PP_{\tilde{\Delta}}$,\begin{equation}
d[\hat{\omega}_{\lambda}] = 2\pi i(\iota_{\tilde{X}_t})_* Res_{\tilde{X}_t}(\hat{\omega}_{\lambda}) =: (\iota_{\tilde{X}_t})_* \omega_{\lambda} \\
\end{equation} defines our section $\{\omega_{\lambda}\in\Gamma(K_{\tilde{X}_{t}})\}_{t\in\PP^{1}}$
of the dualizing sheaf. Using $(0.26)$ and the generalization\[
d[R_{\{\underline{x}\}}]\,\,=\,\,\sum\{\text{terms supported on }\tilde{\DD}\}\,+\,\bigwedge^{3}\dlog\underline{x}\,-\,(2\pi i)^{3}\delta_{T}\]
of $(0.21)$ to $\PP_{\tilde{\Delta}}$, $(0.25)$ becomes \[
\int_{\PP_{\tilde{\Delta}}}(2\pi i)^{3}\delta_{T}\wedge\hat{\omega}_{\lambda}\,\,=\,-\int_{\PP_{\tilde{\Delta}}}d[R_{\{\underline{x}\}}]\wedge\hat{\omega}_{\lambda}\]
\[
\rEq_{\text{parts}}^{\int\text{ by}}\,\int_{\PP_{\tilde{\Delta}}}R_{\{\underline{x}\}}\wedge\iota_{\tilde{X}_{t}}{}_{_{*}}\omega_{\lambda}\,\,=\,\,\int_{\tilde{X}_{t}}R_{\{\underline{x}\}|_{\tilde{X}_{t}}}\wedge\omega_{\lambda},\]
which is $(0.23)$.%
\footnote{Of course, much of the above needs more thorough justification, as
$R\{\underline{x}\}$ is not technically a current on $\PP_{\tilde{\Delta}}$,
and this will be done in \cite{Ke2}.%
} In fact, $\R(\lambda)$'s interpretation as a higher normal function
associated to a family of $K_{3}(K3)$-classes extending through singular
fibers%
\footnote{other than $\lambda=\infty$/$t=0$.%
} leads (almost) automatically to the {}``larger radius of convergence''
mentioned above, as well as to its satisfaction of an inhomogeneous
Picard-Fuchs equation (which then produces a recursion on the $\{b_{m}\}$).

One knows from \cite{Pe} that the family of $K3$ surfaces $\tilde{\X}$
associated to $(0.6)$ is the canonical family of Kummer surfaces
over $\Gamma_{0}(6)^{+6}\diagdown\uhp^{*}$. From the toric ($\S2.2$)
and modular ($\S7.3$) computations of the {}``fundamental regulator
period'' one gets two rather different expressions\[
\Psi(t)=(2\pi i)^{2}\left\{ \log t\,+\,\sum_{m\geq1}\frac{t^{m}}{m}\sum_{k=0}^{m}\binom{m}{k}^{2}\binom{m+k}{k}^{2}\right\} \]
\[
\tilde{\Psi}(\tau)=-12(2\pi i)^{3}\tau\,+\,\frac{(2\pi i)^{2}}{20}\left\{ 7\psi_{4}(q)-2\psi_{4}(q^{2})+3\psi_{4}(q^{3})-42\psi_{4}(q^{6})\right\} \]
(where $q:=e^{2\pi i\tau}$ and $\psi_{4}(q)=\sum_{M\geq1}\frac{q^{M}}{M}\{\sum_{r|M}r^{3}\}$)
which must coincide modulo $\QQ(3)$ under the {}``period map''
$\tau(t)=\frac{\int_{\varphi_{1}}\omega_{t}}{\int_{\varphi_{0}}\omega_{t}}$
(see $\S8.3$). 

In general when a toric-hypersurface pencil arising from Theorem 0.1
is modular (in a sense to be made precise in $\S8.3$), the limit
MHS at $t=0$ is trivialized by taking $q:=\exp(\frac{2\pi i}{N}\tau(t))$
(for some $N\in\ZZ$) as the local parameter (or more generally $t_{0}$
with $\lim_{t\to0}\frac{q(t)}{t_{0}(t)}$ a root of unity). An example
of a $non$modular case --- with nontrivial LMHS (see $\S8.6$) ---
is the mirror quintic family obtained from $\phi=x+y+z+w+\frac{1}{xyzw}$.
It follows that the Fermat quintic family $\tilde{\X}$ obtained from
$(0.7)$ (of which the mirror quintic is essentially a quotient) also
has extensions in $H_{\text{lim}}^{3}(\tilde{X}_{0})$ $not$ trivializable
by change of parameter. What is still true is that we have the splitting
$(0.3)$ of MHS\[
H^{3}(\tilde{X}_{0})\twoheadrightarrow\QQ(0)\]
induced by $\left\langle \,\cdot\,,\widehat{\omega(0)}\right\rangle $,
and inducing \[
J^{2}(\tilde{X}_{0})\rOnto^{\theta}\CC/\QQ(2).\]
This follows from the existence of $\Xi$ in the Theorem, and is $false$
if we change the coefficients in $(0.7)$ (e.g. writing instead $\phi=\frac{x^{5}+2y^{5}+7z^{5}+w^{5}+1}{xyzw}$)
without regard for the {}``generalized temperedness'' criterion.

Sticking with the Fermat family, here is why this is important. Let
$\mathsf{D}^{*}:=\mathsf{D}\m\{0\}\subset\PP^{1}$ be a punctured
disk about $t=0$, and suppose we are given a {}``local'' family
of cycles $\{Z_{t}\in Z_{\text{hom}}^{2}(\tilde{X}_{t})\}_{t\in\mathsf{D}^{*}}$
satisfying $\sZ^{*}:=\cup_{t\in\mathsf{D}^{*}}Z_{t}\homeq0$ on $\tilde{\pi}^{-1}(\mathsf{D}^{*})\subset\tilde{\X}$.
Then by \cite[sec. III.B]{GGK1} $\lim_{t\to0}AJ_{\tilde{X}_{t}}(Z_{t})\in J^{2}(\tilde{X}_{0})$
is well-defined,%
\footnote{as an invariant of the family of \emph{rational equivalence classes}.%
} and by applying $\theta$ so is $\theta(\lim_{t\to0}AJ_{\tilde{X}_{t}}(Z_{t}))=\lim_{t\to0}\nu(t)=:\nu(0)$
(cf. $(0.4)$). In {[}op. cit., $\S$IV.C] such a family is constructed,
with%
\footnote{$D_{2}=$Bloch-Wigner function%
}\[
\Im(\nu(0))=D_{2}(\sqrt{-3});\]
 and so the general $Z_{t}\nrateq0$.

To conclude, we comment on a few intriguing issues arising in the
present work, which might form the basis for later projects. We would
like to have a better understanding of the geometry of families of
$K3$ surfaces supporting $K_{3}$-classes which are $not$ Eisenstein
symbols. There are scores%
\footnote{corresponding to about a quarter of the 4319 reflexive polytopes in
$\RR^{3}$; see $\S1.3$.%
} of Laurent polynomials $\phi\in\QQ[\mathbf{T}^{3}]$ satisfying Theorem
1.7, but (for example) we are only able to show the generic Picard
number $\text{rk}(Pic(X_{\eta}))=19$ for a handful of these. While
there are techniques for obtaining $lower$ bounds on this number,
we are aware of no methods for (nontrivially) bounding it $above$.
Do any of the families have $generic$ Picard rank $<19$? Are any
of them not elliptic fibrations? In fact, on those that admit a $torically$
$defined$ elliptic fiber structure, we are able to construct (and
partially evaluate the regulator on) families of $K_{1}$-classes.

For CY 3-folds, it turns out that $none$ of the $K_{4}$-classes
constructed by Theorem 1.7 are Eisenstein symbols, because none of
the allowed CY families are classically modular (Prop. 8.15). This
would likely be remedied by generalizing the construction to admit
singularities on the generic fiber as we have done for $K3$'s; this
hard work has yet to be done.

The conjectural mirror theorem of $\S3.4$ relates Hodge theory of
the (open $CY$ $3$-fold) $B$-model family $Y_{t}:=\{1-t\phi(\underline{x})+u^{2}+v^{2}=0\}\subset(\CC^{*})^{2}\times\CC^{2}$
to enumerative geometry of the ($A$-model) total space of the canonical
bundle $K_{\PP_{\Delta^{\circ}}}$. But the mirror map and the VHS
$H^{3}(Y_{t})$ are determined from the data of the underlying elliptic
curve family $X_{t}^{*}=\{1-t\phi(\underline{x})=0\}\subset(\CC^{*})^{2}$
and the toric symbol $\{x_{1},x_{2}\}\in K_{2}(X_{t}^{*})$ (whose
$AJ$ class in $\ext(\QQ(0),H^{1}(X_{t}^{*},\QQ(2)))$ projects to
$H^{3}(Y_{t})$, cf. Prop. 3.5ff). The mirror $X^{\circ}$ of $\{X_{t}\}$
is the (elliptic curve) zero locus of a section of $K_{\PP_{\Delta^{\circ}}}^{\vee}$.
Is it possible to recast the Gromov-Witten invariants of $K_{\PP_{\Delta^{\circ}}}$
directly in terms of $X^{\circ}$, and thus rewrite the mirror theorem
in terms of $X_{t}\longleftrightarrow X^{\circ}$? A starting point
might be to think of $H^{\text{even}}(K_{\PP_{\Delta^{\circ}}})$
as an extension of $H^{\text{even}}(X^{\circ})$ by $\QQ(0)$ and
reduce the quantum product to one on $H^{\text{even}}(X^{\circ})$.

A. Collino \cite{Co} has studied the behavior of the Ceresa cycle
associated to a nonhyperelliptic genus $3$ curve as this curve acquires
two successive nodes. Working modulo $2$-isogenies, with each degeneration
a $\mathbb{G}_{m}$ splits off from the (Jacobian) abelian variety
on which the cycle sits. Under this process $CH^{2}\left(\begin{array}{c}
\text{abelian}\\
3\text{-fold}\end{array}\right)\rightsquigarrow CH^{2}\left(\begin{array}{c}
\text{abelian}\\
\text{surface}\end{array},\,1\right)\rightsquigarrow CH^{2}\left(\begin{array}{c}
\text{elliptic}\\
\text{curve}\end{array},\,2\right)$, the Ceresa cycle limits to the Eisenstein symbol over $Y_{1}(4)$,
which should be thought of as the intersection of two boundary components
in moduli space. Obviously this admits generalization, essentially
by considering moduli of genus $3$ Jacobians with level $N$ structure.
It is of great interest, therefore, to attempt a modular computation
of the normal function for such {}``modular Ceresa cycles'', which
should limit to an integral of an Eisenstein series. Certain singularities
of this normal function in the sense of Griffiths and Green \cite{GG}
(equivalently, the residues of the corresponding Hodge class {[}op.
cit.]), must then be given by the rational residues (in the sense
of $\S5.1.5$ below) of {}``$\QQ$-Eisenstein series'' $E_{3}^{\QQ}(N)$.
It is a fundamental property of Eisenstein series that they are determined
by their residues.

In fact, there is a beautiful analogy between the picture in $\S4$
of {[}op. cit.] and the Eisenstein situation reviewed in $\S\S5-6.1$.
Given a projective variety $X^{2p}$, a $(p,p)$-class $\zeta$, and
a sufficiently ample line bundle $\mathcal{L}\to X$, the infinitesimal
invariant of $\zeta$ (pulled back to the incidence variety $\X\subset X\times\PP H^{0}(\mathcal{O}_{X}(\mathcal{L}))$)
maps to certain {}``residues'' over higher-codimension substrata
of $X^{\vee}\subset\PP H^{0}(\mathcal{O}_{X}(\mathcal{L}))$. An explicit
form of Deligne's {}``Hodge $\Longrightarrow$ Absolute Hodge''
conjecture, is that this map should be injective on Hodge classes%
\footnote{The point is that the map preserves $\QQ$-structure and the target
$\QQ$-structure is {}``algebraic'' (in the sense of being Galois-invariant).%
} --- that is, that the $rational$ $(p,p)$ classes are {}``generalized
$\QQ$-Eisenstein series''. That all such should be motivated by
a {}``generalized Eisenstein symbol'' is, of course, the Hodge Conjecture.
In the context of Kuga varieties over modular curves (and higher cycles),
we have spelled out how Beilinson's work established the relevant
(Beilinson-)Hodge Conjecture in $\S\S5-6.1$ below.$\vspace{2mm}$\\
\textbf{Acknowledgments:} It gives us great pleasure to thank M.-J.
Bertin, S. Bloch, M. Lalin, V. Maillot, J. Stienstra, and F. Rodriguez-Villegas
for stimulating conversations and interest in this work; and especially
M. Nori for suggesting the view of our Theorem 1.7 with which this
Introduction began, and A. Scholl for explaining Beilinson's construction
to us. This paper has also benefitted from concurrent collaborations
of the first author with J. Morgan and the second author with P. Griffiths,
M. Green, and J. Lewis.

${}$

\section{\textbf{Constructing motivic cohomology classes on families of CY-varieties}}

The goal of this section is a combinatorial machine for producing
1-parameter families of Calabi-Yau $(n-1)$-folds%
\footnote{the small tilde does not denote a desingularization; $\tilde{X}_{t}$
can be singular.%
} $\tilde{X}_{t}$ that carry nontrivial elements $\Xi_{t}\in H_{\M}^{n}(\tilde{X}_{t},\QQ(n))$
$\forall t\in\PP^{1}\m\{0\}$, for $n=2,3,4.$ For $n=2$, our construction
is a slight extension of work \cite{RV} of Villegas. We remind the
reader that for $\tilde{X}_{t}$ smooth, working $\otimes\QQ$ (as
is our convention in this paper)\[
H_{\M}^{n}(\tilde{X}_{t},\QQ(n))\begin{array}[b]{c}
_{\cong}\\
\rightarrow\end{array}CH^{n}(\tilde{X}_{t},n)\begin{array}[b]{c}
_{\cong}\\
\leftarrow\end{array}Gr_{\gamma}^{n}K_{n}(\tilde{X}_{t}).\]
 Our construction still yields something in $H_{\M}^{n}$ for singular
members of the family, though in that case $CH^{n}(\tilde{X}_{t},n)\cong Gr_{\gamma}^{n}G_{n}(\tilde{X}_{t})$
and both isomorphisms above fail. However, by taking hyper-resolutions
as in \cite{L1}, $H_{\M}^{n}$ can still be represented by higher
Chow precycles, which allows for explicit computation \cite{KL} of
the Abel-Jacobi map\[
AJ^{n,n}:\, H_{\M}^{n}(\tilde{X}_{t},\QQ(n))\to H^{n-1}(\tilde{X}_{t},\CC/\QQ(n))\]
in terms of currents and $C^{\infty}$ chains. We will partially compute
$AJ$ in $\S2$, and deal with the singular fibers (in some cases)
in $\S4$.

\subsection{Toric data}

Our $\tilde{X}_{t}$'s will be hypersurfaces in toric Fano $n$-folds
$\PP_{\tilde{\Delta}}$. To start the construction, let\[
\sum_{\underline{m}\in\ZZ^{n}}\alpha_{\underline{m}}\underline{x}^{\underline{m}}=\phi\in K[x_{1}^{\pm1},\ldots,x_{n}^{\pm1}]\]
be a Laurent polynomial with coefficients in a number field $K\subset\CC$.
Denote by $\mathfrak{{M}}_{\phi}:=\{\underline{m}\in\ZZ^{n}|\,\alpha_{\underline{m}}\neq0\}$
the set of monomial exponents, with convex hull $\Delta\subset\RR^{n}$.
Define\[
\PP_{\Delta}:=Proj\left(\CC\left[\left\{ x_{o}^{\ell}\underline{x}^{\underline{m}}\left|\,\underline{m}\in\ell\Delta\cap\ZZ^{n},\,\ell\in\ZZ^{\geq0}\right.\right\} \right]\right)\mspace{100mu}\]
\[
\mspace{200mu}\hookleftarrow Proj\left(\CC\left[x_{0},x_{1}^{\pm1},\ldots,x_{n}^{\pm1}\right]\right)=\left(\CC^{*}\right)^{n},\]
 and in $\PP_{\Delta}$ hypersurfaces \[
X^{\lambda}:=\overline{\{\lambda-\phi=0\}}\rEq_{t=\lambda^{-1}}\overline{\{1-t\phi=0\}}=:X_{t}\,;\]
these fit together in the family\[
\PP^{1}\times\PP_{\Delta}\supset\X\rTo^{\pi}\PP^{1}.\]
The base locus is the intersection of $X_{t}$ (for any $t\neq0$)
with $\DD:=\PP_{\Delta}\m(\CC^{*})^{n}$. To describe this, denote
the codimension-$i$ faces of $\Delta$ by $(\sigma\in)\Delta(i)$,
with corresponding $(n-i)$-plane $\RR_{\sigma}.$ Take $x_{1}^{\sigma},\ldots,x_{n-i}^{\sigma}$
to be monomials corresponding to a set of generators for $\RR_{\sigma}\cap\ZZ^{n}$($\cong\ZZ^{n-i}$
after choosing an origin $o_{\sigma}$). The latter may be completed
to a generating set for $\ZZ^{n}$; denote the corresponding monomials
by $x_{n-i+1}^{\sigma},\ldots,x_{n}^{\sigma}$. Set\[
\DD_{\sigma}^{*}:=\left\{ x_{j}^{\sigma}=0\,\,\forall j>n-i,\,\,\,\neq0\,\,\forall j\leq n-i\right\} \subseteq\PP_{\Delta},\,\,\,\DD_{\sigma}:=\overline{\DD_{\sigma}^{*}}\]
and independently of $t\neq0$\[
D_{\sigma}^{(*)}:=\DD_{\sigma}^{(*)}\cap X_{t}.\]
That this intersection is proper follows from the construction; if
its multiplicity is $>1$, then $D_{\sigma}$ is non-reduced. We also
write \[
X_{t}^{*}:=X_{t}\cap(\CC^{*})^{n},\,\,\,\,\, D:=\cup_{\sigma\in\Delta(1)}D_{\sigma}=X_{t}\m X_{t}^{*},\,\,\,\,\,\DD:=\cup\DD_{\sigma}=X_{0}.\]
Now $D_{\sigma}^{(*)}$ may be described as follows: multiply $\phi$
by $\underline{x}^{-\underline{o}_{\sigma}}$ to translate the origin
of $\ZZ_{\sigma}^{n-i}$ to $\{\underline{0}\}$, rewrite the result
in terms of the $\{x_{j}^{\sigma}\}_{j=1}^{n}$, and set $x_{j>n-i}^{\sigma}=0$.
This defines a Laurent polynomial $\phi_{\sigma}(x_{1}^{\sigma},\ldots,x_{n-i}^{\sigma})$,
hence also a polytope $\Delta_{\sigma}\subset\RR^{n-i}$ (by taking
convex hull), which is just $\sigma\subset\RR_{\sigma}$ viewed as
a polytope. (We refer to $\phi_{\sigma}$ as a {}``$(n-i)$-face
polynomial'', with {}``$(n-i)$-face'' replaced by {}``edge''
or {}``facet'' if $n-i=1$ or $i=1$ respectively.) We have\[
D_{\sigma}=\overline{\{\phi_{\sigma}=0\}}\subset\PP_{\Delta_{\sigma}}\cong\DD_{\sigma}.\]
 Of course, $\PP_{\Delta}$ and $X_{t}$ (for $t$ general) may be
singular.

Given nonvanishing holomorphic functions $f_{1},\ldots,f_{\ell}\in\Gamma(Y,\mathcal{O}_{Y}^{*})$
on a quasi-projective variety $Y$, the symbol $\{f_{1},\ldots,f_{\ell}\}\in Z^{\ell}(Y,\ell)$
denotes the higher Chow cycle given by their graph in $Y\times(\PP^{1}\m\{1\})^{\ell}$.
Its class $\left\langle \{f_{1},\ldots,f_{\ell}\}\right\rangle \in CH^{\ell}(Y,\ell)$
maps to an element in Milnor $K$-theory $K_{\ell}^{M}(\CC(Y))\cong CH^{\ell}(\eta_{Y},\ell)$
which is also denoted $\{f_{1},\ldots,f_{\ell}\}$.

\begin{defn}
(i) $\phi$ \textbf{reflexive} $\Longleftrightarrow$ $\Delta$ reflexive
\cite{Ba1} (with $\{\underline{0}\}\in\RR^{n}$ as its unique integral
interior point).

(ii) $\phi$ \textbf{regular} $\Longleftrightarrow$ $\lambda-\phi$
is $\Delta$-regular \cite{Ba1} for general $\lambda\in\CC$ 

$\mspace{100mu}\Longleftrightarrow$ $D_{\sigma}^{*}$ nonsingular
and reduced ($\forall$ $i\geq1$, $\sigma\in\Delta(i)$).

(iii) $\phi$ \textbf{tempered} $\Longleftrightarrow$ coordinate
symbols $\{x_{1}^{\sigma},\ldots,x_{n-i}^{\sigma}\}$ give (working
$\otimes\QQ$ as always) trivial classes in $CH^{n-i}(D_{\sigma}^{*},n-i)$
($\forall$ $i\geq1$, $\sigma\in\Delta(i)$).
\end{defn}
\begin{rem}
Though we have been working over $\CC$, the above constructions and
definitions descend to $K$. Provided one is willing to work over
a suitable algebraic extension of $K$ (or $\bar{\QQ}$), we can discuss
irreducible components of the $D_{\sigma}^{*}$. For $n-i=1$, the
$D_{\sigma}^{*}$ components are points and must have root-of-unity
coordinates $x_{1}^{\sigma}$ if $\phi$ is tempered. (Hence we recover
Villegas's prescription for $n=2$, that the $\phi_{\sigma}$ be cyclotomic
$\forall$ $\sigma\in\Delta(1)$.) For $n-i=2$, the tempered condition
is equivalent to $\{x_{1}^{\sigma},x_{2}^{\sigma}\}$ giving torsion
classes in $K_{2}^{M}$ of the $\bar{\QQ}$-function fields of the
irreducible component curves $C$ of $D_{\sigma}^{*}$, since $ker\{CH^{2}(C,2)\to CH^{2}(\eta_{C},2)\}=\oplus_{p\in C(\bar{\QQ})}CH^{1}(p,2)=0$.
\end{rem}
Henceforth, we assume $\phi$ reflexive. This means in particular
that $\PP_{\Delta}$ is Fano and $X_{t}\in|-\mathsf{{K}}_{\PP_{\Delta}}|$.
A maximal projective triangulation of the (integral) dual polytope
$\Delta^{\circ}$ produces a crepant partial%
\footnote{for $n=4$, $\PDT$ may still have point singularities in $\cup_{\tilde{\sigma}\in\DT(4)}\DD_{\tilde{\sigma}}$.%
} desingularization $\PDT\rTo^{\mu}\PD$ \cite{Ba1}. Affine charts
for $\PDT$ are obtained from monomial generators for the integral
points of the cones dual to the cones on $tr(\Delta^{\circ})$. The
cones on $\Delta^{\circ}$ likewise provide affine charts for $\PD$;
and in both cases the relations between the monomials produce local
equations for $\PP_{(\cdot)}$. The two sets of affine charts are
related by blow-up along coordinate subspaces, and locally $\mu$
is just the proper transform. Let $\tilde{\Delta}$ be the combinatorial
dual (simplicial complex) of $tr(\Delta^{\circ})$. Note that since
the polytope $\DT$ no longer lives in $\RR^{n}$, the $Proj$ mechanism
doesn't make sense for constructing $\PDT$ from $\DT$. However,
the correspondence $\DT\longleftrightarrow\PDT$ is not merely formal:
the face structure of $\DT$ correctly describes the topology of $\tilde{\DD}:=\PDT\m(\CC^{*})^{n}$.
(It seems possible, though we have not checked, that $\tilde{\DD}$
is the Stanley-Reisner variety of $\tilde{\Delta}$.) $\tilde{\DD}$
is a normal-crossing divisor in $\PDT$.

If $\phi$ is regular and $n\leq4$, then the $\mu$-preimage of $X_{t}$
is a smooth CY-$(n-1)$-fold for general $t$; call this $\TXT$.
Denote the discriminant locus $\{t\in\PP^{1}\,|\,\TXT\text{{\, singular}}\}=:\L$,
and $\DD_{\tilde{\sigma}}^{(*)}\cap\TXT=:D_{\ST}^{(*)}$. For $\ST\in\DT(i-k)$
lying over $\sigma\in\Delta(i)$ (i.e., $\mu(\DD_{\ST})=\DD_{\sigma}$),
toric coordinates on $\DD_{\ST}\subset\PDT$ are given by $x_{1}^{\sigma},\ldots,x_{n-i}^{\sigma};\, y_{1}^{\ST},\ldots,y_{k}^{\ST}$.
Here the $\{x_{j}^{\sigma}\}$ are the toric coordinates on $\DD_{\sigma}$;
and the $\{y_{j}^{\ST}\}$ blow-up coordinates ranging freely on $D_{\ST}^{*}\cong D_{\sigma}^{*}\times(\CC^{*})^{k}\subset\TXT$,
which is cut out of $\DD_{\ST}^{(*)}$ by $(0=)\phi_{\tilde{\sigma}}(\underline{x}^{\sigma};\underline{y}^{\sigma}):=\phi_{\sigma}(\underline{x}^{\sigma})$.
Moreover, $\cup_{\ST\in\DT(1)}D_{\ST}=\tilde{D}$ is a NCD on $\TXT$;
this is clear from regularity, dual combinatorics of $\DT\longleftrightarrow tr(\Delta^{\circ})$
and the fact that facets of $tr(\Delta^{\circ})$ are elementary simplices:
e.g., $(n-2)$-faces of $tr(\Delta^{\circ})$ contain $n-1$ vertices
$\Longleftrightarrow$ edges of $\DT$ abut $n-1$ facets $\Longleftrightarrow$
$n-1$ components of $\tilde{D}$ intersect transversely in a point.
But we won't make essential use of this.

We elaborate briefly on the affine charts for $\PDT$. These are in
$1$-$1$ correspondence with vertices $\tilde{v}\in\DT(n)$ or equivalently
with facets $\tilde{f}\in tr(\Delta^{\circ})(1)$ --- i.e. the elementary
$(n-1)$-simplices occurring in the maximal triangulation. Write $\Sigma(\tilde{f})$
for the cone on $\tilde{f}$ and $\{\underline{m}_{i}\}_{i=1}^{n}$
for the integral generators of the $n$ edges of $\Sigma(\tilde{f})^{\circ}$.
(One can also view these as the $n$ edges of $\DT$ emerging from
$\tilde{v}$.) The $\{\underline{m}_{i}\}$ will generate $\Sigma(\tilde{f})^{\circ}\cap\ZZ^{n}$
iff $\tilde{f}$ is regular (see \cite{Ba1} $2.2.6$), in which case
the chart is nonsingular. More precisely, it is $\cong\CC^{n}$ with
coordinates $\{z_{i}:=\underline{x}^{\underline{m}_{i}}\}$, and the
local equation of $\tilde{X}^{\l}$ reads $P(z_{1},\ldots,z_{n})-\l z_{1}\cdots z_{n}=0$
($P$ a polynomial with nonvanishing constant term). For $n=2,3$
$(n-1=1,2$) elementary $\implies$ regular and this is all true;
for $n\geq4$ the implication is false. So in general one will have
$k$ more generators supplementing the $\{\underline{m}_{i}\}$, hence
a singular affine chart with $n+k$ monomial coordinates (and $k$
relations), and a more complicated local equation for $X^{\l}$ (e.g.
see $\S2$). The (integral) exponent vectors of the $k$ new monomials
come from the interior of the cone $\Sigma(\tilde{f})^{\circ}$ (and
of its faces). For $n=4$ the only possible singularity of the chart
is a $\QQ$-factorial terminal singularity at the origin. 

Continue to assume $\phi$ regular and $n\leq4$. For $\ST_{i}\in\DT(i)$
we may define iterated residue maps\[
CH^{n}(\PDT\m\tilde{\DD},n)\to CH^{n-1}(\DD_{\ST_{1}}^{*},n-1)\to\cdots\to CH^{n-i}(\DD_{\ST_{i}}^{*},n-i),\]
given a choice of flag $(\ST_{i}\subsetneq)\,\ST_{i-1}\subsetneq\cdots\subsetneq\ST_{1}$,
$\ST_{j}\in\DT(j)$. The composition is independent of the choice,
and is denoted $Res_{\ST_{i}}^{i}$; a similar construction yields
$Res_{\ST}^{i}:\, CH^{n}(\TXT\m\tilde{D},n)\to CH^{n-i}(D_{\ST}^{*},n-i)$
for $t\notin\L$. If we remove tildes, the $Res_{\sigma}^{i}$ still
make sense; note in particular that all singularities (on $\PD,\, X_{t},\,\DD_{\sigma},\, D_{\sigma}$
for any $\sigma$) are in codimension $\geq2$. For example, if $\sigma'\subsetneq\sigma$
($\sigma'\in\Delta(i+1)$, $\sigma\in\Delta(i)$) with toric coordinates
$x_{1}^{\sigma}=x_{1}^{\sigma'}$, $\ldots$, $x_{n-i-1}^{\sigma}=x_{n-i-1}^{\sigma'}$,
$x_{n-i}^{\sigma}$ on $\DD_{\sigma'}^{*}$, one has a smooth affine
neighborhood $\DD_{\sigma'}^{*}\times\AA_{x_{n-i}^{\sigma}}^{1}\subset\DD_{\sigma}$.
This allows for easy computation of the iterated residues.

Let $\xi:=\left\langle \{x_{1},\ldots,x_{n}\}\right\rangle \in CH^{n}\left((\CC^{*})^{n}=\PDT\m\tilde{\DD}=\PD\m\DD,\, n\right)$
denote the class of the coordinate symbol. For $t\notin\L$ this restricts
to $\xi_{t}\in CH^{n}(X_{t}^{*}=\TXT^{*},\, n)$, either by pulling
back the $\{x_{i}\}$ directly or by invoking contravariant functoriality
of higher Chow groups ($\otimes\QQ$) for arbitrary morphisms between
smooth varieties \cite{L2}.

\begin{lem}
The diagram

\xymatrix{& & & CH^n(\PDT \m \tilde{\DD},n)  \ar [r] ^{Res^i_{\ST}} \ar [d] ^{I^*_t} & CH^{n-i}(\DD_{\ST}^*,n-i) \ar [d] ^{I_{\ST}^*} \\ & & & CH^n(\TXT^*,n) \ar [r] ^{Res^i_{\ST}} & CH^{n-i}(D_{\ST}^*,n-i)}

 ${}$\\
commutes for any $\ST\in\Delta(i)$, as does a similar diagram with
all tildes removed.
\end{lem}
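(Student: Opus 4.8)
The plan is to reduce to the case of a single residue step and then deduce the square from the naturality of the localization long exact sequence for higher Chow groups under transverse pullback. By construction $Res^{i}_{\ST}$ is the composite of one-step residue maps along a flag $\ST=\ST_{i}\subsetneq\ST_{i-1}\subsetneq\cdots\subsetneq\ST_{1}$ ($\ST_{j}\in\DT(j)$), each being the boundary map in the localization sequence for the torus orbit $\DD^{*}_{\ST_{j}}$ sitting inside a product neighbourhood $\DD^{*}_{\ST_{j-1}}\times\AA^{1}\subset\DD_{\ST_{j-1}}$ of the kind recalled above; the corresponding one-step residues on the fibres use the divisors $D^{*}_{\ST_{j}}\subset D_{\ST_{j-1}}$, which are smooth because $\phi$ is regular. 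Since composing commutative squares yields a commutative square, it suffices to treat one step: for $\ST\in\DT(1)$, to show $I^{*}_{\ST}\circ Res_{\ST}=Res_{\ST}\circ I^{*}_{t}$ as maps $CH^{n}((\CC^{*})^{n},n)\to CH^{n-1}(D^{*}_{\ST},n-1)$.

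For the one-step case I would fix, using the toric structure, a Zariski-open $\sU\subset\PDT$ in the smooth locus, containing the torus orbit $\DD^{*}_{\ST}$, with $\sU\cong\DD^{*}_{\ST}\times\AA^{1}_{y}$ where $\DD^{*}_{\ST}=\{y=0\}$ and $\sU\m\DD^{*}_{\ST}\subseteq(\CC^{*})^{n}$. As $t\notin\L$ the fibre $\TXT$ is smooth, and by regularity of $\phi$ it meets $\DD_{\ST}$ transversally (\cite{Ba1}); so $\iota\colon\TXT\cap\sU\hookrightarrow\sU$ is a closed regular embedding with $\iota^{-1}(\DD^{*}_{\ST})=D^{*}_{\ST}$ a smooth divisor of $\TXT\cap\sU$ and $(\TXT\cap\sU)\m D^{*}_{\ST}\subseteq\TXT^{*}$. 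On these neighbourhoods both residues factor as restriction followed by the localization boundary $\partial$, so the square of the Lemma decomposes into a \emph{restriction square} and a \emph{boundary square}. The restriction square commutes immediately: by contravariant functoriality of higher Chow groups ($\otimes\QQ$) for morphisms of smooth varieties \cite{L2}, both composites equal the pullback along $(\TXT\cap\sU)\m D^{*}_{\ST}\hookrightarrow(\CC^{*})^{n}$.

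The crux, and the step I expect to be the main obstacle, is the boundary square: $\partial\circ\iota^{*}=\iota^{*}\circ\partial$, i.e.\ compatibility of $\iota^{*}$ with the localization boundary maps. Here transversality is essential: because $\iota$ is a regular embedding meeting $\DD^{*}_{\ST}$ transversally, $\iota^{-1}(\DD^{*}_{\ST})=D^{*}_{\ST}$ has the expected codimension, so $\iota^{*}$ carries the localization triangle of $(\sU,\DD^{*}_{\ST},\sU\m\DD^{*}_{\ST})$ to that of $(\TXT\cap\sU,D^{*}_{\ST},(\TXT\cap\sU)\m D^{*}_{\ST})$ and in particular commutes with $\partial$. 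I would obtain this either by appealing to the functoriality of Bloch's cycle complexes, or directly on $\RR$-admissible precycles: for a precycle in good position relative to $\DD^{*}_{\ST}\times\square^{n}$, transversality makes the operations ``take Zariski closure in $\sU\times\square^{n}$'' and ``intersect with $\DD^{*}_{\ST}\times\square^{n}$'' commute with $\iota^{*}$, and this suffices since $Z^{n}_{\RR}(-,\bullet)$ is quasi-isomorphic to $Z^{n}(-,\bullet)$. A reassuring special case is the coordinate symbol $\xi=\{x_{1},\dots,x_{n}\}$: restriction to $\TXT^{*}$ leaves the $y$-adic valuations of the $x_{j}$ unchanged, so both routes around the square return the same iterated Tame symbol, namely the ``coordinate symbol'' on $D^{*}_{\ST}$ figuring in the temperedness condition.

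Finally, the untilded diagram follows by repeating the argument with $\PD,X_{t},\DD_{\sigma},D_{\sigma}$ in place of $\PDT,\TXT,\DD_{\ST},D_{\ST}$: $\PD$, $X_{t}$ and the $\DD_{\sigma},D_{\sigma}$ may be singular, but only in codimension $\geq 2$, hence away from the generic point of every $\DD_{\sigma}$; the smooth product neighbourhoods $\DD^{*}_{\sigma'}\times\AA^{1}\subset\DD_{\sigma}$ recalled above exist as stated, and the localization-plus-transversality argument applies near them without change.
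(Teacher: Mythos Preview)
Your argument is correct and rests on the same underlying mechanism as the paper's: both reduce the commutativity to the compatibility of the localization boundary map with pullback to $\TXT$, and both justify that compatibility via moving lemmas (your reference to \cite{L2} and ``directly on $\RR$-admissible precycles'' is exactly what the paper invokes from \cite{L2} and \cite{Bl1}). The organizational difference is that you first reduce to a single residue step along the flag and then appeal to naturality of the localization sequence under transverse pullback, composing $i$ commutative squares; the paper instead treats all strata at once by writing down, for each depth $\ell$, a quasi-isomorphism between the full cycle complex on $\DD^{[\ell]}\setminus\DD^{[\ell+1]}$ and the subcomplex of precycles in good position with respect to \emph{both} $\DD^{[\ell+1]}\setminus\DD^{[\ell+2]}$ and $D^{[\ell]}\setminus D^{[\ell+2]}$ (modulo pushforwards from the next stratum), and then notes that ``extend across a stratum and take $\db$'' commutes on the nose with restriction to $X_t$ for such well-placed representatives. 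Your packaging is a bit cleaner conceptually (it isolates exactly where transversality enters, and makes explicit that the key point is $\partial\circ\iota^{*}=\iota^{*}\circ\partial$); the paper's formulation has the minor advantage of making the iterated residue concrete enough for the subsequent Proposition~1.4 without further unpacking. Either route is fine, and your treatment of the untilded case (singularities only in codimension $\ge 2$, hence avoidable by the product neighbourhoods $\DD_{\sigma'}^{*}\times\AA^{1}$) matches the paper's remark preceding the Lemma.
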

\begin{proof}
With or without tildes, this is based on iterated application ($\ell=0,1,\ldots,i-1$)
of a quasi-isomorphism which may be proved using the moving lemmas
of \cite{L2} and \cite{Bl1}. Writing \[
\DD^{[i]}:=\cup_{\sigma\in\Delta(i)}\DD_{\sigma},\,\,\,\DD^{[0]}:=\PD,\,\,\, D^{[i]}:=X_{t}\cap\DD^{[i]},\]
this is \[
\frac{Z^{n-\ell}(\DD^{[\ell]}\m\DD^{[\ell+2]},\bullet)_{_{D^{[\ell]}\m D^{[\ell+2]}}}}{\iota_{_{*}}\left(Z^{n-\ell-1}(\DD^{[\ell+1]}\m\DD^{[\ell+2]},\bullet)_{_{D^{[\ell+1]}\m D^{[\ell+2]}}}\right)}\rTo^{\simeq}Z^{n-\ell}\left(\DD^{[\ell]}\m\DD^{[\ell+1]},\bullet\right)_{_{D^{[\ell]}\m D^{[\ell+1]}}}.\]
A $\db$-closed element on the r.h.s. can therefore be moved into
good position, extended to $\DD^{[\ell]}\m\DD^{[\ell+2]}$, and differentiated
(to yield a cycle supported on $\DD^{[\ell+1]}\m\DD^{[\ell+2]}$),
compatibly with pullbacks to $X_{t}$.
\end{proof}
The point is to use the lemma to compute the $Res_{\ST\text{{\, or\,}}\sigma}^{i}$
(bottom row) on $\xi_{t}$. For one thing, it is clear that the result
is constant in $t$ and descends to $CH^{n-i}\left((D_{\ST\text{{\, or\,}}\sigma}^{*})_{K},n-i\right)$.
The next result follows easily from the lemma combined with the foregoing
discussion.

\begin{prop}
For $t\notin\L$, $\sigma\in\Delta(i)$, and $\ST\in\DT(i-k)$ lying
over $\sigma$ in the above sense,\[
Res_{\sigma}^{i}\xi_{(t)}=(I_{\sigma}^{*})\left\langle \pm\{x_{1}^{\sigma},\ldots,x_{n-i}^{\sigma}\}\right\rangle \]
\[
Res_{\ST}^{i-k}\xi_{(t)}=(I_{\ST}^{*})\left\langle \pm\{x_{1}^{\sigma},\ldots,x_{n-i}^{\sigma},y_{1}^{\ST},\ldots,y_{k}^{\ST}\}\right\rangle ,\]
where the parenthetical expressions are optional.
\end{prop}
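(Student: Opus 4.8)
The plan is to reduce both formulas to a single computation on the open torus $(\CC^*)^n=\PDT\m\tilde{\DD}=\PD\m\DD$, where $\xi=\{x_1,\dots,x_n\}$ literally lives, and then transport the answer to $\TXT^*$ by chasing the commutative square of the preceding Lemma. Since, by the discussion above, each $Res^i_{\ST}$ (and likewise each $Res^i_\sigma$) is a composition of one-step residues which is independent of the chosen flag and which factors through the corresponding iterated residue on $(\CC^*)^n$, it suffices to establish the two displayed identities with the optional pullbacks $I^*_{\ST}$, $I^*_\sigma$ deleted; the versions with pullbacks then follow by applying the Lemma's diagram $i$ (resp.\ $i-k$) times, exactly as in the Lemma's proof, using that the moving-lemma quasi-isomorphism used there commutes with restriction to $X_t$. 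Constancy in $t$ and descent to $K$ are inherited from the residue maps, as already recorded before the statement.

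For the torus computation I would fix a maximal flag $\sigma=\sigma_i\subsetneq\sigma_{i-1}\subsetneq\cdots\subsetneq\sigma_1$ with $\sigma_j\in\Delta(j)$ (respectively $\ST=\ST_{i-k}\subsetneq\cdots\subsetneq\ST_1$ with $\ST_j\in\DT(j)$), and choose the monomial coordinates on the relevant affine chart so that $\DD_{\sigma_j}$ (resp.\ $\DD_{\ST_j}$) is cut out, inside a smooth neighbourhood of $\DD^*_{\sigma_{j-1}}$ in $\DD_{\sigma_{j-1}}$, by the vanishing of one further coordinate, these coordinate systems being chosen mutually compatibly along the flag (this is the {}``$\DD^*_{\sigma'}\times\AA^1$'' remark above). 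Each residue step is then the tame symbol along a smooth coordinate divisor, computable at the level of $\db$-closed precycles via the moving lemmas invoked in the Lemma. Because passing from $\{x_1,\dots,x_n\}$ to the flag-adapted coordinates $\{x^{\sigma_1}_1,\dots,x^{\sigma_1}_n\}$ is a unimodular monomial change of variables (their exponent vectors form a $\ZZ$-basis of $\ZZ^n$), multilinearity and antisymmetry of the symbol give $\xi=\pm\{x^{\sigma_1}_1,\dots,x^{\sigma_1}_n\}$ on $(\CC^*)^n$, with no leftover constant factors; peeling off the coordinates that vanish successively along the flag leaves exactly $\pm\{x^\sigma_1,\dots,x^\sigma_{n-i}\}$ on $\DD^*_\sigma$, which is the first formula. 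For the tilde'd formula, the charts of $\PDT$ recalled above show that the exceptional blow-up coordinates $y^{\ST}_1,\dots,y^{\ST}_k$ are genuine free coordinates on $D^*_{\ST}\cong D^*_\sigma\times(\CC^*)^k$ and hence survive the iterated tame symbol, while the coordinates stripped off are the pullbacks of the facet coordinates complementary to the $x^\sigma_j$ together with the exceptional coordinates complementary to the $y^{\ST}_j$; the result is $\pm\{x^\sigma_1,\dots,x^\sigma_{n-i},y^{\ST}_1,\dots,y^{\ST}_k\}$.

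The step I expect to cost the most is not the combinatorial bookkeeping — which, beyond pinning down the sign, is routine — but the precycle-level rigor: one must realize the unimodular change of toric coordinates and the iterated tame-symbol computation by honest admissible higher Chow precycles on the strata $\DD_\sigma$, $\DD_{\ST}$, which are smooth only away from codimension $\ge 2$ (and, for $n=4$, $\PDT$ itself may carry point singularities, forcing one to work in the singular charts where the {}``edge'' monomials need supplementary generators), and then verify that moving into good position, extending one codimension at a time, and differentiating can all be carried out compatibly with the restriction maps $I^*_{(\cdot)}$. This is precisely the moving-lemma quasi-isomorphism already established in the proof of the Lemma, reused here; once that mechanism is granted, the Proposition follows.
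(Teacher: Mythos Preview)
Your proposal is correct and follows essentially the same approach as the paper: the paper simply states that the result ``follows easily from the lemma combined with the foregoing discussion,'' and your write-up is precisely an unpacking of that sentence---reducing to $(\CC^*)^n$ via the commutative square, making the unimodular monomial change to flag-adapted coordinates, and peeling off one coordinate at each tame-symbol step using the $\DD^*_{\sigma'}\times\AA^1$ description given just before the Lemma. Your comments on precycle-level rigor and the codimension-$\geq 2$ singularities are also in line with what the paper invokes (and defers to) in the Lemma's proof.
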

It follows that if all $Res_{\sigma}^{i}\xi_{t}$ are trivial (hence,
if $\phi$ is tempered), then so are all $Res_{\ST}^{i}\xi_{t}$ ---
in particular, all $Res_{\ST}^{1}$'s.

\begin{rem}
(i) The regularity assumption on $\phi$ is not strictly necessary
for these results. For $n=2$, we need only ask that the general $\TXT$
(equivalently, $X_{t}$) be nonsingular; whereas for $n=3$ $A$-$D$-$E$
(rational) singularities are allowed (on $\TXT$) provided they occur
in $\tilde{D}^{[2]}:=\cup_{\ST\in\DT(2)}D_{\ST}$. Note however that
in Proposition 1.4 the formulas for $Res_{\sigma\text{{\, or\,}}\tilde{\sigma}}^{i}\xi_{t}$
(not $\xi$) are multiplied by the multiplicity of (components of)
$D_{\sigma\text{{\, or\,}}\tilde{\sigma}}$ in case these are nonreduced.

(ii) The $Res_{\sigma}^{i}$, $Res_{\ST}^{i-k}$ are trivially $0$
on $CH^{n}(\TXT^{*},n)$ (hence on $\xi_{t}$) for $i=n$ (in particular,
for $\ST$ lying over a point), since $D_{\sigma},D_{\ST}=\emptyset$
in that case.
\end{rem}

\subsection{Completing the coordinate symbol}

Turning our attention to the family, we define ($\l=t^{-1}$)\[
\tilde{\X}:=\{(\l,x)\,|\, x\in\tilde{X}^{\l}\}\subseteq\PP_{\l}^{1}\times\PDT.\]
Recalling that $\tilde{X}_{0}=\tilde{X}^{\infty}=\tilde{\DD}$, set\[
\tilde{\X}_{-}:=\tilde{\X}\m(\{\infty\}\times\tilde{X}^{\infty})\subset\AA_{\l}^{1}\times\PDT,\]
and noting that $\TXM\cap\AA^{1}\times\tilde{\DD}\cong\AA^{1}\times\tilde{D}$,\[
\TXM^{*}:=\TXM\m\AA^{1}\times\tilde{D}=\{(\l,x)\,|\, x\in(\TXL)^{*}\}\subset\AA^{1}\times(\CC^{*})^{n}.\]

\begin{defn}
We say \textbf{$\xi$ ($\in H_{\M}^{n}((\CC^{*})^{n},\QQ(n))$) completes
to a family of motivic cohomology classes}, if $\exists$ $\Xi\in H_{\M}^{n}(\TXM,\QQ(n))$
such that the pullbacks of $\xi,\Xi$ to $H_{\M}^{n}((\TXL)^{*},\QQ(n))$
agree $\forall$ $\l\in\AA^{1}$. That is, in the diagram

\begin{equation} \\
\xymatrix{{\Xi} \in \ar @{|->} [d] & H^n_{\M}(\TXM,\QQ(n)) \ar [d]_{(\iota^{\l})^*} & H^n_{\M}((\CC^*)^n,\QQ(n)) \ar [d]^{(I^{\l})^*} & \ni \xi \ar @{|->} [d] \\ {\Xi^\l} \in & H^n_{\M}(\TXL,\QQ(n)) \ar [r]_{r^{\l}} & H^n_{\M} ((\TXL)^*,\QQ(n)) & \ni \xi^{\l}} \\
\end{equation} ${}$\\
we must have for each $\l$, $r^{\l}(\Xi^{\l})=\xi^{\l}$. (Here $\TXM$,
$\TXL$, and even $(\TXL)^{*}$ may all be singular.)
\end{defn}
To state general conditions under which we can produce such a $\Xi$,
we introduce some more notation (mainly for subsets of $\tilde{D}$).
When $\phi$ is not regular, it has a nonempty irregularity locus\[
\mathcal{{I}}:=\text{{\, union\, over\, all\,\,}}\ST\text{{\, of\, singularities\, or\, nonreduced\, components\, of\,\,}}D_{\ST}^{*}\]
(which is just where $\phi_{\ST}$ vanishes together with all its
partials). Writing $\II^{n}:=\cup_{i}\{x_{i}=1\}\subset\PDT$ (where
$\{x_{i}\}_{i=1}^{n}\subset K(\PDT)^{*}$ extend the $(\CC^{*})^{n}$-coordinates),
set\[
\mathcal{{J}}:=\text{{\, union\, of\, all\,\,}}D_{\ST},\,\ST\in\DT(1),\text{{\, which\, are\, not\, contained\, in\,\,}}\II^{n}\cap\tilde{\DD}.\]
For $n=3$ specifically, where we will allow $A_{1}$-singularities
(ordinary double points) on the general $\TXL$ (but only at $\tilde{D}^{[2]}$),
write $\mathcal{{A}}$ ($\subseteq\mathcal{{I}}$) for the collection
of these,\[
\{\alpha_{1},\ldots,\alpha_{k}\}:=\mathcal{{A}}\cap\mathcal{{J}},\text{{\,\, and}}\mspace{100mu}\]
\[
\{\D_{1},\ldots,\D_{\ell}\}:=\text{{irreducible\, curves\, in\,}}\tilde{D}\mspace{10mu}\]
\[
\mspace{200mu}\text{{\, avoiding\, the\, set\,}}(\mathcal{{A}}\m\mathcal{{A}}\cap\mathcal{{J}})\cup(\mathcal{{I}}\m\mathcal{{A}}).\]
There is a linear map of vector spaces\[
\E:\,\QQ\left\langle \D_{1},\ldots,\D_{\ell}\right\rangle \to\QQ\left\langle \alpha_{1},\ldots,\alpha_{k}\right\rangle \]
obtained by sending generators $[\D_{i}]\mapsto\sum_{\alpha_{j}\in\D_{i}}[\alpha_{j}]$.

\begin{thm}
Let $\phi$ be reflexive and tempered, $n\leq4$. Also assume in case\\
\\
$\underline{n=2}$: the general $X^{\l}$ is nonsingular.\\
\\
$\underline{n=3}$: (a) the general $\TXL$ is nonsingular apart from
$A_{1}$-singularities at

$\mspace{60mu}$points $\A\subseteq\II^{3}\cap\tilde{\DD}^{[2]}$;

$\mspace{40mu}$(b) $\I\subseteq\II^{3}(\cap\tilde{\DD})$, $\I\cap\J\subseteq\A$;
and

$\mspace{40mu}$(c) either

$\mspace{60mu}$(i) $\E$ is surjective, or

$\mspace{60mu}$(ii) $K$ is totally real and the irreducible component
curves of $\tilde{D}$ 

$\mspace{80mu}$are nonsingular and defined over $K$.\\
\\
$\underline{n=4}$: (a) $\phi$ is regular,

$\mspace{40mu}$(b) $K$ is totally real, and

$\mspace{40mu}$(c) each irreducible component of each $D_{\sigma}$,
$\sigma\in\Delta(2)$ resp. $\Delta(3)$, 

$\mspace{60mu}$admits a dominant morphsim defined over $K$ from
$\AA^{1}$ resp. $\AA^{0}$.\\
\\
Then $\xi$ completes to a family of motivic cohomology classes (see
Defn. 1.6).
\end{thm}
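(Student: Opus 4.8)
The plan is to produce $\Xi$ by induction on the codimension-stratification of $\tilde{D}$, using the localization sequence for higher Chow groups to peel off one skeleton of $\tilde{\DD}$ at a time. The obstruction to extending a class from the generic stratum inward is measured by the iterated residues $Res^i_{\ST}$, and Proposition 1.4 identifies these residues on $\xi_t$ with the coordinate symbols $\{x^\sigma_1,\dots,x^\sigma_{n-i},y^{\ST}_1,\dots\}$; temperedness forces all of these to vanish ($\otimes\QQ$). So the rough strategy is: start with $\xi \in H^n_{\M}((\CC^*)^n,\QQ(n))$ sitting over the open stratum $\TXM^* \subset \TXM$, and successively lift it across the divisors $D_{\ST}$, $\ST \in \tilde\Delta(1)$, then across $\tilde D^{[2]}$, etc., each lift being unobstructed precisely because the relevant residue is torsion.

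\smallskip

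First I would set up the localization/Gysin sequence in motivic cohomology for the pair $(\TXM,\, \TXM^*)$, or more precisely the stratified version indexed by the skeleta $\tilde D^{[j]} = \cup_{\ST\in\tilde\Delta(j)} D_{\ST}$, using Bloch's localization theorem for higher Chow groups together with the moving-lemma quasi-isomorphism already invoked in the proof of Lemma 1.3 (which lets us work with precycles in good position relative to all coskeleta). The key exactness statement is of the form
\[
\bigoplus_{\ST\in\tilde\Delta(j)} CH^{n-j}(D_{\ST}^{*},\, n-j) \to CH^{n}\big(\TXM \m \tilde D^{[j+1]},\, n\big) \to CH^{n}\big(\TXM \m \tilde D^{[j]},\, n\big) \xrightarrow{\ \oplus Res^{j}_{\ST}\ } \bigoplus_{\ST\in\tilde\Delta(j)} CH^{n-j-1}(D_{\ST}^{*},\, n-j-1),
\]
so that a class on $\TXM \m \tilde D^{[j]}$ extends over $\tilde D^{[j]}$ iff its residues vanish. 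One applies this with $j = 1, 2, \dots$, starting from $\xi$ on $\TXM^* = \TXM \m \tilde D^{[1]}$ (here I use that over the open torus the coordinate symbol genuinely lives in $H^n_{\M}$, and that $\xi_t$ is, up to $\db$-coboundary, the restriction of the ambient $\xi$ on $(\CC^*)^n$, as noted in $\S1.1$). By Proposition 1.4 combined with the remark that temperedness of $\phi$ kills all $Res^i_\sigma\xi_t$, hence all $Res^i_{\ST}\xi_t$ — in particular the $Res^1_{\ST}$ — the first lift exists; and since $D_\sigma = D_{\ST} = \emptyset$ once $\sigma$ or $\ST$ lies over a point (Remark 1.5(ii)), the induction terminates. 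Throughout one checks compatibility with the restriction maps $(\iota^\l)^*$ fiber by fiber, exactly as in the diagram $(1.7)$, so that the resulting $\Xi \in H^n_{\M}(\TXM,\QQ(n))$ restricts to $\xi^\l$ on each $(\TXL)^*$; this is where Lemma 1.3 (commutativity of residues with $I^*_t$) does its work.

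\smallskip

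The dimension-by-dimension complications — singular generic fibers, nonreduced boundary components, failure of the relevant groups to vanish — are what force the extra hypotheses in each case, and handling them is where the real content lies. For $\underline{n=2}$ the strata $D_{\ST}$ are points with root-of-unity coordinates (Remark 1.2), so $CH^1(D_{\ST}^*,1)_\QQ = \bar\QQ^*\otimes\QQ$ contains the symbol $x^\sigma_1$ as a torsion-hence-zero class, and the single localization step suffices — this recovers Villegas. For $\underline{n=3}$ one must allow $A_1$-points on $\TXL$; the point is to carry out the localization on the resolution/good model and show the surviving obstruction lands in a group controlled by the curves $\D_i$ and the nodes $\alpha_j$, whence the map $\E: \QQ\langle\D_i\rangle \to \QQ\langle\alpha_j\rangle$ — surjectivity of $\E$ (case (c)(i)) exactly says this last obstruction can be cancelled, while case (c)(ii) replaces this by a rationality/total-reality input that forces the relevant $K_2$-classes of the component curves to be torsion via Suslin rigidity and the Beilinson–Soulé–type vanishing $\ker\{CH^2(C,2)\to CH^2(\eta_C,2)\}=0$ quoted in Remark 1.2. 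For $\underline{n=4}$ one needs $\phi$ regular (so $\tilde D$ is genuinely an NCD and the residue calculus is clean), and the hypothesis that the boundary components of $D_\sigma$ for $\sigma\in\Delta(2),\Delta(3)$ are dominated over a totally real $K$ by $\AA^1$ resp. $\AA^0$ is precisely what lets one invoke Borel/Beilinson's computation that $K_2(\bar\QQ\text{-point})_\QQ = 0$ and that $K_3$ of a rational curve over a totally real field has no transcendental part, killing the $Res^2$ and $Res^3$ obstructions.

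\smallskip

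\textbf{The hard part} will be the $n=3$ case (c)(i): one has a genuine obstruction class living in $\QQ\langle\alpha_1,\dots,\alpha_k\rangle$ (the images of the node-residues), and surjectivity of $\E$ only guarantees that $\it some$ combination of the curve classes $[\D_i]$ maps to it — one then has to correct $\Xi$ by subtracting a suitable higher Chow class supported on $\cup\D_i$ and verify that this correction (a) exists as an actual motivic class, not just a formal combination, (b) restricts to zero on each $(\TXL)^*$ so that the fiberwise agreement with $\xi^\l$ is unharmed, and (c) does not reintroduce obstructions at the zero-dimensional strata. Making the localization sequence exact on the nose at the relevant spot — rather than merely up to the indeterminacy coming from $A_1$-singularities sitting on $\tilde D^{[2]}$ — requires the careful moving-lemma bookkeeping already foreshadowed in Lemma 1.3 and will be the technical heart of the argument.
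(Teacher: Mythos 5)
Your overall strategy — peel the class inward across the skeleta of $\tilde{\DD}$ using a localization/spectral-sequence argument, with temperedness killing the obstructions — matches the paper's, which uses the local-global spectral sequence
\[
E_{1}^{i,-j}(n)=\oplus_{\ST\in\DT(i)}CH^{n-i}(D_{\ST}^{*}\times\AA^{1},j-i)
\]
abutting to $\text{im}\{CH^{n}(\TXM,n)\to CH^{n}(\TXM^{*},n)\}$. But the step where you justify the induction contains a genuine gap, and it is precisely the one the paper explicitly warns against.

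You write that because temperedness kills all the iterated residues $Res^{i}_{\ST}\xi_{t}$, the lift can be continued across each stratum $\tilde{D}^{[j]}$. That reasoning conflates $Res^{j}$ (the iterated residue of the \emph{original} class $\xi$, which factors through the codimension-$1$ strata) with the obstruction to extending the \emph{lifted} class across $\tilde{D}^{[j]}$. After the first lift to $\TXM\m\tilde{D}^{[2]}$, the choice of lift is not unique, and the boundary of the lifted class on the codimension-$2$ stratum depends on that choice; the well-defined obstruction lives in a subquotient (the $E_{2}$-page term), not in the naive target of $Res^{2}$. The paper flags this in parentheses right after defining the spectral sequence: ``Warning: the $d_{i}$ are not the above $Res^{i}$ for $i>1$.'' Your invocation of Remark 1.5(ii) compounds the problem: you cite it as showing the ``induction terminates'' after the first step, but that remark only says $Res^{i}$ vanishes trivially for $i=n$ (when $\ST$ lies over a vertex), leaving the genuine codimension-$2$ (and, for $n=4$, codimension-$3$) strata unaddressed. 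For $n=2,3$ this is harmless because the target of $d_{2}$ happens to be a subquotient of $\oplus CH^{n-2}(D_{\ST}^{*}\times\AA^{1},n-3)$, which is automatically zero; but you do not argue this, and for $n=4$ the vanishing of $E_{2}^{2,-5}(4)$ is exactly where the ``$K$ totally real, $D_{\sigma}$ uniformized over $K$'' hypothesis comes in (via $K_{3}^{\text{ind}}(K)=0$), so the mechanism you need and the mechanism you cite do not match. Your version must either pass to the spectral sequence as the paper does, or explicitly show the target obstruction groups vanish at each stage independently of the choice of lift.

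Two smaller points: in (c)(ii) the paper does not invoke Suslin rigidity but rather the self-annihilation of $CH^{2}(\AA^{1}_{K},3)$-classes over a totally real number field; and your treatment of the singular total space $\TXM$ (the $A_1$-blow-up, the exceptional $\Q_{\alpha}$, and the cone-complex model of $H_{\M}^{3}$) is only gestured at, whereas the paper devotes the bulk of the proof to it for $n=3$. The broad outline of your $n=3$ hard-case discussion — correct $\Xi$ by subtracting a class supported on the $\D_{i}$'s, using surjectivity of $\E$ — is right in spirit, but you would need the cone-complex machinery and the explicit rational-equivalence on $\Q_{\alpha}$ (pulling back $f\in\bar{\QQ}(\PP^{1}_{\alpha})^{*}$ with $(f)=q_{3}-q_{4}$, $f(q_{1})=1$) to carry it out.
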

\begin{rem}
(i) For ease of application we have stated the additional requirements
for $n=2,4$ in terms of $X^{\l},D$; whereas for $n=3$ they are
phrased in terms of $\TXL,\tilde{D}$. (We are $not$ saying all singularities
must be $A_{1}$'s on $X^{\l}$; just that $A_{1}$'s are all that
remains after passing to $\TXL$.)

(ii) The additional requirements for $n=3$ may be significantly relaxed
if all we want to do is complete $\xi$ to a class in $H_{\M}^{n}(\TXL,\QQ(n))$
for some fixed $\l$. Obviously, taking $\l$ to be very general and
spreading out would then also yield a class in $H_{\M}^{n}(\TXM\times_{\rho,\mathbb{A}^{1}}U,\QQ(n))$
for some \'etale neighborhood $U\overset{\rho}{\to}\mathbb{A}^{1}$
--- i.e. not on the family $\TXM$ but on a finite pullback. Here
are two possibilities:\\
(1) Drop {}``general'' in (a), drop requirement (b), assume (c)(i)
(but only make $\{\D_{i}\}$ avoid $\A\m\A\cap\J$ in the definition
of $\E$). If $\TXL$ is smooth, (c)(i) is empty.\\
(2) Allow $A$-$D$-$E$ singularities (call the set of these $\A'$):
more precisely, $\TXL$ nonsingular except at $\A'\subseteq\II^{3}\cap\DD^{[2]}$;
and each irreducible component of $\J$ contains at most one point
of $\A'$. (We should also note that $\TXL$ is still a {[}singular]
$K3$ surface in this case, and its minimal desingularization is a
smooth $K3$.)

(iii) With the caveat that the following simplification comes at the
expense of important examples, all three additional requirements (for
$n=3$) may be done away with if we assume $\phi$ regular: in fact,
$(a)$, $(b)$, and $(c)(i)$ collapse.

(iv) We make no claim that this result is exhaustive for $n=3$ or
$4$. Indeed, if (for $n=3$) the general $\TXL$ is nonsingular and
$\I\subset(\cup D_{\tilde{\sigma}}^{*})\cap\II^{3}$ consists of $K$-rational
points ($K$ totally real), then (although we may not have $\I\cap\J=\emptyset$)
the conclusion still holds.
\end{rem}
\begin{proof}
Noting that $\TXM^{*}\cong(\CC^{*})^{n}$ and that the resulting map
\[
H_{\M}^{n}(\TXM,\QQ(n))\rTo^{r}H_{\M}^{n}((\CC^{*})^{n},\QQ(n))\]
 completes equation $(1.1)$ to a commutative diagram, it suffices
to construct $\Xi\in r^{-1}(\xi)$.

Before doing so, we briefly sketch how the map $(\iota^{\delta})$
to $H_{\M}^{n}(\tilde{X}^{\delta},\QQ(n))$ can be computed explicitly
in terms of higher Chow cycles, when $\delta\in\L$ ($\implies$ $\tilde{X}^{\delta}$
is singular with desingularization $\widetilde{\tilde{X}^{\delta}}$).
For simplicity, assume $sing(\tilde{X}^{\delta})=:\ms$, $\widetilde{\tilde{X}^{\delta}}\times_{\tilde{X}^{\delta}}\ms=:\ms'$,
and $\TXM$ are smooth: then $H^{-n}$ of \[
\hat{Z}^{n}(\tilde{X}^{\delta},-\bullet):=Cone\left\{ Z^{n}(\widetilde{\tilde{X}^{\delta}},-\bullet)_{_{\ms'}}\oplus Z^{n}(\ms,-\bullet)\rTo_{\text{{pullbacks}}}^{\text{{diff.\, of}}}Z^{n}(\ms',-\bullet)\right\} [-1]\]
computes $H_{\M}^{n}(\tilde{X}^{\delta},\QQ(n))$. (In general, $Z^{n}$
of $\ms,\ms'$ must each be replaced by a $Cone$ complex, also denoted
$\hat{Z}^{n}$.) Assuming $\Xi$ has been produced, and representing
it by a cycle in $Z^{n}(\TXM,n)_{_{\ms\cup\tilde{X}^{\delta}}}$,
a representative of $(\iota^{\delta})^{*}\Xi$ is obtained by pulling
back to $\widetilde{\tilde{X}^{\delta}}$ and $\ms$ (which gives
a triple of the form $(*,*,0)$).

Now, we will first explain the construction of $\Xi$ in case the
total space $\TXM$ (and fixed general $\TXL$) is nonsingular, as
is the case when $\phi$ is regular. (However, we don't assume that
$\tilde{D}$ is a NCD or even that its components are smooth.) In
the (commutative) diagram

\small \xymatrix{{\xi\in} \ar @{|->} [d] & CH^n((\TXM^*)_K,n) \ar [r]^{Res^1_{\ST}\mspace{50mu}} \ar [d]_{(\iota^{\l})^*} & CH^{n-1}((D_{\ST}^*\times \AA^1)_K,n-1) \ar [d] & CH^{n-1}((D_{\ST}^*)_K,n-1) \ar [l] _{\cong} \ar  @{^(->} [ld] \\ {\xi^{\l}\in} & CH^n((\TXL)^*_{\CC},n) \ar [r] ^{Res^1_{\ST}\mspace{30mu}} & CH^{n-1} ((D_{\ST}^*)_{\CC},n-1),}

\normalsize ${}$\\
our hypothesis that $\phi$ is tempered (together with Proposition
1.4) implies $Res_{\ST}^{1}\xi^{\l}=0$, hence that $Res_{\ST}^{1}\xi=0$
$\forall$ $\ST\in\DT(1)$. The local-global spectral sequence\[
E_{1}^{i,-j}(n):=\left\{ \begin{array}{cc}
CH^{n}(\TXM^{*},j)\,[\cong H_{\M}^{n}((\CC^{*})^{n},\QQ(n))] & ,\,\,\, i=0\\
\\\oplus_{\ST\in\DT(i)}CH^{n-i}(D_{\ST}^{*}\times\AA^{1},j-i) & ,\,\,\, i>0\\
\\0 & ,\,\,\, i<0\end{array}\right.\]
with $d_{1}:\, E_{1}^{0,-n}(n)\to E_{1}^{1,-n}(n)$ given by $\oplus_{\ST\in\DT(1)}Res_{\ST}^{1}$,
has \[
E_{\infty}^{0,-n}(n)\cong im\left\{ CH^{n}(\TXM,n)\to CH^{n}(\TXM^{*},n)\right\} \]
\[
\cong\bigcap ker\left\{ d_{i}:\, E_{i}^{0,-n}(n)\to E_{i}^{i,n-i+1}(n)\right\} \]
\[
\cong\left\{ \begin{array}{cc}
ker(d_{1}) & ,\text{{\, for\,}}n=2,3\\
\\ker(d_{1})\cap ker(d_{2}) & ,\text{{\, for\,}}n=4\end{array}\right..\]
(Warning: the $d_{i}$ are not the above $Res^{i}$ for $i>1$; see
\cite{Ke1} for a description.) So for $n=2,3$ we automatically get
the desired class $\Xi\in CH^{n}(\TXM,n)\cong H_{\M}^{n}(\TXM,\QQ(n))$.

For $n=4$, the stated conditions imply that the $\{D_{\ST}^{*}\}_{\ST\in\DT(2)}$
are Zariski-open subsets $U\subseteq\AA_{K}^{1}$ (obtained by omitting
points wsith coordinates $\in K$). Since $CH^{1}(\text{{pt.}},3)$
is zero, $CH^{2}(U,3)\cong CH^{2}(\AA_{K}^{1},3)\cong CH^{2}(Spec(K),3)\cong K_{3}^{ind}(K)=0$
for $K$ totally real (\# field); since $E_{2}^{2,-5}(4)$ is a subquotient
of $\oplus_{\ST\in\DT(2)}CH^{2}((D_{\ST}^{*}\times\AA^{1})_{K},3)$
we are done.

So we have reduced to examining additional complications arising from
the case of $\TXM$ singular insofar as this is allowed by the conditions
of the Theorem. If $n=2$, the singularities occur in $\tilde{D}\times\L$
and are always rational (surface) singularities of type $A_{1}$,
$A_{2}$, or $A_{3}$ (see \cite{BPV} for defintion). The last observation
is verified using the table of $16$ $2$-dimensional reflexive polytopes
in \cite{BSk}. Briefly, a singularity $Q\in sing(\TXM)$ occurs due
to a multiple root $r_{Q}$ of $\phi_{\sigma}(x_{1}^{\sigma})$ for
some $\sigma\in\Delta(1)$. In a neighborhood of $\{(x_{1}^{\sigma}-r_{Q},\, x_{2},\,\lambda-\delta)=(0,0,0)\}=Q$
the equation of $\TXM$ is of the form\[
0\,=\,(x_{1}^{\sigma}-r_{Q})^{k}\Psi_{1}(x_{1}^{\sigma}-r_{Q})\,+\,(x_{2}^{\sigma})^{\ell(>0)}\Psi_{2}(x_{1}^{\sigma}-r_{Q},x_{2})\,-\,(\l-\delta)(x_{1}^{\sigma}-r_{Q})x_{2}^{\sigma}-(\l-\delta)x_{2}^{\sigma},\]
where $\Psi_{1},\,\Psi_{2}$ are holomorphic ($\neq0$ at $Q$) and
$2\leq k\leq4$. (Note $(\l-\delta)x_{2}^{\sigma}$ is quadratic and
nonzero, and is not cancelled out.) At any rate, the canonical desingularization
\cite{BPV} produces $\widetilde{\TXM}\rTo^{b}\TXM$ with $b^{-1}(Q)=$
a chain $\RR_{Q}$ of ($1$, $2$, or $3$) rational curves for each
$Q\in sing(\TXM)$. Writing $\widetilde{\TXM}^{*}:=b^{-1}(\TXM^{*})\cong(\CC^{*})^{2}$,
there are some extra $Res^{1}$'s of $\xi\in CH^{2}(\widetilde{\TXM}^{*},2)$
to deal with, in $CH^{1}(\UU_{\QQ},1)$ for $\UU_{\QQ}\subseteq\RR_{\QQ}$
Zariski open. But this is clearly just (for $Q=\{(r_{Q},\delta)\}\in D_{\ST}\times\L$
as above) $\{r_{Q}\}$, which is necessarily a root of unity (due
to the tempered requirement), hence trivial. So $\xi$ comes from
$\Xi\in CH^{2}(\widetilde{\TXM},2)$. In view of the long-exact sequence
{[}with $\sqcup=\sqcup_{Q\in sing(\TXM)}$]\[
\to H_{\M}^{2}(\TXM,\QQ(2))\to CH^{2}(\widetilde{\TXM},2)\oplus CH^{2}(\sqcup Q,2)\to H_{\M}^{2}(\sqcup\RR_{Q},2)\to\]
and the identfication of $CH^{2}(Q,2)$ and $H_{\M}^{2}(\RR_{Q},\QQ(2))$
(working over $\bar{K}=\bar{\QQ}$) with $K_{2}^{M}(\bar{\QQ})=0$,
$\Xi$ descends to $H_{\M}^{2}(\TXM,\QQ(2))$.

If $n=3$, then we admit fiberwise $A_{1}$-singularities $\alpha$;
since these live in $\tilde{D}^{[2]}$, their location in $\PDT$
is fixed as $\l$ varies. So for each $\alpha\in\A$, $\{\alpha\}\times\AA^{1}\subseteq sing(\TXM)$.
Since these are ordinary double points, a minimal resolution for the
generic fiber is effected merely by blowing up $\PDT$ at each $\alpha$.
(The proper transform $\hat{\X}_{-}\subset Bl_{\A}(\TXM)$ of $\TXM$
is still possibly singular over a discriminant set $=:\L\subset\AA^{1}$.)
We write $\hat{\X}_{-}\rTo^{B}\TXM$ for the resulting morphism, which
has its own {}``exceptional divisors'' $B^{-1}(\alpha\times\AA^{1})$
and proper transforms $\hat{D}\,(\times\AA^{1})$ of $\tilde{D}\,(\times\AA^{1})$. 

Let $\PP_{\alpha}^{2}$ denote the exceptional divisor in $Bl_{\A}(\PDT)$
over $\alpha\in D_{\ST}$, $\ST\in\DT(2)$; and let $X,Y,Z$ be homogeneous
coordinates with $X=0,\, Y=0$ the equations of $\PP_{\alpha}^{2}\cap\hat{\DD}_{\ST_{1}}$,
$\PP_{\alpha}^{2}\cap\hat{\DD}_{\ST_{2}}$ (where $\ST_{1},\ST_{2}$
are the facets of $\DT$ meeting $\ST$). The equation for $B^{-1}(\alpha\times\AA^{1})\subseteq\PP_{\alpha}^{2}\times\AA_{(\l)}^{1}$
must be of the form

\begin{equation} 
f(X,Y,Z)+\l XY = 0 \\
\end{equation}with $f\nequiv0$ of homogeneous degree $2$.

Let $\{p_{i}\}_{i=1}^{4}$ denote the (not necessarily distinct) points
of intersection of $f=0$ and $XY=0$. Stereographic projection, say,
through $p_{1}$ to the $Z=0$ line {}``uniformizes'' the conic
(uniformly in $\l$), so that $B^{-1}(\alpha\times\AA^{1})\cong\PP^{1}\times\AA^{1}=:\PP_{\alpha}^{1}\times\AA^{1}$.
(If (c)(ii) holds, then this can be done over $K$.) Clearly the $\{p_{i}\}$
are the points where $\hat{D}_{\ST_{1}},\hat{D}_{\ST_{2}}$ meet the
conic $(1.2)$. Since they and their images $q_{i}\in\PP_{\alpha}^{1}$
under projection are constant in $\l$, we see that \[
B^{-1}(\alpha\times\AA^{1})\cap(\hat{D}_{\ST_{j}}\times\AA^{1})=\left\{ \begin{array}{cc}
q_{1}\cup q_{2} & \text{{\,\, if\,\,}}j=1\\
\\q_{3}\cup q_{4} & \text{{\,\, if\,\,}}j=2\end{array}\right\} \times\AA^{1}\subseteq\PP_{\alpha}^{1}\times\AA^{1},\]
for $j=1,2$. 

Suppose a component $D_{\I}$ of (say) $D_{\ST_{1}}$ passing through
$\alpha$ belongs to $\I$. Since $\I\subseteq\II^{3}\cap\tilde{\DD}$,
some $x_{i}\equiv1$, and another $x_{j}\equiv0$ or $\infty$ on
$D_{\I}$. Hence $D_{\I}$ is a double line (double in the sense of
the multiplicity of $\TXL\cdot D_{\ST_{1}}$ there); this means that
$p_{1}=p_{2}$ and no other components of $D_{\ST_{1}}$ pass through
$\alpha$. It follows that any component of $\J$ passing through
$\alpha$ belongs to $D_{\ST_{2}}$ and has tangent line (at $\alpha$)
distinct from $T_{\alpha}D_{\I}$ (i.e., $\{p_{1},p_{2}\}$ and $\{p_{3},p_{4}\}$
are disjoint). Since $\I\cap\J\subseteq\A$, this argument makes it
clear that the proper $B$-transforms of $\I\,(\times\AA^{1})$ and
$\J\,(\times\AA^{1})$ do not meet.

Now $\hat{\ms}:=sing(\hat{\X}_{-})\subseteq\hat{\I}\times\L$, hence
does not intersect $\hat{\J}\times\AA^{1}$ (the proper transform
of $\J\times\AA^{1}$). Let $\widetilde{\hat{\X}_{-}}\rTo^{\beta}\hat{\X}_{-}$
be a desingularization (which is an $\cong$ off $sing(\hat{\X}_{-})$),
and write $\Q_{\alpha}:=\beta^{-1}(\PP_{\alpha}^{1}\times\AA_{(\l)}^{1})$,
$\cup_{\alpha\in\A}\Q_{\alpha}=:\Q$. Obviously $\beta^{-1}(\hat{\J}\times\AA^{1})\cong\hat{\J}\times\AA^{1}$,
so we may write $\Q^{-}:=\Q\m(\hat{\J}\times\AA^{1})\cap\Q$; the
$\Q_{\alpha}$ are rational surfaces, and the $\Q_{\alpha}^{-}$ have
rational curves missing. Finally, put $\ms:=sing(\TXM)=B(\hat{\ms})\cup(\A\times\AA^{1})$
and $b:=B\circ\beta:\,\widetilde{\hat{\X}_{-}}\to\TXM$, and note
that $b^{-1}(\ms)=\beta^{-1}(\hat{\ms})\cup\Q$. As above, we want
to use the l.e.s.\[
\to H_{\M}^{3}(\TXM,\QQ(3))\to CH^{3}(\widetilde{\hat{\X}_{-}},3)\oplus H_{\M}^{3}(\ms,\QQ(3))\rTo^{i^{*}-b^{*}}H_{\M}^{3}(b^{-1}(\ms),\QQ(3))\to\]
to obtain a class $\Xi$ in the first term from a pair $(\Xi_{0},0)$
in the middle, with $i^{*}\Xi_{0}=0$.

To construct $\Xi_{0}$, begin with the coordinate symbol $\xi\in Z^{3}((\widetilde{\hat{\X}_{-}}\m b^{-1}(\tilde{D}))\cong(\CC^{*})^{3},\,3)$,
which (as $\I\subseteq\II^{3}$) obviously extends to $\xi\in Z_{_{\db-cl}}^{3}(\widetilde{\hat{\X}_{-}}\m\hat{\J}\times\AA^{1},\,3)_{_{\beta^{-1}(\hat{\ms}\cup\Q^{-})}}$.
(It actually pulls back to $0$ on $\beta^{-1}(\hat{\ms})$ and $\Q^{-}$.)
Clearly the $Res^{1}$'s are all $0$. Combining this with the moving
lemmas of Levine and Bloch, there exist $\Gamma\in Z^{3}(\widetilde{\hat{\X}_{-}}\m\hat{\J}\times\AA^{1},\,4)_{_{\beta^{-1}(\hat{\ms})\cup\Q^{-}}}$
and $\Xi_{0}\in Z_{_{\db-cl}}(\widetilde{\hat{\X}_{-}},\,3)_{_{\beta^{-1}(\hat{\ms})\cup\Q\,[=b^{-1}(\ms)]}}$
such that $\xi+\db\Gamma$ is the restriction of $\Xi_{0}$. The pullback
of $\Xi_{0}$ to $b^{-1}(\ms)$ gives a cocycle in the complex computing
$H_{\M}$, $\hat{Z}^{3}(b^{-1}(\ms),-\bullet):=$\[
Cone\left\{ \hat{Z}^{3}(\beta^{-1}(\hat{\ms}),-\bullet)_{_{\beta^{-1}(\hat{\ms})\cap\Q}}\oplus Z^{3}(\Q,-\bullet)_{_{\beta^{-1}(\hat{\ms})\cap\Q}}\to\hat{Z}^{3}(\beta^{-1}(\hat{\ms})\cap\Q,-\bullet)\right\} [-1].\]
This can be {}``moved'' by a coboundary (in the cone complex) to
essentially an element of $Z_{\db-cl}^{3}(\Q,3)_{_{\beta^{-1}(\hat{\ms})\cap\Q}}$
supported on $\Q\cap\hat{\J}\times\AA^{1}$. Moreover, the components
of $\Q_{\alpha}\cap\hat{\J}\times\AA^{1}$ ($\alpha\in\A$) are pairwise
disjoint $\AA^{1}$'s which are $\rateq$ (as divisors) on $\Q_{\alpha}$
by functions $\hat{f}_{\alpha}\in\bar{\QQ}(\Q_{\alpha})$ restricting
to $1$ on $\Q_{\alpha}\cap\beta^{-1}(\hat{\ms})$. (Pull back to
$\Q_{\alpha}$ $f\in\bar{\QQ}(\PP_{\alpha}^{1})^{*}$ which has $(f)=q_{3}-q_{4}$
and $f(q_{1}=q_{2})=1$, in the only nontrivial situation.) Since
$CH^{2}(\AA^{1},3)\cong CH^{2}(\text{{pt.}},3)$ one can move the
elements of $Z^{3}(\Q_{\alpha},3)$ so as to make them constant along
each of the supporting $\AA^{1}$'s, and then {}``collect'' all
these constant cycles along only one such $\AA^{1}$, by using {[}$\db$-coboundaries
of] cycles (of the form $\mathfrak{{A}}\otimes\hat{f}_{\alpha}\in Z^{3}(\Q_{\alpha},4)$)
restricting to $0$ at $\Q_{\alpha}\cap\beta^{-1}(\hat{\ms})$. The
constant $\AA^{1}$-supported cycles are then killed by adding constant
cycles on the $b^{-1}(\D_{j}\times\AA^{1})\cong\D_{j}\times\AA^{1}$
to $\Xi_{0}$, via $Z^{2}(\D_{j}\times\AA^{1},\,3)\hookrightarrow Z^{3}(\widetilde{\hat{\X}_{-}},3)$.
That we have {}``enough'' $\D_{j}$'s to kill all constant cycles
on the $\Q_{\alpha}$'s is guaranteed (if (c)(i) holds) by surjectivity
of $\E$. Alternatively, if (c)(ii) holds then all of the above is
valid over $K$ (as opposed to $\bar{K}$), and $K$ totally real
$\implies$ the $CH^{2}(\AA_{K}^{1},3)$-classes embedded in the $\Q_{\alpha}$'s
self-annihilate.
\end{proof}

\subsection{Examples of $\phi$ satisfying the Theorem}

Here are specific ways to realize the conditions of the Theorem (in
particular, the tempered condition); $\phi$ is defined over a number
field $K$ as usual.

\begin{cor}
Let $\phi$ be reflexive with cyclotomic edge polynomials and root-of-unity
vertex coefficients. Furthermore for\\
\\
$\underline{n=2}$: assume the general $X_{t}$ is nonsingular.\\
\\
$\underline{n=3}$: \textbf{assume the facets of $\Delta$ have no
interior points}, and that $\phi$ is regular.\\
\\
$\underline{n=4}$: \textbf{assume the facets of $\Delta$ are elementary
$3$-simplices} (all points of $\Delta$ other than $\{\underline{0}\}$
are vertices), with coefficients $\pm1$ only (except at $\{\underline{0}\}$).%
\footnote{There are 151 such reflexive $4$-polytopes, with a maximum of $12$vertices.
\cite{No}%
}\\
\\
Then $\xi$ completes.
\end{cor}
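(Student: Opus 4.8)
The plan is to deduce Corollary 1.8 from Theorem 1.7 by checking, case by case, that the stated combinatorial conditions imply its hypotheses. Reflexivity is assumed, so the work is to produce temperedness and the case‑specific side conditions. I would first dispose of the side conditions. For $n=2$, Theorem 1.7 asks only that the general $X_t$ be nonsingular, which is assumed. For $n=3$ we assume $\phi$ regular, and then by Remark 1.8(iii) the conditions (a), (b), (c) of Theorem 1.7 collapse, so only reflexivity and temperedness remain. For $n=4$: $\phi$ is regular because a face of an elementary simplex is again an elementary simplex, so each $D_\sigma^*$ is --- after a unimodular monomial change of coordinates and a rescaling by $\pm1$ --- the smooth hypersurface $\{1\pm x_1\pm\cdots\pm x_d=0\}\subset(\CC^*)^d$ with $d=\dim\sigma$; the coefficient $\alpha_{\underline{0}}$ at the origin can be absorbed into the pencil parameter (a M\"obius change of coordinate on $\PP^1_t$ fixing $t=0$ and leaving $\tilde{\X}_{-}$ and the symbol $\xi$ untouched), so we may take $K=\QQ$, which is totally real; and condition (c) of Theorem 1.7 then holds, since the edge polynomials have only $\pm1$ as roots (so the $D_\sigma$, $\sigma\in\Delta(3)$, are $\QQ$-rational points) while each $D_\sigma$, $\sigma\in\Delta(2)$, is a line $\cong\PP^1$, dominated over $\QQ$ by $\AA^1$.

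For temperedness I would argue by the dimension $d=n-i$ of the face $\sigma\in\Delta(i)$. If $d=0$ then $D_\sigma=\emptyset$ (Remark 1.5(ii)). If $d=1$, then $D_\sigma^*$ is the zero-set in $\CC^*$ of the edge polynomial $\phi_\sigma$; cyclotomicity of $\phi_\sigma$ --- the root-of-unity vertex coefficients ensuring this is genuinely a root of unity times a product of cyclotomic polynomials --- forces every point of $D_\sigma^*$ to have root-of-unity coordinate, so $\{x_1^\sigma\}$ is torsion in $CH^1(D_\sigma^*,1)$. This settles $n=2$, and leaves the facets ($d=2$) for $n=3$, and the ridges ($d=2$) and facets ($d=3$) for $n=4$.

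For $n=3$ and $\sigma$ a facet, the assumption that $\sigma$ has no interior lattice points, together with regularity, forces every irreducible component $C$ of $D_\sigma^*$ to be a rational curve (Khovanskii's genus formula), so $\bar{\QQ}(C)\cong\bar{\QQ}(\mathbf{t})$ and --- by Milnor's exact sequence together with $K_2^M(\bar{\QQ})\otimes\QQ=0$ --- the tame-symbol map $K_2^M(\bar{\QQ}(C))\otimes\QQ\to\bigoplus_p\bar{\QQ}^*\otimes\QQ$ is injective. Since $x_1^\sigma,x_2^\sigma$ are units on $C\subset(\CC^*)^2$, the only possibly nonzero tame symbols of $\{x_1^\sigma,x_2^\sigma\}$ lie over the toric boundary of $\PP_{\Delta_\sigma}$, where they equal the edge symbols of the edges of $\sigma$ --- roots of the cyclotomic edge polynomials of $\Delta$, hence roots of unity. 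So $\{x_1^\sigma,x_2^\sigma\}$ is torsion in $K_2^M(\bar{\QQ}(C))$, hence trivial in $CH^2(C,2)$ and so in $CH^2(D_\sigma^*,2)$ via the vanishing of $\ker\{CH^2(C,2)\to CH^2(\eta_C,2)\}$ recorded in Remark 1.2. For $n=4$ and $\sigma$ a ridge ($d=2$) the situation is similar, with $D_\sigma^*\cong\{1\pm x_1\pm x_2=0\}$: after the substitution $x_i\mapsto-x_i$ the symbol becomes $\{x_1,1-x_1\}$ up to $2$-torsion, which is $\db$-exact in $Z^2(D_\sigma^*,\bullet)$ --- the universal Steinberg nullhomotopy of $\{t,1-t\}$ pulls back since both $x_1$ and $1-x_1$ are units on $D_\sigma^*$ --- hence trivial in $CH^2(D_\sigma^*,2)$.

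The main obstacle is the last case: $n=4$, $\sigma$ a facet ($d=3$), where one must show the triviality of $\{x_1^\sigma,x_2^\sigma,x_3^\sigma\}$ in $CH^3(D_\sigma^*,3)$ for the \emph{surface} $D_\sigma^*\cong\{1\pm x_1\pm x_2\pm x_3=0\}$, and here --- unlike for curves --- one cannot detect triviality at the generic point alone, since $\ker\{CH^3(D_\sigma^*,3)\to K_3^M(\bar{\QQ}(D_\sigma^*))\}$ need not vanish. My approach would be twofold. First, a Steinberg computation at the generic point: writing $a:=x_1/(x_1+x_2)$ so that (on $\{x_1+x_2+x_3=1\}$, after the sign change) $x_1=a(1-x_3)$ and $x_2=(1-a)(1-x_3)$, expanding $\{x_1,x_2,x_3\}$ multilinearly, and repeatedly using $\{t,1-t\}=0$, $\{f,f\}\equiv\{f,-1\}$ and that $\{\,\cdot\,,-1\}$ is $2$-torsion, one finds the class dies in $K_3^M(\bar{\QQ}(D_\sigma^*))\otimes\QQ$, so it lies in the first step of the coniveau filtration --- represented, after rational equivalence, by a class supported on curves in $D_\sigma^*$. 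Second, one runs an iterated-residue argument in the spirit of the proof of Theorem 1.7: because all face polynomials are of the form $1\pm x$ and the coefficients are $\pm1$, every successive residue of $\{x_1^\sigma,x_2^\sigma,x_3^\sigma\}$ down the strata of the hyperplane arrangement $D_\sigma^*$ is again a symbol with root-of-unity entries, hence torsion, while the only contributions not killed this way --- the indecomposable-$K_3$ parts attached to the $1$-dimensional strata --- vanish because $K$ is totally real, i.e.\ $K_3^{\mathrm{ind}}(K)\otimes\QQ=0$ (exactly the vanishing already exploited for $n=4$ in the proof of Theorem 1.7). Hence $\{x_1^\sigma,x_2^\sigma,x_3^\sigma\}=0$ in $CH^3(D_\sigma^*,3)$. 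The delicate bookkeeping here --- and the precise reason ``$K$ totally real'' and ``coefficients $\pm1$'' are genuinely needed --- is this passage from generic-point triviality to triviality in $CH^3$ of the open surface; the rest (reduction to Theorem 1.7, the reparametrization absorbing $\alpha_{\underline{0}}$, and the signs and $2$-torsion that disappear $\otimes\QQ$) is routine.
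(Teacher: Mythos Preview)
Your reduction to Theorem 1.7 is sound for $n=2,3$ and for the side conditions, and your $d\le 2$ temperedness arguments are essentially the paper's. The genuine gap is in the $n=4$, $d=3$ facet case, and it stems from a single false normalization: you assert that an elementary $3$-simplex is, after a unimodular change, the standard one, so that $\phi_\sigma=1\pm x_1\pm x_2\pm x_3$. This is not true in dimension $\ge 3$---``elementary'' (no lattice points but the vertices) does not imply ``regular'' (normalized volume $1$). A typical counterexample is the tetrahedron on $(0,0,0),(1,0,0),(0,1,0),(1,1,2)$. What you \emph{can} do is normalize a chosen $2$-face (elementary triangles are regular) so that three of the four vertex monomials are $1,y_1,y_2$; the fourth monomial is then $y_1^{a}y_2^{b}y_3^{c}$ with $c\ge 1$, giving $\phi_\sigma=1+y_1+y_2+y_1^{a}y_2^{b}y_3^{c}$ (up to signs). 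Your parametrization $x_1=a(1-x_3)$, $x_2=(1-a)(1-x_3)$ on $\{x_1+x_2+x_3=1\}$ and the ensuing Steinberg reduction therefore do not address the general facet.

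The paper's computation is designed precisely for this general form: one rewrites
\[
\{y_1,y_2,y_3\}\;=\;\tfrac{1}{c}\{y_1,y_2,y_1^{a}y_2^{b}y_3^{c}\}\;=\;\tfrac{1}{c}\{-y_1/y_2,\,-(y_1+y_2),\,-y_1^{a}y_2^{b}y_3^{c}\}
\]
by Milnor-type relations, and then uses the defining equation $1+y_1+y_2+y_1^{a}y_2^{b}y_3^{c}=0$ to replace $-y_1^{a}y_2^{b}y_3^{c}$ by $1+(y_1+y_2)$, landing on a Steinberg symbol $\{\,\cdot\,,\,-(y_1+y_2),\,1+(y_1+y_2)\}$. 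Crucially, the paper does \emph{not} argue generically and then repair: it checks that each of these relations among graph cycles already holds on all of $D_\sigma^{*}$, so the class vanishes in $CH^{3}(D_\sigma^{*},3)$ directly. Your two-step plan---kill the generic-point class, then clean up residues using $K_3^{\mathrm{ind}}(K)\otimes\QQ=0$---conflates the role of that vanishing (in Theorem 1.7 it kills a $d_2$ in the spectral sequence for $\tilde{\X}_-$, not obstructions on the facet surface) and is not a substitute for the direct check. Fix the normalization and redo the facet Steinberg computation accordingly; the rest of your argument carries over.
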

\begin{example}
Take $\phi$ to be {[}an arbitrary constant plus] the characteristic
(Laurent) polynomial of the $vertex$ $set$ of any reflexive polytope
$\Delta$ satisfying the relevant assumption in boldface. This will
be regular in case $n=2,4$, and also for $n=3$ provided none of
the facets are of the form (c) {[}see proof below] with $\frac{a}{2^{m}}$,
$\frac{b}{2^{m}}$ both odd for the same $m\in\ZZ^{\geq0}$.%
\footnote{Out of the 899 reflexive $3$-polytopes with interior-point-free facets,
this leaves us with 239. \cite{No}%
}
\end{example}
\begin{rem}
For $n=3$, we can also allow triangular facets $\sigma$ with interior
points, provided the only monomials appearing (with nonzero coefficients)
in $\phi_{\sigma}$ correspond to the vertices of $\sigma$.%
\footnote{This gets us up to 1071 resp. 358 $3$-polytopes, depending on whether
the special type (c) facets are admitted. \cite{No}%
}
\end{rem}
\begin{proof}
\emph{(of Corollary).} For $n=2$ it suffices to show $\phi$ tempered,
and this is obvious.

For $n=3$, one can easily classify (up to shift and unimodular transformation)
facets $\sigma$ with no interior points. Viewed in a $2$-plane $\RR_{\sigma}$,
they are all convex hulls of $3$ or $4$ points: (a) $\{(0,0),\,(2,0),\,(0,2)\}$,
(b) $\{(0,0),\,(0,1),\,(a,0)\}$, or (c) $\{(0,0),\,(0,1),\,(a,0),\,(b,1)\}$
(with $a,b\in\NN$). In each case $\phi_{\sigma}(x_{1}^{\sigma},x_{2}^{\sigma})=0$
can only yield ($D_{\sigma}^{*}=$) a Zariski open subset of a rational
curve. (Since $\phi$ is regular, $D_{\sigma}$ is also nonsingular.)
For $\sigma'\in\Delta(2)$, $\phi_{\sigma'}$ cyclotomic $\implies$$\{x_{1}^{\sigma'}\}$
gives $0$ in $CH^{1}(D_{\sigma'}^{*},1)$. Hence (for $\sigma\in\Delta(1)$)
$\{x_{1}^{\sigma},x_{2}^{\sigma}\}\in\left\{ ker(\text{{Tame}})\subseteq CH^{2}(D_{\sigma}^{*},2)\right\} =im\left\{ CH^{2}(D_{\sigma},2)\to CH^{2}(D_{\sigma}^{*},2)\right\} .$
But $CH^{2}(\PP_{K}^{1},2)\cong K_{2}^{M}(K)=0$ (in fact, $K_{2}^{M}(\bar{\QQ})=0$),
and so $\phi$ is tempered. The remaining conditions follow from regularity
by Remark 1.8(iii).

For $n=4$, the tempered condition is again clear for edges $\sigma''\in\Delta(3)$,
so fix $\sigma'\subset\sigma$, $\sigma\in\Delta(1)$ and $\sigma'\in\Delta(2)$;
$\sigma$ is a triangle and $\sigma'$ a tetrahedron. Any two edges
of $\sigma'$ (viewed as integral vectors) generate $\RR_{\sigma'}\cap\ZZ^{4}$,
and so one may choose the monomials $x_{1}^{\sigma'},x_{2}^{\sigma'}$
so that $\phi_{\sigma'}=1+x_{1}^{\sigma'}+x_{2}^{\sigma'}$ (ignoring
the $\pm1$ issue). This makes plain the $\AA_{\QQ}^{1}$-uniformizability
of $D_{\sigma'}$ (condition (c) of Thm. 1.7), since $\phi_{\sigma'}=0$
is the equation of $D_{\sigma}^{*}$ (in local toric coordinates);
it is also clear that $\{x_{1}^{\sigma'},x_{2}^{\sigma'}\}\in CH^{2}(D_{\sigma'}^{*},2)$
vanishes. Next, one can choose monomials $x_{1}^{\sigma}(:=x_{1}^{\sigma'})$,
$x_{2}^{\sigma}(:=x_{2}^{\sigma'})$, $x_{3}^{\sigma}$ generating
$\RR_{\sigma}\cap\ZZ^{4}$ such that $\phi_{\sigma}=1+x_{1}^{\sigma}+x_{2}^{\sigma}+(x_{1}^{\sigma})^{a}(x_{2}^{\sigma})^{b}(x_{3}^{\sigma})^{c}$
($a,b\in\ZZ^{\geq0},\, c\in\NN$). We must show that $\{x_{1}^{\sigma},x_{2}^{\sigma},x_{3}^{\sigma}\}$
vanishes in $CH^{3}(D_{\sigma}^{*},3)$, where $D_{\sigma}^{*}\cong\{(x_{1}^{\sigma},x_{2}^{\sigma},x_{3}^{\sigma})\in(\CC^{*})^{3}\,|\,\phi_{\sigma}(\underline{x}^{\sigma})=0\}.$
This requires a short calculation for which we rewrite $x_{i}^{\sigma}=:y_{i}$
and write elements of $CH^{3}(D_{\sigma}^{*},3)$ as symbols --- as
if they were in $K_{3}^{M}(\bar{\QQ}(D_{\sigma}))$. However, we have
explicitly checked that the following relations actually hold over
$D_{\sigma}^{*}$ (for the relevant graph cycles) and not just $\eta_{_{D_{\sigma}^{*}}}$:\[
\left\{ y_{1},\, y_{2},\, y_{3}\right\} \,=\,\frac{1}{c}\left\{ y_{1},\, y_{2},\, y_{1}^{a}y_{2}^{b}y_{3}^{c}\right\} \,=\,\frac{1}{c}\left\{ -\frac{y_{1}}{y_{2}},\,-y_{2},\,-y_{1}^{a}y_{2}^{b}y_{3}^{c}\right\} \]
\[
=\,\frac{1}{c}\left\{ -\frac{y_{1}}{y_{2}},\,-\left(1+\frac{y_{1}}{y_{2}}\right)y_{2},\,-y_{1}^{a}y_{2}^{b}y_{3}^{c}\right\} \,=\,\frac{1}{c}\left\{ -\frac{y_{1}}{y_{2}},\,-(y_{1}+y_{2}),\,-y_{1}^{a}y_{2}^{b}y_{3}^{c}\right\} .\]
Using $1+y_{1}+y_{2}+y_{1}^{a}y_{2}^{b}y_{3}^{c}=0$ yields \[
\frac{1}{c}\left\{ -\frac{y_{1}}{y_{2}},\,-(y_{1}+y_{2}),\,1+(y_{1}+y_{2})\right\} ,\]
 which is zero (again over all of $D_{\sigma}^{*}$). Hence $\phi$
is tempered. Regularity of $\phi$ (i.e., $\Delta$-regularity of
$\phi-\l$ for general $\l$) along the faces is obvious from the
explicit equations for $\phi_{\sigma},\phi_{\sigma'},\phi_{\sigma''}$
(and irregularities in the torus $(\CC^{*})^{4}$ for generic $\l$
are impossible by a simple calculus argument). 
\end{proof}
\begin{example}
For $n=4$, there are examples (where $\xi$ completes) that do not
fall under the aegis of Theorem $1.7$ --- e.g. $\phi=x_{1}^{-1}x_{2}^{-1}x_{3}^{-1}x_{4}^{-1}(1+\sum_{i=1}^{4}x_{i}^{5}),$
which gives the Fermat quintic family in $\PP^{4}$. One must verify
directly that $\left\langle \{\underline{x}\}\right\rangle \in CH^{4}(\tilde{\X}_{-}^{*},4)$
lies in $\ker(d_{1})\cap\ker(d_{2})$, in the local-global spectral
sequence described in the Theorem's proof. This means checking that
the residues of (a representative of) $\left\langle \{\underline{x}\}\right\rangle $
in $\oplus_{\tilde{\sigma}\in\tilde{\Delta}(1)}Z^{3}(D_{\tilde{\sigma}}^{*}\times\AA^{1},3)$
are killed by relations (in $Z^{3}(D_{\tilde{\sigma}}^{*}\times\AA^{1},4)$),
then that differences of residues of these relations in $\oplus_{\tilde{\sigma}\in\tilde{\Delta}(2)}Z^{2}(D_{\tilde{\sigma}}^{*}\times\AA^{1},3)$
are trivialized as well. This is left to the reader. 
\end{example}
\begin{rem}
For $n=2$, one can sometimes avoid going modulo torsion and complete
$\xi$ to a class $\tilde{\Xi}\in H_{\M}^{2}(\widetilde{\TXM},\ZZ(2))$
($\cong CH^{2}(\widetilde{\TXM},2)$ but without our implicit $\otimes\QQ$
convention). Namely, for each edge $\sigma$, let $x_{(1)}^{\sigma}=x_{1}^{a_{\sigma}}x_{2}^{b_{\sigma}}$
(where $(a_{\sigma},b_{\sigma})=1$) generate $\RR_{\sigma}\cap\ZZ^{2}$.
Then it suffices to require (besides smoothness of the general $X^{\l}$)
the edge polynomial $\phi_{\sigma}$ to have only $(-1)$ as root
if $a_{\sigma}$ and $b_{\sigma}$ are both odd, and only $(+1)$
as root otherwise. This follows simply from (integral) computation
of the $Tame$ symbol of $\{x_{1},x_{2}\}$.
\end{rem}
We conclude this section with a discussion of what can be done for
an arbitrary reflexive $3$-polytope $\Delta$ if we are only after
getting a $\Xi^{\lambda}$ for general $\lambda$ (as in Remark 1.8(ii)).
An arbitrary facet $\sigma\in\Delta(1)$ inherits the integral structure
$\ZZ^{3}\cap\RR_{\sigma}$ (and is obviously not in general itself
reflexive).

\begin{fact}
\cite{No} Up to shift and unimodular transformation, there are $344$
possibilities for $\sigma$, and they all satisfy $\ell(\sigma)>2\ell^{*}(\sigma)$.
\end{fact}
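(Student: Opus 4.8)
The plan is to reduce the statement to a finite enumeration that has already been carried out in the literature (Kreuzer--Skarke / the reference \cite{No}), and then check the inequality $\ell(\sigma) > 2\ell^*(\sigma)$ uniformly across the resulting list. Here $\ell(\sigma)$ denotes the number of lattice points of $\sigma$ (with respect to its induced affine lattice $\ZZ^3 \cap \RR_\sigma$) and $\ell^*(\sigma)$ the number of interior lattice points. First I would recall that a facet $\sigma$ of a reflexive $3$-polytope $\Delta$ is a lattice polygon; since $\underline 0 \in \mathrm{Int}(\Delta)$ is the unique interior lattice point and $\sigma$ lies in an affine hyperplane $\RR_\sigma$ at lattice distance $1$ from $\underline 0$ (reflexivity of $\Delta$), the polygon $\sigma$ is \emph{not} itself required to be reflexive, but it \emph{is} a lattice polygon that occurs as a facet of \emph{some} reflexive $3$-polytope --- and this occurrence condition is exactly what cuts the (a priori infinite) set of lattice polygons down to a finite list.

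The key steps, in order: (1) Observe that any such $\sigma$ admits a lattice point at height $1$ above it completing to a reflexive polytope, which forces a bound on the normalized volume of $\sigma$ --- concretely, if $\sigma$ had too many lattice points the cone over $\sigma$ with apex $\underline 0$ would fail the reflexivity (integral polar dual) constraint. This is the finiteness input; it is not original here and may be cited. (2) Invoke the classification result of \cite{No}: up to the affine unimodular group $\mathrm{GL}_2(\ZZ)\ltimes \ZZ^2$ acting on $\RR_\sigma$, there are exactly $344$ such polygons $\sigma$. (3) For each of the $344$ polygons, verify $\ell(\sigma) > 2\ell^*(\sigma)$. By Pick's theorem, $\mathrm{Area}(\sigma) = \ell^*(\sigma) + \tfrac12 b(\sigma) - 1$ where $b(\sigma)$ is the number of boundary lattice points, and $\ell(\sigma) = \ell^*(\sigma) + b(\sigma)$; so the desired inequality is equivalent to $b(\sigma) > \ell^*(\sigma)$, i.e. (using Pick again) $b(\sigma) > \mathrm{Area}(\sigma) - \tfrac12 b(\sigma) + 1$, i.e. $\tfrac32 b(\sigma) > \mathrm{Area}(\sigma) + 1$. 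This last form is a clean combinatorial inequality on lattice polygons that are facets of reflexive $3$-polytopes; it holds because such $\sigma$ cannot be "too fat" relative to its perimeter --- the height-$1$ reflexivity constraint with respect to the interior point $\underline 0$ bounds the area in terms of the edge data.

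The main obstacle is step (3): making the inequality $\tfrac32 b(\sigma) > \mathrm{Area}(\sigma) + 1$ follow from a \emph{structural} property rather than brute enumeration. I would handle it by decomposing $\sigma$ into lattice triangles fanning from a vertex and using that each such triangle, being part of a facet of a reflexive polytope, has bounded area (in fact each "primitive slice" contributes a controlled amount to both sides); alternatively one can simply confirm the inequality on the $344$-element list produced in \cite{No} (or equivalently on the $4319$ reflexive $3$-polytopes of Kreuzer--Skarke, reading off facets), since the computation is entirely mechanical once the list is in hand. Either route closes the argument; the paper only needs the inequality as an input, so citing the enumeration and recording the Pick-theorem reformulation $b(\sigma) > \ell^*(\sigma)$ suffices.
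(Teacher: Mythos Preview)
The paper does not give a proof of this Fact at all: it is stated as a citation to \cite{No}, which in the bibliography is a computer enumeration (Novoseltsev's computations with PALP in Sage, built on the Kreuzer--Skarke classification of the $4319$ reflexive $3$-polytopes). Your proposal correctly identifies this as a finite-enumeration result and lands on the same conclusion --- cite the list and check the inequality mechanically --- so in that sense you match the paper exactly.

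Your Pick-theorem reformulation $\ell(\sigma)>2\ell^*(\sigma) \Longleftrightarrow b(\sigma)>\ell^*(\sigma) \Longleftrightarrow \tfrac{3}{2}b(\sigma)>\mathrm{Area}(\sigma)+1$ is correct and a nice observation, but the structural argument you sketch for step~(3) (triangulating from a vertex, ``each primitive slice contributes a controlled amount'') is not a proof as written: the height-$1$ reflexivity condition bounds the \emph{total} volume of $\Delta$, not directly the area-to-boundary ratio of an individual facet, and there is no obvious uniform bound on the triangle areas in such a fan decomposition. Since you explicitly fall back to the enumeration, this gap is harmless, but you should not present the structural sketch as an alternative proof without filling it in.
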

Fix an isomorphism $\ZZ^{2}\overset{\cong}{\to}\ZZ^{3}\cap\RR_{\sigma}$,
and denote the corresponding toric coordinates on $\mathbb{D}_{\tilde{\sigma}}^{*}$
by $x_{1}^{\sigma},x_{2}^{\sigma}$. Writing $\ell'(\sigma):=\ell(\sigma)-\ell^{*}(\sigma)-1$,
let $\mathfrak{M}_{\sigma}=\mathfrak{M}_{\sigma}^{*}\cup(\mathfrak{M}_{\sigma}\setminus\mathfrak{M}_{\sigma}^{*})=\{\underline{m}_{i}^{*}\}_{i=1}^{\ell^{*}(\sigma)}\cup\{\underline{m}_{j}'\}_{j=0}^{\ell'(\sigma)}$
be the decomposition of $\sigma\cap\ZZ^{2}$ into interior and edge
points. The ample linear system $|\mathcal{O}_{\mathbb{D}_{\tilde{\sigma}}}(1)|\cong\PP^{\ell(\sigma)-1}$
is parametrized by Laurent polynomials\[
\phi_{\sigma;[\underline{\alpha}:\underline{\beta}]}(\underline{x}^{\sigma})\,:=\,\sum_{i=1}^{\ell^{*}(\sigma)}\alpha_{i}\cdot(\underline{x}^{\sigma})^{\underline{m}_{i}^{*}}+\sum_{j=0}^{\ell'(\sigma)}\beta_{j}\cdot(\underline{x}^{\sigma})^{\underline{m}_{j}^{'}}\,=\, A_{\underline{\alpha}}(\underline{x}^{\sigma})+B_{\underline{\beta}}(\underline{x}^{\sigma}),\]
and consists (generically) of genus-$\ell^{*}(\sigma)$ curves. Let
$\mathcal{V}_{\sigma}^{irr}\subset\PP^{\ell(\sigma)-1}$ be the locus
of ($\phi_{\sigma}$ cutting out) $\ell^{*}(\sigma)$-nodal irreducible
rational curves $C_{\phi_{\sigma}}$ in this system. It seems entirely
reasonable to hope that \begin{equation}
\mathcal{V}^{irr}_{\sigma} \text{ is nonempty for all } \sigma \in \Delta(1)
\\
\end{equation} is satisfied for all reflexive $\Delta\subset\RR^{3}$; this may
be decidable by applying the tropical methods of \cite{Mi}. In fact
one has

\begin{fact}
\cite{Mi,Ty} If $\mathcal{V}_{\sigma}^{irr}\neq\emptyset$, its Zariski
closure $\overline{\mathcal{V}_{\sigma}^{irr}}$ (the so-called \emph{Severi
variety}) is a codimension-$\ell^{*}(\sigma)$ irreducible subvariety
of $\PP^{\ell(\sigma)-1}$.
\end{fact}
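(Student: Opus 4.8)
The plan is to separate the two assertions, the codimension statement being an exercise in deformation theory and the irreducibility being the substantive point. Write $S:=\DD_{\ST}$ for the toric surface attached to $\sigma$ and $L:=\O_S(1)$, so $|L|\cong\PP^{\ell(\sigma)-1}$; a general $C\in|L|$ is smooth and $p_a(C)=\ell^*(\sigma)$ for every $C\in|L|$, since by adjunction and Pick's theorem $2p_a-2=C\cdot(C+K_S)=2\,\mathrm{Area}(\sigma)-\#(\partial\sigma\cap\ZZ^2)=2\ell^*(\sigma)-2$, while $-K_S\cdot C=\#(\partial\sigma\cap\ZZ^2)=\ell(\sigma)-\ell^*(\sigma)$. (The finitely many cyclic quotient singularities of $S$ are torus-fixed and are avoided by every curve considered below, so one may work on $S^{\mathrm{sm}}$, or on a resolution $\widetilde S\to S$, without any change.) By definition a point of $\V_\sigma^{irr}$ parametrizes an irreducible curve with exactly $\ell^*(\sigma)$ nodes and no worse singularity, which — as $p_a(C)=\ell^*(\sigma)$ — is the same as an irreducible rational curve in $|L|$ with only nodes.

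For the \emph{codimension} I would argue as follows. Fix $[C]\in\V_\sigma^{irr}$ (nonemptiness being the hypothesis), let $\nu\colon\PP^1=\widetilde C\to C\subset S$ be the normalization (an unramified immersion, since $C$ is nodal), and let $N_\nu:=\nu^*T_S/T_{\PP^1}$ be the normal line bundle of the map, of degree $-K_S\cdot C-2=\ell(\sigma)-\ell^*(\sigma)-2$. The equisingular deformations of $C$ inside $|L|$ are identified with $H^0(\PP^1,N_\nu)$, with obstructions in $H^1(\PP^1,N_\nu)$; as $\sigma$ has at least one boundary lattice point, $\deg N_\nu\geq-1$, so $H^1(\PP^1,N_\nu)=0$ and $\V_\sigma^{irr}$ is smooth at $[C]$ of dimension $h^0(\PP^1,N_\nu)=\ell(\sigma)-\ell^*(\sigma)-1=\ell'(\sigma)$. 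Since a general point of each irreducible component of $\overline{\V_\sigma^{irr}}$ lies in $\V_\sigma^{irr}$, every component has dimension $\ell'(\sigma)$, i.e. codimension $(\ell(\sigma)-1)-\ell'(\sigma)=\ell^*(\sigma)$ in $\PP^{\ell(\sigma)-1}$; in particular $\overline{\V_\sigma^{irr}}$ is pure of codimension $\ell^*(\sigma)$.

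For \emph{irreducibility} — the content of \cite{Mi,Ty} — I would pursue one of two routes. (1) A Harris-type degeneration: at a general $[C]$ the $\ell^*(\sigma)$ nodes are independently smoothable, a consequence of the vanishing $H^1(\PP^1,N_\nu(-Z'))=0$ for any subset $Z'$ of at most $\ell^*(\sigma)-1$ of the node-preimages (again by positivity of $-K_S\cdot C$); combining this with transitivity of the monodromy permuting the nodes within a component, and with a degeneration of $C$ onto the toric boundary that exhibits each component of $\overline{\V_\sigma^{irr}}$ as abutting a component of the analogous Severi locus with $\ell^*(\sigma)-1$ nodes, one induces on $\ell^*(\sigma)$, the base case $\ell^*=0$ being the irreducible variety $|L|$ itself. (2) The tropical route of the cited works: via non-archimedean tropicalization $\V_\sigma^{irr}$ maps onto the tropical Severi set of genus-$0$ tropical plane curves with Newton polygon $\sigma$, this set is connected through chains of elementary modifications (Mikhalkin's correspondence theorem), and each tropical curve lifts to an irreducible family over a torus, so that patching the resulting charts realizes $\overline{\V_\sigma^{irr}}$ as the closure of a single irreducible stratum. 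Either way the main obstacle is precisely the combinatorial bookkeeping of the degenerations — controlling which reducible or less-singular curves occur on the boundary of a given component and proving that the monodromy (respectively the tropical connectivity) is transitive; by contrast the codimension and smoothness are routine once the positivity $\deg N_\nu\geq-1$ is in hand.
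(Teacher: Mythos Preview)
The paper does not prove this statement at all: it is stated as a \emph{Fact} with citations to \cite{Mi,Ty} and is then used as a black box in the proof of Proposition~1.16. There is therefore no ``paper's own proof'' against which to compare your attempt.

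That said, your sketch is a reasonable outline of the argument one finds in the cited literature. The codimension computation via the normal bundle $N_\nu$ of the normalization map is standard, and your positivity check $\deg N_\nu=\ell(\sigma)-\ell^*(\sigma)-2\geq -1$ (in fact $\geq 1$ here, since any lattice polygon has at least three boundary points) correctly yields unobstructedness and the expected dimension $\ell'(\sigma)$. Your route~(2) is essentially what the citation to Mikhalkin points to; Tyomkin's paper on Hirzebruch surfaces is closer in spirit to your route~(1), proving irreducibility by degeneration and monodromy. One minor caution: in route~(1) you assert transitivity of the monodromy on nodes and the existence of a suitable degeneration to the boundary as if these were automatic, but in the toric (non-$\PP^2$, non-Hirzebruch) setting this is exactly where the work lies and where the cited references are genuinely needed --- your sketch correctly flags this as the ``main obstacle'' but does not itself supply the argument.
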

Here, then, is our {}``most general'' example for $n=3$:

\begin{prop}
For a reflexive $3$-polytope $\Delta$ satisfying (1.3), there exists
a tempered Laurent polynomial $\phi$ (with Newton polytope $\Delta$)
defining a family of (generically smooth) $K3$ surfaces $\{\tilde{X}_{t}\}$
such that (for general $t$) the toric symbol completes to a $CH^{3}(\tilde{X}_{t},3)$-class
$\Xi_{t}$.
\end{prop}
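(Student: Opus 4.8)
The plan is to build $\phi$ facet-by-facet, generalizing the $n=3$ construction behind Corollary~1.9 so as to allow facets $\sigma\in\Delta(1)$ with interior lattice points, and then to invoke the weakened form of Theorem~1.7 recorded in Remark~1.8(ii). Since $\Delta$ is reflexive, $\{\underline 0\}$ is its only interior lattice point, so prescribing $\phi$ with $Newton(\phi)=\Delta$ amounts to choosing a constant $\alpha_0$ together with facet polynomials $\phi_\sigma$ ($\sigma\in\Delta(1)$) that agree on shared edges. First I would fix the \emph{boundary} coefficients of every $\phi_\sigma$ to be those of the vertex characteristic polynomial $\phi^{0}=\sum_{v}\underline{x}^{v}$ (the sum over the vertices $v$ of $\Delta$): all vertex coefficients $=1$, all edge-interior coefficients $=0$. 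Then on each edge $\sigma'\in\Delta(2)$ of lattice length $d$ the edge polynomial is $\phi_{\sigma'}=1+(x_1^{\sigma'})^{d}$, whose roots are $2d$-th roots of unity, and since edge-interior monomials vanish identically on the edges we will still be free to modify the interior-of-$\sigma$ coefficients of each $\phi_\sigma$ without touching any $\phi_{\sigma'}$, so the resulting facet polynomials glue to a single $\phi$ with cyclotomic edge polynomials.

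The heart of the proof is to choose those interior coefficients so that each $D_\sigma=\{\phi_\sigma=0\}\subset\DD_\sigma$ is an irreducible $\ell^{*}(\sigma)$-nodal rational curve, i.e.\ so that $\phi_\sigma\in\mathcal{V}^{irr}_\sigma$. With only vertex terms $D_\sigma$ has genus $\ell^{*}(\sigma)$; adding an interior perturbation $\sum_{i=1}^{\ell^{*}(\sigma)}\alpha_i(\underline{x}^{\sigma})^{\underline{m}_{i}^{*}}$ moves $\phi_\sigma$ inside the linear subsystem $L_\sigma\cong\PP^{\ell^{*}(\sigma)}\subset|\mathcal{O}_{\DD_\sigma}(1)|=\PP^{\ell(\sigma)-1}$ of curves with the chosen cyclotomic behaviour along the toric boundary of $\DD_\sigma$. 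By (1.3) and Fact~1.15, $\overline{\mathcal{V}^{irr}_\sigma}$ is irreducible of codimension $\ell^{*}(\sigma)$, hence of dimension $\ell(\sigma)-1-\ell^{*}(\sigma)=\ell'(\sigma)$ --- precisely the dimension of the space of boundary coefficients --- so the boundary-restriction map, whose fibers are the subsystems $L_\sigma$, ought to be dominant, and Fact~1.14 ($\ell(\sigma)>2\ell^{*}(\sigma)$, i.e.\ $\ell'(\sigma)\geq\ell^{*}(\sigma)$) leaves room for the fiber over the cyclotomic configuration to actually meet $\mathcal{V}^{irr}_\sigma$. Making this precise --- producing, for each of the $344$ facet types allowed by (1.3), an irreducible $\ell^{*}(\sigma)$-nodal rational curve realizing the prescribed cyclotomic edge polynomials --- is what I expect to be the main obstacle; the natural tool is the tropical machinery of \cite{Mi}: one realizes a rational tropical curve in $\sigma$ with $\ell^{*}(\sigma)$ trivalent vertices and with unbounded ends located at the prescribed root-of-unity boundary points, then passes back to an algebraic member of $\mathcal{V}^{irr}_\sigma$ via Mikhalkin's correspondence.

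Granting such a $\phi$, the verification that it is tempered is routine. On an edge $\sigma'\in\Delta(2)$ the roots of the cyclotomic $\phi_{\sigma'}$ are roots of unity, so $\{x_1^{\sigma'}\}=0$ in $CH^{1}(D^{*}_{\sigma'},1)$. On a facet $\sigma\in\Delta(1)$, $D_\sigma$ is rational, so $\bar{\QQ}(D_\sigma)\cong\bar{\QQ}(\PP^1)$; the toric symbol $\{x_1^\sigma,x_2^\sigma\}$ is unramified away from the finitely many points of the normalization lying over the toric divisors $\DD_{\sigma'}\subset\DD_\sigma$, and at such a point $p\in D_{\sigma'}$ its tame symbol is, up to sign, a monomial in the coordinates of $p$, which is a root of the cyclotomic $\phi_{\sigma'}$, hence a root of unity; thus $\{x_1^\sigma,x_2^\sigma\}$ has all tame symbols roots of unity, so it is torsion modulo the unramified part $K_2^{M}(\bar{\QQ})$, and since $K_2^{M}(\bar{\QQ})=0$ in our $\otimes\QQ$ convention (cf.\ the proof of Corollary~1.9) it is $0$ in $CH^{2}(D^{*}_\sigma,2)$ by the injectivity recorded in Remark~1.2. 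Together with Remark~1.5(ii) at vertices this shows $\phi$ is tempered, so by Proposition~1.4 and the remark following it, $Res^{1}_{\ST}\xi=0$ for every $\ST\in\tilde\Delta(1)$ (the residues over $\ST$ lying above an edge being of the form $\{x_1^{\sigma'},y_1^{\ST}\}$ with a root of unity in the first slot).

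It remains to arrange smoothness and complete $\xi$. Bertini gives smoothness of the general member in the torus; along $\tilde D$ the reducedness of the $D_{\sigma'}$ together with a generic choice of $\alpha_0$ (which changes neither $\PP_\Delta$ nor any facet polynomial) yields smoothness of the general $\tilde X^{\lambda}$, so $\L\subsetneq\PP^1$, and over $\AA^1\m\L$ the total space $\TXM$ is smooth --- the nodes of the $D_\sigma$, which sit inside $\DD_\sigma$, do not singularize $\tilde X^{\lambda}$ or $\TXM$, since the coordinate normal to $\DD_\sigma$ restores transversality there. We are then exactly in the situation of Remark~1.8(ii)(1): condition (b) of Theorem~1.7 for $n=3$ is dropped and, $\tilde X^{\lambda}$ being smooth, condition (c)(i) is vacuous, so the local--global spectral sequence argument from the proof of Theorem~1.7 applies verbatim --- the vanishings $Res^{1}_{\ST}\xi=0$ place $\xi$ in $\ker d_1=E^{0,-3}_{\infty}\cong\text{im}\{CH^{3}(\TXM,3)\to CH^{3}(\TXM^{*},3)\}$ --- yielding a class $\Xi$ which restricts on each general fiber to the desired $\Xi_t\in CH^{3}(\tilde X_t,3)$.
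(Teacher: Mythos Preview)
Your outline is close to the paper's, but the step you flag as ``the main obstacle'' is precisely where the paper does the real work, and your proposal leaves it as a genuine gap. You argue that the projection $\rho:[\underline{\alpha}:\underline{\beta}]\mapsto[\underline{\beta}]$ restricted to $\overline{\mathcal{V}^{irr}_\sigma}$ ``ought to be dominant'' and propose tropical methods; the paper instead proves it directly by a deformation-theoretic argument. If $\rho|_{\overline{\mathcal{V}^{irr}_\sigma}\cap\mathcal{U}}$ were not generically an immersion, a generic $C_{\phi_\sigma}\in\mathcal{V}^{irr}_\sigma$ would deform keeping its intersection with the toric boundary $\mathsf{D}=\DD_\sigma\setminus(\CC^*)^2$ fixed. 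But for $f:\PP^1\cong\widetilde{C_{\phi_\sigma}}\twoheadrightarrow C_{\phi_\sigma}\hookrightarrow\DD_\sigma$ one computes $N_f\cong\mathcal{O}_{\PP^1}(-2+f^*(\mathsf{D}))$, so such a deformation would be a nonzero section of $N_f(-f^*(\mathsf{D}))\cong\mathcal{O}_{\PP^1}(-2)$, which is impossible. Since $\dim\overline{\mathcal{V}^{irr}_\sigma}=\ell'(\sigma)=\dim\PP^{\ell'(\sigma)}$, the restriction of $\rho$ is therefore dominant and its image is open; this open set then contains a Zariski-dense set of cyclotomic boundary configurations with distinct roots on each edge.

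There is a second, subtler issue with your strategy of fixing the boundary data \emph{in advance} to the vertex characteristic polynomial (edge polynomials $1+(x_1^{\sigma'})^d$). The normal-bundle argument gives only that $\rho(\mathcal{V}^{irr}_\sigma\cap\mathcal{U})$ is \emph{open} in $\PP^{\ell'(\sigma)}$, not that it is all of $\PP^{\ell'(\sigma)}$; your specific boundary point could lie in the complement. The paper avoids this by choosing the boundary \emph{after} establishing openness: since the cyclotomic boundary configurations are Zariski-dense in $\PP^{\ell'(\sigma)}$, the open image contains countably many of them, and since there are finitely many facets one can pick edge polynomials that work simultaneously for every $\sigma$. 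Your gluing is automatic because you fix the edges globally from the start, but at the cost of having no guarantee that each fiber $L_\sigma$ meets $\mathcal{V}^{irr}_\sigma$. The rest of your argument --- the tempered verification via $K_2^M(\bar{\QQ})=0$, smoothness of $\tilde X^\lambda$ at nodes of $D_\sigma$ via the transverse coordinate, and the appeal to Remark~1.8(ii)(1) for completion --- is correct and matches what the paper does (the paper is terser about the last two points).
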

\begin{proof}
Let $\mathcal{U}\subset\PP^{\ell(\sigma)-1}$ be the complement of
the $\PP^{\ell^{*}(\sigma)-1}$ defined by $\underline{\beta}=\underline{0}$.
Since $\dim(\overline{\mathcal{V}_{\sigma}^{irr}})=\ell(\sigma)-\ell^{*}(\sigma)-1>\ell^{*}(\sigma)-1$
by Facts 1.14-15, $\overline{\mathcal{V}_{\sigma}^{irr}}\cap\mathcal{U}\neq\emptyset$.
Consider the projection $\mathcal{U}\overset{\rho}{\to}\PP^{\ell^{'}(\sigma)}$
induced by $[\underline{\alpha}:\underline{\beta}]\mapsto[\underline{\beta}]$;
we contend that its restriction to $\overline{\mathcal{V}_{\sigma}^{irr}}\cap\mathcal{U}$
is generically an immersion.

Indeed, otherwise a generic $C_{\phi_{\sigma}}\in\mathcal{V}_{\sigma}^{irr}$
deforms while keeping its intersection with the boundary $\mathbb{D}_{\tilde{\sigma}}\setminus(\C^{*})^{2}\,=:\,\mathsf{D}$
fixed. The normal bundle of the composition $f:\,\PP^{1}\cong\widetilde{C_{\phi_{\sigma}}}\twoheadrightarrow C_{\phi_{\sigma}}\hookrightarrow\mathbb{D}_{\tilde{\sigma}}$
is $N_{f}:=f^{*}(\theta_{\mathbb{D}_{\tilde{\sigma}}}^{1})/\theta_{\PP^{1}}^{1}\cong\mathcal{O}_{\PP^{1}}(-2+f^{*}(\mathsf{D}))$.
A deformation of this form would yield a nonzero section of $N_{f}(-f^{*}(\mathsf{D}))\cong\mathcal{O}_{\PP^{1}}(-2)$,
which is impossible.

Since $\dim(\overline{\mathcal{V}_{\sigma}^{irr}})=\ell^{'}(\sigma)$
we conclude that $\rho(\mathcal{V}_{\sigma}^{irr}\cap\mathcal{U})\subset\PP^{\ell^{'}(\sigma)}$
is open, and therefore contains a Zariski-dense subset corresponding
to cyclotomic edge polynomials (with distinct roots on each edge).
So we get countably many $\phi_{\sigma;[\underline{\alpha}:\underline{\beta}]}$
defining irreducible nodal rational curves $C_{\phi_{\sigma}}$ with
regular, cyclotomic edge polynomials; and $\underline{\alpha},\underline{\beta}$
can be taken to lie in $\bar{\QQ}$.

Globalizing this to the $3$-polytope, there is a choice of $\phi(x_{1},x_{2},x_{3})$,
all of whose facet polynomials $\phi_{\sigma}$ are of this form.
Clearly, $\phi$ is tempered if the classes $\{x_{1}^{\sigma},x_{2}^{\sigma}\}\in K_{2}(\bar{\QQ}(\widetilde{C_{\phi_{\sigma}}}))\cong K_{2}(\bar{\QQ}(\PP^{1}))$
vanish. But since the edges of $\phi_{\sigma}$ are cyclotomic, $\{x_{1}^{\sigma},x_{2}^{\sigma}\}\in\ker(Tame)=K_{2}(\bar{\QQ})=\{0\}$.
\end{proof}

\section{\textbf{The fundamental regulator period}}

The $1$-parameter families $\{\TXT\}$ of CY toric hypersurfaces
produced by Theorem $1.7$ have in a neighborhood of $t=0$ a canonical
family of cycles $\tilde{\varphi}_{t}$ vanishing (in $H_{n-1}(\tilde{X}_{0})$)
at $t=0$. (In fact, using \cite{LTY}%
\footnote{or the Clemens-Schmid sequence: SSR replaces $\tilde{X}_{0}$ by a
NCD $'\tilde{X}_{0}$, and \[
H_{n-1}({}'\tilde{X}_{0})(-n+1)\to H^{n-1}({}'\tilde{X}_{0})\to H_{lim}^{n-1}(\tilde{X}_{t})\rTo^{N}H_{lim}^{n-1}(\tilde{X}_{t})\]
is exact (with $\QQ$-coefficients), where $N=\log(T)$ and weights
of $H^{n-1}({}'\tilde{X}_{0})$ {[}resp. $H_{n-1}({}'\tilde{X}_{0})(-n+1)$]
lie in $[0,n-1]$ {[}resp. $[n-1,2n-2]$]. So maximal unipotent monodromy
of $T$ $\Longleftrightarrow$ $N^{n-1}\neq0$ $\Longleftrightarrow$
$\hm{(}\QQ(0),\ker(N))\neq\{0\}$ $\Longleftrightarrow$ $\hm{(}\QQ(0),H^{n-1}({}'\tilde{X}_{0}))\neq\{0\}$
$\Longleftrightarrow$ $H^{0}({}'\tilde{X}_{0}^{[n-2]})\to H^{0}({}'\tilde{X}_{0}^{[n-1]})$
is not surjective (where $'\tilde{X}_{0}^{[i]}:=$desingularization
of $i^{\text{th}}$ coskeleton of $'\tilde{X}_{0}$). The last criterion
follows from the fact that the dual graph of $'\tilde{X}_{0}$ is
$\partial\{tr(\Delta^{\circ})\}$, which is topologically a triangulation
of $S^{n-1}$. %
} one can show that they have maximal unipotent monodromy there, provided
{[}for $n=4$] $\PP_{\tilde{\Delta}}$ is smooth.) What we aim to
do in this section, is to pair $\tilde{\varphi}_{t}$ against the
regulator image\[
AJ(\Xi_{t})\in H^{n-1}(\TXT,\CC/\QQ(n))\cong Hom_{\QQ}\left(H_{n-1}(\TXT,\QQ),\CC/\QQ(n)\right)\]
over a punctured disk $\bar{D}_{|t_{0}|}^{*}(0)$ extending to the
singular fiber (at $t_{0}\in\L$) nearest the one at $t=0$. The resulting
(multivalued) function is called the {}``fundamental regulator period'';
the {}``fundamental period'' is just the period of a canonical holomorphic
form $\tilde{\omega}_{t}\in\Omega^{n-1}(\TXT)$ over $\tilde{\varphi}_{t}$.
The regulator computation has some surprisingly beautiful and easy
corollaries related to differential equations, number theory, and
local mirror symmetry.

For the next two subsections, it will suffice to assume\\
(a) $\phi$ is reflexive with root-of-unity vertex coefficients (denoted
$\zeta$);\\
(b) the generic $\TXT$ has at worst Gorenstein orbifold singularities,%
\footnote{in this case $\L\subset\PP^{1}$ records only the {}``more'' singular
fibers where the local system $R^{n-1}\tilde{\pi}_{*}\QQ$ has monodromy.%
} and these lie in $\tilde{D}$; and\\
(c) $\xi$ completes to $\Xi_{t}\in H_{\M}^{n}(\TXT,\QQ(n))$ as in
Definition $1.6$.\\
So in principle $n$ could be $>4$. The importance of (a) is that
it amounts to a choice of the parameter $t$ normalizing (in fact,
for $n=2$ trivializing) the rational limit mixed Hodge structure
at $0$.

\subsection{The vanishing cycle and fundamental period}

Pick a vertex $\underline{v}\in\Delta(n)$ and a facet (=elementary
$(n-1)$-simplex) $\ST^{\circ}\in tr(\Delta^{\circ})(1)$ contained
in the facet $\sigma_{\underline{v}}^{\circ}\in\Delta^{\circ}(1)$
dual to $\underline{v}$. We may assume

\begin{equation} 
\begin{matrix} \ST^{\circ} \text{ has a flag of } i\text{-faces } (0\leq i \leq n-1) \\ \text{contained in } i\text{-faces of } \sigma_{\underline{v}}^{\circ},\text{ e.g.:} \mspace{50mu} \end{matrix} \\
\end{equation}

$\mspace{150mu}$\includegraphics[scale=0.5]{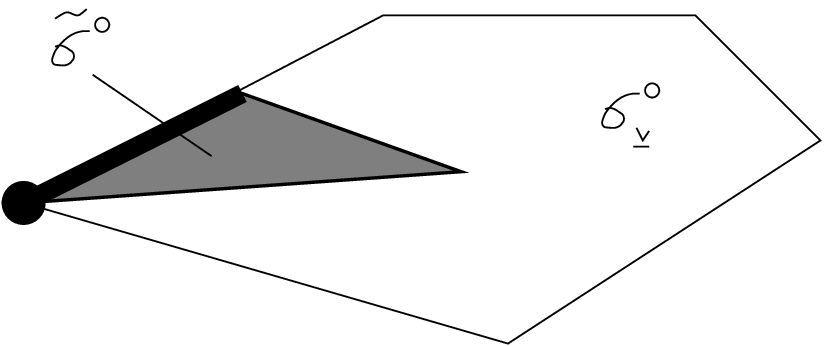}

${}$\\
The simplex $\ST^{\circ}$ is dual to some $\ST\in\DT(n)$ lying over
$\underline{v}$, and produces an affine coordinate chart around $\tilde{v}$
for $\PDT$. Referring to the discussion at the end of $\S1.1$, the
coordinates are monomials in the $\{x_{i}\}$ of two types:$\vspace{2mm}$
\\
(i) those corresponding (essentially) to edges of $\DT$ emerging
from $\tilde{v}$ (or to edges of $\Sigma(\ST^{\circ})^{\circ}$):\[
z_{1},\, z_{2},\,\ldots,\, z_{n}\,;\]
 \\
(ii) those corresponding to interior points of (faces of) $\Sigma(\ST^{\circ})^{\circ}$,
and having powers which are monomials in the $\{z_{j}\}$:\[
(u_{1},\ldots,u_{k_{2}})=:\underline{u}_{2};\,(u_{k_{2}+1},\ldots,u_{k_{3}})=:\underline{u}_{3};\,\ldots;\,(u_{k_{n-1}+1},\ldots,u_{k_{n}})=:\underline{u}_{n}.\]
(We have organized these so that powers of the $u_{k_{m-1}+1},\ldots,u_{k_{m}}$
are expressible in $z_{1},\ldots,z_{m}$.) Writing out the $\underline{u}\longleftrightarrow\underline{z}$
relations gives the local affine (singular) equations for $\PDT$.
The local affine equation for $\tilde{X}^{\l}$ is then a $polynomial$
in $(\underline{z},\,\underline{u})$ obtained from $\l-\phi(\underline{x})$
by dividing out the $\zeta\underline{x}^{\underline{v}}$ term: $0=\Phi_{\underline{v}}(\underline{z},\,\underline{u})=$\[
1+\phi_{1}(z_{1})+\phi_{2}(z_{1},z_{2};\underline{u}_{2})+\cdots+\{\phi_{n}(z_{1},\ldots,z_{n};\underline{u}_{n})-\l\underline{z}^{\underline{\mu}_{1}}\underline{u}_{n}^{\underline{\mu}_{2}}\}\]
where the $k^{\text{{th}}}$ term vanishes if $z_{k}=0$; $(2.1)$
$\implies$ none of the $\phi_{k}\equiv0$. Write $\DD_{\ST_{i}}=$
closure of locus in \{affine chart $\cap\PDT$\} where $z_{n-i+1}=\cdots=z_{n}=0$
(and $\ST_{i}$ for the corrresponding faces $\in\DT(i)$, dual to
the flag $(2.1)$), and\[
\Phi_{\underline{v},\ST_{i}}(z_{1},\ldots,z_{n-i};\underline{u})=:\Phi_{\underline{v}}|_{\DD_{\ST_{i}}}=1+\sum_{k\leq n-i}\phi_{k}.\]
 In particular, note that the divisor $\DD_{\ST_{1}}$ is defined
by $z_{n}=0$.

Define on $\PP_{\Delta}$, $\Omega_{t}\in\Gamma(\hat{\Omega}_{\PP_{\Delta}}^{n}(\log X_{t}))$
by\[
\Omega_{t}:=\frac{\dlog x_{1}\wedge\cdots\wedge\dlog x_{n}}{1-t\phi(\underline{x})}=\l\frac{\bigwedge^{n}\dlog\underline{x}}{\l-\phi(\underline{x})}\,,\]
 and let\[
\omega_{t}:=Res_{X_{t}}(\Omega_{X_{t}})\in\hat{\Omega}^{n-1}(X_{t})\,;\]
these have $\mu^{*}$-pullbacks $\tilde{\Omega}_{t}$, $\tilde{\omega}_{t}(\in\Omega^{n-1}(\TXT))$.
Let $\epsilon>0$ and define the real $n$-torus\[
\tor:=\{|z_{1}|=\cdots=|z_{n}|=\e\}\cap\PDT\in Z_{n}^{top}(\PDT\m\TXT\cup\DDT).\]
For fixed $\e>0$ it is clear (using $\Phi_{\v}$ above) that for
$|\l|\,>$ some fractional power of $\frac{1}{\e}$, i.e. for $|t|<\delta(\e)$
sufficiently small, $\tor$ avoids $\TXT$. One has the {}``membrane''
\[
\Gamma_{\v,\e}:=\{|z_{1}|=\cdots=|z_{n-1}|=\e,\,|z_{n}|\leq\e\}\in C_{n+1}^{top}(\PDT\m\DDT^{-})\]
where $\DDT^{-}:=\bigcup_{\ST\neq\ST_{1}}\DD_{\ST}$; this bounds
on the real $n$-torus:\[
\d\Gamma_{\v,\e}=(-1)^{n-1}\tor.\]
We specify our family of vanishing cycles by demanding that for $|t|<\delta(\e)$\[
-\van_{t}\homeq\TXT\cap\Gamma_{\v,\e}\in Z_{n-1}^{top}(\TXT).\]

Now the exponent vectors $\underline{m}_{i}$ relating $\{z_{i}\}\longleftrightarrow\{x_{j}\}$
($z_{i}=\underline{x}^{\underline{m}_{i}}$) form a rationally invertible
matrix. Hence, $\tor=\{|x_{i}|=\e^{q_{i}}\,(\forall i)\}\subset(\CC^{*})^{n}\subset\PDT$
for some nonzero rational numbers $q_{i}$. For the fundamental period
we have therefore\[
A(t):=\int_{\van_{t}}\tilde{\omega}_{t}\,=\,\int_{\van_{t}}Res_{\TXT}(\tilde{\Omega}_{t})\,=\,\frac{1}{2\pi i}\int_{Tube(\van_{t})=\tor}\tilde{\Omega}_{t}\]
\[
=\,\,\frac{1}{2\pi i}\int_{_{|x_{1}|^{^{1/q_{1}}}=\cdots=|x_{n}|^{^{1/q_{n}}}=\e}}\left(\sum_{m=0}^{\infty}t^{m}\phi(\underline{x})^{m}\right)\bigwedge^{n}\dlog\underline{x}\]
\[
=\,\,(2\pi i)^{n-1}\sum_{m=0}^{\infty}\frac{t^{m}}{(2\pi i)^{n}}\oint\phi(\underline{x})^{m}\bigwedge^{n}\dlog\underline{x}\]

\begin{equation} 
= \, \, (2\pi i)^{n-1}\sum_{m=0}^{\infty} [\phi(\underline{x})^m]_0 t^m , \\ 
\end{equation}${}$\\
where $[\cdot]_{0}$ takes the constant term of a Laurent polynomial.
While we proved this for $|t|<\delta(\e)$ ($\implies$ $|t\phi(\underline{x})|<1$
on $\tor$), the period and the power series extend to $D_{|t_{0}|}^{*}(0)$
and agree there since both functions are analytic.

\subsection{The period of the Milnor regulator current}

Given a symbol $\linebreak$$\left\langle \{f_{1},\ldots,f_{n}\}\right\rangle \in CH^{n}(Y,n)$
as in $\S1.1$ (but with $Y$ $smooth$ quasi-projective of $\dim<n$),
recall from \cite{Ke1,KLM} that $AJ\left\langle \{\underline{f}\}\right\rangle \in H^{n-1}(Y,\CC/\QQ(n))$
is represented by the regulator current

\begin{equation} 
 R_n \{\underline{f}\} \, = \log f_1 \dlog f_2 \wedge \cdots \wedge \dlog f_n \, -\, (2\pi i)\delta_{T_{f_1}} \wedge R_{n-1}\{ f_2,\ldots,f_n\} \, \in \, \D^{n-1}(Y), \\
\end{equation}${}$\\
where \[
T_{f}:=f^{-1}\{\RR^{\leq0}\cup\{\infty\},\text{{\, oriented\, from\,\,\,}}\infty\text{{\, to\,\,\,}}0\}\]
 is the {}``cut'' in $\arg(f)\in(-\pi,\pi)$. ($R_{1}\{f\}$ is
just the $0$-current $\log f$.) Note that in $(2.3)$ we have omitted
the $\QQ(n)$-valued $\delta$-current; modulo this, $R_{n}$ is $d$-closed.

\begin{rem}
(i) There are some good real-position requirements for the above representation
(see \cite{KL,KLM}), which we omit but which are satisfied below.

(ii) If the integral cohomology of $Y$ is torsion-free, as in the
case of an open elliptic curve, we can replace $\QQ(n)$ by $\ZZ(n)$.
\end{rem}
The vanishing cycle $\van_{t}$ extends to a multivalued section of
$\RR^{n-1}\tilde{\pi}_{*}\ZZ$ over $\PP^{1}\m\L$, and \begin{equation}
\Psi(t):=AJ(\Xi_t)(\van_t) \\
\end{equation} yields a multivalued holomorphic function (see Cor. 2.4ff). (It remains
multivalued after going modulo $\QQ(n)$, due to monodromy of $\van_{t}$.)
We want to compute $\Psi(t)$ for $t\in U_{\e}:=\{|t|<\delta(\e)\text{{\, and\,\,}}\arg(t)\in(-\frac{\pi}{4},\frac{\pi}{4})\}$.
Consider the diagrams$\vspace{6mm} $

$ \mspace{80mu} $ \xymatrix{{\xi_t}:= \left< \{ x_1,\ldots,x_n \} \right> & & & {\Xi_t} \ar @{|->} [lll] }

 $\mspace{80mu} $\xymatrix{CH^n(\TXT \m \tilde{D} , n) \ar [d]^{AJ} & H^n_{\M} (\TXT ,\QQ(n)) \ar [l]^{r_t} \ar  [d]^{AJ} \\ H^{n-1}(\TXT \m \tilde{D}, \CC/\QQ(n)) & H^{n-1}(\TXT , \CC/\QQ(n)) \ar [l]^{\jmath^*}, }$ \vspace{6mm} $ 

\small \xymatrix{{\hat{\xi}_t}:= \left< \{ \l - \phi(\underline{x}),x_1,\ldots ,x_n \} \right> \ar @{|->} [rrrr] & & & & {\left( \xi_t , Res^1_{\ST}\hat{\xi}_t \right)} }

 \xymatrix{CH^{n+1}(\PDT \m \DDT \cup \TXT ,n+1) \ar [r]^{Res \mspace{80mu}} \ar [d]^{AJ} & CH^n(\TXT \m \tilde{D},n)\oplus CH^n(\DD_{\ST_1}^* \m D_{\ST_1}^* ,n) \ar [d]^{AJ} \\ H^n(\PDT \m \DDT \cup \TXT , \CC/\QQ(n+1)) \ar [r]^{Res \mspace{100mu}} & H^{n-1}(\TXT \m \tilde{D} ,\CC/\QQ(n))\oplus H^{n-1}(\DD_{\ST_1}^* \m D_{\ST_1}^* ,\CC/\QQ(n)) , }
$ \vspace{6mm} $ 

\xymatrix{[\tor] & & & & {\left( [\Gamma_{\v,\e}\cap \TXT], [\Gamma_{\v,\e} \cap \DD_{\ST_1}] \right) } \ar @{|->} [llll] \ar @{|->} [rrrr] & & & & [\van_t] }
\xymatrix{H_n(\PDT \m \DDT \cap \TXT , \QQ) & H_{n-1}(\TXT \m \tilde{D} ,\QQ)\oplus H_{n-1}(\DD_{\ST_1}^* \m D_{\ST_1}^*, \QQ) \ar [l]_{Tube \mspace{100mu}} \ar [r]^{\mspace{120mu} (\jmath_*,0)} & H_{n-1}(\TXT,\QQ)). } \normalsize
$ \vspace{4mm} $ 

${}$\\
These suggest that\[
\Psi(t)\,=\, AJ(\xi_{t})(\Gamma_{\v,\e}\cap\TXT)\,\,=\]
\[
-\frac{1}{2\pi i}AJ(\hat{\xi}_{t})(\tor)+(-1)^{n}AJ(Res_{\ST_{1}}^{1}\hat{\xi}_{t})(\Gamma_{\v,\e}\cap\DD_{\ST_{1}}),\]
the first term of which we can compute directly using the regulator
formula $(2.3)$; we will show the second zero by an induction argument.

Working on $\PDT\m\TXT\cup\DDT$, we have $\frac{1}{2\pi i}AJ(\hat{\xi}_{t})(\tor)\,=$

\begin{equation} \\
{\frac{1}{2\pi i}} \int_{\tor} R \{ \l - \phi(\underline{x}), x_1, \ldots , x_n \} \\
\end{equation}\[
=\,\frac{1}{2\pi i}\oint_{_{|x_{1}|^{^{1/q_{1}}}=\cdots=|x_{n}|^{^{1/q_{n}}}=\e}}\log(\l-\phi)\bigwedge^{n}\dlog\underline{x}\,,\]
since $t\in U_{\e}$ and $\underline{x}\in\tor$ $\implies$ $|\phi(\underline{x})|\leq\frac{1}{\delta(\e)}<|\l|$
and $\arg(\l)\in(-\frac{\pi}{4},\frac{\pi}{4})$ $\implies$ $x\notin T_{\l-\phi(\underline{x})}$.
Using $\l-\phi=t^{-1}(1-t\phi)$ and $|t\phi|<1$, we see the latter\[
=\,-(2\pi i)^{n-1}\left\{ \log t\,+\,\sum_{m\geq1}\frac{[\phi(\underline{x})^{m}]_{0}t^{m}}{m}\right\} .\]

On the other hand, we can manipulate the regulator current in $(2.5)$
by only \{coboundary on $\PDT\m\TXT\cup\DDT$\}$+$\{$\QQ(n)$-currents\}
to obtain a rational multiple of $R\{\Phi_{\v},z_{1},\ldots,z_{n}\}$.
This is done by using multilinearity and anticommutativity relations
for symbols valid in $CH^{n}(\PDT\m\TXT\cup\DDT)$ and the map of
complexes in \cite{KLM}. The relations are used first to multiply
$\l-\phi$ by $\underline{x}^{-\v}$ (which just gives $\Phi_{\v}(\underline{z};\underline{u})$,
and then to turn $\{x_{1},\ldots,x_{n}\}$ into $q\cdot\{z_{1},\ldots,z_{n}\}$
($0\neq q\in\QQ$). Hence $(2.5)\,=$ \[
\frac{q}{2\pi i}\int_{\tor}R\{\Phi_{\v},z_{1},\ldots,z_{n}\},\]
and enlarging the domain to $\PDT\m\DDT^{-}$ and using $(-1)^{n-1}\tor=\d\Gamma_{\v,\e}$
gives \[
\frac{-q}{2\pi i}\int_{\Gamma_{\v,\e}}d[R\{\Phi_{\v};\underline{z}\}]\]
\[
=\, q\left(\int_{\Gamma_{\v,\e}\cap\TXT}R\{z_{1},\ldots,z_{n}\}\,\pm\,\int_{\Gamma_{\v,\e}\cap\DD_{\ST_{1}}}R\{\Phi_{\v,\ST_{1}},z_{1},\ldots,z_{n-1}\}\}\right)\]
\[
=\,-\int_{\van_{t}}R\{x_{1},\ldots,x_{n}\}\,\pm\, q\int_{\d\Gamma_{\v,\e}^{(1)}}R\{\Phi_{\v,\ST_{1}},z_{1},\ldots,z_{n-1}\}\]
where the switch from $R\{\underline{z}\}$ back to $R\{\underline{x}\}$
(in the first term) is valid on $\TXT^{*}$ and \[
\Gamma_{\v,\e}^{(i)}\,:=\,\left\{ |z_{1}|=\cdots=|z_{n-i-1}|=\e,\,|z_{n-i}|\leq\e,\,|z_{n-i+1}|=\cdots=|z_{n}|=0\right\} \]
\[
\in\, C_{n-i+1}^{top}(\DD_{\ST_{i}}).\]
Of course $\int_{\van_{t}}R\{\underline{x}\}\equiv\Psi(t)$ $\mod\,\QQ(n)$.

Now we may argue inductively:\[
\int_{\d\Gamma_{\v,\e}^{(i)}}R\{\Phi_{\v,\ST_{i}},z_{1},\ldots,z_{n-i}\}\,=\,\pm\int_{\Gamma_{\v,\e}^{(i)}}d[R]\,=\]
\[
2\pi i\left(\pm\int_{\Gamma_{\v,\e}^{(i)}\cap\DD_{\ST_{i+1}}}R\{\Phi_{\v,\ST_{i+1}},z_{1},\ldots,z_{n-i-1}\}\,\pm\,\int_{\Gamma_{\v,\e}^{(i)}\cap D_{\ST_{i}}}R\{z_{1},\ldots,z_{n-i}\}\right).\]
Since $D_{\ST_{i}}$ is defined by vanishing of $\Phi_{\v,\ST_{i}}=1+\phi_{1}+\cdots+\phi_{n-i}$,
which is $\approx1$ on $\Gamma_{\v,\e}^{(i)}$, $\Gamma_{\v,\e}^{(i)}\cap D_{\ST_{i}}=\emptyset$
and this becomes\[
\pm2\pi i\int_{\d\Gamma_{\v,\e}^{(i+1)}}R\{\Phi_{\v,\ST_{i+1}},z_{1},\ldots,z_{n-i-1}\}\]
for $i<n-1$. When $i=n-1$, $\Gamma_{\v,\e}^{(n-1)}\cap\DD_{\ST_{n}(=\v)}$
is just the origin, $\Phi_{\v,\ST_{n}}$ is $1$, and \[
\int_{\Gamma_{\v,\e}^{(n-1)}}R\{\Phi_{\v},\ST_{n}\}\,=\,\log1\,=\,0.\]
We have proved

\begin{thm}
Assuming hypotheses (a)-(c) at the beginning of the section, the fundamental
regulator period for $\Xi_{t}$ is \begin{equation}
\Psi(t)\equiv (2\pi i)^{n-1} \{ \log t + \sum_{m\geq 1} \frac{[\phi^m]_0}{m}t^m\} \,\,\,\, \mod \QQ(n),
\end{equation} for all $t\in U_{\e}$.
\end{thm}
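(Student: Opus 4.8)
The plan is to follow $\Psi(t) = AJ(\Xi_t)(\van_t)$ through the diagrams displayed just before the statement, reducing it to an explicit torus integral plus a boundary term that I then show lies in $\QQ(n)$. First I would use that $\van_t$ is homologous to $\Gamma_{\v,\e}\cap\TXT$ and that the restriction $r_t(\Xi_t)$ equals the toric symbol $\xi_t = \langle\{x_1,\ldots,x_n\}\rangle$ on $\TXT\m\tilde D$, so $\Psi(t)\equiv AJ(\xi_t)(\Gamma_{\v,\e}\cap\TXT)\pmod{\QQ(n)}$. Then I introduce the auxiliary symbol $\hat{\xi}_t := \langle\{\l-\phi(\underline{x}),\,x_1,\ldots,x_n\}\rangle\in CH^{n+1}(\PDT\m\DDT\cup\TXT,n+1)$, whose residue along $\TXT$ is $\xi_t$ and whose residue along $\DD_{\ST_1}$ is $Res^1_{\ST_1}\hat{\xi}_t$; combining $\d\Gamma_{\v,\e}=(-1)^{n-1}\tor$ with compatibility of $AJ$ with residues and with the $\d$/Tube pairing gives
\[
\Psi(t)\;\equiv\;-\tfrac{1}{2\pi i}\,AJ(\hat{\xi}_t)(\tor)\;+\;(-1)^n\,AJ\bigl(Res^1_{\ST_1}\hat{\xi}_t\bigr)\bigl(\Gamma_{\v,\e}\cap\DD_{\ST_1}\bigr)\pmod{\QQ(n)}.
\]

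Second I would evaluate the first term directly from the KLM regulator current $(2.3)$ for $\{\l-\phi,x_1,\ldots,x_n\}$. On $\tor$ with $t\in U_{\e}$ one has $|\phi(\underline{x})|\le\delta(\e)^{-1}<|\l|$ and $\arg(\l)\in(-\tfrac{\pi}{4},\tfrac{\pi}{4})$, so $\underline{x}\notin T_{\l-\phi}$ and only the leading term $\log(\l-\phi)\bigwedge^n\dlog\underline{x}$ of $R\{\l-\phi,\underline{x}\}$ survives the integration over $\tor$. Writing $\l-\phi=t^{-1}(1-t\phi)$, expanding $\log(1-t\phi)=-\sum_{m\ge1}(t\phi)^m/m$ (legitimate since $|t\phi|<1$ there), and applying the residue theorem on the torus $\{|x_i|^{1/q_i}=\e\}$ to pick off constant terms of Laurent polynomials --- exactly as in the computation of the fundamental period $A(t)$ --- this term equals $-(2\pi i)^{n-1}\{\log t+\sum_{m\ge1}\frac{[\phi^m]_0}{m}t^m\}$. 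So the theorem reduces to showing the boundary term is $\QQ(n)$-valued; in fact I will show it vanishes.

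Third, to kill the boundary term I rewrite $\hat{\xi}_t$ in the affine chart around $\tilde v$ using multilinearity and anticommutativity of symbols, valid in $CH^n(\PDT\m\TXT\cup\DDT)$: multiplying $\l-\phi$ by $\underline{x}^{-\v}$ turns it into the local defining polynomial $\Phi_{\v}(\underline{z};\underline{u})$, and converting $\{x_1,\ldots,x_n\}$ into $q\{z_1,\ldots,z_n\}$ with $q\in\QQ^\ast$ (via the exponent matrix $\{\underline{m}_i\}$); so modulo coboundaries on $\PDT\m\TXT\cup\DDT$ and $\QQ(n)$-currents the relevant current becomes $q\,R\{\Phi_{\v},z_1,\ldots,z_n\}$. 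Then I apply $d[R\{\Phi_{\v};\underline{z}\}]$ and Stokes on the nested membranes $\Gamma_{\v,\e}^{(i)}$ ($i=1,\ldots,n-1$): at each step the boundary splits into a contribution on $D_{\ST_i}$ --- which is empty since $\Phi_{\v,\ST_i}=1+\phi_1+\cdots+\phi_{n-i}\approx1$ on $\Gamma_{\v,\e}^{(i)}$ --- and a contribution on the next deeper face $\DD_{\ST_{i+1}}$. Iterating down to the origin, where $\Phi_{\v,\ST_n}\equiv1$, the last integral is $\log1=0$, so the whole boundary term vanishes. Finally, having established the identity on $U_{\e}$ where both sides are analytic, analytic continuation extends it over the punctured disk $D_{|t_0|}^\ast(0)$.

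The main obstacle is not any single computation but the bookkeeping required to make the KLM/Stokes manipulations rigorous: one must check the good-position conditions (cf. Remark 2.1(i)) so that $\tor$ and each $\Gamma_{\v,\e}^{(i)}$ meet the cuts $T_{z_j}$, the divisors $\DD_{\ST}$ and their coskeleta properly; track the signs produced by each residue and each $d[R]$; and confirm that the $\QQ(n)$-valued $\delta$-currents suppressed from $(2.3)$ stay $\QQ(n)$-valued throughout, so that every equality above is a genuine congruence modulo $\QQ(n)$. One also must fix $\delta(\e)$ so that $\tor$ avoids $\TXT$ and $|t\phi|<1$ on it --- this is where hypothesis (a) (root-of-unity vertex coefficients, normalizing the parameter $t$) enters.
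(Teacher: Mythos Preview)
Your proposal is correct and follows essentially the same approach as the paper: introduce the auxiliary symbol $\hat{\xi}_t=\{\lambda-\phi,\underline{x}\}$, split $\Psi(t)$ into the torus integral (evaluated via the leading term of $R\{\lambda-\phi,\underline{x}\}$ and the Taylor expansion of $\log(1-t\phi)$) plus a boundary term, then rewrite in the local coordinates $\{z_j\}$ and kill the boundary term by the iterated Stokes argument on the nested membranes $\Gamma_{\v,\e}^{(i)}$, terminating with $\log 1=0$. One small note: the analytic continuation to $\bar D_{|t_0|}^{*}(0)$ you mention at the end is the content of the subsequent Corollary, not of the theorem itself, which only asserts the identity on $U_{\e}$.
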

\begin{rem}
(a) For $\TXT$ smooth, $AJ(\Xi_{t})$ is represented (by \cite{KLM})
by the class of a closed $(n-1)$-current $R_{\Xi_{t}}':=R_{\Xi_{t}}+(2\pi i)^{n}\delta_{\d^{-1}T_{\Xi_{t}}}$
(modulo cycles modifying the membrane $\d^{-1}T_{\Xi_{t}}$) in $H^{n-1}(\TXT,\CC)/im\{H_{n-1}(\TXT,\QQ(n))\}$,
and $\Psi(t)\equiv\int_{\van_{t}}[R_{\Xi_{t}}']$. For brevity, we
denote $R_{\Xi_{t}}'=:R_{t}'$. We think of $[R_{t}']$ as a multivalued
section of $\H_{\tilde{\X}/\PP^{1}}^{n-1}:=R^{n-1}\tilde{\pi}_{*}\CC\otimes\O_{\PP^{1}}$
over $\PP^{1}\m\L$.

(b) Theorem $2.2$ is valid $\mod\,\ZZ(2)$ if $n=2$, Remark $1.11$
applies, and vertex coefficients of $\phi$ are all $1$.

(c) The apparent similarity (of the $\sum_{m\geq1}$ in the Theorem)
to the formal group law in \cite{BS} is somewhat deceptive, as their
$\ell(t)$ would correspond to $\sum_{m\geq0}\frac{[\phi^{m}]_{0}}{m+1}t^{m+1}$
in the present notation.
\end{rem}
Now assume henceforth that the general $\TXT$ is nonsingular (or
is a surface with $A_{1}$ singularities). The Gauss-Manin connection
$\nabla$ kills periods hence \linebreak $H^{n-1}(\TXT,\QQ(n))$-ambiguities
in $[R_{t}']$, and $\nabla[R_{t}']\in\Gamma(\PP^{1},\Omega_{\PP^{1}}^{1}\left\langle \log\L\right\rangle \otimes\F^{n-1}\H_{\tilde{\X}/\PP^{1}}^{n-1})$
(see \cite{Ke1}). Writing $\delta_{t}:=t\d_{t}:=t\frac{d}{dt}$,
this implies that \[
\nabla_{\delta_{t}}[R_{t}']=f(t)[\tilde{\omega}_{t}]\]
for $f\in\bar{K}(\PP^{1})^{*}$. To find $f$, we take periods of
both sides:\[
\frac{1}{(2\pi i)^{n-1}}t\frac{d}{dt}\int_{\van_{t}}[R_{t}']\,=\,\frac{f(t)}{(2\pi i)^{n-1}}\int_{\van_{t}}\tilde{\omega}_{t}\,,\]
and for $t\in U_{\e}$ this becomes \[
t\frac{d}{dt}\left\{ \log t\,+\,\sum_{m\geq1}\frac{[\phi^{m}]_{0}}{m}t^{m}\right\} \,=\, f(t)\sum_{m\geq0}[\phi^{m}]_{0}t^{m}.\]
So $f(t)\equiv1$ on $U_{\e}$, hence on $\PP^{1}$. There exists
a Picard-Fuchs operator $D_{PF}=\delta_{t}^{r}+\sum_{k=0}^{r-1}g_{k}(t)\delta_{t}^{k}$
($g_{k}\in\bar{K}(\PP^{1})^{*}$, $r\leq rk(R^{n-1}\tilde{\pi}_{*}\CC)$)
satisfying $D_{PF}A(t)=0$, and $\nabla_{PF}[\tilde{\omega}_{t}]=0$.

\begin{cor}
On $\PP^{1}\m\L$, $\nabla_{\delta_{t}}[R_{t}']=[\tilde{\omega}_{t}]$,
and the periods of $R_{t}'$ (e.g. $\Psi(t)$) satisfy the homogeneous
equation $(D_{PF}\circ\delta_{t})(\cdot)=0.$
\end{cor}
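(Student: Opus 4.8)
The plan is to assemble the two assertions from the computation already carried out in the paragraph immediately preceding the statement, taking care over what ``$\nabla_{\delta_t}[R_t']$'' actually means and then reading off the differential equation. First I would recall from \cite{Ke1} that, since the Gauss--Manin connection $\nabla$ annihilates the local system $R^{n-1}\tilde{\pi}_*\QQ(n)$, the derivative $\nabla_{\delta_t}[R_t']$ descends to a well-defined section of $\Omega^1_{\PP^1}\langle\log\L\rangle\otimes\H^{n-1}_{\tilde{\X}/\PP^1}$ over $\PP^1\m\L$, independent both of the choice of KLM representative $R_t'$ of $AJ(\Xi_t)$ and of the membrane $\d^{-1}T_{\Xi_t}$ used to close it. The key structural input (again from \cite{Ke1}) is the version of Griffiths transversality appropriate to such a lift of an Abel--Jacobi class: $\nabla_{\delta_t}[R_t']$ lands in the Hodge line $\F^{n-1}\H^{n-1}_{\tilde{\X}/\PP^1}=\O_{\PP^1}\cdot[\tilde{\omega}_t]$. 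Hence $\nabla_{\delta_t}[R_t']=f(t)[\tilde{\omega}_t]$ for some $f\in\bar{K}(\PP^1)^*$.

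To pin down $f$ I would pair both sides with the flat (multivalued) family $\van_t$, viewed as a section of $R^{n-1}\tilde{\pi}_*\ZZ$ over $\PP^1\m\L$. Because $\van_t$ is $\nabla$-horizontal, differentiation commutes with the period integral, so $\delta_t\int_{\van_t}[R_t']=\int_{\van_t}\nabla_{\delta_t}[R_t']=f(t)\int_{\van_t}[\tilde{\omega}_t]=f(t)A(t)$, where $A(t)$ is the fundamental period of $(2.2)$. On the sector $U_\e$, Theorem $2.2$ gives $\int_{\van_t}[R_t']\equiv(2\pi i)^{n-1}\{\log t+\sum_{m\geq1}\tfrac{[\phi^m]_0}{m}t^m\}$ modulo $\QQ(n)$; applying $\delta_t$ both removes the $\QQ(n)$-ambiguity and, by $(2.2)$, produces exactly $A(t)$. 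Since $A(0)=(2\pi i)^{n-1}\neq0$, we conclude $f\equiv1$ on $U_\e$, and hence $f\equiv1$ as a rational function on $\PP^1$; this is the first assertion.

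For the second assertion, recall that $D_{PF}=\delta_t^r+\sum_{k=0}^{r-1}g_k(t)\delta_t^k$ was chosen precisely so that $\nabla_{PF}[\tilde{\omega}_t]=0$ (equivalently $D_{PF}A(t)=0$). Composing with the relation just proved, $\nabla_{PF}\nabla_{\delta_t}[R_t']=\nabla_{PF}[\tilde{\omega}_t]=0$; that is, $[R_t']$ is annihilated by the order-$(r+1)$ operator $D_{PF}\circ\delta_t$ acting through $\nabla$. Pairing with any flat local section $\gamma_t$ of $R^{n-1}\tilde{\pi}_*\QQ$ turns $\nabla$ into ordinary $t$-differentiation, so every period $\int_{\gamma_t}[R_t']$, and in particular $\Psi(t)$, satisfies $(D_{PF}\circ\delta_t)(\cdot)=0$; alternatively this is immediate from $(D_{PF}\circ\delta_t)\Psi(t)=D_{PF}(\delta_t\Psi(t))=D_{PF}(A(t))=0$.

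The step I expect to be the main obstacle is the transversality input from \cite{Ke1}: that $\nabla_{\delta_t}$ applied to a KLM current representing an $AJ$-class again lands in $\F^{n-1}$ (so that $\nabla_{\delta_t}[R_t']$ is a genuine \emph{multiple} of $[\tilde{\omega}_t]$ rather than an arbitrary class), together with the bookkeeping that $[R_t']$ is $\nabla$-well-defined only modulo the rational lattice and modulo membrane-modifying cycles. Everything else is the explicit power series of $(2.2)$, the already-established Theorem $2.2$, and the standard fact that periods over flat cycles intertwine Gauss--Manin with $\tfrac{d}{dt}$.
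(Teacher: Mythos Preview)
Your proof is correct and follows essentially the same approach as the paper: the argument appears in the paragraph immediately preceding the Corollary, and you have reproduced its structure faithfully---the transversality input from \cite{Ke1} giving $\nabla_{\delta_t}[R_t']=f(t)[\tilde{\omega}_t]$, the pairing with $\tilde{\varphi}_t$ combined with Theorem~2.2 and $(2.2)$ to force $f\equiv 1$, and then composition with $\nabla_{PF}$. Your write-up is, if anything, slightly more careful in spelling out why $f\equiv 1$ (noting $A(0)\neq 0$) and in explaining why periods over flat cycles convert $\nabla$ to ordinary differentiation.
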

${}$

\begin{cor}
The classes $\Xi_{t}\in H_{\M}^{n}(\TXT,\QQ(n))$ and $\xi_{t}\in CH^{n}(\TXT^{*},n)$
are \linebreak (AJ-)nontrivial for general $t\in\PP^{1}$.
\end{cor}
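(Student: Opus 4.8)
The plan is to extract the Corollary essentially formally from Theorem~2.2, using the definition $(2.4)$ of $\Psi$ together with the current-level computation carried out just before Theorem~2.2. First I would record that that computation exhibits $\Psi(t)$, modulo $\QQ(n)$, as the value of an Abel--Jacobi pairing
\[
\Psi(t)\;=\;AJ(\xi_t)\bigl([\Gamma_{\v,\e}\cap\TXT]\bigr),\qquad [\Gamma_{\v,\e}\cap\TXT]\in H_{n-1}(\TXT\m\tilde D,\QQ),
\]
the chain $\Gamma_{\v,\e}\cap\TXT$ mapping (up to sign) to $\van_t$ under $\jmath_*\colon H_{n-1}(\TXT\m\tilde D,\QQ)\to H_{n-1}(\TXT,\QQ)$. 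Since $AJ(\xi_t)=\jmath^{*}AJ(\Xi_t)$ by functoriality of $AJ$ under the open immersion $\TXT^{*}\hookrightarrow\TXT$, a nonzero value of this pairing forces $AJ(\xi_t)\ne 0$ and a fortiori $AJ(\Xi_t)\ne 0$. So the entire statement reduces to: \emph{$\Psi$ is non-constant}.

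The next step is to see non-constancy directly from the formula $(2.7)$. On the sector $U_\e=\{|t|<\delta(\e),\ \arg t\in(-\tfrac{\pi}{4},\tfrac{\pi}{4})\}$, which is simply connected, I would fix the holomorphic branch $\widetilde\Psi(t)=(2\pi i)^{n-1}\{\log t+\sum_{m\ge 1}\tfrac{[\phi^m]_0}{m}t^m\}$; it is the sum of $(2\pi i)^{n-1}\log t$ and a function holomorphic at $t=0$, hence patently non-constant. More structurally, $\delta_t\widetilde\Psi(t)=A(t)=(2\pi i)^{n-1}\sum_{m\ge 0}[\phi^m]_0\,t^m$ is the fundamental period (this is the content of Corollary~2.3, $\nabla_{\delta_t}[R_t']=[\tilde\omega_t]$, or just term-by-term differentiation), and $A(0)=(2\pi i)^{n-1}\ne 0$, so $A\not\equiv 0$ and $\widetilde\Psi$ is non-constant. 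Because $\QQ(n)=(2\pi i)^n\QQ$ is countable while $\widetilde\Psi$ is holomorphic, $\Psi$ itself is non-constant as a map $U_\e\to\CC/\QQ(n)$.

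Finally I would upgrade "non-constant" to "nonzero for general $t$". If $AJ(\Xi_t)=0$, then $\Psi(t)=0$ in $\CC/\QQ(n)$, i.e. $\widetilde\Psi(t)\in\QQ(n)$; a non-constant holomorphic function meets the countable set $\QQ(n)$ only on a thin (countable) subset of $U_\e$, so $AJ(\xi_t)$ and $AJ(\Xi_t)$ are nonzero off a countable subset of $U_\e$ already. To reach the Zariski-generic statement one invokes that the higher normal function $t\mapsto AJ(\Xi_t)$ over $\PP^1\m\L$ admits a non-flat lift (Corollary~2.3, $\nabla_{\delta_t}[R_t']=[\tilde\omega_t]\ne 0$), hence cannot vanish on a nonempty open set without vanishing identically; its zero locus is therefore a proper closed subset with empty interior, and $AJ(\Xi_t)\ne 0$, $AJ(\xi_t)\ne 0$ for general $t\in\PP^1$.

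I do not expect a genuine obstacle once Theorem~2.2 is available: the argument is soft. The only point deserving care is the bookkeeping in the first step --- verifying that $\Psi(t)$ really is a pairing against $AJ(\xi_t)$, and not merely against $AJ(\Xi_t)$, so that \emph{both} the ambient class and the restricted symbol are seen to be nontrivial. This is precisely what the manipulation of the regulator current $R\{\l-\phi,x_1,\dots,x_n\}$ preceding Theorem~2.2 supplies, once the residual integrals over the $\DD_{\ST_i}$ are checked to vanish --- the very induction carried out there. An alternative and slightly cleaner route to non-constancy, which I might use instead, bypasses $(2.7)$ altogether and argues purely via Corollary~2.3: $\nabla_{\delta_t}$ of a lift of the normal function equals the nowhere-vanishing holomorphic form $[\tilde\omega_t]$, so the normal function is not flat and in particular not identically zero.
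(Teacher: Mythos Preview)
Your argument is correct and matches the paper's approach: the paper also deduces nontriviality from Theorem~2.2, observing simply that $\Psi(t)\to\infty$ as $t\to 0$ (a slightly sharper form of your non-constancy), and likewise offers the infinitesimal-invariant route $\nabla_{\delta_t}[R_t']=[\tilde\omega_t]\ne 0$ as an alternative. (A minor numbering point: the result you cite as Corollary~2.3 is Corollary~2.4 in the paper; 2.3 is a Remark.)
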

\begin{proof}
There are several simple ways to see this; the first is that Theorem
$2.2$ $\implies$ $\Psi(t)\to\infty$ as $t\to0$, which obviously
shows \[
0\neq AJ(\xi_{t})\in Hom_{\QQ}(H_{n-1}(\TXT^{*},\QQ),\CC/\QQ(n)).\]
One can also use nonvanishing of the infinitesimal invariant $\nabla[R_{t}']$,
and there is an abstract way to do this which bypasses Corollary $2.4$
(and the Theorem). Recall $\TXM^{*}\cong(\CC^{*})^{n}$, and consider
the diagram $ \vspace{2mm} $ 

\small \xymatrix{ CH^n(\TXM^*,n) \ar [d]^{cl} & H^n_{\M}(\TXM ,n) \ar [l]_{\jmath^*} \ar [rr]^{ \{ AJ_t \}_{t\in \PP^1 \m \L} \mspace{100mu}} \ar [d]^{cl} & & H^0 \left( \PP^1 \m \L \; , \; \H^{n-1}_{\tilde{\X}/\PP^1} / R^{n-1}\tilde{\pi}_* \QQ(n) \right) \ar [d]^{\nabla} \\ F^nH^n(\TXM^*,\CC) & F^nH^n(\TXM ,\CC) \ar [l]_{\jmath^*} \ar @{^(->} [rr] & & H^0 \left( \PP^1 \m \L \; , \; \Omega^1_{\PP^1} \otimes \F^{n-1} \H^{n-1}_{\tilde{\X}/\PP^1} \right) } \normalsize

${}$\\
in which\[
\jmath^{*}(\Omega_{\Xi})=\jmath^{*}(cl(\Xi))=cl\left\langle \{\underline{x}\}\right\rangle =[\bigwedge^{n}\dlog\underline{x}]\neq0.\]
(Note that this implies that $\bigwedge^{n}\dlog\underline{x}$ extends
to a holomorphic form on $\TXM$, namely $\Omega_{\Xi}$.) One could
also base a proof on Corollary $2.9$ below.
\end{proof}
To put the last result in context, we recall the vanishing theorem
of \cite{Ke1} as it applies to the case of CY's. For $X/\CC$ smooth
projective of dimension $n-1$, let\[
K_{n}^{M}(X):=im\{CH^{n}(X,n)\to K_{n}^{M}(\CC(X))\},\]
and\[
\underline{H}^{n-1}(\eta_{X},\CC/\QQ(n)):=im\{H^{n-1}(X,\CC/\QQ(n))\to\mspace{-20mu}\begin{array}[t]{c}
\underrightarrow{\lim}\\
\Tiny{}^{^{\begin{array}{c}
D\subset X\\
codim.\,1\end{array}}}\end{array}\mspace{-20mu}H^{n-1}(X\m D,\CC/\QQ(n))\}\]
\[
\cong Gr_{N}^{0}H^{n-1}(X,\CC/\QQ(n)),\]
where $N^{\bullet}$ is the coniveau filtration. (This is nonzero
for a CY since $[\omega]\notin N^{1}$; for a surface it is $H_{tr}^{2}$.)
Then the $AJ$ map\[
K_{n}^{M}(X)\to\underline{H}^{n-1}(\eta_{X},\CC/\QQ(n))\]
is zero for $X$ a CY arising as a very general complete intersection
in $\PP^{n+r}$ of multidegree $(D_{0},\ldots,D_{r})$, $\sum D_{j}=n+r+1$,
and $n\geq3$ ($X\neq$curve). (Probably a similar result holds with
$\PP^{n+r}$ replaced by another toric Fano variety.) In contrast,
a general member of a $1$-parameter family arising from Theorem $1.7$
is still rather special, $\phi$ having coefficients in a number field
which are further restricted by the tempered requirement. In fact,
since $0\neq[\tilde{\omega}_{t}]=\nabla_{\delta_{t}}[R_{t}']\in\frac{N^{0}}{N^{1}}H^{n-1}(\TXT,\CC/\QQ(n))$
for general $t$ and $\nabla_{\delta_{t}}\N^{1}\H^{n-1}\subseteq\N^{1}\H^{n-1},$
we see that generically $0\neq[R_{t}']\in Gr_{N}^{0}$ $\implies$
$\{\underline{x}\}\in K_{n}^{M}(\TXT)$ is ($AJ$-)nontrivial.

So far, little to nothing has been said regarding the behavior of
$\Psi(t)$ globally or near $t_{1}\in\L\m\{0\}=:\L^{*}$. Fix a base
point $0'\in U_{\epsilon}$, let $\mathfrak{P}$ denote the space
of $C^{\infty}$ paths $P:\,[0,1]\to\PP^{1}\m\{0\}$ satisfying $P(0)=0'$,
$P([0,1))\subset\PP^{1}\m\L$, and write $P([0,1])=:|P|$. Define
a projection $\rho:\,\mathfrak{P}\to\PP^{1}\m\{0\}$ by $\rho(P):=P(1)$,
and let $\Phi_{P}=\cup_{t\in|P|}\van_{t}$ (with $[\van_{\rho(P)}]\in H_{n-1}(\tilde{X}_{\rho(P)},\ZZ)$)
be a {}``topological continuation'' of the vanishing cycle. There
is an obvious equivalence relation on $\mathfrak{P}^{\circ}:=\rho^{-1}(\PP^{1}\m\L)$
--- namely, $P_{1},P_{2}\in\rho^{-1}(t)$ are equivalent iff the restriction
of $R^{n-1}\tilde{\pi}_{*}\ZZ$ to $|P_{1}|\cup|P_{2}|$ is trivial.
Extend this to $t\in\L^{*}$ by requiring only that the union of $(|P_{1}|\cup|P_{2}|)\m\{t\}$
with some subset of $D_{\varepsilon}^{*}(t)$ have trivial monodromy.
Denote the quotient spaces by $\check{\mathfrak{P}}^{\circ}\subset\check{\mathfrak{P}}$,
topologizing the latter in analogy with the extended upper half-plane.
Note that $\L^{*}$ splits into finite and (unipotent and non-unipotent)
infinite monodromy fibers; $\rho^{-1}$ of the former should be thought
of as points interior to $\check{\mathfrak{P}}$, $\rho^{-1}$ of
the latter as cusps.

We want to clarify the following

\subsubsection*{Assertion:}

$\Psi(t)$ lifts to a well-defined, continuous function on $\check{\mathfrak{P}}$
with holomorphic restriction to $\check{\mathfrak{P}}^{\circ}$.\\
\\
To do this, we must finish defining $\Psi(t)$ by observing that $(2.4)$
makes sense (in $\CC/\QQ(n)$) even for $t\in\L^{*}$ once the homology
class $\van_{t}\in H_{n-1}(\TXT,\ZZ)$ is fixed. Since the MHS $H^{n}(\TXT)$
has weights $\leq n$, $\hm{(}\QQ(0),H^{n}(\TXT,\QQ(n)))=\{0\}$ and
$H_{\H}^{n}(\TXT,\QQ(n))\cong\ext(\QQ(0),H^{n-1}(\TXT,\QQ(n)))\cong H^{n-1}(\TXT,\CC/\QQ(n)).$
So $AJ(\Xi_{t})$ is at least defined in the last group (though we
won't say how to compute it until $\S4$), and $(2.4)$ simply pairs
homology and cohomology.

Fix $t\in\PP^{1}\m\{0\}$, $P\in\rho^{-1}(t)$ and $\Phi_{P}$ (hence
$\van_{t}$). By functoriality of KLM currents (moving $\Xi$ if necessary
to lie in $Z^{n}(\TXM,n)_{\TXT}$), $\int_{\van_{t}}R_{\Xi_{t}}=\int_{\van_{t}}R_{\Xi}$
for any $t\in\PP^{1}\m\{0\}$. If we accept (in anticipation of $\S4.1$)
that $AJ(\Xi_{t})(\van_{t})\equiv\int_{\van_{t}}R_{\Xi_{t}}$ even
for $t\in\L^{*}$, then $(2.4)$ gives \[
\Psi(t)=\int_{\van_{t}}R_{\Xi}=\int_{\Phi_{P}}d[R_{\Xi}]+\int_{\van_{0'}}R_{\Xi}\qneq\int_{\Phi_{P}}\Omega_{\Xi}+\Psi(0')\]
 for the continuation of $\Psi$ corresponding to $P$. The Assertion
follows, using $\Omega_{\Xi}\in\Omega^{n}(\TXM)$ and Morera's theorem
for the holomorphicity (which we already know in any case), and {}``smoothing
out'' any $\QQ(n)$-discrepancies.

As for the local behavior of (the multivalued function) $\Psi(t)$
at $t_{1}\in\L^{*}$ on $\PP^{1}$, this must be consistent with the
continuity on $\check{\mathfrak{P}}$. In $q:=t-t_{1}$ we have in
general $\Psi=$ holomorphic plus terms of the form $q^{\beta}(\log^{k}q)H(q)$
where $\beta\in\QQ^{+},\, k\in\{1,\ldots,n-1\}$, and $H$ is holomorphic.
For example, in the unipotent case suppose we have monodromy $T\van_{t}=\van_{t}+\eta_{t}$;
then $\eta_{t}\in\text{im}(T-I)$ implies (by Clemens-Schmid) that
$\eta_{t_{1}}$ is zero in $H_{n-1}(X_{t_{1}},\ZZ)$, hence pairs
to $0$ (mod $\QQ(n)$) with $AJ(\Xi_{t_{1}})$. Moreover, if $\eta_{t}\in\ker(T-I)$
then we simply have $\Psi=\Psi_{0}(q)+q(\log q)\Psi_{1}(q)$ where
$\Psi_{0},\,\Psi_{1}$ are holomorphic (and single-valued). 

Now let $t_{0}$ be the smallest%
\footnote{Of course there might be more than one element of smallest ($\neq0$)
modulus; in this event just choose one.%
} nonzero element of $\L$; i.e. (at least if $\phi$ is regular) $\frac{1}{t_{0}}$
is the critical value of $\phi$ of largest finite modulus. Putting
the above discussion together with Corollary 2.4 yields

\begin{cor}
The $\Psi(t)$ computation in Theorem $2.2$ holds $\forall t\in\bar{D}_{|t_{0}|}^{*}$.
\end{cor}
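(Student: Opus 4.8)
The plan is to extend the identity $(2.6)$ from the small sector $U_{\e}$ to the whole punctured disk $\bar{D}_{|t_{0}|}^{*}$ by analytic continuation, exploiting that by the very choice of $t_{0}$ there are no singular fibers to get in the way. First I would name the two functions in play: the right-hand side $\psi_{0}(t):=(2\pi i)^{n-1}\{\log t+\sum_{m\geq1}\frac{[\phi^{m}]_{0}}{m}t^{m}\}$ of $(2.6)$, and the fundamental period $A(t)=(2\pi i)^{n-1}\sum_{m\geq0}[\phi^{m}]_{0}t^{m}$ of \S2.1. Since $t_{0}$ is the nonzero point of $\L$ nearest $0$, the disk $D_{|t_{0}|}(0)$ meets $\L$ only in $\{0\}$; hence over $D_{|t_{0}|}^{*}$ the map $\tilde{\pi}$ is smooth, $R^{n-1}\tilde{\pi}_{*}\QQ$ is a local system, and $\Psi(t)=AJ(\Xi_{t})(\van_{t})\equiv\int_{\van_{t}}[R_{t}']$ (Remark $2.3$(a)) is a holomorphic function there, multivalued only through the monodromy of $\van_{t}$ about $0$. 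Likewise $\psi_{0}$ is holomorphic on $D_{|t_{0}|}^{*}$: the series for $A$ has radius of convergence $|t_{0}|$ (a Picard–Fuchs solution can only be singular on $\L$), and dividing its coefficients by $m$ does not shrink this.

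The crucial point I would use is that the two functions share a logarithmic derivative. On one hand $\delta_{t}\psi_{0}=A$ by term-by-term differentiation ($\delta_{t}\log t=1=[\phi^{0}]_{0}$, $\delta_{t}t^{m}=mt^{m}$). On the other hand, pairing the identity $\nabla_{\delta_{t}}[R_{t}']=[\tilde{\omega}_{t}]$ of Corollary $2.4$ against the $\nabla$-flat section $\van_{t}$ gives $\delta_{t}\Psi=\int_{\van_{t}}\tilde{\omega}_{t}=A$ on $D_{|t_{0}|}^{*}$. Therefore $\Psi-\psi_{0}$ has vanishing $\delta_{t}$-derivative, hence is constant on the universal cover of $D_{|t_{0}|}^{*}$. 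By Theorem $2.2$, $\Psi-\psi_{0}\in\QQ(n)$ on $U_{\e}\subseteq D_{|t_{0}|}^{*}$; since $U_{\e}$ is connected and $\QQ(n)$ is countable, this difference is a single element $c_{0}\in\QQ(n)$ there, and combining with the preceding sentence $\Psi-\psi_{0}\equiv c_{0}$ throughout $D_{|t_{0}|}^{*}$ (for the branches prolonging those on $U_{\e}$). This is $(2.6)$ on the open punctured disk.

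It remains to reach the bounding circle $|t|=|t_{0}|$, and this is where I expect the only real subtlety to lie, since there may be several points of $\L$ of modulus $|t_{0}|$ and the input needed here --- the local behavior of $\Psi$ near points of $\L$ --- was only sketched in \S2.3. There $\Psi$ is exhibited as $(\text{holomorphic})+q^{\beta}(\log^{k}q)H(q)$ with $\beta\in\QQ^{+}$ near such a point, so $\Psi$ extends continuously to $\bar{D}_{|t_{0}|}^{*}$; applying $\delta_{t}$, the function $A$ has there at worst a $q^{\beta-1}$ ($\beta-1>-1$) or logarithmic singularity, hence is locally integrable, so $\psi_{0}$ extends continuously to $\bar{D}_{|t_{0}|}^{*}$ as well. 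Letting $t$ approach the circle in the congruence already established on $D_{|t_{0}|}^{*}$ then yields $(2.6)$ for all $t\in\bar{D}_{|t_{0}|}^{*}$. By contrast, the analytic continuation over the open disk is essentially forced once one knows there are no intervening singular fibers, so the genuine work is concentrated in justifying the boundary behavior.
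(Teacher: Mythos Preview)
Your treatment of the open punctured disk $D_{|t_0|}^*$ is correct and essentially coincides with the paper's (implicit) argument: both sides of $(2.6)$ are holomorphic there and agree on $U_{\e}$, hence agree everywhere. Your route via $\delta_t\Psi=A=\delta_t\psi_0$ (using Corollary~2.4) is a perfectly good way to package this. One small correction: the continuity Assertion for $\Psi$ you invoke is in \S2.2, just above the corollary, not in \S2.3.

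There is, however, a genuine gap at the boundary circle. Your argument that $A$ is locally integrable near $t_0$ shows only that the \emph{function} $\psi_0$ admits a continuous extension to $|t|=|t_0|$ (indeed, this already follows from $\psi_0=\Psi-c_0$ once you know $\Psi$ extends). It does \emph{not} show that the power series $\sum_{m\ge1}\frac{[\phi^m]_0}{m}t^m$ actually converges for $|t|=|t_0|$; a continuous boundary extension of an analytic function need not coincide with the Abel sum, let alone force ordinary convergence. Since the corollary is subsequently used to evaluate this series at $t_0$ itself (see $(4.8)$ and $(4.14)$), convergence is part of the statement. This is precisely what the paper's ``bit of Tauberian theory'' supplies: knowing that $A(t)=\delta_t\Psi(t)$ has at worst a $\log^{n-1}(t-t_0)$ singularity controls the growth of the coefficients $[\phi^m]_0\,t_0^m$ sufficiently to conclude that $\sum\frac{[\phi^m]_0}{m}t_0^m$ converges, not merely that it is Abel-summable. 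Your integrability remark is in the right direction but stops one step short of what is needed.
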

\begin{proof}
The convergence and continuity of $\sum\frac{[\phi^{m}]_{0}}{m}t^{m}$
at the boundary follows from a bit of Tauberian theory, combined with
the fact that $A(t)=\delta_{t}\Psi(t)$ has at worst a $\log^{n-1}(t-t_{0})$
pole at $t_{0}$. Then one invokes continuity of $\Psi(t)$ itself. 
\end{proof}
We conclude with a number-theoretic application. Various authors \cite{Be1,De2,RV}
have noticed a relation between the logarithmic Mahler measure ${\bf m}$
of a Laurent polynomial $Q(x_{1},\ldots,x_{n})$ and real regulator
periods (or special values of $L$-functions) associated to the variety
$Q=0$. Writing\[
\hat{\TT}^{n}:=\{|x_{1}|=\cdots=|x_{n}|=1\}\subset(\CC^{*})^{n},\]
this is\[
{\bf m}(Q):=\frac{1}{(2\pi i)^{n}}\int_{\hat{\TT}^{n}}\log|Q|\bigwedge^{n}\dlog\underline{x}.\]
the real regulator is just the composition\[
H_{\M}^{n}(\TXT,\QQ(n))\rTo^{AJ}H^{n-1}(\TXT,\CC/\QQ(n))\rOnto^{\pi_{\RR}}H^{n-1}(\TXT,\RR(n-1)),\]
where (on the level of currents) $\pi_{\RR}$ takes $R_{\Xi_{t}}'$
to its {}``$(2\pi i)^{n-1}\cdot$real''-part $r_{\Xi_{t}}\in\mathcal{{D}}_{\RR(n-1)}^{n-1}(\TXT)$.
(The latter is $(2\pi i)^{n}\cdot$Goncharov's current \cite{Go},
up to coboundary.) In the present context the two are related as follows.

\begin{cor}
Under the conditions of Theorem $2.2$,\[
-Re\left(\frac{1}{(2\pi i)^{n-1}}\Psi(t)\right)=\frac{-1}{(2\pi i)^{n-1}}\int_{\van_{t}}[r_{t}]={\bf m}(t^{-1}-\phi)\]
for all $t$ in \[
\mathcal{S}:=\overline{\{\text{{connected\, component\, of\,\,\,}}(\PP^{1}\m\{\frac{_{1}}{^{\phi(\hat{\TT}^{n})}}\})\text{{\, containing\,\,\,}}\{0\}\}}\m\{0\}\,\,\subseteq\,\PP^{1},\]
where the bar denotes analytic closure.
\end{cor}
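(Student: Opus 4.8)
The plan is to reduce the Mahler measure identity to the already-established Theorem 2.2 by deforming the torus $\hat{\TT}^n$ on which the Mahler integral lives to the vanishing cycle's tube $\tor$ on which $\Psi(t)$ was computed. First I would recall that $\mathbf{m}(t^{-1}-\phi) = \frac{1}{(2\pi i)^n}\int_{\hat{\TT}^n}\log|t^{-1}-\phi|\bigwedge^n\dlog\underline{x}$ and note the elementary identity $\log|w| = \mathrm{Re}(\log w)$ applied to $w = t^{-1}-\phi = t^{-1}(1-t\phi)$. Thus $\mathbf{m}(t^{-1}-\phi) = \mathrm{Re}\left(\frac{1}{(2\pi i)^n}\int_{\hat{\TT}^n}\log(t^{-1}-\phi)\bigwedge^n\dlog\underline{x}\right)$, provided the branch of $\log$ can be chosen continuously on $\hat{\TT}^n$ — which holds exactly when $t^{-1}$ lies in the unbounded component of $\CC\m\phi(\hat{\TT}^n)$, i.e. precisely the condition defining $\ms$ (equivalently, $0 \notin \{$closure of $1-t\phi(\hat{\TT}^n)\}$, so that $1-t\phi$ has a well-defined logarithm on the torus).

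Next I would connect this integral to the regulator computation. The key observation is that the integrand $\log(t^{-1}-\phi)\bigwedge^n\dlog\underline{x}$ is, up to the $\QQ(n)$-valued $\delta$-current and a coboundary, exactly the pull-back of the KLM regulator current $R\{t^{-1}-\phi, x_1,\ldots,x_n\} = R(\hat{\xi}_t)$ evaluated against a top-cycle — this is precisely the manipulation carried out in the proof of Theorem 2.2, where $\frac{1}{2\pi i}AJ(\hat{\xi}_t)(\tor) = \frac{1}{2\pi i}\oint \log(\l-\phi)\bigwedge^n\dlog\underline{x}$. The point is that both $\hat{\TT}^n$ and $\tor$ are real $n$-tori in $(\CC^*)^n$ of the form $\{|x_i| = c_i\}$, and as long as $t$ stays in $\ms$ the homology class $[\hat{\TT}^n] = [\tor]$ in $H_n\big((\CC^*)^n\m\tilde{X}_t^*\big)$ — no singular fiber is crossed and $\log(t^{-1}-\phi)\bigwedge^n\dlog\underline{x}$ is a closed form (in fact a holomorphic $n$-form) on the complement of $\tilde{X}_t^*$ in the torus $(\CC^*)^n$, so Stokes lets me replace $\hat{\TT}^n$ by $\tor$. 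This gives $\frac{1}{(2\pi i)^n}\int_{\hat{\TT}^n}\log(t^{-1}-\phi)\bigwedge^n\dlog\underline{x} = \frac{1}{(2\pi i)^n}\int_{\tor}\log(t^{-1}-\phi)\bigwedge^n\dlog\underline{x} = \frac{1}{2\pi i}\cdot\frac{1}{(2\pi i)^{n-1}}AJ(\hat{\xi}_t)(\tor)$, and the computation inside the proof of Theorem 2.2 identified $-\frac{1}{2\pi i}AJ(\hat{\xi}_t)(\tor) = \Psi(t)$ modulo $\QQ(n)$ and modulo the vanishing second (residue) term. Taking real parts kills the $\QQ(n)$-ambiguity up to $\RR(n-1)$ and yields $-\mathrm{Re}\left(\frac{1}{(2\pi i)^{n-1}}\Psi(t)\right) = \mathbf{m}(t^{-1}-\phi)$.

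Finally I would handle the middle term $\frac{-1}{(2\pi i)^{n-1}}\int_{\van_t}[r_t]$: by Remark 2.1 and the definition of $\pi_\RR$, the current $r_{\Xi_t}$ is the $(2\pi i)^{n-1}\cdot$real part of $R_{\Xi_t}'$, so $\int_{\van_t}[r_t] = (2\pi i)^{n-1}\mathrm{Re}\!\left(\frac{1}{(2\pi i)^{n-1}}\int_{\van_t}[R_t']\right) = (2\pi i)^{n-1}\mathrm{Re}\!\left(\frac{1}{(2\pi i)^{n-1}}\Psi(t)\right)$ using $\Psi(t) \equiv \int_{\van_t}[R_t']$ from Remark 2.3(a); dividing through gives the first equality in the Corollary for free. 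The main obstacle I anticipate is \emph{justifying the torus-swap $\hat{\TT}^n \rightsquigarrow \tor$ over all of $\ms$}, not just near $t=0$: one must check that within the connected component of $\PP^1\m\{1/\phi(\hat{\TT}^n)\}$ containing $0$, the homology class of the tube stays constant (no wall-crossing of $\tilde{X}_t^*$ through the torus, which is exactly the statement that $t^{-1}\notin\phi(\hat{\TT}^n)$) and that the current identities of Theorem 2.2 — originally proved for $t\in U_\e$ by analyticity — extend to all of $\ms$; here one invokes analytic continuation together with Corollary 2.6's boundary-continuity argument and the fact that $\ms$ is connected and contains $U_\e$. A secondary technical point is the real-position/good-position requirements for the KLM representative (Remark 2.1(i)), which need to be re-verified for the slightly enlarged semi-algebraic chains when $t$ ranges over $\ms$ rather than a small disk; these are routine but must be stated.
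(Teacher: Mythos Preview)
Your proof is correct but takes a more roundabout path than the paper. The paper dispenses with the torus-swap entirely: for $|t|$ small enough that $|t\phi|<1$ on $\hat{\TT}^{n}$, it expands
\[
\frac{-1}{(2\pi i)^{n}}\int_{\hat{\TT}^{n}}\log(t^{-1}-\phi)\,\bigwedge^{n}\dlog\underline{x}
\;=\;\log t+\sum_{m\geq1}\frac{[\phi^{m}]_{0}}{m}\,t^{m}
\]
directly, because the Cauchy-type identity $[\phi^{m}]_{0}=\frac{1}{(2\pi i)^{n}}\int\phi^{m}\bigwedge^{n}\dlog\underline{x}$ holds on \emph{any} torus $\{|x_i|=c_i\}$, including $\hat{\TT}^{n}$ itself --- there is no need to pass to $\tor$. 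This power series is exactly the one in Theorem~2.2, so the two analytic (multivalued) functions agree on a small open set; analytic continuation then carries the identity across the connected component of $\PP^{1}\setminus(\L\cup\{1/\phi(\hat{\TT}^{n})\})$ containing $0$, real parts kill multivaluedness, and continuity extends to the closure (erasing $\L\setminus\{0\}$).

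Your geometric route --- identify $[\hat{\TT}^{n}]=[\tor]$ in $H_{n}((\CC^{*})^{n}\setminus\tilde{X}_{t}^{*})$ and transport the integrand by Stokes --- is valid for $t$ small (where both tori and the cobounding region avoid $\tilde{X}_{t}^{*}$), but the sentence ``as long as $t$ stays in $\ms$ the homology class $[\hat{\TT}^{n}]=[\tor]$'' overshoots: for $t\in\ms$ not small, $\tor$ may well meet $\tilde{X}_{t}^{*}$ and is no longer a cycle in that complement. You catch this in your ``main obstacle'' paragraph and correctly fall back on analytic continuation, so the argument closes --- but at that point the swap has bought nothing beyond what the direct series expansion gives. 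The paper's approach is shorter precisely because it sidesteps any question about the homology of the complement; your approach gives a more vivid geometric picture of \emph{why} the two integrals coincide near $t=0$.
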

\begin{proof}
Consider the equation\[
\frac{1}{(2\pi i)^{n-1}}\int_{\van_{t}}[R_{t}']\,=\,\log t\,+\,\sum_{m\geq1}\frac{[\phi^{m}]_{0}}{m}t^{m}\,=\,\frac{-1}{(2\pi i)^{n}}\int_{\hat{\TT}^{n}}\log(t^{-1}-\phi)\bigwedge^{n}\dlog\underline{x},\]
where the first equality holds by Theorem $2.2$ for (say) $t\in U_{\epsilon}$,
and the second for $t(\neq0)$ such that $|t|<|\phi(\underline{x})|^{-1}$
$\forall\underline{x}\in\hat{\TT}^{n}$. (Note that $|\phi|$ is bounded
above on $\hat{\TT}^{n}$.) Now the l.h.s. is analytic multivalued
on $\PP^{1}\m\L$, while the r.h.s. is analytic multivalued as long
as $(0\neq)\, t$ doesn't pass through $\{\frac{1}{\phi(\hat{\TT}^{n})}\}$
(so that log retains a continuous single-valued branch on the image
$t^{-1}-\phi(\hat{\TT}^{n})$). Since they agree on an analytic open
set, they continue to agree on (the covering space of) the obvious
connected component of $\PP^{1}\m\L\cup\{\frac{1}{\phi(\hat{\TT}^{n})}\}$.
Taking real parts of both sides kills multivaluedness,%
\footnote{To see this on the r.h.s., replace $\frac{\bigwedge^{n}\dlog\underline{x}}{(2\pi i)^{n}}$
by $\bigwedge^{n}\text{darg}\underline{x}$; for the l.h.s., one easily
sees that $\van_{t}$ has no monodromy on $\mathcal{S}$ (though $[R_{t}']$
may, which is harmless).%
} and the equality extends to the analytic closure by continuity, erasing
$\L\m\{0\}$ (where $\int_{\van_{t}}[r_{t}]$ is finite).
\end{proof}

\subsection{The higher normal function}

For this subsection, take the family $\tilde{\X}\overset{\tilde{\pi}}{\to}\PP^{1}$
to be as in (the assumptions of) Theorem $1.7$. Given any (possibly
singular) fiber $\tilde{X}_{t\neq0}$, we have $AJ(\Xi_{t})\in H^{n-1}(\TXT,\CC/\QQ(n))$.
If $\R_{t}\in H^{n-1}(\TXT,\CC)$ is any lift of this class, then
since $\tilde{\omega}_{t}=\frac{1}{2\pi i}Res_{\TXT}\tilde{\Omega}_{t}\in H_{\TXT}^{n+1}(\PDT,\CC)\cong H_{n-1}(\TXT,\CC),$
the pairing $\left\langle \R_{t},[\tilde{\omega}_{t}]\right\rangle \in\CC$
makes sense. For $\TXT$ smooth and $\R_{t}=[R_{t}']$ as in Remark
$2.3(a)$, this is just $\int_{\TXT}R_{t}'\wedge\tilde{\omega}_{t}$.

\begin{defn}
The \textbf{higher normal function associated to $\Xi$} is the multivalued
function\[
\nu(t):=\left\langle \R_{t},[\tilde{\omega}_{t}]\right\rangle \]
on $\PP^{1}\m\L$, where $\R_{t}$ is a (multivalued) continuous family
of lifts of $AJ_{\TXT}(\Xi_{t})$.
\end{defn}
This is a highly transcendental function, but applying $D_{PF}$ kills
the ambiguities (which are periods of $\tilde{\omega}$) and produces
$g(t):=D_{PF}\nu(t)\in\bar{K}(\PP^{1})$ (see \cite{dAM2}). Viewed
as an element of $\bar{K}(\PP^{1})/D_{PF}\bar{K}(\PP^{1})$, $g$
is the class of a certain extension of $\mathcal{{D}}$-modules attached
to $\Xi$. Alternatively, it is the inhomogeneous term of the Picard-Fuchs
equation\[
D_{PF}(\cdot)=g\]
satisfied by $\nu$, and its nonvanishing would give another proof
of nontriviality of $\Xi_{t}$: $g\neq0$ $\implies$ $\nu\neq$period
of $\tilde{\omega}$ $\implies$ $\R_{t}\notin H^{n-1}(\TXT,\QQ(n))$
{[}general $t$] $\implies$ general $AJ(\Xi_{t})\nequiv0$. Note
that conversely, if the $\CC$-span of the $\{\nabla_{\delta_{t}}^{i}[\tilde{\omega}_{t}]\}_{i=0}^{r-1}$
is a (complexified) Hodge structure for general $t$, then it is possible
to show (using $\nabla_{\delta_{t}}\R_{t}=[\twt]$ from Corollary
$2.4$) $g\neq0$.

The study of inhomogeneous PF equations for $higher$ normal functions
was initiated by M\"uller-Stach and del Angel \cite{dAM1,dAM2,dAM3}.
Their work focused on families of higher cycles $\eta_{t}\in CH^{p}(X_{t},2p-n)$
($p<n=\dim X+1$), in which case $\int_{X_{t}}R_{\eta_{t}}'\wedge\omega_{t}$
reduces to integration of $\omega_{t}$ over a real membrane. Here
we want to demonstrate that the case $p=n$ is also accessible and
interesting.

The \textbf{Yukawa coupling} is the function $\Y\in K(\PP^{1})$ defined
by\[
\Y(t):=\left\langle [\tilde{\omega}_{t}],\nabla_{\delta_{t}}^{n-1}[\tilde{\omega}_{t}]\right\rangle \]
for $t\notin\L$. ($A_{1}$-singularities for such $t$ are harmless
here, as $[\tilde{\omega}]$ lifts to $H^{n-1}(\widetilde{\TXT})$.)
The next result implies this is the inhomogeneous term in many cases
including that of elliptic curves ($n=2$) and $K3$ surfaces ($n=3$)
with generic Picard rank $19$.

\begin{cor}
If the order of $D_{PF}$ is $(r=)\, n$, i.e. if the $\mathcal{{D}}$-module
generated by $[\tilde{\omega}_{t}]$ has rank $n$, then $g=\Y$.
\end{cor}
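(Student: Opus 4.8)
The plan is to differentiate the normal function $\nu(t)=\langle \R_t,[\twt]\rangle$ repeatedly with respect to $\delta_t$, using the identity $\nabla_{\delta_t}\R_t=[\twt]$ of Corollary $2.4$ together with the Gauss--Manin flatness of the pairing $\langle\,\cdot\,,\,\cdot\,\rangle$ appearing in Definition $2.11$, and then to feed in the rank-$n$ Picard--Fuchs relation only at the very last step. Recall first that, taking $\R_t=[R_t']$ (with the evident analogue on the minimal resolution when $\TXT$ carries $A_1$-singularities, where $[\twt]$ still lifts), Corollary $2.4$ gives $\nabla_{\delta_t}\R_t=[\twt]$ on $\PP^1\m\L$; and that Griffiths transversality places $\nabla_{\delta_t}^{j}[\twt]$ in $\F^{n-1-j}\H^{n-1}_{\tilde{\X}/\PP^1}$, while the polarization orthogonality $\langle\F^{a},\F^{b}\rangle=0$ for $a+b>n-1$ then forces
\[
\langle[\twt],\nabla_{\delta_t}^{j}[\twt]\rangle=0\quad(0\le j\le n-2),\qquad \langle[\twt],\nabla_{\delta_t}^{n-1}[\twt]\rangle=\Y(t).
\]

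Next I would establish by induction on $k$ that
\[
\delta_t^{k}\,\nu(t)=\langle\R_t,\nabla_{\delta_t}^{k}[\twt]\rangle\qquad(0\le k\le n-1).
\]
The case $k=0$ is the definition of $\nu$. For the step, applying $\delta_t$ and using flatness of $\langle\,\cdot\,,\,\cdot\,\rangle$ together with $\nabla_{\delta_t}\R_t=[\twt]$,
\[
\delta_t^{k}\nu=\langle\nabla_{\delta_t}\R_t,\nabla_{\delta_t}^{k-1}[\twt]\rangle+\langle\R_t,\nabla_{\delta_t}^{k}[\twt]\rangle=\langle[\twt],\nabla_{\delta_t}^{k-1}[\twt]\rangle+\langle\R_t,\nabla_{\delta_t}^{k}[\twt]\rangle,
\]
and the first term vanishes by the displayed orthogonality as long as $k-1\le n-2$. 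Differentiating once more --- now the first term is precisely the Yukawa coupling rather than zero ---
\[
\delta_t^{n}\nu=\langle[\twt],\nabla_{\delta_t}^{n-1}[\twt]\rangle+\langle\R_t,\nabla_{\delta_t}^{n}[\twt]\rangle=\Y(t)+\langle\R_t,\nabla_{\delta_t}^{n}[\twt]\rangle.
\]
Since by hypothesis the $\mathcal D$-module generated by $[\twt]$ has rank $n$, the relation $\nabla_{PF}[\twt]=0$ reads $\nabla_{\delta_t}^{n}[\twt]=-\sum_{k=0}^{n-1}g_k(t)\,\nabla_{\delta_t}^{k}[\twt]$; pairing against $\R_t$ and invoking the inductive identity gives $\langle\R_t,\nabla_{\delta_t}^{n}[\twt]\rangle=-\sum_{k=0}^{n-1}g_k\,\delta_t^{k}\nu$. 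Substituting,
\[
D_{PF}\,\nu=\delta_t^{n}\nu+\sum_{k=0}^{n-1}g_k\,\delta_t^{k}\nu=\Y(t),
\]
and since $g:=D_{PF}\nu$ we conclude $g=\Y$.

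The steps above are essentially formal, so the only care needed is bookkeeping: one must view $[\twt]$ as a section of $\H^{n-1}_{\tilde{\X}/\PP^1}$ (via the residue/Poincar\'e-duality identification already used to make sense of $\nu$ and of $\Y$) so that $\nabla_{\delta_t}$ and $\langle\,\cdot\,,\,\cdot\,\rangle$ apply uniformly; and one should note that although $\nu$ is multivalued, its ambiguity is a period of $\twt$, hence is annihilated by $D_{PF}$, consistently with the single-valued answer $\Y$. I expect the only real point requiring a word of justification to be the orthogonality $\langle[\twt],\nabla_{\delta_t}^{j}[\twt]\rangle=0$ for $j<n-1$ in this relative setting (Griffiths transversality plus $\langle\F^{a},\F^{b}\rangle=0$ for $a+b>n-1$), together with the remark that the $A_1$-singular fibers are handled by passing to the minimal resolution, where $[\twt]$ lifts and none of the above is affected.
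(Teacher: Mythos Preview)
Your proof is correct and follows essentially the same approach as the paper: both argue inductively that $\delta_t^{k}\nu=\langle\R_t,\nabla_{\delta_t}^{k}[\twt]\rangle$ for $k\le n-1$ using $\nabla_{\delta_t}\R_t=[\twt]$ together with the Hodge--Riemann orthogonality $\langle[\twt],\nabla_{\delta_t}^{j}[\twt]\rangle=0$ for $j<n-1$ (from Griffiths transversality), and then apply $\nabla_{PF}[\twt]=0$ at the last step to conclude $D_{PF}\nu=\Y$. Your write-up is somewhat more explicit about the final substitution and about the ambient bookkeeping (lifting to the minimal resolution at $A_1$-fibers, multivaluedness of $\nu$), but the argument is the same.
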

\begin{proof}
Compute first\[
\delta_{t}\left\langle \R_{t},[\twt]\right\rangle =\left\langle [\twt],[\twt]\right\rangle +\left\langle \R_{t},[\twt]\right\rangle =\left\langle \R_{t},\nabla_{\delta_{t}}[\twt]\right\rangle ,\]
then inductively\[
\delta_{t}^{j<n}\left\langle \R_{t},[\twt]\right\rangle =\delta_{t}\left\langle \R_{t},\nabla_{\delta_{t}}^{j-1}[\twt]\right\rangle =\left\langle [\twt],\nabla_{\delta_{t}}^{j-1}[\twt]\right\rangle +\left\langle \R_{t},\nabla_{\delta_{t}}^{j}[\twt]\right\rangle .\]
By Hodge type and Griffiths transversality, this\[
=\,\left\langle \R_{t},\nabla_{\delta_{t}}^{j}[\twt]\right\rangle .\]
Hence, with $D_{PF}=\delta_{t}^{n}+\sum_{k=0}^{n-1}g_{k}(t)\delta_{t}^{k}$,
\[
D_{PF}\nu(t)=\Y(t)+\left\langle \R_{t},\nabla_{PF}[\twt]=0\right\rangle =\Y(t).\]

\end{proof}
\begin{rem}
For $r=n=2,3,4$ $\Y(t)$ is computed by an obvious differential equation.
To state it, recall that by \cite{LTY} we have maximal unipotent
monodromy at $t=0$. Hence $g_{j}(t)=tf_{j}(t)$ for $f_{j}$ holomorphic
at $t=0$, and with $q_{2}=1$, $q_{3}=\frac{2}{3}$, $q_{4}=\frac{1}{2}$
we get $\delta_{t}\Y(t)=-q_{n}tf_{n-1}(t)\Y(t)$ $\implies$ $\Y(t)=\kappa\exp\{-q_{n}\int f_{n-1}(t)dt\}.$
From above, $\Y=g$ must be a rational function, and $f_{n-1}(t)=-\frac{M}{q_{n}}\cdot\frac{\Y'(t)}{\Y(t)}$
(for $M\in\ZZ$). (If one has maximal unipotent monodromy also at
$t=\infty$, then $M$ can be determined also.) The value of $\kappa$
requires more precise (e.g. modular) information about the family.
Note that for $n=2$, $n=3$ and $rk(Pic)=19$, or $n=4$ and $h^{3}=4$,
Corollary $2.4$ $\implies$ $g\neq0$ $\implies$ $\kappa\neq0$.
\end{rem}
We prove next an interesting result on the monodromy of (a choice
of branch of) $\nu$. Recall from $\S1.3$ the definitions (for all
$n$) of $\J,\I\subseteq\tilde{D}$ and for $n=3$ set $\mathcal{{D}}:=$normalization
of $\J$ at $\J\cap\A$. From the proof of Theorem $1.7$, $\hat{\X}\rTo^{B}\tilde{\X}$
is the simultaneous resolution of the $A_{1}$-singularities $\A(\times\PP^{1})$,
and $\mathcal{{D}}$ is just the proper transform of $\J$ (along
$\hat{X}_{t}\to\TXT$). Let $\J^{-}$ be the union of the $D_{\ST}$'s
that are not in $\II$ $and$ not of the form $\{x_{i_{1}}+x_{i_{2}}=1,\, x_{i_{3}}^{\pm1}=0\}$.
For all $n$, let $\oTT:=\RR_{x_{1}}^{-}\times\cdots\times\RR_{x_{n}}^{-}\subset(\CC^{*})^{n}$
with analytic closure $\TT^{n}\subset\PDT$; note that its class in
$H_{n}(\PDT,\DDT)$ is Lefschetz dual to that of the $n$ torus $\hat{\TT}^{n}$
in $H_{n}((\CC^{*})^{n})$. Let $\K$ denote the analytic closure
of $\phi(\oTT)$ in $\PP_{\l}^{1}$, with (open) complement $U:=\PP^{1}\m\K\subseteq\AA_{\l}^{1}$,
and set $\tilde{\X}_{U}:=\tilde{\pi}^{-1}(U)\subseteq\TXM$, $\tilde{\X}_{\K}:=\tilde{\pi}^{-1}(\K)\subseteq\tilde{\X}$.
(If $U$ is not connected, replace it by a single connected component,
and augment $\K$ by the other connected components.) Finally, let
$X:=\tilde{X}^{\l_{0}}$ be a very general fiber (with $\l_{0}\in U$).

\begin{prop}
(a) Let $\TXM$ be one of the families from Theorem $1.7$ with nonsingular
general fiber and assume $\ker\{H_{n-2}(\J)\to H_{n-2}(X)\}=0.$ Then
there exists a single-valued family of cohomology classes $\R^{\l}\in H^{n-1}(\tilde{X}^{\l},\CC)$
lifting $AJ(\Xi^{\l})$ for $\l\in U$. (This includes singular fibers
\emph{{[}}$=U\cap\L$\emph{]} unless $n=2$ and $\J\cap\I$ is nonempty.)

(b) For $n=3$ and $\A$ nonempty (the case excepted above), $H_{1}(\J^{-}\m\J^{-}\cap\A)=0$
$\implies$ conclusion of (a) holds as stated. If we assume instead
$H_{1}(\mathcal{{D}})=0$, then the conclusion only holds with $\tilde{X}^{\l}$
replaced by $\hat{X}^{\l}$ (and $\R^{\l}$ lifts $AJ(\Xi_{0}^{\l})\in H^{n-1}(\hat{X}^{\l},\CC/\QQ(n))$).
\end{prop}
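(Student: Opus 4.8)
Here is the approach I would take. The idea is to build $\R^{\l}$ as the class of a current which is \emph{manifestly} coherent in $\l$, so that the only possible source of monodromy is a choice of membrane, and then to use the homological hypotheses to make that membrane rigid. Recall from \S2.2 that on $(\CC^{*})^{n}$ the class $\xi$ of the coordinate symbol is represented by $R\{x_{1},\dots,x_{n}\}$, a current built \emph{only} from the globally defined meromorphic functions $x_{i}\in K(\PDT)^{*}$ and their canonical cuts $T_{x_{i}}=x_{i}^{-1}(\RR^{\le 0}\cup\{\infty\})$ --- no branch of a logarithm is chosen. The defining property $\l\notin\K=\overline{\phi(\oTT)}$ of $U$ says exactly that $\TXL$ misses $\oTT=\bigcap_{i}\{x_{i}\in\RR^{<0}\}$, the torus part of $T_{\underline{x}}:=\bigcap_{i}T_{x_{i}}$, which is the deepest stratum of the cut configuration. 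Hence for $\l\in U$ the restriction $R_{\l}:=R\{x_{1},\dots,x_{n}\}|_{\TXL}$ is a well-defined current on $\TXL$, and, being the restriction of \emph{one fixed} current on $\PDT$, it depends single-valuedly on $\l\in U$. Temperedness ($Res^{1}_{\ST}\xi=0$ for all $\ST\in\DT(1)$, by Prop.~1.4) annihilates the polar contributions of $R_{\l}$ along the components of $\tilde{D}$, so the extension of $(0.21)$ to $\PDT$ restricts to
\[
d[R_{\l}]\;=\;-(2\pi i)^{n}\,\delta_{T_{\Xi^{\l}}},
\]
where $T_{\Xi^{\l}}$ is the KLM $T$-chain of the completed cycle $\Xi^{\l}$. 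Its torus part lies over $\oTT\cap\TXL=\emptyset$, and on the components $D_{\ST}\subseteq\II^{n}\cap\DDT$ some $x_{i}\equiv1$, so $T_{\underline{x}}$ --- and with it $R_{\l}$ --- is empty there; thus $T_{\Xi^{\l}}$ is an $(n-2)$-cycle sitting on the \emph{fixed} toric boundary, supported on $\J$ (in case (b), after discarding the components on which the symbol is rationally trivial, on the refinement $\J^{-}$).

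To obtain a $d$-closed current we add $(2\pi i)^{n}\delta_{\Gamma_{\l}}$ for an $(n-1)$-chain $\Gamma_{\l}\subset\TXL$ with $\partial\Gamma_{\l}=T_{\Xi^{\l}}$; then $\R^{\l}:=[R_{\l}+(2\pi i)^{n}\delta_{\Gamma_{\l}}]\in H^{n-1}(\TXL,\CC)$ reduces mod $\QQ(n)$ to $AJ(\Xi^{\l})$, by the KLM formula and the completion results of \S\S1--2. Since $R_{\l}$ is single-valued, the only monodromy in $\R^{\l}$ comes from the choice of $\Gamma_{\l}$: two bounding chains differ by a cycle in $H_{n-1}(\TXL,\QQ)$, and continuation around a loop in $U$ returns $\Gamma_{\l}$ to itself plus such a cycle. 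So it suffices to place $\Gamma_{\l}$ inside a subvariety that does not move with $\l$. Now $T_{\Xi^{\l}}$ already bounds in $\TXL$ --- the membrane $\partial^{-1}T_{\Xi^{\l}}$ of Rem.~2.3(a) exists --- so $[T_{\Xi^{\l}}]=0$ in $H_{n-2}(\TXL)$; and for the very general fibre $X=\tilde{X}^{\l_{0}}$, $\l_{0}\in U$, the transport isomorphism $H_{n-2}(\TXL)\cong H_{n-2}(X)$ is compatible with the inclusion of the $\l$-independent set $\J$. Under the hypothesis $\ker\{H_{n-2}(\J)\to H_{n-2}(X)\}=0$ of (a), the class of $T_{\Xi^{\l}}$ in $H_{n-2}(\J)$ therefore vanishes, so one may take $\Gamma_{\l}\subset\J$, $\l$-independent. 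This proves (a) over $U\setminus\L$; over the singular fibres $\l\in U\cap\L$ one replaces the transport isomorphism by the Clemens--Schmid specialisation sequence, which is harmless unless a fibral singularity sits on $\J$ --- and that occurs only in the excluded case $n=2$, $\J\cap\I\neq\emptyset$.

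For part (b), $n=3$ with $\A\neq\emptyset$, the new feature is that $\TXL$ acquires $A_{1}$-points which, by the local analysis around $\alpha\in\A$ in the proof of Theorem~1.7, lie on $\J$. If $H_{1}(\J^{-}\setminus\J^{-}\cap\A)=0$: near each $\alpha$ the symbol pulls back to $0$ (again by that local analysis), so $T_{\Xi^{\l}}$ is a $1$-cycle inside $\J^{-}\setminus(\J^{-}\cap\A)$ and bounds a chain there for free --- no injectivity is needed --- giving $\R^{\l}$ on the (singular) $\TXL$ exactly as in (a), with $AJ$ on $\TXL$ understood via a hyper-resolution as in \cite{KL}. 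If instead $H_{1}(\D)=0$ (with $\D$ the proper transform of $\J$ on the blow-up $\hat{X}^{\l}$, and $\Xi^{\l}$ replaced by the class $\Xi_{0}^{\l}$ built on $\hat{X}^{\l}$ in the proof of Theorem~1.7): the exceptional divisors of $\hat{\X}\to\tilde{\X}$ carry no part of $T_{\Xi_{0}^{\l}}$, since on the $\I$-side some $x_{i}\equiv1$; hence $T_{\Xi_{0}^{\l}}$ is a $1$-cycle on $\D$, forced by $H_{1}(\D)=0$ to bound within $\D$, which again does not move with $\l$, so $\R^{\l}:=[R\{\underline{x}\}|_{\hat{X}^{\l}}+(2\pi i)^{3}\delta_{\Gamma_{\l}}]$ with $\Gamma_{\l}\subset\D$ is a single-valued lift of $AJ(\Xi_{0}^{\l})\in H^{2}(\hat{X}^{\l},\CC/\QQ(3))$.

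The hard part is the first, current-theoretic, step: verifying that the fixed current $R\{\underline{x}\}$ really does restrict to $\TXL$ for $\l\in U$ and there represents $AJ(\Xi^{\l})$. This demands (i) the real-position bookkeeping for the cuts $T_{x_{i}}$ in a neighbourhood of $\tilde{D}$, (ii) temperedness to kill the boundary residues, and (iii) for $\l\in U\cap\L$, the hyper-resolution description of $H^{n}_{\M}$ and of the regulator on singular fibres. Once these are in place the remaining homological input --- locating the support of $T_{\Xi^{\l}}$ in $\J$, $\J^{-}$ or $\D$, and checking compatibility of the transport/specialisation isomorphisms with $\J\hookrightarrow\DDT$ across the singular fibres --- is comparatively soft.
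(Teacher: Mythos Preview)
Your overall strategy --- confine the membrane to the $\l$-independent locus $\J$, then use the injectivity hypothesis to bound there --- is the right mechanism, and your use of the hypothesis matches the paper's. But the execution diverges significantly from the paper, and the point you flag as ``the hard part'' is precisely what the paper's route is designed to avoid.

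The paper does \emph{not} attempt to restrict the bare current $R\{\underline{x}\}$ to each fibre and analyze its boundary behaviour across $\tilde{D}$. Instead it works on the total space with the KLM data $R_{\Xi}$, $T_{\Xi}$ of the \emph{completed} cycle $\Xi$ (which are unambiguously defined on the smooth $\TXM$). Since $\Xi|_{\TXM\setminus\J\times\AA^{1}}=\xi+\db\gamma$, one has $\overline{T_{\Xi}}\equiv\overline{T_{\xi}}+\partial\overline{T_{\gamma}}$ in $Z_{n}^{\text{top}}(\tilde{\X},\tilde{X}_{0}\cup\J\times\PP^{1})$; and $\overline{T_{\xi}}$ vanishes modulo $\tilde{\X}_{\K}\cup\J\times\PP^{1}$ by the very definition of $U$. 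Thus $\overline{T_{\Xi}}$ is only \emph{homologous} (relative to $\tilde{\X}_{\K}$) to a class $\tau$ supported on $\J\times(\overline{U},\partial\overline{U})$ --- the paper never asserts that $T_{\Xi^{\l}}$ is literally supported on $\J$. A K\"unneth computation $H_{n}(\J\times(\overline{U},\partial\overline{U}))\cong H_{n-2}(\J)$ (using that $\K$ connected $\Rightarrow$ $U$ simply connected) then reduces to the stated hypothesis, producing a single global membrane $\Gamma\subset\tilde{\X}_{U}$. Finally $R'_{\Xi}:=R_{\Xi}+(2\pi i)^{n}\delta_{\Gamma}$ is smoothed (by a $d$-exact current) so as to pull back to every fibre, including the singular ones, and a moving argument confirms the restriction represents $AJ$.

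Your fibrewise route requires instead that $R\{\underline{x}\}|_{\TXL}$ extend as a current across $\tilde{D}$ with $d$ a delta supported on a fixed $(n-2)$-cycle in $\J$. This is plausible, but the step ``on $D_{\tilde{\sigma}}\subset\II^{n}$ some $x_{i}\equiv1$, so $T_{\underline{x}}$ --- and with it $R_{\l}$ --- is empty there'' is too quick: $x_{i}\equiv1$ kills $\log x_{i}$ and $\delta_{T_{x_{i}}}$, but other $x_{j}$ may go to $0$ or $\infty$ along $D_{\tilde{\sigma}}$, so the remaining terms of $R\{\underline{x}\}$ need not be innocuous. The paper itself warns (footnote near $(0.26)$) that ``$R\{\underline{x}\}$ is not technically a current on $\PP_{\tilde{\Delta}}$.'' The total-space argument with the completed cycle sidesteps this entirely: one needs only the homology class of $T_{\Xi}$ modulo $\tilde{\X}_{\K}$, not control of the support of $R\{\underline{x}\}$ near the boundary.
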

\begin{rem}
(i) For $n=2$, the assumption of (a) says $\J$ is one point; for
$n=3$ it says $H_{1}(\J)=0$: $\J$ is a configuration of rational
curves whose associated graph has no loop.

(ii) The continuation of $\R^{\l}$ around a loop not in $U$ may
no longer be single-valued over $U$.

(iii) A relaxation of the hypotheses (e.g. allowing singularities
in the general fiber, $\phi$ not regular) may be necessary to produce
examples for $n=4$.
\end{rem}
\begin{proof}
We do this under the assumption that the total space $\tilde{\X}$
is nonsingular. (While such examples come out of Theorem $1.7$, we
don't know if any of these survive the extra requirements for this
Proposition; nevertheless, the main ideas are contained in our {}``artificial''
proof, and the more general situation is treated with cone complexes
as in Theorem $1.7$'s proof.) Write $\overline{Z}^{p}(\cdot,n)$
for $\d_{\B}$-closed higher Chow precycles.

In the proof of Theorem $1.7$ we started by {}``completing'' $\xi=\{\underline{x}\}\in\overline{Z}^{n}(\TXM\m\J\times\AA^{1},n)$
to $\Xi\in\overline{Z}^{n}(\TXM,n)$ restricting to $\xi+\d_{\B}\gamma$
(on $\TXM\m\J\times\AA^{1}$); since $\xi\in\overline{Z}_{\RR}^{n}(\TXM\m\J\times\AA^{1},n)_{X\m\J(\times\{x_{0}\})}$,
we may arrange to have \[
\Xi\in\overline{Z}_{\RR}^{n}(\TXM,n)_{X}\,,\,\,\,\,\,\gamma\in Z_{\RR}^{n}(\TXM\m\J\times\AA^{1},n+1)_{X\m\J}\,,\]
 the first pulling back to $\Xi^{\l_{0}}\in\overline{Z}_{\RR}^{n}(X,n)$.
We take the analytic closure of the $\d$-closed Borel-Moore $C^{\infty}$
chain $T_{\xi}$ on $\TXM\m\J\times\AA^{1}$ to get $\overline{T_{\xi}}\in Z_{n}^{top}(\tilde{\X},\tilde{X}_{0}\cup\J\times\PP^{1})$.
Since $(\tilde{\X}_{U}\m\J\times U)\cap\TT^{n}=\emptyset$ by construction,
we see that $\overline{T_{\xi}}$ maps to $0$ in $Z_{n}^{top}(\tilde{\X},\tilde{\X}_{\K}\cup\J\times\PP^{1})$.
Clearly $\overline{T_{\Xi}}\in Z_{n}^{top}(\tilde{\X},\tilde{X}_{0})$
maps to $\overline{T_{\xi}}+\d\overline{T_{\gamma}}$ in $Z_{n}^{top}(\tilde{\X},\tilde{X}_{0}\cup\J\times\PP^{1}),$
hence to $\d\overline{T_{\gamma}}$ in $Z_{n}^{top}(\tilde{\X},\tilde{\X}_{\K}\cup\J\times\PP^{1})$;
and so in $Z_{n}^{top}(\tilde{\X},\tilde{\X}_{\K})$, $\overline{T_{\Xi}}$
is homologous to a cycle $\tau\in Z_{n}^{top}(\J\times(\PP^{1},\K))\cong Z_{n}^{top}(\J\times(\overline{U},\d\overline{U}))$
(where $\d\overline{U}:=\overline{U}\m U$). (The latter may be put
in good position with respect to $X$, since $T_{\Xi}$ is.)

Now $0=F^{n}H^{n}(X,\CC)\cap H^{n}(X,\QQ(n))$ $\implies$ $0\homeq T_{\Xi^{\l_{0}}}=T_{\Xi}\cap X$
(on $X$) $\implies$ $\tau\cap X\homeq0$ (on $X$). Moreover, $H_{n}(\J\times(\overline{U},\d\overline{U}))=H_{n-2}(\J)\otimes H_{2}(\overline{U},\d\overline{U})\cong H_{n-2}(\J)$
since $U$ connected $\implies$$H_{2}(\overline{U},\d\overline{U})=\QQ$,
$\K$ connected $\implies$ $U$ simply connected $\implies$ $H_{1}(\overline{U},\d\overline{U})=0$,
and obviously $H_{0}(\overline{U},\d\overline{U})=0.$ Hence, $\ker\{H_{n-2}(\J)\to H_{n-2}(X)\}=0$
$\implies$ $\tau\homeq0$ $\implies$ $\exists$ $\Gamma\in Z_{n+1}^{top}(\tilde{\X},\tilde{\X}_{\K})$
with $\d\Gamma=T_{\Xi}$ (mod $\tilde{\X}_{\K}$), and we define $R_{\Xi}':=R_{\Xi}+(2\pi i)^{n}\delta_{\Gamma}\in\mathcal{{D}}^{n-1}(\tilde{\X}_{U})$.
One has $d[R_{\Xi}']=\Omega_{\Xi}\in F^{n}\mathcal{{D}}^{n}(\tilde{\X}_{U})$.

This $\Omega_{\Xi}$, being a $d$-closed $(n,0)$-current, is in
fact $C^{\infty}$ (i.e. holomorphic) by standard regularity results.
On $\tilde{\X}_{U}$ it is cohomologous to $0$, hence $d\eta$ there
for some $C^{\infty}$ $(n-1)$-form $\eta$. Hence $R_{\Xi}'-\eta$
is closed and $\exists$ $(n-2)$-current $\kappa$ such that $R_{\Xi}'-\eta+d[\kappa]$
is $C^{\infty}$ (in the same class); obviously $R_{\Xi}'+d[\kappa]$
is also $C^{\infty}$ (but not closed), and so pulls back to every
fiber to give a continuous family of (closed $C^{\infty}$ forms $\implies$)
classes in $\{H^{n-1}(\tilde{X}^{\l},\CC)\}_{\l\in U}$ (including
singular fibers).

Next pick any $\l_{1}\in U$, put $X_{1}:=\tilde{X}^{\l_{1}}$; we
must show $[\iota_{X_{1}}^{*}(R_{\Xi}'+d[\kappa])]$ lifts $AJ(\iota_{X_{1}}^{*}\Xi_{1})\in H^{n-1}(X,\CC/\QQ(n))$
for some {}``move'' $\Xi_{1}$ of $\Xi$. Namely, use $\M\in Z_{\RR}^{n}(\TXM,n+1)$
to get $\Xi_{1}:=\Xi+\d_{\B}\M\in\overline{Z}_{\RR}^{n}(\TXM,n)_{X_{1}}$,
and $\mu\in C_{n+2}^{top}(\tilde{\X},\tilde{\X}_{\K})$ to move $\Gamma$
to $\Gamma_{1}:=\Gamma-T_{\M}+\d\mu\in C_{n+1}^{top}(\tilde{\X},\tilde{\X}_{\K})_{X_{1}}$.
Note that $\d\Gamma_{1}=\d\Gamma-\d T_{\M}=T_{\Xi}-\d T_{\M}=T_{\Xi_{1}}$,
so that $R_{\Xi_{1}}':=R_{\Xi_{1}}+(2\pi i)^{n}\delta_{\Gamma_{1}}$
has $d[R_{\Xi_{1}}']=\Omega_{\Xi_{1}}=\Omega_{\Xi}$. Moreover, the
$d$-closed pullback $\iota_{X_{1}}^{*}R_{\Xi_{1}}'=R_{\iota_{X_{1}}^{*}\Xi_{1}}+(2\pi i)^{n}\delta_{\d^{-1}(\iota_{X_{1}}^{*}T_{\Xi_{1}})}$
so its class lifts $AJ(\iota_{X_{1}}^{*}\Xi_{1}).$ Now we compare
the two things pulled back, $\iota_{X_{1}}^{*}$ of $\R_{\Xi_{1}}'$
and $\R_{\Xi}'+d[\kappa]$:\[
R_{\Xi_{1}}'\,\,=\,\, R_{\Xi}+d\left[\frac{R_{\M}}{2\pi i}\right]+(2\pi i)^{n}\delta_{T_{\M}+\Gamma_{1}}\]
\[
=\,\, R_{\Xi}+d\left[\frac{R_{\M}}{2\pi i}+(2\pi i)^{n}\delta_{\mu}\right]+(2\pi i)^{n}\delta_{\Gamma}\]
\[
=\,\, R_{\Xi}'+d[=:S\,]\,,\]
hence $R_{\Xi_{1}}'-R_{\Xi}'-d[\kappa]=d[S-\kappa]$. If $S-\kappa$
does not pull back to $X_{1}$, it is replaceable by something that
does (since the l.h.s. does). 
\end{proof}
Stiller \cite{St} studied monodromy of solutions to inhomogeneous
equations, in the case where the corresponding homogeneous equation
$D_{PF}(\cdot)=0$ is solved by the period functions associated to
an elliptic modular surface. It would be interesting to compare his
formula (\cite{St}, Thm. 10) with the following for $n=2$.

\begin{cor}
In the situation of Proposition $2.11$((a) or (b)), the inhomogeneous
equation $D_{PF}(\cdot)=g$ admits a solution single-valued in $U$
(i.e. also finite at $U\cap\L$, except possibly when $n=2$ and $\J\cap\I\neq\emptyset$).
\end{cor}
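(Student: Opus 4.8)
The plan is to deduce Corollary 2.13 directly from Proposition 2.11 together with Corollary 2.4 and the definition of the higher normal function $\nu$ from \S2.3. First I would recall that by Proposition 2.11 we have, on the (connected, simply connected) open set $U = \PP^1 \m \K$, a \emph{single-valued} continuous family of lifts $\R^{\l} \in H^{n-1}(\tilde{X}^{\l},\CC)$ of $AJ(\Xi^{\l})$ (replacing $\tilde{X}^{\l}$ by $\hat{X}^{\l}$ in the one case of Proposition 2.11(b) where that substitution is forced, which does not affect the period computation since $[\tilde{\omega}_t]$ lifts to $H^{n-1}(\hat{X}^{\l})$). Set $\nu(t) := \left\langle \R^{t}, [\tilde{\omega}_t] \right\rangle$ for $t$ such that $\l = t^{-1} \in U$; this is well-defined and single-valued there, since $\R^{t}$ is single-valued on $U$ and $[\tilde{\omega}_t]$ is a single-valued section of $\F^{n-1}\H^{n-1}_{\tilde{\X}/\PP^1}$ away from $\L$ (it is the residue of the algebraic form $\tilde{\Omega}_t$). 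Continuity of $\nu$ on $U$, including at the points $U \cap \L$, follows from continuity of $\R^{\l}$ asserted in Proposition 2.11(a)/(b) (for singular fibers $[\tilde{\omega}_t]$ still makes sense as an element of $H_{n-1}$ paired against $H^{n-1}$, or one passes to $\widetilde{\tilde{X}^{\l}}$).

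Next I would check that this $\nu$ solves the inhomogeneous equation with the correct $g$. By Corollary 2.4, any continuous family of lifts $\R_t$ of $AJ(\Xi_t)$ satisfies $\nabla_{\delta_t}\R_t = [\tilde{\omega}_t]$ on $\PP^1 \m \L$; hence for the particular lift $\R^{\l}$ over $U$, the function $\nu(t) = \langle \R^{t}, [\tilde{\omega}_t]\rangle$ has $D_{PF}\nu(t) = g(t)$ exactly as in the general discussion of \S2.3 (and, when $r = n$, $g = \Y$ by Corollary 2.12). Since $D_{PF}$ is a differential operator with coefficients in $\bar{K}(\PP^1)$, applying it to the single-valued $\nu$ on $U$ produces a single-valued holomorphic answer, consistent with $g \in \bar{K}(\PP^1)$; so $\nu$ is a genuine single-valued solution of $D_{PF}(\cdot) = g$ on $U$. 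The only subtlety is the claimed \emph{finiteness} at $U \cap \L$ (the exceptional case being $n = 2$ with $\J \cap \I \neq \emptyset$, precisely the case excluded from Proposition 2.11(a)): finiteness is exactly the continuity of $\R^{\l}$ through those singular fibers, which is what Proposition 2.11 gives us when the stated hypotheses hold, together with the fact that $[\tilde{\omega}_t]$ extends over the unipotent/$A_1$-singular fibers (for $A_1$ singularities one uses that $[\tilde{\omega}]$ lifts to $H^{n-1}(\widetilde{\tilde{X}_t})$, as noted in \S2.3).

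The main obstacle I anticipate is bookkeeping at the boundary fibers $U \cap \L$: one must verify that the single-valued lift $\R^{\l}$ from Proposition 2.11 genuinely extends continuously through these fibers (so that $\nu$ stays finite), and that the pairing $\langle \R^{\l},[\tilde{\omega}_{\l}]\rangle$ behaves continuously as $\l$ approaches such a fiber. For $n=3$ under hypothesis (b) one has also to track the passage $\tilde{X}^{\l} \rightsquigarrow \hat{X}^{\l}$ and confirm it does not change the value of $\nu$ (it doesn't, since $B$ is an isomorphism away from $\A \times \PP^1$ and $[\tilde{\omega}]$, being residue of an algebraic $n$-form, pulls back correctly). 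Once this continuity is in hand, the rest is formal: $\nu$ is single-valued on $U$ by construction, satisfies $D_{PF}(\nu) = g$ by Corollary 2.4, and is finite on $U \cap \L$ outside the excluded case by Proposition 2.11. I would organize the write-up as: (1) invoke Proposition 2.11 to get $\R^{\l}$; (2) define $\nu$ and note single-valuedness and continuity on $U$; (3) invoke Corollary 2.4 to get $D_{PF}(\nu) = g$; (4) dispose of the finiteness claim at $U \cap \L$ via the continuity in Proposition 2.11, flagging the $n=2$, $\J \cap \I \neq \emptyset$ exception.
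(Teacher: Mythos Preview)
Your proposal is correct and matches the paper's intent: the paper gives no explicit proof for this corollary, treating it as an immediate consequence of Proposition 2.11 together with the definition of $\nu$ in \S2.3 and the general fact (stated after Definition 2.8) that $D_{PF}\nu = g$. Your write-up is simply a careful unpacking of this implicit argument, with the finiteness at $U\cap\L$ coming precisely from the continuity through singular fibers asserted in Proposition 2.11.
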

Of course, this is most interesting in case $ord(D_{PF})=n$ and Corollary
$2.9$ also applies.

As an application of higher normal functions and Corollary $2.4$,
we consider the problem of producing $linearly$ $independent$ families
of higher Chow cycles over $\CP:=\PP_{t}^{1}\m\T$, where $\T\ni\{0\}$
is a collection of points. Since the idea will be to produce independent
topological invariants $[\Omega]\in F^{n}H^{n}(\tilde{\X}_{\CP},\CC)\cap H^{n}(\tilde{\X}_{\CP},\QQ(n))$
($\tilde{\X}_{\CP}:=\tilde{\pi}^{-1}(\CP)$), larger $\T$ is better.
In fact, $\T=\{(t=)\,0\}$ won't do, as $F^{n}H^{n}(\TXM,\CC)\cong F^{n}H^{n}((\CC^{*})^{n},\CC)\cong\CC\left\langle \Omega_{\Xi}=\bigwedge^{n}\dlog\underline{x}\right\rangle $
has rank $1$.

Suppose we have a rational map (defined $/\bar{\QQ}$) of families
satisfying the conditions of Theorem $1.7$: 

$ \mspace{200mu} $ \xymatrix{{\tilde{\X}_{\CP}} \ar @{-->} [r]^{\mathfrak{A}} \ar [d]_{\tilde{\pi}} & {}'\TXM \ar [d]_{{}'\tilde{\pi}} \\ {\CP} \ar [r]^{\alpha \mspace{40mu}} & {\PP^1 \m \{ 0 \}}.}\\
That is, we have Zariski open $\V_{\CP}\subseteq\tilde{\X}_{\CP}$,
hence some blow-up $\mathfrak{{Y}}_{\CP}\rOnto^{\mathfrak{{B}}}\tilde{\X}_{\CP}$,
mapping to ${}'\TXM$ over $\alpha$. Write $\mathfrak{{A}}_{t}:\,\TXT-->{}'\tilde{X}_{\alpha(t)}$,
$u_{i}:=\mathfrak{{A}}^{*}({}'x_{i})\in\bar{\QQ}(\tilde{\X}_{\CP})^{*}$.
If $\mathfrak{{A}}$ is the restriction of a rational map $\PDT\times\PP^{1}-->\PP_{{}'\DT}\times\PP^{1}$
given by $(x_{1},\ldots,x_{n};t)\mapsto(f_{1}(\underline{x};t),\ldots,f_{n}(\underline{x};t);\alpha(t))=({}'x_{1},\ldots,{}'x_{n};{}'t),$
then $u_{i}=f_{i}(\underline{x};t)$.

By pulling $'\Xi$ back to $\mathfrak{Y}_{\CP}$ and pushing forward
along $\mathfrak{{B}}$ we obtain \[
\Theta\,:=\,\mathfrak{A}^{*}({}'\Xi)\,=\,\text{{completion\, of\,\,}}\{\underline{u}\}\,\in\, CH^{n}(\tilde{\X}_{\CP},n).\]
Clearly $\Omega_{\Theta}=\mathfrak{A}^{*}(\Omega_{{}'\Xi})$, and
this is a holomorphic form; since the fibers of $\tilde{\pi}$ are
CY, $[\Omega_{\Theta}]=[(\tilde{\pi}^{*}G)\Omega_{\Xi}]$ for some
$G\in\bar{\QQ}(\PP^{1})^{*}$. On the fibers we have $\mathfrak{A}_{t}^{*}[{}'\tilde{\omega}_{\alpha(t)}]=G(t)[\twt]$,
and $\mathfrak{A}_{t}^{*}({}'\R_{\alpha(t)})=:\mathcal{{S}}_{t}$
lifting $AJ(\Theta_{t})$. Corollary $2.4$ for ${}'\Xi$ says $\nabla_{\delta_{\alpha(t)}}{}'\R_{\alpha(t)}=[{}'\tilde{\omega}_{\alpha(t)}],$
and applying $\mathfrak{A}^{*}$ gives $\nabla_{\delta_{\alpha(t)}}\mathcal{{S}}_{t}=G(t)[\twt]$,
or\[
\nabla_{\delta_{t}}\mathcal{{S}}_{t}=\frac{t\alpha'(t)}{\alpha(t)}G(t)[\twt].\]
Comparing this with $\nabla_{\delta_{t}}\R_{t}=[\twt]$ (and noting
that $\nabla_{\delta_{t}}$ removes the ambiguities in the lifts of
$AJ$ of $\Theta_{t}$, $\Xi_{t}$), we obtain:

\begin{cor}
If $\frac{t\alpha'}{\alpha}G$ is not a rational constant, then the
families of classes $\Theta_{t},\,\Xi_{t}\in CH^{n}(\TXT,n)$ are
(AJ-)independent.
\end{cor}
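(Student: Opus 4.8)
The plan is to argue by contradiction, using the infinitesimal invariant $\nabla_{\delta_t}$, which annihilates flat ($\QQ(n)$-valued) sections of $\H^{n-1}_{\tilde{\X}/\PP^1}$ and hence removes the multivaluedness of the lifts $\R_t$ and $\mathcal{S}_t$ of the two Abel--Jacobi classes. First I would fix the meaning of ``(AJ-)independent'': there is no pair $(a,b)\in\QQ^2\m\{(0,0)\}$ with $a\,AJ(\Theta_t)+b\,AJ(\Xi_t)=0$ in $H^{n-1}(\TXT,\CC/\QQ(n))$ for general $t\in\CP$. So suppose such a relation holds. Then $a\mathcal{S}_t+b\R_t\in H^{n-1}(\TXT,\CC)$ is a lift of the zero class, hence lies in $\mathrm{im}\{H^{n-1}(\TXT,\QQ(n))\to H^{n-1}(\TXT,\CC)\}$, a flat subbundle for the Gauss--Manin connection, so $\nabla_{\delta_t}(a\mathcal{S}_t+b\R_t)=0$.

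On the other hand, Corollary $2.4$ applied to $\Xi$ gives $\nabla_{\delta_t}\R_t=[\twt]$, while Corollary $2.4$ applied to ${}'\Xi$, pulled back along $\mathfrak{A}_t$ and combined with the chain rule $\delta_t=\tfrac{t\alpha'}{\alpha}\,\delta_{\alpha(t)}$ (exactly as recorded in the paragraph preceding the statement), gives $\nabla_{\delta_t}\mathcal{S}_t=\tfrac{t\alpha'(t)}{\alpha(t)}G(t)\,[\twt]$. Substituting, I obtain
\[
0 \; = \; \left( a\,\frac{t\alpha'(t)}{\alpha(t)}G(t) \; + \; b \right)[\twt].
\]
Now $\twt=\tfrac1{2\pi i}Res_{\TXT}(\tilde{\Omega}_t)$ generates $H^{n-1,0}(\TXT)\cong\CC$ (the fibers being Calabi--Yau), so $[\twt]$ is a nowhere-zero section of $\F^{n-1}\H^{n-1}_{\tilde{\X}/\PP^1}$ over $\PP^1\m\L$; therefore $a\,\tfrac{t\alpha'}{\alpha}G+b\equiv 0$ as a rational function on $\PP^1$. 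If $a=0$ this forces $b=0$; if $a\ne 0$ it forces $\tfrac{t\alpha'}{\alpha}G=-b/a\in\QQ$, contradicting the hypothesis. Hence $(a,b)=(0,0)$, which is the assertion. (In particular, taking $(a,b)=(1,0)$ recovers the non-triviality of $\Theta_t$ as well, since $G\not\equiv0$.)

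The one step that needs genuine care --- the main obstacle, such as it is --- is the implication ``$a\,AJ(\Theta_t)+b\,AJ(\Xi_t)=0$ $\Rightarrow$ $\nabla_{\delta_t}(a\mathcal{S}_t+b\R_t)=0$'': one must know that the specific (multivalued) lifts $\mathcal{S}_t$ and $\R_t$ produced above differ from an arbitrary lift of the respective classes only by flat $\QQ(n)$-valued sections, so that any $\QQ$-linear combination of them lifting the zero class is itself flat and killed by Gauss--Manin. This is precisely the content of the remark recalled before the Corollary that $\nabla_{\delta_t}$ ``removes the ambiguities in the lifts'', together with the standard fact (KLM) that a $d$-closed current representing a class in $H^{n-1}(\TXT,\CC/\QQ(n))$ determines a lift to $H^{n-1}(\TXT,\CC)$ canonically modulo $\QQ(n)$-periods. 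Everything else is the formal Gauss--Manin bookkeeping and the change of variable $\delta_t\leftrightarrow\delta_{\alpha(t)}$.
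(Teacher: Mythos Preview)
Your argument is correct and is essentially the paper's own proof, spelled out in full: the paper simply says that comparing $\nabla_{\delta_t}\mathcal{S}_t=\frac{t\alpha'}{\alpha}G\,[\twt]$ with $\nabla_{\delta_t}\R_t=[\twt]$ (and noting that $\nabla_{\delta_t}$ removes the lift ambiguities) gives the Corollary, which is exactly the contradiction you wrote out.
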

There are examples where $\alpha(t)=\pm\frac{1}{t}$ and $G(t)=t$
for $n=2$ and $3$, see \cite{Ke2}.

We can also compare the higher normal functions $\nu(t):=\left\langle \R_{t},[\twt]\right\rangle ,$
$\e(t):=\left\langle \mathcal{{S}}_{t},[\twt]\right\rangle $. If
$0\neq g:=D_{PF}\nu$, and $\frac{t\alpha'}{\alpha}G$ is not a rational
constant, then from\[
D_{PF}\e=\frac{t\alpha'}{\alpha}Gg\]
one may deduce independence of the families of Milnor $K$-theory
classes $\{\underline{x}\},\{\underline{u}\}\in K_{n}^{M}(\CC(\TXT))$
for $n=2,3$.

In the event that $\alpha$ is of infinite order (rather than e.g.
an involution like $t\mapsto\pm\frac{1}{t}$), iteratively applying
the above construction (for $\alpha$, $\alpha\circ\alpha$, $\alpha\circ\alpha\circ\alpha$,
etc. which of course requires shrinking $\CP$ at each stage) would
give explicit countable generation for $CH^{n}(\text{{generic fiber}},n).$
However it seems likely (already for $n=2$, by comparing with the
proof of infinite generation in \cite{Co}, sec. 7) that this is not
possible without allowing $\alpha$ to be $algebraic$ and replacing
the Zariski neighborhood $\CP$ with an \'etale one; the relevant
(geometric) generic fiber is then defined over $\overline{\QQ(\PP^{1})}$
(rather than $\bar{\QQ}(\PP^{1})$).

\section{\textbf{An application to local mirror symmetry}}

For any reflexive polytope $\Delta\subset\RR^{n}$ ($n=2,3,4$), the
total space of $\mathsf{K}_{\PP_{\Delta^{\circ}}}$ may be viewed
as a noncompact CY $(n+1)$-fold. If we let $F\in\CC[x_{1}^{\pm1},\ldots,x_{n}^{\pm1}]$
range over Laurent polynomials with $Conv(\mathfrak{M}_{F})=\Delta$,
then the family\[
Y_{F}:=\{F(\underline{x})+u^{2}+v^{2}=0\}\subset(\CC^{*})^{n}\times\CC^{2}\]
 of $(n+1)$-folds is the mirror dual of $\mathsf{K}_{\PP_{\Delta^{\circ}}}$.
These are CY, since the holomorphic form\[
\eta_{F}:=2i\cdot Res_{Y_{F}}\left(\frac{\bigwedge^{n}\dlog\underline{x}\wedge du\wedge dv}{F+u^{2}+v^{2}}\right)\in\Omega^{n+1}(Y_{F})\]
yields a nonvanishing global section of the canonical bundle (i.e.
$\mathsf{K}_{Y_{F}}$). Its periods may be interpreted in terms of
regulator periods on the $X_{F}^{*}:=\{F(\underline{x})=0\}\subset(\CC^{*})^{n}$.
We work out this story in $\S3.1$ and use it to compute the mirror
map for $n=2$ in $\S3.3$. Only in $\S3.4$ (and the end of $\S3.1$)
do we once again require $F$ to be tempered, in order to link up
with $\S\S1,2,4$ and study asymptotic growth of local Gromov-Witten
numbers for $\mathsf{K}_{\PP_{\Delta^{\circ}}}$.

\subsection{Periods of an open CY $3$-fold}

Let $X_{F}\subset\PP_{\Delta}$ be the Zariski closure of $X_{F}^{*}$,
with crepant resolution $\tilde{X}_{F}\subset\PP_{\tilde{\Delta}}$;
denote the inclusion $J:\, X_{F}^{*}\rInto\tilde{X}_{F}$. We assume
$F$ is $\Delta$-regular, so that $\tilde{X}_{F}$ is smooth and
the $D_{\tilde{\sigma}}$ reduced ($\forall i\geq1$, $\tilde{\sigma}\in\tilde{\Delta}(i)$).
Write $\{\underline{x}\}:=\{x_{1},\ldots,x_{n}\}\in CH^{n}((\CC^{*})^{n},n)$
and $\xi_{F}:=I^{*}\{\underline{x}\}\in CH^{n}(X_{F}^{*},n)$ for
its restriction to $X_{F}^{*}\rInto^{I}(\CC^{*})^{n}$. We use a somewhat
nonstandard definition\[
H_{n-1}^{tr}(\tilde{X}_{F}):=\text{{im}}\{H_{n-1}(X_{F}^{*},\QQ)\rTo^{J_{*}}H_{n-1}(\tilde{X}_{F},\QQ)\}\]
for the {}``transcendental part'' of homology; clearly this is everything
for $n=2$ and contains the orthogonal complement of $Pic(\tilde{X}_{F})$
for $n=3.$ Also define\[
\K_{n-1}(X_{F}^{*}):=\ker\{H_{n-1}(X_{F}^{*},\QQ)\rTo^{I_{*}}H_{n-1}((\CC^{*})^{n},\QQ)\}.\]

\begin{lem}
$\K_{n-1}(X_{F}^{*})$ surjects onto $H_{n-1}^{tr}(\tilde{X}_{F})$;
that is, every class $\Gamma$ in \linebreak $H_{n-1}^{tr}(\tilde{X}_{F},\QQ)$
has a representative $\gamma\in Z_{n-1}^{top}(X_{F}^{*};\QQ)$ that
bounds in $(\CC^{*})^{n}$.
\end{lem}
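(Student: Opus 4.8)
The statement is about a commutative square of varieties
\[
\begin{array}{ccc}
X_F^* & \hookrightarrow^{\!\!I} & (\CC^*)^n\\
\;\;\downarrow^{J} & & \;\;\;\downarrow\\
\tilde X_F & \hookrightarrow & \PP_{\tilde\Delta}
\end{array}
\]
and one wants: every transcendental homology class on $\tilde X_F$ lifts to a cycle on $X_F^*$ that already bounds in $(\CC^*)^n$. First I would reduce everything to the long exact sequences of the pairs $\bigl(\tilde X_F, \tilde D\bigr)$ and $\bigl((\CC^*)^n\bigr)$ together with the open piece $X_F^*=\tilde X_F\smallsetminus \tilde D$, where $\tilde D=\cup_{\tilde\sigma\in\tilde\Delta(1)}D_{\tilde\sigma}$ is the NCD at infinity. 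Concretely, fix $\Gamma\in H_{n-1}^{tr}(\tilde X_F,\QQ)$; by definition of $H^{tr}_{n-1}$ it equals $J_*[\gamma_0]$ for some $\gamma_0\in Z^{top}_{n-1}(X_F^*;\QQ)$, so the task is to \emph{modify} $\gamma_0$ within its class in $H_{n-1}(\tilde X_F,\QQ)$ — i.e. add a boundary supported in $\tilde D$ (or rather something that becomes a boundary on $X_F^*$ after such a modification) — so that $I_*[\gamma_0]=0$ in $H_{n-1}((\CC^*)^n,\QQ)$. So the real content is: $\ker\{H_{n-1}(X_F^*)\to H_{n-1}(\tilde X_F)\}$ maps \emph{onto} $I_*^{-1}(0)\cap$ (what we need), equivalently $\K_{n-1}(X_F^*)\to H^{tr}_{n-1}(\tilde X_F)$ is onto.

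The key mechanism is the Gysin/residue sequence for $\tilde D\subset\tilde X_F$. Write $\widetilde{\tilde D}=\sqcup_{\tilde\sigma} D_{\tilde\sigma}$ for the normalization of the $1$-skeleton; one has the exact sequence (with $\QQ$-coefficients)
\[
H_{n-1}(\widetilde{\tilde D})\rTo^{\text{Gysin}} H_{n-1}(\tilde X_F)\rTo^{J^!} H_{n-1}(X_F^*)\rTo^{\partial} H_{n-2}(\widetilde{\tilde D})\to\cdots
\]
and dually a comparison with the analogous sequence for $(\CC^*)^n$ and $\DD=\PP_{\tilde\Delta}\smallsetminus(\CC^*)^n$. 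The surjectivity we want will follow once I show two things: (1) any class in $H_{n-1}(\tilde X_F)$ \emph{not} coming from $\tilde D$ (i.e. a transcendental class) can be represented by a cycle in $X_F^*$ whose $I_*$-image in $H_{n-1}((\CC^*)^n,\QQ)$ is accounted for by the toric part — but $H_{n-1}((\CC^*)^n,\QQ)=\wedge^{n-1}H_1(\CC^*,\QQ)$ is generated by products of the coordinate circles, each of which is "algebraic" in the sense of being pushed in from $\PP_{\tilde\Delta}$'s divisor strata; (2) the toric boundary classes $D_{\tilde\sigma}$, which generate the non-transcendental part, surject (via Gysin) onto the image of $H_{n-1}((\CC^*)^n)$ inside $H_{n-1}(\tilde X_F)$. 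Combining (1) and (2): given $\Gamma$ transcendental, lift to $\gamma_0\in X_F^*$, push to $(\CC^*)^n$; by (1)–(2) there is a cycle $\delta$ supported on $\tilde D$ (coming from the boundary strata) with $I_*[\gamma_0]=I_*[J^!(\text{Gysin}\,\delta')]$ suitably interpreted; subtract to get $\gamma_0-$(a cycle supported near $\tilde D$) which still represents $\Gamma$ on $\tilde X_F$ (Gysin images are killed on $\tilde X_F$ only modulo... — care needed here) but now lies in $\K_{n-1}$. A cleaner route: argue directly that $I_*\colon H_{n-1}(X_F^*,\QQ)\to H_{n-1}((\CC^*)^n,\QQ)$, \emph{restricted} to classes that die on $\tilde X_F$, still surjects onto the image of $H^{tr}$-relevant part — but the most robust formulation is via the commuting diagram of Gysin sequences and a diagram chase, using that the residue maps $\Res^1_{\tilde\sigma}$ on the toric side (facet polynomials $\phi_{\tilde\sigma}$, which by regularity and reflexivity are nice) control exactly the failure of $J^!$ to be injective.

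So the structure of the proof I would write: (i) set up the two Gysin sequences for $(\tilde X_F,\tilde D)$ and $(\PP_{\tilde\Delta},\DD)$ and the restriction map between them; (ii) observe $H_{n-1}((\CC^*)^n,\QQ)\cong\wedge^{n-1}\QQ^n$ with an explicit basis of "coordinate" $(n-1)$-tori $T^{n-1}_{\hat i}$; (iii) for each such basis cycle produce an explicit representative in $X_F^*$ and show, using $\Delta$-regularity (so that $X_F^*\to(\CC^*)^n$ meromorphic coordinates behave, and $X_F^*$ meets the coordinate strata transversally), that each $T^{n-1}_{\hat i}$ either lifts to $X_F^*$ or its image bounds after pushing to $\tilde X_F$ because it gets absorbed into a divisor $D_{\tilde\sigma}$; (iv) a diagram chase concludes that $\K_{n-1}(X_F^*)\twoheadrightarrow H^{tr}_{n-1}(\tilde X_F)$. \textbf{The main obstacle} I expect is step (iii)/(iv): proving that the image of $H_{n-1}((\CC^*)^n)$ inside $H_{n-1}(\tilde X_F)$ is entirely "non-transcendental" — i.e. lands in the span of the toric boundary divisor classes, so that anything transcendental is automatically killed when we impose $I_*=0$. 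This is essentially the statement that the $n$-torus $\hat\TT^n$ and its subtori are homologically supported on $\Pic(\tilde X_F)$ (for $n=3$) / are trivial after excision in the elliptic-curve case ($n=2$), which should follow from the weight/Hodge structure on $H^{n-1}(\tilde X_F)$ together with the fact that $I^*[\wedge^n\dlog\underline x]$ spans $F^nH^n$, but making the homology argument fully rigorous without circularity (the paper elsewhere uses exactly this kind of Hodge-theoretic input) is the delicate point; for $n=2$ it is easy since $H^{tr}_1=H_1$ of the elliptic curve and $\K_1$ is all of it, so I would dispatch $n=2$ first and then handle $n=3$ by the Gysin/Picard argument.
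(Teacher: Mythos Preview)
Your overall strategy --- lift $\Gamma$ to some $\gamma_0\in Z_{n-1}^{top}(X_F^*;\QQ)$ and then correct it by an element of $\ker(J_*)$ so as to kill $I_*(\gamma_0)$ --- is exactly what the paper does. But your proposed implementation via Gysin sequences and a diagram chase has a genuine gap, and the paper's actual argument is completely different (and much shorter).

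The sequence you write,
\[
H_{n-1}(\widetilde{\tilde D})\xrightarrow{\text{Gysin}} H_{n-1}(\tilde X_F)\xrightarrow{J^!} H_{n-1}(X_F^*)\xrightarrow{\partial} H_{n-2}(\widetilde{\tilde D}),
\]
does not exist as stated: for an open inclusion $J:X_F^*\hookrightarrow\tilde X_F$ the natural map on ordinary homology goes $J_*:H_{n-1}(X_F^*)\to H_{n-1}(\tilde X_F)$, not the other way. The long exact sequence of the pair $(\tilde X_F,\tilde D)$ involves $H_*(\tilde X_F,\tilde D)\cong H_*^{BM}(X_F^*)$, not $H_*(X_F^*)$. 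Likewise, the ``main obstacle'' you isolate --- that the image of $H_{n-1}((\CC^*)^n)$ inside $H_{n-1}(\tilde X_F)$ be non-transcendental --- does not parse: there is no map $H_{n-1}((\CC^*)^n)\to H_{n-1}(\tilde X_F)$ in the picture. So the chase you sketch cannot be carried out, and the appeal to Hodge/weight structure at the end is circular in the sense you yourself worry about.

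The paper bypasses all of this with an explicit construction. For each edge $\sigma_1\in\Delta(n-1)$ it writes down a small $(n-1)$-cycle
\[
\delta_{\sigma_1}=\{\Phi_{\underline\nu}=0\}\cap\{|z_2|=\cdots=|z_n|=\epsilon\}\cap\{|z_1-r|\text{ small}\}
\]
on $X_F^*$, localized near a vertex $\underline\nu$ of $\Delta$ (here $r$ is a root of the edge polynomial). Two facts are then checked directly: (a) $\delta_{\sigma_1}\homeq 0$ on $\tilde X_F$ (it is a tiny torus that contracts toward the boundary stratum), and (b) under the coordinate projections $\pi_i:(\CC^*)^n\to(\CC^*)^{n-1}$ one has $\pi_{i*}I(\delta_{\sigma_1})\homeq m_i(\sigma_1)\,\hat\TT_i^{n-1}$, so $I(\delta_{\sigma_1})\equiv\sum_i m_i(\sigma_1)\iota_{i*}\hat\TT_i^{n-1}$. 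Since the edge vectors $\underline m(\sigma_1)$ span $\QQ^n$ as $\sigma_1$ varies, the classes $I(\delta_{\sigma_1})$ generate $H_{n-1}((\CC^*)^n,\QQ)$. Now given $\gamma^0$ lifting $\Gamma$, choose a $\QQ$-combination $\delta$ of the $\delta_{\sigma_1}$'s with $I(\delta)\homeq I(\gamma^0)$, and set $\gamma=\gamma^0-\delta$; by (a) this still represents $\Gamma$, and by construction $I_*[\gamma]=0$. No diagram chase, no Hodge theory, no case split on $n$.
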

\begin{proof}
Choose an edge $\sigma_{1}\in\Delta(n-1)$ and vertex $\underline{\nu}\in\Delta(n)$
on $\sigma_{1}$. (More precisely, we take $\tilde{\sigma}_{1}\in\tilde{\Delta}(n-1)$
and $\underline{\tilde{\nu}}\in\tilde{\Delta}(n)$ sitting {}``over''
these.) Repeat the construction of $\S2.1$ so that $\Phi_{\underline{\nu}}=0$
locally describes $\tilde{X}_{F}$ and $1+\phi_{1}(z_{1})$ gives
(up to a constant) the edge polynomial of $\sigma_{1}$. Fix a root
$r$($\in\CC^{*}$) of this, define in $Z_{n-1}^{top}(X_{F}^{*};\ZZ)$\[
\delta_{\sigma_{1}}:=\{\Phi_{\underline{\nu}}=0\}\cap\{|z_{2}|=\cdots=|z_{n}|=\epsilon\}\cap\{|z_{1}-r|\text{{\,"small"}}\}\]
and notice $\delta_{\sigma_{1}}\homeq0$ on $\tilde{X}_{F}$. Write
$z_{1}(=x_{1}^{\sigma_{1}})=:\underline{x}^{\underline{m}(\sigma_{1})}$.

Define projections and inclusions

$ \mspace{75mu} $\xymatrix{(\CC^*)^n \ar @{->>} [r]^{\pi_i \mspace{150mu}}  & {\left\{ \underline{x}\in(\CC^*)^n \, | \, x_i=1 \right\} } \cong (\CC^*)^{n-1} \ar @{^(->} [r]^{\mspace{150mu} \iota_i} & (\CC^*)^n \\ & {\left\{ x_i=1, \, |x_j|=1 \, \forall j\neq i \right\} } =: \hat{\TT}^{n-1}_i . \ar @{^(->} [u] }
\\
We can orient everything so that $\pi_{i}{}_{_{*}}(I(\delta_{\sigma_{1}}))\homeq m_{i}(\sigma_{1})\hat{\TT}_{i}^{n-1}$;
hence $I(\delta_{\sigma_{1}})\equiv\sum_{i=1}^{n}m_{i}(\sigma_{1})\iota_{i}{}_{_{*}}(\hat{\TT}_{i}^{n-1}).$
Now the $\{\underline{m}(\sigma_{1})\}$ (taken over all such edges)
generate $\QQ^{n}$; hence the $\{I(\delta_{\sigma_{1}})\}$ generate
$H_{n-1}((\CC^{*})^{n-1},\QQ)$.

Given $\Gamma\in H_{n-1}^{tr}(\tilde{X}_{F})$, let $\gamma^{0}$
be a representative in $Z_{n-1}^{top}(X_{F}^{*})$. We may choose
an appropriate sum $\delta$ of $\delta_{\sigma_{1}}$'s with $I(\gamma^{0})\homeq I(\delta)$;
clearly $\delta\homeq0$ on $\tilde{X}_{F}$, and so taking $\gamma:=\gamma^{0}-\delta$
we are done.
\end{proof}
\begin{rem}
When $|\gamma^{0}|\subseteq X_{F}^{*}\cap\{\RR^{n}\text{{\, or\,}}(i\RR)^{n}\}$,
$I(\gamma^{0})$ bounds on $(\CC^{*})^{n}$ without modification by
a $\delta$. {[}\noun{Proof}: For any cycle $\sZ$ on $(\CC^{*})^{n}$,
$Box^{n}(\sZ):=\sZ+\sum_{k=1}^{n}(-1)^{k}\sum_{|I|=k}(\iota_{I}\circ\pi_{I})_{_{*}}\sZ\homeq0$;
since $H_{n-1}((\CC^{*})^{j<n-1})=0$, it follows that $I(\gamma^{0})-\sum_{i=1}^{n}(\iota_{i}\circ\pi_{i})_{_{*}}I(\gamma^{0})$
bounds (in $(\CC^{*})^{n}$). But if $\gamma^{0}$ has real support
then each $(\pi_{i})_{_{*}}I(\gamma^{0})$ {}``cancels itself out'',
being of the same real dimension as the $real$ part of the target
($=$disjoint union of copies of $(\RR^{+})^{n-1}$).] This is essentially
used for the real, nonvanishing cycle $L_{0}$ (for real $t$ near
$0$) in Appendix A of \cite{Ho}. However, the procedure (employed
there) of {}``bounding'' the vanishing cycles $\{K_{j}\}$ with
$noncompact$ membranes is unnecessary in view of Lemma 1, and also
incorrect in homology.
\end{rem}
\begin{lem}
If $\gamma\in Z_{n-1}^{top}(X_{F}^{*};\ZZ)$ has $I(\gamma)=\partial\mu$,
for $\mu\in C_{n}^{top}((\CC^{*})^{n};\ZZ)$, then\[
\int_{\gamma}R(\xi_{F})\,\equiv\,\int_{\mu}\wedge^{n}\dlog\underline{x}\mspace{20mu}\mod\,\ZZ(n).\]

\end{lem}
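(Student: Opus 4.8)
The strategy is to realize $R(\xi_F)$ as the restriction to $X_F^{*}$ of the coordinate Milnor regulator current on $(\CC^{*})^{n}$ and then to integrate by parts. Write $R\{\underline{x}\}=R\{x_1,\dots,x_n\}\in\D^{n-1}((\CC^{*})^{n})$ for the current of $(2.3)$ built from the coordinate functions on the whole torus. Since $\xi_F=I^{*}\{x_1,\dots,x_n\}$ and $x_i|_{X_F^{*}}$ is the pullback of $x_i$ along $I\colon X_F^{*}\hookrightarrow(\CC^{*})^{n}$, functoriality of the KLM currents gives
\[
R(\xi_F)\;=\;I^{*}R\{\underline{x}\}
\]
as currents on $X_F^{*}$, once $X_F^{*}$ — equivalently $|\gamma|$ and its boundary strata — meets the semialgebraic sets $T_{x_1}\supseteq T_{x_1}\cap T_{x_2}\supseteq\cdots\supseteq T_{\underline{x}}:=\bigcap_{i=1}^{n}T_{x_i}$ properly. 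One may move $\gamma$ and $\mu$ within their integral homology classes to arrange this ``good position'' (cf.\ the real-position requirements noted after $(2.3)$); since $d[R(\xi_F)]=-(2\pi i)^{n}\delta_{T_{\underline{x}}|_{X_F^{*}}}$ and $d[R\{\underline{x}\}]$ pair to $\ZZ(n)$ against integral chains, neither $\int_\gamma R(\xi_F)$ nor $\int_\mu\bigwedge^{n}\dlog\underline{x}$ changes mod $\ZZ(n)$ under such a move.

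Granting the identification of currents, adjunction between $I^{*}$ and $I_{*}$ and Stokes' theorem for currents give
\[
\int_\gamma R(\xi_F)\;=\;\int_\gamma I^{*}R\{\underline{x}\}\;=\;\int_{I(\gamma)}R\{\underline{x}\}\;=\;\int_{\partial\mu}R\{\underline{x}\}\;=\;\int_\mu d[R\{\underline{x}\}].
\]
On the full torus the dimension is $n$, so the $(n,0)$-form $\bigwedge^{n}\dlog\underline{x}$ survives and the defining property of the regulator current (cf.\ $(0.21)$ and the discussion preceding it, with no boundary contributions since the $x_i$ are units on $(\CC^{*})^{n}$) reads
\[
d[R\{\underline{x}\}]\;=\;\bigwedge^{n}\dlog\underline{x}\;-\;(2\pi i)^{n}\,\delta_{T_{\underline{x}}}.
\]
Hence
\[
\int_\gamma R(\xi_F)\;=\;\int_\mu\bigwedge^{n}\dlog\underline{x}\;-\;(2\pi i)^{n}\,(\mu\cdot T_{\underline{x}}),
\]
and as $\mu$ and $T_{\underline{x}}$ are integral chains of complementary dimension in general position, their intersection number $\mu\cdot T_{\underline{x}}$ is an integer, so the correction term lies in $\ZZ(n)$. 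This is the asserted congruence.

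The one delicate point is the first step: that $R(\xi_F)$ is genuinely the pullback current $I^{*}R\{\underline{x}\}$ and that $\int_\gamma I^{*}R\{\underline{x}\}=\int_{I(\gamma)}R\{\underline{x}\}$. Pullback of currents is not defined in general, but the KLM currents are finite sums of $L^{1}_{\mathrm{loc}}$ forms wedged with integration currents over real semialgebraic subsets, and such objects do pull back along $I$ as soon as $X_F^{*}$ is transverse to each stratum $T_{x_1}\cap\cdots\cap T_{x_k}$; the resulting identity is precisely formula $(2.3)$ evaluated with $f_i=x_i|_{X_F^{*}}$ and $T_{f_i}=X_F^{*}\cap T_{x_i}$. (Alternatively one avoids pullback altogether: replace $\xi_F$ by a $\db$-equivalent precycle in $\RR$-good position on $X_F^{*}$, compute $R$ directly from $(2.3)$, and note that its periods over $\gamma$ agree mod $\ZZ(n)$ with those of the honest restriction, both computing $AJ(\xi_F)$; the refinement from $\QQ(n)$ to $\ZZ(n)$ is legitimate because $H^{*}(X_F^{*},\ZZ)$ is torsion-free — automatically for the open curve $n=2$, and for the families produced by Theorem 1.7.) With that in hand the remainder is Stokes' theorem and the bookkeeping of the $(2\pi i)^{n}$ factor.
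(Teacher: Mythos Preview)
Your argument is correct and is essentially the paper's own proof, run in the opposite direction: the paper starts from $\int_{\mu}\bigwedge^{n}\dlog\underline{x}$, applies $d[R\{\underline{x}\}]=\bigwedge^{n}\dlog\underline{x}\pm(2\pi i)^{n}\delta_{T_{\underline{x}}}$, and uses Stokes to land on $\int_{\gamma}I^{*}R\{\underline{x}\}$, whereas you start from $\int_{\gamma}R(\xi_{F})$ and push the other way. Your additional discussion of good position and the legitimacy of pulling back the KLM current along $I$ is more careful than the paper's one-line proof (which tacitly relies on the remarks after $(2.3)$), but the core content is identical.
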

\begin{proof}
On $(\CC^{*})^{n}$, $\bigwedge^{n}\dlog\underline{x}=d[R\{\underline{x}\}]\pm(2\pi i)^{n}\delta_{T_{\underline{x}}}$,
and so \[
\int_{\mu}\wedge^{n}\dlog\underline{x}\,\equiv\,\int_{\mu}d[R\{\underline{x}\}]\,=\,\int_{\partial\mu}R\{\underline{x}\}\,=\,\int_{\gamma}I^{*}R\{\underline{x}\}.\]

\end{proof}
We want to construct cycles in $Z_{n+1}^{top}(Y_{F})$ over which
to integrate $\eta_{F}$. Considering $Y_{F}$ as a fiber bundle over
$(\CC^{*})^{n}$, we have (for $n=2$) the picture displayed in Figure
3.1.%
\begin{figure}
\caption{\protect\includegraphics[scale=0.8]{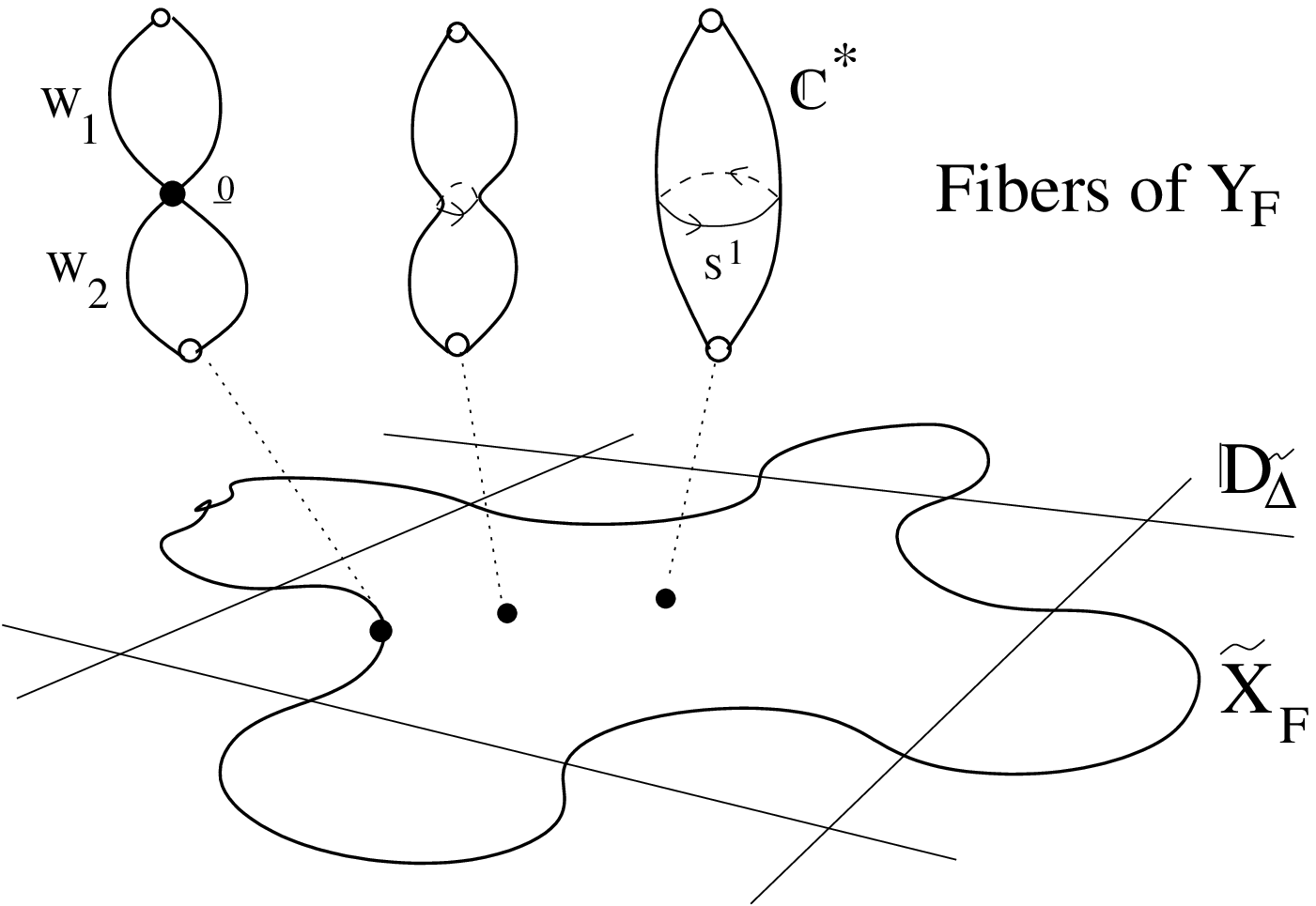}}

\end{figure}
 In a topological sense, we may view $Y$ as the disjoint union of
an $S^{1}$-bundle over $(\CC^{*})^{n}$ with a copy of $X_{F}^{*}$.
More precisely, if $P:\, Y_{F}\rOnto(\CC^{*})^{n}$ sends $(\underline{x},u,v)\mapsto\underline{x}$,
then\[
\underline{x}\in(\CC^{*})^{n}\m X_{F}^{*}\,\,\,\implies\,\,\, P^{-1}(\underline{x})\cong\CC^{*}\,\,\,\text{(homotopic to }S^{1}\text{)}\]
\[
\underline{x}\in X_{F}^{*}\,\,\,\implies\,\,\, P^{-1}(\underline{x})\cong\{u^{2}+v^{2}=0\}=:W=W_{1}\cup W_{2}\]
where $W_{i}\cong\AA_{\CC}^{1}$. In fact, $Y_{F}\supset X_{F}^{*}\times W$
and we can write $W=W_{1}\amalg W_{2}^{*}$ ($W_{2}^{*}:=W_{2}\m\{(0,0)\}$);
the complement $Y_{F}\m(X_{F}^{*}\times W_{1})$ is then homotopic
to $(\CC^{*})^{n}\times S^{1}$.

Consider the long-exact sequence \begin{equation} \xymatrix{ \\ H_n(Y_F \m X_F^* \times W_1 ) \ar [r]_{\cong} \ar [u] & H_{n-1}((\CC^*)^n) \oplus H_n((\CC^*)^n) \\ H_{n-1}(X_F^* \times W_1) \ar [r]_{\cong} \ar [u]^{tube} & H_{n-1}(X_F^*) \ar [u]^{(I_*,0)} \\ H_{n+1}(Y_F) \ar [u]^{\cap} \\ H_{n+1}(Y_F \m X_F^* \times W_1) \ar [r]^{\text{"forget }S^1\text{"}}_{\cong} \ar [u] & H_n((\CC^*)^n) \\ H_n(X_F^* \times W_1) \ar [u]^{tube} \ar [r]_{\cong} & H_n(X_F^*) . \ar [u]^{I_* =0} \\ \ar [u] }  \end{equation}The
bottom $I_{*}$ is $0$ because the dual map $[F^{n}]H^{n}((\CC^{*})^{n})\to H^{n}(X_{F}^{*})$
must be, as $\dim(X_{F}^{*})=n-1$ $\implies$ $F^{n}H^{n}(X_{F}^{*})=\{0\}$.
The $H_{n-1}(X_{F}^{*})\to H_{n}((\CC^{*})^{n})$ is essentially the
composition of $Tube:\, H_{n-1}(X_{F}^{*})\to H_{n}((\CC^{*})^{n}\m X_{F}^{*})$
with $H_{n}((\CC^{*})^{n}\m X_{F}^{*})\to H_{n}((\CC^{*})^{n})$;
it is $0$ for a similar reason.

Using any $\hat{\TT}_{\underline{\nu},\epsilon}^{n}\in Z_{n}^{top}((\CC^{*})^{n}\m X_{F}^{*})$
(see $\S2.1$) and the topological {}``$S^{1}$-bundle'' structure
of $Y_{F}\,\m(X_{F}^{*}\times W_{1})$, gives a cycle $\hat{\TT}_{Y}^{n+1}\in Z_{n+1}^{top}(Y_{F})$.
Now (3.1) becomes the short-exact sequence\[
\QQ\left\langle \hat{\TT}_{Y}^{n+1}\right\rangle \to H_{n+1}(Y_{F})\to\K_{n-1}(X_{F}^{*}).\]
To construct explicitly an isomorphism\[
M:\,\K_{n-1}(X_{F}^{*})\to H_{n+1}(Y_{F})\left/\QQ\left\langle \hat{\TT}_{Y}^{n+1}\right\rangle \right.,\]
let $\gamma,\,\mu$ be as in Lemma 2 ($\QQ$-coefficients). The cycle
(representing) $M(\gamma)$ will have support in $P^{-1}(|\mu|)$,
with $S^{1}$-fibers over $Int|\mu|$ and point fibers over $|\partial\mu|=|\gamma|$.
More precisely, $M(\gamma)\cap P^{-1}(\underline{x})$ (for $\underline{x}\in|\mu|$)
is given by\[
V\in[-\sqrt{|F(\underline{x})|},\,\sqrt{|F(\underline{x})|}]\,,\,\,\,\, v=e^{\frac{i}{2}\arg(-F(\underline{x}))}V\,,\,\,\,\, u=\pm\sqrt{-(v^{2}+F(\underline{x}))}.\]
Note that $\QQ\left\langle \hat{\TT}_{Y}^{n+1}\right\rangle $ absorbs
the ambiguity arising from the choice of $\mu$.

\begin{lem}
For $\gamma$, $\mu$ as in Lemma 2, \[
\int_{M(\gamma)}\eta_{F}\,\,=\,\,2\pi i\,\int_{\mu}\wedge^{n}\dlog\underline{x}\,.\]
Moreover, $\int_{\hat{\TT}_{Y}^{n+1}}\eta_{F}=(2\pi i)^{n+1}$.
\end{lem}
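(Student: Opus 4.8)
The plan is to reduce the integral of $\eta_F$ over $M(\gamma)$ to the integral of $\bigwedge^n\dlog\underline{x}$ over $\mu$ by carrying out the two fiber integrations ($u$, then $v$) explicitly, fiberwise over $|\mu|\subset(\CC^*)^n$. First I would recall the residue presentation $\eta_F = 2i\cdot Res_{Y_F}\bigl(\frac{\bigwedge^n\dlog\underline{x}\wedge du\wedge dv}{F+u^2+v^2}\bigr)$ and write it, away from the locus $u=v=0$, as a genuine $(n+1)$-form on $Y_F$. Using $u=\pm\sqrt{-(v^2+F(\underline{x}))}$ as a local coordinate substitute for $u$ on the two sheets $W_1,W_2^*$, one gets $du = \mp\frac{v\,dv + \tfrac12\,dF}{\sqrt{-(v^2+F)}}$ along a fiber of fixed $\underline{x}$; the residue then produces, after the standard $\frac{1}{2\pi i}\oint$ around $\{F+u^2+v^2=0\}$ being absorbed into the definition, a form proportional to $\frac{dv}{u}\wedge\bigwedge^n\dlog\underline{x}$, i.e. $\eta_F|_{\text{fiber over }\underline{x}} \sim \frac{dv}{\sqrt{-(v^2+F(\underline{x}))}}\wedge\bigwedge^n\dlog\underline{x}$, with the factor $2i$ chosen precisely so that the constant works out. (This is the "$\frac{dz}{\sqrt{1-z^2}}$" normalization that makes the $2i$ appear.)

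Next I would do the $v$-integration over the real interval prescribed in the definition of $M(\gamma)\cap P^{-1}(\underline{x})$: with $v = e^{\frac{i}{2}\arg(-F(\underline{x}))}V$ and $V\in[-\sqrt{|F(\underline{x})|},\sqrt{|F(\underline{x})|}]$, summing over the two sheets $u=\pm\sqrt{-(v^2+F)}$ gives $\int \frac{dv}{u} = 2\int_{-\sqrt{|F|}}^{\sqrt{|F|}}\frac{e^{\frac{i}{2}\arg(-F)}\,dV}{\sqrt{|F|-V^2}} = 2\pi\,e^{\frac{i}{2}\arg(-F)}\cdot e^{-\frac{i}{2}\arg(-F)}$-type constant — concretely the substitution $V = \sqrt{|F|}\sin\theta$ turns this into $2\int_{-\pi/2}^{\pi/2}d\theta = 2\pi$, and tracking the phase factors together with the $2i$ from $\eta_F$ yields exactly $2\pi i$. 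Thus the fiber integral contributes the constant $2\pi i$ independently of $\underline{x}\in|\mu|$, and what remains is $\int_{|\mu|}\bigwedge^n\dlog\underline{x}$ over the base chain, i.e. $\int_{M(\gamma)}\eta_F = 2\pi i\int_\mu\bigwedge^n\dlog\underline{x}$ as claimed. The orientation conventions on $M(\gamma)$, $\mu$, and the two sheets $W_1, W_2^*$ have to be fixed consistently so that the two sheets add rather than cancel; this is the one genuinely fussy point, and I would pin it down by comparing with the model case $F\equiv$ const (no $\underline{x}$-dependence), where $Y_F$ degenerates to $(\CC^*)^n\times\{u^2+v^2 = -c\}$ and the fiber is literally a conic $\cong\CC^*$.

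For the second assertion, $\int_{\hat{\TT}_Y^{n+1}}\eta_F = (2\pi i)^{n+1}$, I would use that $\hat{\TT}_Y^{n+1}$ is the $S^1$-bundle (Fubini–Study/Haar $S^1$-fiber in the conic direction) over the torus $\hat{\TT}^n_{\underline{\nu},\epsilon}\in Z^{top}_n((\CC^*)^n\setminus X_F^*)$. Over such $\underline{x}$ the fiber $P^{-1}(\underline{x})\cong\CC^*$ and $\eta_F$ restricts to (a constant multiple of) $\dlog$ of the conic parameter; integrating that $S^1$-fiber gives $2\pi i$, leaving $\int_{\hat{\TT}^n}\bigwedge^n\dlog\underline{x} = (2\pi i)^n$, for a total of $(2\pi i)^{n+1}$. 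Concretely: parametrize the conic $\{u^2+v^2 = -F(\underline{x})\}$ rationally by $s\in\CC^*$ via $u = \tfrac{i}{2}(s - F/s)$, $v = \tfrac12(s+F/s)$ (so $u^2+v^2 = -F$ identically in $s$); then $du\wedge dv$ pulled back is $\sim \frac{ds}{s}\wedge(\text{stuff in }dF)$, the residue picks out $\frac{ds}{s}\wedge\bigwedge^n\dlog\underline{x}$ up to the same constant as before, and $\oint_{|s|=1}\frac{ds}{s} = 2\pi i$ finishes it. The main obstacle throughout is bookkeeping of the constants and orientations — the factor $2i$ in the definition of $\eta_F$, the branch of $\sqrt{\phantom{x}}$ on the two sheets, and the induced orientation on $M(\gamma)$ from that on $\mu$ — rather than anything structural; checking it against the two model computations above is what makes the constant come out to exactly $2\pi i$ (resp. $(2\pi i)^{n+1}$) with no stray sign.
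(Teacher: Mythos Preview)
Your approach is correct and will work, but the paper does this in one line by a coordinate change that you essentially rediscover only in your treatment of $\hat{\TT}_Y^{n+1}$. The paper sets $u':=u+iv$, $v':=u-iv$, so that $u^2+v^2=u'v'$ and (absorbing the Jacobian into the $2i$) $\eta_F=\bigwedge^n\dlog\underline{x}\wedge\dlog u'$ away from $v'=0$. The explicit description of the fiber $M(\gamma)\cap P^{-1}(\underline{x})$ then shows that, under $V=\sqrt{|F|}\sin\phi$, the two sheets glue to give $u'=e^{i\theta/2}\sqrt{|F|}\,e^{i\phi}$ tracing a full circle in $u'$; integrating $\dlog u'$ over this $S^1$ gives $2\pi i$ at once, with no separate tracking of the $2i$, the branch of the square root, or the arcsin integral. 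Your real integral $\int_{-\sqrt{|F|}}^{\sqrt{|F|}}\frac{dV}{\sqrt{|F|-V^2}}$ summed over the two sheets is exactly this same $\oint\dlog u'$ written out in the $(u,v)$ coordinates, so the two arguments are the same computation in different clothing. The advantage of the $u',v'$ substitution is that it handles $M(\gamma)$ and $\hat{\TT}_Y^{n+1}$ uniformly (your rational conic parameter $s$ \emph{is} $u'$, up to a harmless scalar) and makes the orientation issue you flag transparent: the two sheets are just the upper and lower halves of a single $u'$-circle.
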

\begin{proof}
Writing $u':=u+iv$, $v':=u-iv$, we have (away from $v'=0$) $\eta_{F}=Res_{Y_{F}}\left(\frac{\bigwedge^{n}\dlog\underline{x}\,\,\wedge\, du'\wedge dv'}{F(\underline{x})+u'v'}\right)=\bigwedge^{n}\dlog\underline{x}\,\wedge\dlog\, u'$.
The result is now immediate (by integrating {}``first'' over the
$S^{1}$ fibers of $M(\gamma)$).
\end{proof}
Lemmas $3.3$ and $3.4$ imply the following

\begin{prop}
The periods of $\eta_{F}$ are precisely the $\CC\to\CC/\QQ(n+1)$
lifts of the $2\pi i\int_{\gamma}R(\xi_{F})$ for $\gamma\in\K_{n-1}(X_{F}^{*})$,
including the lifts $(2\pi i)^{n+1}\QQ$ of $0$.
\end{prop}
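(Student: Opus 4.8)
The plan is to read off the periods of $\eta_F$ through the short exact sequence
\[
0 \;\longrightarrow\; \QQ\langle \hat{\TT}_Y^{n+1}\rangle \;\longrightarrow\; H_{n+1}(Y_F) \;\longrightarrow\; \K_{n-1}(X_F^*) \;\longrightarrow\; 0
\]
together with the explicit section $M$ constructed above. First I would note that a period of $\eta_F$ is an integral $\int_\Gamma \eta_F$ for $\Gamma\in H_{n+1}(Y_F)$, and that if $\gamma\in\K_{n-1}(X_F^*)$ denotes the image of $\Gamma$, then $\Gamma - M(\gamma)\in\QQ\langle\hat{\TT}_Y^{n+1}\rangle$. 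Since $\int_{\hat{\TT}_Y^{n+1}}\eta_F=(2\pi i)^{n+1}$ by the second assertion of Lemma 3.4, the $\hat{\TT}_Y^{n+1}$-part of $\Gamma$ contributes an arbitrary element of $(2\pi i)^{n+1}\QQ=\QQ(n+1)$; hence $\int_\Gamma\eta_F\equiv\int_{M(\gamma)}\eta_F\pmod{\QQ(n+1)}$, and conversely every element of $\int_{M(\gamma)}\eta_F+\QQ(n+1)$ occurs as a genuine period (add rational multiples of $\hat{\TT}_Y^{n+1}$). Thus the set of periods is a union of $\QQ(n+1)$-cosets indexed by $\gamma\in\K_{n-1}(X_F^*)$, and it remains to compute the coset $\int_{M(\gamma)}\eta_F\bmod\QQ(n+1)$.

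For that step I would choose a chain $\mu\in C_n^{top}((\CC^*)^n;\QQ)$ with $\partial\mu=I(\gamma)$ --- which exists precisely because $I_*[\gamma]=0$ in $H_{n-1}((\CC^*)^n)$ --- and, after clearing denominators to apply the $\ZZ$-coefficient lemmas and dividing back, invoke the first formula of Lemma 3.4 to get $\int_{M(\gamma)}\eta_F=2\pi i\int_\mu\bigwedge^n\dlog\underline{x}$, and then Lemma 3.3 to get $\int_\mu\bigwedge^n\dlog\underline{x}\equiv\int_\gamma R(\xi_F)\pmod{\QQ(n)}$. Multiplying by $2\pi i$ yields $\int_{M(\gamma)}\eta_F\equiv 2\pi i\int_\gamma R(\xi_F)\pmod{2\pi i\cdot\QQ(n)}$, and $2\pi i\cdot\QQ(n)=(2\pi i)^{n+1}\QQ=\QQ(n+1)$. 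Before concluding I would check well-definedness of both sides: changing $\mu$ by a cycle in $Z_n^{top}((\CC^*)^n)$ changes $\int_\mu\bigwedge^n\dlog\underline{x}$ by a period of the Haar form $\frac{1}{(2\pi i)^n}\bigwedge^n\dlog\underline{x}$, i.e. by an element of $(2\pi i)^n\QQ=\QQ(n)$, so $2\pi i\int_\mu\bigwedge^n\dlog\underline{x}$ is unchanged modulo $\QQ(n+1)$, exactly the indeterminacy $\QQ\langle\hat{\TT}_Y^{n+1}\rangle$ seen on the $Y_F$ side; and since $R(\xi_F)$ represents the class $AJ(\xi_F)\in H^{n-1}(X_F^*,\CC/\QQ(n))$, the pairing $\int_\gamma R(\xi_F)$ lives in $\CC/\QQ(n)$ and $2\pi i\int_\gamma R(\xi_F)$ is canonically defined in $\CC/\QQ(n+1)$. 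Putting this together, the set of periods of $\eta_F$ equals $\bigcup_{\gamma\in\K_{n-1}(X_F^*)}\big(2\pi i\int_\gamma R(\xi_F)+\QQ(n+1)\big)$, i.e.\ the set of all $\CC$-lifts of the numbers $2\pi i\int_\gamma R(\xi_F)$; taking $\gamma=0$ exhibits $(2\pi i)^{n+1}\QQ$, the lifts of $0$, among them.

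The substantive content --- the explicit $S^1$-fibration description of $M(\gamma)$, the residue identity $\eta_F=\bigwedge^n\dlog\underline{x}\wedge\dlog u'$ away from $u-iv=0$, and the Stokes comparison of $R(\xi_F)$ with $\bigwedge^n\dlog\underline{x}$ --- is already packaged into Lemmas 3.3 and 3.4, so the only real work left is the torsor bookkeeping: carefully matching the bounding-chain indeterminacy of $\mu$ over $(\CC^*)^n$ against the $\hat{\TT}_Y^{n+1}$-indeterminacy of the section $M$ over $Y_F$, and verifying that $\Gamma\mapsto\int_\Gamma\eta_F$ descends along the exact sequence to the asserted well-defined map $\K_{n-1}(X_F^*)\to\CC/\QQ(n+1)$. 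I expect that to be the main (if mild) obstacle; no further hard estimate or geometric construction should be needed.
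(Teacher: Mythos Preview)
Your proposal is correct and is exactly the argument the paper has in mind: the paper's entire proof is the one line ``Lemmas $3.3$ and $3.4$ imply the following,'' and you have simply spelled out how those two lemmas combine with the short-exact sequence $\QQ\langle\hat{\TT}_Y^{n+1}\rangle\to H_{n+1}(Y_F)\to\K_{n-1}(X_F^*)$ and the section $M$ to give the statement. The extra bookkeeping you do (clearing denominators to pass to $\ZZ$-coefficients, and matching the $\mu$-ambiguity against the $\hat{\TT}_Y^{n+1}$-ambiguity) is routine and consistent with the paper's conventions.
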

If we now assume $F=\hat{F}$ is tempered,%
\footnote{plus additional assumptions for $n=4$ (cf. Theorem 1.7) %
} then $\xi_{\hat{F}}$ comes from some $\Xi_{\hat{F}}\in CH^{n}(\tilde{X}_{\hat{F}},n)$,
and so $R(\xi_{\hat{F}})$ has no residues to separate periods over
$\gamma_{1}$, $\gamma_{2}$($\in\K_{n-1}$) with $J_{*}\gamma_{1}=J_{*}\gamma_{2}$.
Therefore (using Lemma 3.1), we get

\begin{cor}
The periods of $\eta_{\hat{F}}$ may be expressed in terms of the
regulator periods of {}``transcendental cycles'': $\int_{(\cdot)}\eta_{\hat{F}}$
is the composition \SMALL \[
\frac{H_{n+1}(Y_{\hat{F}})}{M(\ker(J_{*})\cap\K_{n-1})+\QQ\left\langle \hat{\TT}_{Y}^{n+1}\right\rangle }\rTo_{\cong}^{M^{-1}}\frac{\K_{n-1}(X_{\hat{F}}^{*})}{\ker(J_{*})\cap\K_{n-1}}\rTo_{\cong}^{\text{Lemma 3.1}}H_{n-1}^{tr}(\tilde{X}_{\hat{F}})\rTo^{2\pi i\int_{(\cdot)}R(\Xi_{\hat{F}})}\CC/\QQ(n+1).\]

\end{cor}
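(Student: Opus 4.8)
The plan is to combine Proposition 3.5, Lemma 3.1, and the temperedness hypothesis; the latter is the only place where anything beyond formal diagram-chasing is needed. Recall from \S3.1 the short exact sequence $\QQ\langle\hat{\TT}_Y^{n+1}\rangle \to H_{n+1}(Y_{\hat F}) \to \K_{n-1}(X_{\hat F}^*)$, which makes $M\colon \K_{n-1}(X_{\hat F}^*)\xrightarrow{\ \cong\ } H_{n+1}(Y_{\hat F})/\QQ\langle\hat{\TT}_Y^{n+1}\rangle$ an isomorphism; passing to the quotient by $\ker(J_*)\cap\K_{n-1}$ on the target side (its preimage in $H_{n+1}(Y_{\hat F})$ being exactly $M(\ker(J_*)\cap\K_{n-1})+\QQ\langle\hat{\TT}_Y^{n+1}\rangle$) shows that the arrow labeled $M^{-1}$ in the statement is a well-defined isomorphism. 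First I would invoke Proposition 3.5 to rewrite the period functional: for $c\in H_{n+1}(Y_{\hat F})$ with $\gamma:=M^{-1}(c)\in\K_{n-1}(X_{\hat F}^*)$ one has $\int_c\eta_{\hat F}\equiv 2\pi i\int_\gamma R(\xi_{\hat F})\pmod{\QQ(n+1)}$, the $\QQ(n+1)$-ambiguity absorbing both the $\CC\to\CC/\QQ(n+1)$ lift and the generator $\hat{\TT}_Y^{n+1}$. So everything reduces to showing that $\gamma\mapsto 2\pi i\int_\gamma R(\xi_{\hat F})\pmod{\QQ(n+1)}$ factors as the stated composite through $H_{n-1}^{tr}(\tilde X_{\hat F})$, via $2\pi i\int_{(\cdot)}R(\Xi_{\hat F})$.

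The key input is that, since $\hat F$ is tempered, $\xi_{\hat F}=J^*\{\underline x\}$ is the restriction of a class $\Xi_{\hat F}\in CH^n(\tilde X_{\hat F},n)$ with all residues $Res^1_{\ST}\,\xi_{\hat F}$ vanishing; consequently a $\db$-closed representative produces a KLM regulator current $R(\Xi_{\hat F})$ on $\tilde X_{\hat F}$ with no $\delta$-contributions along the boundary divisors $D_{\ST}$, pulling back under $J$ to $R(\xi_{\hat F})$ up to a $\QQ(n)$-discrepancy and an exact term (here the good real-position requirements of Remark 2.1(i) for the cuts $T_{\underline x}$ must be checked). Because $\dim\tilde X_{\hat F}=n-1<n$, formula $(0.21)$ gives $d[R(\Xi_{\hat F})]\equiv 0$ modulo $\QQ(n)$-currents, equivalently the corrected current $R_{\Xi_{\hat F}}'$ of Remark 2.3(a) is $d$-closed. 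Hence if $\gamma,\gamma'\in Z^{top}_{n-1}(X_{\hat F}^*;\QQ)$ satisfy $J_*\gamma=J_*\gamma'$ in $H_{n-1}(\tilde X_{\hat F},\QQ)$, writing $J_*\gamma-J_*\gamma'=\partial\Gamma$ on $\tilde X_{\hat F}$ and applying Stokes,
\[
2\pi i\int_\gamma R(\xi_{\hat F})-2\pi i\int_{\gamma'}R(\xi_{\hat F})\;=\;2\pi i\int_\Gamma d[R(\Xi_{\hat F})]\;\equiv\;0\pmod{\QQ(n+1)}.
\]
Thus the functional factors through $J_*$, hence through $\K_{n-1}(X_{\hat F}^*)/(\ker(J_*)\cap\K_{n-1})$, and the induced map there is literally $J_*\gamma\mapsto 2\pi i\int_{J_*\gamma}R(\Xi_{\hat F})\pmod{\QQ(n+1)}$.

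Finally I would apply Lemma 3.1: $J_*$ restricted to $\K_{n-1}(X_{\hat F}^*)$ is onto $H_{n-1}^{tr}(\tilde X_{\hat F})$ with kernel exactly $\ker(J_*)\cap\K_{n-1}$, so the middle arrow $\K_{n-1}(X_{\hat F}^*)/(\ker(J_*)\cap\K_{n-1})\xrightarrow{\cong}H_{n-1}^{tr}(\tilde X_{\hat F})$ is an isomorphism. Splicing $M^{-1}$, this isomorphism, and $2\pi i\int_{(\cdot)}R(\Xi_{\hat F})$ reproduces $\int_{(\cdot)}\eta_{\hat F}$ on the quotient $H_{n+1}(Y_{\hat F})/(M(\ker(J_*)\cap\K_{n-1})+\QQ\langle\hat{\TT}_Y^{n+1}\rangle)$, which is the corollary. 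The hard part is not the gluing but the current-theoretic bookkeeping of the middle paragraph: one must verify that $\Xi_{\hat F}$ and a representative of $\gamma$ can be chosen simultaneously in good position so that $J^*R(\Xi_{\hat F})=R(\xi_{\hat F})$ with a $\QQ(n)$-discrepancy that genuinely disappears after multiplying by $2\pi i$ and reducing mod $\QQ(n+1)$; a fully rigorous treatment would route this through the map of complexes of \cite{KLM} and the hyper-resolution formalism of \cite{KL}.
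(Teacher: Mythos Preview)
Your proposal is correct and follows essentially the same route as the paper: the paper's one-sentence argument (``$R(\xi_{\hat F})$ has no residues to separate periods over $\gamma_1,\gamma_2\in\K_{n-1}$ with $J_*\gamma_1=J_*\gamma_2$'') is exactly your Stokes computation unpacked, and the invocation of Lemma 3.1 is identical. Your additional care about the current-theoretic bookkeeping (good position, $\QQ(n)$-discrepancies) goes beyond what the paper makes explicit at this point, but is the right thing to worry about.
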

\normalsize In particular, if we put ourselves in a $1$-parameter
family setting $\hat{F}=1-t\phi(\underline{x})$ for $\phi$ as in
$\S2$, then Corollaries 2.4 and 3.6 beget

\begin{cor}
The $\D$-submodule of $\H_{\tilde{X}_{t}}^{n-1}$ generated by $[\tilde{\omega}_{t}]$
is a quotient of the submodule of $\H_{Y_{t}}^{n+1}$ generated by
$[\eta_{t}]$, via\[
\nabla_{PF}^{(Y,\eta)}=\nabla_{PF}^{(\tilde{X},\tilde{\omega})}\circ\nabla_{\delta_{t}}.\]

\end{cor}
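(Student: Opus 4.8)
The plan is to trace through the chain of isomorphisms and derivative relations already assembled in $\S\S2$-$3.1$. We are trying to prove Corollary 3.8: that the $\D$-module $\M_\omega := \D\cdot[\tilde\omega_t] \subset \H^{n-1}_{\tilde X_t}$ is a quotient of $\M_\eta := \D\cdot[\eta_t] \subset \H^{n+1}_{Y_t}$, via $\nabla^{(Y,\eta)}_{PF} = \nabla^{(\tilde X,\tilde\omega)}_{PF}\circ\nabla_{\delta_t}$.

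First I would set up the period comparison. By Corollary 3.6 (with $\hat F = 1-t\phi$), the periods of $\eta_t$ over $H_{n+1}(Y_t)$ are, after dividing by $M(\ker(J_*)\cap\K_{n-1}) + \QQ\langle\hat\TT^{n+1}_Y\rangle$, identified with $2\pi i\int_{(\cdot)}R(\Xi_t)$ over $H^{tr}_{n-1}(\tilde X_t)$, which modulo $\QQ(n+1)$ are exactly the periods of the current $R'_t$ of Remark 2.3(a) over transcendental cycles. In particular the fundamental vanishing cycle $\tilde\varphi_t$ (which bounds in $(\CC^*)^n$, hence lies in $\K_{n-1}$) gives the period $2\pi i\,\Psi(t)$ up to $\QQ(n+1)$. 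So every period of $\eta_t$ is $2\pi i$ times a period of $R'_t$, and conversely every period of $R'_t$ over a transcendental cycle arises this way. Now I would invoke Corollary 2.4: $\nabla_{\delta_t}[R'_t] = [\tilde\omega_t]$. Applying $\nabla_{\delta_t}$ to the (multivalued) function $2\pi i\int_{\tilde\varphi_t}R'_t$ yields $2\pi i\int_{\tilde\varphi_t}\tilde\omega_t$, i.e. $2\pi i$ times a period of $\tilde\omega_t$; and this identification is uniform over all transcendental cycles. Hence the period sheaf of $\eta_t$, once we apply $\nabla_{\delta_t}$, becomes $2\pi i$ times the period sheaf of $\tilde\omega_t$. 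Therefore the $\D$-module generated by the periods of $\eta_t$ maps onto, after one application of $\nabla_{\delta_t}$, the $\D$-module generated by the periods of $\tilde\omega_t$.

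The second step is to turn this statement about period sheaves into the stated factorization of Picard-Fuchs operators. Since $D^{(\tilde X,\tilde\omega)}_{PF}$ annihilates all periods of $\tilde\omega_t$ (by definition of the Picard-Fuchs operator for $[\tilde\omega_t]$, as in $\S2.2$), and every period of $\tilde\omega_t$ is $2\pi i$ times $\nabla_{\delta_t}$ applied to a period of $\eta_t$, the composite operator $D^{(\tilde X,\tilde\omega)}_{PF}\circ\nabla_{\delta_t}$ annihilates every period of $\eta_t$. Conversely, $\nabla_{\delta_t}$ sends the period lattice of $\eta_t$ onto that of $\tilde\omega_t$ (surjectivity coming from Lemma 3.1 together with the spanning statements in Lemma 3.1's proof, which give enough cycles to realize all of $H^{tr}_{n-1}$), so the kernel of $D^{(\tilde X,\tilde\omega)}_{PF}\circ\nabla_{\delta_t}$ on the $\eta_t$-period space is precisely the annihilator ideal of $[\eta_t]$-periods, forcing $D^{(Y,\eta)}_{PF} = D^{(\tilde X,\tilde\omega)}_{PF}\circ\nabla_{\delta_t}$ up to the usual normalization (leading coefficient/monic convention). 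Dually, the surjection of period spaces gives the surjection $\M_\eta \twoheadrightarrow \M_\omega$ of $\D$-modules, with kernel generated by $\nabla_{\delta_t}$ acting as the connecting map; concretely, the map $\M_\eta \to \M_\omega$ is $\nabla_{\delta_t}$ followed by the identification $\nabla_{\delta_t}[R'_t]\mapsto[\tilde\omega_t]$, which is well-defined and surjective because $[R'_t]$ generates a rank-one extension of $\M_\omega$ and $\eta_t$'s periods match those of $R'_t$.

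The main obstacle I expect is the bookkeeping of the $\QQ(n+1)$-ambiguities and the quotient by $M(\ker(J_*)\cap\K_{n-1}) + \QQ\langle\hat\TT^{n+1}_Y\rangle$: one must check that these ambiguities are exactly killed by $\nabla_{\delta_t}$ (for the $\QQ(n+1)$-periods, since $\nabla$ kills constants, this is immediate; for the $\ker(J_*)$ part one uses that $R(\Xi_t)$ has no residues on the boundary because $\xi_t$ extends to $\Xi_t \in CH^n(\tilde X_t,n)$, which is precisely the point of temperedness). One must also verify that the rank-one discrepancy — the $\QQ\langle\hat\TT^{n+1}_Y\rangle$ direction, which corresponds on the $\tilde X$ side to the extra solution $\log t + \cdots$ of the inhomogeneous-turned-homogeneous equation $(D_{PF}\circ\delta_t)(\cdot)=0$ of Corollary 2.4 — is exactly accounted for by the extra $\delta_t = t\,d/dt$ factor, so that orders match: $\mathrm{ord}(D^{(Y,\eta)}_{PF}) = \mathrm{ord}(D^{(\tilde X,\tilde\omega)}_{PF}) + 1$. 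Everything else is a formal consequence of Corollaries 2.4, 3.6, and the surjectivity in Lemma 3.1.
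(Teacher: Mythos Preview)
Your approach is essentially the paper's own: the corollary is stated immediately after Corollary 3.6 with the remark that ``Corollaries 2.4 and 3.6 beget'' it, and you correctly unpack exactly those two ingredients --- the identification of periods of $\eta_t$ with $2\pi i$ times regulator periods of $R'_t$ (Cor.~3.6), followed by $\nabla_{\delta_t}[R'_t]=[\tilde\omega_t]$ (Cor.~2.4) --- to obtain that $D_{PF}^{(\tilde X,\tilde\omega)}\circ\delta_t$ annihilates all periods of $\eta_t$, hence factors the Picard--Fuchs operator of $\eta$.

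One point of caution: your concluding sentence ``dually, the surjection of period spaces gives the surjection $\M_\eta\twoheadrightarrow\M_\omega$'' runs the Riemann--Hilbert duality in the wrong direction. A surjection $\mathrm{Sol}(L)\twoheadrightarrow\mathrm{Sol}(A)$ via $f\mapsto\delta_t f$ (arising from $L=A\cdot\delta_t$) corresponds to an \emph{injection} $\D/\D A\hookrightarrow\D/\D L$ of $\D$-modules (sending $1\mapsto\delta_t$), not a surjection. Concretely, the factorization exhibits $\M_\omega$ as the submodule of $\M_\eta$ generated by $\nabla_{\delta_t}[\eta_t]$, with rank-one quotient $\D/\D\delta_t$. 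Your explicit description of the map as ``$\nabla_{\delta_t}$ followed by the identification\ldots'' is therefore not a $\D$-module homomorphism as written. The paper's word ``quotient'' is itself informal here (likely meant in the sense of solution spaces, or simply ``constituent''), so your derivation of the operator factorization is the real content and is correct; just don't lean on the dual-map description.
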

\begin{rem}
If $\tilde{\varphi}_{0}$ is a vanishing cycle (as in $\S2$), with
$\K_{n-1}\ni\varphi_{0}\rMapsto^{J_{*}}\tilde{\varphi}_{0}$, then
by Theorem 2.2 and Corollary 3.6\[
\frac{\int_{M(-\varphi_{0})}\eta_{t}}{\int_{\hat{\TT}_{Y}^{n+1}}\eta_{t}}=\frac{-2\pi i\int_{\tilde{\varphi}_{0}}R(\Xi_{t})}{(2\pi i)^{n+1}}=\frac{\Psi(t)}{-(2\pi i)^{n}}\sim\frac{\log t}{2\pi i}\]
as $t\to0$. So this period ratio is custom-made for defining a mirror
map.
\end{rem}

\subsection{The canonical bundle as a CY toric variety}

We specialize to the case $n=2$ for the remainder of the section.
Let $\Delta\subset\RR^{2}$ be a reflexive polytope with vertices
$\underline{\nu}^{(1)}$, $\ldots$, $\underline{\nu}^{(r+2)}$ numbered
counter-clockwise. Together with $\underline{\nu}^{(0)}=\{\underline{0}\}$,
these are the {}``relevant integral points'' of $\Delta$ (any interior
points of edges are excluded). We have a (partial) triangulation $tr(\Delta)$
using the segments $\mathfrak{s}^{(k)}=[\underline{\nu}^{(0)},\underline{\nu}^{(k)}]$,
and write $\underline{\nu}^{(i,j)}:=\underline{\nu}^{(j)}-\underline{\nu}^{(i)}$.

A fan $\Sigma_{\Delta}$ is obtained by taking cones on $tr(\Delta)\times\{1\}\,\subset\,\RR^{3}$.
The generators of $\Sigma_{\Delta}(1)$ are $\{\underline{\hat{\nu}}^{(0)},\ldots,\underline{\hat{\nu}}^{(r+2)}\}$
where $\underline{\hat{\nu}}^{(k)}=(\underline{\nu}^{(k)},1).$ The
associated toric variety $Y^{\circ}$ is the total space of $\mathsf{K}_{\PP_{\Delta^{\circ}}}\rTo^{\rho}\PP_{\Delta^{\circ}}$.
The line bundle $\mathsf{K}_{Y^{\circ}}$ is trivialized by a {[}global
nonvanishing] {}``tautological section'', making $Y^{\circ}$ an
open CY $3$-fold. If edges of $\Delta$ have interior integral points
$\underline{u}^{(\ell)}$ then $Y^{\circ}$ is singular (but normal).
When we refer to the {}``singular case'' resp. {}``smooth case''
below, this is what is meant.

The curves $C_{i}^{\circ}\subset Y^{\circ}$ dual to subfans $\Sigma_{\mathfrak{s}^{(i)}}$
are in $1$-$1$ correspondence with edges of $\Delta^{\circ}$, and
are supported on the {}``$0$-section'' $D_{0}^{\circ}\cong\PP_{\Delta^{\circ}}\subset Y^{\circ}$.
The $[C_{i}^{\circ}]$ generate $H_{2}(Y^{\circ},\ZZ)$, and the \textbf{Mori
cone} (of effective curves) in $H_{2}(Y^{\circ},\RR)$ is just obtained
by taking $\RR^{\geq0}$-linear combinations of them. We assume henceforth
that the Mori cone with $this$ integral structure is smooth (cf.
\cite{CK}, p. 32; this implies simplicial). A simple example where
both $Y^{\circ}$ and Mori are smooth is shown in Figure 3.2.%
\begin{figure}
\caption{\protect\includegraphics[scale=0.7]{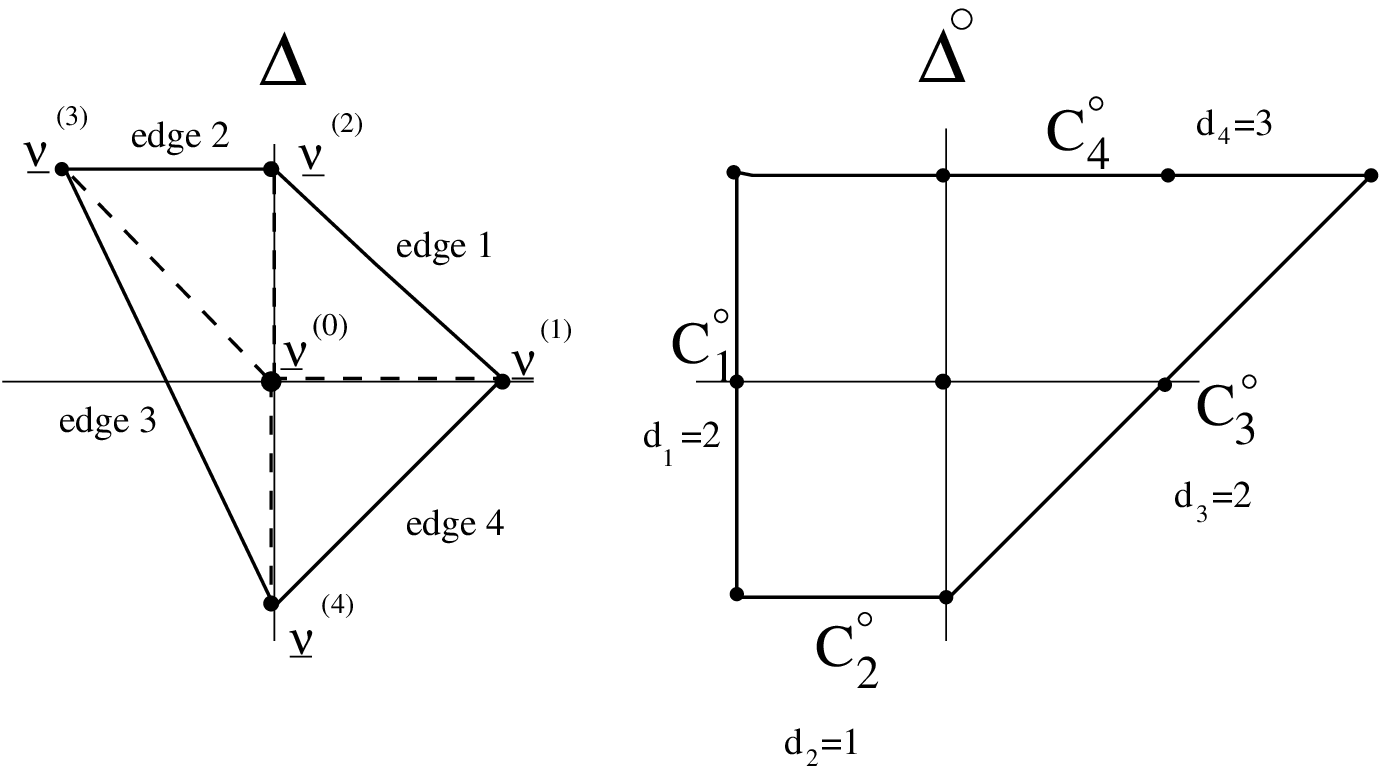}}

\end{figure}

The divisors $D_{i}^{\circ}$ dual to subfans $\Sigma_{\underline{\nu}^{(i)}}$,
$i=0,\ldots,r+2$, generate $H^{2}(Y^{\circ},\QQ)$. If $\PP_{\Delta^{\circ}}$
(and $Y^{\circ}$) are smooth then the $D_{i}^{\circ}=\rho^{-1}(C_{i}^{\circ})$.
Otherwise, using the $\underline{u}^{(\ell)}$ to refine $\Sigma_{\Delta}$
yields the crepant resolution $Y^{\circ}\lTo^{\hat{p}}\tilde{Y}^{\circ}$
over $\PP_{\Delta^{\circ}}\lTo^{p}\PP_{\tilde{\Delta}^{\circ}}$.
Denote the exceptional divisors $E_{\ell}^{\circ}$ (for $p$) and
$\hat{E}_{\ell}^{\circ}:=\tilde{\rho}^{-1}(E_{\ell}^{\circ})$ (for
$\hat{p}$); we have $H^{2}(Y^{\circ},\QQ)\cong\ker\{H^{2}(\tilde{Y}^{\circ})\to H^{2}(\cup\hat{E}_{\ell}^{\circ})\}$.
Writing $\tilde{C}_{i}^{\circ}=p^{*}C_{i}^{\circ}$ for the proper
transforms, the $D_{i}$ are then represented by cycles on $\tilde{Y}^{\circ}$
of the form $\tilde{D}_{j}^{\circ}:=\tilde{\rho}^{-1}(\tilde{C}_{i}^{\circ})+\sum_{\ell}\beta_{\ell}^{i}\hat{E}_{\ell}^{\circ}$
for $\beta_{\ell}^{i}\in\QQ$ satisfying $(\tilde{C}_{i}^{\circ}+\sum\beta_{\ell}^{i}E_{\ell}^{\circ})\cdot E_{k}^{\circ}=0$
$\forall i,k$. 

Intersections $M_{ij}:=\left\langle C_{i}^{\circ},D_{j}^{\circ}\right\rangle $
under the pairing $H^{2}(Y^{\circ})\times H_{2}(Y^{\circ})\to\QQ$
are then computed by $\tilde{C}_{i}^{\circ}\cdot\tilde{D}_{j}^{\circ}$.
These need not be integers (see \cite{Fu}) but the matrix $[M_{ij}]_{i,j\geq1}$
is symmetric. The \textbf{K\"ahler cone} is the dual of Mori in $H^{2}(Y^{\circ},\RR)$
under this pairing; it is represented by divisors $\{D=\sum\alpha_{j}D_{j}\,|\,\left\langle C_{i},D\right\rangle \geq0\,(\forall i)\}.$

In general we have in $H^{2}(Y^{\circ})$\[
D_{0}^{\circ}\,\equiv\,-\sum_{i\geq1}D_{i}^{\circ}\,\equiv\,\rho^{-1}(\mathsf{K}_{\PP_{\Delta^{\circ}}})\,\equiv\,-\rho^{-1}(X^{\circ})\]
where $X^{\circ}$ is any anticanonical (elliptic curve) hypersurface
in good position with respect to $\DD_{\Delta^{\circ}}$. Writing
$d_{i}-1\,:=\,$number of interior points of the edge of $\Delta^{\circ}$
dual to $\underline{\nu}^{(i)}$, we have ($i\geq1$)\[
-\left\langle C_{i}^{\circ},D_{0}^{\circ}\right\rangle _{Y^{\circ}}\,=\,\left\langle C_{i}^{\circ},X^{\circ}\right\rangle _{\PP_{\Delta^{\circ}}}\,=\, d_{i}.\]
Put $e_{i}-1\,:=\,$number of interior points on the edge {}``next''
(in the counterclockwise direction) to $\underline{\nu}^{(i)}$. We
are in the singular case iff some $e_{i}>1$.

We are interested in a very explicit (and standard) presentation of
the Mori cone: first, we write down generators for the integral relations
on the $\underline{\hat{\nu}}^{(i)}$ as follows. For any $k\in\{1,\ldots,r+2\}$,
let%
\footnote{$\ell_{k-1}^{(k)}$ is replaced by $\ell_{r+2}^{(k)}$ for $k=1$.%
} $\ell_{k-1}^{(k)}\underline{\hat{\nu}}^{(k-1)}+\ell_{k+1}^{(k)}\underline{\hat{\nu}}^{(k+1)}$
be the minimal $\ZZ^{+}$-linear combination lying in the line containing
$\mathfrak{s}^{(k)}$, and then choose $\ell_{k}^{(k)}\in\ZZ$, $\ell_{0}^{(k)}\in\ZZ^{\leq0}$
such that \begin{equation} \ell^{(k)}_0 \underline{\hat{\nu}}^{(0)} + \ell^{(k)}_{k-1} \underline{\hat{\nu}}^{(k-1)} + \ell^{(k)}_k \underline{\hat{\nu}}^{(k)} + \ell^{(k)}_{k+1} \underline{\hat{\nu}}^{(k+1)} = 0. \end{equation} 

\begin{rem}
One can show that these take the form\small \[
\ell_{0}^{(k)}=\frac{-e_{k}e_{k-1}d_{k}}{e_{(k,k-1)}}\,,\,\,\,\ell_{k-1}^{(k)}=\frac{e_{k}}{e_{(k,k-1)}}\,,\,\,\,\ell_{k}^{(k)}=\frac{e_{k}e_{k-1}d_{k}-e_{k}-e_{k-1}}{e_{(k,k-1)}}\,,\,\,\,\ell_{k+1}^{(k)}=\frac{e_{k-1}}{e_{(k,k-1)}},\]
\normalsize where $e_{(k,k-1)}:=\gcd(e_{k},e_{k-1})$.
\end{rem}
This procedure determines a vector $\underline{\ell}^{(k)}\in\ZZ^{
}$ with \[
d_{k}\ell_{j}^{(k)}\,=\,-\ell_{0}^{(k)}M_{kj}\,=\,\left\langle -\ell_{0}^{(k)}C_{k}^{\circ},D_{j}^{\circ}\right\rangle .\]
(In the smooth case, $d_{k}=-\ell_{0}^{(k)}$.) That is, the relations
vectors $\underline{\ell}^{(i)}$ are essentially the rows of $M$
with denominators cleared; write $L$ for the new matrix.

The Mori cone can be represented by the $\RR^{\geq0}$-span $\M\subset\RR^{r+3}$
of rows of $L$; by our above assumption (on Mori), $\M$ is simplicial.
However, the integral structures may not be the same in the {}``singular
case'', so $\M$ may not be smooth. More concretely, write $\MM:=\{\RR\text{{-span of }}\underline{\ell}^{(i)}\}\subset\RR^{r+3}$,
with integral lattice $\MM_{\ZZ}=\MM\cap\ZZ^{r+3}$, and $\M_{\ZZ}=\M\cap\MM_{\ZZ}$.
Then the affine toric variety\[
U_{\Delta}:=Spec\left\{ \CC[\underline{a}^{\underline{m}}\,|\,\underline{m}\in\M_{\ZZ}]\right\} \]
is just $\AA^{r}$ in the smooth case but can be singular in the singular
case.

Using the fact that $\M$ is simplicial, take the $\{\underline{\ell}^{(i_{k})}\}_{k=1}^{r}$
which cannot be written as $\RR^{\geq0}$-linear combinations of the
other $\{\underline{\ell}^{(j)}\}$. (In the singular case, if any
$\underline{\ell}^{(i)}$ are the same, we choose the one for which
the {}``dual'' $d_{i}$ is minimized.) Note that $\M$ is smooth
iff $\ZZ^{\geq0}\left\langle \{\underline{\ell}^{(i_{k})}\}\right\rangle $
is all of $\M_{\ZZ}$. Next, let $\alpha_{k}^{i}\in\QQ$ be such that
$J_{m}^{\circ}:=\sum_{j=1}^{r+2}\alpha_{m}^{j}D_{j}^{\circ}$ satisfy\[
\left\langle C_{i_{k}}^{\circ},J_{m}^{\circ}\right\rangle _{Y^{\circ}}\,\,\left(=\,\sum_{j=1}^{r+2}\alpha_{m}^{j}\left|\frac{d_{i_{k}}}{\ell_{0}^{(i_{k})}}\right|\ell_{j}^{(i_{k})}\right)\,=\,\delta_{km}.\]
(That is, if we omit a couple of rows from $L$, the $\{\alpha_{m}^{j}\}$
give linear combinations of the columns that yield $\hat{e}_{m}\in\RR^{r}$.)
These $\{J_{m}^{\circ}\}$ then generate the K\"ahler cone. We have
$\sum d_{i_{k}}J_{k}^{\circ}\equiv-D_{0}^{\circ}$ since $\sum_{k}d_{i_{k}}\left\langle C_{i_{j}}^{\circ},J_{k}^{\circ}\right\rangle =d_{i_{j}}=-\left\langle C_{i_{j}}^{\circ},D_{0}^{\circ}\right\rangle .$

\begin{rem}
The $\{\alpha_{m}^{j}\}$ are nonnegative, since the K\"ahler cone
lies in the effective divisor cone, see \cite{CK}. It follows that
$\M_{\ZZ}\supseteq\MM\cap(\ZZ^{\geq0})^{r+3}$.
\end{rem}
Now we use this construction to identify the complex structure moduli
we will use, for the anticanonical hypersurface $X_{\underline{a}}$
given by the Zariski closure of\[
F_{\underline{a}}(\underline{x}):=\sum_{i=0}^{r+2}a_{i}\underline{x}^{\underline{\nu}^{(i)}}=0\]
in $\PP_{\Delta}$. The coordinate patch in simplified polynomial
moduli space $\overline{\M}_{simp}$ (cf. \cite{CK}) on which it
is natural to work is just $U_{\Delta}$, with coordinates \[
t_{k}:=\underline{a}^{\underline{\ell}^{(i_{k})}}\,,\,\,\,\,\,\,\, k=1,\ldots,r.\]
In the singular case, to parametrize $U_{\Delta}$ one really needs
all $r+2$ of the $\underline{a}^{\underline{\ell}^{(i)}}=:s_{i}$
together with their relations, but the functions we consider will
be defined in terms of the $\{t_{k}\}$. Moreover, the inclusion of
$\M_{\ZZ}$ into the true Mori integral lattice (generated by the
$\{C_{i_{k}}^{\circ}\}$) defines a smooth finite cover $\AA^{r}\cong\tilde{U}_{\Delta}\to U_{\Delta}$
with coordinates $\{\tilde{t}_{k}\}$ satisfying $(\tilde{t}_{k})^{\mu_{k}}=t_{k}$,
for $\mu_{k}:=\frac{|\ell_{0}^{(i_{k})}|}{d_{i_{k}}}=\frac{e_{i_{k}}e_{i_{k}-1}}{e_{(i_{k},i_{k}-1)}}$.
This is where we really want to work.

\subsection{Construction of the mirror map via regulator periods}

The family $Y_{\underline{a}}:=\{u^{2}+v^{2}+F_{\underline{a}}(\underline{x})=0\}\subset(\CC^{*})^{2}\times\CC^{2}$
treated (in greater generality) above, with holomorphic form $\eta_{\underline{a}}$,
is considered to be the mirror of $\mathsf{K}_{\PP_{\Delta^{\circ}}}$.
This is in part because its periods satisfy the relevant GKZ equations
$\mathcal{D}_{k}(\cdot)=0$.%
\footnote{For a more thorough conceptual treatment of local mirror symmetry,
the reader is encouraged to consult \cite{CKYZ}, \cite{dOFS}, \cite{Ho}.%
} The $\mathcal{D}_{k}$ are essentially the push-forwards, under the
map $(\CC^{*})^{r+3}\to(\CC^{*})^{r}$ given by $\underline{a}\mapsto\underline{t}$,
of\[
\tilde{\mathcal{D}}_{k}=\prod_{\{j\,|\,\ell_{j}^{(i_{k})}>0\}}\partial_{a_{j}}^{|\ell_{j}^{(i_{k})}|}-\prod_{\{j\,|\,\ell_{j}^{(i_{k})}<0\}}\partial_{a_{j}}^{|\ell_{j}^{(i_{k})}|}.\]
 In view of Proposition 3.5, we will work instead with regulator periods
on $X_{\underline{a}}^{*}$ to construct the (inverse of the) mirror
map. This will be a map from complex structure parameters $\underline{\tilde{t}}$
to complexified K\"ahler parameters \begin{equation} \xymatrix{ {} \tilde{U}_{\Delta} \; \supset \; \tilde{\mathcal{P}}  \ar @{~>} [rr] & & \left\{ \ZZ \left< \left\{J_k^{\circ}\right\}_{k=1}^r \right> \subset H^{1,1}(Y^{\circ},\QQ) \right\}  \otimes_{\ZZ} (\CC/\ZZ) , } \end{equation}
where $\tilde{\mathcal{{P}}}\to\mathcal{{P}}\to D_{\varepsilon}^{*}(0)^{\times r}$
are small punctured polycylinders centered at $\underline{0}$ in
$\tilde{U}_{\Delta}\to U_{\Delta}\to\AA^{r}$.

We will follow the method of $\S\S2.1-2$ for computing these periods,
taking $\underline{\nu}:=\underline{\nu}^{(j)}$ and $z_{1}:=\underline{x}^{e_{j}^{-1}\underline{\nu}^{(j,j+1)}}$
(see beginning of $\S2.1$). The local affine equation of $\tilde{X}_{\underline{a}}$
is then given by\[
(f_{\underline{a}}(\underline{z})+a_{0})z_{1}z_{2}\,=\, a_{j}+a_{j+1}z_{1}^{e_{j}}+\phi_{2}(z_{1},z_{2})+a_{0}z_{1}z_{2}\,=\,0,\]
where $\phi_{2}(z_{1},0)=0$. Assuming $0<|a_{i}|\ll|a_{0}|$ ($\forall i$)
{[}hence $0<|t_{k}|\ll1$ ($\forall k$)], consider the family of
cycles\[
\hat{\varphi}_{0}^{(j)}:=\{|z_{1}|=\epsilon\,,\,|z_{2}|\leq\epsilon\}\cap\tilde{X}_{\underline{a}}\subset X_{\underline{a}}^{*}.\]
This may be thought of as a vanishing cycle being pinched to the {}``point
at vertex $\underline{\nu}^{(j)}$'' as $a_{j}\to0$.

As in $\S2.2$ we set (working integrally)\[
\xi_{\underline{a}}:=\{x_{1},x_{2}\}\equiv\{(-1)^{\sigma_{j}}z_{1},(-1)^{\sigma_{j-1}}z_{2}\}\in CH^{2}(X_{\underline{a}}^{*},2)\]
where $\sigma_{j}:=\left|\frac{\nu_{1}^{(j,j+1)}\nu_{2}^{(j,j+1)}}{e_{j}^{2}}\right|$
gives essentially the sign from Remark $1.13$.

In $CH^{3}((\CC^{*})^{2}\m X_{\underline{a}}^{*},3)$ we define \[
\hat{\xi}_{\underline{a}}:=\left\{ a_{0}+f_{\underline{a}}(\underline{z}),\,(-1)^{\sigma_{j}}z_{1},\,(-1)^{\sigma_{j-1}}z_{2}\right\} \]
\[
\equiv\left\{ (-1)^{\sigma_{j}+\sigma_{j-1}}(a_{j}+a_{j+1}z_{1}^{e_{j}}+\mathcal{{O}}(z_{2})),\,(-1)^{\sigma_{j}}z_{1},\,(-1)^{\sigma_{j-1}}z_{2}\right\} .\]
This has residue $\xi_{\underline{a}}$ along $\tilde{X}_{\underline{a}}$,
so that $\frac{1}{2\pi i}AJ(\xi_{\underline{a}})(\hat{\varphi}_{0}^{(j)})=$\[
\frac{1}{(2\pi i)^{2}}AJ(\hat{\xi}_{\underline{a}})(|z_{1}|=|z_{2}|=\epsilon)-\frac{1}{2\pi i}AJ(Res_{\{z_{2}=0\}}^{1}\hat{\xi}_{\underline{a}})(|z_{1}|=\epsilon)\,=\]
\[
\int_{_{|z_{1}|=|z_{2}|=\epsilon}}\log(a_{0}+f_{\underline{a}}(\underline{z}))\frac{\dlog(z_{1})}{2\pi i}\wedge\frac{\dlog(z_{2})}{2\pi i}\mspace{100mu}\]
\[
\mspace{100mu}-\,\int_{_{|z_{1}|=\epsilon}}\log((-1)^{\sigma_{j}+\sigma_{j-1}}(a_{j}+a_{j+1}z_{1}^{e_{j}}))\frac{\dlog(z_{1})}{2\pi i}\,=\]
\[
\log(a_{0})-\sum_{k\geq1}\frac{1}{k}\left[\left(-\frac{1}{a_{0}}f_{\underline{a}}(\underline{z})\right)^{k}\right]_{0}-\log((-1)^{\sigma_{j}+\sigma_{j-1}}a_{j})\,=\]
\begin{equation} -\log \left( (-1)^{\sigma_j + \sigma_{j-1}} \frac{a_j}{a_0} \right) - H(\underline{a}). \end{equation} 
Here $[\cdot]_{0}$ takes the terms constant in $z_{1},z_{2}$. Now
in the smooth case (essentially following pp. 160-1 \cite{CK})\[
H(\underline{a})=\sum_{m\geq1}\frac{1}{m}\sum_{\ell_{1},\ldots,\ell_{r+2}}\frac{(\sum\ell_{j})!}{\prod(\ell_{j}!)}\cdot\frac{\prod a_{i}^{\ell_{i}}}{(-a_{i})^{\sum\ell_{i}}}\]
\[
=\sum_{m\geq1}\frac{1}{m}\sum_{n_{1},\ldots,n_{r}}\frac{(\sum n_{k}|\ell_{0}^{(i_{k})}|)!}{\prod_{j}(\sum n_{k}\ell_{j}^{(i_{k})})!}\cdot\prod_{k}((-1)^{\ell_{0}^{(i_{k})}}t_{k})^{n_{k}}.\]
The first big $\sum$ is over non-negative integers $\{\ell_{j}\}$
satisfying $\sum\ell_{j}=m$, $\sum\ell_{j}\underline{\nu}^{(j)}=0$;
the second is over integers $\{n_{k}\}$ with $\sum n_{k}\underline{\ell}^{(i_{k})}\in\ZZ\times(\ZZ^{\geq0})^{r+2}$
and $\sum n_{k}|\ell_{0}^{(i_{k})}|=m$. By Remark 3.10 we can take
these $n_{k}\geq0$, and so $H$ is holomorphic (and well-defined)
in a neighborhood of $\underline{0}$ in $U_{\Delta}$. In the singular
case, we replace $\sum_{n_{1},\ldots,n_{r}}$ by a sum over $\MM\cap(\ZZ^{\geq0})^{r+3}$
(which involves non-redundant choices of $\{n_{i}\}_{i=1}^{r+2}$)
and use all the $\ell^{(i)}$ and $s_{i}$ (not just the $\ell^{(i_{k})}$
and $t_{k}$). The resulting $H$ is defined on $U_{\Delta}$ and
pulls back to a holomorphic function on $\tilde{U}_{\Delta}$. Henceforth
it will be written $H(\underline{s})$.

Clearly the {}``$\log$''-term of (3.4) makes no sense on $U_{\Delta}$
or even $\tilde{U}_{\Delta}$; this reflects the fact that $\xi_{\underline{a}}$
is not invariant under the action of the torus $(\CC^{*})^{2}$. But
the periods of $R\{x_{1},x_{2}\}$ over cycles in\[
\K(X_{\underline{a}}^{*}):=\ker\{H_{1}(X_{\underline{a}}^{*},\ZZ)\to H_{1}((\CC^{*})^{2},\ZZ)\}\]
$are$ torus-invariant, and $r$ \textbf{distinguished vanishing cycles}
in $\K(X_{\underline{a}}^{*})$ are given by\[
\varphi_{0}^{[k]}:=-\sum_{j=1}^{r+2}\ell_{j}^{(i_{k})}\hat{\varphi}_{0}^{(j)}\,,\,\,\,\,\,\,\,\, k=1,\ldots,r\,.\]
The map $H_{1}(X_{\underline{a}}^{*})\rOnto H_{1}(\tilde{X}_{\underline{a}})$
induced by inclusion sends $\varphi_{0}^{[k]}$ to $\ell_{0}^{(i_{k})}$
times a primitive vanishing cycle $\tilde{\varphi}_{0}$. If $\varphi_{1}\in\K(X_{\underline{a}}^{*})$
is a lift of a complimentary generator $-\tilde{\varphi}_{1}$, then
$AJ(\xi_{\underline{a}})(\varphi_{1})$ and the $AJ(\xi_{\underline{a}})(\varphi_{0}^{[k]})$
form a $\QQ$-basis for the periods (modulo $\QQ(2)$) of $AJ(\xi_{\underline{a}})=[R\{x,y\}]$
over cycles in $\K(X_{\underline{a}}^{*})$. One should view the $\varphi_{0}^{[k]}$
as differing by loops around points of $D\subset\tilde{X}_{\underline{a}}$,
hence the $AJ(\xi_{\underline{a}})(\varphi_{0}^{[k]})$ as differing
by residues. 

Now we slightly change our notation to bring it in line with \cite{Ho}.
Write (multivalued) functions of $\underline{t}$\[
\tilde{w}^{(0)}:=(2\pi i)^{3}=\int_{\hat{\TT}_{Y}^{3}}\eta_{\underline{a}}\,,\]
\[
\tilde{w}_{k}^{(1)}:=2\pi i\, AJ(\xi_{\underline{a}})(\varphi_{0}^{[k]})=\int_{M(\varphi_{0}^{[k]})}\eta_{\underline{a}}\,,\]
\[
\tilde{w}^{(2)}:=2\pi i\, AJ(\xi_{\underline{a}})(\varphi_{1})=\int_{M(\varphi_{1})}\eta_{\underline{a}}\,,\]
and normalize these by setting $w_{\cdot}^{(\cdot)}:=\tilde{w}_{\cdot}^{(\cdot)}/\tilde{w}^{(0)}.$

\begin{thm}
The $w_{k}^{(1)}$ are well-defined $\CC/\ZZ$-valued functions on
$\mathcal{{P}}$, given by $\frac{1}{2\pi i}$ times\[
\log\left((-1)^{\epsilon_{k}}t_{k}\right)+|\ell_{0}^{(i_{k})}|H(\underline{s}),\]
where $\epsilon_{k}:=\sum_{j=1}^{r+2}(\sigma_{j}+\sigma_{j-1})\ell_{j}^{(i_{k})}$.
\end{thm}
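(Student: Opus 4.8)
The plan is to derive the formula for $w_k^{(1)}$ from the single-vertex computation $(3.4)$ by linearity in the cycle argument, and then to account for the ambiguities. First I would recall that the residue/membrane analysis of $\S\S2.1$--$2.2$, applied to the symbol $\hat{\xi}_{\underline{a}}$, gives $(3.4)$: for each vertex $\underline{\nu}^{(j)}$ and for $0<|a_i|\ll|a_0|$,
\[
\tfrac{1}{2\pi i}AJ(\xi_{\underline{a}})(\hat{\varphi}_{0}^{(j)})\,=\,-\log\!\Big((-1)^{\sigma_j+\sigma_{j-1}}\tfrac{a_j}{a_0}\Big)-H(\underline{a}),
\]
where the good real-position requirements of Remark $2.1$(i) are satisfied by the cycles $\hat{\varphi}_0^{(j)}$ and their bounding membranes. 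Since $AJ(\xi_{\underline{a}})$ is additive in the cycle argument, $\varphi_0^{[k]}=-\sum_{j=1}^{r+2}\ell_j^{(i_k)}\hat{\varphi}_0^{(j)}$, and $H(\underline{a})$ is independent of $j$, summing gives
\[
\tfrac{1}{2\pi i}AJ(\xi_{\underline{a}})(\varphi_0^{[k]})\,=\,\sum_{j=1}^{r+2}\ell_j^{(i_k)}\log\!\Big((-1)^{\sigma_j+\sigma_{j-1}}\tfrac{a_j}{a_0}\Big)\,+\,\Big(\sum_{j=1}^{r+2}\ell_j^{(i_k)}\Big)H(\underline{a}).
\]

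The next step is to simplify the two sums using the defining relation $(3.2)$ for $\underline{\ell}^{(i_k)}$. Comparing the homogenizing (last) coordinates in $(3.2)$ yields $\sum_{j=0}^{r+2}\ell_j^{(i_k)}=0$, hence $\sum_{j=1}^{r+2}\ell_j^{(i_k)}=-\ell_0^{(i_k)}=|\ell_0^{(i_k)}|$ (recall $\ell_0^{(i_k)}\le0$); this turns the second term into $|\ell_0^{(i_k)}|\,H(\underline{s})$ (writing $H(\underline{a})=H(\underline{s})$ as in the text, where it is holomorphic on $U_{\Delta}$ and pulls back holomorphically to $\tilde{U}_{\Delta}$). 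For the first term, the factor $a_0$ then appears with exponent $-\sum_{j\ge1}\ell_j^{(i_k)}=\ell_0^{(i_k)}$, so $\sum_{j=1}^{r+2}\ell_j^{(i_k)}\log(a_j/a_0)=\sum_{j=0}^{r+2}\ell_j^{(i_k)}\log a_j=\log\big(\prod_j a_j^{\ell_j^{(i_k)}}\big)=\log t_k$ (using that $\underline{\ell}^{(i_k)}$ is supported on $\{0,i_k-1,i_k,i_k+1\}$ and $t_k=\underline{a}^{\underline{\ell}^{(i_k)}}$), while the signs collect to $\prod_{j=1}^{r+2}\big((-1)^{\sigma_j+\sigma_{j-1}}\big)^{\ell_j^{(i_k)}}=(-1)^{\epsilon_k}$ with $\epsilon_k=\sum_{j=1}^{r+2}(\sigma_j+\sigma_{j-1})\ell_j^{(i_k)}$. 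Thus $\tfrac{1}{2\pi i}AJ(\xi_{\underline{a}})(\varphi_0^{[k]})=\log\big((-1)^{\epsilon_k}t_k\big)+|\ell_0^{(i_k)}|H(\underline{s})$, and normalizing, $w_k^{(1)}=\tilde{w}_k^{(1)}/\tilde{w}^{(0)}=2\pi i\,AJ(\xi_{\underline{a}})(\varphi_0^{[k]})/(2\pi i)^3$, yields exactly $\tfrac{1}{2\pi i}\big[\log((-1)^{\epsilon_k}t_k)+|\ell_0^{(i_k)}|H(\underline{s})\big]$. (One can cross-check $|\ell_0^{(i_k)}|$ and $\epsilon_k$ against the explicit formulas of Remark $3.9$.)

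Finally I would check well-definedness on $\mathcal{P}$. As $X_{\underline{a}}^{*}$ is an open elliptic curve with torsion-free integral cohomology, Remarks $1.11$ and $2.1$(ii) allow $AJ(\xi_{\underline{a}})$ to be taken modulo $\ZZ(2)=(2\pi i)^2\ZZ$; after dividing by $\tilde{w}^{(0)}$ this makes $w_k^{(1)}$ well-defined modulo $\ZZ$, and the branch ambiguity $\log t_k\mapsto\log t_k+2\pi i\,\ZZ$ likewise becomes an element of $\ZZ$ after multiplication by $\tfrac{1}{2\pi i}$. Since $H(\underline{s})$ is single-valued and holomorphic on $\mathcal{P}$, it follows that $w_k^{(1)}$ descends to a single-valued, $\CC/\ZZ$-valued holomorphic function on the punctured polycylinder $\mathcal{P}$: any monodromy of $\varphi_0^{[k]}$ around the coordinate hyperplanes contributes only integral classes, which vanish after the reduction mod $\ZZ$. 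The main obstacle is the bookkeeping — matching the sign exponents $\sigma_j$ to the stated $\epsilon_k$ and tracking the powers of $2\pi i$ through the normalization $w_{\cdot}^{(\cdot)}=\tilde{w}_{\cdot}^{(\cdot)}/\tilde{w}^{(0)}$ — together with verifying that the real-position hypotheses of Remark $2.1$(i) genuinely hold for the cycles and membranes used to establish $(3.4)$; the required cohomological vanishing ($F^2H^2(X_{\underline{a}}^{*})=0$) and torsion-freeness are already in place from $\S\S2$--$3$.
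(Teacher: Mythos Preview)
Your proposal is correct and follows essentially the same approach as the paper: the theorem is the summary of the computation immediately preceding it, namely applying $(3.4)$ linearly to $\varphi_0^{[k]}=-\sum_j\ell_j^{(i_k)}\hat{\varphi}_0^{(j)}$ and simplifying via the relation $\sum_{j\geq 0}\ell_j^{(i_k)}=0$ coming from $(3.2)$. One minor correction: your citation of ``Remark $1.11$'' is off---the integral (as opposed to rational) statement for $n=2$ is Remark $1.13$ together with Remark $2.1$(ii)/$2.3$(b); alternatively (and more directly in this non-tempered setting), the $\CC/\ZZ$-valuedness follows from Lemma $3.4$ and the short-exact sequence after it, since the ambiguity in the choice of membrane $\mu$ is absorbed by $\ZZ\langle\hat{\TT}_Y^{3}\rangle$, contributing $(2\pi i)^3\ZZ$ to $\tilde{w}_k^{(1)}$ and hence $\ZZ$ to $w_k^{(1)}$.
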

\begin{defn}
The (inverse) mirror map (3.3) is given by\[
(\tilde{t}_{1},\ldots,\tilde{t}_{r})\longmapsto\sum_{k=1}^{r}J_{k}^{\circ}\otimes W_{k}^{(1)}(\underline{\tilde{t}}),\]
where $W_{k}^{(1)}(\underline{\tilde{t}}):=\frac{1}{\mu_{k}}w_{k}^{(1)}(\underline{s}(\underline{\tilde{t}}))$.
\end{defn}
\begin{rem}
(i) \cite{Ho} considers the (conjectural!) map \[
mir:\, K^{c}(Y^{\circ})\to H_{3}(Y,\ZZ)\]
arising from Kontsevich's homological mirror symmetry conjecture,
and proposes that one should have $\hat{\TT}_{Y}^{3}=mir(\mathcal{{O}}_{pt.})$,
$\frac{1}{\mu_{k}}M(\varphi_{0}^{[k]})=mir(\mathcal{{O}}_{C_{i_{k}}^{\circ}}(-J_{k}^{\circ}))$,
$M(\varphi_{1})=mir(\mathcal{{O}}_{D_{0}})$.

(ii) Set $\delta_{T}:=\sum_{j=1}^{r}|\ell_{0}^{(i_{j})}|\delta_{t_{j}}$.
The $W_{k}^{(1)}$ are logarithmic integrals of periods of $\omega_{\underline{a}}:=Res\left(\frac{\frac{dx_{1}}{x_{1}}\wedge\frac{dx_{2}}{x_{2}}}{F_{\underline{a}}(x_{1},x_{2})}\right)$
in the (limited) sense that \[
\delta_{T}W_{k}^{(1)}=\frac{d_{i_{k}}}{(2\pi i)^{2}}\int_{\tilde{\varphi}_{0}}\omega_{\underline{a}}\]
 for each $k$. We also write (after \cite{CKYZ}) $\partial_{S}:=\sum_{k=1}^{r}d_{i_{k}}\partial_{W_{k}^{(1)}}$.
\end{rem}

\subsection{Growth of local Gromov-Witten invariants}

Define (on $\tilde{\mathcal{{P}}}$) the Gromov-Witten prepotential\[
\F_{loc}(\underline{W}^{(1)})\,:=\,\frac{1}{2}\sum_{j,\ell}\left\langle J_{j}^{\circ}|_{\PP_{\Delta^{\circ}}},J_{\ell}^{\circ}\right\rangle W_{j}^{(1)}W_{\ell}^{(1)}\,\,\,\,+\,\,\,\{\begin{array}{c}
_{\text{{lower-order}}}\\
^{\text{{terms}}}\end{array}\}(\underline{W}^{(1)})\]
\[
\mspace{100mu}-\,\sum_{k_{1},\ldots,k_{r}}\left(\sum_{j=1}^{r}d_{i_{j}}k_{j}\right)N_{k_{1},\ldots,k_{r}}Q_{1}^{k_{1}}\cdots Q_{r}^{k_{r}},\]
where $Q_{j}:=\exp(2\pi i\, W_{j}^{(1)})$ and $N_{\underline{k}}$
is the genus zero (local) G-W invariant {}``counting rational curves
in {[}the total space of] $\mathsf{K}_{\PP_{\Delta^{\circ}}}$''
of homology class $\sum k_{j}[C_{i_{j}}^{\circ}]\in H_{2}(Y^{\circ},\ZZ)$.
(See \cite{Li} $\S6.1$ for a precise definition.) \cite{CKYZ} originally
obtained (essentially) this expression by writing a compact CY $3$-fold
$\mathfrak{X}$ (with prepotential $\F$) as a torically described
elliptic fibration over $\PP_{\Delta^{\circ}}$, and taking the limit
of {[}a suitable partial of] $\F$ under degeneration of the fiber.
Morally, the resulting (local) $N_{\underline{k}}$ were supposed
to measure the contribution of the zero-section $\PP_{\Delta^{\circ}}$
to G-W invariants of $\mathfrak{X}$.

Here then is the fundamental local mirror symmetry prediction:

\begin{conjecture}
\emph{(\cite{CKYZ},\cite{Ho})} For a suitable choice of $\varphi_{1}$,
\begin{equation} \F_{loc}(\underline{W}^{(1)}) = w^{(2)}(\underline{\tilde{t}}) \end{equation} 
under the mirror map.
\end{conjecture}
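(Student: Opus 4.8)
The statement to prove is that the Gromov--Witten prepotential $\F_{loc}(\underline{W}^{(1)})$, expressed through the mirror map, agrees with the period $w^{(2)}(\underline{\tilde{t}})$ over the open $CY$ $3$-fold $Y_{\underline{a}}$. The plan is to exploit the dictionary set up in $\S3.1$: by Proposition $3.5$ and Corollary $3.6$, every period of $\eta_{\underline{a}}$ is the $\CC\to\CC/\QQ(3)$ lift of $2\pi i$ times a regulator period $\int_{\gamma}R(\xi_{\underline{a}})$ for $\gamma\in\K_1(X_{\underline{a}}^*)$, and in particular $\tilde w^{(2)}=2\pi i\,AJ(\xi_{\underline a})(\varphi_1)=\int_{M(\varphi_1)}\eta_{\underline a}$ after normalizing by $\tilde w^{(0)}=(2\pi i)^3$. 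So the left side is a \emph{known} function of the $\{t_k\}$ once $\varphi_1$ is chosen, and the right side is built from G-W data that is \emph{defined} (in \cite{CKYZ,Li}) precisely so as to satisfy the local mirror conjecture. The mathematical content is therefore to verify that the two prescriptions produce the same power series, i.e. that the Hodge-theoretic/regulator construction here matches the GKZ-period construction of \cite{Ho}.

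First I would fix $\varphi_1$ explicitly (the ``suitable choice'' in the statement): following $\S3.3$ and Appendix A of \cite{Ho}, $\varphi_1\in\K_1(X_{\underline a}^*)$ should be the lift of $-\tilde\varphi_1$ realized by the real membrane bounding on the real locus, so that $M(\varphi_1)$ is $mir(\mathcal O_{D_0})$ under Hosono's conjectural homological mirror map (Remark $3.14$(i)). Second, I would compute $w^{(2)}=\tilde w^{(2)}/\tilde w^{(0)}$ by the same $\S\S2.1$--$2.2$ residue technique used to derive Theorem $2.2$ and Theorem $3.12$: one writes $\hat\xi_{\underline a}$ with the extra coordinate $a_0+f_{\underline a}(\underline z)$, takes its residue along $\tilde X_{\underline a}$, and pairs against a tube over $\varphi_1$; the double integral splits into a $\log$-term (quadratic and linear pieces in the $W_k^{(1)}$, coming from $\log(a_0+f)$ on the torus) plus a power-series correction whose coefficients are the ``instanton'' sums $\sum_{\underline k}(\sum d_{i_j}k_j)N_{\underline k}Q^{\underline k}$. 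Third, I would identify these pieces termwise with the three parts of $\F_{loc}$: the quadratic part matches $\tfrac12\sum\langle J_j^\circ|_{\PP_{\Delta^\circ}},J_\ell^\circ\rangle W_j^{(1)}W_\ell^{(1)}$ because the intersection matrix $[M_{ij}]$ (equivalently the rows of $L$) governs both the K\"ahler-cone pairing and the leading behavior of the period; the lower-order terms match the classical (polynomial) corrections; and the exponential part matches by the definition of $N_{\underline k}$ via the mirror theorem of \cite{CKYZ}.

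The main obstacle is the last identification --- showing the ``instanton'' part of $w^{(2)}$ really is $\sum_{\underline k}(\sum d_{i_j}k_j)N_{\underline k}Q^{\underline k}$ with $N_{\underline k}$ the \emph{honest} local G-W invariants. This is exactly the unproven local mirror theorem; so the statement as phrased (``here then is the fundamental local mirror symmetry prediction''/Conjecture) is \emph{not} something to be proved unconditionally. What I can actually establish rigorously is the ``$B$-model side'' identity: $w^{(2)}$ equals an explicit $q$-expansion of the shape $\tfrac12\sum\langle\cdot,\cdot\rangle W^{(1)}W^{(1)}+\{\text{l.o.t.}\}-\sum c_{\underline k}Q^{\underline k}$, with the $c_{\underline k}$ computable from the regulator periods $\int_{\varphi_0^{[k]}}R(\xi_{\underline a})$ and $\int_{\varphi_1}R(\xi_{\underline a})$ via $\S3.3$; then \emph{granting} the \cite{CKYZ} prescription $N_{\underline k}:=c_{\underline k}/(\sum d_{i_j}k_j)$ (after the standard multiple-cover/BPS rearrangement of $\F_{loc}$), Conjecture $3.15$ becomes a theorem. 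I would present the proof in that conditional form: reduce $(3.5)$ to the $B$-model period identity, prove that identity by the residue computation above, and remark that under the (conjectural) mirror theorem of \cite{CKYZ,Ho} the combinatorial coefficients $c_{\underline k}$ coincide with the G-W generating series, so that Theorem $3.12$ together with Corollary $3.6$ yields $(3.5)$. The remaining genuinely routine work --- verifying the quadratic and linear coefficients against $[M_{ij}]$ and checking the $\sigma_j$-sign bookkeeping inherited from Remark $1.13$ and Theorem $3.12$ --- I would leave as a computation, referring to \cite{Ke2} for the analytic justification of manipulating $R\{\underline x\}$ as a current on $\PP_{\tilde\Delta}$.
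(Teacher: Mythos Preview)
The paper does not prove this statement: it is explicitly labeled a \emph{Conjecture} (attributed to \cite{CKYZ,Ho}) and described as ``the fundamental local mirror symmetry prediction.'' There is no proof in the paper to compare your proposal against; the subsequent results (Theorem~3.15, Corollary~3.18) \emph{assume} the conjecture rather than establish it.

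You correctly identify this yourself midway through your proposal: the obstruction is that equating the instanton part of $w^{(2)}$ with the honest local Gromov--Witten invariants $N_{\underline{k}}$ is precisely the content of the conjecture. Your ``conditional proof'' is therefore circular --- it amounts to saying that if one \emph{defines} the $N_{\underline{k}}$ via the $B$-model expansion (the \cite{CKYZ} prescription), then $(3.5)$ holds tautologically, but the conjecture asserts agreement with independently defined enumerative invariants. The $B$-model computation you sketch (extracting the quadratic, linear, and exponential pieces of $w^{(2)}$ from the regulator period via the residue method of \S\S2--3) is reasonable and in the spirit of the paper, but it does not touch the $A$-model side, which is where the actual content lies. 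So your proposal is not wrong so much as it is a proof of a different (and much weaker) statement than the one asserted.
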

To summarize: the first regulator period yields the mirror map; the
second gives the prepotential.

We will now pull (3.5) back to a {}``diagonal slice'' of $\tilde{\mathcal{P}}$
where residual differences between the $w_{k}^{(1)}$ vanish. Write
\begin{equation} \phi:=\sum_{j=1}^{r+2} \alpha_j \underline{x}^{\underline{\nu}^{(j)}} \; , \; \; F_{\phi,t}(\underline{x}):=1-t\phi(\underline{x}) ; \end{equation} 
this gives $a_{0}=1$, $a_{j}=t\alpha_{j}$,\[
t_{k}=(-1)^{\ell_{0}^{(i_{k})}}\left(\prod_{j=1}^{r+2}\alpha_{j}^{\ell_{j}^{(i_{k})}}\right)t^{|\ell_{0}^{(i_{k})}|}.\]
If we further set \begin{equation} \alpha_j := (-1)^{\sigma_j + \sigma_{j-1} + 1} , \end{equation} 
then $t_{k}(t)=(-1)^{\epsilon_{k}}t^{|\ell_{0}^{(i_{k})}|}$, and
the {}``slice'' is given by $\tilde{t}_{k}(t):=\zeta_{k}t^{d_{i_{k}}}$
($\zeta_{k}=$ some root of unity with $\mu_{k}^{\text{{\,\, th}}}$
power $(-1)^{\epsilon_{k}}$; the choice won't affect calculations).
The pullback of $W_{k}^{(1)}(\underline{\tilde{t}})$ under $t\mapsto\underline{\tilde{t}}(t)$
is then simply\[
W_{k}^{(1)}(t)=\frac{d_{i_{k}}}{2\pi i}\left\{ \log t\,+\, H(t)\right\} =:d_{i_{k}}w^{(1)}(t),\]
where $H(t)\,(:=H(\underline{s}(t)))$ can frequently be easier to
determine than $H(\underline{s})$.

So the map of families $\{F_{\phi,t}(\underline{x})=0\}\to\{F_{\underline{a}}(\underline{x})=0\}/(\CC^{*})^{3}$
induces a {}``diagonal'' embedding $\mathfrak{D}:\, w^{(1)}\mapsto(d_{i_{1}}w^{(1)},\ldots,d_{i_{r}}w^{(1)})$
of K\"ahler moduli. Clearly $\mathfrak{D}^{*}\circ\partial_{S}=\partial_{w^{(1)}}\circ\mathfrak{D}^{*}$,
and by (3.5)\[
\mathfrak{D}^{*}\F_{loc}(\underline{W}^{(1)})=w^{(2)}(t(w^{(1)}));\]
it follows that \begin{equation} \mathfrak{D}^* \partial_S^2 \F_{loc}(\underline{W}^{(1)}) = \left( \frac{d}{dw^{(1)}} \right)^2 w^{(2)}(t(w^{(1)})). \end{equation} 

For the l.h.s. of (3.8),\[
\partial_{S}^{2}\F_{loc}\,=\,\sum_{j,\ell}d_{i_{j}}d_{i_{\ell}}\left\langle J_{j}^{\circ}|_{\PP_{\Delta^{\circ}}},J_{\ell}^{\circ}\right\rangle _{Y^{\circ}}-(2\pi i)^{2}\sum_{k_{1},\ldots,k_{r}}\left(\sum_{j=1}^{r}d_{i_{j}}k_{j}\right)^{3}N_{k_{1},\ldots,k_{r}}Q_{1}^{k_{1}}\cdots Q_{r}^{k_{r}}\]
\[
=\,\left\langle -\mathsf{K}_{\PP_{\Delta^{\circ}}},-\mathsf{K}_{\PP_{\Delta^{\circ}}}\right\rangle _{\PP_{\Delta^{\circ}}}-(2\pi i)^{2}\sum_{D\geq1}D^{3}\sum_{\{\underline{k}\,|\,\sum d_{i_{j}}k_{j}=D\}}N_{\underline{k}}\underline{Q}^{\underline{k}}.\]
Thinking of $\underline{k}$ as the homology class $\sum k_{j}[C_{i_{j}}^{\circ}]\in H_{2}(Y^{\circ})=H_{2}(\PP_{\Delta^{\circ}})$,
we have $\left\langle \underline{k},X^{\circ}\right\rangle _{\PP_{\Delta^{\circ}}}=\sum k_{j}d_{i_{j}}$;
hence applying $\mathfrak{D}^{*}$ yields\[
\sum_{i=1}^{r+2}d_{i}\,\,-\,\,(2\pi i)^{2}\sum_{D\geq1}D^{3}\left(\sum_{\{\underline{k}\,|\,\left\langle \underline{k},X^{\circ}\right\rangle =D\}}N_{\underline{k}}\right)Q^{D}\]
where $Q=\exp(2\pi i\, w^{(1)})$. Note that the constant term just
records the number of components $\N_{0}$ of the singular fiber of
the diagonal family at $t=0$ (after a minimal desingularization of
the total space). We also rechristen the sum in parentheses $N_{D}^{\left\langle X^{\circ}\right\rangle }$.
It would be very interesting to have an interpretation of these numbers
in terms of $X^{\circ}$ alone,%
\footnote{To venture out on a limb, can one suitably define a class in $K_{2}$
of (the nerve of) the Fukaya category (of $X^{\circ}$), which completes
$X^{\circ}$ to a datum {}``mirror'' to the family $\{X_{t}\}$
together with $\{\xi_{t}\in K_{2}(X_{t})\}$? Is there then a {}``regulator''
of this class which pairs with $\mathcal{O}_{D_{0}}|_{X^{\circ}}$
(recall $M(\varphi_{1})$'s conjectural mirror is $\mathcal{O}_{D_{0}}$)
to yield the prepotential $\mathcal{F}_{loc}$?%
} since the mirror map is defined only in terms of $X$ (not $Y$).

For the r.h.s. of (3.8), write $\pi^{(1)}$ and $\pi^{(2)}$ for the
$\frac{\text{{periods}}}{(2\pi i)^{2}}$ of $\omega_{t}:=Res_{X_{\phi,t}}\left(\frac{\bigwedge\dlog\underline{x}}{F_{\phi,t}}\right)$;
then $\delta_{t}w^{(\ell)}(t)=\pi^{(\ell)}(t)$ ($\ell=1,2$). So
we have \[
\frac{d}{dw^{(1)}}w^{(2)}=\frac{\delta_{t}w^{(2)}}{\delta_{t}w^{(1)}}=\frac{\pi^{(2)}}{\pi^{(1)}},\]
and applying one more $\frac{d}{dw^{(1)}}$ yields\[
\frac{\delta_{t}\left(\frac{\pi^{(2)}}{\pi^{(1)}}\right)}{\delta_{t}w^{(1)}}=\frac{\pi^{(1)}\delta_{t}\pi^{(2)}-\pi^{(2)}\delta_{t}\pi^{(1)}}{(\pi^{(1)})^{3}}.\]
Writing this in terms of functions from $\S\S2.1,\,2.3$ for the diagonal
family $\tilde{X}_{\phi,t}$ (and dividing l.h.s. and r.h.s. by $(2\pi i)^{2}$),
we have the following equality of a G-W generating function and Yukawa
coupling \begin{equation} \frac{\N_0}{(2\pi i)^2} - \sum_{D\geq 1} D^3 N_D^{\left< X^{\circ} \right>} Q^D = \frac{\Y(t)}{(A(t))^3} \end{equation} 
under the mirror map. The latter is just the local analytic isomorphism
$t\mapsto Q(t)$ {[}$Q(0)=0$], extending at least to $\overline{D_{|t_{0}|}}$.
(Recall $\tilde{X}_{\phi,t_{0}}$ is the singular fiber nearest $t=0$
in the punctured diagonal family.) The r.h.s. of (3.9) blows up at
$t_{0}$ since $\Y(t)\sim\frac{1}{t-t_{0}}$ and $A(t)\sim\log(t-t_{0})$
(up to constants) for $t\ \to t_{0}$. Hence the l.h.s. series has
radius of convergence%
\footnote{If there is more than one $t_{0}$ of minimal modulus, one should
of course pick the one that minimizes $|Q(t_{0})|$; but in every
case we have tested, symmetry ensures that this is independent of
the choice.%
} $|Q(t_{0})|=\exp\{\Re(2\pi i\, w^{(1)}(t_{0}))\}=\exp\{\frac{1}{2\pi}\Im(\Psi(t_{0}))\}$
where $\Psi(t)=(2\pi i)^{2}w^{(1)}(t).$

\begin{thm}
Let $\Delta$ be a reflexive polytope $\subseteq\RR^{2}$ such that
the Mori cone of $Y^{\circ}:=\mathsf{K}_{\PP_{\Delta^{\circ}}}$ is
smooth, determine $\phi(\underline{x})$ by (3.6), (3.7), and let
$\Psi(t)$ and $|t_{0}|$ be as in Corollary 2.6. Assume Conjecture
3.14. Then the local Gromov-Witten invariants of $Y^{\circ}$ have
exponential growth-rate \begin{equation} \limsup_{D\to \infty} |N_D^{\left< X^{\circ} \right> }|^{\frac{1}{D}} = e^{\frac{-1}{2\pi} \Im (\Psi (t_0))} . \end{equation} 
\end{thm}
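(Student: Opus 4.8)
The plan is to extract the exponential growth rate from Conjecture 3.14 (equation (3.9)) by a standard application of the root test to the power series $\sum_{D\geq1}D^3 N_D^{\langle X^{\circ}\rangle}Q^D$ appearing on the left side of (3.9). The key point is to locate the radius of convergence of this series by finding the nearest singularity of its sum, which — assuming the conjecture — equals the meromorphic function $\frac{\Y(t)}{(A(t))^3}$ composed with the inverse of the mirror map $t\mapsto Q(t)$. So first I would recall from Corollary 2.6 that $A(t)=\delta_t\Psi(t)$ is the fundamental period, that the power series computation of $\Psi(t)$ (hence of $w^{(1)}(t)$) of Theorem 2.2 is valid on all of $\overline{D}^*_{|t_0|}$, and that the mirror map $Q(t)=\exp\{2\pi i\, w^{(1)}(t)\}$ extends to a local analytic isomorphism on a neighborhood of $0$, with $Q(0)=0$.

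Next I would analyze the behavior of both sides of (3.9) near $t=t_0$. On the B-model side, $t_0$ is by definition (Corollary 2.6) the nonzero point of $\L$ of smallest modulus — the "conifold point" — so $\tilde X_{\phi,t_0}$ is the singular fiber of the diagonal family nearest $t=0$. From Remark 2.7(c) and the discussion of local behavior of $\Psi$ at points of $\L^*$ (the $q^\beta(\log^k q)H(q)$ expansion with $q=t-t_0$), together with the fact that the Yukawa coupling $\Y(t)\in K(\PP^1)$ has a simple pole of the expected type at the conifold point, I would verify: $A(t)\sim c_1\log(t-t_0)$ and $\Y(t)\sim c_2/(t-t_0)$ as $t\to t_0$, so that $\Y(t)/(A(t))^3$ blows up at $t_0$. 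Meanwhile $Q(t)$ stays bounded and analytic near $t_0$ (since $w^{(1)}$ has at worst a logarithmic singularity there, $Q=\exp(2\pi i w^{(1)})$ has at worst an algebraic zero/branch point but no pole, and in particular $|Q(t)|\to|Q(t_0)|$ finite). Hence, pulling back along the mirror map, the function $Q\mapsto \frac{\Y(t(Q))}{(A(t(Q)))^3}$ — equivalently, by Conjecture 3.14, the sum of the series $\sum D^3 N_D^{\langle X^\circ\rangle}Q^D$ — has a genuine singularity at $Q=Q(t_0)$ and is holomorphic on $|Q|<|Q(t_0)|$ (here one should pick the $t_0$ of minimal modulus minimizing $|Q(t_0)|$, as flagged in the footnote, and use that the other points of $\L$ of modulus $\geq|t_0|$ are pushed no closer under $Q$). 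I would note that one must also check there is no cancellation — i.e. that $Q(t_0)$ is actually the singularity, not a removable point — which follows because $\Y/(A)^3$ genuinely diverges there while $Q(t)$ does not.

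Given that the series $\sum_{D\geq1}D^3 N_D^{\langle X^\circ\rangle}Q^D$ has radius of convergence exactly $|Q(t_0)|$, the Cauchy–Hadamard formula gives $\limsup_{D\to\infty}|D^3 N_D^{\langle X^\circ\rangle}|^{1/D}=1/|Q(t_0)|$, and since $\lim_{D\to\infty}(D^3)^{1/D}=1$ this yields $\limsup_{D\to\infty}|N_D^{\langle X^\circ\rangle}|^{1/D}=1/|Q(t_0)|$. Finally I would compute $|Q(t_0)|=\exp\{\Re(2\pi i\, w^{(1)}(t_0))\}=\exp\{-2\pi\,\Im(w^{(1)}(t_0))\}=\exp\{\tfrac{1}{2\pi}\Im(\Psi(t_0))\}$, using $\Psi(t)=(2\pi i)^2 w^{(1)}(t)$ so that $\Im(\Psi(t_0))=(2\pi i)^2$'s imaginary contribution, i.e. $\Re(2\pi i w^{(1)})=-2\pi\Im(w^{(1)})=\tfrac{1}{2\pi}\Im((2\pi i)^2 w^{(1)})=\tfrac{1}{2\pi}\Im(\Psi(t_0))$; taking reciprocals gives $1/|Q(t_0)|=e^{-\frac{1}{2\pi}\Im(\Psi(t_0))}$, which is (3.10).

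\textbf{Main obstacle.} The hard part will be the precise asymptotic analysis at the conifold point $t_0$: establishing rigorously that $\Y(t)$ has exactly a simple pole there (this is where one needs that $\deg\omega_{\tilde\X/\PP^1}=1$ and the rank-$n$ $\D$-module hypothesis of Corollary 2.9, so that $g=\Y$ and the Picard–Fuchs local exponents at $t_0$ are of conifold type), that $A(t)$ has exactly a $\log(t-t_0)$ (not higher power of log, nor algebraic) singularity, and — most delicately — that after composing with the mirror map the resulting function of $Q$ has its nearest singularity \emph{precisely} at $|Q(t_0)|$ with nothing closer coming from other branch points of $t(Q)$ or from zeros of $A(t)$ in $\overline{D}_{|t_0|}$. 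The Tauberian input already invoked in the proof of Corollary 2.6 handles the boundary convergence of $\sum\frac{[\phi^m]_0}{m}t^m$ and should be reusable here. One also relies on Conjecture 3.14 throughout, which is why the statement of Theorem 3.15 is explicitly conditional on it.
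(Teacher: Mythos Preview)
Your proposal is correct and follows essentially the same route as the paper: the argument given in the paragraph immediately preceding the theorem statement observes that the mirror map extends analytically to $\overline{D_{|t_0|}}$, that $\Y(t)\sim (t-t_0)^{-1}$ and $A(t)\sim\log(t-t_0)$ force the r.h.s.\ of (3.9) to blow up at $t_0$, and hence that the l.h.s.\ series has radius of convergence $|Q(t_0)|=\exp\{\tfrac{1}{2\pi}\Im(\Psi(t_0))\}$, from which (3.10) follows by Cauchy--Hadamard. Your write-up is simply a more explicit version of this, including the observation $(D^3)^{1/D}\to 1$ and the careful unpacking of $\Re(2\pi i\,w^{(1)})=\tfrac{1}{2\pi}\Im(\Psi)$; the ``main obstacle'' you flag is treated by the paper as understood from the preceding analysis of $\S\S2$--3 (and the footnote about choosing $t_0$ minimizing $|Q(t_0)|$).
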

\begin{rem}
(i) In $\S4$ we will describe a procedure for computing the {}``regulator
period'' $\Psi(t_{0})$ on a singular elliptic fiber of Kodaira type
$I_{n}$. This identifies with the image of an indecomposable $K_{3}$
class under the composition\[
K_{3}^{ind}(\bar{\QQ})\cong H_{\M,hom}^{2}(\tilde{X}_{t_{0}}/\bar{\QQ}\,,\,\QQ(2))\rTo^{AJ^{2,2}}H^{1}(\tilde{X}_{t_{0}},\CC/\QQ(2))\cong\CC/\QQ(2),\]
which (after taking the imaginary part) coincides (up to a factor
of 2) with the Borel regulator. This explains the occurrence of Dirichlet
$L$-functions in results of \cite{MOY} related to (3.10). (We will
be more precise about the field of definition in $\S4$.)

(ii) Equation (3.9) gives, for $t=0$, the correct value $\Y(0)=2\pi i\,\N_{0}$.
\end{rem}
Finally, we want to explain how {}``reasonable'' assumptions on
the $\{N_{D}^{\left\langle X^{\circ}\right\rangle }\}$ lead to a
more precise characterization of their growth. (The argument is similar
to that in \cite{CdOGP} but more rigorous.) Let $d:=\gcd\{d_{i}\,|\, i=1,\ldots,r+2\}$,
put $\tilde{\Psi}(t)=d\cdot\{\Psi(t)-\Re(\Psi(t_{0}))\}$, and define
{}``normalized'' quantities\[
\tilde{N}_{D}:=-d^{3}N_{d\cdot D}^{\left\langle X^{\circ}\right\rangle }e^{-i\frac{d\cdot D}{2\pi}\Re(\Psi(t_{0}))}\,\,,\,\,\,\,\,\,\tilde{Q}:=\exp\{\frac{-i}{2\pi}\tilde{\Psi}(t)\}.\]
Reindexing, (3.9) becomes\[
-\frac{\N_{0}}{4\pi^{2}}+\sum_{D\geq1}D^{3}\tilde{N}_{D}\tilde{Q}^{D}\,=\,\frac{\Y(t)}{A^{3}(t)}\,;\]
and we assume$\vspace{2mm}$\\
(a) the $\tilde{N}_{D}$ are uniformly positive (or negative) for
sufficiently large $D$.$\vspace{2mm}$\\
Next define $n_{D}\,(>0)$ by \[
\tilde{N}_{D}=\pm e^{\frac{-D}{2\pi i}\tilde{\Psi}(t_{0})}D^{-3}n_{D},\]
and assume that$\vspace{2mm}$\\
(b) $\lim_{D\to\infty}n_{D}\log^{2}D$ exists (in the extended reals
$\RR^{\geq0}\cup\{\infty\}$),$\vspace{2mm}$\\
i.e. that the $\tilde{N}_{D}$ {}``do not oscillate too much'' in
the limit.

Now asymptotically as $t\to t_{0}$ (keeping $t-t_{0}\in\RR$ and
$|t|<|t_{0}|$), $\pi^{(1)}\sim-m\pi^{(2)}(t_{0})\frac{\log|t-t_{0}|}{2\pi i}$
(where $m\in\ZZ^{+}$ is essentially the number of components of $X_{t_{0}}$);
logarithmically integrating this, we have $x:=\frac{1}{2\pi i}(\tilde{\Psi}(t_{0})-\tilde{\Psi}(t))=\frac{d}{2\pi i}(\Psi(t_{0})-\Psi(t))\sim d\cdot m\cdot\pi^{(2)}(t_{0})\left(\frac{t}{t_{0}}-1\right)\log|t-t_{0}|.$
This implies the r.h.s. of $\vspace{2mm}$\\
(c) $\pm\sum_{D\geq1}n_{D}e^{-Dx}=\frac{\Y(t)}{A^{3}(t)}+\frac{\N_{0}}{4\pi^{2}}$$\vspace{2mm}$\\
is asymptotic to $\frac{d}{mx\log^{2}(t-t_{0})}\sim\frac{d}{mx\log^{2}x}$,
where we can replace $t\to t_{0}$ by $x\to0^{+}$.

We need a result from Laplace Tauberian theory.

\begin{lem}
Given a sequence $\{n_{k}\}$ of real numbers satisfying\\
\emph{(a')} $n_{k}$ positive (or at least $n_{k}\geq-\frac{C}{\log^{2}k}$
for some $C>0$)\\
\emph{(b')} $\lim_{n\to\infty}n_{k}\log^{2}k$ exists (finite or infinite)\\
\emph{(c')} $\sum_{k=0}^{\infty}n_{k}e^{-kx}\sim\frac{1}{x\log^{2}x}$
as $x\to0^{+}$.\\
(Here \emph{(a')} is the {}``Tauberian'' hypothesis.) Then $n_{k}\sim\frac{1}{\log^{2}k}$
as $k\to\infty$. That is, $n_{k}\log^{2}k\to1$.
\end{lem}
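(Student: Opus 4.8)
The plan is to recognize this as an instance of Karamata's Tauberian theorem with the slowly varying factor $L(y):=(\log y)^{-2}$, and then to upgrade the resulting partial--sum asymptotics to a term--by--term statement using the extra hypothesis (b'). Write $f(x):=\sum_{k\ge 2}n_k e^{-kx}$; hypothesis (c') reads $f(x)\sim x^{-1}L(1/x)$ as $x\to 0^+$, and $L$ is indeed slowly varying at $\infty$ (it is continuous, positive, and $L(\lambda y)/L(y)\to 1$ locally uniformly in $\lambda$). First I would remove the mild failure of positivity permitted by (a'): set $b_k:=(\log k)^{-2}$ for $k\ge 2$. A routine comparison of $\sum_{2\le k\le n}b_k$ with $\int_2^n(\log u)^{-2}\,du$ plus one integration by parts gives $\sum_{2\le k\le n}b_k\sim n(\log n)^{-2}$, hence by the elementary (Abelian) half of Karamata's theorem $\sum_k b_k e^{-kx}\sim x^{-1}L(1/x)$ as well. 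Consequently, for the constant $C\ge 0$ in (a') the sequence $\{n_k+Cb_k\}$ is nonnegative for all large $k$ (the finitely many small-$k$ terms never affect asymptotics), and its Laplace transform is asymptotic to $(1+C)\,x^{-1}L(1/x)$.

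Next I would apply the Tauberian (hard) direction of Karamata's theorem to the nonnegative sequence $\{n_k+Cb_k\}$: from $\sum_k(n_k+Cb_k)e^{-kx}\sim(1+C)\,x^{-1}L(1/x)$ one gets
\[
\sum_{2\le k\le n}(n_k+Cb_k)\ \sim\ \frac{1+C}{\Gamma(2)}\,n\,L(n)\ =\ (1+C)\,\frac{n}{(\log n)^2}.
\]
Subtracting $C\sum_{2\le k\le n}b_k\sim C\,n(\log n)^{-2}$ then yields the clean partial--sum asymptotic $\sum_{2\le k\le n}n_k\sim n(\log n)^{-2}$.

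Finally I would pass from this to $n_k\log^2 k\to 1$ using (b'). Let $\ell:=\lim_{k\to\infty}n_k(\log k)^2\in[0,\infty]$, which exists by (b'). Writing $n_k=(n_k\log^2 k)\,b_k$ and weighting the (extended-sense) convergent density sequence $n_k\log^2 k\to\ell$ against the nonnegative weights $b_k$ whose partial sums diverge, a Cesàro-type argument (a Fatou/truncation argument to $k\ge n/2$ handles the case $\ell=\infty$) shows that $\sum_{2\le k\le n}n_k$ is asymptotic to $\ell\,n(\log n)^{-2}$ when $\ell<\infty$, and is of strictly larger order than $n(\log n)^{-2}$ when $\ell=\infty$. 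Comparing with the previous paragraph forces $\ell=1$; in particular $\ell$ is finite and $n_k\log^2 k\to 1$, as claimed. The main obstacle — really the only delicate point — is invoking Karamata's theorem in exactly the right form: the hard direction genuinely requires the nonnegativity arranged in the first step, and one must verify that $L(y)=(\log y)^{-2}$ meets the standard hypotheses so that the textbook statement (e.g.\ Feller, \emph{An Introduction to Probability Theory and its Applications}, Vol.~II, Ch.~XIII, or Bingham--Goldie--Teugels, \emph{Regular Variation}) applies verbatim; everything else is bookkeeping with slowly varying functions.
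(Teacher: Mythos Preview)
Your proof is correct and follows essentially the same route as the paper: compare with the reference sequence $m_k=(\log k)^{-2}$, apply a Tauberian theorem to obtain $\sum_{k\le n}n_k\sim\sum_{k\le n}m_k$, then use (b') and a Stolz--Ces\`aro argument to force $\lim n_k/m_k=1$. The only cosmetic difference is that you invoke Karamata's theorem (after shifting by $Cb_k$ to secure nonnegativity), whereas the paper cites Frennemo's generalization of Littlewood's Tauberian theorem for the same step.
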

\begin{proof}
For $m_{k}:=\left\{ \begin{array}{cc}
1, & k=0\\
0, & k=1\\
\frac{1}{\log^{2}k}, & k\geq2\end{array}\right.$, it is an exercise in elementary analysis to prove $\sum_{k=0}^{\infty}m_{k}e^{-kx}\sim\frac{1}{x\log^{2}x}$
($x\to0^{+}$), e.g. in the form $\linebreak$$\lim_{y\to\infty}\sum_{k=2}^{\infty}\frac{1}{y}\left(\frac{\log^{2}y}{\log^{2}k}-1\right)e^{-\frac{k}{y}}=0$.
Now let $N(k)$, $M(k)$ be the respective $k^{th}$ partial sums
of $n_{k}$, $m_{k}$, viewed as functions on $\RR^{\geq0}$. Hypothesis
(c') obviously implies $\int_{0}^{\infty}e^{-kx}dN(k)\sim\int_{0}^{\infty}e^{-kx}dM(k)$
(for $x\to0^{+}$) and then (using (a')) \cite{Fr} gives $N(k)\sim M(k)$
for $k\to\infty$. Hypothesis (b') says $\lim_{k\to\infty}\frac{n_{k}}{m_{k}}$
exists (finite or $+\infty$), in which case it must equal $\lim_{k\to\infty}\frac{N(k)}{M(k)}$,
which is $1$.
\end{proof}
In our situation this yields $n_{D}\sim\frac{d}{m\log^{2}D}$, hence
the following result:

\begin{cor}
Under assumptions \emph{(a)} and \emph{(b) above (and the conditions
of Theorem 3.15), the {}``normalized'' G-W invariants have asymptotic
behavior\[
\tilde{N}_{D}\sim\pm\frac{d}{m}\frac{\exp\{-\frac{D\cdot\tilde{\Psi}(t_{0})}{2\pi i}\}}{D^{3}\log^{2}D}\]
for $D\to\infty$.}
\end{cor}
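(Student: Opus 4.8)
The plan is to extract the asymptotics of $\tilde N_D$ from equation $(3.9)$ by isolating the singular behavior of its right-hand side $\frac{\Y(t)}{A^3(t)}$ near the conifold point $t_0$ and then feeding this into Lemma 3.17. First I would set up the change of variables $x = \frac{1}{2\pi i}(\tilde\Psi(t_0) - \tilde\Psi(t))$ and verify, using the local analysis already carried out in the paragraph preceding Theorem 3.15, that $x \to 0^+$ as $t \to t_0$ along the real segment $|t| < |t_0|$, and that $\tilde Q = e^{-x} \to 1$. The key input here is that $A(t) = \delta_t\Psi(t)$ has a $\log(t-t_0)$-type singularity (Corollary 2.6 and its proof, via Tauberian theory) while $\Y(t)$ has a simple pole $\sim \frac{1}{t-t_0}$ (Remark 2.10 / the structure of $D_{PF}$ for $r=n=2$), so that $\frac{\Y(t)}{A^3(t)} \sim \frac{c}{(t-t_0)\log^3(t-t_0)}$ for an explicit constant; translating to the $x$-variable using $x \sim d\cdot m\cdot\pi^{(2)}(t_0)(\frac{t}{t_0}-1)\log|t-t_0|$ gives $\frac{\Y}{A^3} + \frac{\N_0}{4\pi^2} \sim \frac{d}{m\, x \log^2 x}$ as $x\to 0^+$. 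This is precisely hypothesis (c$'$) of Lemma 3.17 after normalizing by the factor $\frac{d}{m}$.

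Next I would verify the two Tauberian hypotheses for the sequence $n_D$ defined by $\tilde N_D = \pm e^{-\frac{D}{2\pi i}\tilde\Psi(t_0)} D^{-3} n_D$: hypothesis (a$'$) is exactly our standing assumption (a) (uniform positivity/negativity of the $\tilde N_D$, hence of the $n_D$, for large $D$), and hypothesis (b$'$) is our standing assumption (b) ($\lim_{D\to\infty} n_D \log^2 D$ exists in $[0,\infty]$). With (a$'$), (b$'$), (c$'$) in hand, Lemma 3.17 yields $n_D \log^2 D \to 1$, i.e. $n_D \sim \frac{1}{\log^2 D}$ — but note one must carry the normalization constant $\frac{d}{m}$ through, so in fact $n_D \sim \frac{d}{m\log^2 D}$ (apply the Lemma to the rescaled sequence $\frac{m}{d}n_D$). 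Unwinding the definition of $n_D$ then gives
\[
\tilde N_D \;\sim\; \pm\,\frac{d}{m}\,\frac{\exp\{-\tfrac{D\,\tilde\Psi(t_0)}{2\pi i}\}}{D^3 \log^2 D},
\]
which is the claimed asymptotic.

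The main obstacle I anticipate is the careful bookkeeping in the passage from the $t$-variable singularity of $\frac{\Y(t)}{A^3(t)}$ to the clean form $\frac{1}{x\log^2 x}$ in the $x$-variable: one must control not just the leading pole but the precise power of the logarithm, and confirm that the substitution $x \leftrightarrow t - t_0$ does not alter the $\frac{1}{x\log^2 x}$ shape (only lower-order corrections), which requires knowing that $A(t) \sim -m\,\pi^{(2)}(t_0)\frac{\log|t-t_0|}{2\pi i}$ with the correct integer $m$ (the number of components of $X_{t_0}$), and that $\Y(t)(t-t_0) \to$ a nonzero constant. All of this is essentially assembled in the three displayed asymptotic relations (a), (b), (c) in the text immediately before Lemma 3.17; the proof is really just the observation that those relations verify the hypotheses of the Lemma, so the remaining work is to state this cleanly and track the constant $\frac{d}{m}$. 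A secondary (minor) point is checking that assumption (a) genuinely gives the one-sided bound $n_D \geq -C/\log^2 D$ required by (a$'$) — but this is immediate since (a) says the $n_D$ are eventually of one sign.

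\begin{proof}
Recall from $(3.9)$, after the reindexing in $\tilde\Psi,\tilde N_D,\tilde Q$, that
\[
-\frac{\N_0}{4\pi^2} + \sum_{D\geq 1} D^3\,\tilde N_D\,\tilde Q^D \;=\; \frac{\Y(t)}{A^3(t)},
\]
with $\tilde Q = e^{-\frac{i}{2\pi}\tilde\Psi(t)}$ and $x := \frac{1}{2\pi i}(\tilde\Psi(t_0)-\tilde\Psi(t))$, so that $\tilde Q = \tilde Q(t_0)\,e^{-x}$ and (choosing the branch so that $\tilde Q(t_0)=1$ is absorbed into $\tilde N_D$) the left side is $-\frac{\N_0}{4\pi^2} + \sum_{D}D^3\tilde N_D e^{-Dx}$. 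As $t\to t_0$ along the real segment with $|t|<|t_0|$ we have $x\to 0^+$ by the local analysis preceding the statement: $\pi^{(1)}(t)\sim -m\,\pi^{(2)}(t_0)\tfrac{\log|t-t_0|}{2\pi i}$ gives, upon logarithmic integration, $x\sim d\,m\,\pi^{(2)}(t_0)\bigl(\tfrac{t}{t_0}-1\bigr)\log|t-t_0|$. Since $\Y(t)\sim \tfrac{\mathrm{const}}{t-t_0}$ and $A(t)\sim \mathrm{const}\cdot\log(t-t_0)$ near $t_0$ (Corollary 2.6, Remark 2.10), the right side satisfies $\frac{\Y(t)}{A^3(t)}+\frac{\N_0}{4\pi^2}\sim \frac{d}{m\,x\log^2 x}$ as $x\to 0^+$; this is the content of relation (c) above.

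Define $n_D>0$ by $\tilde N_D = \pm e^{-\frac{D}{2\pi i}\tilde\Psi(t_0)}D^{-3}n_D$; substituting into (c) gives $\pm\sum_{D\geq1}n_D e^{-Dx}\sim \frac{d}{m\,x\log^2 x}$. The sequence $\frac{m}{d}n_D$ then satisfies hypothesis (c$'$) of Lemma 3.17 with right-hand side $\frac{1}{x\log^2 x}$; hypothesis (b$'$) is our assumption (b), and (a$'$) follows from assumption (a), which gives $n_D$ of one fixed sign (hence $\geq -C/\log^2 D$) for $D$ large. Lemma 3.17 yields $\frac{m}{d}n_D\log^2 D\to 1$, i.e.
\[
n_D \;\sim\; \frac{d}{m\,\log^2 D} \qquad (D\to\infty).
\]
Unwinding the definition of $n_D$,
\[
\tilde N_D \;\sim\; \pm\,\frac{d}{m}\,\frac{\exp\{-\tfrac{D\cdot\tilde\Psi(t_0)}{2\pi i}\}}{D^3\log^2 D},
\]
as asserted.
\end{proof}
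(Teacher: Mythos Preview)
Your proposal is correct and follows essentially the same approach as the paper: the paper's proof consists of the single sentence ``In our situation this yields $n_D \sim \frac{d}{m\log^2 D}$, hence the following result'' placed just before the Corollary, i.e.\ it applies Lemma 3.17 (with the $\frac{d}{m}$ rescaling) to the already-established relations (a), (b), (c) and then unwinds the definition of $n_D$. You have simply spelled out in more detail the verification that (a), (b), (c) match (a$'$), (b$'$), (c$'$) of the Lemma, which the paper leaves implicit since that setup was done in the text preceding the Corollary.
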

\begin{rem*}
It seems likely that one could use a Fourier Tauberian argument to
eliminate the assumptions.
\end{rem*}

\section{\textbf{First examples: limits of regulator periods}}

A well-traveled road in dealing with computations for 1-parameter
families of varieties is to attempt to recognize {}``modularity''
in some suitable sense. For example, this approach was employed in
\cite{Do1,Do2} to describe mirror maps and Picard-Fuchs equations
for families of CY's. Here (in $\S8$) we use it, for the families
(and higher cycles) produced by Theorem 1.7, to compute the cycle-class,
higher normal function, and regulator periods --- especially their
limiting values at cusps. The central purpose of this section, in
contrast, is to illustrate a procedure inspired by \cite{Bl2} for
computing these {}``special values'' of $\Psi(t)$ (at singular
fibers), that does not rely on modularity. This leads to a formula
(Prop. 4.4) for essentially the $\tilde{\Psi}(t_{0})$ of Thm. 3.15/Cor.
3.18, which we apply to some key examples in $\S4.3$. Throughout
this section $\TXM$ is as in Theorem 1.7 (so that $\Xi$ and $\Psi$
have the established meaning).

\subsection{$AJ$ map for singular fibers}

Fixing $\alpha\in\L^{*}$, write $\tilde{X}_{\alpha}=:Y=\cup Y_{i}$
with $Y_{i}$ irreducible,%
\footnote{We do not require that $\tilde{\pi}^{-1}(\alpha)=\sum m_{i}Y_{i}$
to be reduced, here or in the $Y=$ NCD case.%
} $\tilde{\varphi}_{\alpha}=\sum\varphi_{i}$ for $\varphi_{i}\in C_{n-1}^{\text{top}}(Y_{i})$;
and assume $\Xi\in Z_{_{\d_{\B}-\text{cl.}}}^{n}(\TXM,n)_{Y}$ so
that the $\Xi_{i}:=\Xi\cdot Y$ are defined. Our first goal is to
verify the claim from $\S2.2$ that \begin{equation}
AJ(\Xi_{\alpha})(\tilde{\varphi}_{\alpha}) = \int_{\tilde{\varphi}_{\alpha}}R_{\Xi} = \sum_i \int_{\varphi_i}R_{\Xi_i} ; \\
\end{equation} to this end we review briefly the computation of $AJ(\Xi_{\alpha})$
from $\S8$ of \cite{KL}. The (somewhat technical) general conditions
under which it (hence $(4.1)$) is valid are described in loc. cit.
following Prop. 8.17, and allow for all singular curves, as well as
any local-normal-crossing or nodal singularities.

Here we shall focus on the case $Y=$ NCD, writing $Y_{I}:=\cap_{i\in I}Y_{i}$,
$Y^{[j]}:=\amalg_{|I|=j+1}Y_{I}$, and $Y^{I}$ for the collection
$\{Y_{J}\cap Y_{I}\}_{J\cap I=\emptyset}$ of subsets of $Y_{I}$.
This {}``hyper-resolution'' of $Y$ gives rise to 4th quadrant double-complexes
\[
\left.\begin{array}{c}
Z_{Y}^{\ell,m}(n):=Z^{n}(Y^{[\ell]},-m)_{\#}\\
\,\,\,\,\,\,\,\,\,\,\,\,\,\,\,\,\,\,\,\,\,\,\,\,\,\,\,\,\,\,\,\,:=\oplus_{_{|I|=\ell+1}}Z_{\RR}^{n}(Y_{I},-m)_{_{Y^{I}}}\\
\d_{\B}:\, Z_{Y}^{\ell,m}(n)\to Z_{Y}^{\ell,m+1}(n)\\
\mathfrak{I}:\, Z_{Y}^{\ell,m}(n)\to Z_{Y}^{\ell+1,m}(n)\end{array}\,\,\,\,\right|\,\,\,\,\begin{array}{c}
C_{\ell,m}^{Y}(n):=C_{2n+m-1}^{\text{top}}(Y^{[\ell]};\QQ)\\
\,\,\,\,\,\,\,\,\,\text{(piecewise }C^{\infty}\text{ chains)}\\
\d_{\text{top}}:C_{\ell,m}^{Y}(n)\to C_{\ell,m-1}^{Y}(n)\\
Gy:\, C_{\ell,m}^{Y}(n)\to C_{\ell-1,m}^{Y}(n)\end{array}\]
where $\mathfrak{I}$ (resp. $Gy$) is the alternating sum (cf. loc.
cit. for signs) of pullbacks (resp. pushforwards). These have associated
simple complexes/total differentials/(co)homology\[
\left.\begin{array}{c}
Z_{Y}^{\bb}(n):=\mathbf{s}^{\bb}Z_{Y}^{\bb,\bb}(n)\\
\udb:=\db\pm\mathfrak{I}\\
H^{*}(Z_{Y}^{\bb}(n))\cong H_{\M}^{2n+*}(Y,\QQ(n))\end{array}\,\,\,\,\right|\,\,\,\,\begin{array}{c}
C_{\bb}^{Y}(n):=\mathbf{s}_{\bb}C_{\bb,\bb}^{Y}(n)\\
\udt:=\d_{\text{top}}\pm Gy\\
H_{*}(C_{\bb}^{Y}(n))\cong H_{2n+*-1}(Y)\end{array}.\]
 The KLM currents ($\mathfrak{Z}\,\,\mapsto\,\, T_{\mathfrak{Z}},\Omega_{\mathfrak{Z}},R_{\mathfrak{Z}}$)
give a map of complexes (described in full in loc. cit.) inducing
an Abel-Jacobi map from $H_{\M}^{2n+*}(Y,\QQ(n))$ to\[
H_{\D}^{2n+*}(Y,\QQ(n))\aeq\ext(\QQ(0),H^{2n+*-1}(Y,\QQ(n)))\beq H^{2n+*-1}(Y,\CC/\QQ(n)).\]
For $*=-n$ in particular, this is \begin{equation}
AJ^{n,n}_Y: H^n_{\M}(Y,\QQ(n))\to Hom(H_{n-1}(Y,\QQ),\CC/\QQ(n)). \\
\end{equation} To compute this for $\dim(Y)=n-1$, let \begin{equation}
\begin{matrix}
\mathfrak{Z} = \sum_{\ell} \{\mathfrak{Z}^{[\ell]}\in Z^n_{\RR}(Y^{[\ell]},n+\ell) \} \in \{ \ker(\udb)\subset Z^{-n}_Y(n) \} \\
\gamma = \sum_{\ell} \{ \gamma^{[\ell]}\in C^{\text{top}}_{n-\ell-1}(Y^{[\ell]};\QQ) \} \in \{ \ker(\udt) \subset C_{-n}^Y(n) \} , 
\end{matrix} \\
\end{equation} with each $\gamma^{[\ell]}$ (resp. $\mathfrak{Z}^{[\ell]}$) decomposing
into $\{\gamma_{I}\}_{|I|=\ell+1}$ (resp. $\{\mathfrak{Z}_{I}\}_{|I|=\ell+1}$).
Then \begin{equation}
AJ^{n,n}_Y(\mathfrak{Z})(\gamma) \equiv \sum_{\ell\geq 0} \int_{\gamma^{[\ell]}} R_{\mathfrak{Z}^{[\ell]}} = \sum_{\ell\geq 0} \sum_{|I|=\ell+1}\int_{\gamma_I}R_{\mathfrak{Z}_I} \\
\end{equation} gives a well-defined pairing $H^{-n}(Z_{Y}^{\bb}(n))\times H_{-n}(C_{\bb}^{Y}(n))\to\CC/\QQ(n)$.
Now consider the map\[
I_{Y}^{*}:\, Z_{_{\RR,\db-\text{cl.}}}^{n}(\TXM,n)_{Y}\to\{\ker(\udb)\subset Z_{Y}^{-n}(n)\}\]
given by restricting to the irreducible components of $Y$. That is,
if $\mathfrak{Z}=I_{Y}^{*}\Xi$ then $\mathfrak{Z}^{[0]}$ is the
collection $\{\iota_{Y_{i}}^{*}\Xi\}$ while $\mathfrak{Z}^{[\ell]}=0$
for $\ell>0$. Let $\gamma$ be the $\udt$-cycle corresponding to
$\tilde{\varphi}_{\alpha}$: i.e. $\gamma^{[0]}=\{\varphi_{i}\}$,
while the $\gamma^{[\ell]}(\neq0)$ comprise iterated boundaries of
the $\varphi_{i}$. Then\[
AJ(\Xi_{\alpha})(\tilde{\varphi}_{\alpha})=AJ(\mathfrak{Z})(\gamma)\eqe\sum_{i}\int_{\varphi_{i}}R_{\iota_{Y_{i}}^{*}\Xi=\Xi_{i}}\]
confirms $(4.1)$.

Continuing to assume $Y$ a (connected) NCD of dimension $n-1$, we
want to say something about the $value$ of $(4.1)$ in $\CC/\QQ(n)$.
Place the {}``weight'' filtration\[
W_{\beta}H_{\M}^{2n+*}(Y,\QQ(n)):=\text{im}\{H^{*}(\mathbf{s}^{\bb}Z_{Y}^{(\bb\geq-n-\beta),\bb}(n))\to H^{*}(Z_{Y}^{\bb}(n))\}\]
on motivic cohomology, and note that $W_{-2n+1}H_{\M}^{n}(Y,\QQ(n))$
consists of those classes representable by $\udb$-cocycles supported
on points $p_{I}:=Y_{I}$, $|I|=n$. (For simplicity we assume these
are each one point.) This is compatible with the weight filtration
on the generalized Jacobians in the sense that $AJ_{Y}^{n,r}$ is
{}``filtered'' by maps\[
W_{\bb}H_{\M}^{2n-r}(Y,\QQ(n))\eeq\ext(\QQ(0),W_{\bb-1}H^{2n-r-1}(Y,\QQ(n))).\]
In particular the target of $W_{-2n+1}AJ_{Y}^{n,n}$ is $\ext(\QQ(0),\QQ(n)^{\oplus b_{Y}})\cong(\CC/\QQ(n))^{\oplus b_{Y}}$,
where $b_{Y}:=\text{rk}\{\text{coker}(H^{0}(Y^{[n-2]})\to H^{0}(Y^{[n-1]}))\}$.
For $Y=\tilde{X}_{\alpha}$ a degenerate CY, $b_{Y}=0$ or $1$: $b_{Y}=1$
implies maximal quasi-unipotent monodromy about $\alpha$; and in
the unipotent case, maximal monodromy $\implies$ $b_{Y}=1$.

We need to be more precise about the field of definition: recall that
$\TXM$ is defined over a number field $K$; it may be that $\alpha\notin K$,
and that to {}``separate components'' of $Y$ requires an algebraic
field extension larger than $K(\alpha)$.

\begin{defn}
$\mathsf{L}/K(\alpha)$ is a \emph{splitting field} for the NCD $Y$
iff all the components $Y_{I}$ of the hyper-resolution are defined
over $\mathsf{L}$. Furthermore, $Y$ is \emph{simple} iff all $Y_{I}$
are rational.
\end{defn}
With such a choice of $\mathsf{L}$, and assuming $b_{Y}=1$, we have
\begin{equation} 
W_{-2n+1}H^n_{\M}(Y/\mathsf{L},\QQ(n))\cong CH^n(Spec(\mathsf{L}),2n-1)\cong K^{\text{alg}}_{2n-1}(\mathsf{L})\otimes \QQ . \\
\end{equation} Let $\gamma,\mathfrak{Z}$ be as in $(4.3)$ with $\gamma^{[n-1]}=\{q_{I}[p_{I}]\}_{|I|=n}$
($q_{I}\in\QQ$) and $[\mathfrak{Z}]\in(4.5)$. Then $\mathfrak{Z}\equiv\{\mathfrak{W}_{I}\}_{|I|=n}$
modulo $\udb$-coboundary, and \begin{equation}
AJ^{n,n}_Y(\sZ)(\gamma)=AJ^{2n-1,n}_{Spec(\mathsf{L})}(\sum \pm q_I \sW_I) \in \CC/\QQ(n), \\
\end{equation} where in light of $(4.5)$ $AJ_{Spec(\mathsf{L})}^{2n-1,n}$ should
be thought of essentially as the Borel regulator. The key result,
which the computations below will reflect (but not use), is

\begin{prop}
Let $n=2$ or $3$, $Y=\tilde{X}_{\alpha}$ be a simple NCD with abelian
splitting field extension $\mathsf{L}/\QQ$, and if $n=3$ assume
$\mathsf{L}$ totally real. Then $H_{\M}^{n}(Y/\mathsf{L},\QQ(n))=W_{-2n+1}H_{\M}^{n}(Y/\mathsf{L},\QQ(n))$,
and $\Psi(\alpha)$ is a sum of Dirichlet $L$-series $L(\chi,n)$
with algebraic coefficients.
\end{prop}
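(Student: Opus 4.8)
The plan is to reduce the statement to two essentially separate assertions: (i) a \emph{weight} claim, that for a simple NCD degeneration $Y=\tilde{X}_\alpha$ over an abelian splitting field $\mathsf{L}$ (totally real when $n=3$), the motivic cohomology $H^n_\M(Y/\mathsf{L},\QQ(n))$ is concentrated in the lowest weight piece $W_{-2n+1}$; and (ii) a \emph{regulator identification}, that on this lowest weight piece $\Psi(\alpha)$ computes a $K^{\mathrm{alg}}_{2n-1}(\mathsf{L})$-class under $AJ^{2n-1,n}_{\mathrm{Spec}(\mathsf{L})}$, and that such classes map to $\QQ$-linear combinations of Dirichlet $L$-values $L(\chi,n)$. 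Part (ii) is basically $(4.6)$--$(4.7)$ combined with Borel's theorem and the fact that for abelian $\mathsf{L}$ the Beilinson/Borel regulator on $K_{2n-1}(\mathsf{L})_\QQ$ is expressed (after Deligne, Beilinson; cf. the computations in \cite{Bl2}) through $L$-functions of Dirichlet characters cutting out $\mathsf{L}$; for $n=2$ this is Bloch's evaluation via the dilogarithm/Bloch--Wigner function of cyclotomic arguments, and for $n=3$ totally real one uses the trilogarithm and $\zeta_\mathsf{L}(3)/\zeta(3)$-type factorizations, valid because the relevant $K$-group is one-dimensional (rank of $K_5$ of a totally real field equals $r_1$, but the \emph{Borel} regulator lands in a one-dimensional target after pairing with $[\tilde\omega]$-duals, and abelian $\implies$ $L$-function factorization into Dirichlet $L(\chi,3)$). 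So the substance is Part (i).

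For Part (i): by the weight filtration set-up preceding Definition 4.1, $W_\bullet H^n_\M(Y,\QQ(n))$ is the image of $H^\bullet$ of the truncated double complex $\mathbf{s}^\bullet Z_Y^{(\bullet\ge -n-\beta),\bullet}(n)$. The associated graded pieces $\mathrm{Gr}^W_{-n-\beta}$ are controlled by the motivic cohomology of the strata $Y^{[\beta]}=\amalg_{|I|=\beta+1}Y_I$: concretely one has a spectral sequence whose $E_1$-terms are $\bigoplus_{|I|=\beta+1}H^{n-\beta}_\M(Y_I,\QQ(n))$. Since $Y$ is simple, every $Y_I$ is a rational variety of dimension $n-1-\beta$. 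The key vanishing inputs are then: (a) $H^{n-\beta}_\M(Y_I,\QQ(n))\cong CH^{n-\beta-?}(\ldots)$ of a smooth rational $(n-1-\beta)$-fold, and for $1\le\beta\le n-2$ these are governed by $K$-groups of the ground field $\mathsf{L}$ in the ``wrong'' degree — e.g. for $n=3$ the middle stratum contributes $H^2_\M(\PP^1_\mathsf{L},\QQ(3))\cong CH^2(\mathrm{Spec}\,\mathsf{L},3)\cong K_3^{\mathrm{ind}}(\mathsf{L})_\QQ$, which is $0$ for $\mathsf{L}$ \emph{totally real} — exactly the hypothesis — and (b) the top-weight stratum $\beta=0$ contributes $H^n_\M(Y_i,\QQ(n))$ for rational $(n-1)$-folds $Y_i$, whose relevant part is $K^M_n(\mathsf{L})$-type and, by the tempered hypothesis feeding into the residue/Tame computations of $\S1$, the class $\Xi_\alpha$ restricted there is already trivial in the higher-weight quotient (this is the NCD analogue of the argument in the proof of Theorem 1.7 that all $Res^1_{\tilde\sigma}$ vanish). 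Combining (a) and (b) kills all $\mathrm{Gr}^W_{>-2n+1}$, so any class — in particular $\Xi_\alpha$ — lifts to $W_{-2n+1}$.

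The main obstacle will be step (b) together with making the spectral-sequence bookkeeping honest for $n=3$: one must check that \emph{every} intermediate stratum $Y_I$ (not just the ``generic'' one) is rational over $\mathsf{L}$ and that the relevant $K$-theory of $\mathsf{L}$ vanishes or is absorbed — this is where ``simple'' and ``$\mathsf{L}$ totally real'' are both used, and where one has to invoke that $K_3^{\mathrm{ind}}(\mathsf{L})_\QQ=0$ iff $\mathsf{L}$ has no complex places. For $n=2$ the picture collapses: $Y$ is a cycle of $\PP^1$'s, $Y^{[1]}$ is a finite set of points, $H^2_\M(Y_I,\QQ(2))=K^M_2(\mathsf{L})$ which is torsion for $\mathsf{L}$ abelian over $\QQ$ (Garland/Tate), hence $0$ after $\otimes\QQ$, so $W_{-3}=W_{-\infty}$ is everything automatically and only Part (ii) — Bloch's dilogarithm formula — remains. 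I would write $n=2$ first as the model case, then push the same mechanism through $n=3$ using the $K_3^{\mathrm{ind}}$ vanishing, and finish by quoting Borel regulator $\Rightarrow$ $L$-values with algebraic coefficients via the abelian (Kronecker--Weber) factorization of $\zeta_\mathsf{L}$.
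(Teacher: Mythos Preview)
Your overall strategy matches the paper's --- reduce the weight claim to vanishing of the higher strata contributions in the double complex, then invoke Beilinson's regulator result for cyclotomic fields (which contains $\mathsf{L}$, since $\mathsf{L}/\QQ$ is abelian) to get the Dirichlet $L$-values. But you have misplaced the key vanishing input for $n=3$, and your step (b) does not prove what is claimed.

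For $n=3$, the middle stratum $\ell=1$ contributes $CH^3(Y_{ij},4)$ with $Y_{ij}\cong\PP^1_{\mathsf{L}}$; by the projective bundle formula this is $CH^3(\mathsf{L},4)\oplus CH^2(\mathsf{L},4)$, and both pieces vanish for \emph{any} number field since $K_4(\mathsf{L})_\QQ=0$ (Borel). Your identification $H^2_\M(\PP^1_\mathsf{L},\QQ(3))\cong CH^2(\mathrm{Spec}\,\mathsf{L},3)\cong K_3^{ind}(\mathsf{L})_\QQ$ is an indexing error --- the correct group is $CH^3(\PP^1,4)$, which has nothing to do with $K_3^{ind}$. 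The totally real hypothesis is actually needed at the \emph{top} stratum $\ell=0$: there $Y_i$ is a rational surface, and (using the blow-up formula for $S=Bl_{\{p_1,\ldots,p_N\}}(\PP^2)$) one has
\[
CH^3(S_{\mathsf{L}},3)\cong CH^3(\mathsf{L},3)\oplus CH^2(\mathsf{L},3)^{\oplus(N+1)}\oplus CH^1(\mathsf{L},3),
\]
whose middle summand is $K_3^{ind}(\mathsf{L})_\QQ$, zero precisely when $\mathsf{L}$ is totally real (the outer summands vanish unconditionally). Your step (b) instead tries to use the tempered hypothesis to show that the \emph{specific} class $\Xi_\alpha$ restricts trivially to $Y_i$; but the proposition asserts the equality of \emph{groups} $H^n_\M(Y)=W_{-2n+1}H^n_\M(Y)$, so you must kill all of $CH^n(Y_i,n)$, not just the image of $\Xi_\alpha$. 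That is exactly what the decomposition above accomplishes (and for $n=2$ the analogous statement is $CH^2(\PP^1_{\mathsf{L}},2)\cong K_2^M(\mathsf{L})_\QQ=0$, which you do state correctly at the end).
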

\begin{rem*}
For $\mathsf{L}$ non-abelian one might hope to relate the collection
of values of $\Psi$ at (some) points of $\L^{*}$ to Artin $L$-series
corresponding to a representation of $Gal(\mathsf{L}/\QQ)$.
\end{rem*}
\begin{proof}
In order to {}``move'' an arbitrary $\udb$-cocycle (in $Z_{Y}^{-n}(n)$)
into $Z_{Y}^{n-1,-2n+1}(n)$, we need only know that (for $n=2$)
$CH^{2}(Y_{i},2)=\{0\}$ ($\forall i$) and (for $n=3$) $CH^{3}(Y_{i},3)$
and $CH^{3}(Y_{ij},4)$ are $0$ ($\forall i,j$). This follows from
vanishing of $CH^{p}(\PP_{\mathsf{L}}^{1},n)\cong_{n.c.}CH^{p}(\mathsf{L},n)\oplus CH^{p-1}(\mathsf{L},n)$
and (for $S:=Bl_{\{p_{1},\ldots,p_{N}\}}(\PP^{2})$) \[
CH^{p}(S_{\mathsf{L}},n)\cong CH^{p}(\mathsf{L},n)\oplus CH^{p-1}(\mathsf{L},n)^{\oplus(N+1)}\oplus CH^{p-2}(\mathsf{L},n).\]
Now since $\Xi$ is (like $\mathcal{X}$) defined over $K$, its pullback
to {[}the components of] $Y$ is defined over $\mathsf{L}$. The last
statement (of the Prop.) then follows from Beilinson's fundamental
result \cite{B1,Ne1} on higher regulators of a cyclotomic field ($\supset\mathsf{L}$),
together with $(4.5)$ and $(4.6)$.
\end{proof}
For actually computing $(4.1)$ we shall take a different approach,
for which one may drop the assumption that $Y$ is a NCD. Using the
fact that $\Xi$ and $\xi$ differ by a $\db$-coboundary on $\TXM^{*}$,
$\int_{\van_{t}}R_{\Xi}\equiv\int_{\van_{t}}R_{\xi}$ (mod $\QQ(n)$)
provided $\van_{t}$ does not meet $\tilde{D}$. For $t=\alpha$ this
yields \begin{equation}
\Psi(\alpha) \qneq \sum_i \int_{\varphi_i}R\{x_1|_{Y_i},\ldots ,x_n|_{Y_i}\}. \\
\end{equation} In the event that $(t=)\alpha=t_{0}$ (at the boundary of convergence
of $(2.6)$), using Corollary 2.6 gives \begin{equation}
\log(t_0)+\sum_{k\geq 1}\frac{[\phi^k]_0}{k}t^k \qoeq \frac{1}{(2\pi i)^{n-1}} \sum_i \int_{\varphi_i} R\{\underline{x}\}|_{Y_i} ; \\
\end{equation} in particular, if $t_{0}\in\RR^{+}$ and $K\subset\RR$ then the
$\text{l.h.s.}=\Re(\text{r.h.s.})$.

These formulas are of greatest practical use --- i.e. the r.h.s. of
(4.7-8) is directly computable --- when the $\{Y_{I}\}$ are rational
(and explicitly parametrized). This is automatic for $n=2$, but unfortunately
(at least for $(4.8)$) doesn't tend to occur at $t_{0}$ for $n=3$
--- in all the examples we have analyzed (see e.g. $\S\S4.4,8.5$),
the $K3$ acquires a node there.

We conclude with a general result which best captures the sense in
which {}``singular'' $AJ_{\tilde{X}_{\alpha}}(\Xi_{\alpha})$ is
a $limit$ of {}``smooth'' \{$AJ_{\TXT}(\Xi_{t})$\}. Let $\X\mapg\ms$
be a proper, dominant morphism of smooth varieties with $\dim(\ms)=1$
and unique%
\footnote{Since $\ms$ isn't required to be  complete, this can be arranged
by omitting other singular fibers.%
} singular fiber $X_{0}$. Assume $X_{0}$ is a $reduced$ NCD so that
the local degeneration (over a disk with coordinate $s$)

\xymatrix{& & & & \X_{\Delta}^*  \ar @{^(->} [r] \ar [d]^f & \X_{\Delta} \ar [d]^{\bar{f}} & X_0 \ar @{_(->} [l]_{\iota_{X_0}} \ar [d] & \cup Y_i \ar @{=} [l] \\ & & & &  {}\Delta^* \ar @{^(->} [r]^\jmath & \Delta & \{0\} \ar @{_(->} [l] }${}$\\
is semistable; and let $\Xi^{*}\in CH^{p}(\X\m X_{0},r)$. Define
the local system $\HH_{\QQ}:=R^{2p-r-1}f_{*}\QQ(p)$, cohomology sheaves
$\H:=R^{2p-r-1}f_{*}\CC\otimes\mathcal{O}_{\Delta^{*}}$ with holomorphic
Hodge subsheaves $\F^{m}$, and Jacobian sheaf (via the s.e.s.) \begin{equation}
\HH_{\QQ} \hookrightarrow \frac{\H}{\F^p} \twoheadrightarrow \J^{p,r}. \\
\end{equation} Then $\Xi^{*}$ gives rise to the higher normal function \[
\nu_{\Xi^{*}}(s):=AJ_{X_{s}}(\Xi_{s})\in\Gamma(\Delta^{*},\J^{p,r}),\]
 where $\Xi_{s}:=\iota_{X_{s}}^{*}(\Xi^{*})$. Writing $T\in Aut(\HH_{\QQ})$
for the (unipotent) monodromy operator (with $N:=\log T$), consider
the Clemens-Schmid exact sequence of MHS\[
\cdots\to H^{2p-r-1}(X_{0})\maprho H_{lim}^{2p-r-1}(X_{s})\mapNN H_{lim}^{2p-r-1}(X_{s})(-1)\to\cdots\]
and the canonically extended sheaves $\H_{e},\,\F_{e}^{p}$, and \begin{equation}
\jmath_* \HH_{\QQ}  \hookrightarrow \frac{\H_e}{\F^p_e} \twoheadrightarrow \J^{p,r}_e \\
\end{equation} over $\Delta$. Set%
\footnote{$(\jmath_{*}\HH_{\QQ})_{0}$ is the stalk of the local system at $0$
(i.e., invariant cycles), while $\H_{e,0}$ and $\F_{e,0}^{p}$ are
the fibers (over $0$) of the corresponding holomorphic vector bundles.%
} \[
J_{lim}^{p,r}(X_{s}):=\frac{\H_{e,0}}{(\jmath_{*}\HH_{\QQ})_{0}+\F_{e,0}^{p}}\cong\ext(\QQ(0),H_{lim}^{2p-r-1}(X_{s},\QQ(p)))\]
and $J^{p,r}(X_{0}):=\ext(\QQ(0),H^{2p-r-1}(X_{0},\QQ(p)))$; then
$\rho$ induces \[
J(\rho):\, J^{p,r}(X_{0})\to J_{lim}^{p,r}(X_{s}).\]
Note that any section $\nu\in\Gamma(\Delta,\J_{e}^{p,r})$ has a well-defined
{}``value'' $\nu(0)\in J_{lim}^{p,r}(X_{s})$.

\begin{prop}
Suppose $Res_{X_{0}}(\Xi^{*})\in CH^{p-1}(X_{0},r-1)\,(\cong H_{\M,X_{0}}^{2p-r+1}(\X,\QQ(p)))$
is zero. Then $\nu_{\Xi^{*}}$ lifts uniquely to a section $\nu\in\Gamma(\Delta,\J_{e}^{p,r})$,
and we define $\lim_{s\to0}\nu_{\Xi^{*}}(s):=\nu(0)\in J_{lim}^{p,r}(X_{s})$.
Furthermore, if $\Xi\in CH^{p}(\X,r)$ restricts to $\Xi^{*}$ then\[
\lim_{s\to0}\nu_{\Xi^{*}}(s)=J(\rho)(AJ_{X_{0}}(\iota_{X_{0}}^{*}\Xi)).\]

\end{prop}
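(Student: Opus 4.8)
The plan is to exploit the semistability hypothesis together with the vanishing of $Res_{X_0}(\Xi^*)$ to control the behavior of $\nu_{\Xi^*}$ near $s=0$, and then to identify the limit via the Clemens--Schmid sequence. First I would address the lifting statement. The obstruction to lifting the section $\nu_{\Xi^*}\in\Gamma(\Delta^*,\J^{p,r})$ across the puncture is precisely the failure of $\nu_{\Xi^*}$ to extend as a section of the canonical extension $\J_e^{p,r}$; concretely, in terms of the period description, one must check that the $\CC$-valued (multivalued) lift of $\nu_{\Xi^*}$ grows at worst like a polynomial in $\log s$ with the correct nilpotent-orbit behavior, and that the residue of its connection along $s=0$ is nilpotent of the expected order. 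The key input here is that $Res_{X_0}(\Xi^*)=0$ in $CH^{p-1}(X_0,r-1)\cong H^{2p-r+1}_{\M,X_0}(\X,\QQ(p))$: by functoriality of the KLM currents and the localization sequence in motivic cohomology, this residue being zero means that a representative $R_{\Xi^*}$ of the regulator current does not pick up a $\delta$-current supported on (the components of) $X_0$ when one differentiates, so that the associated normal function has no ``singularity'' at $0$ in the sense of Griffiths--Green; this is exactly what forces $\nu_{\Xi^*}$ into the subsheaf of $\J_e^{p,r}$ of sections extending over $0$, and uniqueness of the lift follows because $\jmath_*\HH_\QQ\hookrightarrow \H_e/\F^p_e$ has torsion-free quotient near $0$ so two extensions differ by a section of $\jmath_*\HH_\QQ$ vanishing on $\Delta^*$, hence zero.

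Next I would prove the formula $\lim_{s\to 0}\nu_{\Xi^*}(s)=J(\rho)(AJ_{X_0}(\iota_{X_0}^*\Xi))$ under the extra hypothesis that $\Xi\in CH^p(\X,r)$ extends $\Xi^*$. The idea is that when $\Xi$ extends across $X_0$, the current $R_\Xi$ is a genuine current on all of $\X_\Delta$ (or on $\X$), and by functoriality of KLM for the inclusion $\iota_{X_0}$ one has $\iota_{X_0}^* R_\Xi = R_{\iota_{X_0}^*\Xi}$ (this is where the reduced-NCD hyper-resolution from \cite{KL}, $\S8$, and the pairing $(4.4)$ enter: on each component $Y_i$ one restricts, and the boundary strata contributions organize into the $\udb$-/$\udt$-complexes). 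Pairing against a topological $(2p-r-1)$-cycle $\varphi_s$ on $X_s$ that is the restriction of an invariant cycle $\varphi\in (\jmath_*\HH_\QQ)_0$ (equivalently, a cycle on $X_\Delta$ mapping to a cycle on $X_0$ under $\rho$ in homology), one gets $\int_{\varphi_s}R_\Xi = \int_{\varphi_s}R_\Xi$ independent of $s$ by Stokes (since $d[R_\Xi]=\Omega_\Xi$ is holomorphic and $\varphi$ sweeps out a chain avoiding the singular locus — the same Morera/continuity argument used for the Assertion in $\S2.2$ and in Corollary 2.6), and in the limit this equals the pairing $AJ_{X_0}(\iota_{X_0}^*\Xi)(\varphi_0)$ by the description $(4.1)$--$(4.4)$. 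Since $\rho$ on homology is dual to $\rho$ on cohomology appearing in Clemens--Schmid, and $J(\rho)$ is by definition the induced map on $\ext$-groups, this identifies $\nu(0)$ with $J(\rho)(AJ_{X_0}(\iota_{X_0}^*\Xi))$; the passage to the limit is legitimate because $\nu$ is already known to be a section of $\J_e^{p,r}$ by the first part, so $\nu(0)$ is well-defined in $J^{p,r}_{lim}(X_s)$ and the continuity of $\int_{\varphi_s}R_\Xi$ in $s$ pins it down.

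The main obstacle I anticipate is the first step — proving that $Res_{X_0}(\Xi^*)=0$ genuinely implies $\nu_{\Xi^*}$ extends to $\Gamma(\Delta,\J_e^{p,r})$ rather than merely to a section of a larger sheaf with a pole. This requires a careful analysis of the asymptotics of the regulator current $R_{\Xi^*}$ near the NCD $X_0$ in the semistable model, matching the weight/Hodge filtration jumps predicted by the limiting MHS $H^{2p-r-1}_{lim}(X_s)$; the cleanest route is probably to use the admissibility of the normal function (in the sense of Saito/Steenbrink--Zucker) attached to $\Xi^*$, for which the vanishing of the residue class is precisely the condition guaranteeing that the corresponding extension of variations has no ``singularity'' and hence that the associated section of the Jacobian sheaf extends over the puncture. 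Once admissibility is in hand, the canonical extension $\H_e$ absorbs the $\log$-terms and the extended section $\nu$ exists and is unique. I would also need to check, as a technical matter, that the reduced-NCD and semistability hypotheses are exactly what make the computation $\iota_{X_0}^* R_\Xi = R_{\iota_{X_0}^*\Xi}$ valid on the nose (no correction terms from multiplicities), citing the conditions following Prop. 8.17 of \cite{KL}; this is routine but must be stated.
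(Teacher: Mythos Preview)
Your proposal is broadly correct but diverges from the paper's argument in how the extension statement is obtained. For the lifting of $\nu_{\Xi^*}$ to $\Gamma(\Delta,\J_e^{p,r})$, you argue directly from the vanishing of $Res_{X_0}(\Xi^*)$ via absence of the Griffiths--Green singularity and admissibility of the associated normal function, correctly flagging the Saito/Steenbrink--Zucker input as the main burden. The paper instead uses the motivic hypothesis more aggressively: since $Res_{X_0}(\Xi^*)=0$, the localization sequence (Bloch's moving lemma) immediately produces a global $\Xi\in CH^p(\X,r)$ restricting to $\Xi^*$. With $\Xi$ in hand from the outset, $cl(\Xi)=[\Omega_\Xi]=(2\pi i)^p[T_\Xi]$ is defined on all of $\X_\Delta$; since $X_0$ is a deformation retract of $\X_\Delta$ and $\hm(\QQ(0),H^{2p-r}(X_0,\QQ(p)))=\{0\}$ by weights, this class is trivial on $\X_\Delta$. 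That kills the topological obstruction in $H^1(\Delta^*,\HH_\QQ)$ and allows a globally completed current $R''_{\Xi}:=R_\Xi|_{\X_\Delta}-d^{-1}(\Omega_\Xi|_{\X_\Delta})+(2\pi i)^p\delta_{\d^{-1}(T_\Xi|_{\X_\Delta})}$, whose fiberwise pairings against sections of $\bar f_*F^{n-p}A^{2(n-p)+r-1}_{\X/\ms}(\log X_0)$ stay bounded as $s\to 0$, giving the extension without any appeal to admissibility machinery. Uniqueness then comes from the long-exact cohomology sequences of the two short exact sequences defining $\J^{p,r}$ and $\J_e^{p,r}$. What the paper's route buys is that the ``hard'' analytic step you anticipate disappears entirely; what your route buys is that it would work even in situations where one knows Hodge-theoretic vanishing of the singularity but not motivic vanishing of the residue. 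Your treatment of the limit formula itself (functoriality $\iota_{X_0}^*R_\Xi=R_{\iota_{X_0}^*\Xi}$, continuity of $\int_{\varphi_s}R_\Xi$, identification via Clemens--Schmid) matches the paper's.
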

\begin{proof}
(Sketch.) The existence of $\Xi$ follows from Bloch's moving lemma
\cite{Bl1}, and we can put it into good position relative to $X_{0}$.
Since \[
\iota_{X_{0}}^{*}(cl(\Xi))\in\hm{(}\QQ(0),H^{2p-r}(X_{0},\QQ(p)))=\{0\},\]
and $X_{0}$ is a deformation retract of $\X_{\Delta}$, the restriction
of $cl(\Xi)=[\Omega_{\Xi}]=(2\pi i)^{p}[T_{\Xi}]$ to $\X_{\Delta}$
(hence to $\X_{\Delta}^{*}$) is trivial.%
\footnote{After this step, remaining details are similar to those in \cite{GGK1}
$\S3$.%
} So the image of $\nu_{\Xi^{*}}$ in $H^{1}(\Delta^{*},\HH_{\QQ})$
vanishes, and its lift to $\Gamma(\Delta^{*},\frac{\H}{\F^{p}})$
is actually computed by fiberwise integration of the completed regulator
current $R_{(\Xi|_{\X_{\Delta}})}'':=R_{\Xi}|_{\X_{\Delta}}-d^{-1}(\Omega_{\Xi}|_{\X_{\Delta}})+(2\pi i)^{p}\delta_{\d^{-1}(T_{\Xi}|_{\X_{\Delta}})}$
against sections of $\bar{f}_{*}F^{n-p}A_{\X/\ms}^{2(n-p)+r-1}(\log X_{0})$
($n=\dim\X$). As $s\to0$ these integrals do not blow up, so the
lift extends to $\tilde{\nu}\in\gamma(\Delta,\frac{\H_{e}}{\F_{e}^{p}})$;
this has image $\nu\in\Gamma(\Delta,\J_{e}^{p,r})$. (In fact, at
$s=0$ they compute $AJ_{X_{0}}(\iota_{X_{0}}^{*}\Xi)$ by generalizing
the argument used to prove $(4.1)$ above.) The uniqueness of $\nu$
is a simple argument using the long-exact cohomology sequences of
$(4.9),(4.10)$.
\end{proof}

\subsection{Formula for $AJ$ on a N\'eron $N$-gon}

Returning to the setting of Theorem 1.7, we will now compute the r.h.s.
of $(4.7)$ for Kodaira type $I_{N}$ degenerations of elliptic curves.
Specialize to the case $n=2$, $\tilde{X}_{\alpha}=Y=\cup_{i=1}^{N}Y_{i}$
with each $Y_{i}\cong\PP^{1}$, $Y_{i_{0}i_{1}}$ nonempty iff $i_{0}-i_{1}\equiv\pm1$
mod $N$, and $Y^{[2]}\cap\tilde{D}=\emptyset$. Let $z_{i}:\, Y_{i}\mapf\PP^{1}$
be such that $z_{i}(Y_{i,i-1})=\infty$, $z_{i}(Y_{i,i+1})=0$, and
$\tilde{\varphi}_{\alpha}=\varepsilon_{\alpha}\cdot\sum_{i=1}^{N}T_{z_{i}}$
(for some $\varepsilon\in\ZZ$). Then restrictions of toric coordinates
$x_{1}|_{Y_{i}},x_{2}|_{Y_{i}}$ will be written\[
f_{i}(z_{i})=A_{i}\prod_{j}(1-\frac{\alpha_{ij}}{z_{i}})^{d_{ij}}\,\,\,,\,\,\,\,\,\,\,\, g_{i}(z_{i})=B_{i}\prod_{k}(1-\frac{z_{i}}{\beta_{ik}})^{e_{ik}}\]
(with no $\alpha_{ij}$ or $\beta_{ik}$ $0$ or $\infty$); note
that $\sum_{j}d_{ij}=\sum_{k}e_{ik}=0$ ($\forall i$) and \[
(f_{i}(0),g_{i}(0))=(A_{i}\prod_{i}\alpha_{ij}^{d_{ij}},\, B_{i})\,\,\,,\,\,\,\,\,\,\,\,(f_{i}(\infty),g_{i}(\infty))=(A_{i},\, B_{i}\prod_{k}\beta_{ik}^{-e_{ik}}).\]

Since $Y$ is a singular fiber in a family of elliptic curves produced
via a $tempered$ Laurent polynomial, $Tame_{\xi}\{f_{i},g_{i}\}$
is torsion for every $\xi\in|(f_{i})|\cup|(g_{i})|$. We do $not$
require that $|(f_{i})|\cap|(g_{i})|=\emptyset$, so for sums over
both $j$ and $k$ the notation $\sum_{j,k}'$ means to omit terms
for which $\alpha_{ij}=\beta_{ik}$. In particular, we set\[
\N_{f_{i},g_{i}}:=\sum_{j,k}{}^{'}d_{ij}e_{ik}[\frac{\alpha_{ij}}{\beta_{ik}}]\,\in\,\ZZ[\PP^{1}\m\{0,\infty\}]\]
and $\N_{\alpha}:=\sum_{i}\N_{f_{i},g_{i}}$. Another important notational
point is that $\log z$ is regarded as a $0$-current with branch
cut along $T_{z}$, so that (with $\dlog z:=\frac{dz}{z}$) $\delta_{T_{z}}=\frac{1}{2\pi i}(\dlog z-d[\log z])$;
also $d[\frac{\dlog z}{2\pi i}]=\delta_{\{0\}}-\delta_{\{\infty\}}.$
While this approach {}``keeps track of branches of log'', a nasty
side effect is that $\log a-\log b\neq\log\frac{a}{b}$; although
the discrepancy lies in $\ZZ(1)$ this becomes significant when multiplied
by another function.

Now recalling that\[
R\{f,g\}:=\log f\dlog g-2\pi i(\log g)\delta_{T_{f}},\]
one easily checks that (in $\mathcal{D}^{1}(Y_{i}\m|(f_{i})|\cup|(g_{i})|)$)\[
R\{f_{i},g_{i}\}\,\equiv\,\sum_{j,k}{}^{'}d_{ij}e_{ik}R\{1-\frac{\alpha_{ij}}{z_{i}},1-\frac{z_{i}}{\beta_{ik}}\}\,+\, R\{f_{i},B_{i}\}+R\{A_{i},g_{i}\}\]
where the equivalence is generated by $d\{0\text{-currents which are }0\text{ at }z=0,\infty\}$
and $\delta_{\{\ZZ(2)[\frac{1}{2}]-chains\}}.$ This gives the r.h.s.
of $(4.7)$ (for now omitting $\varepsilon_{\alpha}$)\[
\sum_{i,j,k}{}^{'}d_{ij}e_{ik}\int_{T_{z_{i}}}R\{1-\frac{\alpha_{ij}}{z_{i}},1-\frac{z_{i}}{\beta_{ik}}\}\,-\,2\pi i\sum_{i}\log B_{i}\int_{T_{z_{i}}}\delta_{T_{f_{i}}}\,+\,\sum_{i}\log A_{i}\int_{T_{z_{i}}}\dlog g_{i}.\]
Rewriting $\int_{T_{z_{i}}}(\cdot)$ as $\frac{1}{2\pi i}\int_{\PP^{1}}(\frac{dz_{i}}{z_{i}}-d[\log z_{i}])\wedge(\cdot)=\frac{-1}{2\pi i}\int_{\PP^{1}}(\cdot)\wedge\frac{dz_{i}}{z_{i}}+\frac{1}{2\pi i}\int_{\PP^{1}}(\log z_{i})d(\cdot)$
yields \begin{equation}
\begin{matrix} 
\sum_{i,j,k}'  d_{ij}e_{ik} \left( \int_{T_{1-\frac{\alpha_{ij}}{z_i}}} \log (1-\frac{z_i}{\beta_{ik}}) \frac{dz_i}{z_i}  + \int_{\PP^1}\frac{\log z_i}{2\pi i}d \left[R\{ 1-\frac{\alpha_{ij}}{z_i},1-\frac{z_i}{\beta_{ik}} \} \right] \right) \\ 
+\frac{1}{2\pi i}\sum_i \log B_i \int_{\PP^1} \{(\log f_i)d[\frac{dz_i}{z_i}]-(\log z_i)d[\frac{df_i}{f_i}] \} + \frac{1}{2\pi i}\sum_i \log A_i \int_{\PP^1}(\log z_i)d[\frac{dg_i}{g_i}].
\end{matrix} \\
\end{equation}

The directed line segments (for distinct $a,b\in\CC^{*}$)\[
T_{1-\frac{a}{z}}=e^{i\arg a}[0,|a|]\,\,\,,\,\,\,\,\,\,\,\, T_{1-\frac{z}{b}}=e^{i\arg b}[-\infty,|b|]\]
 in $\PP^{1}$ do not intersect unless $\arg a\equiv\arg b$ (mod
$2\pi\ZZ$) and $|b|<|a|$, in which case a global perturbation as
in $\S9$ of \cite{Ke1} may be deployed to kill the intersection.
Since in general\[
d[R\{f,g\}]=2\pi i(\log f|_{(g)}-\log g|_{(f)})-(2\pi i)^{2}\delta_{T_{f}\cdot T_{g}},\]
$(4.11)$ becomes ($\Psi(\alpha)\qteq$) \begin{equation}
\begin{matrix}
-\sum_{i,j,k}' d_{ij}e_{ik} \{Li_2(\frac{\alpha_{ij}}{\beta_{ik}}) + (\log \alpha_{ij} - \log \beta_{ik} ) \log (1-\frac{\alpha_{ij}}{\beta_{ik}}) \} \\ \\
+ \sum_i \log g_i(0) (\log f_i(0) - \log f_i(\infty) ) \\ \\
- \sum_i \log B_i \sum_j d_j \log \alpha_{ij} + \sum_i \log A_i \sum_k e_{ik} \log \beta_{ik}.
\end{matrix} \\
\end{equation} This is the best we can do without further information.

Next, suppose that we know $\Psi(\alpha)$ is pure imaginary (up to
$\QQ(2)$), or just want its imaginary part. Taking $\Im\{(4.12)\}$
gives \begin{equation}
\begin{matrix}
-\sum_{i,j,k}' d_{ij}e_{ik}\{\Im Li_2(\frac {\alpha_{ij}}{\beta_{ik}})+\log|\frac{\alpha_{ij}}{\beta_{ik}}|\arg(1-\frac{\alpha_{ij}}{\beta_{ik}}) \} \\ \\
+\sum_i \log|g_i(0)|\left( \arg f_i(0)-\arg f_i(\infty) \right) + \sum_i \arg(g_i(0))\log|\frac{f_i(0)}{f_i(\infty)} | \\ \\
-\sum_i \arg (g_i(0)) \log|\frac{f_i(0)}{f_i(\infty)}| + \sum_i \arg(f_i(\infty)) \log|\frac{g_i(0)}{g_i(\infty)}| \\ \\
-\sum_i \log|B_i| \sum_j d_{ij} \arg \alpha_{ij} + \sum_i \log|A_i| \sum_k e_{ik} \arg \beta_{ik} \\ \\ 
-\sum_i \sum_j d_{ij} \arg \alpha_{ij} \log \left|\prod_k ' (1-\frac{\alpha_{ij}}{\beta_{ik}})^{e_{ik}} \right| + \sum_i\sum_k e_{ik} \arg \beta_{ik} \log \left| \prod_j ' (1-\frac{\alpha_{ij}}{\beta_{ik}})^{d_{ij}} \right| ,
\end{matrix} \\
\end{equation} where the $\prod_{k}',\,\prod_{j}'$ mean to omit terms which are
$0$. The last $4$ terms of $(4.13)$ may be rerranged to give\[
\sum_{i}\sum_{\xi\in\CC^{*}}\arg(\xi)\log\left|\frac{\left\{ A_{i}\prod_{j}'(1-\frac{\alpha_{ij}}{\xi})^{d_{ij}}\right\} ^{\nu_{\xi}(g_{i})}}{\left\{ B_{i}\prod_{k}'(1-\frac{\xi}{\beta_{ik}})^{e_{ik}}\right\} ^{\nu_{\xi}(f_{i})}}\right|=\]
\[
\sum_{i}\sum_{\xi\in\CC^{*}}\arg(\xi)\log\left|Tame_{\xi}\{f_{i},g_{i}\}\right|=0.\]
The 2nd and 3rd rows of $(4.13)$, after obvious cancellations, yield
the collapsing sum\[
\sum_{i}\left\{ \log|g_{i}(0)|\arg f_{i}(0)-\log|g_{i}(\infty)|\arg f_{i}(\infty)\right\} =0.\]
This leaves us with the first row, which is just\[
-\sum_{i,j,k}{}^{'}d_{ij}e_{ik}D_{2}(\frac{\alpha_{ij}}{\beta_{ik}})=:-D_{2}(\N_{\alpha}),\]
 where $D_{2}(z):=\Im(Li_{2}(z))+\log|z|\arg(1-z)$ is the (real,
single-valued) Bloch-Wigner function. Summarizing this discussion
and combining with $(4.8)$ gives immediately

\begin{prop}
For a family of elliptic curves as in Theorem 1.7 ($n=2$), with $\tilde{X}_{\alpha}$
a N\'eron $N$-gon,%
\footnote{this includes $N=1,2$%
} $\Psi(\alpha)\qteq\varepsilon_{\alpha}\cdot(4.12)$ with $\Im(\Psi(\alpha))=-\varepsilon_{\alpha}D_{2}(\N_{\alpha})$.
In particular if $\alpha=t_{0}$, and $K(t_{0})\subset\RR$, we have
\begin{equation}
\log \left| \frac{1}{t_0} \right| -\sum_{k\geq 1}\frac{[\phi^k]_0}{k}t_0^k = \frac{\varepsilon_{\alpha}}{2\pi}D_2(\N_{t_0}), \\
\end{equation} plus or minus $\pi i$ if $t_{0}<0$.
\end{prop}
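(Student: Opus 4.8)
The plan is to compute the right-hand side of $(4.7)$ for $n=2$ directly, when $\tilde{X}_\alpha$ is a N\'eron $N$-gon, by reducing everything to the dilogarithm. First I would set up notation as in the statement: the components $Y_i\cong\PP^1$ with coordinates $z_i$ normalized so $z_i(Y_{i,i-1})=\infty$, $z_i(Y_{i,i+1})=0$, and the restrictions $f_i=x_1|_{Y_i}$, $g_i=x_2|_{Y_i}$ written as products $A_i\prod_j(1-\alpha_{ij}/z_i)^{d_{ij}}$, $B_i\prod_k(1-z_i/\beta_{ik})^{e_{ik}}$, with $\sum_j d_{ij}=\sum_k e_{ik}=0$. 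The formula $(4.7)$ from Proposition~4.3's discussion (which I may assume) says $\Psi(\alpha)\qteq\sum_i\int_{\varphi_i}R\{f_i,g_i\}$ with $\varphi_i=\varepsilon_\alpha T_{z_i}$, so the whole computation is bilinear in the factorizations of $f_i,g_i$.

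The key steps: (1) Use multilinearity of the symbol $\{\,,\,\}$, modulo $d$ of $0$-currents vanishing at $0,\infty$ and modulo $\delta$ on $\QQ(2)[\tfrac12]$-chains, to split $R\{f_i,g_i\}$ into $\sum'_{j,k}d_{ij}e_{ik}R\{1-\tfrac{\alpha_{ij}}{z_i},1-\tfrac{z_i}{\beta_{ik}}\}$ plus the ``cross'' pieces $R\{f_i,B_i\}+R\{A_i,g_i\}$; the prime omits $\alpha_{ij}=\beta_{ik}$. (2) Integrate over $T_{z_i}$. Rewrite $\int_{T_{z_i}}(\cdot)=\tfrac{-1}{2\pi i}\int_{\PP^1}(\cdot)\wedge\tfrac{dz_i}{z_i}+\tfrac{1}{2\pi i}\int_{\PP^1}(\log z_i)\,d(\cdot)$ using $\delta_{T_{z_i}}=\tfrac{1}{2\pi i}(\dlog z_i-d[\log z_i])$. (3) For the main terms, apply $d[R\{f,g\}]=2\pi i(\log f|_{(g)}-\log g|_{(f)})-(2\pi i)^2\delta_{T_f\cdot T_g}$; the segments $T_{1-a/z}=e^{i\arg a}[0,|a|]$ and $T_{1-z/b}=e^{i\arg b}[-\infty,|b|]$ meet only when $\arg a\equiv\arg b$ and $|b|<|a|$, and in that case a perturbation as in \cite{Ke1}~\S9 removes the intersection, so the $\delta_{T_f\cdot T_g}$ term contributes only to $\QQ(2)$. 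Evaluating the residue integrals gives the dilogarithm $-Li_2(\alpha_{ij}/\beta_{ik})$ together with the elementary term $(\log\alpha_{ij}-\log\beta_{ik})\log(1-\alpha_{ij}/\beta_{ik})$ and the boundary-value terms $\log g_i(0)(\log f_i(0)-\log f_i(\infty))$, etc.; collecting these is exactly formula $(4.12)$, i.e. $\Psi(\alpha)\qteq\varepsilon_\alpha\cdot(4.12)$.

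For the imaginary part: take $\Im$ of $(4.12)$ to get $(4.13)$. Then I would show the ``elementary'' terms cancel in three groups. The last four rows of $(4.13)$ regroup, using $\arg a\log|a^{m}|$-type bookkeeping, into $\sum_i\sum_{\xi\in\CC^*}\arg(\xi)\log|Tame_\xi\{f_i,g_i\}|$; since $\phi$ is tempered, every $Tame_\xi\{f_i,g_i\}$ is a root of unity, so each $\log|\cdot|=0$. The second and third rows collapse after cancellation to $\sum_i\{\log|g_i(0)|\arg f_i(0)-\log|g_i(\infty)|\arg f_i(\infty)\}$, which telescopes around the $N$-gon to $0$ because gluing $Y_i$ to $Y_{i+1}$ identifies $(f_i(0),g_i(0))$ with $(f_{i+1}(\infty),g_{i+1}(\infty))$. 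What survives is the first row, $-\sum'_{i,j,k}d_{ij}e_{ik}\{\Im Li_2(\tfrac{\alpha_{ij}}{\beta_{ik}})+\log|\tfrac{\alpha_{ij}}{\beta_{ik}}|\arg(1-\tfrac{\alpha_{ij}}{\beta_{ik}})\}=-D_2(\N_\alpha)$, giving $\Im(\Psi(\alpha))=-\varepsilon_\alpha D_2(\N_\alpha)$. Finally, for $\alpha=t_0$ with $K(t_0)\subset\RR$, Corollary~2.6 identifies $\log|1/t_0|-\sum_{k\ge1}\tfrac{[\phi^k]_0}{k}t_0^k=\tfrac{1}{2\pi i}\Re(\text{r.h.s.})$ with $\tfrac{1}{2\pi}\Im$ of the regulator when $t_0>0$ (the extra $\pm\pi i$ appearing when $t_0<0$ because then $\log(1/t_0)$ carries an imaginary $\pm\pi i$), which is $(4.14)$.

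The main obstacle I expect is step (3): keeping the branch-of-log bookkeeping consistent so that the nonlinearity $\log a-\log b\ne\log(a/b)$ (the discrepancy lies in $\ZZ(1)$, but becomes visible once multiplied by another $\log$) only ever contributes terms in $\QQ(2)$, and verifying that the perturbation removing the $T_f\cdot T_g$ intersections likewise stays within $\QQ(2)$. Everything else is careful but routine current manipulation; the substantive inputs — $(4.7)$, temperedness forcing the Tame symbols to be torsion, and Corollary~2.6 — are already established.
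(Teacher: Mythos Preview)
Your proposal is correct and follows essentially the same approach as the paper: the paper's \S4.2 carries out exactly your steps (1)--(3) to arrive at (4.12), then takes imaginary parts to get (4.13), kills the last four rows via the tame-symbol identity (temperedness $\implies$ roots of unity $\implies$ $\log|\cdot|=0$), collapses the middle rows as a telescoping sum around the $N$-gon, and combines the surviving $-D_2(\N_\alpha)$ with (4.8) to conclude. Your anticipated obstacle (branch-of-log bookkeeping and the $T_f\cdot T_g$ perturbation staying in $\QQ(2)$) is precisely the one the paper flags and handles the same way.
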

If the family $\TXM$ or a $t\mapsto t^{\kappa}$ quotient thereof
has just three singular fibers, then the l.h.s. of $(4.12)$ is a
special value of a {}``hypergeometric integral'' or Meijer $G$-function,
and such identities seem to go back essentially to Ramanujan. In addition,
the Meijer $G$-functions studied in \cite{MOY} for the $E_{6},\, E_{7},\, E_{8}$
cases below are nothing but $\frac{1}{2\pi i}$ times the regulator
period $\Psi(t^{\kappa})$.

We should emphasize that $(4.14)$ (as derived above) is a $motivic$
identity which directly reflects the limit $AJ$ result Prop. 4.3.

\subsection{Examples $\mathbf{D_{5},\, E_{6},\, E_{7},\, E_{8}}$}

We turn now to 4 {}``mirror pairs'' of elliptic curve families with
common fundamental periods. The Laurent polynomials $\phi_{I},\phi_{II}$
in the first column of the table below have dual Newton polytopes
and are of the type considered in Example 1.10. The corresponding
$\tilde{\X}_{I},\tilde{\X}_{II}$ are smooth and the second column
lists their Kodaira fiber types over $t=0$, $t\in\L\cap\CC*$, and
$t=\infty$ (in that order). These 2 families share a common degree-$\kappa$
quotient (over simply $t\mapsto t^{\kappa}$ for each $\tilde{\X}_{II}$),
whose singular fibers (after a minimal desingularization of the total
space) are listed next. This is followed by the Dynkin diagram type
of the dual graph of the singular fiber over $t^{\kappa}=\infty$
(in the quotient), which we use to {}``identify'' each example.
The vanishing-cycle periods about $t=0$ (being pullbacks from the
quotient families) take the form $A_{I}(t)=A_{II}(t)=\sum_{m\geq0}a_{m}t^{\kappa m}$,
and so $\Psi_{I}(t)=\Psi_{II}(t)=2\pi i(\log t+\sum_{m\geq1}\frac{a_{m}}{\kappa m}t^{\kappa m})$.
Finally, if we take $\phi=\phi_{II}$ in $\S3$, then the $\{N_{D}^{\left\langle X^{\circ}\right\rangle }\}$
are local Gromov-Witten invariants of the $Y_{II}^{\circ}$ indicated
and these will have exponential growth rate $\exp(-\Re(\frac{\Psi_{II}(t_{0})}{2\pi i}))$
by $(3.10)$.\\
\\
\begin{tabular}{|c|c|c|c|c|c|c|c|}
\hline 
$\begin{array}{c}
\phi_{I}\\
\phi_{II}\end{array}$ & \small fibers of $\tilde{\X}$\normalsize  & $\kappa$ & $\begin{array}{c}
\text{fibers of}\\
\widetilde{\tilde{X}/\ZZ_{\kappa}}\end{array}$ & \tiny $\begin{array}{c}
\text{type}\\
\text{at }\infty\end{array}$\normalsize & $a_{m}$ & $t_{0}$ & $Y_{II}^{\circ}$\tabularnewline
\hline
\hline 
$\begin{array}{c}
(x+\frac{1}{x})(y+\frac{1}{y})\\
x+\frac{1}{x}+y+\frac{1}{y}\end{array}$ & $\begin{array}{c}
I_{4},2I_{1},I_{4}\\
I_{8},2I_{1},I_{2}\end{array}$ & 2 & $I_{4},I_{1},I_{1}^{*}$ & $D_{5}$ & ${2m \choose m}^{2}$ & $\frac{1}{4}$ & $K_{_{\PP^{1}\times\PP^{1}}}$\tabularnewline
\hline 
$\begin{array}{c}
\frac{x^{2}}{y}+\frac{y^{2}}{x}+\frac{1}{xy}\\
x+y+\frac{1}{xy}\end{array}$ & $\begin{array}{c}
I_{3},3I_{3},I_{0}\\
I_{9},3I_{1},I_{0}\end{array}$ & 3 & $I_{3},I_{1},IV^{*}$ & $E_{6}$ & ${3m \choose m,m,m}$ & $\frac{1}{3}$ & $K_{\PP^{2}}$\tabularnewline
\hline 
$\begin{array}{c}
\frac{x}{y}+\frac{y^{3}}{x}+\frac{1}{xy}\\
x+y+\frac{1}{x^{2}y}\end{array}$ & $\begin{array}{c}
I_{4},4I_{2},I_{0}\\
I_{8},4I_{1},I_{0}\end{array}$ & 4 & $I_{2},I_{1},III^{*}$ & $E_{7}$ & ${4m \choose 2m,m,m}$ & $\frac{1}{2\sqrt{2}}$ & $K_{_{\PP(1,1,2)}}$\tabularnewline
\hline 
$\begin{array}{c}
\frac{x}{y}+\frac{y^{2}}{x}+\frac{1}{xy}\\
x+y+\frac{1}{x^{3}y^{2}}\end{array}$ & $\begin{array}{c}
I_{6},6I_{1},I_{0}\\
I_{6},6I_{1},I_{0}\end{array}$ & 6 & $I_{1},I_{1},II^{*}$ & $E_{8}$ & ${6m \choose 3m,2m,m}$ & $\frac{1}{4^{\frac{1}{3}}\sqrt{3}}$ & $K_{_{\PP(1,2,3)}}$\tabularnewline
\hline
\end{tabular}${}$\\
\\
\\
Obviously we may use $either$ $\tilde{\X}_{I}$ or $\tilde{\X}_{II}$
to compute $\Psi_{II}(t_{0})$($=\Psi_{I}(t_{0})$), and for $E_{6},\, E_{7},\, E_{8}$
we will use $\tilde{\X}_{I}$. For $D_{5}$, we use instead the family
$\tilde{\X}$ produced by $\phi:=\frac{(x-1)^{2}(y-1)^{2}}{xy}$,
with $t_{0}=\frac{1}{16}$ and $A(t)=\sum_{m\geq0}{2m \choose m}^{2}t^{m}$
(hence $\Psi_{II}(t)=\frac{1}{2}\Psi(t^{2})$); in fact, its minimal
desingularization $is$ the quotient family.

What we now do in each case is find an explicit parametrization of
(each component of) $\tilde{X}_{t_{0}}$ via $\{f_{i},g_{i}\}$, then
compute $\N:=\N_{t_{0}}$ and $D_{2}(\N)$. First, to record some
notation: we shall consider $L$-functions $L(\chi,s):=\sum_{k\geq1}\frac{\chi(k)}{k^{s}}$
of primitive Dirichlet characters\[
\chi_{-3}(\cdot)=0,\,1,\,-1,\,\ldots\,\,\,\,\,(\text{mod}\,\,\,\,3)\]
\[
\chi_{-4}(\cdot)=0,\,1,\,0,\,-1,\,\ldots\,\,\,\,\,(\text{mod}\,\,\,\,4)\]
\[
\chi_{+i,5}(\cdot)=0,\,1,\, i,\,-i,\,-1,\,\ldots\,\,\,\,\,(\text{mod}\,\,\,\,5)\]
\[
\chi_{-i,5}(\cdot)=0,\,1,\,-i,\, i,\,-1,\,\ldots\,\,\,\,\,(\text{mod}\,\,\,\,5)\]
\[
\chi_{-8}(\cdot)=0,\,1,\,0,\,1,\,0,\,-1,\,0,\,-1,\,\ldots\,\,\,\,\,(\text{mod}\,\,\,\,8)\]
at $s=2$. An easy way to get such values is by taking Bloch-Wigner
of roots of unity: e.g. for $\zeta_{a}=e^{\frac{2\pi i}{a}}$,\[
D_{2}(\zeta_{a})=\Im(Li_{2}(\zeta_{a}))+0=\sum_{k\geq1}\frac{\Im(\zeta_{a}^{k})}{k^{2}}.\]
To simplify $D_{2}(\N)$ to terms of this form, we manipulate $\N$
in a quotient of the pre-Bloch group $\B_{2}(\CC)$. Namely, work
in $\ZZ[\PP_{\CC}^{1}\m\{0,1,\infty\}]$ modulo (the subgroup generated
by) relations: $[\xi]+[\frac{1}{\xi}]$; $[1-\xi]+[\xi]$; $[\xi]+[\bar{\xi}]$;
and $\sum_{i=1}^{5}[\xi_{i}]$ where (with subscripts mod $5$) $\xi_{i}=1-\xi_{i+1}\xi_{i-1}$
($\forall i$), pictured as in Figure 4.1.%
\begin{figure}
\caption{\protect\includegraphics[bb=0bp 0bp 124bp 114bp,scale=0.7]{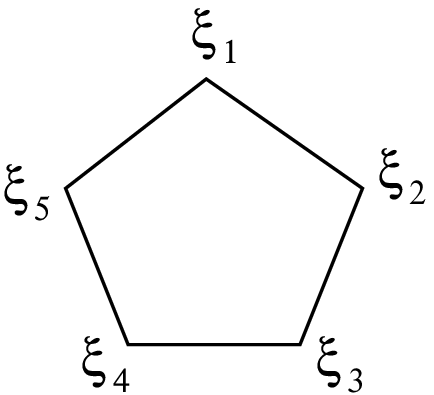}}

\end{figure}
 (These are all well-known relations on $D_{2}$, see \cite{Bl2}.)\\
\\
$\underline{\mathbf{D_{5}}}:$ In $\PP^{1}\times\PP^{1}$, $1-\frac{1}{16}\frac{(x-1)^{2}(y-1)^{2}}{xy}=0$
is an $I_{1}$ normalized by\[
f(z)=-\frac{(1+\frac{1}{z})^{2}}{(1-\frac{1}{z})^{2}}\,\,\,,\,\,\,\,\,\,\,\, g(z)=-\frac{(1+\frac{z}{i})^{2}}{(1-\frac{z}{i})^{2}}.\]
Hence $\N=8[-i]-8[i]\equiv-16[i]$, and \[
D_{2}(\N)=-16D_{2}(i)=-16L(\chi_{-4},2).\]
(So in fact the correct $D_{2}(\N_{t_{0}})$ to use for $\phi_{II}$
is $-8L(\chi_{-4},2)$.)\\
\\
$\underline{\mathbf{E}_{\mathbf{6}}}:$ In $\PP^{2}$, $0=1-\frac{1}{3}\frac{x^{3}+y^{3}+1}{xy}=\frac{-1}{3xy}(1+x+y)(1+\zeta_{3}x+\bar{\zeta_{3}}y)(1+\bar{\zeta_{3}}x+\zeta_{3}y)$
is normalized by\[
f_{1}(z_{1})=\bar{\zeta_{3}}\frac{(1-\frac{\zeta_{3}}{z_{1}})}{(1-\frac{1}{z_{1}})}\,\,\,,\,\,\,\,\,\,\,\, g_{1}(z_{1})=\frac{(1-\frac{z_{1}}{\bar{\zeta_{3}}})}{(1-z_{1})},\]
\[
f_{2}(z_{2})=\frac{(1-\frac{\zeta_{3}}{z_{2}})}{(1-\frac{1}{z_{2}})}\,\,\,,\,\,\,\,\,\,\,\, g_{2}(z_{2})=\bar{\zeta_{3}}\frac{(1-\frac{z_{2}}{\bar{\zeta_{3}}})}{(1-z_{2})},\]
\[
f_{3}(z_{3})=\zeta_{3}\frac{(1-\frac{\zeta_{3}}{z_{3}})}{(1-\frac{1}{z_{3}})}\,\,\,,\,\,\,\,\,\,\,\, g_{3}(z_{3})=\zeta_{3}\frac{(1-\frac{z_{3}}{\bar{\zeta_{3}}})}{(1-z_{3})},\]
so that $\N=3[\bar{\zeta_{3}}]-6[\zeta_{3}]\equiv-9[\zeta_{3}]$ and
\[
D_{2}(\N)=-9D_{2}(\zeta_{3})=-\frac{9\sqrt{3}}{2}L(\chi_{-3},2).\]
\\
\\
$\underline{\mathbf{E_{7}}}:$ In $\PP(1,1,2)$, $0=1-\frac{1}{2\sqrt{2}}\frac{x^{2}+y^{4}+1}{xy}=\frac{-1}{2\sqrt{2}xy}(x+iy^{2}-\sqrt{2}y-i)(x-iy^{2}-\sqrt{2}y+i)$
is normalized by\[
f_{1}(z_{1})=-\sqrt{2}\frac{(1-\frac{\gamma}{z_{1}})(1-\frac{\delta}{z_{1}})}{(1+\frac{1}{z_{1}})^{2}}\,\,\,,\,\,\,\,\,\,\,\, g_{1}(z_{1})=\frac{1-z_{1}}{1+z_{1}},\]
\[
f_{2}(z_{2})=\sqrt{2}\frac{(1-\frac{\gamma}{z_{2}})(1-\frac{\delta}{z_{2}})}{(1+\frac{1}{z_{2}})^{2}}\,\,\,,\,\,\,\,\,\,\,\, g_{2}(z_{2})=\frac{1-z_{2}}{1+z_{2}},\]
where $\gamma:=i(\sqrt{2}-1)$, $\delta:=i(\sqrt{2}+1)$ (and $\gamma\delta=-1$).
We read off\[
\N=2[\gamma]+2[\delta]-2[-\gamma]-2[-\delta]-2[-1]=4[\gamma]+4[\delta]\]
using $\bar{\gamma}=-\gamma$, $\bar{\delta}=-\delta$. Now using
the three $5$-term relations pictured in Figure 4.2,%
\begin{figure}

\caption{\protect\includegraphics[scale=0.7]{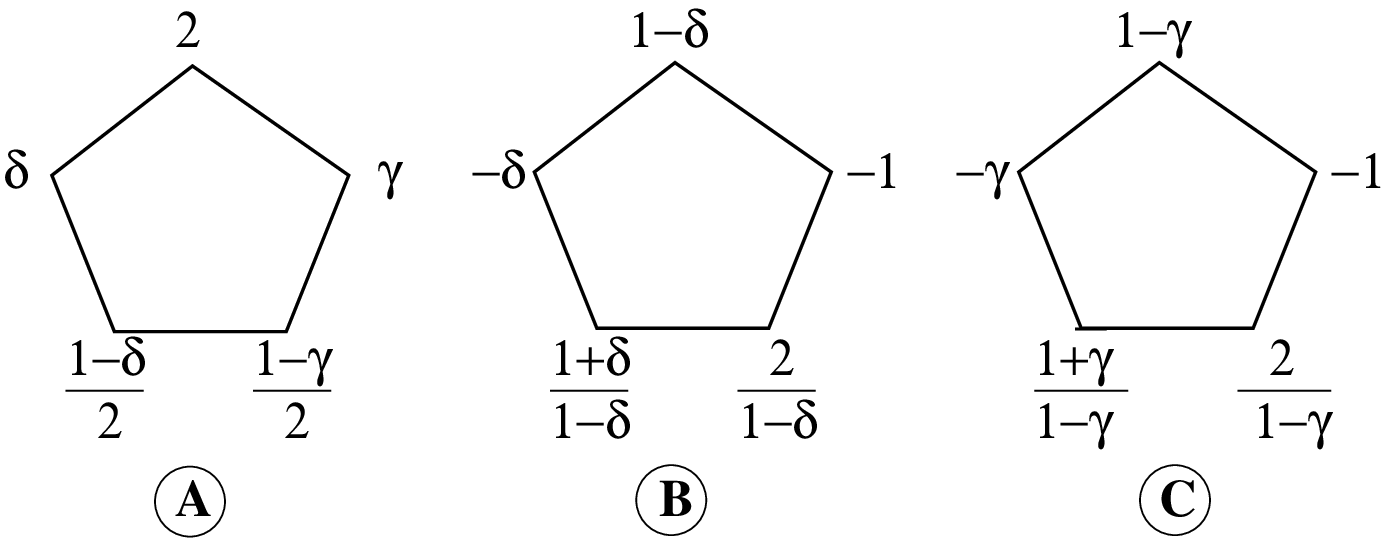}}

\end{figure}
 together with $\frac{1+\gamma}{1-\gamma}=\zeta_{8}$, $\frac{1+\delta}{1-\delta}=\zeta_{8}^{3}$,
we have\[
[\gamma]+[\delta]\,\aequiv\,2([\gamma]+[\delta])\,+\,[\frac{1-\delta}{2}]\,+\,[\frac{1-\gamma}{2}]\]
\[
\equiv\,-[-\gamma]-[1-\gamma]-[-\delta]-[1-\delta]-[\frac{2}{1-\gamma}]-[\frac{2}{1-\delta}]\]
\[
\bcequiv\,[\zeta_{8}]+[\zeta_{8}^{3}].\]
Hence \[
D_{2}(\N)=4D_{2}(\zeta_{8})+4D_{2}(\zeta_{8}^{3})=-2i\sum_{k\geq1}k^{-2}\{\zeta_{8}^{k}+\zeta_{8}^{3k}-\zeta_{8}^{5k}-\zeta_{8}^{7k}\}\]
\[
=4\sqrt{2}L(\chi_{-8},2).\]
\\
\\
$\underline{\mathbf{E_{8}}}:$ In $\PP(1,2,3)$, $1-\frac{x^{2}+y^{3}+1}{4^{\frac{1}{3}}3^{\frac{1}{2}}xy}=0$
is an $I_{1}$ whose normalization takes the form\[
f(z)=\sqrt{3}\frac{\prod_{j=1}^{3}(1-\frac{\alpha_{j}}{z})}{(1-\frac{1}{z})^{3}}\,\,\,,\,\,\,\,\,\,\,\, g(z)=\sqrt[3]{2}\frac{\prod_{k=1}^{2}(1-\frac{z}{\beta_{k}})}{(1-z)^{2}},\]
where $\prod\alpha_{j}=\prod\beta_{k}=1$, $g(\alpha_{j})=-\zeta_{3}^{j}$
and $f(\beta_{k})=(-1)^{k}i$.

\begin{conjecture*}
$\sum_{i,j}[\frac{\alpha_{j}}{\beta_{k}}]-3\sum_{k}[\frac{1}{\beta_{k}}]-2\sum_{j}[\alpha_{j}]\equiv\frac{20}{3}[i].$
\end{conjecture*}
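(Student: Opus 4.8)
The plan is to follow the exact template of the $\mathbf{D_5}$, $\mathbf{E_6}$, $\mathbf{E_7}$ computations above, adapting the Bloch-group manipulations to the extra torsion arithmetic that forces the denominator $3$. First I would produce the explicit normalization of the $I_1$ curve $\tilde X_{t_0}\subset\PP(1,2,3)$ at $t_0=4^{-1/3}3^{-1/2}$: writing $f(z)=\sqrt3\,\prod_{j=1}^3(1-\tfrac{\alpha_j}{z})/(1-\tfrac1z)^3$ and $g(z)=\sqrt[3]2\,\prod_{k=1}^2(1-\tfrac z{\beta_k})/(1-z)^2$, the conditions $\prod\alpha_j=\prod\beta_k=1$, $g(\alpha_j)=-\zeta_3^j$, $f(\beta_k)=(-1)^ki$ pin down the $\alpha_j,\beta_k$ (they are algebraic numbers in a field containing $\zeta_3$, $i$, $\sqrt3$, $\sqrt[3]2$); these must be written out, at least symbolically, to evaluate Tame symbols. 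By temperedness, $Tame_\xi\{f,g\}$ is a root of unity for every $\xi$, and the conditions on $g(\alpha_j)$, $f(\beta_k)$ encode precisely which roots of unity occur. From this, $\N:=\N_{t_0}=\sum_{j,k}'[\tfrac{\alpha_j}{\beta_k}]$ (the class appearing in Prop. 4.4) is the object to simplify.

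Next I would work in the quotient of $\ZZ[\PP^1_\CC\setminus\{0,1,\infty\}]$ by the relations listed just before the $\mathbf{D_5}$ computation: inversion $[\xi]+[1/\xi]$, reflection $[1-\xi]+[\xi]$, complex conjugation $[\xi]+[\bar\xi]$, and the five-term relation $\sum_{i=1}^5[\xi_i]$ with $\xi_i=1-\xi_{i+1}\xi_{i-1}$. The goal is to reduce $\N$ modulo these relations to a rational multiple of $[\zeta_a]$ for small $a$ — here I expect $a=4$, since $D_2(i)=L(\chi_{-4},2)$ and $\tfrac{20}{3}$ is the target coefficient. Concretely, one exploits: (i) $g(\alpha_j)=-\zeta_3^j$ gives $\prod_k(1-\alpha_j/\beta_k)=$ (a $\sqrt[3]2$-multiple of) $1+\zeta_3^j$-type expressions, letting one rewrite $\sum_{j,k}[\alpha_j/\beta_k]$ partially in terms of $[\zeta_3^j]$ and $[-\zeta_3^j]$; (ii) $f(\beta_k)=(-1)^ki$ does the symmetric thing, introducing $[i]$ and $[-i]$; (iii) the five-term and reflection relations then collapse the $\zeta_3$-terms (whose $D_2$ would give $\chi_{-3}$) against each other, because the $I_1$ geometry — as in $\mathbf{D_5}$ — is ``self-conjugate'' enough that the $\chi_{-3}$-contributions cancel, leaving only the $[i]$-terms. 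Tracking the coefficient through these cancellations should yield $\N\equiv\frac{20}{3}[i]$, and hence, via Prop. 4.4, $\Im(\Psi(t_0))=-\varepsilon_{t_0}D_2(\tfrac{20}{3}[i])=-\varepsilon_{t_0}\tfrac{20}{3}L(\chi_{-4},2)$.

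The main obstacle will be step (iii): organizing the five-term relations so that the $\zeta_3$ (and $\zeta_{12}$, $\zeta_6$) contributions actually cancel and the \emph{denominator} is exactly $3$ rather than $1$ or $9$. Unlike the $\mathbf{E_6}$ case — where the curve splits into three rational components and everything lands on $[\zeta_3]$ — here the single $I_1$ component carries $\alpha_j,\beta_k$ whose cross-ratios $\alpha_j/\beta_k$ are genuinely in a degree-$6$-ish field, so one needs several interlocking five-term relations (as in the $\mathbf{E_7}$ computation, which used three such relations plus $\tfrac{1+\gamma}{1-\gamma}=\zeta_8$). I anticipate the analogous identities here will be $\tfrac{1+\alpha_j}{1-\alpha_j}$ and $\tfrac{1+\beta_k}{1-\beta_k}$ being $12$th or $6$th roots of unity, whose $D_2$ values reduce to $L(\chi_{-4},2)$ after the $\chi_{-3}$-parts self-annihilate under conjugation. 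Verifying the bookkeeping — that the net coefficient is $\tfrac{20}{3}$ — is the one genuinely delicate computation; everything else (the normalization, the Tame-symbol triviality, the passage from $\N$ to $\Psi(t_0)$ via Prop. 4.4) is routine given the machinery already set up. Since the statement is flagged as a Conjecture, I would present the reduction as far as it goes and note the remaining identity to be checked; if the five-term manipulations close as expected, this upgrades to a Proposition, and then by Theorem 3.15 the $E_8$ local Gromov–Witten invariants of $K_{\PP(1,2,3)}$ have exponential growth rate $\exp\{\tfrac{20}{3\pi}L(\chi_{-4},2)\}$ up to the explicit rational/$\pi$ normalization.
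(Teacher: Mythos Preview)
There is no proof in the paper to compare against: the statement is labeled a \emph{Conjecture} precisely because the authors do \emph{not} prove it. Immediately after stating it they write ``If this is true then $D_2(\N)=\frac{20}{3}L(\chi_{-4},2)$'', and later (end of \S4.3) they explicitly say the $E_7$ and $E_8$ identities ``were conjectured on the basis of numerical experiment in [Bo, MOY]'' and that the $E_8$ case stands ``modulo the $E_8$ Conjecture''. So the paper's treatment of this statement consists solely of (i) writing down the normalization $f,g$ and the constraints $\prod\alpha_j=\prod\beta_k=1$, $g(\alpha_j)=-\zeta_3^j$, $f(\beta_k)=(-1)^k i$; (ii) recording the conjectured Bloch-group identity; and (iii) noting its consequence for $\Psi(t_0)$ and the growth rate.

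Your proposal is a plausible attack plan modeled on the $D_5$/$E_6$/$E_7$ computations, and you correctly flag the bottleneck: organizing enough five-term relations to collapse the $\zeta_3$- and $\zeta_{12}$-type contributions and isolate the coefficient $\frac{20}{3}$ of $[i]$. But be aware that this is exactly the step the authors were unable to complete --- the cross-ratios $\alpha_j/\beta_k$ here live in a substantially larger field than in the $E_7$ case (where $\gamma,\delta$ were in $\QQ(i,\sqrt{2})$ and satisfied the clean identities $\frac{1+\gamma}{1-\gamma}=\zeta_8$, etc.), and there is no evidence in the paper that analogues of those identities exist. Your hope that $\frac{1+\alpha_j}{1-\alpha_j}$, $\frac{1+\beta_k}{1-\beta_k}$ are roots of unity is a guess, not a consequence of temperedness; temperedness only controls the Tame symbols $Tame_\xi\{f,g\}$, not these particular cross-ratios. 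One small correction: your description of $\N$ as ``$\sum_{j,k}'[\alpha_j/\beta_k]$'' omits the $-3\sum_k[1/\beta_k]-2\sum_j[\alpha_j]$ terms coming from the pole of order $3$ (resp.\ $2$) at $z=1$ in $f$ (resp.\ $g$); the full $\N$ is precisely the left-hand side of the Conjecture.
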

If this is true then $D_{2}(\N)=\frac{20}{3}L(\chi_{-4},2)$.\\

In each of these $4$ cases, $\varepsilon_{t_{0}}=-1$ and multiplying
$(4.14)$ by $\kappa$ yields\begin{equation}
\log \left| \frac{1}{t_0^{\kappa}} \right| - \sum_{m\geq 1}\frac{a_m}{m}(t_0^{\kappa})^m = \frac{-\kappa}{2\pi} D_2(\N_{t_0}) ; \
\end{equation}or on an individual basis (writing $G:=\sum_{k\geq0}\frac{(-1)^{k}}{(2k+1)^{2}}$
for Catalan's constant)\[
D_{5}:\,\,\,\,\,\log16-\sum_{m\geq1}\frac{{2m \choose m}^{2}}{m(16)^{m}}=\frac{8}{\pi}G,\]
\[
E_{6}:\,\,\,\,\,\log27-\sum_{m\geq1}\frac{(3m)!}{m(m!)^{3}(27)^{m}}=\frac{27\sqrt{3}}{4\pi}L(\chi_{-3},2),\]
\[
E_{7}:\,\,\,\,\,\log64-\sum_{m\geq1}\frac{(4m)!}{m(2m)!(m!)^{2}(64)^{m}}=\frac{8\sqrt{2}}{\pi}L(\chi_{-8},2),\]
\[
E_{8}:\,\,\,\,\,\log432-\sum_{m\geq1}\frac{(6m)!}{m(3m)!(2m)!m!(432)^{m}}\maybe\frac{20}{\pi}G.\]
Of these identities, $D_{5}$ and $E_{6}$ were known to \cite{RV},
while $E_{7}$ and $E_{8}$ were conjectured on the basis of numerical
experiment in \cite{Bo,MOY}. The latter two examples (modulo the
$E_{8}$ Conjecture) make the strongest case for the method of Prop.
4.4; they ae not amenable to the approach in $\S8.4$ since $\tilde{\X}_{I},\,\tilde{\X}_{II},\,\tilde{\X}_{II}/\ZZ_{\kappa}$
all fail to be modular in the sense required there.

The 4 cases in this section correspond to fundamental examples in
the local mirror symmetry literature. The instanton numbers that appear
in \cite{MOY} Table 1, \cite{S} Ex. 1-4, and \cite{CKYZ} Table
7 ({}``rational'') have the same exponential growth rates as our
$\{N_{\kappa D}^{\left\langle X^{\circ}\right\rangle }\}$, namely
$\exp\{\text{r.h.s. of }(4.15)\}$. The {}``$\kappa D$'' (instead
of $D$) appears due to a discrepancy in indexing of cohomology classes.

\subsection{Other examples}

We begin with an elliptic curve family for which $\Psi(t_{0})$ involves
more than one Dirichlet character: the universal curve with a marked
$5$-torsion point, or {}``$A_{5}$'' family. This arises via minimal
desingularization of the $\tilde{\X}$ obtained from\[
\phi=\frac{(1-x)(1-y)(1-x-y)}{xy},\]
and is birational to the family considered by \cite{Bk} in relation
to irrationality of $\zeta(2)$. This has\[
A(t)=\sum_{m\geq0}\left(\sum_{\ell=0}^{m}{m \choose \ell}^{2}{m+\ell \choose \ell}\right)t^{m}\,\,\,,\,\,\,\,\,\,\,\, t_{0}=\frac{-11\pm5\sqrt{5}}{2},\]
with singular fibers $I_{5},\, I_{1},\, I_{1},\, I_{5}$; $X_{t_{0}}=\overline{\{1-t\phi=0\}}$
is normalized by \[
f(z)=\gamma\frac{(1-\frac{1}{z})^{2}}{(1-\frac{\zeta_{5}^{2}}{z})(1-\frac{\zeta_{5}^{3}}{z})}\,\,\,,\,\,\,\,\,\,\,\, g(z)=\gamma\frac{(1-\frac{z}{\zeta_{5}})^{2}}{(1-\frac{z}{\zeta_{5}^{4}})(1-\frac{z}{\zeta_{5}^{3}})},\]
where $\gamma=-\frac{1+\sqrt{5}}{2}=2\Re(\zeta_{5}^{2})=\bar{\zeta_{5}}^{2}(\bar{\zeta_{5}}+1)=\zeta_{5}^{2}(\zeta_{5}+1).$
This gives $\N=-4[\zeta_{5}]-4[\zeta_{5}^{2}]+[\zeta_{5}^{3}]+6[\zeta_{5}^{4}]\equiv-10[\zeta_{5}]-5[\zeta_{5}^{2}].$
Writing $\delta_{\pm}:=\sqrt{\frac{5\pm\sqrt{5}}{8}}$ ($\delta_{+}=\Im(\zeta_{5}),\,\delta_{-}=\Im(\zeta_{5}^{2})$)
and $\lambda_{0}=\frac{11+5\sqrt{5}}{2}$, we compute \[
D_{2}(\N)\,=\,-5\{(1+\frac{i}{2})\delta_{+}+(\frac{1}{2}-i)\delta_{-}\}L(\chi_{+i,5},2)\,-\,5\{(1-\frac{i}{2})\delta_{+}+(\frac{1}{2}+i)\delta_{-}\}L(\chi_{-i,5},2)\]
and \[
\log\lambda_{0}-\sum_{m\geq1}\frac{\sum_{\ell=0}^{m}{m \choose \ell}^{2}{m+\ell \choose \ell}}{m\lambda_{0}^{m}}=-\frac{D_{2}(\N)}{2\pi}\,(\in\RR^{+}).\]

Turning to $n=3$, consider the irregular (but reflexive and tempered)
Laurent polynomial\[
\phi=(1-\frac{1}{x})(1-\frac{1}{y})(1-\frac{1}{z})(1-x-y+xy-xyz).\]
 This gives rise to the ({}``Ap\'ery'') family $\tilde{\X}$ of
singular $K3$'s related to irrationality of $\zeta(3)$ from the
Introduction. The general fiber has $7$ $A_{1}$ (node) singularities
and Theorem $1.7$ applies (with $K=\QQ$), producing $\Xi\in H_{\M}^{3}(\TXM,\QQ(3))$.
The degenerations occur over $\L=\{0,t_{0},\frac{1}{t_{0}},\infty\}$
where $t_{0}=(\sqrt{2}-1)^{4}$; $X_{t_{0}}$ and $X_{\frac{1}{t_{0}}}$
just have extra nodes ($\implies$ order $2$ monodromy), while $X_{0}$
and $X_{\infty}$ are unions of rational surfaces (and the corresponding
monodromies maximally unipotent). One can therefore use $(4.7)$ (but
with a different choice $\varphi_{\infty}'$ of topological $2$-cycle)
to directly compute $AJ(\Xi_{\infty})(\varphi_{\infty}')=-2\zeta(3)$.
This is done in \cite{KL} (Example $8.21$) and is behind the assertion
about $V(0)$ in the Introduction.

Now for the $n=2$ families $A_{5},\, D_{5},\, E_{6}$, we can take
advantage of their modularity to obtain an alternate computation of
$\lim_{t\to t_{0}}\Psi(t)$; this is carried out for $D_{5}$ in $\S8.1$
Example 1. Similarly, by identifying the Ap\'ery $K3$ family as
modular (and $\Xi$ essentially as an Eisenstein symbol), one can
compute that (one continuation of) $\Psi(\infty)=-48\zeta(3)$, see
$\S8.5$ Ex. 1 and $\S8.1$ Ex. 2. More interestingly, we can even
use (7.17-18) to compute $\Psi(t_{0})$, which is not amenable to
$(4.8)$ (due to the nodal degeneration). Since the fixed point $\tau_{0}=\frac{i}{\sqrt{6}}\in\HH$
of $\left(\begin{array}{cc}
0 & \frac{-1}{\sqrt{6}}\\
\sqrt{6} & 0\end{array}\right)$ corresponds to $t_{0}$, we have (with ${}'\widehat{\varphi_{\textbf{f},+6}}$
as in (8.5))\[
\Psi\left((\sqrt{2}-1)^{4}\right)\qtheq(2\pi i)^{3}\frac{i}{\sqrt{6}}\mathsf{H}_{[i\infty]}^{[2]}({}'\varphi_{\textbf{f},+6})+\frac{1}{2\pi i}\sum_{n}{}^{'}\lim_{M\to\infty}\sum_{m=-M}^{M}{}^{'}\frac{{}'\widehat{\varphi_{\textbf{f},+6}}(m,n)}{m(m\frac{i}{\sqrt{6}}+n)^{3}},\]
or dividing by $-4\pi^{2}$,\[
4\log(\sqrt{2}-1)+\sum_{k\geq1}\frac{(\sqrt{2}-1)^{4k}}{k}\left\{ \sum_{j=0}^{k}{k \choose j}^{2}{k+j \choose j}^{2}\right\} =\]
\[
4\sqrt{6}\pi-\frac{\sqrt{6}}{8\pi^{3}}\sum_{n\in\ZZ\m\{0\}}\sum_{m\geq1}{}'\widehat{\varphi_{\textbf{f},+6}}(m,n)\frac{(\frac{m^{2}}{18}-n^{2})}{(\frac{m^{2}}{6}+n^{2})^{3}}.\]
 Presumably something more can be said about the r.h.s. but we haven't
attempted this.

\section{\textbf{The classically modular analogue: Beilinson's Eisenstein
symbol}}

The next three sections run parallel to what was done for the toric
symbols in $\S\S1-2$: here we will construct the basic higher cycles,
and in $\S\S6,7$ compute the cycle-class and evaluate the fiberwise
$AJ$ map on them (and consider some variations on the basic cycles).
Starting from an $(\ell+1)$-tuple of functions on an elliptic curve
with divisors supported on $N$-torsion (or the $(\ell+1)$ divisors
themselves, or even just their Pontryagin product), the goal is essentially
to construct a family of $CH^{\ell+1}(\cdot,\ell+1)$-cycles on the
$\ell^{\text{th}}$ fiber product of the universal elliptic curve
with marked $N$-torsion over $\Gamma(N)\diagdown\sH$. The idea comes
from work of Bloch for $\ell=2$ \cite{Bl2,Bl3}, and first appeared
in the generality considered here (but for infinite level) in \cite{B2}.
Interesting aspects of the story include the relationship between
the {}``vertical'' choice of divisors and the {}``horizontal''
values of the resulting global cycle's residues over the cusps; and
the role played by modular forms and especially Eisenstein series.
Much of the material in this section (and $\S6.1$) is expository,
but is set up to better enable the $AJ$ computations (and for potentially
easier reading) than the presentations in the existing literature,
amongst which we have found \cite{B2,DS,Sc,De1} to be especially
helpful.

\subsection{Motivation via the Beilinson-Hodge Conjecture}

For a quasi-projective variety $V$ defined over $\bar{\QQ}$, this
Conjecture predicts that the cycle-class map\[
cl_{V}^{p,r}:\, CH^{p}(V,r)\to\hm{(}\QQ(0),H^{2p-r}(V_{\CC}^{an},\QQ(p)))\]
should surject, i.e. that there {}``exist enough cycles''. In the
context below (with $p=r=\ell+1$), it translates to the statement
that every Eisenstein series is, in a precise sense, the fundamental
class of an {}``Eisenstein cycle'' (or {}``symbol''). This case
will be proved in $\S6.1$ when we compute the classes of the symbols
constructed in $\S5.3$. In a sense our motivation is backwards since
the Eisenstein material was originally a major piece of evidence leading
to the Conjecture.

\subsubsection{Construction of Kuga modular varieties}

$\ZZ^{2\ell}$ acts on $\sH\times\CC^{\ell}$ ($\sH=$upper half-plane)
by \[
\left((m_{1},n_{1}),\ldots,(m_{\ell},n_{\ell})\right)\cdot(\tau;z_{1},\ldots,z_{\ell}):=(\tau;\, z_{1}+m_{1}\tau+n_{1},\ldots,z_{\ell}+m_{\ell}\tau+n_{\ell})\]
and we quotient \[
\ZZ^{2\ell}\diagdown\sH\times\CC^{\ell}\,\,=:\,\,\E^{[\ell]}\mapg\sH.\]
Recall $\Gamma(N):=\ker\{SL_{2}(\ZZ)\to SL_{2}(\ZZ/N\ZZ)\}=\left\{ \left.\amat\right|\,\tiny\begin{array}{c}
ad-bc=1\\
a\equiv1\equiv d\,\,(N)\\
b\equiv0\equiv c\,\,(N)\end{array}\normalsize\right\} $ and take $\Gamma\subset SL_{2}(\ZZ)$ s.t. $\{-\text{id}\}\notin\Gamma$
and $\Gamma\supset\Gamma(N)$ for some $N\geq3$ (such a $\Gamma$
is a $congruence$ $subgroup$ of $SL_{2}(\ZZ)$).

Now $\gamma=\amat\in\Gamma$ acts on $\sH^{*}:=\sH\cup\PP^{1}(\QQ)$
by $\gamma(\tau)=\frac{a\tau+b}{c\tau+d}$, and we define $modular$
$curves$\[
\overline{Y}_{\Gamma}:=\,\Gamma\diagdown\sH^{*}\,\supset\,\Gamma\diagdown\sH\,=:Y_{\Gamma}\]
with the $cusps$ as complement:\tiny\[
\kappa_{\Gamma}:=\overline{Y}_{\Gamma}\m Y_{\Gamma}=\frac{\left\{ \frac{r}{s}\in\PP^{1}(\QQ)\,\left|\begin{array}{c}
\exists p,q\in\nicefrac{\ZZ}{N\ZZ}\text{ s.t.}\\
pr+qs\equiv1\mod N\end{array}\right.\right\} }{\Gamma}=\frac{\left\{ (-s,r)\in(\nicefrac{\ZZ}{N\ZZ})^{2}\left|\,|\left\langle (-s,r)\right\rangle |=N\right.\right\} }{\left\langle \begin{array}{c}
(-s,r)\sim\gamma.(-s,r)=(-cr-ds,ar+bs)\\
(-s,r)\sim(s,-r)\end{array}\right\rangle }.\]
\normalsize One has also the $elliptic$ $points$\[
\varepsilon_{\Gamma}:=\left(\left.\begin{array}{c}
\underbrace{\{\tau\in\sH\,|\,\exists\gamma\in\Gamma\text{ s.t. }\gamma(\tau)=\tau\}}\\
=:\widetilde{\varepsilon}_{\Gamma}\end{array}\right/\Gamma\right)\subset Y_{\Gamma}.\]
Now let $\Gamma$ act on $\E^{[\ell]}\m\pi^{-1}(\widetilde{\varepsilon}_{\Gamma})$
by \[
\gamma.(\tau;[z_{1},\ldots,z_{\ell}]_{\tau}):=\left(\gamma(\tau);\left[\frac{z_{1}}{c\tau+d},\ldots,\frac{z_{\ell}}{c\tau+d}\right]_{\gamma(\tau)}\right);\]
the quotient is denoted $\E_{\gamma}^{[\ell]}\rTo^{\pi_{\Gamma}}Y_{\Gamma}\m\varepsilon_{\Gamma}$
and Shokurov's smooth compatification \cite{So} is $\overline{\E}_{\Gamma}^{[\ell]}\rTo^{\overline{\pi}_{\Gamma}}\overline{Y}_{\Gamma}$
(we just need its existence).

\subsubsection{Monodromy on $\E_{\Gamma}^{[\ell]}$}

To understand monodromy about $\varepsilon_{\Gamma}\cup\kappa_{\Gamma}$,
first take $\ell=1$ and let $\alpha$ resp. $\beta$ be the families
of $1$-cycles $[0,1]$ resp. $[0,\tau]$ on fibers $E_{\tau}$ of
$\E^{[1]}\to\mathfrak{H}$. Each $\gamma\in\Gamma$ should be thought
of as a composition of monodromy transformations with action \[
\alpha\mapsto a\alpha+c\beta\,\,\,,\,\,\,\,\,\,\,\,\beta\mapsto b\alpha+d\beta.\]
If $\gamma$ $fixes$ $\frac{r}{s}\in\PP^{1}(\QQ)$ (resp. $\tau_{0}\in\sH$)
then it corresponds to going around (some number of times) $[\frac{r}{s}]\in\kappa_{\Gamma}$
(resp. $[\tau_{0}]\in\varepsilon_{\Gamma}$). The $\varepsilon_{\Gamma}$
are just the finite monodromy points (order$=3$;%
\footnote{monodromy is locally of the form $\bmat$ in an appropriate basis
(Kodaira type $IV^{*}$).%
} if we hadn't required $-\text{id}\notin\Gamma$ then they could have
order $2$ or $4$); if $\Gamma=\Gamma(N)$ then $\varepsilon_{\Gamma}=\emptyset$.

To put all the cusps on an equal footing with regard to monodromy
matrices, given $\frac{r}{s}\in\PP^{1}(\QQ)$ pick $p,q\in\ZZ$ such
that $pr+qs=1$ and define a {}``local monodromy group'' \[
M_{\Gamma}\left(\left[\frac{r}{s}\right]\right):=\cmat\text{Stab}_{_{\Gamma}}\left(\frac{r}{s}\right)\dmat,\]
which is generated by $\emat$ (or $\fmat$) for some $m|N$ (resp.
$m|\frac{N}{2}$). For $\overline{\E}^{[1]}$, this yields a fiber
of type $I_{m}$ (resp. $I_{m}^{*}$) in Kodaira's classification;
we subdivide $\kappa_{\Gamma}=:\kappa_{\Gamma}^{I}\cup\kappa_{\Gamma}^{I^{*}}$. 

For $\ell\geq1$, one has an isomorphism of VHS\[
\H_{\nicefrac{\E^{[\ell]}}{Y}}^{\ell}\cong\oplus_{0\leq a\leq\left\lfloor \frac{\ell}{2}\right\rfloor }\left(\H_{\nicefrac{\E^{[1]}}{Y}}^{1}(-a)^{\otimes(\ell-2a)}\right)^{\oplus{\ell \choose \ell-2a,a,a}}\]
so that monodromy about type $I$ cusps is (maximally) unipotent for
all $\ell$, while that about type $I^{*}$ cusps is only unipotent
for $\ell$ even (by considering $\ell^{\text{th}}$ symmetric powers
of $\fmat$).

\subsubsection{MHS on the singular fibers of $\overline{\E}_{\Gamma}^{[\ell]}$}

We will use the notation $E_{\Gamma,y}^{[\ell]}$($\cong E_{\tau}^{[\ell]}$
for some $\tau\in\sH$) for smooth fibers and $\hat{E}_{\Gamma,y_{0}}^{[\ell]}$
for singular fibers, which are NCD's in the Shokurov compactification.
(Note: $\hat{E}_{\Gamma,y_{0}}^{[\ell]}$ does not count multiple
fiber-components with multiplicity.)

\paragraph*{$\underline{\text{(A) Elliptic points}}$}

($y_{0}\in\varepsilon_{\Gamma}$) Take a degree-$3$ cover $\widetilde{\overline{Y}}_{\Gamma}\rTo^{\mu}\overline{Y}_{\Gamma}$
with ramification index $3$ at $\tilde{y_{0}}\mapsto y_{0}$, and
let $\widetilde{\overline{\E}}_{\Gamma}^{[\ell]}$ be a smooth resolution
of $\overline{\E}_{\Gamma}^{[\ell]}\times_{\mu}\tilde{Y_{\Gamma}}$.
This maps to ${}'\widetilde{\overline{\E}}_{\Gamma}^{[\ell]}$ where

(a) ${}'\widetilde{\overline{\E}}_{\Gamma}^{[\ell]}\m{}'\tilde{E}_{\Gamma,\tilde{y_{0}}}^{[\ell]}=\tilde{\overline{\E}}_{\Gamma}^{[\ell]}\m\tilde{E}_{\Gamma,\tilde{y_{0}}}^{[\ell]}$
(here $\tilde{E}_{\Gamma,\tilde{y_{0}}}^{[\ell]}$ is possibly singular)

(b) ${}'\tilde{E}_{\Gamma,\tilde{y_{0}}}^{[\ell]}$ is the $\ell^{\text{th}}$
self-product of a $smooth$ elliptic curve ($\tau=e^{\frac{2\pi i}{3}}$
or $e^{\frac{2\pi i}{6}}$), \\
yielding a diagram\\
\xymatrix{& & & & '\widetilde{\overline{\E}}^{[\ell]}_{\Gamma} \ar [rd]_{'\tilde{\pi}} & \widetilde{\overline{\E}}^{[\ell]}_{\Gamma} \ar [r]_{\M} \ar [d]^{\tilde{\pi}} \ar [l]^{p} & \overline{\E}^{[\ell]}_{\Gamma} \ar [d]^{\bar{\pi}} \\ & & & & & \widetilde{\overline{Y}}_{\Gamma} \ar [r]_{\mu} & \overline{Y}_{\Gamma}}\\
\\
Now (a)$+$(b) $\implies$ $H^{\ell+1}(\widetilde{\overline{\E}}_{\Gamma}^{[\ell]}\m\tilde{E}_{\Gamma,y_{0}}^{[\ell]})=W_{\ell+2}H^{\ell+1}(\widetilde{\overline{\E}}_{\Gamma}^{[\ell]}\m\tilde{E}_{\Gamma,\tilde{y_{0}}}^{[\ell]}),$
while $\frac{1}{3}\M_{*}\M^{*}$ is the identity on $H^{\ell+1}(\overline{\E}_{\Gamma}^{[\ell]}\m E_{\Gamma,y_{0}}^{[\ell]})$.
By the localization sequence \[
\to H^{\ell+1}(\overline{\E}_{\Gamma}^{[\ell]}\m\hat{E}_{\Gamma,y_{0}}^{[\ell]})\to H_{\ell}(\hat{E}_{\Gamma,y_{0}}^{[\ell]})(-(\ell+1))\to H^{\ell+2}(\overline{\E}_{\Gamma}^{[\ell]})\to,\]
$H_{\ell}(\hat{E}_{\Gamma,y_{0}}^{[\ell]})$ is a pure HS of weight
$-\ell$.

\paragraph*{$\underline{\text{(B) Non-unipotent cusps}}$}

($y_{0}\in\kappa_{\Gamma}^{I^{*}}$, $\ell$ odd) Even in the quasi-unipotent/non-semistable
degeneration setting, if the total space is smooth (with NCD central
fiber) the Wang sequence, relative homology sequence, and deformation
retract business goes through, yielding a long-exact sequence \begin{equation}
\to H_{\ell+2}(\hat{E}^{[\ell]}_{\Gamma,y_0}(-(\ell+1)) \rTo^{\xi}  H^{\ell}(\hat{E}^{[\ell]}_{\Gamma,y_0} \to H^{\ell}(E^{[\ell]}_{\Gamma,y}) \rTo^{T-I} H^{\ell}(E^{[\ell]}_{\Gamma,y})\to  ; \\
\end{equation} here $\xi$ is a morphism of MHS (as $\iota_{y_{0}}^{*}\circ(\iota_{y_{0}})_{*}$,
it is motivic). For the monodromy matrix, taking $\ell^{\text{th}}$
symmetric power of $\fmat$ for $\ell\geq1$ odd gives $T=\gmat$;
hence $T-I$ has maximal rank and $\xi$ is surjective. Since $H_{\ell+2}(\hat{E})(-(\ell+1))$
has weights $\geq\ell$ and $H^{\ell}(\hat{E})$ weights $\leq\ell$,
we find again that $H^{\ell}(\hat{E})$ (hence $H_{\ell}(\hat{E}_{\Gamma,y_{0}}^{[\ell]})$)
is a $pure$ HS.

\paragraph*{$\underline{\text{(C) Unipotent cusps}}$}

($y_{0}\in\kappa_{\Gamma}^{I^{*}}$ and $\ell$ even; $y_{0}\in\kappa_{\Gamma}^{I}$)
Start with $\ell=1$: taking $y=[i\infty]$ as our prototypical such
cusp and assuming an $I_{m}$ degeneration there, the choice of local
parameter $q^{\frac{1}{m}}=:\tilde{q}:=\exp(\frac{2\pi i}{m}\tau)=\exp(\frac{2\pi i}{m}\frac{\int_{\beta}dz}{\int_{\alpha}dz})$
splits the LMHS:\[
H_{\lim_{\tilde{q}\to0}}^{1}(E_{\Gamma,\tilde{q}})\cong\QQ(0)\oplus\QQ(-1).\]
Similarly, $H_{\lim}^{\ell}(E_{\Gamma,\tilde{q}}^{[\ell]})$ is a
$\oplus$ of copies of $\QQ(0)$ thru $\QQ(-\ell)$ --- in particular
$one$ copy of $\QQ(0)$. (Think of this as a consequence of the fact
that the periods are all powers of $m\log\tilde{q}$; the $\QQ(0)$
corresponds to $\alpha^{\times\ell}$ with period $1$.) $(5.1)$
becomes the Clemens-Schmid sequence\[
\to H_{\ell+2}(\hat{E}_{\Gamma,y_{0}}^{[\ell]})(-(\ell+1))\rTo^{\xi}H^{\ell}(\hat{E}_{\Gamma,y_{0}}^{[\ell]})\to H_{\lim}^{\ell}(E_{\Gamma,y}^{[\ell]})\rTo^{N}H_{\lim}^{\ell}(E_{\Gamma,y}^{[\ell]})\to\]
(where $N=\log(T)$ now makes sense); since $N$ is of type $(-1,-1)$
it kills $\QQ(0)$. By the same reasoning as above, $\text{im}(\xi)$
has pure weight $\ell$; so $H^{\ell}(\hat{E}_{\Gamma,y_{0}}^{[\ell]})$
is completely split into $\QQ(-j)$'s (independent of the choice of
parameter), in particular $H^{\ell}(\hat{E}_{\Gamma,y_{0}}^{[\ell]})\cong\QQ(0)\oplus\H$
where $W_{0}\H=\{0\}$.

\paragraph*{$\underline{\text{Conclusion}}$:}

$\hm{(}\QQ(0),H_{\ell}(\hat{E}_{\Gamma,y_{0}}^{[\ell]}))$ is $\{0\}$
in cases (A) and (B) (or for a smooth fiber), and one copy of $\QQ(0)$
for case (C).

\subsubsection{Residues and Beilinson-Hodge}

Let $\mathfrak{p}\subset Y_{\Gamma}\m\varepsilon_{\Gamma}$ be a finite
point set, and consider open subsets of $\begin{array}{c}
\E_{\Gamma}^{[\ell]}\\
\bigcap\\
\overline{\E}_{\Gamma}^{[\ell]}\end{array}$\[
\begin{array}{ccccc}
(\E_{\Gamma}^{[\ell]})^{\circ} & \rTo^{\pi_{\Gamma}^{\circ}} & Y_{\Gamma}^{\circ} & :=Y_{\Gamma}\m\varepsilon_{\Gamma}\cup\mathfrak{p} & =\overline{Y}_{\Gamma}\m\mathfrak{P}\\
\bigcap &  & \bigcap\\
(\overline{\E}_{\Gamma}^{[\ell]})^{\circ} & \rTo^{\overline{\pi}_{\Gamma}^{\circ}} & \overline{Y}_{\Gamma}^{\circ} & :=\overline{Y}_{\Gamma}\m\kappa_{\Gamma}^{[\ell]}\end{array}\]
where $\mathfrak{P}:=\kappa_{\Gamma}^{I}\cup\kappa_{\Gamma}^{I^{*}}\cup\varepsilon_{\Gamma}\cup\mathfrak{p}$,
and $\kappa_{\Gamma}^{[\ell]}:=\left\{ \begin{array}{c}
\kappa_{\Gamma},\,\,\ell\text{ odd}\\
\kappa_{\Gamma}^{I},\,\,\ell\text{ even}\end{array}\right.$ consists of the unipotent cusps. Applying $\hm{(}\QQ(0),\text{---}\otimes\QQ(\ell+1))$
to the {}``localization sequence''\[
0\to\text{coker}\left\{ \jmath_{\ell+1}^{*}:H^{\ell+1}(\overline{\E}_{\gamma}^{[\ell]})\to H^{\ell+1}((\E_{\Gamma}^{[\ell]})^{\circ})\right\} \rTo^{\oplus\frac{Res_{y_{0}}}{(2\pi i)^{\ell}}}\oplus_{y_{0}\in\mathfrak{P}}H_{\ell}(\qhat{}_{\Gamma,y_{0}}^{[\ell]})(-(\ell+1))\]
\[
\rTo^{(2\pi i)^{\ell+1}\left(\oplus(\imath_{y_{0}})_{*}\right)}\ker\left\{ \jmath_{\ell+2}^{*}:\, H^{\ell+2}(\overline{\E}_{\Gamma}^{[\ell]})\to H^{\ell+2}((\E_{\Gamma}^{[\ell]})^{\circ})\right\} \to0\]
gives \[
\hm{(}\QQ(0),\text{coker}(\jmath_{\ell+1}^{*})\otimes\QQ(\ell+1))\cong\oplus_{y_{0}\in\mathfrak{P}}\hm{(}\QQ(0),H_{\ell}(\qhat{}_{\Gamma,y_{0}}^{[\ell]}))\]
\[
\byt\oplus_{y_{0}\in\kappa_{\Gamma}^{[\ell]}}\QQ(0),\]
since $\ker(\jmath_{\ell+2}^{*})$ has pure weight $\ell+2$ (and
$\ell\geq1$). Using \[
0\to\text{im}(\jmath_{\ell+1}^{*})\to H^{\ell+1}((\E_{\Gamma}^{[\ell]})^{\circ})\to\text{coker}(\jmath_{\ell+1}^{*})\to0,\]
 we then clearly have $\hm{(}\QQ(0),H^{\ell+1}((\overline{E}_{\gamma}^{[\ell]})^{\circ},\QQ(\ell+1)))\subset$\[
\hm{\left(\QQ(0),H^{\ell+1}\left((\E_{\Gamma}^{[\ell]})^{\circ},\QQ(\ell+1)\right)\right)}\rInto^{\oplus\frac{Res}{(2\pi i)^{\ell}}}\oplus_{[\frac{r}{s}]\in\kappa_{\Gamma}^{[\ell]}}\QQ.\]

\begin{claim}
The composition\\
 \xymatrix{ 
CH^{\ell+1} \left( ( \overline{E}^{[\ell]}_{\Gamma} )^{\circ}, \ell+1 \right) \ar @{-->} [rrd] \ar [d]^{[\cdot]}
\\ 
\hm \left( \QQ(0), H^{\ell+1} \left( (\overline{E}^{[\ell]}_{\Gamma})^{\circ},\QQ(\ell+1) \right) \right) \ar [rr]_{\mspace{150mu} \oplus \frac{Res}{(2\pi i)^{\ell}}} 
& & 
\oplus_{[\frac{r}{s}]\in\kappa_{\Gamma}^{[\ell]}} \QQ  
}\\
\\
is surjective.
\end{claim}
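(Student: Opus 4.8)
The strategy is to exhibit explicit higher Chow cycles on $(\overline{\E}_\Gamma^{[\ell]})^\circ$ whose residues at the unipotent cusps can be prescribed arbitrarily, thereby hitting every element of $\oplus_{[\frac{r}{s}]\in\kappa_\Gamma^{[\ell]}}\QQ$. These cycles are the Eisenstein symbols of $\S5.3$: starting from an $(\ell+1)$-tuple of functions $f_0,\dots,f_\ell$ on the universal elliptic curve whose divisors are supported on $N$-torsion sections, one forms the symbol $\{f_0,\dots,f_\ell\}$ on the $\ell$-fold fiber product (equivalently, one takes the Pontryagin product of the $\ell+1$ divisors in the relevant higher-Chow sense). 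Since the $N$-torsion sections meet the smooth locus $(\E_\Gamma^{[\ell]})^\circ$ properly and the functions restrict to units away from these, the symbol lands in $CH^{\ell+1}((\overline{\E}_\Gamma^{[\ell]})^\circ,\ell+1)$ once we verify the coskeleton (proper-intersection) conditions. This is the construction promised in $\S5.3$, so by the time this Claim is invoked that part is available.

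\textbf{Key steps.} First, I would identify the residue map $\oplus\frac{Res_{y_0}}{(2\pi i)^\ell}$ concretely: the localization computation just above the Claim shows that $\hm(\QQ(0),H^{\ell+1}((\overline{\E}_\Gamma^{[\ell]})^\circ,\QQ(\ell+1)))$ injects into $\oplus_{[\frac{r}{s}]\in\kappa_\Gamma^{[\ell]}}\QQ$, with the $I^*$-cusps (for $\ell$ odd) and elliptic points contributing nothing by the weight analysis in case (A), (B). So it suffices to show the composite is \emph{onto} this target. Second, I would compute the residue of a single Eisenstein symbol at a unipotent cusp $[\frac{r}{s}]$: using the local parameter $\tilde q=\exp(\frac{2\pi i}{m}\tau)$ that splits the LMHS (case (C)), the fiber $\hat E_{\Gamma,y_0}^{[\ell]}$ is a NCD built from a Néron polygon, and the residue of $\{f_0,\dots,f_\ell\}$ along it is computed by an iterated tame-symbol / boundary calculation — exactly the kind of residue bookkeeping done for toric symbols in Prop.\ 1.4 and $\S2.2$, but now in the modular setting. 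The outcome is that the residue at $[\frac{r}{s}]$ is a weighted sum of the orders/values of the $f_i$ at the cusp, i.e.\ a value attached to the chosen torsion data. Third — the crux — I would show that by varying the torsion data (the divisors of the $f_i$, i.e.\ the choice of $N$-torsion points and multiplicities), the resulting residue vectors in $\oplus_{[\frac{r}{s}]\in\kappa_\Gamma^{[\ell]}}\QQ$ span the whole space. Concretely, this reduces to the surjectivity of an explicit linear map from divisors-supported-on-torsion (modulo the constraint $\deg=0$ and the relations coming from which divisors actually arise from functions, i.e.\ principal divisors) onto functions on the cusp set. One knows classically that divisors of modular units generate a lattice of full rank in the degree-zero part of $\ZZ[\kappa_\Gamma]$ (Manin--Drinfeld), so one can hit any assigned collection of residues at the $\kappa_\Gamma^{[\ell]}$ after possibly clearing denominators (harmless, as we work $\otimes\QQ$).

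\textbf{Main obstacle.} The hard part is the third step: controlling which residue vectors are realized. For $\ell=1$ this is essentially the statement that the group of modular units has full rank in $\ZZ^0[\kappa_\Gamma]$, which is classical; for $\ell\ge 2$ one must propagate this through the fiber-product / Pontryagin-product construction and check that no cancellation forces the image into a proper subspace — in particular that the $\ell+1$ ``legs'' of the symbol can be chosen independently enough. I would handle this by a dimension/rank count: the Eisenstein subspace of $H^{\ell+1}$ that receives these classes has dimension exactly $\#\kappa_\Gamma^{[\ell]}$ (this is the Eisenstein part of the relevant cohomology of the Kuga variety, a standard fact for which \cite{B2,Sc,De1} can be cited), and the residue map restricted to the span of Eisenstein symbols is injective by the weight argument already in hand; an injective map between spaces of equal finite dimension is surjective. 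So the proof ultimately rests on matching two dimension counts — the number of unipotent cusps, and the rank of the span of Eisenstein symbols — and the second of these is exactly what is pinned down by the cycle-class computation of $\S6.1$ (which is where the Beilinson--Hodge conclusion gets its teeth). A secondary technical nuisance is verifying real-position / proper-intersection requirements for the symbols at the boundary divisors of the Shokurov compactification, but these are routine given Shokurov's smooth model and the moving lemmas already cited.
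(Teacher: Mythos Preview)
Your overall strategy—construct Eisenstein symbols from functions with $N$-torsion divisors, compute their residues at the cusps, and show these span the target—matches the paper's. However, you have misidentified the mechanism that makes the third step work.

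The paper proceeds as follows (the proof is spread over \S5.2.2, \S6.1.2, and Corollary 6.9). First, one reduces to $\Gamma=\Gamma(N)$ via the push-forward $(\mathcal{P}_{\Gamma(N)/\Gamma}^{[\ell]})_{*}$ and the surjective trace on cusp spaces (\S5.1.6). Second, the residue of $\sZ_{\textbf{f}}$ at a cusp $\sigma$ is computed not by tame symbols but via the Fourier-analytic cycle-class computation (Theorem 6.6, Corollary 6.8): it equals the ``horospherical'' value
\[
\mathsf{H}_{\sigma}^{[\ell]}(\varphi_{\textbf{f}})\;=\;\frac{(-1)^{\ell}(\ell+1)}{(\ell+2)!}\sum_{a=0}^{N-1}\bigl((\pi_{\sigma})_{*}\varphi_{\textbf{f}}\bigr)(a)\,B_{\ell+2}\!\left(\tfrac{a}{N}\right),
\]
an expression in Bernoulli polynomials, equivalently a special value of a finite Dirichlet series. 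Third—and this is the point you have not identified—surjectivity of $\varphi\mapsto(\mathsf{H}_{\sigma}^{[\ell]}(\varphi))_{\sigma}$ onto $\Upsilon_{2}^{\QQ}(N)$ is exactly Lemma 5.9, whose proof (after Fourier transform and decomposition over the subgroups $G\in\mathfrak{G}(N)$) reduces to the \emph{nonvanishing of Dirichlet $L$-values} $\tilde{L}(\chi,\ell+2)\neq 0$ for all characters $\chi$ mod $N$ of the relevant parity (the Sublemma, citing \cite{Ne1}).

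Your two proposed substitutes for this step do not work. Manin--Drinfeld concerns the torsion-ness of the degree-zero cuspidal divisor class group of $\overline{Y}_{\Gamma}$; that is a statement about modular units on the \emph{base} curve, not about residues of higher symbols on the Kuga variety, and it targets $\ZZ^{0}[\kappa_{\Gamma}]$ rather than the full $\QQ^{\kappa_{\Gamma}^{[\ell]}}$. Your dimension count is circular within the paper's logic: the equality $\dim_{\QQ}\EE_{\ell+2}^{\QQ}(\Gamma(N))=|\kappa(N)|$ is Corollary 6.9(ii), deduced \emph{from} Claim 5.1, not used to prove it. One could try to import the classical dimension formula for the Eisenstein space over $\CC$, but you would still need to know that the Eisenstein series arising from $\varphi\in\Phi_{2}^{\QQ}(N)^{\circ}$ hit every tuple of \emph{rational} residues—and this again is precisely Lemma 5.9, which again needs $L(\chi,\ell+2)\neq 0$. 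The number-theoretic input is not an afterthought here; it is the content of the surjectivity.
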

If this is true, then we have clearly proved that for any $\mathfrak{P}$
as just described \[
CH^{\ell+1}\left((\E_{\Gamma}^{[\ell]})^{\circ},\ell+1\right)\twoheadrightarrow\hm{\left(\QQ(0),H^{\ell+1}\left((\E_{\Gamma}^{[\ell]})^{\circ},\QQ(\ell+1)\right)\right)},\]
which is the relevant special case of the \textbf{Beilinson-Hodge}
conjecture.

\subsubsection{Holomorphic forms of top degree}

Clearly on $\E^{[\ell]}(\to\mathfrak{H})$ these are of the form\[
\Omega_{F}^{\ell+1}:=F(\tau)dz_{1}\wedge\cdots\wedge dz_{\ell}\wedge d\tau,\]
for $F$ holomorphic ($F\in\mathcal{O}(\mathfrak{H})$). For this
to descend to $\E_{\Gamma}^{[\ell]}$ (recalling from $\S5.1.1$ the
action of $\gamma\in\Gamma\subset SL_{2}(\ZZ)$ on $\E^{[\ell]}\m\pi^{-1}(\widetilde{\varepsilon}_{\Gamma})$),
we must have\[
\Omega_{F}^{\ell+1}=\gamma^{*}\Omega_{F}^{\ell+1}=F(\gamma(\tau))\frac{dz_{1}}{c\tau+d}\wedge\cdots\wedge\frac{dz_{\ell}}{c\tau+d}\wedge\frac{\slt d\tau}{(c\tau+d)^{2}},\]
which is equivalent to \begin{equation}
F(\tau) = \frac{F(\gamma(\tau))}{(c\tau+d)^{\ell+2}} =: F|^{\ell+2}_{\gamma}  (\tau) \, \, (\forall \gamma\in\Gamma). \\
\end{equation}

\begin{defn}
(i) $F\in\mathcal{O}(\mathfrak{H})$ and $(5.2)$ holds$\Longleftrightarrow$
$F(\tau)$ an \textbf{automorphic form} of weight $\ell+2$ with respect
to $\Gamma$.\\
(ii) $\lim_{\tau\to i\infty}F(\tau)=:\mathfrak{R}_{[i\infty]}(F)<\infty$
$\Longleftrightarrow$ $F(\tau)$ \textbf{bounded} at $i\infty$.\\
(iii) $\mathfrak{R}_{[i\infty]}(F)=0$ $\Longleftrightarrow$ $F(\tau)$
\textbf{cusp} at $i\infty$.

Now assuming $F$ automorphic of weight $\ell+2$ (w.r.t. some $\Gamma$):\\
(iv) $F$ is \textbf{cusp (resp. bounded) at }$\mathbf{[\frac{r}{s}]}$
$\Longleftrightarrow$ $F|_{\dmat}^{\ell+2}$ cusp (resp. bounded)
at $i\infty$, where $p,q$ are chosen so that the matrix $\in SL_{2}(\ZZ)$;
and\\
(v) $F$ \textbf{cusp (resp. modular) form} of weight $\ell+2$ (w.r.t.
$\Gamma$) $\Longleftrightarrow$ $F$ cusp (resp. bounded) at every
cusp($\in\kappa_{\Gamma}$).\\

\end{defn}
\begin{rem*}
Unconventionally, a $meromorphic$ $modular$ $form$ will mean the
same thing as $modular$ $form$ except that poles at cusps $\kappa_{\Gamma}$
and elliptic points $\widetilde{\varepsilon}_{\Gamma}$ are permitted.
(For each cusp $[\frac{r}{s}]$, this means $\tau^{-K}F|_{\dmat}^{\ell+2}$
is bounded at $i\infty$ for some $K\in\ZZ^{+}$.) We write $A_{\ell+2}(\Gamma)$
(resp. $S_{\ell+2}(\Gamma)$, $M_{\ell+2}(\Gamma)$, $\check{M}_{\ell+2}(\Gamma)$)
for automorphic (resp. cusp, modular, mero. modular) forms.
\end{rem*}
\begin{example}
Let $F\in A_{\ell+2}(\Gamma)$. If the cusp $[i\infty]\in\kappa_{\Gamma}$
is type $I_{m}$ then $\emat\in\Gamma$, so that $F(\tau+m)=F(\tau)$;
if type $I_{m}^{*}$ then $\fmat\in\Gamma$, ensuring $F(\tau+m)=(-1)^{\ell+2}F(\tau)$.
Either way, $\tilde{q}:=q^{\frac{1}{m}}$ (see $\S5.1.3(C)$) gives
a local coordinate on $\overline{Y}_{\Gamma}$ at $[i\infty]$. In
the unipotent case, we conclude that $F$ has a Laurent expansion
$F(\tau)=\sum_{k\in\ZZ}a_{k}\tilde{q}^{k}$; in the non-unipotent
($I_{m}^{*}$ and $\ell$ odd) case we get instead $F(\tau)=\sum_{k\in\ZZ\text{ odd}}a_{k}\tilde{q}^{\frac{k}{2}}$
($\Omega_{F}$ still gives a well-defined holomorphic $form$ on the
quotient $\E_{\Gamma}$). Evidently, the {}``bounded'' condition
says in both cases that $a_{k}=0$ for $k<0$ (and {}``cusp'' forms
have no constant term); so in the non-unipotent case, bounded $\implies$
cusp.
\end{example}
Shokurov (\cite{So}) proved the following:

\begin{prop}
(i) $\Omega^{\ell+1}(\overline{\E}_{\Gamma}^{[\ell]}\m\pi^{-1}(\kappa_{\Gamma}))=\{\Omega_{F}\,|\, F\in A_{\ell+2}(\Gamma)\}$,
i.e. such $\Omega_{F}$ extend holomorphically across the singular
fibers over elliptic points;

(ii) $\Omega^{\ell+1}(\overline{\E}_{\Gamma}^{[\ell]})\left\langle \log(\overline{\pi}_{\Gamma}^{-1}(\kappa_{\Gamma}))\right\rangle =\Omega^{\ell+1}(\overline{\E}_{\Gamma}^{[\ell]})\left\langle \log(\overline{\pi}_{\Gamma}^{-1}(\kappa_{\Gamma}^{[\ell]})\right\rangle =\{\Omega_{F}\,|\, F\in M_{\ell+2}(\Gamma)\}$;
and

(iii) $\Omega^{\ell+1}(\overline{\E}_{\Gamma}^{[\ell]})=\{\Omega_{F}\,|\, F\in S_{\ell+2}(\Gamma)\}$.
\end{prop}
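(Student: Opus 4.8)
The plan is to prove Shokurov's Proposition $5.6$ by a local computation at each type of singular fiber, leveraging the isomorphism of VHS in $\S5.1.2$ to reduce to explicit Kodaira models. The starting observation is already contained in $(5.3)$: on $\E^{[\ell]}\to\mathfrak{H}$ every holomorphic $(\ell+1)$-form is $\Omega_F^{\ell+1}=F(\tau)\,dz_1\wedge\cdots\wedge dz_\ell\wedge d\tau$, and the computation there shows that $\Omega_F^{\ell+1}$ descends to $\E_\Gamma^{[\ell]}\setminus\pi_\Gamma^{-1}(\widetilde\varepsilon_\Gamma)$ exactly when $F|_\gamma^{\ell+2}=F$ for all $\gamma\in\Gamma$, i.e. $F\in A_{\ell+2}(\Gamma)$. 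So the content of all three parts is purely about \emph{holomorphic extension across the singular fibers} of $\overline{\E}_\Gamma^{[\ell]}$; the question is which growth condition on $F$ at which kind of boundary point corresponds to which extension property of $\Omega_F$.

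First I would handle the elliptic points (part (i)). Here the point is that the VHS isomorphism $\H^\ell_{\E^{[\ell]}/Y}\cong\bigoplus_a(\H^1_{\E^{[1]}/Y}(-a)^{\otimes(\ell-2a)})^{\oplus\binom{\ell}{\ell-2a,a,a}}$ together with the $\S5.1.3$(A) analysis shows $H_\ell(\hat E^{[\ell]}_{\Gamma,y_0})$ is pure of weight $-\ell$ at an elliptic point. Concretely one passes to the degree-$3$ cover $\widetilde{\overline{Y}}_\Gamma\to\overline{Y}_\Gamma$ of $\S5.1.3$(A), where after resolving one has a \emph{smooth} elliptic fiber $'\tilde E^{[\ell]}_{\Gamma,\tilde y_0}$; on a small disk with parameter $u$ upstairs, $u^3$ is the downstairs parameter, and a form $F(\tau)\,d\tau$ that is merely bounded (equivalently, $F$ automorphic, so $F(\gamma(\tau))=(c\tau+d)^{\ell+2}F(\tau)$ for the order-$3$ stabilizer element) pulls back to a genuinely holomorphic form in $u$, because the factor $(c\tau+d)^{\ell+2}$ at a fixed point of finite order gives a root-of-unity multiplier whose invariance is automatic once one averages by $\tfrac13\mathsf{M}_*\mathsf{M}^*=\mathrm{id}$. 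So no vanishing is needed at elliptic points: being automorphic already forces holomorphic extension there. Since $\Omega^{\ell+1}(\overline{\E}_\Gamma^{[\ell]}\setminus\pi^{-1}(\kappa_\Gamma))$ is computed by taking $\Gamma$-invariant forms on $\E^{[\ell]}$ that extend across $\pi^{-1}(\widetilde\varepsilon_\Gamma)$, this gives (i).

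Next, parts (ii) and (iii), which are local at the cusps. Using the $\S5.1.3$(C) choice of parameter $\tilde q=q^{1/m}=\exp(\tfrac{2\pi i}{m}\tau)$ at a type-$I_m$ cusp (and $\tilde q=q^{1/m}$ with the sign twist at a type-$I_m^*$ cusp, which is why for $\ell$ odd one only gets a well-defined form on the quotient after checking the $(-1)^{\ell+2}$ factor), Example $5.5$ records that an automorphic $F$ has a Laurent expansion $F=\sum_{k}a_k\tilde q^{k}$ (or $\sum_{k\text{ odd}}a_k\tilde q^{k/2}$ in the non-unipotent case). Now $d\tau=\tfrac{m}{2\pi i}\tfrac{d\tilde q}{\tilde q}$, so $\Omega_F^{\ell+1}$ has a pole along the cusp fiber iff $a_k\ne0$ for some $k<0$, a log pole iff the worst term is $k=0$ (the $\tfrac{d\tilde q}{\tilde q}$), and extends holomorphically iff $a_k=0$ for $k\le 0$. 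Comparing with Definition $5.4$: $F$ bounded at the cusp $\Leftrightarrow a_k=0$ for $k<0$ $\Leftrightarrow$ $\Omega_F$ has at worst a log pole there, giving (ii) for the unipotent cusps; and $F$ cusp at the cusp $\Leftrightarrow a_k=0$ for $k\le0$ $\Leftrightarrow$ $\Omega_F$ holomorphic there, giving (iii). The remaining subtlety in (ii) is the first equality $\Omega^{\ell+1}(\overline{\E}_\Gamma^{[\ell]})\langle\log\overline{\pi}_\Gamma^{-1}(\kappa_\Gamma)\rangle=\Omega^{\ell+1}(\overline{\E}_\Gamma^{[\ell]})\langle\log\overline{\pi}_\Gamma^{-1}(\kappa_\Gamma^{[\ell]})\rangle$: one must check that at a \emph{non-unipotent} cusp ($I_m^*$, $\ell$ odd) a log-pole form is automatically honestly holomorphic, which is exactly Example $5.5$'s observation that in the non-unipotent case the half-integer expansion forces bounded $\Rightarrow$ cusp (the $k=0$ term is absent from $\sum_{k\text{ odd}}$).

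The main obstacle I anticipate is not the form-level bookkeeping but the geometry of Shokurov's compactification near the singular fibers: one needs to know that $\overline{\E}_\Gamma^{[\ell]}$ is smooth with NCD over the cusps and that the local toric/Kodaira model really does have the fiber coordinates $z_i$ and base coordinate $\tilde q$ behaving as claimed, so that the Poincaré-residue/pole order of $\Omega_F^{\ell+1}$ along each component is correctly read off. For $\ell=1$ this is classical Kodaira theory ($I_m$, $I_m^*$ fibers of elliptic surfaces); for higher $\ell$ it requires Shokurov's construction \cite{So}, which we are entitled to invoke. Once the local normal forms of the total space along $\overline{\pi}_\Gamma^{-1}(\kappa_\Gamma)$ and $\pi_\Gamma^{-1}(\widetilde\varepsilon_\Gamma)$ are in hand, each of (i), (ii), (iii) reduces to the one-line pole-counting comparison above, so I would structure the proof as: (1) reduce to $\Gamma$-invariant forms on $\E^{[\ell]}$ via $(5.3)$; (2) elliptic-point extension via the degree-$3$ cover; (3) cusp-by-cusp pole count in $\tilde q$ using Example $5.5$, separating the unipotent and non-unipotent cases to get the two equalities in (ii) and then (iii).
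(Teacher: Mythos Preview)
The paper does not give a proof of this proposition at all: it is stated as a result of Shokurov and attributed to \cite{So}, with no argument supplied. So there is nothing to compare your approach against in the paper itself.

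Your outline is a reasonable sketch of how such a proof would go, and you correctly identify where the real content lies. The reduction to $\Gamma$-invariant forms on $\E^{[\ell]}$ via $(5.2)$ is exactly right, and the base-direction pole count at cusps (reading off the order of $F(\tilde q)\,\tfrac{d\tilde q}{\tilde q}$ from the Laurent/half-integer expansion of Example~5.4, and noting that in the $I_m^*$/odd-$\ell$ case the absence of a $k=0$ term forces bounded $\Rightarrow$ cusp) is clean and correct. Your handling of the first equality in (ii) via this non-unipotent observation is also right.

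The gap you yourself flag is the genuine one: the form is $F(\tau)\,dz_1\wedge\cdots\wedge dz_\ell\wedge d\tau$, and your argument only tracks the $d\tau$ factor. Over both the cusps and the elliptic points the fibers degenerate, and the whole point of Shokurov's paper is to show that in his specific smooth NCD compactification the fiberwise piece $dz_1\wedge\cdots\wedge dz_\ell$ extends to a nowhere-vanishing section of the relative dualizing sheaf (so contributes no extra zeros or poles along any component of the singular fiber). Without that input your pole-counting is heuristic; with it, it is a proof. Your elliptic-point paragraph is a bit hand-wavy for the same reason: the automorphy of $F$ alone does not settle extension of the full $(\ell+1)$-form across the resolved fiber---one still needs Shokurov's local description there. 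Since you explicitly defer to \cite{So} for this, your write-up is honest about its status as an outline rather than a self-contained proof.
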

This gives the dictionary between automorphic forms and holomorphic
forms that we will need. To start relating modular forms to Beilinson-Hodge,
make the following 

\begin{defn}
Given $F\in M_{\ell+2}(\Gamma)$ and $[\frac{r}{s}]\in\kappa_{\Gamma}^{[\ell]}$,
take any $\dmat\in SL_{2}(\ZZ)$ and set \[
\mathfrak{R}_{[\frac{r}{s}]}(F):=\lim_{\tau\to i\infty}F|_{\dmat}^{\ell+2}(\tau)=\lim_{\tau\to i\infty}\frac{F(\frac{r\tau-q}{s\tau+p})}{(s\tau+p)^{\ell+2}}\in\CC.\]

\end{defn}
This gives an interpretation of \textbf{residues}, in the sense that
the following diagram commutes:\\
\xymatrix{
\hm \left( \QQ(0) , H^{\ell+1} \left( ( \overline{\E}^{[\ell]}_{\Gamma} )^{\circ} , \QQ(\ell+1) \right) \right) 
\ar @{^(->} [d] \ar @/_7pc/[dd]_{=:\theta_{\ell+2}}
\ar @{_(->} [rrr]^{ \mspace{100mu} \oplus \frac{Res_{[\frac{r}{s}]}}{(2\pi i)^{\ell}}=:Res } & & & \oplus_{[\frac{r}{s}]\in \kappa_{\Gamma}^{[\ell]}} \QQ \ar @{^(->} [dd]   
\\ 
\Omega^{\ell+1}\left( \overline{\E}^{[\ell]}_{\Gamma} \right) \left\langle \log(\overline{\pi}^{-1}(\kappa_{\Gamma}^{[\ell]})) \right\rangle
\ar @{-->} [rrrd] 
\\ 
M_{\ell+2}(\Gamma) \ar [u]_{\cong} \ar [rrr]_{\oplus \mathfrak{R}_{[\frac{r}{s}]}=:\mathfrak{R}} 
& & & 
\oplus_{[\frac{r}{s}]\in \kappa_{\Gamma}^{[\ell]}} \CC
}\\
\\
where the vertical isomorphism sends $F\mapsto(2\pi i)^{\ell+1}\Omega_{F}$.

\begin{defn}
$M_{\ell+2}^{\QQ}(\Gamma):=\text{im}(\Theta_{\ell+2})=$modular forms
corresponding to holomorphic forms with log poles (at cuspidal fibers)
\emph{and rational periods}.
\end{defn}
By pure thought we have

\begin{prop}
(i) $\mathfrak{R}$ is surjective;

(ii) $\mathfrak{R}|_{M_{\ell+2}^{\QQ}\otimes\CC}$ is injective; and

(iii) $(M_{\ell+2}^{\QQ}\otimes\CC)\oplus S_{\ell+2}\hookrightarrow M_{\ell+2}(\Gamma)$.
\end{prop}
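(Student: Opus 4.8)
The three statements all follow from the commutative diagram preceding Definition 5.12, which identifies $\mathfrak{R}$ with $Res$ (up to normalization) on $M_{\ell+2}^{\QQ}(\Gamma)$, combined with the exact-sequence analysis of $\S5.1.4$. The plan is to prove (i) first, then (ii), then deduce (iii) as a dimension count.

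For (i): surjectivity of $\mathfrak{R}$ is surjectivity of the map $M_{\ell+2}(\Gamma)\to\oplus_{[\frac{r}{s}]\in\kappa_{\Gamma}^{[\ell]}}\CC$ sending a modular form to the tuple of its leading coefficients at the unipotent cusps. I would deduce this directly from Proposition 5.9: by part (ii) of that proposition, the space of holomorphic $(\ell+1)$-forms on $\overline{\E}_{\Gamma}^{[\ell]}$ with log poles along $\overline{\pi}_{\Gamma}^{-1}(\kappa_{\Gamma}^{[\ell]})$ is exactly $\{\Omega_F \mid F\in M_{\ell+2}(\Gamma)\}$, while part (iii) identifies the forms \emph{without} log poles with $S_{\ell+2}(\Gamma)$. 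The cokernel $M_{\ell+2}(\Gamma)/S_{\ell+2}(\Gamma)$ therefore injects into $\oplus_{y_0\in\kappa_{\Gamma}^{[\ell]}}\hm(\QQ(0), H_\ell(\hat{E}_{\Gamma,y_0}^{[\ell]})(-(\ell+1)))$ via $\oplus Res_{y_0}$ — this is the top row of the localization sequence in $\S5.1.4$, restricted to Hodge-$(0,0)$ classes after a Tate twist. By the Conclusion of $\S5.1.3$, each such $\hm$-space is one copy of $\QQ(0)$ (over $\CC$ it is one-dimensional), indexed precisely by $\kappa_{\Gamma}^{[\ell]}$. So $\dim_{\CC} M_{\ell+2}(\Gamma)/S_{\ell+2}(\Gamma)\le |\kappa_{\Gamma}^{[\ell]}|$. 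To get surjectivity of $\mathfrak{R}$ (equality), I would invoke the standard fact (or the dimension formula for modular forms with prescribed behavior at cusps) that one can build, for each unipotent cusp, an Eisenstein series nonvanishing there and vanishing at the other unipotent cusps — equivalently, that $\ker(\oplus Res)=S_{\ell+2}$ forces the cokernel to have dimension exactly $|\kappa_{\Gamma}^{[\ell]}|$ once one knows the relevant Eisenstein space surjects onto residue data. This is classical (Eisenstein series, e.g. via \cite{DS}); I would cite it rather than reprove it.

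For (ii): $M_{\ell+2}^{\QQ}(\Gamma)=\text{im}(\Theta_{\ell+2})$, and the commutative diagram shows $\mathfrak{R}$ restricted to $M_{\ell+2}^{\QQ}\otimes\CC$ factors as $M_{\ell+2}^{\QQ}\otimes\CC\xrightarrow{\Theta_{\ell+2}^{-1}} \hm(\QQ(0),H^{\ell+1}((\overline{\E}_{\Gamma}^{[\ell]})^{\circ},\QQ(\ell+1)))\otimes\CC\xrightarrow{Res}\oplus\CC$, and $Res$ is injective on the source by the displayed injection $\hm(\QQ(0),H^{\ell+1}((\overline{\E}_{\Gamma}^{[\ell]})^{\circ},\QQ(\ell+1)))\hookrightarrow\oplus_{[\frac{r}{s}]\in\kappa_{\Gamma}^{[\ell]}}\QQ$ at the end of $\S5.1.4$ (which in turn came from $\ker(\jmath_{\ell+2}^*)$ having pure weight $\ell+2$, so $\hm(\QQ(0),-)$ of it vanishes). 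Thus $\mathfrak{R}|_{M_{\ell+2}^{\QQ}\otimes\CC}$ is injective.

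For (iii): by (ii), $M_{\ell+2}^{\QQ}\otimes\CC$ embeds in $M_{\ell+2}(\Gamma)$ with $\mathfrak{R}$ injective on it; by Example 5.6 and the definition of $S_{\ell+2}$, cusp forms lie in $\ker(\mathfrak{R})$ (every leading coefficient at every cusp vanishes). Hence the sum $(M_{\ell+2}^{\QQ}\otimes\CC)+S_{\ell+2}$ inside $M_{\ell+2}(\Gamma)$ is direct: any element of the intersection is a cusp form with injective-$\mathfrak{R}$-image zero, hence zero. This gives the claimed inclusion $(M_{\ell+2}^{\QQ}\otimes\CC)\oplus S_{\ell+2}\hookrightarrow M_{\ell+2}(\Gamma)$. (In fact combining with the dimension bound from (i) one sees this is an equality, i.e. every modular form is uniquely a sum of a form with rational periods and a cusp form, but only the inclusion is asserted.)

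The main obstacle is the ``equality'' half of (i): showing $\dim_\CC M_{\ell+2}(\Gamma)/S_{\ell+2}(\Gamma)$ is not merely $\le |\kappa_\Gamma^{[\ell]}|$ but equal, i.e. that residues at the unipotent cusps can be prescribed independently. Everything else is formal consequence of Proposition 5.9 and the weight-purity facts of $\S5.1.3$–$5.1.4$ already established; the surjectivity of $\mathfrak{R}$ is where one genuinely needs input about the existence of Eisenstein series, which I would take from the classical theory. $\qed$
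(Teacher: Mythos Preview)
Your arguments for (ii) and (iii) are correct and essentially match the paper's: injectivity of $\mathfrak{R}$ on $M_{\ell+2}^{\QQ}\otimes\CC$ follows from the injectivity of $Res$ established in $\S5.1.4$, and (iii) is formal from (ii) plus $S_{\ell+2}=\ker(\mathfrak{R})$.

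For (i), however, you take a genuinely different route. You reduce to showing $\dim(M_{\ell+2}/S_{\ell+2})=|\kappa_{\Gamma}^{[\ell]}|$ and then invoke the classical existence of Eisenstein series as external input. The paper instead proves surjectivity of the dotted arrow $\Omega^{\ell+1}(\overline{\E}_{\Gamma}^{[\ell]})\langle\log\rangle\to\oplus\CC$ by a pure weight argument: in the localization sequence of $\S5.1.4$, the copies of $\QQ(0)$ in $\oplus H_{\ell}(\hat{E}^{[\ell]}_{\Gamma,[\frac{r}{s}]})(-(\ell+1))$ sit in weight $2\ell+2$, whereas the next term $\ker(\jmath^*_{\ell+2})\subset H^{\ell+2}(\overline{\E}^{[\ell]}_{\Gamma})$ is pure of weight $\ell+2<2\ell+2$. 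Since the connecting map is a morphism of MHS, these $\QQ(0)$'s map to zero and hence lie in the image of the residue map; strictness of the Hodge filtration then shows they come from $F^{\ell+1}$, i.e.\ from log forms. This is what the paper means by ``pure thought'': no Eisenstein series are needed at this stage. Your approach is valid if one is willing to import the classical theory, but it runs against the grain of the paper's logic, which later \emph{deduces} that Eisenstein series are modular forms (Corollary 6.7) from the cycle construction rather than assuming it. The paper's weight argument is both more self-contained and the intended mechanism here.
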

\begin{proof}
Since $\ker(\mathfrak{R})=S_{\ell+2}(\Gamma)$, the kernel of the
dotted arrow is actually $\Omega^{\ell+1}(\overline{E}_{\Gamma}^{[\ell]})$.
This arrow must $surject$, since the $\oplus\QQ$'s (hance $\oplus\CC$'s)
correspond to weight $2\ell+2>\ell+2$ in $\oplus H_{\ell}(\hat{E}_{\Gamma,[\frac{r}{s}]}^{[\ell]})(-(\ell+1))$
(hence cannot be absorbed by the next term in the localization sequence);
(i) follows. Injectivity of $Res$ $\implies$(ii), which $\implies$(iii).
\end{proof}
Now if $Claim$ $5.1$ holds, we have also $M_{\ell+2}^{\QQ}(\Gamma)\twoheadrightarrow\oplus\QQ$,
hence $M^{\QQ}\otimes\CC\twoheadrightarrow\oplus\CC$ (hence $\cong$),
which would imply \begin{equation}
M_{\ell+2}(\Gamma)= \left( M^{\QQ}_{\ell+2}(\Gamma) \otimes \CC \right) \oplus S_{\ell+2}(\Gamma). \\
\end{equation}

\subsubsection{Reduction to \emph{(}$\Gamma=$\emph{)}$\Gamma(N)$}

Assume $SL_{2}(\ZZ)\supset\Gamma\supset\Gamma(N)$. Since $\Gamma(N)\trianglelefteq SL_{2}(\ZZ)$,
$\Gamma(N)\trianglelefteq\Gamma$ and the coset representatives $\{\gamma_{i}\}_{i=1}^{[\Gamma:\Gamma(N)]}$
act on the sheets of the branched cover $\overline{Y}_{\Gamma(N)}\rTo^{\overline{\rho}}\overline{Y}_{\Gamma}$,
and also on\\
\xymatrix{& & & \E^{[\ell]}_{\Gamma(N)} \m \pi^{-1}_{\Gamma(N)}(\overline{\rho}^{-1}(\varepsilon_{\Gamma})) \ar [r]^{\mspace{100mu}\mathcal{P}^{[\ell]}_{\nicefrac{\Gamma(N)}{\Gamma}}} \ar [d]^{\pi_{\Gamma(N)}} & \E_{\Gamma}^{[\ell]} \ar [d]^{\pi_{\Gamma}} \\ & & & Y_{\Gamma(N)}\m \overline{\rho}^{-1}(\varepsilon_{\Gamma}) \ar [r]^{\mspace{50mu} \rho_{\nicefrac{\Gamma(N)}{\Gamma}}}  & Y_{\Gamma}\m \varepsilon_{\Gamma} .
}\\
\\
We can interpret the action of this $\mathcal{P}$ on holomorphic
forms (and eventually, algebraic cycles) in terms of modular forms
and residues:\\
\xymatrix{
\Omega^{\ell+1} \left( \overline{\E}^{[\ell]}_{\Gamma(N)} \right) \left\langle \log \overline{\pi}^{-1}_{\Gamma(N)} (\overline{\rho}^{-1}(\varepsilon_{\Gamma})\cup \kappa_{\Gamma(N)}) \right\rangle \ar @<1ex> [r]^{\mspace{80mu} \mathcal{P}_*}  & \Omega^{\ell+1}\left( \overline{\E}^{[\ell]}_{\Gamma}\right) \left\langle \log \overline{\pi}^{-1}_{\Gamma} (\kappa_{\Gamma}) \right\rangle \ar @<1ex> [l]^{\mspace{80mu} \mathcal{P}^*}
\\
M_{\ell+2}(\Gamma(N)) \ar @<1ex> [r]^{F(\tau)\mapsto \sum_i F|_{\gamma_i}^{\ell+2}(\tau)} \ar [u]_{\cong} \ar [d]^{\mathfrak{R}} &  M_{\ell+2}(\Gamma) \ar [u]_{\cong} \ar @<1ex> [l]^{F(\tau)\leftarrow F(\tau)} \ar [d]^{\mathfrak{R}}
\\
\Upsilon_2(N):=\oplus_{\kappa_{_{\Gamma(N)}}^{[\ell]}}\CC \mspace{80mu} \ar @<1ex> [r]^{\text{trace}\,\,\mathsf{T}^{[\ell]}_{\nicefrac{\Gamma(N)}{\Gamma}}}_{\text{(of }\CC\text{-valued functions on cusps)}} & \mspace{80mu} \oplus_{\kappa_{\Gamma}^{[\ell]}}\CC =: \Upsilon_2(\Gamma) \ar @<1ex> [l]^{\text{pull-back }\mathsf{P}^{[\ell]}_{\nicefrac{\Gamma(N)}{\Gamma}}}
}\\
\\
More precisely (for the {}``trace''): given $[\frac{r_{0}}{s_{0}}]\in\kappa_{\Gamma}^{[\ell]}$,
the image of an element $\{\beta:\,\kappa_{\Gamma(N)}^{[\ell]}\to\CC\}\in\Upsilon_{2}(N)$
takes value $(\mathsf{T}_{*}\beta)([\frac{r_{0}}{s_{0}}])=\sum_{[\frac{r}{s}]\in\overline{\rho}^{-1}([\frac{r_{0}}{s_{0}}])}\text{ord}_{[\frac{r}{s}]}(\overline{\rho})\cdot\beta([\frac{r}{s}])$.
This map is surjective since unipotent cusps cover unipotent cusps;
though when $\ell$ is odd, unipotent ($[\frac{r}{s}]\in\kappa_{\Gamma(N)}^{[\ell]}$)
can map to non-unipotent ($[\frac{r}{s}]\in\kappa_{\Gamma}^{I^{*}}$),
in which case the value is lost.

The main point is that \[
\text{Claim 5.1 (hence Beilinson-Hodge) for }\Gamma(N)\,\,\implies\,\,\text{Claim 5.1 for }\Gamma,\]
since the trace surjects and one can use $\mathcal{P}_{*}$ on higher
Chow cycles, to push them from $\E_{\Gamma(N)}^{[\ell]}$ to $\E_{\Gamma}^{[\ell]}$.
We write $\overline{Y}_{\Gamma(N)}=:\overline{Y}(N)$, $\kappa_{\Gamma(N)}=:\kappa(N)$,
etc. for simplicity.

\emph{Why do we want to do make this reduction?} $Y(N)$ is the moduli
space of elliptic curves with {}``completely marked $N$-torsion''
(in particular, 2 marked generators), so $\E(N)$($:=\E_{\Gamma(N)}$)
has $N^{2}$ $N$-torsion sections --- ideal for building relative
higher Chow cycles (from functions with divisors supported on that
$N$-torsion). Also, all cusps are (unipotent) of type $I_{N}$.%
\footnote{One reason why we exclude $N=2$ is that this is $false$ --- there
are two cusps of type $I_{2}$ and one of type $I_{2}^{*}$.%
} The downside is that $\overline{Y}(N)$ has genus zero only for $N=(2,)3,4,5$. 

For the cusps, writing $\mathfrak{G}(N)$ for the set of subgroups
of $(\nicefrac{\ZZ}{N\ZZ})^{2}$ isomorphic to $\nicefrac{\ZZ}{N\ZZ}$,
we have $\kappa^{[\ell]}(N)=$\[
\kappa(N)=\frac{\left\{ \left.(-s,r)\in(\nicefrac{\ZZ}{N\ZZ})^{2}\,\right|\,|\left\langle (-s,r)\right\rangle |=N\right\} }{\left\langle (-s,r)\sim(s,-r)\right\rangle }=\bigcup_{G\in\mathfrak{G}(N)}\left.G^{*}\right/\left\langle \pm1\right\rangle \]
\[
\cong PSL_{2}(\ZZ/N\ZZ)\left/\left\langle \left(\begin{array}{cc}
1 & *\\
0 & 1\end{array}\right)\right\rangle \right.\,;\]
since each $G\in\mathfrak{G}(N)$ has $|G^{*}|=\phi_{\text{euler}}(N)$,\[
|\kappa(N)|=\frac{\phi_{\text{euler}}(N)}{2}\cdot|\mathfrak{G}(N)|=\frac{N}{2}\prod_{p|N}\left(1-\frac{1}{p}\right)\cdot N\prod_{p|N}\left(1+\frac{1}{p}\right)=\frac{N^{2}}{2}\prod_{p|N}\left(1-\frac{1}{p^{2}}\right).\]
Now given a field $K\subseteq\CC$ set%
\footnote{notationally, we drop $m=1$ or $K=\CC$%
} \[
\Phi_{m}^{K}(N):=\left\{ K\text{-valued functions on }(\ZZ/N\ZZ)^{m}\right\} \]
\[
\Phi_{m}^{K}(N)_{\circ}:=\left\{ \varphi\in\Phi_{m}^{K}(N)\,|\,\varphi(\bar{0},\ldots,\bar{0})=0)\right\} \]
\[
\Phi_{m}^{K}(N)^{\circ}:=\ker\left\{ \text{augmentation map:}\,\Phi_{m}^{K}(N)\to K\right\} .\]
Ultimately, $\Phi_{2}^{K}(N)^{\circ}$ will be divisors ($\otimes\QQ$)
of degree $0$ on $N$-torsion. 

Choose once and for all a representative $(-s,r)$ for each cusp $\sigma\in\kappa(N)$
(s.t. $\sigma=[\frac{r}{s}]$) and a matrix $\cmat\in SL_{2}(\ZZ)$.
Writing \[
\begin{array}{cc}
\pi_{[\frac{r}{s}]}:(\ZZ/N\ZZ)^{2}\twoheadrightarrow\ZZ/N\ZZ\,\,\,,\,\,\,\,\,\,\,\, & \iota_{[\frac{r}{s}]}:\ZZ/N\ZZ\hookrightarrow(\ZZ/N\ZZ)^{2},\\
(m,n)=a(p,q)+b(-s,r)\mapsto a\,\,\,,\,\,\,\,\,\,\,\, & a\mapsto a(-s,r),\end{array}\]
one has \[
(\pi_{[\frac{r}{s}]})_{*}:\Phi_{2}(N)^{(\circ)}\rOnto^{\text{trace}}\Phi(N)^{(\circ)}\,\,\,,\,\,\,\,\,\,\,\,(\iota_{[\frac{r}{s}]})^{*}:\Phi_{2}(N)_{(\circ)}\rOnto^{\text{pullback}}\Phi(N)_{(\circ)},\]
etc.

\subsection{Divisors with $N$-torsion support}

Here we collect together related material on finite Fourier transforms,
$L$-functions, and meromorphic functions on $\E(N)$ with divisors
supported on the $N$-torsion sections. The technical {}``$(p,q)$-vertical''
subsection will be used in $\S7$ to compute the $AJ$ map.

\subsubsection{Some Fourier theory}

We define Fourier transforms \[
\widehat{\,\,}:\,\Phi(N)^{(\circ)}\rTo^{\cong}\Phi(N)_{(\circ)}\]
\[
\varphi(a)\mapsto\widehat{\varphi}(k):=\sum_{a\in\nicefrac{\ZZ}{N\ZZ}}\varphi(a)e^{-\frac{2\pi i}{N}ka}\]
\[
\widehat{\,\,}:\Phi_{2}(N)^{(\circ)}\rTo^{\cong}\Phi_{2}(N)_{(\circ)}\]
\[
\varphi(m,n)\mapsto\widehat{\varphi}(\mu,\eta):=\sum_{(m,n)\in(\nicefrac{\ZZ}{N\ZZ})^{2}}\varphi(m,n)e^{\frac{2\pi i}{N}(\mu n-\eta m)}.\]
One can show (easily) that for $\varphi_{0}\in\Phi(N)$, $\varphi\in\Phi_{2}(N)$
\begin{equation}
\frac{1}{N} \cdot \widehat{\pi^*_{[\frac{r}{s}]}\varphi_0} = (\iota_{[\frac{r}{s}]})_* \widehat{\varphi_0} ,\\
\end{equation} \begin{equation} 
\widehat{(\pi_{[\frac{r}{s}]})_* \varphi} = \iota_{[\frac{r}{s}]}^* \widehat{\varphi} , \\
\end{equation} and also $(\pi_{[\frac{r}{s}]}^{*}\widehat{\varphi_{0}})(\cdot)=\widehat{(\iota_{[\frac{r}{s}]})_{*}\varphi_{0}}(-\cdot)$.%
\footnote{Note that for $N$ prime, one has (dividing by $\frac{\phi_{\text{euler}}(N)}{2}=$number
of cusps {}``in'' each $\ZZ/N\ZZ$ subgroup) $\widehat{\varphi}=\frac{2}{\phi_{\text{euler}}(N)}\sum_{\sigma\in\kappa(N)}(\iota_{[\frac{r}{s}]})_{*}(\iota_{[\frac{r}{s}]})^{*}\widehat{\varphi}$
$\implies$ $\varphi=\frac{2}{N\cdot\phi_{\text{euler}}(N)}\sum_{\sigma}(\pi_{[\frac{r}{s}]})^{*}(\pi_{[\frac{r}{s}]})_{*}\varphi$
for $\varphi\in\Phi_{2}(N)^{\circ}$ but this doesn't hold for $N$
$not$ prime.%
} Finally, if $\mu_{a}:\ZZ/N\ZZ\rTo^{\cong}\ZZ/N\ZZ$ is multiplication
(mod $N$) by $a\in(\ZZ/N\ZZ)^{*}$, one has \begin{equation}
\widehat{\mu_a^*\varphi_0} = \mu_{a^{-1}}^*\widehat{\varphi_0} . \\
\end{equation}

One wonders why undergraduates don't learn these discrete Fourier
transforms in linear algebra (or at least before the continuous/$L^{2}$/$L^{1}$
theory), considering that future mathematicians might use them in
number theory and engineers in MATLAB. Moreover, together with Bernoulli
numbers and polynomials, they have a very attractive application to
computing series yielding rational multiples of powers of $\pi$.
Recall that the Bernoulli numbers \[
B_{0}=1,\, B_{1}=-\frac{1}{2},\, B_{2}=\frac{1}{6},\, B_{3}=0,\, B_{4}=\frac{-1}{30},\, B_{5}=0,\,\text{etc.}\]
satisfy $\sum_{k=0}^{\infty}B_{k}\frac{t^{k}}{k!}=\frac{te^{t}}{e^{t}-1}.$
If we define Bernoulli polynomials \[
B_{k}(x):=\sum_{j=0}^{k}{k \choose j}B_{j}x^{k-j}\]
(e.g., $B_{3}(x)=x^{3}-\frac{3}{2}x^{2}+\frac{1}{2}x$, $B_{4}(x)=x^{4}-2x^{3}+x^{2}-\frac{1}{30}$)
then they consequently satisfy $\sum_{k=0}^{\infty}B_{k}(x)\frac{t^{k}}{k!}=\frac{te^{t(1+x)}}{e^{t}-1}.$
One also has (for $k\geq2$) $B_{k}=\left\{ \begin{array}{c}
\frac{-k!}{(2\pi i)^{k}}2\zeta(k),\,\, k\text{ even}\\
\,\,\,\,\,\,\,\,0,\,\,\,\,\,\,\,\,\,\,\, k\text{ odd}\end{array}\right.$ and correspondingly $B_{k}(x)=\frac{(-1)^{k-1}k!}{(2\pi i)^{k}}\sum_{m\in\ZZ}'\frac{e^{-2\pi imx}}{m^{k}}.$

For us the key calculation is: given $\varphi\in\Phi(N)$ (and $\ell\geq1$),\[
\sum_{a=0}^{N-1}\varphi(a)B_{\ell+2}(\frac{a}{N})\,\,=\,\,\frac{(-1)^{\ell+1}(\ell+2)!}{(2\pi i)^{\ell+2}}\sum_{a=0}^{N-1}\varphi(a)\sum_{m\in\ZZ}{}^{'}\frac{e^{-2\pi im\frac{a}{N}}}{m^{\ell+2}}\]
\[
=\,\,\frac{(-1)^{\ell+1}(\ell+2)!}{(2\pi i)^{\ell+2}}\sum_{m\in\ZZ}{}^{'}\frac{1}{m^{\ell+2}}\begin{array}[t]{c}
\underbrace{\sum_{a=0}^{N-1}\varphi(a)e^{-\frac{2\pi i}{N}ma}}\\
\widehat{\varphi}(m)\end{array}\]
\[
=\,\,\frac{(-1)^{\ell+1}(\ell+2)!}{(2\pi i)^{\ell+2}}\tilde{L}(\widehat{\varphi},\ell+2),\]
where $\tilde{L}(\widehat{\varphi},\ell+2):=\sum_{m\in\ZZ}'\frac{\widehat{\varphi}(m)}{m^{\ell+2}}$
(thinking of $\widehat{\varphi}$ as an $N$-periodic function on
$\ZZ$). Note that, by this calculation, if $\varphi\in\Phi^{\QQ}(N)$
then $regardless$ of rationality of $\widehat{\varphi}$, $\tilde{L}(\widehat{\varphi},\ell+2)$
is always in $\QQ(\ell+2)$.

\begin{example}
(for the undergraduates) $N=4$, $\varphi=0,1,0,-1;\ldots$ $\FT$
$\widehat{\varphi}=0,2i,0,-2i;\ldots$. Say we want to compute $1-\frac{1}{3^{3}}+\frac{1}{5^{3}}-\frac{1}{7^{3}}+\cdots=\sum_{M\geq0}\frac{(-1)^{M}}{(2M+1)^{3}}.$
This is $\frac{1}{2}\cdot\frac{1}{(-2i)}\cdot\sum_{m\in\ZZ}{}'\frac{\widehat{\varphi}(m)}{m^{3}}=\frac{-1}{4i}\cdot\frac{(2\pi i)^{3}}{(-1)^{2}3!}\sum_{a=0}^{3}\varphi(a)B_{3}(\frac{a}{4})=\frac{-8\pi^{3}i}{-4i\cdot6}(B_{3}(\frac{1}{4})-B_{3}(\frac{3}{4}))=\frac{\pi^{3}}{3}\left(\frac{3}{64}-\left(\frac{-3}{64}\right)\right)=\frac{\pi^{3}}{32}.$
Much more complicated rational numbers (than $\frac{1}{32}$) usually
arise.
\end{example}

\subsubsection{The horospherical map}

Now we establish the central number-theoretic Lemma 5.9 which will
ultimately translate to {}``surjectivity of residues of higher Chow
cycle classes onto the cusps,'' hence Beilinson-Hodge. Define for
$\sigma\in\kappa(N)$, $\QQ\subseteq K\subseteq\CC$\[
\mathsf{H}_{\sigma}^{[\ell]}:\Phi_{2}^{K}(N)^{\circ}\to K\mspace{200mu}\]
\[
\mspace{200mu}\varphi\mapsto\frac{(-1)^{\ell}(\ell+1)}{(\ell+2)!}\sum_{a=0}^{N-1}\left((\pi_{\sigma})_{*}\varphi\right)(a)\cdot B_{\ell+2}\left(\frac{a}{N}\right).\]
If the following is true for $K=\CC$ then it holds for any $K$:

\begin{lem}
$\left(\oplus_{\sigma\in\kappa(N)}\mathsf{H}_{\sigma}^{[\ell]}\right):\,\Phi_{2}^{K}(N)^{\circ}\to\Upsilon_{2}^{K}(N)$
is surjective.
\end{lem}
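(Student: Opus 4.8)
The plan is to reduce the surjectivity of $\oplus_{\sigma\in\kappa(N)}\mathsf{H}_\sigma^{[\ell]}$ to a statement about $1$-variable functions via the trace maps $(\pi_\sigma)_*$, and then invoke the finite Fourier theory of $\S5.2.1$. First I would observe that each $\mathsf{H}_\sigma^{[\ell]}$ factors as $\varphi\mapsto (\pi_\sigma)_*\varphi\mapsto \frac{(-1)^\ell(\ell+1)}{(\ell+2)!}\sum_{a=0}^{N-1}\left((\pi_\sigma)_*\varphi\right)(a)B_{\ell+2}(a/N)$, i.e. it is the composition of the trace to $\Phi^K(N)^\circ$ with a fixed linear functional $\lambda_{\ell+2}\colon \Phi^K(N)^\circ\to K$, $\psi\mapsto c_\ell\sum_a \psi(a)B_{\ell+2}(a/N)$. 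By the key calculation of $\S5.2.1$, $\lambda_{\ell+2}(\psi)$ is (up to the nonzero constant $c_\ell\cdot\frac{(-1)^{\ell+1}(\ell+2)!}{(2\pi i)^{\ell+2}}$) equal to $\tilde L(\widehat\psi,\ell+2)=\sum_{m\in\ZZ}{}'\widehat\psi(m)/m^{\ell+2}$, so understanding the image amounts to understanding which tuples $(\tilde L(\widehat{(\pi_\sigma)_*\varphi},\ell+2))_{\sigma}$ can occur as $\varphi$ ranges over $\Phi_2^K(N)^\circ$.

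Next I would dualize. The target $\Upsilon_2^K(N)=\oplus_{\sigma\in\kappa(N)}K$ is finite-dimensional, so surjectivity is equivalent to: no nonzero linear functional on $\Upsilon_2^K(N)$ annihilates the image. A functional is a tuple $(c_\sigma)_{\sigma\in\kappa(N)}$, and it annihilates the image iff $\sum_\sigma c_\sigma\,\mathsf{H}_\sigma^{[\ell]}(\varphi)=0$ for all $\varphi\in\Phi_2^K(N)^\circ$. Using the Fourier identity $\widehat{(\pi_{[r/s]})_*\varphi}=\iota_{[r/s]}^*\widehat\varphi$ from $(5.7)$, I can rewrite $\mathsf H_\sigma^{[\ell]}(\varphi)$ purely in terms of $\widehat\varphi$ evaluated along the line $\ZZ\cdot(-s,r)$ in $(\ZZ/N\ZZ)^2$, weighted by $1/m^{\ell+2}$. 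Since $\widehat{(-)}$ is an isomorphism $\Phi_2^K(N)^\circ\xrightarrow{\cong}\Phi_2^K(N)_\circ$ (functions vanishing at the origin), the vanishing condition becomes: the combination $\sum_\sigma c_\sigma$ (suitable $L$-value along the $\sigma$-line) vanishes for every function on $(\ZZ/N\ZZ)^2\setminus\{0\}$. Testing against delta functions at individual nonzero points $(m_0,n_0)$, this forces, for each such point, $\sum_{\sigma\ni (m_0,n_0)} c_\sigma/(\text{coordinate})^{\ell+2}=0$ — and because a nonzero point of $(\ZZ/N\ZZ)^2$ lies on at most one cyclic line $\langle(-s,r)\rangle$ (the lines through $0$ partition the primitive directions), each such equation involves a single $c_\sigma$ with a nonzero coefficient, giving $c_\sigma=0$.

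The main obstacle, and the step requiring genuine care rather than formal manipulation, is the bookkeeping of which cusp representatives $(-s,r)$ a given nonzero element of $(\ZZ/N\ZZ)^2$ lies on, and the verification that the relevant weight $1/m^{\ell+2}$ (or the $B_{\ell+2}$-evaluation, before passing to Fourier side) is genuinely nonzero so that the delta-function test is not vacuous. In particular one must handle the non-cyclic or non-primitive elements of $(\ZZ/N\ZZ)^2$ (those generating a proper subgroup) — these do not correspond to cusps in $\kappa(N)$, but $\widehat\varphi$ is a function on all of $(\ZZ/N\ZZ)^2$, so I must check that the image of the pre-composition $\widehat{(-)}\colon \Phi_2^K(N)^\circ\to\Phi_2^K(N)_\circ$ is still large enough (it is all of $\Phi_2^K(N)_\circ$) and that restricting attention to the primitive directions suffices. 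Once the partitioning of directions is set up correctly, the argument is a short linear-algebra dualization; I would present it in that order (factor through trace, pass to Fourier side via $(5.7)$, dualize, test against delta functions, conclude $c_\sigma=0$).
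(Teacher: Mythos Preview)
Your reduction to the Fourier side and the dualization are both sound, and you correctly identify that $\mathsf{H}_\sigma^{[\ell]}(\varphi)$ depends only on $\iota_\sigma^*\widehat\varphi$, i.e.\ on the values of $\widehat\varphi$ along the cyclic subgroup $G=\langle(-s,r)\rangle\subset(\ZZ/N\ZZ)^2$. But the step ``each such equation involves a single $c_\sigma$ with a nonzero coefficient'' is where the argument breaks. A cusp $\sigma$ is not the same as a cyclic subgroup $G\in\mathfrak{G}(N)$: each $G$ of order $N$ has $\phi_{\text{euler}}(N)$ generators, hence $\phi_{\text{euler}}(N)/2$ cusps sitting inside it. So when you test against $\delta_{(m_0,n_0)}$ with $(m_0,n_0)$ primitive, the resulting equation involves \emph{all} the $c_\sigma$ for $\sigma$ belonging to the unique $G$ containing $(m_0,n_0)$, not just one. (For $N\ge 5$ this is more than one cusp; your argument only works as written for $N=3,4$.) What you actually get, for each $G$, is a square linear system of size $\phi_{\text{euler}}(N)/2$, whose matrix has entries of the form $\sum_{m\equiv a\,(N)}m^{-(\ell+2)}$ indexed by $a\in(\ZZ/N\ZZ)^*/\{\pm 1\}$, and you must show this matrix is nonsingular.

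That nonsingularity is exactly the content of the paper's Sublemma, and it is \emph{not} a formal fact: the paper proves it by diagonalizing via Dirichlet characters of the correct parity (even or odd according to $\ell$), so that the matrix becomes diagonal with entries proportional to $L(\chi,\ell+2)$, and then invokes the nonvanishing of these Dirichlet $L$-values at $\ell+2\ge 3$. This analytic input is essential and is entirely absent from your sketch; a purely algebraic delta-function argument cannot succeed here, because the surjectivity genuinely depends on $L(\chi,\ell+2)\ne 0$. Your handling of non-primitive points is also incomplete (for composite $N$ such points can lie on several subgroups $G$), but that issue is secondary: once you organize the problem subgroup-by-subgroup and restrict to primitive test points, the real work is the $L$-value nonvanishing.
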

\begin{proof}
Let ($\Phi(N)^{\circ}\supset$) \[
\Upsilon^{[\ell]}(N):=\left\{ \text{functions on }(\nicefrac{\ZZ}{N\ZZ})^{*}\text{ satisfying }f(-y)=(-1)^{\ell}f(y)\right\} \]
\[
\cong\left\{ \text{functions on those cusps }(-s,r)\text{ "contained" in any one }G\in\mathfrak{G}(N)\right\} .\]
Writing \[
\mathsf{L}^{[\ell]}:\,\Phi(N)_{\circ}\to\CC\mspace{100mu}\]
\[
\mspace{100mu}\xi\mapsto-\frac{\ell+1}{(2\pi i)^{\ell+2}}\tilde{L}(\xi,\ell+2),\]
by results of $\S5.2.1$ we have\[
\oplus_{\sigma}\mathsf{H}_{\sigma}^{[\ell]}\,=\,\oplus_{\sigma}\mathsf{L}^{[\ell]}\circ\widehat{\,\,}\circ(\pi_{\sigma})_{*}\,=\,\oplus_{\sigma}\mathsf{L}^{[\ell]}\circ\iota_{\sigma}^{*}\circ\widehat{\,\,}\,=\,\oplus_{_{G\in\mathfrak{G}(N)}}\oplus_{_{a\in(\nicefrac{\ZZ}{N\ZZ})^{*}}}\mathsf{L}^{[\ell]}\circ\mu_{a}^{*}\circ\iota_{\sigma_{G}}^{*}\circ\widehat{\,\,},\]
for $\sigma_{G}$ some choice of generator $(-s,r)$ for each $G\subset(\ZZ/N\ZZ)^{2}$.
Obviously \[
(\oplus_{_{G\in\mathfrak{G}(N)}}\Upsilon^{[\ell]}(N))\subseteq\text{image}\{\oplus_{_{G\in\mathfrak{G}(N)}}\iota_{\sigma_{G}}^{*}:\,\Phi_{2}(N)_{\circ}\to\oplus_{_{\mathfrak{G}(N)}}\Phi(N)_{\circ}\},\]
and $\widehat{\,\,}:\,\Phi_{2}(N)^{\circ}\to\Phi_{2}(N)_{\circ}$
is also obviously surjective; so it will suffice to check the following$\vspace{2mm}$\\
$\underline{Sublemma}:$ $\left(\oplus_{_{a\in(\nicefrac{\ZZ}{N\ZZ})^{*}}}\mathsf{L}^{[\ell]}\circ\mu_{a}^{*}\right)\left|_{_{\Upsilon(N)}}\right.:\,\Upsilon^{[\ell]}(N)\left(\subseteq\Phi(N)_{\circ}\right)\mapf\Upsilon^{[\ell]}(N).$$\vspace{2mm}$\\
$\underline{Pf.}:$ Working over $\CC$, $\Upsilon^{[\ell]}(N)$ is
spanned (depending on $\ell$) by even or odd Dirichlet characters
(mod $N$) $\{\chi_{i}\}_{i=1}^{\frac{1}{2}\phi_{\text{euler}}(N)}$.
These satisfy (by definition) $(\mu_{a}^{*}\chi)(b)=\chi(a)\cdot\chi(b)$.
So $\left(\mathsf{L}^{[\ell]}\circ\mu_{a}^{*}\right)(\chi_{i})=\chi_{i}(a)\cdot\mathsf{L}^{[\ell]}(\chi_{i})$,
and by \cite[sec. VII.2]{Ne1} $\tilde{L}(\chi_{i},\ell+2)\neq0$.
We may therefore $divide$ $\frac{\chi_{i}(\cdot)}{\mathsf{L}^{[\ell]}(\chi_{i})}=:\tilde{\chi}_{i}(\cdot)$,
so that $\left(\mathsf{L}^{[\ell]}\circ\mu_{a}^{*}\right)(\tilde{\chi}_{i})=\chi_{i}(a)$.
Thus each $\chi_{i}$ appears in the image (in $\Upsilon^{[\ell]}(N)$)
of this map, and since they span $\Upsilon^{[\ell]}(N)$ we are done.
\end{proof}
We can be more explicit and produce a {}``rational basis for the
surjection'' of lemma 5.9 (onto $\Upsilon^{[\ell]}(N)$).

\begin{prop}
There exists a unique {}``fundamental vector'' $\varphi_{N}^{[\ell]}\in\Phi^{\QQ}(N)^{\circ}$
satisfying $\mathsf{H}_{\sigma'}^{[\ell]}\left(\frac{1}{N}\pi_{\sigma}^{*}(\varphi_{N}^{[\ell]})\right)=\delta_{\sigma\sigma'}\,\,(\forall\sigma,\sigma'\in\kappa(N))$
.
\end{prop}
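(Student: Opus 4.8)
The plan is to unwind the composition of maps that Lemma 5.9 and the surrounding Fourier theory have packaged, and exhibit the ``fundamental vector'' by straightforward linear algebra over $\QQ$. First I would note that the map $\varphi_0 \mapsto \left(\mathsf{H}_{\sigma'}^{[\ell]}\bigl(\tfrac{1}{N}\pi_\sigma^*\varphi_0\bigr)\right)_{\sigma,\sigma'}$ is, by $(5.3)$ together with the identities $(\iota_\sigma)_*\widehat{\varphi_0} = \tfrac1N\widehat{\pi_\sigma^*\varphi_0}$ and $(\pi_{\sigma'})_*\iota_\sigma{}_* = $ (multiplication by $a(\sigma,\sigma')$ on the relevant coordinate, or $0$ when the cusps lie in distinct $G\in\mathfrak G(N)$), reducible to the operator $\oplus_{a\in(\ZZ/N\ZZ)^*}\mathsf L^{[\ell]}\circ\mu_a^*$ restricted to $\Upsilon^{[\ell]}(N)$, precisely the one shown to be an isomorphism in the Sublemma inside the proof of Lemma 5.9. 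So the content of the Proposition is: the $|\kappa(N)|\times|\kappa(N)|$ rational matrix whose $(\sigma,\sigma')$ entry is $\mathsf H_{\sigma'}^{[\ell]}(\tfrac1N\pi_\sigma^*(\cdot))$, viewed as a linear map $\Phi^{\QQ}(N)^\circ \to \Upsilon_2^{\QQ}(N)\cong\oplus_{\kappa(N)}\QQ$, is surjective; uniqueness of $\varphi_N^{[\ell]}$ then amounts to saying that after restricting the domain appropriately this map is in fact an isomorphism of $\QQ$-vector spaces of the correct (equal) dimension.

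The key steps, in order: (1) Identify the target. The conditions $\mathsf H_{\sigma'}^{[\ell]}(\tfrac1N\pi_\sigma^*\varphi_N^{[\ell]}) = \delta_{\sigma\sigma'}$ for all $\sigma'$ say that $\varphi_N^{[\ell]}$, pulled back and averaged over the cusps in one fixed $G_0\in\mathfrak G(N)$, is sent by $\oplus_a \mathsf L^{[\ell]}\circ\mu_a^*$ to the indicator function of the chosen generator, and is sent to $0$ on cusps outside $G_0$ (automatic, since $\iota_{\sigma'}^*\pi_\sigma^*$ kills the relevant piece when $\sigma,\sigma'$ generate different subgroups). (2) Reduce to a single $G$. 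Because $\Phi^\QQ(N)^\circ$ is visibly the target of the trace $(\pi_{\sigma})_*$, and the whole system decouples over $\mathfrak G(N)$, it suffices to solve the problem inside the $\phi_{\mathrm{euler}}(N)/2$-dimensional space $\Upsilon^{[\ell]}(N)$ for one $G$. (3) Invert over $\QQ$ using rational characters. Here is the crux: the Sublemma was phrased over $\CC$ using Dirichlet characters $\chi_i$ and the renormalization $\tilde\chi_i = \chi_i/\mathsf L^{[\ell]}(\chi_i)$, but to get a vector in $\Phi^\QQ(N)^\circ$ one must observe that $\mathsf L^{[\ell]}(\chi)$ equals (up to the explicit rational factor $-(\ell+1)/(2\pi i)^{\ell+2}$) the value $\tilde L(\chi,\ell+2)$, which by the Bernoulli-polynomial computation of $\S5.2.1$ lies in $\QQ(\ell+2)\cdot(\text{Gauss sum})$, so that the matrix $(\chi_i(a))_{a,i}$ is invertible with entries in a cyclotomic field and its inverse, paired against the $\mathsf L^{[\ell]}(\chi_i)$, produces honest rational-valued functions on $(\ZZ/N\ZZ)^*$; the Galois-equivariance of $a\mapsto\chi_i(a)\mathsf L^{[\ell]}(\chi_i)$ under $\mathrm{Gal}(\QQ(\zeta_N)/\QQ)$ shows the resulting solution vector is fixed by Galois, hence in $\Phi^\QQ(N)^\circ$. (4) Uniqueness: the domain can be cut down to $\Upsilon^{[\ell]}(N)$ (the rest of $\Phi^\QQ(N)^\circ$ maps to zero), on which $\oplus_a\mathsf L^{[\ell]}\circ\mu_a^*$ is a $\QQ$-linear isomorphism onto $\Upsilon^{[\ell]}(N)\cong\Upsilon_2^\QQ(N)$ of equal dimension, so the preimage of the $\delta_{\sigma\sigma'}$-system is a single vector.

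I expect the main obstacle to be step (3): making the passage from the complex-analytic invertibility in the Sublemma to a genuinely $\QQ$-rational inverse, i.e. checking that the non-vanishing $\tilde L(\chi,\ell+2)\neq 0$ of \cite{Ne1} together with the Bernoulli formula yields not just invertibility over $\CC$ but an inverse matrix whose interaction with $\mu_a^*$ lands one back in $\Phi^\QQ(N)^\circ$ rather than merely $\Phi^\CC(N)^\circ$. Concretely, one has to track the Gauss sums carefully: $\widehat{\chi}$ for a primitive character is $\overline{\chi}$ times a Gauss sum, and the combination that appears in $\mathsf L^{[\ell]}\circ\mu_a^*$ is arranged (via the Bernoulli identity $\sum_a \varphi(a)B_{\ell+2}(a/N) \in \QQ(\ell+2)$ whenever $\varphi\in\Phi^\QQ(N)$) so that these transcendental factors cancel against the $(2\pi i)^{\ell+2}$; verifying this cancellation and the Galois-stability is the only place where real work beyond bookkeeping is required. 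Everything else — the decoupling over $\mathfrak G(N)$, the dimension count $|\kappa(N)| = \tfrac12\phi_{\mathrm{euler}}(N)\cdot|\mathfrak G(N)|$, and the translation of the $\delta_{\sigma\sigma'}$ conditions into the language of the Sublemma — is routine given $\S\S5.1.6$–$5.2.2$.
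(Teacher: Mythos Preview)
Your overall strategy is sound and close to the paper's, but the paper's execution is considerably simpler, and your sketch has two gaps worth flagging.

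First, the rationality step (3): you propose to solve over $\CC$ using characters and then descend via Galois equivariance of the combination $a\mapsto\chi_i(a)\mathsf{L}^{[\ell]}(\chi_i)$. The paper avoids this entirely. It observes that the conditions $\mathsf{L}^{[\ell]}(\widehat{\varphi})=1$, $\mathsf{L}^{[\ell]}(\widehat{\mu_a^*\varphi})=0$ ($a\not\equiv\pm1$), and $\widehat{\varphi}(n)=0$ for $\gcd(n,N)>1$ translate \emph{directly}, via the Bernoulli identity of $\S5.2.1$, into $\QQ$-linear equations in the values $\varphi(0),\ldots,\varphi(N-1)$ with rational coefficients: namely $\sum_c\varphi(ac)B_{\ell+2}(c/N)=\text{(rational)}$ and $\sum_b\varphi(a+bN/r)=0$. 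Since the Sublemma guarantees a solution over $\CC$, and the system is $\QQ$-linear, there is a solution over $\QQ$ --- no Galois descent, no Gauss sums, no tracking of $(2\pi i)^{\ell+2}$ factors. This is the whole point of introducing the $\mathsf{H}^{[\ell]}_\sigma$ notation via Bernoulli polynomials in the first place.

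Second, two genuine gaps. Your step (2) claims $\iota_{\sigma'}^*\iota_{\sigma*}$ vanishes when $\sigma,\sigma'$ lie in distinct $G\in\mathfrak{G}(N)$; this is only automatic when $N$ is prime. For composite $N$ the subgroups $G,G'$ can intersect nontrivially, and one needs the extra condition $\widehat{\varphi}|_{(\ZZ/N\ZZ)\setminus(\ZZ/N\ZZ)^*}=0$ (the paper's condition (iii)) to force the vanishing. Your step (4) is also too loose: it is not true that ``the rest of $\Phi^\QQ(N)^\circ$ maps to zero'' without further constraints. The paper pins down uniqueness by imposing, in addition to (i)--(iii), the parity condition $\varphi(-a)=(-1)^\ell\varphi(a)$; together these cut the domain to a space on which the map is bijective.
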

\begin{proof}
The proof of the sublemma implies the existence of $\varphi\in\Phi(N)^{\circ}$
with $(i)$ $\mathsf{L}^{[\ell]}(\widehat{\varphi})=1$, $(ii)$$\mathsf{L}^{[\ell]}(\widehat{\mu_{a}^{*}\varphi})=0$
$\forall a\in(\ZZ/N\ZZ)^{*}\m\{\pm1\}$, and $(iii)$ $\widehat{\varphi}(n)=0$
$\forall n$ not relatively prime to $N$. If we ask that $(iv)$
$\varphi(-a)=(-1)^{\ell}\varphi(a)$ ($\forall a$), then $\varphi$
is uniquely determined. Conditions $(i)$-$(iii)$ translate to (somewhat
redundantly expressed) $\QQ$-linear conditions on $\varphi$:

$(i')$ $1=\frac{(-1)^{\ell}(\ell+1)}{(\ell+2)!}\sum_{c=0}^{N-1}\varphi(c)B_{\ell+2}\left(\frac{c}{N}\right)$

$(ii')$ $0=\sum_{c=0}^{N-1}\varphi(ac)B_{\ell+2}\left(\frac{c}{N}\right)$
($\forall a\nequiv\pm1$ $(N)$ with $\gcd(a,N)=1$)

$(iii')$ $0=\sum_{b=0}^{r-1}\varphi(a+b\frac{N}{r})$ ($\forall a=0,\ldots,\frac{N}{r}-1$)
for each $r(\neq1,N$) dividing $N$.\\
Then $\mathsf{H}_{\sigma'}^{[\ell]}(\frac{1}{N}\pi_{\sigma}^{*}\varphi)=\mathsf{L}^{[\ell]}(\frac{1}{N}\widehat{\pi_{\sigma'_{*}}\pi_{\sigma}^{*}\varphi})=\mathsf{L}^{[\ell]}(\iota_{\sigma'}^{*}\iota_{\sigma_{*}}\widehat{\varphi}),$
which is $0$ if $\sigma'$ {}``belongs to a different subgroup''
than $\sigma$ (using condition $(iii)$ if $N$ is not prime); otherwise
it becomes $\mathsf{L}^{[\ell]}(\mu_{a^{-1}}^{*}\widehat{\varphi})$
($=0$ if $\sigma'\nequiv\sigma${[}$\leftrightarrow a\nequiv\pm1$],
by $(ii)$; or $=1$ by $(i)$).
\end{proof}
\begin{example}
Here are a few of the fundamental vectors for $\ell=1,2$ (where we
list the values $\varphi(0),\ldots,\varphi(N-1)$)\[
\varphi_{3}^{[1]}=0,-\frac{81}{2},\frac{81}{2}\,;\,\,\,\,\,\,\,\,\,\varphi_{4}^{[1]}=0,-32,0,32\,;\,\,\,\,\,\,\,\,\,\varphi_{5}^{[1]}=0,-25,-\frac{25}{2},\frac{25}{2},25\,;\]
\[
\varphi_{3}^{[2]}=-162,81,81\,;\,\,\,\,\,\,\,\,\,\varphi_{6}^{[2]}=-\frac{432}{5},-\frac{216}{5},\frac{216}{5},\frac{432}{5},\frac{216}{5},-\frac{216}{5}.\]

\end{example}

\subsubsection{Pontryagin products}

Consider the map\[
\left(\Phi_{2}^{\QQ}(N)^{\circ}\right)^{\otimes\ell+1}\rTo^{*^{^{\ell+1}}}\Phi_{2}^{\QQ}(N)^{\circ}\mspace{300mu}\]
\[
\varphi_{1}\otimes\cdots\otimes\varphi_{\ell+1}\mapsto(\varphi_{1}*\cdots*\varphi_{\ell+1})(m,n):=\sum_{{\{m_{i},n_{i}\}\in(\nicefrac{\ZZ}{N\ZZ})^{2\ell+2}\atop \sum(m_{i},n_{i})\emn(m,n)}}\prod_{i=1}^{\ell+1}\varphi_{i}(m_{i},n_{i})\]
which becomes Pontryagin product when $\Phi_{2}(N)^{\circ}$ is interpreted
as divisors on $N$-torsion.

\begin{lem}
(i) $*^{^{\ell+1}}$ is surjective;

(ii) $\widehat{\varphi_{1}*\cdots*\varphi_{\ell+1}}=\prod_{i=1}^{\ell+1}\widehat{\varphi_{i}}$.
\end{lem}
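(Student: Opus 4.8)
The plan is to prove the two parts by passing to Fourier transforms, where Pontryagin product becomes ordinary (pointwise) product, and then reading off both claims from elementary properties of the finite Fourier transform already recorded in $\S5.2.1$.

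For part (ii), I would first observe that the finite Fourier transform $\widehat{\,\,}:\Phi_2^{\QQ}(N)^{\circ}\to\Phi_2^{\QQ}(N)_{\circ}$ carries convolution (in the group $(\ZZ/N\ZZ)^2$) to pointwise multiplication. Concretely, writing out
\[
\widehat{\varphi_1 * \cdots * \varphi_{\ell+1}}(\mu,\eta) = \sum_{(m,n)} \left( \sum_{\sum(m_i,n_i)\equiv(m,n)} \prod_{i=1}^{\ell+1}\varphi_i(m_i,n_i)\right) e^{\frac{2\pi i}{N}(\mu n - \eta m)},
\]
one reindexes the double sum by the individual $(m_i,n_i)$, uses that the exponential is a character of $(\ZZ/N\ZZ)^2$ so that $e^{\frac{2\pi i}{N}(\mu n - \eta m)} = \prod_i e^{\frac{2\pi i}{N}(\mu n_i - \eta m_i)}$ whenever $\sum(m_i,n_i)\equiv(m,n)$, and then the constraint $\sum(m_i,n_i)\equiv(m,n)$ is automatically absorbed once $(m,n)$ is summed over all of $(\ZZ/N\ZZ)^2$. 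The sum factors as $\prod_{i=1}^{\ell+1}\left(\sum_{(m_i,n_i)}\varphi_i(m_i,n_i)e^{\frac{2\pi i}{N}(\mu n_i - \eta m_i)}\right) = \prod_{i=1}^{\ell+1}\widehat{\varphi_i}(\mu,\eta)$. One should check that $\varphi_1 * \cdots * \varphi_{\ell+1}$ indeed lands in $\Phi_2^{\QQ}(N)^{\circ}$ (i.e.\ vanishes at $(\bar 0,\bar 0)$), but this follows since each $\varphi_i\in\Phi_2^{\QQ}(N)^{\circ}$ forces $\widehat{\varphi_i}$ to lie in the kernel-free image and, dually, the augmentation of a convolution is the product of augmentations.

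For part (i), I would deduce surjectivity of $*^{\ell+1}$ from part (ii) together with the fact that $\widehat{\,\,}:\Phi_2^{\QQ}(N)^{\circ}\xrightarrow{\ \cong\ }\Phi_2^{\QQ}(N)_{\circ}$ is an isomorphism. Indeed, via this isomorphism the map $*^{\ell+1}$ is conjugate to the pointwise multiplication map $(\Phi_2(N)_{\circ})^{\otimes \ell+1}\to\Phi_2(N)_{\circ}$, $\psi_1\otimes\cdots\otimes\psi_{\ell+1}\mapsto \psi_1\cdots\psi_{\ell+1}$. The target $\Phi_2(N)_{\circ}$ has as a basis the ``delta functions'' $\delta_{(\mu_0,\eta_0)}$ at nonzero points $(\mu_0,\eta_0)\in(\ZZ/N\ZZ)^2$, and each such $\delta_{(\mu_0,\eta_0)}$ is itself a product of $\ell+1$ copies of a function supported off the origin (e.g.\ $\delta_{(\mu_0,\eta_0)}$ times $\ell$ copies of an everywhere-nonzero-on-the-relevant-point function, or more simply $\delta_{(\mu_0,\eta_0)} = \delta_{(\mu_0,\eta_0)}^{\ell+1}$ up to normalization since a delta function is idempotent under pointwise multiplication after rescaling). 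Hence each basis vector is in the image, so the pointwise multiplication map is surjective, and therefore so is $*^{\ell+1}$. (Working over $\QQ$ rather than $\CC$: since the delta functions are rational and the needed factorizations can be taken rational, no extension of scalars is required.)

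The only mildly delicate point — and the one I would write out carefully — is the bookkeeping in part (ii), namely that summing the inner constraint "$\sum(m_i,n_i)\equiv(m,n)$" against $(m,n)$ ranging over all of $(\ZZ/N\ZZ)^2$ correctly decouples the $\ell+1$ sums; this is where the fact that we are convolving over an abelian $group$ (so the map $((m_i,n_i))_i\mapsto \sum(m_i,n_i)$ is a surjective group homomorphism with the "expected" fibers) is used. Everything else is a routine manipulation of finite sums, entirely parallel to the continuous convolution theorem, and is consistent with the relations $(5.3)$–$(5.6)$ already established. I do not anticipate a genuine obstacle here; the lemma is essentially a formality once the Fourier dictionary of $\S5.2.1$ is in place.
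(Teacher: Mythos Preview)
Your argument for (ii) is exactly the direct verification the paper has in mind (it just says ``trivial computation'').

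For (i), your route differs from the paper's. You pass to the Fourier side and factor each $\delta_{(\mu_0,\eta_0)}\in\Phi_2(N)_{\circ}$ as $\delta_{(\mu_0,\eta_0)}^{\ell+1}$. The paper instead exhibits a single explicit convolution identity $\beta_N\in\Phi_2^{\QQ}(N)^{\circ}$ (essentially $\frac{N^2-1}{N^2}$ at the origin and $-\frac{1}{N^2}$ elsewhere, so that $\widehat{\beta_N}=1-\delta_{(0,0)}$) and simply writes $\varphi=\varphi*\beta_N*\cdots*\beta_N$. The paper's advantage is that $\beta_N$ is visibly $\QQ$-valued, so surjectivity over $\QQ$ is immediate. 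Your argument establishes surjectivity over $\CC$ cleanly, but your parenthetical justification for the $\QQ$-case is not quite right: although the delta functions $\delta_{(\mu_0,\eta_0)}$ are $\QQ$-valued in $\Phi_2(N)_{\circ}$, their \emph{inverse Fourier transforms} are characters and are not $\QQ$-valued, so the conjugation by $\widehat{\,\,}$ does not stay inside $\Phi_2^{\QQ}(N)^{\circ}$. The correct descent is simply that $*^{\ell+1}$ is a $\QQ$-linear map of finite-dimensional $\QQ$-vector spaces, hence surjective over $\QQ$ iff surjective after $\otimes_{\QQ}\CC$. With that one-line fix your approach is complete; the paper's version sidesteps the issue entirely by producing a rational preimage on the nose.
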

\begin{proof}
$(ii)$ is a trivial computation.

For $(i)$ write $\beta_{N}(m,n):=\left\{ \begin{array}{c}
\frac{N^{2}-1}{N^{2}}\,\,\,\,(m,n)\equiv(0,0)\\
\,\,\,\frac{1}{N^{2}}\,\,\,\,\text{otherwise}\,\,\,\,\,\end{array}\right.,$ and let $\varphi\in\Phi_{2}^{\QQ}(N)^{\circ}.$ Then $\varphi*\begin{array}[t]{c}
\underbrace{\beta_{N}*\cdots*\beta_{N}}\\
\ell\text{ times}\end{array}=\varphi$.
\end{proof}

\subsubsection{Decomposition into $(p,q)$-verticals}

For $(p,q)\in(\ZZ/N\ZZ)^{2}$ such that $\left\langle (p,q)\right\rangle \cong\ZZ/N\ZZ$,
define in $\Phi_{2}^{\QQ}(N)^{\circ}$ a subgroup of {}``$\mathbf{(p,q)}$\textbf{-vertical}-degree-$0$''
functions\[
\Phi_{2}^{\QQ}(N)_{(p,q)}^{\circ}:=\left\{ \varphi\in\Phi_{2}^{\QQ}(N)\,|\,\sum_{a\in\nicefrac{\ZZ}{N\ZZ}}\varphi(a(p,q)+(m,n))=0\,\,\forall(m,n)\in(\ZZ/N\ZZ)^{2}\right\} .\]
Inside this we have the $set$\[
\mathfrak{S}(N)_{(p,q)}:=\small\left\{ \text{translates of the function }\varphi_{(p,q)}(m,n):=\left\{ \begin{array}{c}
-2,\,\,(m,n)\emn(0,0)\,\,\,\\
\,1,\,\,(m,n)\emn\pm(p,q)\\
\,0,\,\,\text{otherwise}\,\,\,\,\,\,\,\,\,\,\,\,\,\,\,\,\end{array}\right.\right\} .\normalsize\]
The next result says that $\varphi\in\Phi_{2}^{\QQ}(N)^{\circ}$ can
be written as a sum of Pontryagin products where each term contains
only functions from $\mathfrak{S}(N)_{(p,q)}$ for some $(p,q)$.$\vspace{2mm}$\\
\textbf{Decomposition Lemma:} \emph{(i) the map \[
\QQ[\mathfrak{S}(N)_{(p,q)}^{\times(\ell+1)}]\to\Phi_{2}^{\QQ}(N)_{(p,q)}^{\circ}\]
\[
\sum a_{j}[(\varphi_{1}^{(j)},\ldots,\varphi_{\ell+1}^{(j)})]\mapsto\sum a_{j}\varphi_{1}^{(j)}*\cdots*\varphi_{\ell+1}^{(j)}\]
is surjective ($\ell\geq0$);}

\emph{(ii) If $\sigma\in\kappa(N)$ corresponds to $\cmat\in SL_{2}(\ZZ)$
(see the end of $\S5.1.6$), then \[
\Phi_{2}^{\QQ}(N)_{(p,q)}^{\circ}\supset\pi_{\sigma}^{*}\Phi^{\QQ}(N)^{\circ};\]
}

\emph{(iii) $\oplus_{_{G\in\mathfrak{G}(N)}}\pi_{\sigma_{G}}^{*}\Phi^{\QQ}(N)^{\circ}\twoheadrightarrow\Phi_{2}^{\QQ}(N)^{\circ}$
($\sigma_{G}$ as in the proof of Lemma 5.9).}

\begin{proof}
$(i)$ First note $\otimes^{\ell+1}\Phi_{2}^{\QQ}(N)_{(p,q)}^{\circ}\rOnto^{*^{^{\ell+1}}}\Phi_{2}^{\QQ}(N)_{(p,q)}^{\circ}$
using \[
\beta_{N}^{(p,q)}(m,n):=\left\{ \begin{array}{c}
\frac{N-1}{N},\,\,(m,n)\emn(0,0)\\
\frac{1}{N},\,\,\,\,\,(m,n)\in\left\langle (p,q)\right\rangle \setminus\{(0,0)\}\\
0,\,\,\,\,\,\,\text{otherwise}\,\,\,\,\,\,\,\,\,\end{array}\right.\]
 in place of $\beta_{N}$ above; so it suffices to prove case $\ell=0$.
Put $\varphi_{(p,q)}^{\{k\}}(m,n):=\varphi_{(p,q)}((m,n)-k(p,q))$
and $\Delta_{(p,q)}(m,n):=\left\{ \begin{array}{c}
\,\,1,\,\,(m,n)\emn(p,q)\\
-1,\,\,(m,n)\emn(0,0)\\
\,\,0,\,\,\text{otherwise}\,\,\,\,\,\,\,\end{array}\right..$ Translates of $\Delta_{(p,q)}$ clearly generate $\Phi_{2}^{\QQ}(N)_{(p,q)}^{\circ}$,
and $\sum_{k=1}^{N}\frac{k}{N}\varphi_{(p,q)}^{\{k\}}=\Delta_{(p,q)}.$

$(ii)$ obvious.

$(iii)$ Follows from \[
\widehat{\Phi_{2}(N)^{\circ}}=\Phi_{2}(N)_{\circ}=\sum_{_{G\in\mathfrak{G}(N)}}(\iota_{\sigma_{G}})_{*}(\Phi(N)_{\circ})=\sum_{_{G\in\mathfrak{G}(N)}}\widehat{(\pi_{\sigma_{G}})^{*}(\Phi(N)^{\circ})}.\]

\end{proof}

\subsubsection{Functions with divisors supported on $N$-torsion}

Writing $\E(N)$, $\E$ for $\E^{[1]}(N)$, $\E^{[1]}$ we have \\
\xymatrix{
& & & & \E \ar @{->>} [r]_{\mathcal{P}_N} \ar [d]^{\pi} & \E(N) \ar @{^(->} [r] \ar [d]^{\pi(N)} & \overline{\E}(N) \ar [d]^{\overline{\pi}(N)} 
\\
& & & & \mathfrak{H} \ar @{->>} [r]_{\rho_N} & Y(N) \ar @{^(->} [r] & \overline{Y}(N).
}\\
\\
Let $U(N)\incl\E(N)$ be the complement of the $N^{2}$ $N$-torsion
sections; there is a {}``relative divisor'' map%
\footnote{note that $\mathcal{O}^{*}(U(N))\subset\CC(\overline{\E}(N))^{*}$%
} \[
\mathcal{O}^{*}(U(N))\rTo^{\div}\Phi_{2}(N)^{\circ}\]
\[
f\mapsto\varphi_{f}\]
(which ignores divisor components supported on the singular fibers
over cusps $\{\hat{E}_{y_{0}}(N)\,|\, y_{0}\in\kappa(N)\}$). Now
assume $p,q$ have been chosen as in the beginning of $\S5.2.4$.
Taking any $r,s$ such that $\gamma:=\cmat\in SL_{2}(\ZZ)$ , define
$\mathfrak{F}(N)_{\gamma}:=$\small\[
\left\{ f\in\mathcal{O}^{*}(U(N))\,\left|\begin{array}{c}
\mathcal{P}_{N}^{*}f\text{ has "}(p,q)\text{-vertical" }T_{\mathcal{P}_{N}^{*}f}\text{ over the hyperbolic geodesic}\\
(\tau\in)\,\mathcal{A}_{\gamma}:=\{\frac{ibr-q}{ibs+p}\,|\, b\in\RR^{+}\}\subset\mathfrak{H}\text{ connecting }[\frac{r}{s}]\text{ and }[\frac{-q}{p}],\\
\text{in the sense that its support in }E_{\tau}\text{ lies in one connected component}\\
\text{of }W_{\tau}^{(p,q)}(N):=\{\xi(p\tau+q)+\frac{m\tau+n}{N}\,|\, m,n\in\nicefrac{\ZZ}{N\ZZ},\,\xi\in\nicefrac{\CC}{\ZZ}\}\end{array}\right.\right\} .\]
\normalsize 

\begin{lem}
(i) $\div$ is surjective.

(ii) $\div(\mathfrak{F}(N)_{\gamma})\supset\mathfrak{S}(N)_{(p,q)}$.
\end{lem}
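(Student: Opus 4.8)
The plan is to realize every element of $\Phi_{2}^{\QQ}(N)^{\circ}$ (resp.\ every function in $\mathfrak{S}(N)_{(p,q)}$) as the relative divisor of a modular unit on $\E(N)$ (resp.\ one whose branch cut is $(p,q)$-vertical over $\mathcal{A}_{\gamma}$), built fibrewise out of classical Weierstrass $\sigma$- and $\wp$-functions, equivalently Klein forms. Throughout we work $\otimes\QQ$, as in the rest of the section.

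For (i): first reduce to a spanning set. The space $\Phi_{2}^{\QQ}(N)^{\circ}$ is $\QQ$-spanned by the degree-zero divisors $\delta_{(m,n)}-\delta_{(0,0)}$ with $(m,n)\neq(0,0)$, so it suffices to find, for each such $(m,n)$, a function $u_{m,n}\in\mathcal{O}^{*}(U(N))$ with $\div(u_{m,n})$ a nonzero rational multiple of $\delta_{(m,n)}-\delta_{(0,0)}$. By Abel's theorem the divisor $N[P_{m,n}]-N[P_{0,0}]$ is principal on every smooth fibre $E_{\tau}$ (the point-sum is $N$ times an $N$-torsion point, hence $O$); a fibrewise representative is $\bigl(\sigma(z-\tfrac{m\tau+n}{N};\tau)/\sigma(z;\tau)\bigr)^{N}$ corrected by an exponential $\exp(c_{m,n}(\tau)z+\cdots)$ to make it $\Lambda_{\tau}$-periodic and by a power of $\Delta(\tau)$ to make the expression $\Gamma(N)$-invariant modulo the cuspidal fibres; equivalently one simply invokes the structure theory of modular units (Klein/Siegel forms, Kubert--Lang, Kato), which supplies units with divisor supported exactly on the $N$-torsion sections and the cusp fibres. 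Since $\div$ ignores components over $\kappa(N)$, this gives $\div(u_{m,n})=N(\delta_{(m,n)}-\delta_{(0,0)})$, and $\otimes\QQ$ one hits the spanning set. The only technical point is the descent to $\mathcal{O}^{*}(U(N))$, i.e.\ that after $\otimes\QQ$ and modification by cuspidal-fibre divisors the obstruction line bundle on $Y(N)$ is trivial; this is classical, and built into the modular-unit construction (ultimately because $\omega^{\otimes 12}$ is cuspidal, $\Delta$ being its defining section).

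For (ii): fix $\varphi\in\mathfrak{S}(N)_{(p,q)}$, the translate of $\varphi_{(p,q)}$ by $(m_{0},n_{0})$, corresponding to the divisor $D_{\varphi}=[P_{m_{0}+p,n_{0}+q}]+[P_{m_{0}-p,n_{0}-q}]-2[P_{m_{0},n_{0}}]$. The three points lie in the single coset $(m_{0},n_{0})+\langle(p,q)\rangle$ and $D_{\varphi}$ sums to $O$ on every fibre, so it is already fibrewise principal, with representative $w(z,\tau):=\wp(z-\tfrac{m_{0}\tau+n_{0}}{N};\tau)-\wp(\tfrac{p\tau+q}{N};\tau)$. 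Under $\Gamma(N)$ this transforms with weight $2$ (trivial multiplier), so dividing by a weight-$2$ meromorphic modular form for $\Gamma(N)$ — say $g(\tau)=\wp(\tfrac{p'\tau+q'}{N};\tau)$ — and correcting by a function $h(\tau)\in\CC(\overline{Y}(N))^{*}$ absorbing any non-cuspidal vertical divisor of $g$ (possible $\otimes\QQ$ since $[\omega^{\otimes 2}]$ lies in the cuspidal span) produces a genuine $f=wh/g\in\mathcal{O}^{*}(U(N))$ with $\div(f)=D_{\varphi}=\varphi$. It then remains to check $f\in\mathfrak{F}(N)_{\gamma}$: the zeros and poles of $f|_{E_{\tau}}$ are precisely $P_{m_{0}\pm p,n_{0}\pm q}$ and $P_{m_{0},n_{0}}$, all on the single connected component of $W_{\tau}^{(p,q)}(N)$ through $P_{m_{0},n_{0}}$ — the $(p\tau+q)$-direction loop carrying that coset — so the cut $T_{\mathcal{P}_{N}^{*}f}=(\mathcal{P}_{N}^{*}f)^{-1}(\RR^{\leq0}\cup\{\infty\})$ can be taken inside that loop for $\tau\in\mathcal{A}_{\gamma}$. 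To see this cleanly one should first change $SL_{2}(\ZZ)$-basis so that $\tfrac{p\tau+q}{N}$ becomes the horizontal direction and $[\tfrac{r}{s}],[\tfrac{-q}{p}]$ become $[i\infty],[0]$: in these coordinates the natural $\tau$-direction cut of a $\wp$-difference becomes the $(p,q)$-direction cut, and one picks $g,h$ contributing no cut transverse to that loop over the geodesic.

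I expect the main obstacle to be exactly this last real-geometric bookkeeping in (ii): arranging that the branch cut of $\log \mathcal{P}_{N}^{*}f$ stays inside one component of $W_{\tau}^{(p,q)}(N)$ along the whole of $\mathcal{A}_{\gamma}$, which needs a careful analysis of the argument of a $\wp$-difference in the $(p,q)$-adapted coordinates and its compatibility with the chosen pair of cusps $[\tfrac{r}{s}],[\tfrac{-q}{p}]$. Everything in (i) — Abel's theorem, the $\sigma$/Klein-form formulae, the divisor computations, and the descent to $\E(N)$ — is classical; the substance of the lemma lies in matching the horospherical geometry ($(p,q)$-verticality over $\mathcal{A}_{\gamma}$) to the explicit divisors appearing in $\mathfrak{S}(N)_{(p,q)}$.
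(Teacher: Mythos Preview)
Your approach to (i) is essentially correct and parallel to the paper's: both produce modular units with prescribed $N$-torsion divisor via a theta/sigma-type construction. The paper is simply more explicit, writing down the product
\[
f_0(\tau,z)=\prod_{k\in\ZZ}\prod_{(m,n)}\left(1-e^{2\pi i(k\tau+z-\frac{m\tau+n}{N})}\right)^{\tilde{a}_{m,n}}
\]
(with integer lifts $\tilde{a}_{m,n}$ satisfying $\sum\tilde{a}_{m,n}m=0=\sum\tilde{a}_{m,n}n$) and then dividing by $f_0(\tau,\frac{m_0\tau+n_0}{N})$ to force descent to $\E(N)$. Your appeal to Kubert--Lang / Klein forms amounts to the same content.

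For (ii), however, you have left the essential step unproved, and your chosen construction makes it harder than necessary. The paper does \emph{not} switch to a $\wp$-difference; it specializes the product $f_0$ from (i) to the exponents $\tilde{a}_{0,0}=2$, $\tilde{a}_{\pm 1,0}=-1$, and the whole point is that this explicit form makes the branch-cut check a direct computation: for $\tau=iy\in i\RR^{+}$ every factor of $f_0(iy,iY)$ is real, so $f_0(iy,iY)\in\RR^{\leq 0}$ for $Y\in(-y/N,y/N)$, and since $f_0$ has degree $2$ on the fiber this vertical segment already exhausts $T_{f_0}$. A conjugation argument ($\overline{f_0(iy,\bar z)}=f_0(iy,z)$, plus the leading coefficient at $z=0$ being real positive) then shows $f_0(iy,\frac{1}{N})\in\RR^{+}$, so the normalizing constant does not rotate the cut. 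The general $\varphi\in\mathfrak{S}(N)_{(p,q)}$ are obtained by translating this $f$ and pulling back along $\cmat\in SL_2(\ZZ)$.

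Your $\wp$-difference route is not wrong in principle, but by your own admission the real-geometric verification is unfinished --- and with your construction it is genuinely more delicate: dividing by an auxiliary weight-$2$ form $g(\tau)$ and a further $h(\tau)\in\CC(\overline{Y}(N))^{*}$ introduces $\tau$-dependent phases that can rotate $T_f$ out of the desired $(p,q)$-vertical component, and you would have to control these uniformly along $\mathcal{A}_{\gamma}$. The paper's product is engineered so that reality along the relevant segment is manifest, which is precisely what collapses the ``main obstacle'' you anticipated.
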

\begin{rem}
(a) Together with the Decomposition Lemma, $(ii)$ ensures that we
can actually compute with the KLM formula (because we are able to
work with functions with known $T_{f}$ on $\pi^{-1}$ of the arc
$\A_{\gamma}$).

(b) It is obvious that the definition of $\mathfrak{F}(N)_{\gamma}$
only depends on the coset of $\gamma$ in $SL_{2}(\ZZ)/\Gamma(N)$,
but we won't need this.
\end{rem}
\begin{proof}
$(i)$ Working on $\E$, we will construct a meromorphic function
$f\in\text{im}(\mathcal{P}_{N}^{*})$ with divisor $\sum_{(m,n)\in(\nicefrac{\ZZ}{N\ZZ})^{2}}a_{m,n}\left[\frac{m\tau+n}{N}\right]$
for any given $\{a_{m,n}\}_{_{(m,n)\in(\nicefrac{\ZZ}{N\ZZ})^{2}}}$
satisfying $\sum a_{m,n}m\emn0\emn\sum a_{m,n}n$ and $\sum a_{m,n}=0$.
In fact, we can choose $\{\tilde{a}_{m,n}\}_{(m,n)\in\ZZ^{2}}$ (all
but finitely many zero) {}``lifting'' $\{a_{m,n}\}$ such that $\sum\tilde{a}_{m,n}m=0=\sum\tilde{a}_{m,n}n$
exactly; this leads (following \cite[p. 8.8]{Bl3}) to the construction
of a function $f_{0}$ on $\mathfrak{H}\times\CC$ descending to $\E$:
\begin{equation}
f_0(\tau,z):=\prod_{k\in\ZZ}\prod_{(m,n)\in\ZZ^2} \left( 1-e^{2\pi i(k\tau+z-\frac{m\tau+n}{N})} \right)^{a_{m,n}}. \\
\end{equation} Factoring $f_{0}$ if necessary, we may assume that some $(m_{0},n_{0})\in(\ZZ/N\ZZ)^{2}$
has $a_{m_{0},n_{0}}=0$; then \begin{equation}
f(\tau,z):=\frac{f_0(\tau,z)}{f_0 \left( \tau, \frac{m_0\tau+n_0}{N} \right)} \text{ descends to }\E(N). \\
\end{equation} 

$(ii)$ We will use the proof of $(i)$ to construct $f\in\mathfrak{F}(N)_{\hmat}$
with $\varphi_{f}(m,n)=\left\{ \begin{array}{c}
-1,\,\,(m,n)\emn(\pm1,0)\\
\,\,2,\,\,(m,n)\emn(0,0)\,\,\,\,\\
\,\,0,\,\,\text{otherwise}\,\,\,\,\,\,\,\,\,\,\,\,\,\,\,\,\,\end{array}\right.;$ then the idea is simply to translate and pull back (using the action
of $\cmat\in SL_{2}(\ZZ)$ on $\E(N)$ induced from that on $\E$)
this $f$.

Taking $\tilde{a}_{0,0}=2$, $\tilde{a}_{1,0}=\tilde{a}_{-1,0}=-1$
(all other $\tilde{a}_{m,n}=0$) in $(5.7)$, one easily computes
that (with $\tau=iy\in i\RR^{+}$) $f_{0}(iy,iY)\in\RR^{\leq0}$ for
$Y\in(\frac{-y}{N},\frac{y}{N})$. So on each $E_{\tau=iy}$, $|T_{f_{0}}|\supset\{z=iY\,|\, Y\in[\frac{-y}{N},\frac{y}{N}]\},$
while $f_{0}$ is of \emph{degree} \emph{$2$}; it follows that $T_{f_{0}}$
is just the sum of two directed line segments, from $\pm\frac{\tau}{N}(=\pm\frac{iy}{N})$
to $0$. In $(5.8)$, we take $(m_{0},n_{0})=(0,1)$, and check that
$T_{f}=T_{f_{0}}$ over $\tau=iy$ ($y\in\RR^{+}$), or equivalently
that $f(iy,\frac{1}{N})\in\R^{+}$. To do this, observe that $\overline{f_{0}(iy,\overline{z})}$
is (a) holomorphic and has (b) the same divisor as $f_{0}(iy,z)$
and (c) the same leading coefficient of power series expansion at
$z=0$ ($f_{0}=Cz^{2}+\cdots$, where $[0\neq]\, C\in\RR^{+}$ since
$T_{f_{0}}$ is vertical). Thus $\overline{f_{0}(\overline{z})}=f_{0}(z)$,
which $\implies$ $\overline{f_{0}(\frac{1}{N})}=f_{0}(\frac{1}{N})\,(\in\RR)$.
Since $\frac{1}{N}\notin T_{f_{0}}$, $f_{0}(\frac{1}{N})\in\RR^{+}$.
\end{proof}
Now we can obtain meromorphic functions on $\E^{[\ell]}(N)$ by noting
that $\E^{[\ell]}(N)=\times_{Y(N)}^{\ell}\E(N)$, $\E^{[\ell]}=\times_{\mathfrak{H}}^{\ell}\E$,
and (by abuse of notation) writing the projections to these factors
\tiny \xymatrix{\E^{[\ell]}(N) \ar [rr]_{z_i} \ar [rd] & & \E(N) \ar [ld] \\ & Y(N)} \normalsize 
so that $f(z_{i})$ denotes $z_{i}^{*}f$, etc.

\subsection{Construction of the Eisenstein symbols}

\subsubsection{Eisenstein series}

Since the cycle-class computation ($\S6.1$) will show that these
series actually yield modular forms, we won't bother proving this
directly. Note that for the double sums $\sum_{m,n}'$ means to omit
$(m,n)=(0,0)$. 

For $N\geq3$ and $\ell\in\ZZ^{+}$ define \[
\EE_{\ell+2}(\Gamma(N)):=\left\{ F\in\mathcal{O}(\mathfrak{H})\,\left|\, F\text{ of form }\sum_{(m,n)\in\ZZ^{2}}{}^{'}\frac{\psi(m,n)}{(m\tau+n)^{\ell+2}}\text{ for }\psi\in\Phi_{2}(N)\right.\right\} .\]
(The series is necessarily convergent.)

\begin{lem}
The map \[
\Phi_{2}(N)^{\circ}\rTo^{\mathsf{E}^{[\ell]}}\EE_{\ell+2}(\Gamma(N))\]
defined by \[
\varphi\mapsto E_{\varphi}^{[\ell]}(\tau):=\frac{-(\ell+1)}{(2\pi i)^{\ell+2}}\sum_{(m,n)\in\ZZ^{2}}{}^{'}\frac{\widehat{\varphi}(m,n)}{(m\tau+n)^{\ell+2}}\]
is surjective.
\end{lem}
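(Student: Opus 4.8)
The plan is to reduce the surjectivity of $\mathsf{E}^{[\ell]}$ to the bijectivity of the finite Fourier transform $\widehat{\,\,}\colon\Phi_2(N)^{\circ}\to\Phi_2(N)_{\circ}$ (noted just before $\S5.2.2$), after accounting for the single coincidence among distinct $N$-periodic coefficient functions that define the same Eisenstein series. Throughout one uses that $\ell+2\geq 3$, so all the double series are absolutely convergent and may be freely rearranged; in particular $E^{[\ell]}_{\varphi}$ is of the form $\sum_{(m,n)\in\ZZ^2}{}'\psi(m,n)(m\tau+n)^{-(\ell+2)}$ with $\psi=-\tfrac{\ell+1}{(2\pi i)^{\ell+2}}\widehat{\varphi}\in\Phi_2(N)$, hence lies in $\EE_{\ell+2}(\Gamma(N))$ and the map is well-defined.

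First I would fix $F\in\EE_{\ell+2}(\Gamma(N))$, write $F(\tau)=\sum_{(m,n)}{}'\psi(m,n)(m\tau+n)^{-(\ell+2)}$ with $\psi\in\Phi_2(N)$, and observe that if $\psi$ already lay in $\Phi_2(N)_{\circ}$ (i.e. $\psi(\bar 0,\bar 0)=0$) we would be finished: setting $\varphi:=\widehat{\,\,}^{-1}\!\left(-\tfrac{(2\pi i)^{\ell+2}}{\ell+1}\,\psi\right)\in\Phi_2(N)^{\circ}$ gives $\widehat{\varphi}=-\tfrac{(2\pi i)^{\ell+2}}{\ell+1}\psi$ and hence $\mathsf{E}^{[\ell]}(\varphi)=F$ term by term. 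So the only work is to replace $\psi$ by some $\tilde\psi\in\Phi_2(N)_{\circ}$ defining the same $F$. To do this I would isolate the terms with $(m,n)\equiv(\bar 0,\bar 0)\pmod N$: putting $(m,n)=N(m',n')$ these contribute $\psi(0,0)N^{-(\ell+2)}G_{\ell+2}(\tau)$, where $G_{\ell+2}(\tau):=\sum_{(m,n)}{}'(m\tau+n)^{-(\ell+2)}$ is the level-one Eisenstein series. The trivial identity
\[
(1-N^{-(\ell+2)})\,G_{\ell+2}(\tau)=\sum_{(m,n)}{}'\mathbf{1}_{(m,n)\not\equiv(\bar 0,\bar 0)}\,(m\tau+n)^{-(\ell+2)},
\]
valid because $N\geq 3$ forces $1-N^{-(\ell+2)}\neq 0$, exhibits $G_{\ell+2}$ itself as an Eisenstein series whose $N$-periodic coefficient function $(1-N^{-(\ell+2)})^{-1}\mathbf{1}_{\ne(\bar 0,\bar 0)}$ vanishes at the origin. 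Combining this with $\psi_{\circ}:=\psi-\psi(0,0)\,\mathbf{1}_{\equiv(\bar 0,\bar 0)}$ yields $F(\tau)=\sum_{(m,n)}{}'\tilde\psi(m,n)(m\tau+n)^{-(\ell+2)}$ with
\[
\tilde\psi:=\psi_{\circ}+\frac{\psi(0,0)}{N^{\ell+2}-1}\,\mathbf{1}_{\ne(\bar 0,\bar 0)}\ \in\ \Phi_2(N)_{\circ},
\]
and applying the first paragraph to $\tilde\psi$ produces the desired $\varphi\in\Phi_2(N)^{\circ}$ with $\mathsf{E}^{[\ell]}(\varphi)=F$.

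There is essentially no serious obstacle here; the one point deserving care is exactly the observation that the assignment $\psi\mapsto\sum{}'\psi(m,n)(m\tau+n)^{-(\ell+2)}$ is \emph{not} injective on $\Phi_2(N)$ (its kernel contains $\mathbf{1}_{\equiv(\bar 0,\bar 0)}-(N^{\ell+2}-1)^{-1}\mathbf{1}_{\ne(\bar 0,\bar 0)}$), so one cannot literally invoke Fourier inversion on $\Phi_2(N)$ but must first normalize the coefficient function into $\Phi_2(N)_{\circ}$, where $\widehat{\,\,}$ is an isomorphism onto it from $\Phi_2(N)^{\circ}$. When $\ell$ is odd the series $G_{\ell+2}$ vanishes identically, so the correction term contributes nothing and the uniform formula for $\tilde\psi$ above still applies unchanged. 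Convergence and holomorphy of every series involved are automatic, so no analytic subtleties intervene.
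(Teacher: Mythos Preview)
Your proof is correct and follows essentially the same approach as the paper. The paper's proof simply exhibits the kernel element $\psi_0$ (equal to $N^{\ell+2}-1$ at the origin and $-1$ elsewhere, which is a scalar multiple of your $\mathbf{1}_{\equiv(\bar 0,\bar 0)}-(N^{\ell+2}-1)^{-1}\mathbf{1}_{\ne(\bar 0,\bar 0)}$), observes that the associated Eisenstein series vanishes, and concludes in one line that one may assume $\psi\in\Phi_2(N)_{\circ}$; your version unpacks this same normalization step more explicitly via the level-one series $G_{\ell+2}$.
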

\begin{proof}
Let $\psi_{0}:=\left\{ \begin{array}{c}
N^{\ell+2}-1,\,\,(m,n)\emn(0,0)\\
\,\,-1\,\,\,\,\,\,\,\,\,\,,\,\,\,\,\text{otherwise}\,\,\,\,\,\,\,\,\,\end{array}\right.;$ then $\sum'\frac{\psi_{0}(m,n)}{(m\tau+n)^{\ell+2}}$ is obviously
$0$. This implies that we may assume $\psi\in\Phi_{2}(N)_{\circ}$
($\implies\psi=\widehat{\varphi},$$\varphi\in\Phi_{2}(N)^{\circ}$)
in the definition.
\end{proof}
Put $\EE_{\ell+2}^{\QQ}(\Gamma(N)):=\mathsf{E}^{[\ell]}\left(\Phi_{2}^{\QQ}(N)^{\circ}\right).$
(Clearly $\EE_{\ell+2}=\EE_{\ell+2}^{\QQ}\otimes\CC$.)

\begin{lem}
For $E_{\varphi}^{[\ell]}\in\EE_{\ell+2}^{\QQ}(\Gamma(N))$, $\lim_{\tau\to i\infty}E_{\varphi}^{[\ell]}(\tau)=\mathsf{H}_{[i\infty]}^{[\ell]}(\varphi)\,\,(\in\QQ).$
\end{lem}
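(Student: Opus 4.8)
The plan is to compute the limit $\lim_{\tau\to i\infty}E_\varphi^{[\ell]}(\tau)$ directly from the defining series and recognize the answer as $\mathsf{H}_{[i\infty]}^{[\ell]}(\varphi)$ via the Lipschitz/Hurwitz formula. First I would write $q=e^{2\pi i\tau}$ and split the double sum $\sum'_{(m,n)\in\ZZ^2}\frac{\widehat\varphi(m,n)}{(m\tau+n)^{\ell+2}}$ according to whether $m=0$ or $m\neq0$. In the $m\neq0$ terms, $|m\tau+n|\to\infty$ uniformly in $n$ (for fixed $m$) as $\Im\tau\to\infty$, and since $\ell+2\geq3$ the tail is dominated by $\sum_{m\neq0}\sum_{n}|m\tau+n|^{-(\ell+2)}$, which is $O((\Im\tau)^{-(\ell+1)})\to0$; here I would use that $\widehat\varphi$ is $N$-periodic, hence bounded. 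So only the $m=0$ terms survive in the limit, contributing
\[
\lim_{\tau\to i\infty}E_\varphi^{[\ell]}(\tau)=\frac{-(\ell+1)}{(2\pi i)^{\ell+2}}\sum_{n\in\ZZ}{}'\frac{\widehat\varphi(0,n)}{n^{\ell+2}}.
\]

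Next I would identify this with $\mathsf{H}_{[i\infty]}^{[\ell]}(\varphi)$. By definition the cusp $[i\infty]$ corresponds to $(p,q)=(1,0)$ (or its chosen representative), and the projection $\pi_{[i\infty]}\colon(\ZZ/N\ZZ)^2\twoheadrightarrow\ZZ/N\ZZ$ in $\S5.1.6$ is $(m,n)\mapsto m$ in that normalization, so $(\pi_{[i\infty]})_*\varphi$ is the function $a\mapsto\sum_{m\equiv a}\varphi(m,n)$ summed over $n$ — precisely the object whose Fourier transform governs the $m=0$ slice. Concretely, one has by $(5.4')$ (the pull-back/push-forward compatibility of $\widehat{\ \ }$ with $\pi_\sigma,\iota_\sigma$) that $\iota_{[i\infty]}^*\widehat\varphi$, i.e. the restriction $n\mapsto\widehat\varphi(0,n)$, equals $\widehat{(\pi_{[i\infty]})_*\varphi}$. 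Feeding this into the displayed limit and applying the key Fourier computation from $\S5.2.1$,
\[
\sum_{c=0}^{N-1}\bigl((\pi_{[i\infty]})_*\varphi\bigr)(c)\,B_{\ell+2}\!\left(\tfrac cN\right)=\frac{(-1)^{\ell+1}(\ell+2)!}{(2\pi i)^{\ell+2}}\,\tilde L\bigl(\widehat{(\pi_{[i\infty]})_*\varphi},\ell+2\bigr),
\]
I can rewrite $\sum'_n\widehat\varphi(0,n)n^{-(\ell+2)}=\tilde L(\widehat{(\pi_{[i\infty]})_*\varphi},\ell+2)$ in terms of Bernoulli values, and the constants $\frac{-(\ell+1)}{(2\pi i)^{\ell+2}}$ combine with $\frac{(-1)^{\ell+1}(\ell+2)!}{(2\pi i)^{\ell+2}}$ to give exactly the factor $\frac{(-1)^\ell(\ell+1)}{(\ell+2)!}$ appearing in the definition of $\mathsf{H}_{[i\infty]}^{[\ell]}$ in $\S5.2.2$. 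That yields $\lim_{\tau\to i\infty}E_\varphi^{[\ell]}(\tau)=\mathsf{H}_{[i\infty]}^{[\ell]}(\varphi)$, and rationality is automatic since $\varphi\in\Phi_2^\QQ(N)^\circ$ forces $(\pi_{[i\infty]})_*\varphi\in\Phi^\QQ(N)^\circ$ and $B_{\ell+2}(c/N)\in\QQ$.

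The main obstacle I anticipate is the bookkeeping in the second step: getting the normalization of $\pi_{[i\infty]}$ right (which coordinate is killed, and with what sign/orientation convention relative to the choice $\cmat\in SL_2(\ZZ)$ fixed in $\S5.1.6$), and then chasing the identities $(5.3')$–$(5.4')$ carefully enough to see that the $\iota^*/\pi_*$ transpose relation converts the $m=0$ slice of $\widehat\varphi$ into $\widehat{(\pi_{[i\infty]})_*\varphi}$ without a stray multiplicative constant. The analytic estimate for the vanishing of the $m\neq0$ part is routine (dominated convergence, or just a crude geometric-series bound using $\ell+2\geq3$), so it should not cause trouble; one only needs to be slightly careful that the interchange of limit and summation is justified, for which the uniform bound $|\widehat\varphi|\le\sum|\varphi|$ together with absolute convergence of the Eisenstein series suffices.
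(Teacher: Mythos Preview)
Your proposal is correct and follows essentially the same route as the paper's proof: take the limit to kill the $m\neq0$ terms, identify $\widehat\varphi(0,n)=(\iota_{[i\infty]}^*\widehat\varphi)(n)=\widehat{(\pi_{[i\infty]})_*\varphi}(n)$ via the transform compatibility (5.5), and then invoke the Bernoulli/$\tilde L$ computation of \S5.2.1 together with the definition of $\mathsf{H}_{[i\infty]}^{[\ell]}$ in \S5.2.2. Your bookkeeping worries are unfounded: with the paper's conventions $\pi_{[i\infty]}(m,n)=m$ and $\iota_{[i\infty]}(a)=(0,a)$, so the identifications go through exactly as you wrote.
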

\begin{proof}
$\lim_{\tau\to i\infty}\sum_{m,n}'\frac{\widehat{\varphi}(m,n)}{(m\tau+n)^{\ell+2}}=\sum_{n}'\frac{\widehat{\varphi}(0,n)}{n^{\ell+2}}=\sum_{n}'\frac{(\iota_{[i\infty]}^{*}\widehat{\varphi})(n)}{n^{\ell+2}}=\sum_{n}'\frac{\widehat{\pi_{[i\infty]_{*}}\varphi}(n)}{n^{\ell+2}}$\[
=\tilde{L}(\widehat{\pi_{[i\infty]_{*}}\varphi},\ell+2)=\frac{-(2\pi i)^{\ell+2}}{\ell+1}\mathsf{H}_{[i\infty]}^{[\ell]}(\varphi),\]
by $\S\S5.2.1-2$.
\end{proof}

\subsubsection{Group actions}

Writing $\mathfrak{S}_{\ell}$ for the symmetric group, let $\mathcal{G}:=\mathfrak{S}_{\ell}\ltimes(\ZZ/2\ZZ)^{\ell}$
act on $\mathfrak{H}\times\CC^{\ell}$ by\[
(c,\underline{\epsilon})(\tau;\, z_{1},\ldots,z_{\ell}):=(\tau;\,(-1)^{\epsilon_{1}}z_{c(1)},\ldots,(-1)^{\epsilon_{\ell}}z_{c(\ell)})\,;\]
this descends to $\E^{[\ell]}$ and $\E^{[\ell]}(N)$. Fixing $N$,
let $\Lambda^{\ell}:=(\ZZ/N\ZZ)^{2\ell}$ act on $\E^{[\ell]}$ via
translations\[
\text{tr}_{\underline{\lambda}}(\tau;\, z_{1},\ldots,z_{\ell}):=\left(\tau;\, z_{1}+\frac{\lambda_{1}\tau+\lambda_{2}}{N},\ldots,z_{\ell}+\frac{\lambda_{2\ell-1}\tau+\lambda_{2\ell}}{N}\right);\]
this descends to $\E^{[\ell]}(N).$

\subsubsection{Inclusions and open subsets of $\E^{[\ell]}(N)$}

$\begin{array}[t]{cccc}
\supset & \bar{U}^{[\ell]}(N) & \supset & \tilde{U}^{[\ell]}(N)\\
 & \cup &  & \cup\\
 & U^{[\ell]}(N) & \supset & \hat{U}^{[\ell]}(N)\end{array}$ (to be defined). Writing {}``FP'' for fixed points, set\[
\hat{W}_{N}^{[\ell]}:=\bigcup_{\underline{\lambda}\in\Lambda^{\ell}}\text{tr}_{\underline{\lambda}}\left\{ \cup_{(c,\underline{\epsilon})\in\mathcal{G}}\text{FP}((c,\underline{\epsilon}))\right\} \subset\E^{[\ell]}\,\,,\,\,\,\,\,\,\,\,\,\hat{W}^{[\ell]}(N):=\mathcal{P}_{N}(W_{N}^{[\ell]}),\]
\[
\hat{U}^{[\ell]}(N):=\E^{[\ell]}(N)\m\hat{W}^{[\ell]}(N).\]
Next, generalize $U(N)$ in two different ways:\[
U^{[\ell]}(N):=\times_{Y(N)}^{\ell}U(N)\,\,,\,\,\,\,\,\,\,\,\bar{U}^{[\ell]}(N):=\E^{[\ell]}(N)\m\{N^{2\ell}\text{ }N\text{-torsion sections}\}.\]
The inclusion $\mathfrak{H}\times\CC^{\ell}\hookrightarrow\mathfrak{H}\times\CC^{\ell+1}$
given by\[
(z_{1},\ldots,z_{\ell})\mapsto(-z_{1},z_{1}-z_{2},\ldots,z_{\ell-1}-z_{\ell},z_{\ell})=:(u_{1},\ldots,u_{\ell+1})\]
descends to define maps $\iota:\,\E^{[\ell]}\hookrightarrow\E^{[\ell+1]}$
and \[
\iota(N):\,\E^{[\ell]}(N)\hookrightarrow\E^{[\ell+1]}(N).\]
Finally, put \[
\tilde{U}^{[\ell]}(N):=\iota(N)^{-1}\left(U^{[\ell+1]}(N)\right).\]
To summarize,\[
\left.\begin{array}{c}
\bar{U}^{[\ell]}(N)\\
\hat{U}^{[\ell]}(N)\\
\tilde{U}^{[\ell]}(N)\\
U^{[\ell]}(N)\end{array}\right\} \begin{array}{c}
\text{means the}\\
\text{"complement of}\\
\text{translates of"}\end{array}\left\{ \begin{array}{c}
z_{1}=\cdots=z_{\ell}=0\,\,\,\,\,\,\,\,\,\,\,\,\,\,\,\,\,\,\,\,\,\,\,\,\,\,\,\,\,\,\,\,\,\,\\
\text{all }z_{i}=\pm z_{j},\, z_{i}=0\,\,\,\,\,\,\,\,\,\,\,\,\,\,\,\,\,\,\,\,\,\,\,\,\,\,\,\,\,\,\,\,\\
z_{1}=0,\, z_{1}=z_{2},\ldots,\, z_{\ell-1}=z_{\ell},\, z_{\ell}=0\\
z_{1}=0,\, z_{2}=0,\ldots,\, z_{\ell}=0\,\,\,\,\,\,\,\,\,\,\,\,\,\,\,\,\,\,\,\,\,\,\,\end{array}\right.\]
and makes sense in $\E^{[\ell]}$ or $\E^{[\ell]}(N)$ (where in $\E^{[\ell]}$
these open sets are denoted instead $\bar{U}_{N}^{[\ell]},\,\hat{U}_{N}^{[\ell]},$
etc.). Denote the $U$-complements (i.e. the translates of the sets
on the r.h.s.) by $\bar{W}$, $\hat{W}$, etc.

\subsubsection{Completion of symbols }

Write $\QQ[\mathcal{O}^{*}(U(N))]$ for the $\otimes\QQ$ free-abelian
group on the set of elements of $\mathcal{O}^{*}(U(N))$, and recall
$\square:=\PP^{1}\m\{1\}$. To each monomial $\textbf{f}:=f_{1}\otimes\cdots\otimes f_{\ell+1}\in\otimes^{\ell+1}\QQ[\mathcal{O}^{*}(U(N))]$
we associate the graph cycle $\{\textbf{f}\}:=$\small  \[
\{f_{1}(u_{1}),\ldots,f_{\ell+1}(u_{\ell+1})\}:=\left\{ (\tau;\,\underline{u};\, f_{1}(\tau,u_{1}),\ldots,f_{\ell+1}(\tau,u_{\ell+1}))\,\left|\,(\tau,\underline{u})\in U^{[\ell+1]}(N)\right.\right\} \]
\normalsize \[
\subset U^{[\ell+1]}(N)\times\square^{\ell+1}.\]
Its pullback by $\iota(N)$ should be thought of as the symbol \begin{equation}
\iota^*\{\textbf{f}\}:=\{ f_1(-z_1),f_2(z_1-z_2),\ldots ,f_{\ell}(z_{\ell-1}-z_{\ell}),f_{\ell+1}(z_{\ell})\}, \\
\end{equation} which is evidently in good position (i.e. yields a higher Chow precycle)
over all of $\bar{U}^{[\ell]}(N)$. To kill $\db$ of this symbol
in $\hat{W}^{[\ell]}(N)$, we flip it about components of $\hat{W}^{[\ell]}(N)$
and subtract the result: writing $\tilde{\G}:=\G\ltimes\Lambda^{\ell}$,
define \[
\tilde{\G}^{*}:=\frac{1}{\ell!2^{\ell}N^{2\ell}}\left\{ \sum_{(c,\underline{\epsilon},\underline{\lambda})\in\tilde{\G}}(-1)^{sgn(\sigma)+\sum\epsilon_{i}}(c,\underline{\epsilon})^{*}(\text{tr}_{\underline{\lambda}})^{*}\right\} ,\]
 and $\tilde{\G}_{0}^{*}$ if signs are $removed$. (There is also
$\G^{*}$, defined by forgetting the $\frac{1}{N^{2\ell}}\sum_{\underline{\lambda}}(\text{tr}_{\underline{\lambda}})^{*}$
part.)

Now consider the diagram\\
\xymatrix{
& & 
\otimes^{\ell+1}\QQ \left[\mathcal{O}^*(U(N))\right] 
\ar [d]^{\left\{\cdot\right\}}
\ar @/^5pc/ [rddd]^{\tiny \begin{matrix} \tilde{\G}^*\circ \iota(N)^* \circ \{\cdot\}  \text{, followed} \\ \text{by Zariski closure} \end{matrix} \normalsize} 
\\ 
& & Z^{\ell+1}_{_{\db\text{-cl.}}}\left(U^{[\ell+1]}(N),\ell+1\right)_{_{\{\sum u_i=0\}}} 
\ar [d]^{\iota(N)^*} 
\\ 
& & Z^{\ell+1}_{_{\db\text{-cl.}}}\left(\tilde{U}^{[\ell]}(N),\ell+1\right)
\ar [d]^{\tilde{\G}^*}
\\
& & 
\left[ Z^{\ell+1}_{_{\db\text{-cl.}}}\left(\hat{U}^{[\ell]}(N),\ell+1\right)\right]^{\tilde{\G}} 
&
\left[ Z_{_{\db\text{-cl.}}}^{\ell+1} \left( \bar{U}^{[\ell]}(N),\ell+1 \right) \right]^{\tilde{\G}} \ar [l]_{\text{restriction}}
}\\
\\
in which we denote the images of $\textbf{f}$ as follows:\\
\xymatrix{& & & & \textbf{f} \ar @{|->} [d] \ar @/^4pc/ @{|->} [rrrrddd] 
\\
& & & & \left\{ \textbf{f} \right\} \ar @{|->} [d]
\\
& & & & \iota_{_{(N)}}^* \left\{ \textbf{f} \right\} \ar @{|->} [d]
\\
& & & & Z_{\textbf{f}}
& & & & \overline{Z_{\textbf{f}}}.
\ar @{|->} [llll]
}\\
\\
Unless%
\footnote{This condition just says $0\notin|(f_{1})|*\cdots*|(f_{\ell+1})|$.%
} $\alpha_{1}+\cdots+\alpha_{\ell+1}\neq0$ $\forall\{\alpha_{1},\ldots,\alpha_{\ell+1}\}\in|(f_{1})|\times\cdots\times|(f_{\ell+1})|$,
extending the cycle over the $N$-torsion sections to $\E^{[\ell]}(N)$
requires a {}``move'' (by adding a $\db$-coboundary); such a move
always exists, as\[
\left[CH^{\ell+1}(\E^{[\ell]}(N),\ell+1)\right]^{\tilde{\G}}\rTo_{\text{restriction}}^{\cong}\left[CH^{\ell+1}(\hat{U}^{[\ell]}(N),\ell+1)\right]^{\tilde{\G}},\]
but of course this eliminates well-definedness on the level of $precycles$
(but not cycle-class) for the resulting\[
\mathfrak{Z}_{\textbf{f}}\in Z_{_{\db\text{-cl.}}}^{\ell+1}(\E^{[\ell]}(N),\ell+1).\]

\begin{prop}
We have a well-defined map of precycles\[
\otimes^{\ell+1}\QQ[\mathcal{O}^{*}(U(N))]\longrightarrow[Z_{_{\db\text{-cl.}}}^{\ell+1}(\bar{U}^{[\ell]}(N),\ell+1)]^{\tilde{\G}}\]
\[
\mathcal{\textbf{\emph{f}}}\longmapsto\overline{Z_{\textbf{\emph{f}}}}.\]
Going modulo relations, this induces a well-defined map\\
\\
\xymatrix{& & \mathcal{O}^*(U(N))^{\otimes \ell+1} \ar [r] \ar[rd] & \left[ CH^{\ell+1}(\bar{U}^{[\ell]}(N),\ell+1)\right]^{\tilde{\G}} 
\\ & & \emph{\textbf{f}} \ar @{|->} [rd] & \left[ CH^{\ell+1}(\E^{[\ell]}(N),\ell+1)\right]^{\tilde{\G}} \ar [u]_{\cong} \\ & & & \left\langle \mathfrak{Z}_{\emph{\textbf{f}}} \right\rangle
.}
\end{prop}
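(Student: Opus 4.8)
The plan is to unwind the definitions assembled in $\S5.3$ and verify the two claims in Proposition 5.18 — well-definedness on precycles, then well-definedness on Chow groups — in that order. First I would trace through the diagram preceding the Proposition: given a monomial $\mathbf{f}=f_1\otimes\cdots\otimes f_{\ell+1}$ with each $f_i\in\mathcal{O}^*(U(N))$, the graph cycle $\{\mathbf{f}\}$ lives in $Z^{\ell+1}(U^{[\ell+1]}(N),\ell+1)$ and is visibly $\db$-closed (graphs of units are always $\db$-closed precycles away from where functions acquire zeros/poles, and the support condition $\{\sum u_i=0\}$ is exactly where this could fail). The pullback $\iota(N)^*\{\mathbf{f}\}$ is the symbol displayed in $(5.9)$, and by construction of $\tilde U^{[\ell]}(N)=\iota(N)^{-1}(U^{[\ell+1]}(N))$ it is in good position over $\tilde U^{[\ell]}(N)$; I would then check that $\tilde{\G}^*$ (the signed average over $\tilde\G=\G\ltimes\Lambda^\ell$) sends a $\db$-closed precycle in good position over $\tilde U^{[\ell]}(N)$ to one in good position over $\hat U^{[\ell]}(N)$ — the point being that the translates and sign-flips by $\tilde\G$ precisely sweep out the locus $\hat W^{[\ell]}(N)$ removed to form $\hat U^{[\ell]}(N)$, so the averaged cycle now intersects all coskeleta of $\hat U^{[\ell]}(N)\times\d\square^{\ell+1}$ properly. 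The main work at this stage is bookkeeping: confirming that (i) each operation $(c,\underline\epsilon)^*$ and $(\mathrm{tr}_{\underline\lambda})^*$ preserves $\db$-closedness and good position relative to its natural open set, (ii) the averaging is $\tilde\G$-equivariant (lands in the $\tilde\G$-invariant part), and (iii) the resulting precycle extends from $\hat U^{[\ell]}(N)$ to $\bar U^{[\ell]}(N)$, i.e. across the fixed-point/diagonal locus but not across the $N$-torsion sections — this is where ``completion'' in the sense of $\S5.3.5$ happens, and it uses that $\hat W^{[\ell]}(N)\supset$ the diagonals but the $N$-torsion sections are a separate divisor. Taking Zariski closure over the $N$-torsion-deleted locus $\bar U^{[\ell]}(N)$ then gives $\overline{Z_\mathbf{f}}\in[Z^{\ell+1}_{\db\text{-cl.}}(\bar U^{[\ell]}(N),\ell+1)]^{\tilde\G}$, and since all the constructions are $\QQ$-linear and functorial in $\mathbf{f}$ (the graph of a product of units, additivity, etc. all respect $\otimes\QQ$), the assignment $\mathbf{f}\mapsto\overline{Z_\mathbf{f}}$ extends linearly to $\otimes^{\ell+1}\QQ[\mathcal{O}^*(U(N))]$, which is the first assertion.

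For the second assertion I would pass to Chow groups and show the precycle map descends modulo the defining relations of $K$-theory/higher Chow groups — multilinearity and the Steinberg relation $\{f,1-f,\ldots\}\sim 0$ on the $\mathcal{O}^*(U(N))^{\otimes(\ell+1)}$ side, and modulo $\db$-coboundaries on the cycle side. Multilinearity is immediate from the graph-cycle construction and $\QQ$-linearity already established; the Steinberg relation follows because the graph of $\{f,1-f\}$ is $\db$-equivalent to a degenerate cycle in $Z^2(-,2)$ (the standard fact underlying the definition of the regulator/KLM map, cf. the discussion around $(0.18)$–$(0.21)$ in the Introduction), and this is preserved under $\iota(N)^*$, $\tilde\G^*$, and Zariski closure since all of these are exact operations on the relevant complexes. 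Then I would invoke the isomorphism displayed just before the Proposition,
\[
\left[CH^{\ell+1}(\E^{[\ell]}(N),\ell+1)\right]^{\tilde\G}\;\xrightarrow[\text{restriction}]{\;\cong\;}\;\left[CH^{\ell+1}(\hat U^{[\ell]}(N),\ell+1)\right]^{\tilde\G},
\]
which lets me transport the class of $\overline{Z_\mathbf{f}}|_{\hat U^{[\ell]}(N)}$ uniquely back to a $\tilde\G$-invariant class $\langle\mathfrak{Z}_\mathbf{f}\rangle$ on the whole compactified Kuga variety $\E^{[\ell]}(N)$; the ``move'' (adding a $\db$-coboundary) needed to realize this as an actual precycle $\mathfrak{Z}_\mathbf{f}\in Z^{\ell+1}_{\db\text{-cl.}}(\E^{[\ell]}(N),\ell+1)$ exists precisely because the restriction is an isomorphism on Chow groups, even though it is not canonical at precycle level — exactly as the text warns. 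The existence of this isomorphism is the one input I would want to cite rather than reprove; it follows from the localization sequence for higher Chow groups together with vanishing of $CH^*$ of the lower-dimensional strata making up $\hat W^{[\ell]}(N)$ (products of affine/projective spaces and blow-ups thereof, whose higher Chow groups are controlled as in the computations in the proof of Prop. 4.2), combined with taking $\tilde\G$-invariants (which is exact over $\QQ$).

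The main obstacle I anticipate is not any single hard theorem but the careful verification that $\tilde\G^*\circ\iota(N)^*$ actually lands in $[Z^{\ell+1}_{\db\text{-cl.}}(\bar U^{[\ell]}(N),\ell+1)]^{\tilde\G}$ — that is, that the averaged cycle is genuinely in good position over all of $\bar U^{[\ell]}(N)$ and not just over $\hat U^{[\ell]}(N)$ or $\tilde U^{[\ell]}(N)$. This requires knowing that the components of $\hat W^{[\ell]}(N)$ (translates of $z_i=\pm z_j$ and $z_i=0$) are exactly the loci along which $\iota(N)^*\{\mathbf{f}\}$ can fail good position once one flips by $\tilde\G$, and that after the signed symmetrization these failures cancel combinatorially — a sign/incidence computation across the faces $\d\square^{\ell+1}$ that has to be done component-by-component. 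I would organize this by reducing to the case where each $f_i$ has divisor a single $\tilde\G$-translate of a function from $\mathfrak{S}(N)_{(p,q)}$ (legitimate by the Decomposition Lemma of $\S5.2.4$ and Lemma 5.15(ii)), where the supports $T_{f_i}$ are explicitly the ``$(p,q)$-vertical'' membranes over the geodesic $\A_\gamma$, making the intersection-theoretic verification concrete; then linearity propagates the conclusion to general $\mathbf{f}$.
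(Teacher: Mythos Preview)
The paper gives no separate proof of this proposition; it is a summary of the construction assembled in the diagram immediately preceding it, together with the restriction isomorphism $[CH^{\ell+1}(\E^{[\ell]}(N),\ell+1)]^{\tilde{\G}}\cong[CH^{\ell+1}(\hat{U}^{[\ell]}(N),\ell+1)]^{\tilde{\G}}$ stated just before it. Your plan to unwind that construction is therefore the right one, and you correctly isolate the two genuine verifications: $\db$-closedness of $\overline{Z_{\mathbf f}}$ on $\bar U^{[\ell]}(N)$, and the restriction isomorphism.

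Two places you over-engineer. First, the Steinberg relation is irrelevant: the source of the second map is $\mathcal{O}^*(U(N))^{\otimes(\ell+1)}$, the tensor power of the abelian group of units, not Milnor $K$-theory. The only relations to check are the multilinearity relations $[fg]-[f]-[g]$, which the graph-cycle construction handles by the usual $\db$-coboundary. Second, your ``main obstacle'' does not require the Decomposition Lemma or any reduction to $(p,q)$-vertical divisors --- that machinery is kept in reserve for the $AJ$ computation in $\S7$. The $\db$-closedness on $\bar U^{[\ell]}(N)$ is exactly what the paper means by ``To kill $\db$ of this symbol in $\hat W^{[\ell]}(N)$, we flip it about components of $\hat W^{[\ell]}(N)$ and subtract the result'': each irreducible component $D\subset\hat W^{[\ell]}(N)\setminus\bar W^{[\ell]}(N)$ (a translate of some $\{z_i=\pm z_j\}$ or $\{z_i=0\}$) is stabilized setwise by an element $g\in\tilde{\G}$ with $\mathrm{sgn}(g)=-1$ acting by $-1$ on the conormal direction, so the residue of any $\tilde{\G}^*$-symmetrized precycle along $D$ equals its own negative and vanishes. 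The same sign argument, fed into the localization sequence and then passed to $\tilde{\G}^*$-invariants (exact over $\QQ$), gives the restriction isomorphism --- the point is not that the strata have vanishing Chow groups, but that their contribution to the sign-isotypic part is zero.
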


\section{\textbf{Fundamental class computations}}

\subsection{Cycle-class of the Eisenstein symbol}

\subsubsection{More Fourier theory}

Now we introduce {}``fiberwise Fourier series'' for \\
\xymatrix{& & & & & \E \ar [d]_{\pi} \\ & & & & &  \mathfrak{H} \ar @/_1pc/ [u]_{\se:=\text{zero section.}}}
\\
\\
Writing coordinates $(\tau,u=x+y\tau)$ on $\E$, and $\nu:=\bar{\tau}-\tau$,
we note that $du$ is only well-defined in $\Omega^{1}(\nicefrac{\E}{\uhp})$,
whereas \[
\widetilde{du}:=du-\frac{\bar{u}-u}{\nu}d\tau\in A^{1,0}(\E)\,\,\,\,\,\,\,\,\,\text{[resp. }\widetilde{d\bar{u}}:=\overline{\widetilde{du}}\in A^{0,1}(\E)\text{]}\]
make sense on $\E$.

Let $\Gamma:=\Gamma(\uhp,R^{1}\pi_{*}\ZZ)\cong\ZZ\left\langle [\alpha],[\beta]\right\rangle $,
so that $\gamma=m[\beta]+n[\alpha]=${}``$(m,n)$''$\in\Gamma$
has period $\omega(\gamma):=\pi_{*}(du\cdot\delta_{\gamma})=m\tau+n$
against $du$; and write \[
\overline{\chi_{\gamma}}(u):=\exp(2\pi i(mx-ny))\,\,\,,\,\,\,\,\,\,\,\,\, d\overline{\chi_{\gamma}}(u)=\frac{2\pi i}{\nu}\{\overline{\omega(\gamma)}du-\omega(\gamma)d\overline{u}\}\overline{\chi_{\gamma}}.\]
Associate to a current $\K\in\D^{M}(\E)$ {}``Fourier coefficients''\[
\hat{\K}(\gamma):=\left\{ \begin{array}{cc}
\pi_{*}(\K\cdot\overline{\chi_{\gamma}})\in\D^{M-2}(\uhp), & M\geq2\\
\nu^{-1}\pi_{*}(\K\cdot\overline{\chi_{\gamma}}\widetilde{du}\wedge\widetilde{d\bar{u}})\in\D^{M}(\uhp),\,\, & M<2\end{array}\right.\]
for each $\gamma\in\Gamma$. (Note: $\nu^{-1}du\wedge d\bar{u}=dx\wedge dy$.)

\begin{lem}
(i) If $\K\in A^{M}(\E)$ ($M<2$) then \[
\se^{*}\K=\sum_{\gamma\in\Gamma}\hat{\K}(\gamma),\]
and the r.h.s. is absolutely convergent.

(ii) Recalling the notation of $\S5.2.5$,%
\footnote{Warning: in this section we are no longer using $\gamma$ to denote
$\cmat\in SL_{2}(\ZZ).$%
} if $\K\in\D^{0}(E_{\tau})$ is a smooth function on the complement
of%
\footnote{Obviously the singularities are $L^{1}$-integrable since $\K$ is
a current.%
} $W_{\tau}^{(p,q)}(N)\m\{\text{connected component of }u=0\}$, then
\[
\K(0)=\se^{*}\K=\sum_{k\in\ZZ}\sum_{j\in\ZZ}^{P.V.}\hat{\K}(jp-ks,jq+kr)\]
where $\sum_{j\in\ZZ}^{P.V.}:=\lim_{J\to\infty}\sum_{j=-J}^{J}$ (or
alternatively, add $\pm j$ terms then sum $j\geq0$).
\end{lem}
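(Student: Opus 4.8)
The plan is to treat parts (i) and (ii) as two facets of one Fourier‐analytic statement: that the fiberwise Fourier decomposition of a current on $\E$ recovers its restriction to the zero section. For part (i), the key point is that $\K\in A^M(\E)$ with $M<2$ means $\K$ is a smooth form, and on each fiber $E_\tau$ it is a genuine smooth differential form (of degree $0$ or $1$) with smooth coefficients. First I would write $\K$ fiberwise in the frame $\{\widetilde{du},\widetilde{d\bar u}\}$, extracting the ``density'' coefficient (the coefficient of $\widetilde{du}\wedge\widetilde{d\bar u}$ in the $M<2$ normalization, which is exactly what the definition of $\hat\K(\gamma)$ contracts against). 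The functions $\overline{\chi_\gamma}(u)=\exp(2\pi i(mx-ny))$ for $\gamma=(m,n)\in\Gamma$ form a complete orthonormal system for $L^2(E_\tau)$ with respect to $dx\wedge dy=\nu^{-1}du\wedge d\bar u$. Then $\hat\K(\gamma)$ is precisely the $\gamma$-th Fourier coefficient of the restriction of $\K$ to the fiber, expressed back as a form on $\uhp$ via the $\nu^{-1}(\cdots)\widetilde{du}\wedge\widetilde{d\bar u}$ bookkeeping. Classical Fourier theory on the torus $E_\tau\cong\RR^2/\ZZ^2$ gives that, since $\K$ is $C^\infty$ in the fiber variables (hence its fiberwise Fourier coefficients decay faster than any polynomial in $(m,n)$), the Fourier series converges absolutely and uniformly, and evaluates at $u=0$ (i.e.\ pulls back along $\se$) to $\sum_{\gamma}\hat\K(\gamma)$. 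The only mild subtlety is to check that the frame change between $du$ and $\widetilde{du}$ is handled consistently so that both sides are honest forms on $\uhp$; this is a direct unwinding of the definitions in the displayed formula for $d\overline{\chi_\gamma}$.

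For part (ii) the new feature is that $\K$ is only a current on $E_\tau$: it is smooth off the real-codimension-one locus $W_\tau^{(p,q)}(N)$ minus the component through $u=0$, but may have jump discontinuities (or $\delta$-type singularities built from $\log$-currents as in $\S5.2.5$) along that locus. The plan is to reduce to the $1$-variable Dirichlet/Fourier situation by exploiting the $(p,q)$-vertical structure: the support of the singular locus is a union of translates of the ``vertical'' line $\RR(p\tau+q)$, so along the transverse direction $\K$ is, up to smooth error, a piecewise-constant or $\log$-type function of one real variable. I would first change coordinates so that $\{(p,q),(r,s)\}$ — recall $\cmat\in SL_2(\ZZ)$ — becomes the standard basis; the lattice $\ZZ(p\tau+q)\oplus\ZZ(r\tau+s)$ is then $\ZZ\tau'\oplus\ZZ$ in the new coordinate $u'$, and the indexing $\gamma=(jp-ks,\,jq+kr)$ is exactly $\gamma=j\cdot(p,q)+k\cdot(r,s)$, i.e.\ the ``new'' Fourier index $(j,k)$. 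So the claimed sum $\sum_{k}\sum_{j}^{P.V.}\hat\K(jp-ks,jq+kr)$ is just the iterated Fourier series in the new coordinates, summed first in the direction $j$ transverse to the singular locus (hence only principal-value convergent, like a Dirichlet series for a jump function) and then absolutely in $k$ (the direction along which $\K$ is smooth, by the connected-component hypothesis ensuring $u=0$ avoids the singular support in that direction).

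The main obstacle — and the place I expect to spend real effort — is the convergence analysis of the inner sum in (ii): because $\K$ has a jump (or worse, a logarithmic-current singularity) along $W_\tau^{(p,q)}(N)$, the Fourier coefficients $\hat\K$ decay only like $O(1/j)$ in the transverse direction $j$, so the inner series does not converge absolutely and one must argue for conditional (principal-value) convergence. The plan here is the standard $1$-dimensional Dirichlet-kernel argument: decompose $\K$ along a transverse slice as (smooth part) $+$ (explicit elementary jump/$\log$ function whose Fourier series is classically known to converge in the P.V.\ sense to the function value at non-singular points, including $u=0$), apply Dirichlet/Jordan-type convergence criteria (using that $\K$ is of bounded variation transversally, or is a finite combination of the model $\log$-currents from $\S5.2.5$), and then handle the outer $k$-sum by the absolute-convergence argument of part (i) applied slice-by-slice, justifying the interchange of summation and restriction by dominated convergence. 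A secondary technical point is to make sure that ``$\K(0)$'' is well-defined — i.e.\ that $u=0$ lies in the connected component of the fiber minus the singular locus where $\K$ is smooth — which is exactly the hypothesis stated, and which is what lets us evaluate pointwise rather than only in a distributional sense.
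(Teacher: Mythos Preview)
Your approach is correct and follows essentially the same route as the paper. For (i) both you and the paper simply invoke classical Fourier inversion for smooth functions on the torus. For (ii) both arguments reduce (by the $SL_2(\ZZ)$ change of coordinates) to the case $(p,q)=(1,0)$ and then iterate one-dimensional Fourier convergence: the paper integrates out the smooth $y$-direction first to form $G_k(x):=\int_0^1\K(x,y)e^{-2\pi iky}\,dy$, applies a one-variable Dirichlet-type pointwise convergence result (cited from Wilcox--Myers) to get $G_k(0)=\sum_j^{P.V.}\hat\K(j,k)$, and then observes that the $G_k(0)$ are the Fourier coefficients of the \emph{smooth} function $\K(0,y)$, so $\sum_k G_k(0)=\K(0,0)$ absolutely.

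The one place where you are working harder than necessary is the proposed decomposition of $\K$ into (smooth part) $+$ (explicit jump/log model): the paper bypasses this entirely by applying the Dirichlet/Jordan-type criterion directly to $G_k$, which is piecewise smooth with singularities only at $\{1/N,\dots,(N-1)/N\}$ and in particular smooth near $x=0$. Your decomposition would also work, but it is extra bookkeeping. Likewise, the justification for the outer $k$-sum is slightly cleaner in the paper's formulation: rather than arguing ``slice-by-slice dominated convergence'', one simply recognizes $\{G_k(0)\}_k$ as the Fourier coefficients of a single smooth one-variable function $\K(0,\cdot)$ on the circle, whence rapid decay and absolute convergence are immediate.
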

\begin{proof}
\emph{(i)} is just the statement {}``$\K(0)=$\{inverse FT evaluated
at $0$\}$=\sum$\{Fourier coefficients\}'' for smooth functions.

\emph{(ii)} Say $(p,q)=(1,0)$, $M=0$. Then (working on some $E_{\tau}$)
put $G_{k}(x):=\int_{0}^{1}\K(x,y)e^{-2\pi iny}dy\,\in\D^{0}(\nicefrac{\CC}{\ZZ})$;
this restricts to a $smooth$ $function$ on the complement of $\{\frac{1}{N},\frac{2}{N},\ldots,\frac{N-1}{N}\}.$
By \cite[Cor. 41.4]{WM} $G_{k}(0)=\sum_{j\in\ZZ}^{P.V.}\widehat{G_{k}}(j)=\sum_{j\in\ZZ}^{P.V.}\int_{0}^{1}G_{k}(x)e^{2\pi ijx}dx\rEq^{\text{Fubini}}\sum_{j\in\ZZ}^{P.V.}\int\int_{E_{\tau}}\K(x,y)\overline{\chi_{(j,k)}}dx\wedge dy=\sum_{j\in\ZZ}^{P.V.}\hat{\K}(j,k).$
But the $\{G_{k}(0)\}$ are the Fourier coefficients of the smooth
function $\K(0,y)$ $\implies$ $\K(0,0)=\sum G_{k}(0)$.
\end{proof}
\begin{lem}
If $F\in\D^{0}(\E)$, $\frac{\d F}{\d\bar{u}}\in\D^{0}(\E)$ is defined
and $\widehat{\frac{\d F}{\d\bar{u}}}(\gamma)=\frac{2\pi i\omega(\gamma)}{\nu}\hat{F}(\gamma).$
\end{lem}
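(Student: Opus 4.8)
The plan is to read off $\widehat{\partial F/\partial\bar u}(\gamma)$ from the definition of Fourier coefficients in degree $<2$, and then convert it into a multiple of $\hat F(\gamma)$ by an integration by parts in the fiber direction of $\pi:\E\to\uhp$, in exactly the spirit of the proof of Lemma 6.1(ii); the only extra input needed is the formula $d\overline{\chi_\gamma}=\tfrac{2\pi i}{\nu}\{\overline{\omega(\gamma)}\,du-\omega(\gamma)\,d\bar u\}\,\overline{\chi_\gamma}$ recalled above.

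First I would pin down the meaning of $\partial F/\partial\bar u$. Since $\{\widetilde{d\bar u},\,d\bar\tau\}$ is a global smooth coframe for $(0,1)$-forms on $\E$, the $(0,1)$-part $\bar\partial F$ of the $1$-current $dF$ decomposes uniquely as $\bar\partial F=\bigl(\tfrac{\partial F}{\partial\bar u}\bigr)\widetilde{d\bar u}+(\ast)\,d\bar\tau$ with $0$-current coefficients, and this defines $\partial F/\partial\bar u\in\D^0(\E)$ (agreeing with the usual fiberwise antiholomorphic derivative when $F$ is smooth). The computational mechanism is the remark that, $\uhp$ being a complex curve, $\pi^*$ of a test $2$-form on $\uhp$ is a multiple of $d\tau\wedge d\bar\tau$; hence $\pi_*$ annihilates any current that (after wedging) is a monomial containing $d\tau$ or $d\bar\tau$, so $\pi_*$ of a $2$-current depends only on its $du\wedge d\bar u$-component. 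Applying the degree-$<2$ formula, $\widehat{\partial F/\partial\bar u}(\gamma)=\nu^{-1}\pi_*\bigl(\tfrac{\partial F}{\partial\bar u}\,\overline{\chi_\gamma}\,\widetilde{du}\wedge\widetilde{d\bar u}\bigr)$; writing $\tfrac{\partial F}{\partial\bar u}\,\widetilde{d\bar u}=\bar\partial F-(\ast)\,d\bar\tau$ and discarding the $\widetilde{du}\wedge d\bar\tau$-term (killed by $\pi_*$), this equals $\nu^{-1}\pi_*(\overline{\chi_\gamma}\,\widetilde{du}\wedge\bar\partial F)$.

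Now the integration by parts: $F\,\overline{\chi_\gamma}\,\widetilde{du}$ is a $1$-current on $\E$, so $\pi_*$ of its exterior derivative is a current of degree $-1$ on $\uhp$, i.e. zero (the fibers being compact elliptic curves is what rules out a boundary term, so $\pi_*d=\pm d\pi_*$ applies). Expanding $d(F\overline{\chi_\gamma}\widetilde{du})=dF\wedge\overline{\chi_\gamma}\widetilde{du}+F\,d\overline{\chi_\gamma}\wedge\widetilde{du}+F\overline{\chi_\gamma}\,d\widetilde{du}$ and pushing down: the $\partial F$-part of the first term dies because $\partial F\wedge\widetilde{du}$ restricts to a $(2,0)$-form on the curve fibers; the last term dies since every monomial of $d\widetilde{du}$ carries a $d\tau$ or $d\bar\tau$; and in $d\overline{\chi_\gamma}\wedge\widetilde{du}$ the only piece surviving $\pi_*$ is $-\tfrac{2\pi i}{\nu}\omega(\gamma)\,\overline{\chi_\gamma}\,d\bar u\wedge du=2\pi i\,\omega(\gamma)\,\overline{\chi_\gamma}\,dx\wedge dy$ (using $\nu^{-1}du\wedge d\bar u=dx\wedge dy$), whose pushforward is $2\pi i\,\omega(\gamma)\hat F(\gamma)$ since $\widetilde{du}\wedge\widetilde{d\bar u}$ and $du\wedge d\bar u$ have the same $\pi_*$. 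Collecting surviving terms and using $\bar\partial F\wedge\widetilde{du}=-\widetilde{du}\wedge\bar\partial F$ gives $0=-\pi_*(\overline{\chi_\gamma}\widetilde{du}\wedge\bar\partial F)+2\pi i\,\omega(\gamma)\hat F(\gamma)$; dividing by $\nu$ yields $\widehat{\partial F/\partial\bar u}(\gamma)=\tfrac{2\pi i\,\omega(\gamma)}{\nu}\hat F(\gamma)$.

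I expect the main obstacle to be nothing conceptual but rather the careful bookkeeping in the last two steps: deciding exactly which wedge monomials are annihilated by $\pi_*$, verifying that the single surviving contribution is the one carrying the factor $\omega(\gamma)$, and checking that $\widehat{\partial F/\partial\bar u}(\gamma)$ can legitimately be manipulated as above even though $F$ is only a current — the latter being routine once one records that $F\overline{\chi_\gamma}\widetilde{du}$ is a $1$-current and the fibers of $\pi$ are compact, so Stokes' theorem through $\pi_*$ produces no boundary term.
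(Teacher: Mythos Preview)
Your proof is correct and follows precisely the integration-by-parts method the paper indicates: the paper does not spell out a proof of this lemma (it cites \cite{B2}), but the proof it gives for Lemma~6.3(ii) ``to get a feel for how the others go'' is exactly your computation --- rewrite the Fourier coefficient via $\pi_*$, use $\pi_*\circ d=0$ on the $1$-current $F\overline{\chi_\gamma}\widetilde{du}$, and identify the unique surviving term as the one carrying $\omega(\gamma)$ from the formula for $d\overline{\chi_\gamma}$. Your bookkeeping of which monomials are killed by $\pi_*$ is accurate.
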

${}$

\begin{lem}
Let $f\in\mathcal{O}^{*}(U_{N})$, and write $\widehat{\varphi_{f}}(\gamma):=\widehat{\varphi_{f}}(m,n).$

(i) $\widehat{\delta_{(f)}}(\gamma)=\widehat{\varphi_{f}}(\gamma)$;

(ii) $\widehat{\log f}(\gamma)=\frac{\int_{T_{f}}\overline{\chi_{\gamma}}du}{\omega(\gamma)}$
for $\gamma\neq(0,0)$, while $\widehat{\log f}(0)=0$ if $f\in\mathfrak{F}(N)_{\cmat}$;

(iii) $\text{dlog}f=\alpha_{f}\widetilde{du}+\beta_{f}d\tau$ $\implies$$\widehat{\alpha_{f}}(0)=\widehat{\beta_{f}}(0)=0$
while for $\gamma\neq(0,0)$,\[
\widehat{\alpha_{f}}(\gamma)=\frac{-\widehat{\varphi_{f}}(\gamma)}{\omega(\gamma)}\,\,\text{ and }\,\,\widehat{\beta_{f}}(\gamma)=\frac{\widehat{\phi_{f}}(\gamma)}{2\pi i(\omega(\gamma))^{2}}.\]

\end{lem}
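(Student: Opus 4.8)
The plan is to compute each Fourier coefficient directly from its defining integral, exploiting the basic relations between the currents $\delta_{(f)}$, $\log f$, $\mathrm{dlog}\,f$ already set up in the excerpt together with Lemmas 6.2 and 6.3.

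First I would establish $(i)$. By definition $\widehat{\delta_{(f)}}(\gamma) = \pi_*(\delta_{(f)}\cdot\overline{\chi_\gamma})$, and since $\delta_{(f)}$ is supported on the $N$-torsion section $(f)$, pushing forward against $\overline{\chi_\gamma}$ simply evaluates $\overline{\chi_\gamma}$ on each torsion point appearing in the divisor, weighted by multiplicity. Recalling that $\overline{\chi_\gamma}(\frac{a\tau+b}{N}) = \exp(\frac{2\pi i}{N}(ma - nb))$ and comparing with the definition of the finite Fourier transform $\widehat{\varphi_f}$ from $\S5.2.1$, these agree up to the sign conventions built into that transform; the key point is that $\varphi_f = \mathrm{div}(f)$ restricted to $N$-torsion (ignoring fiberwise components over cusps, which do not meet the generic fiber $E_\tau$), so $\pi_*(\delta_{(f)}\cdot\overline{\chi_\gamma})$ is precisely $\widehat{\varphi_f}(\gamma)$. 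This is essentially bookkeeping once the conventions are matched.

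Next, for $(ii)$ and $(iii)$ I would use the current identity $\mathrm{dlog}\,f = \frac{2\pi i}{\nu}(\overline{\omega}\,du - \omega\,d\bar u)$-type expansions together with $d[\mathrm{dlog}\,f/2\pi i] = \delta_{\{0\}} - \delta_{\{\infty\}}$ applied fiberwise, or more efficiently Lemma 6.3: since $\frac{\partial \log f}{\partial \bar u} = 0$ away from $T_f$ while $d[\log f] = \mathrm{dlog}\,f - 2\pi i\,\delta_{T_f}$ (so $\log f$ has a distributional $\bar\partial$ coming from the branch cut $T_f$), one gets $\frac{\partial \log f}{\partial \bar u} = -2\pi i\,(\text{the }(0,1)\text{-part of }\delta_{T_f})$. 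Taking Fourier coefficients and invoking Lemma 6.3, $\frac{2\pi i\,\omega(\gamma)}{\nu}\widehat{\log f}(\gamma)$ equals a multiple of $\int_{T_f}\overline{\chi_\gamma}\,du$; solving gives $\widehat{\log f}(\gamma) = \frac{1}{\omega(\gamma)}\int_{T_f}\overline{\chi_\gamma}\,du$ for $\gamma\neq(0,0)$. For the $\gamma=(0,0)$ constant term, the hypothesis $f\in\mathfrak{F}(N)_{\tiny\begin{pmatrix}p&q\\-s&r\end{pmatrix}}$ forces $T_f$ to be $(p,q)$-vertical, so $\widehat{\log f}(0) = \nu^{-1}\pi_*(\log f\cdot\widetilde{du}\wedge\widetilde{d\bar u})$ vanishes by an averaging/symmetry argument over the torsion translates built into the definition of $\mathfrak{F}(N)_\gamma$ (the same kind of reality argument used in the proof of Lemma 5.17(ii)). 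Then $(iii)$ follows by writing $\mathrm{dlog}\,f = d[\log f] + 2\pi i\,\delta_{T_f}$, decomposing $\mathrm{dlog}\,f = \alpha_f\widetilde{du} + \beta_f\,d\tau$, and reading off Fourier coefficients of each side: the $\widetilde{du}$-component picks up $\widehat{\alpha_f}(\gamma)$, and using $d\overline{\chi_\gamma} = \frac{2\pi i}{\nu}(\overline{\omega(\gamma)}\,du - \omega(\gamma)\,d\bar u)\overline{\chi_\gamma}$ to integrate by parts against $\log f$ relates $\widehat{\alpha_f}(\gamma)$ to $\widehat{\log f}(\gamma)$ and hence to $\widehat{\varphi_f}(\gamma)/\omega(\gamma)$; the $d\tau$-component similarly yields $\widehat{\beta_f}(\gamma)$, where the extra factor of $\omega(\gamma)$ and the $(2\pi i)$ come from differentiating the period $\omega(\gamma) = m\tau+n$ in $\tau$, i.e. from the $\frac{\bar u - u}{\nu}d\tau$ correction term in $\widetilde{du}$. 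The vanishing $\widehat{\alpha_f}(0) = \widehat{\beta_f}(0) = 0$ is immediate since $\mathrm{dlog}\,f$ has no constant Fourier mode on a compact elliptic curve (equivalently, $\deg(f) $ contributes nothing to $H^0$).

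The main obstacle I expect is keeping the integration-by-parts bookkeeping consistent: the distinction between $du$ (well-defined only relatively) and $\widetilde{du}$ (well-defined on $\E$) introduces the $\frac{\bar u-u}{\nu}d\tau$ term, and tracking how this term feeds into $\widehat{\beta_f}$ versus $\widehat{\alpha_f}$ — together with the precise powers of $2\pi i$ and $\nu$ and the orientation of $T_f$ — is where sign and normalization errors creep in. The conceptual content ($(i)$ is just the definition of the finite Fourier transform, $(ii)$--$(iii)$ are $\bar\partial$ of a logarithm) is straightforward; the care needed is entirely in matching the normalizations of $\S5.2.1$, Lemma 6.2, and Lemma 6.3, and in verifying the constant-term vanishing from the $(p,q)$-verticality hypothesis.
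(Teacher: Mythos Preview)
Your approach is essentially the paper's: integrate by parts against $d\overline{\chi_\gamma}$ (this is what Lemma~6.2 packages---your reference to ``Lemma~6.3'' is a self-reference slip) to reduce $\widehat{\log f}(\gamma)$ to the $T_f$-integral, then handle the constant term via the reality symmetry $\overline{f(\bar z)}=f(z)$ built into the prototype function in what the paper numbers as Lemma~5.13(ii) (not 5.17). One refinement: the paper splits $\widehat{\log f}(0)$ into $\widehat{\log|f|}(0)$, which vanishes by exactness ($\log|f|\,du\wedge d\bar u = d[\log|f|\,d\bar u]$) with no symmetry needed, and $\widehat{i\arg f}(0)$, which is where the conjugation symmetry enters---your phrase ``averaging over torsion translates'' doesn't quite capture this and is worth sharpening.
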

\begin{proof}
Lemma 6.2 and 6.3(i), (iii) (which uses 6.2) are essentially done
in \cite{B2}. For (ii) (and to get a feel for how the others go),\tiny 
\[
\widehat{\log f}(\gamma)=\nu^{-1}\pi_{*}(\log f\,\overline{\chi_{\gamma}}du\wedge d\bar{u})=(2\pi i)^{-1}\omega(\gamma)^{-1}\pi_{*}\left(\log f\,\frac{2\pi i}{\nu}\{\overline{\omega(\gamma)}du-\omega(\gamma)d\bar{u}\}\overline{\chi_{\gamma}}\wedge du\right)\]
\[
=(2\pi i)^{-1}\omega(\gamma)^{-1}\pi_{*}(\log f\, d\overline{\chi_{\gamma}}\wedge du)=(2\pi i)^{-1}\omega(\gamma)^{-1}\{-\pi_{*}(\overline{\chi_{\gamma}}d[\log f]\wedge du)+\pi_{*}(\overline{\chi_{\gamma}}\begin{array}[t]{c}
\underbrace{\frac{df}{f}\wedge du}\\
0\end{array})\}\]
\small \[
=\omega(\gamma)^{-1}\pi_{*}(\overline{\chi_{\gamma}}\delta_{T_{f}}\wedge du)=\frac{\int_{T_{f}}\overline{\chi_{\gamma}}du}{\omega(\gamma)},\]
\normalsize where at the end we have used $d[\log f]=\frac{df}{f}-2\pi i\delta_{T_{f}}$.
As for $\widehat{\log f}(0)$, we have $\widehat{\log|f|}(0)=\nu^{-1}\pi_{*}(\log|f|\, du\wedge d\bar{u})=0$
since $\log|f|du\wedge d\bar{u}=\text{dlog}|f|\wedge d\bar{u}=d[\log|f|d\bar{u}]$.
Now, using our prototype (from the proof of Lemma 5.13(ii)) for $f\in\mathfrak{F}(N)_{\hmat}$
with $\overline{f(\bar{z})}=f(z)$, one finds ($\tau\in i\RR^{+}$)
that $\pi_{*}(\arg f\, du\wedge d\bar{u})=\pi_{*}(\arg\bar{f}\, du\wedge d\bar{u})=\pi_{*}(-\arg f\, du\wedge d\bar{u})$.
(A similar argument works in general.)
\end{proof}
\begin{lem}
Let $f\in\mathfrak{F}(N)_{\cmat}$, $\gamma=(m,n)$. Then over $(\tau\in)\,\A_{\cmat}\subset\uhp$,
\[
\int_{T_{f}}\overline{\chi_{\gamma}}d\left\{ \begin{array}{c}
u\\
\bar{u}\end{array}\right\} =\frac{p\left\{ \begin{array}{c}
\tau\\
\bar{\tau}\end{array}\right\} +q}{2\pi i(mq-np)}\widehat{\varphi_{f}}(\gamma)\]
if $mq-np\neq0$; otherwise the l.h.s. is $0$.
\end{lem}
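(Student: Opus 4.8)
The plan is to reduce, via the $SL_2(\ZZ)$-action, to the prototype situation analyzed in the proof of Lemma 5.13(ii), and then to compute the one-dimensional integral directly using the explicit vertical structure of $T_f$ over the arc $\A_{\cmat}$. First I would recall that $f\in\mathfrak{F}(N)_{\cmat}$ means precisely that, after pulling back by the $SL_2(\ZZ)$-translate that carries $\A_{\cmat}$ to the positive imaginary axis $i\RR^+$ and $(p,q)$-vertical geodesics to $(1,0)$-vertical ones, $T_f$ becomes a sum of directed segments lying along $\{u = iY \mid Y \in \RR\}$ on each fiber $E_{iy}$ (as in the $f\in\mathfrak{F}(N)_{\hmat}$ case of Lemma 5.13(ii)). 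So it suffices to prove the formula in the case $\gamma = (m,n)$, $\cmat = \hmat$ (so $p=1$, $q=0$, and $mq - np = -n$), where $T_f$ over $\tau = iy$ is $\sum$ of vertical segments between $N$-torsion points of the form $\frac{k\tau}{N}$ and $0$; the general case then follows by substituting $\tau \mapsto \gamma_0(\tau)$ and using $\omega(\gamma)$'s transformation together with $\widehat{\varphi_f}$'s equivariance under the induced action on $\Lambda$. I expect the $SL_2$-bookkeeping — tracking how $(p,q)$, $(m,n)$, the period $\omega(\gamma) = m\tau+n$, and the Fourier index all transform simultaneously — to be the main obstacle, since one must check the claimed denominator $2\pi i(mq-np)$ and numerator $p\tau + q$ come out with the right normalization.

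For the core computation in the normalized situation: over $\tau = iy$, parametrize a vertical segment of $T_f$ from $0$ to $\frac{ky}{N}i$ (coming from a divisor point $[\tfrac{k\tau}{N}]$ with appropriate multiplicity), so $u = iY$, $du = i\,dY$, $\bar u = -iY$, $d\bar u = -i\,dY$, $Y$ ranging over $[0, \tfrac{ky}{N}]$. Then $\overline{\chi_{(m,n)}}(u) = \exp(2\pi i (mx - ny))$; writing $u = x + y\tau = x + iy Y'$ is not quite the right coordinate normalization — I would instead use the standard coordinates $u = x + y\cdot\tau$ with $\tau = iy$, so that on the segment $u = iY$ one has $x = 0$, $y$-coordinate of the point equal to $Y/y$, hence $\overline{\chi_{(m,n)}}|_{T_f} = \exp(-2\pi i n Y / y)$. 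The integral $\int_{T_f}\overline{\chi_\gamma}\,du$ then becomes a sum over segments of $i\int_0^{k y/N} e^{-2\pi i n Y/y}\,dY$, which evaluates in elementary closed form to a constant multiple of $\frac{y}{2\pi n}(e^{-2\pi i n k/N} - 1)$ (for $n \neq 0$) times $i$; assembling these with the multiplicities $a_{k}$ of the divisor and recognizing $\sum_k a_{k}(e^{-2\pi i nk/N}-1) = \sum_k a_k e^{-2\pi i n k/N} = \widehat{\varphi_f}(0,n)$ up to the degree-zero condition, one recovers $\widehat{\varphi_f}(\gamma)$; the prefactor $i \cdot \frac{y}{2\pi n} = \frac{iy}{2\pi n}= \frac{\tau}{2\pi i \cdot (-n)} = \frac{p\tau + q}{2\pi i(mq-np)}$ in the normalized case ($p=1,q=0$). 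For $d\bar u$ the only change is $du = i\,dY \leadsto d\bar u = -i\,dY$, flipping a sign that is absorbed into $\bar\tau = -iy = \overline{\tau}$ in the numerator, giving the $\bar\tau$ variant. When $n = 0$ (i.e. $mq - np = 0$ in the normalized frame), $\overline{\chi_\gamma}|_{T_f} \equiv 1$ and $\int_{T_f} du = \int_{T_f} d[\,\cdot\,]$ over a closed chain in each fiber — but $T_f$ here is a $1$-chain with boundary on $N$-torsion, and the relevant cancellation is exactly that the vertical segments, being oriented from the $N$-torsion points to $0$, sum (after including multiplicities summing to zero by $\varphi_f\in\Phi_2(N)^\circ$) to give integral zero; this is where the $\mathfrak{F}(N)_{\cmat}$ hypothesis and the degree-zero condition on $\varphi_f$ are used jointly, paralleling the vanishing of $\widehat{\log f}(0)$ in Lemma 6.3(ii).

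Finally I would assemble the pieces: state the normalized computation as the base case, then transport it by the $SL_2(\ZZ)$-element $\gamma_0 = \gamma_\sigma$ (the matrix $\cmat$ itself) using that $\gamma_0$ acts on $\E^{[1]}(N)$ compatibly with its action on $\Gamma = \ZZ\langle[\alpha],[\beta]\rangle$, so that $\omega(\gamma_0^{-1}\gamma) = (c\tau + d)^{-1}\omega'(\gamma)$ and the divisor function $\varphi_f$ pulls back to $\varphi_{\gamma_0^* f}$ with the correspondingly permuted Fourier index; a short check confirms that $\frac{p\tau + q}{2\pi i(mq - np)}$ is the correct image of $\frac{\tau}{2\pi i(-n)}$ under this transport. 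The only genuinely delicate point, as noted, is verifying that the factor $mq - np$ (rather than, say, $mp + nq$ or $np - mq$) appears with the stated sign; I would pin this down by testing against the simplest explicit $f\in\mathfrak{F}(N)_{\hmat}$ from Lemma 5.13(ii) with $\varphi_f(m,n) = -1$ at $(\pm1,0)$, $+2$ at $(0,0)$, where both sides can be evaluated by hand.
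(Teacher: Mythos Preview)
Your reduce-then-transport strategy is valid in outline, but it differs substantially from the paper's proof and carries two weaknesses worth noting.

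\textbf{How the paper proceeds, and why it is cleaner.} The paper never normalizes to $(p,q)=(1,0)$. It parametrizes the $(p,q)$-vertical support of $T_f$ directly by $\mathsf{P}(t)=L\tfrac{-s\tau+r}{N}+t(p\tau+q)$ and computes $\mathsf{P}^*(\overline{\chi_\gamma}\,du)=e^{2\pi it(mq-np)}e^{\frac{2\pi iL}{N}(mr+ns)}(p\tau+q)\,dt$ in one line. Both the numerator $p\tau+q$ and the denominator $mq-np$ fall out of this pullback immediately, so the sign-and-factor ambiguity you flag as ``the only genuinely delicate point'' never arises. There is no $SL_2$-bookkeeping to verify. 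The paper also decomposes $T_f=\tilde T_f+S_f$ where $S_f=b\cdot\mathsf{P}([0,1])$ is a full-loop piece (with $b\in\QQ$ determined by $f$); you implicitly assume $T_f$ consists only of segments from $N$-torsion points to a basepoint, which is correct for the prototype of Lemma~5.13(ii) but not for an arbitrary $f\in\mathfrak{F}(N)_{\cmat}$.

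\textbf{A gap in the $mq-np=0$ case.} Your claim that $\int_{T_f}du=0$ follows from ``multiplicities summing to zero by $\varphi_f\in\Phi_2(N)^\circ$'' is not the right argument. The degree-zero condition alone does not force the path-integral to vanish, since adding a full loop to $T_f$ changes $\int_{T_f}du$ by a period $p\tau+q$ without altering $\varphi_f$. The paper instead uses the current identity $2\pi i\,\delta_{T_f}=\tfrac{df}{f}-d[\log f]$ to write $\int_{T_f}du=\tfrac{1}{2\pi i}\int\bigl(\tfrac{df}{f}-d[\log f]\bigr)\wedge du=\tfrac{1}{2\pi i}\int(\log f)\,d[du]=0$, which handles the full-loop contribution correctly. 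You should replace your argument by this one (or, if you insist on the normalized picture, invoke Abel's condition $\sum_K a_K K\equiv 0$ together with an explicit check that the loop coefficient $b$ vanishes for your chosen $f$).
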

\begin{proof}
Represent $T_{f}$ as a sum of straight paths of the following type,
assuming $(f)=\sum_{K=0}^{N-1}a_{K}\left[K\frac{p\tau+q}{N}+L\frac{-s\tau+r}{N}\right]$
($L\in\{0,\ldots,N-1\}$ fixed). For the paths, write \[
\mathsf{P}:\,[0,1]\hookrightarrow E_{\tau}\]
\[
t\mapsto L\frac{-s\tau+r}{N}+t(p\tau+q);\]
then\[
T_{f}\,=\,\sum_{K}a_{K}\left\{ \frac{N-K}{N}\cdot\mathsf{P}\left(\left[0,\nicefrac{K}{N}\right]\right)-\frac{K}{N}\cdot\mathsf{P}\left(\left[\nicefrac{K}{N},1\right]\right)\right\} +b\cdot\mathsf{P}([0,1])\,=:\,\tilde{T}_{f}+S_{f},\]
where $b\in\QQ$. We have\small  \[
\mathsf{P}^{*}(\overline{\chi_{\gamma}}du)\,=\, e^{2\pi i\left\{ m\left(\frac{Lr}{N}+qt\right)-n\left(\frac{-Ls}{N}+pt\right)\right\} }(p\tau+q)dt\,=\, e^{2\pi it(mq-np)}e^{\frac{2\pi iL}{N}(mr+ns)}(p\tau+q)dt.\]
\normalsize 

Now $\frac{1}{p\tau+q}\int_{S_{f}}\overline{\chi_{\gamma}}du=b\cdot e^{\frac{2\pi iL}{N}(mr+ns)}\int_{0}^{1}e^{2\pi it(mq-np)}dt$
is obviously $0$ if $mq-np\neq0$; but if $mq-np=0$ then\small 
\[
\frac{e^{\frac{-2\pi iL}{N}(mr+ns)}}{p\tau+q}\int_{T_{f}}\overline{\chi_{\gamma}}du\,=\,\int_{T_{f}}du\,=\,\frac{1}{2\pi i}\int(\frac{df}{f}-d[\log f])\wedge du\,=\,\frac{1}{2\pi i}\int(\log f)d[du]=0.\]
\normalsize For $mq-np\neq0$ we have\tiny \[
\frac{1}{p\tau+q}\int_{\tilde{T}_{f}}\overline{\chi_{\gamma}}du\,=\, e^{\frac{2\pi iL}{N}(mr+ns)}\sum_{K}a_{K}\left\{ \frac{N-K}{N}\int_{0}^{\frac{K}{N}}e^{2\pi it(mq-np)}dt-\frac{K}{N}\int_{\frac{K}{N}}^{1}e^{2\pi it(mq-np)}dt\right\} \]
\[
=\frac{e^{2\pi i\frac{L}{N}(mr+ns)}}{2\pi i(mq-np)}\left(\sum_{K}a_{K}e^{2\pi i\frac{K}{N}(mq-np)}-\sum_{K}a_{K}\right)=\frac{1}{2\pi i(mq-np)}\sum_{K}a_{K}\overline{\chi_{\gamma}}\left(K\frac{p\tau+q}{N}+L\frac{-s\tau+r}{N}\right)\]
\small  \[
=\frac{\widehat{\varphi_{f}}(\gamma)}{2\pi i(mq-np)}\]
\normalsize (where we have used that $\sum a_{K}=0$).
\end{proof}
\begin{rem}
Lemma 6.3(iii) can be read $\int_{E_{\tau}}\overline{\chi_{\gamma}}\text{dlog}f\wedge d\bar{u}=\frac{-\nu\widehat{\varphi_{f}}(\gamma)}{\omega(\gamma)}.$
\end{rem}

\subsubsection{Main computation; proof of Beilinson-Hodge}

We now use the Fourier {}``technology'' to compute\[
CH^{\ell+1}(\E^{[\ell]}(N),\ell+1)\rTo^{[\cdot]}\hm{\left(\QQ(0),H^{\ell+1}\left(\E^{[\ell]}(N),\QQ(\ell+1)\right)\right)}\]
for\[
\mathfrak{Z}_{\textbf{f}}\longmapsto\Omega_{\mathfrak{Z}_{\textbf{f}}}\in\Omega^{\ell+1}(\overline{\E}^{[\ell]}(N))\left\langle \log\pi^{-1}(\kappa(N))\right\rangle .\]
By $\S5.1.5$, $\mathcal{P}_{N}^{*}\Omega_{\mathfrak{Z}_{\textbf{f}}}=(2\pi i)^{\ell+1}\Omega_{F_{\textbf{f}}}=(2\pi i)^{\ell+1}F_{\textbf{f}}(\tau)dz_{1}\wedge\cdots\wedge dz_{\ell}\wedge d\tau$
for some $F_{\textbf{f}}(\tau)\in M_{\ell+2}^{\QQ}(\Gamma(N)),$ and
it is \emph{this modular form} we must identify. Consider $\Omega_{\iota(N)^{*}\{\textbf{f}\}}\in\Omega^{\ell+1}(\overline{\E}^{[\ell]}(N))\left\langle \log\left(\overline{\tilde{W}^{[\ell]}(N)}\cup\pi^{-1}(\kappa(N))\right)\right\rangle $,
which pulls back by $\tilde{\G}^{*}$ to $\Omega_{\overline{Z_{\textbf{f}}}}$.
The latter is $not$ affected by moving $\overline{Z_{\textbf{f}}}$
into good position over $\bar{W}^{[\ell]}(N)$ and completing it to
$\mathfrak{Z}_{\textbf{f}}$; so $\Omega_{\mathfrak{Z}_{\textbf{f}}}=\tilde{\G}^{*}\Omega_{\iota(N)^{*}\{\textbf{f}\}}=\tilde{\G}^{*}\iota(N)^{*}\dlog f_{1}(u_{1})\wedge\cdots\wedge\dlog f_{\ell+1}(u_{\ell+1}).$

Write $\mathfrak{A}_{\{\textbf{f}\}}:=(-1)^{\ell}\Omega_{\mathcal{P}_{N}^{*}\{\textbf{f}\}}\wedge\widetilde{d\bar{u}_{1}}\wedge\cdots\wedge\widetilde{d\bar{u}_{\ell}}\in A^{\ell+1,\ell}(\E^{[\ell+1]})\left\langle \log W_{N}^{[\ell+1]}\right\rangle $
and $\iota^{*}\mathfrak{A}_{\{\textbf{f}\}}=\mathcal{P}_{N}^{*}\Omega_{\iota(N)^{*}\{\textbf{f}\}}\wedge\widetilde{d\bar{z}_{1}}\wedge\cdots\wedge\widetilde{d\bar{z}_{\ell}}\in A^{\ell+1,\ell}(\E^{[\ell]})\left\langle \log\tilde{W}_{N}^{[\ell]}\right\rangle \subset\D^{\ell+1,\ell}(\E^{[\ell]}).$
Using the diagram \begin{equation}
\xymatrix{\E^{[\ell]} \ar @{^(->} [rr]^{\iota} \ar [rrd]_{\pi^{[\ell]}} & & \E^{[\ell+1]} \ar @{->>} [rr]^{P} \ar [d]_{\pi^{[\ell+1]}\vspace{2mm}} & & \E \ar[lld]_{\pi} \\ & & \uhp \ar @/_1pc/ [rru]_{\mathfrak{e}} 
} \\
\end{equation} where $P(\tau;[u_{1},\ldots,u_{\ell+1}]_{\tau}):=(\tau;[u_{1}+\cdots+u_{\ell+1}]_{\tau})$,
we compute $\pi_{*}^{[\ell]}(\iota^{*}\mathfrak{A}_{\{\textbf{f}\}})$
in two different ways.

For the first,\[
\pi_{*}^{[\ell]}(\iota^{*}\mathfrak{A}_{\{\textbf{f}\}})=\pi_{*}^{[\ell]}(\tilde{\G}_{0}\iota^{*}\mathfrak{A}_{\{\textbf{f}\}})=\pi_{*}^{[\ell]}\{\tilde{\G}^{*}(\mathcal{P}_{N}^{*}\Omega_{\iota(N)^{*}\{\textbf{f}\}})\wedge\widetilde{d\bar{z}_{1}}\wedge\cdots\wedge\widetilde{d\bar{z}_{\ell}}\}\]
\[
=\pi_{*}^{[\ell]}\left\{ (2\pi i)^{\ell+1}F_{\textbf{f}}(\tau)dz_{1}\wedge\cdots\wedge dz_{\ell}\wedge d\tau\wedge\widetilde{d\bar{z}_{1}}\wedge\cdots\wedge\widetilde{d\bar{z}_{\ell}}\right\} \]
\[
=(-1)^{{\ell+1 \choose 2}}(2\pi i)^{\ell+1}\nu^{\ell}F_{\textbf{f}}(\tau)d\tau\in A^{1,0}(\uhp).\]

For the second, \[
\pi_{*}^{[\ell]}(\iota^{*}\mathfrak{A}_{\{\textbf{f}\}})=\mathfrak{e}^{*}P_{*}\mathfrak{A}_{\{\textbf{f}\}}\rEq^{\text{Lemma }6.1(i)}\sum_{\gamma\in\Gamma}\widehat{P_{*}\mathfrak{A}_{\{\textbf{f}\}}}(\gamma)\]
\[
=\nu^{-1}\sum_{\gamma\in\Gamma}\pi_{*}(\overline{\chi_{\gamma}}P_{*}\mathfrak{A}_{\{\textbf{f}\}}\wedge\widetilde{du}\wedge\widetilde{d\bar{u}})\]
\[
=\nu^{-1}\sum_{\gamma\in\Gamma}\pi_{*}^{[\ell+1]}\left((P^{*}\overline{\chi_{\gamma}})\mathfrak{A}_{\{\textbf{f}\}}\wedge(\widetilde{du_{1}}+\cdots+\widetilde{du_{_{\ell+1}}})\wedge P^{*}\widetilde{d\bar{u}}\right).\]
Writing $\dlog(\mathcal{P}_{N}^{*}f_{i}(u_{i}))=\alpha_{i}\widetilde{du_{i}}+\beta_{i}d\tau$,
this\small \[
=(-1)^{{\ell+2 \choose 2}}\nu^{-1}\sum_{\gamma\in\Gamma}\sum_{i=1}^{\ell+1}\pi_{*}^{[\ell+1]}\left\{ \left(\prod_{k=1}^{\ell+1}\overline{\chi_{\gamma}}(u_{k})\right)\beta_{i}\prod_{j\neq i}\alpha_{j}\,\widetilde{du_{1}}\wedge\widetilde{d\bar{u}_{1}}\wedge\cdots\wedge\widetilde{du_{_{\ell+1}}}\wedge\widetilde{d\bar{u}_{_{\ell+1}}}\wedge d\tau\right\} \]
\normalsize \[
=(-1)^{{\ell+2 \choose 2}}\nu^{\ell}\sum_{\gamma\in\Gamma}\sum_{i=1}^{\ell+1}\widehat{\beta_{i}}(\gamma)\prod_{j\neq i}\widehat{\alpha_{i}}(\gamma)d\tau=\frac{(-1)^{\ell}(\ell+1)}{2\pi i}(-1)^{{\ell+2 \choose 2}}\nu^{\ell}\sum_{\gamma\in\Gamma}{}^{'}\frac{\prod_{i=1}^{\ell+1}\widehat{\varphi_{f_{i}}}(\gamma)}{(\omega(\gamma))^{\ell+2}}.\]
So defining $\varphi_{\textbf{f}}:=\varphi_{f_{1}}*\cdots*\varphi_{f_{\ell+1}}\in\Phi_{2}^{\QQ}(N)^{\circ}$
(and linearly extending this to sums of {}``monomials'' $f_{1}\otimes\cdots\otimes f_{\ell+1}$),
we have proved

\begin{thm}
$F_{\textbf{\emph{f}}}(\tau)=\frac{-(\ell+1)}{(2\pi i)^{\ell+2}}\sum_{m,n\in\ZZ^{2}}\frac{\widehat{\varphi_{\textbf{\emph{f}}}}(m,n)}{(m\tau+n)^{\ell+2}}=E_{\varphi_{\textbf{\emph{f}}}}^{[\ell]}(\tau)$.
\end{thm}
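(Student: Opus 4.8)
The plan is to compute the holomorphic form $\Omega_{\mathfrak{Z}_{\mathbf{f}}}$ attached to the Eisenstein symbol $\mathfrak{Z}_{\mathbf{f}}$ and recognize it, via Theorem 5.11 (the map $\mathsf{E}^{[\ell]}$), as the Eisenstein series $E^{[\ell]}_{\varphi_{\mathbf{f}}}$. By Proposition 5.3(i) of $\S5.1.5$, such a holomorphic form pulls back to $\mathcal{P}_N$ to $(2\pi i)^{\ell+1}\Omega_{F_{\mathbf{f}}}$ where $F_{\mathbf{f}}(\tau)\in M^{\QQ}_{\ell+2}(\Gamma(N))$ (note the cycle-class is manifestly defined over $\QQ$ since $\mathfrak{Z}_{\mathbf{f}}$ is), so the whole content is to pin down the modular form $F_{\mathbf{f}}$. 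The key device is the auxiliary $(\ell+1,\ell)$-current $\mathfrak{A}_{\{\mathbf{f}\}} := (-1)^{\ell}\Omega_{\mathcal{P}_N^*\{\mathbf{f}\}}\wedge\widetilde{d\bar u_1}\wedge\cdots\wedge\widetilde{d\bar u_{\ell}}$ on $\E^{[\ell+1]}$, whose $\iota$-pullback lives on $\E^{[\ell]}$; I will compute $\pi^{[\ell]}_*(\iota^*\mathfrak{A}_{\{\mathbf{f}\}})\in A^{1,0}(\uhp)$ in two different ways and equate the results.

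First, using $\tilde{\G}$-invariance of $\Omega_{\mathfrak{Z}_{\mathbf{f}}}$ and the fact that moving $\overline{Z_{\mathbf{f}}}$ into good position and completing to $\mathfrak{Z}_{\mathbf{f}}$ does not change $\Omega$, I rewrite $\iota^*\mathfrak{A}_{\{\mathbf{f}\}}$ (after averaging by $\tilde{\G}_0$) as $(2\pi i)^{\ell+1}F_{\mathbf{f}}(\tau)\,dz_1\wedge\cdots\wedge dz_{\ell}\wedge d\tau\wedge\widetilde{d\bar z_1}\wedge\cdots\wedge\widetilde{d\bar z_{\ell}}$, and pushing forward fiberwise kills the $dz_i\wedge\widetilde{d\bar z_i}$'s (each contributing a factor of $\nu=\bar\tau-\tau$ up to sign), giving $(-1)^{\binom{\ell+1}{2}}(2\pi i)^{\ell+1}\nu^{\ell}F_{\mathbf{f}}(\tau)\,d\tau$. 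Second, I apply Lemma 6.1(i) — fiberwise Fourier expansion along the zero section, using the factorization of $\pi^{[\ell]}$ through $P:\E^{[\ell+1]}\to\E$ in $(6.1)$ — to get $\pi^{[\ell]}_*(\iota^*\mathfrak{A}_{\{\mathbf{f}\}}) = \mathfrak{e}^*P_*\mathfrak{A}_{\{\mathbf{f}\}} = \sum_{\gamma\in\Gamma}\widehat{P_*\mathfrak{A}_{\{\mathbf{f}\}}}(\gamma)$. Expanding $P^*\overline{\chi_\gamma}=\prod_k\overline{\chi_\gamma}(u_k)$ and $\dlog(\mathcal{P}_N^*f_i(u_i))=\alpha_i\widetilde{du_i}+\beta_i\,d\tau$, the wedge-product bookkeeping picks out exactly one $d\tau$ from some $\beta_i$ and the remaining factors from the $\alpha_j$'s, producing $(-1)^{\binom{\ell+2}{2}}\nu^{\ell}\sum_{\gamma}\sum_i\widehat{\beta_i}(\gamma)\prod_{j\neq i}\widehat{\alpha_j}(\gamma)\,d\tau$. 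Then Lemma 6.3(iii) — $\widehat{\alpha_{f}}(\gamma)=-\widehat{\varphi_f}(\gamma)/\omega(\gamma)$, $\widehat{\beta_f}(\gamma)=\widehat{\varphi_f}(\gamma)/(2\pi i\,\omega(\gamma)^2)$, both vanishing at $\gamma=(0,0)$ — collapses the inner sum, contributing the combinatorial factor $(\ell+1)$ (from the $\ell+1$ choices of $i$) and yielding $\frac{(-1)^\ell(\ell+1)}{2\pi i}(-1)^{\binom{\ell+2}{2}}\nu^\ell\sum'_\gamma\frac{\prod_{i=1}^{\ell+1}\widehat{\varphi_{f_i}}(\gamma)}{\omega(\gamma)^{\ell+2}}\,d\tau$.

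Equating the two expressions, cancelling the common $\nu^\ell\,d\tau$ and the sign $(-1)^{\binom{\ell+1}{2}} = \pm(-1)^{\binom{\ell+2}{2}}$ (the discrepancy is $(-1)^{\ell+1}$, absorbed into $(-1)^\ell$; I will track this carefully), and using Lemma 5.12's identity $\widehat{\varphi_{f_1}*\cdots*\varphi_{f_{\ell+1}}}=\prod_i\widehat{\varphi_{f_i}}$ so that $\prod_i\widehat{\varphi_{f_i}}(m,n)=\widehat{\varphi_{\mathbf{f}}}(m,n)$ with $\varphi_{\mathbf{f}}:=\varphi_{f_1}*\cdots*\varphi_{f_{\ell+1}}\in\Phi_2^{\QQ}(N)^\circ$, gives $F_{\mathbf{f}}(\tau)=\frac{-(\ell+1)}{(2\pi i)^{\ell+2}}\sum'_{m,n}\frac{\widehat{\varphi_{\mathbf{f}}}(m,n)}{(m\tau+n)^{\ell+2}}=E^{[\ell]}_{\varphi_{\mathbf{f}}}(\tau)$, which is the claimed formula. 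The statement extends $\QQ$-linearly to arbitrary $\mathbf{f}\in\mathcal{O}^*(U(N))^{\otimes\ell+1}$ since both sides are linear in the symbol.

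The main obstacle — and the only genuinely delicate part — is the justification that $\pi^{[\ell]}_*(\iota^*\mathfrak{A}_{\{\mathbf{f}\}})$ may be computed by Lemma 6.1(i) despite $\mathfrak{A}_{\{\mathbf{f}\}}$ being a current (not a smooth form) with logarithmic singularities along $W_N^{[\ell+1]}$ and poles at the cusps: one must check that the fiberwise pushforward $P_*\mathfrak{A}_{\{\mathbf{f}\}}$ is a form to which Lemma 6.1(i) applies (i.e. that the singular locus, after pushing to $\E$, meets each fiber $E_\tau$ in the right way and that the Fourier series converges absolutely), and that the Fubini/interchange of $\pi_*$ and $\sum_\gamma$ is legitimate. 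Here I would invoke Lemma 6.1 and the careful setup of $\mathfrak{F}(N)_\gamma$ and the Decomposition Lemma from $\S5.2$, which were precisely arranged to guarantee known, controllable $T_f$-supports; by $\QQ$-linearity it suffices to treat $\mathbf{f}$ built from functions in $\mathfrak{F}(N)_\gamma$ whose divisors lie in $\mathfrak{S}(N)_{(p,q)}$, for which Lemma 6.5 and Remark 6.4 supply the needed explicit Fourier coefficients. The sign and normalization bookkeeping (powers of $(-1)$, of $\nu$, of $2\pi i$, and the combinatorial $(\ell+1)$) is routine but must be done once, in the order: (1) fiberwise integration over $\E^{[\ell]}$ in the first computation; (2) the Fourier expansion and wedge extraction in the second; (3) cancellation and comparison with $\mathsf{E}^{[\ell]}$ as defined in Lemma 5.11.
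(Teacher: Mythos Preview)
Your proposal is correct and follows essentially the same approach as the paper: both compute $\pi^{[\ell]}_*(\iota^*\mathfrak{A}_{\{\mathbf{f}\}})$ in two ways --- first via $\tilde{\G}$-invariance to extract $F_{\mathbf{f}}$, then via the fiberwise Fourier expansion of Lemma 6.1(i) together with Lemma 6.3(iii) and the Pontryagin identity $\widehat{\varphi_{\mathbf{f}}}=\prod_i\widehat{\varphi_{f_i}}$ --- and equate. The paper in fact applies Lemma 6.1(i) to $P_*\mathfrak{A}_{\{\mathbf{f}\}}$ directly without the justification you flag as delicate, so your concern about smoothness of the pushforward is a genuine care point that the paper itself leaves implicit.
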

Together with Lemma 5.12(i), the Decomposition Lemma (i), and Lemma
5.15, this immediately yields

\begin{cor}
$\EE_{\ell+2}(\Gamma(N))\subset M_{\ell+2}(\Gamma(N))$, $\EE_{\ell+2}^{\QQ}(\Gamma(N))\subset M_{\ell+2}^{\QQ}(\Gamma(N)).$
\end{cor}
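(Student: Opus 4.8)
The plan is to combine Theorem 6.2 with the three packaging lemmas about Pontryagin products and fundamental vectors, and then invoke the residue/boundedness machinery of \S5.1.5. The statement $\EE_{\ell+2}(\Gamma(N))\subset M_{\ell+2}(\Gamma(N))$ says that every Eisenstein series $E_\varphi^{[\ell]}$, which a priori is only an element of $\mathcal{O}(\mathfrak{H})$ satisfying the weight-$(\ell+2)$ transformation law, actually extends holomorphically (with at worst log poles) across the cusps, i.e.\ is bounded at every cusp. By Shokurov's Proposition (the $M_{\ell+2}$ part), this is equivalent to $(2\pi i)^{\ell+1}\Omega_{E_\varphi^{[\ell]}}$ being a section of $\Omega^{\ell+1}(\overline{\E}_\Gamma^{[\ell]})\langle\log\overline\pi^{-1}(\kappa_\Gamma^{[\ell]})\rangle$; so it suffices to realize $E_\varphi^{[\ell]}$ as $F_{\mathbf{f}}$ for some $\mathbf{f}\in\mathcal{O}^*(U(N))^{\otimes\ell+1}$, because the cycle-class of $\mathfrak{Z}_{\mathbf{f}}$ lands in precisely that space of logarithmic forms by the construction in \S5.3.4 (the completed precycle $\mathfrak{Z}_{\mathbf{f}}$ lives on $\E^{[\ell]}(N)$, so $\Omega_{\mathfrak{Z}_{\mathbf{f}}}$ has log poles only along $\pi^{-1}(\kappa(N))$, and unipotent/type-$I_N$ at every cusp since $\Gamma=\Gamma(N)$).

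First I would run the surjectivity chain. By Theorem 6.2, $F_{\mathbf{f}} = E^{[\ell]}_{\varphi_{\mathbf{f}}}$ where $\varphi_{\mathbf{f}} = \varphi_{f_1}*\cdots*\varphi_{f_{\ell+1}}$. The relative divisor map $\div:\mathcal{O}^*(U(N))\twoheadrightarrow\Phi_2(N)^\circ$ is surjective by Lemma 5.13(i), so every $(\ell+1)$-tuple of functions $\varphi_1\otimes\cdots\otimes\varphi_{\ell+1}\in\Phi_2^{\QQ}(N)^{\circ,\otimes\ell+1}$ is realized by some $\mathbf{f}$; and by Lemma 5.12(i), $*^{\ell+1}:(\Phi_2^\QQ(N)^\circ)^{\otimes\ell+1}\twoheadrightarrow\Phi_2^\QQ(N)^\circ$ is surjective. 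Composing, the assignment $\mathbf{f}\mapsto\varphi_{\mathbf{f}}$ surjects onto $\Phi_2^\QQ(N)^\circ$. Finally Lemma 5.15 says $\mathsf{E}^{[\ell]}:\Phi_2(N)^\circ\twoheadrightarrow\EE_{\ell+2}(\Gamma(N))$ (and its $\QQ$-version onto $\EE_{\ell+2}^\QQ$ by definition of the latter). Chaining all three surjections, every $E_\varphi^{[\ell]}\in\EE_{\ell+2}(\Gamma(N))$ equals $F_{\mathbf{f}}$ for a suitable $\mathbf{f}$ (over $\CC$; over $\QQ$ one uses the $\QQ$-structures throughout), hence is the modular form attached to an honest higher Chow cycle on $\E^{[\ell]}(N)$ and therefore lies in $M_{\ell+2}(\Gamma(N))$, resp.\ $M_{\ell+2}^\QQ(\Gamma(N))$.

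The rational refinement needs one extra remark: the cycle-class lands in $M_{\ell+2}^\QQ(\Gamma(N))=\mathrm{im}(\Theta_{\ell+2})$ by construction, since $\mathfrak{Z}_{\mathbf{f}}$ is defined over $\QQ$ (all the functions in $\mathfrak{F}(N)_\gamma$ and the group operators $\tilde{\mathcal{G}}^*$ are defined over $\QQ$), so $\Omega_{\mathfrak{Z}_{\mathbf{f}}}$ has rational periods; thus $F_{\mathbf{f}}\in M_{\ell+2}^\QQ(\Gamma(N))$ automatically. This gives $\EE_{\ell+2}^\QQ(\Gamma(N))\subset M_{\ell+2}^\QQ(\Gamma(N))$, and tensoring with $\CC$ (using $\EE_{\ell+2}=\EE_{\ell+2}^\QQ\otimes\CC$ and $M_{\ell+2}^\QQ\otimes\CC\subset M_{\ell+2}$ from Prop.\ 5.5(iii)) gives the first inclusion.

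The main obstacle I anticipate is not the surjectivity bookkeeping, which is routine given the lemmas, but making airtight the claim that $\Omega_{\mathfrak{Z}_{\mathbf{f}}}$ genuinely extends with only logarithmic poles along $\pi^{-1}(\kappa(N))$ and no worse singularities — i.e.\ that moving $\overline{Z_{\mathbf{f}}}$ into good position over the $N$-torsion sections and completing to $\mathfrak{Z}_{\mathbf{f}}$ does not spoil the holomorphicity of the cycle-class, and that the Shokurov dictionary (Prop.\ 5.5(ii)) applies with $\Gamma=\Gamma(N)$ where all cusps are type $I_N$ so $\kappa_{\Gamma(N)}^{[\ell]}=\kappa(N)$ for all $\ell$. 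Since $\Omega_{\mathfrak{Z}_{\mathbf{f}}}=\tilde{\mathcal{G}}^*\iota(N)^*(\dlog f_1(u_1)\wedge\cdots\wedge\dlog f_{\ell+1}(u_{\ell+1}))$ is manifestly a meromorphic form with log poles along the torsion locus and the cusp fibers, and the $N$-torsion poles cancel in the cycle-class because $\mathfrak{Z}_{\mathbf{f}}$ is $\db$-closed on all of $\E^{[\ell]}(N)$ (the completion being possible by the stated isomorphism $[CH^{\ell+1}(\E^{[\ell]}(N),\ell+1)]^{\tilde{\mathcal{G}}}\xrightarrow{\cong}[CH^{\ell+1}(\hat U^{[\ell]}(N),\ell+1)]^{\tilde{\mathcal{G}}}$), this reduces to citing \S5.3.4 and \S6.1 rather than recomputing; I would spell out that reduction carefully and keep the rest brief.
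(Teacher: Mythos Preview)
Your proof is correct and follows essentially the same approach as the paper, which derives the corollary from Theorem 6.6 together with the surjectivity lemmas (5.12(i), the Decomposition Lemma, and 5.15); your use of Lemma 5.13(i) in place of the Decomposition Lemma is an equally valid (and slightly more direct) way to run the surjectivity chain $\mathcal{O}^*(U(N))^{\otimes\ell+1}\twoheadrightarrow\Phi_2^{\QQ}(N)^{\circ}\twoheadrightarrow\EE_{\ell+2}^{\QQ}$. One small imprecision: the reason $\Omega_{\mathfrak{Z}_{\mathbf{f}}}$ has $\QQ(\ell+1)$ periods is not that the cycle is ``defined over $\QQ$'' but simply that any cycle-class lands in $\hm(\QQ(0),H^{\ell+1}(\E^{[\ell]}(N),\QQ(\ell+1)))$, hence in the image of $\Theta_{\ell+2}$ by definition of $M_{\ell+2}^{\QQ}$ --- the field of definition plays no role here.
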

(In particular, the map $\mathcal{O}^{*}(U(N))^{\otimes\ell+1}\to\EE_{\ell+2}^{\QQ}(\Gamma(N))$
defined by $f_{1}\otimes\cdots\otimes f_{\ell+1}\mapsto E_{\varphi_{\textbf{f}}}^{[\ell]}(\tau)$
is surjective.)

What is striking here is how simple cycles (once they are constructed)
make it to prove statements about related objects: in this case, that
\emph{Eisenstein series are modular forms}; in the same spirit we
can \emph{identify their {}``values'' at cusps}, and show that \emph{they
yield all holomorphic forms with log poles and $\QQ$-periods}.

\begin{cor}
For $\sigma\in\kappa(N)$,\[
\frac{1}{(2\pi i)^{\ell}}Res_{\sigma}(\Omega_{\mathfrak{Z}_{\textbf{\emph{f}}}})=\mathfrak{R}_{\sigma}(F_{\textbf{\emph{f}}})=\mathsf{H}_{\sigma}^{[\ell]}(\varphi_{\textbf{\emph{f}}})=\frac{-(\ell+1)}{(2\pi i)^{\ell+2}}\tilde{L}(\widehat{(\pi_{\sigma})_{_{*}}\varphi_{\textbf{\emph{f}}}},\ell+2).\]

\end{cor}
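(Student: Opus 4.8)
The plan is to chain together the identifications already in hand. First I would invoke Theorem 6.2, which computes $F_{\mathbf{f}}(\tau)=E^{[\ell]}_{\varphi_{\mathbf{f}}}(\tau)$, i.e. identifies the modular form underlying $\Omega_{\mathfrak{Z}_{\mathbf{f}}}$ (via $\mathcal{P}_N^*\Omega_{\mathfrak{Z}_{\mathbf{f}}}=(2\pi i)^{\ell+1}\Omega_{F_{\mathbf{f}}}$) as the Eisenstein series attached to the Pontryagin product $\varphi_{\mathbf{f}}=\varphi_{f_1}*\cdots*\varphi_{f_{\ell+1}}$. Next, the leftmost equality $\frac{1}{(2\pi i)^\ell}Res_\sigma(\Omega_{\mathfrak{Z}_{\mathbf{f}}})=\mathfrak{R}_\sigma(F_{\mathbf{f}})$ is just the commuting square displayed at the end of $\S5.1.5$ relating $\oplus\frac{Res}{(2\pi i)^\ell}$ to $\oplus\mathfrak{R}_{[r/s]}$ under the isomorphism $F\mapsto(2\pi i)^{\ell+1}\Omega_F$ between $M_{\ell+2}(\Gamma(N))$ and holomorphic forms with log poles; since Corollary 6.3 guarantees $F_{\mathbf{f}}\in M^{\QQ}_{\ell+2}(\Gamma(N))$, that square applies and the residue of $\Omega_{\mathfrak{Z}_{\mathbf{f}}}$ over $\sigma$ is read off as $\mathfrak{R}_\sigma(F_{\mathbf{f}})$.

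The remaining two equalities are pure computation with the definitions. For $\mathfrak{R}_\sigma(F_{\mathbf{f}})=\mathsf{H}^{[\ell]}_\sigma(\varphi_{\mathbf{f}})$, I would compute $\mathfrak{R}_\sigma(E^{[\ell]}_{\varphi_{\mathbf{f}}})=\lim_{\tau\to i\infty}E^{[\ell]}_{\varphi_{\mathbf{f}}}|^{\ell+2}_{\binom{\ p\ q}{-s\,r}}(\tau)$. The key point is that slashing the Eisenstein series by an element of $SL_2(\ZZ)$ carrying $[r/s]$ to $[i\infty]$ transports $\widehat{\varphi_{\mathbf{f}}}$ by the corresponding linear change of the lattice variables $(m,n)$; this is exactly the Fourier-theoretic bookkeeping encoded in formulas $(5.3)$--$(5.6)$ of $\S5.2.1$ (in particular $\widehat{(\pi_{[r/s]})_*\varphi}=\iota^*_{[r/s]}\widehat{\varphi}$). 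Then the argument of Lemma 6.8 applies verbatim with $[i\infty]$ replaced by $\sigma$: only the $m=0$ terms survive the limit, yielding $\sum'_n \widehat{(\pi_\sigma)_*\varphi_{\mathbf{f}}}(n)/n^{\ell+2}=\tilde{L}(\widehat{(\pi_\sigma)_*\varphi_{\mathbf{f}}},\ell+2)$ up to the constant $\frac{-(\ell+1)}{(2\pi i)^{\ell+2}}$, and the calculation in $\S5.2.1$ relating $\sum_a \varphi(a)B_{\ell+2}(a/N)$ to $\tilde{L}(\widehat{\varphi},\ell+2)$ rewrites this as $\mathsf{H}^{[\ell]}_\sigma(\varphi_{\mathbf{f}})$. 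That same $\S5.2.1$ identity also gives the last equality $\mathsf{H}^{[\ell]}_\sigma(\varphi_{\mathbf{f}})=\frac{-(\ell+1)}{(2\pi i)^{\ell+2}}\tilde{L}(\widehat{(\pi_\sigma)_*\varphi_{\mathbf{f}}},\ell+2)$ directly from the definition of $\mathsf{H}^{[\ell]}_\sigma$, so really only two independent computations are needed.

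The main obstacle I anticipate is the bookkeeping in the first of these computations: making precise that $\mathfrak{R}_\sigma$ — defined by slashing with an arbitrary choice of $\binom{\ p\ q}{-s\,r}\in SL_2(\ZZ)$ — picks out exactly $(\pi_\sigma)_*\varphi_{\mathbf{f}}$ under the finite Fourier transform, independently of the choice of $p,q$. This amounts to checking that the substitution $(m,n)\mapsto(m,n)\binom{\ p\ q}{-s\,r}$ in the lattice sum $\sum'_{(m,n)}\widehat{\varphi_{\mathbf{f}}}(m,n)/(m\tau+n)^{\ell+2}$ carries the limiting ($m\mapsto 0$, i.e. first-row) terms to the $\iota^*_\sigma\widehat{\varphi_{\mathbf{f}}}$-sum, using $\widehat{(\pi_\sigma)_*\varphi}=\iota^*_\sigma\widehat{\varphi}$ from $(5.4)$; the choice-independence follows because changing $p,q$ alters the matrix only by a left factor $\binom{1\ *}{0\ 1}$, which fixes $[i\infty]$ and acts trivially on the relevant residue. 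Everything else is an appeal to Theorem 6.2, Corollary 6.3, the $\S5.1.5$ diagram, Lemma 6.8, and the Bernoulli/$L$-value identity of $\S5.2.1$.
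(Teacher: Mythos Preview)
Your proposal is correct and follows essentially the same route as the paper: the outer equalities come from the $\S5.1.5$ diagram and the $\S5.2.1$--$\S5.2.2$ Bernoulli/$L$-value identity, while the middle equality reduces to the $[i\infty]$ case (Lemma~5.16 in the paper's numbering) via the $SL_2(\ZZ)$-action. The only stylistic difference is that you track this $SL_2(\ZZ)$-reduction on the analytic side (slashing the Eisenstein lattice sum and following $\widehat{\varphi_{\mathbf{f}}}$ through the substitution $(m,n)\mapsto(m,n)\cdot M_\sigma$), whereas the paper does it on the geometric side: it observes that $M_\sigma^*\Omega_{\mathfrak{Z}_{\mathbf{f}}}=\Omega_{\mathfrak{Z}_{M_\sigma^*\mathbf{f}}}$ and then checks directly that $\pi_{[i\infty]_*}(M_\sigma^*\varphi_{\mathbf{f}})=\pi_{[r/s]_*}\varphi_{\mathbf{f}}$, so the $[i\infty]$ case applied to $M_\sigma^*\mathbf{f}$ gives the result. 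Your anticipated bookkeeping obstacle is exactly the content of this step, and both ways of handling it are equivalent.
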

\begin{proof}
The outer equalities are just $\S5.1.5$ and $\S5.2.2$, respectively
($\forall\sigma$). For $\sigma=[i\infty]$, $\mathfrak{R}_{[i\infty]}(F_{\textbf{f}}):=\lim_{\tau\to i\infty}F_{\textbf{f}}(\tau)=\lim_{\tau\to i\infty}E_{\varphi_{\textbf{f}}}^{[\ell]}(\tau)=\mathsf{H}_{[i\infty]}^{[\ell]}(\varphi_{\textbf{f}})$
by $\S5.3.1$.

Now $SL_{2}(\ZZ)$ acts compatibly on the diagram\\
\xymatrix{\bar{W}^{[\ell+1]}_N \ar [rd]^P \ar @{^(->} [rr] & & \E^{[\ell+1]} \ar [rd]^P \ar [rr] \ar '[d][dd] & & \E^{[\ell+1]}(N) \ar [rd]^P \ar '[d][dd] \\
& \bar{W}_N \ar @{^(->} [rr] & & \E \ar [rr] \ar [ld] & & \E(N) \ar [ld] \\
& & \uhp \ar [rr] & & Y(N) \ar @{^(->} [r] & \overline{Y}(N) = Y(N)\cup \kappa(N)
}\\
\\
since $\Gamma(N)\trianglelefteq SL_{2}(\ZZ).$ In particular, the
action on connected components of $\bar{W}_{N}$ (the union of $N$-torsion
sections) induces an action (by pullback) on $\Phi_{2}^{\QQ}(N)^{\circ}$
compatible with Pontryagin $*$ and pullbacks of functions $\in\mathcal{O}^{*}(U_{N})$,
etc. Explicitly, $M_{\sigma}:=\dmat$ sends: (in $\kappa(N)$) $[i\infty]\mapsto[\frac{r}{s}]=:\sigma$,
(in $\uhp$) $\tau\mapsto\frac{r\tau-q}{s\tau+p}=:\tau_{0}$, (in
$\bar{W}_{N}$) $m\frac{\tau}{N}+n\frac{1}{N}\mapsto\frac{1}{N}\frac{m\tau+n}{s\tau+p}=(mp-ns)\frac{\tau_{0}}{N}+(mq+nr)\frac{1}{N}=:\mu\frac{\tau_{0}}{N}+\eta\frac{1}{N}$,
and (in $\Phi_{2}^{\QQ}(N)^{\circ}$, by pullback) $\varphi_{\textbf{f}}(\mu,\eta)\mapsto\left(\dmat^{*}\varphi_{\textbf{f}}\right)(m,n):=\varphi_{\textbf{f}}(mp-ns,mq+nr).$
So\[
\left(\pi_{[i\infty]_{*}}\dmat^{*}\varphi_{\textbf{f}}\right)(m)=\sum_{n\in\nicefrac{\ZZ}{N\ZZ}}\varphi_{\textbf{f}}(mp-ns,mq+nr)=\sum_{n}\varphi_{\textbf{f}}(m(p,q)+n(-s,r))\]
\[
=\left(\pi_{[\frac{r}{s}]_{*}}\varphi_{\textbf{f}}\right)(m),\]
and\[
\frac{Res_{\sigma}}{(2\pi i)^{\ell}}\left(\Omega_{\mathfrak{Z}_{\textbf{f}}}\right)=\frac{Res_{[i\infty]}}{(2\pi i)^{\ell}}\left(M_{\sigma}^{*}\Omega_{\mathfrak{Z}_{\textbf{f}}}\right)=\frac{Res_{[i\infty]}}{(2\pi i)^{\ell}}\left(\Omega_{\mathfrak{Z}_{M_{\sigma}^{*}\textbf{f}}}\right)=\frac{-(\ell+1)}{(2\pi i)^{\ell+2}}\tilde{L}\left(\widehat{\pi_{[i\infty]_{*}}\varphi_{M_{\sigma}^{*}\textbf{f}}},\,\ell+2\right)\]
\[
=\frac{-(\ell+1)}{(2\pi i)^{\ell+2}}\tilde{L}\left(\widehat{\pi_{\sigma_{*}}\varphi_{\textbf{f}}},\,\ell+2\right).\]

\end{proof}
\begin{cor}
(i) Claim 5.1 holds for $\Gamma(N)$ ($\implies$Beilinson-Hodge for
$\E^{[\ell]}(N)$).

(ii) $\EE_{\ell+2}^{\QQ}(\Gamma(N))=M_{\ell+2}^{\QQ}(\Gamma(N))\cong\hm{\left(\QQ(0),H^{\ell+1}(\E^{[\ell]}(N),\QQ(\ell+1)\right)},$
with dimension $|\kappa(N)|$.

(iii) $M_{\ell+2}(\Gamma(N))=\EE_{\ell+2}(\Gamma(N))\oplus S_{\ell+2}(\Gamma(N))$.
\end{cor}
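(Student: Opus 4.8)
The plan is to deduce all three statements from the cycle-class computation (Theorem 6.6) together with the formal machinery already set up in Sections 5.1.4--5.1.6. For (i), recall that Claim 5.1 asserts surjectivity of the composite $CH^{\ell+1}((\overline{\E}^{[\ell]}_{\Gamma})^\circ,\ell+1)\to\hm{(\QQ(0),H^{\ell+1})}\to\oplus_{\kappa^{[\ell]}_\Gamma}\QQ$. First I would take $\Gamma=\Gamma(N)$, where $\kappa^{[\ell]}(N)=\kappa(N)$ (all cusps are unipotent of type $I_N$, as noted at the end of $\S5.1.6$), so the target is $\Upsilon^{\QQ}_2(N)=\oplus_{\kappa(N)}\QQ$. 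By Corollary 6.7, the residue of the class of the Eisenstein symbol $\mathfrak{Z}_{\mathbf{f}}$ at the cusp $\sigma$ is $\mathsf{H}^{[\ell]}_\sigma(\varphi_{\mathbf{f}})$. So it suffices to show that as $\mathbf{f}$ ranges over $\mathcal{O}^*(U(N))^{\otimes\ell+1}$, the resulting $\varphi_{\mathbf{f}}=\varphi_{f_1}*\cdots*\varphi_{f_{\ell+1}}$ exhausts $\Phi^{\QQ}_2(N)^\circ$, and then invoke Lemma 5.9 (surjectivity of $\oplus_\sigma\mathsf{H}^{[\ell]}_\sigma$ onto $\Upsilon^{\QQ}_2(N)$). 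The first fact is exactly Lemma 5.12(i) (surjectivity of $*^{\ell+1}$) combined with Lemma 5.13(i) (surjectivity of $\div$), or more cleanly the chain: Decomposition Lemma (i)/(iii) reduces arbitrary $\varphi\in\Phi^\QQ_2(N)^\circ$ to sums of Pontryagin products of elements of $\mathfrak{S}(N)_{(p,q)}$, and Lemma 5.16(ii) realizes each such generator as $\varphi_f$ for a suitable $f$. Chaining these gives Claim 5.1 for $\Gamma(N)$; the reduction in $\S5.1.6$ ($\mathsf{T}$ surjective, push cycles via $\mathcal{P}_*$) then gives it for general $\Gamma$, but for the statement of (i) only the $\Gamma(N)$ case is asserted, so this is immediate. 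Beilinson--Hodge for $\E^{[\ell]}(N)$ follows from Claim 5.1 by the argument already spelled out just before the statement of Claim 5.1 (using the two short exact sequences there).

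For (ii): Corollary 6.7 shows $\EE^{\QQ}_{\ell+2}(\Gamma(N))\subseteq M^{\QQ}_{\ell+2}(\Gamma(N))$ (this is Corollary 6.7(b)'s content, recorded there). Conversely, $M^\QQ_{\ell+2}=\mathrm{im}(\Theta_{\ell+2})$ by Definition 5.7, and $\Theta_{\ell+2}$ is injective with image determined by its residues via the commutative triangle displayed after Definition 5.7 together with Proposition 5.8(ii) ($\mathfrak{R}|_{M^\QQ\otimes\CC}$ injective). Since part (i) now gives $\EE^\QQ_{\ell+2}\twoheadrightarrow\oplus_{\kappa(N)}\QQ$ under $\mathfrak{R}$, and $\mathfrak{R}$ is injective on $M^\QQ_{\ell+2}\otimes\CC\supseteq\EE^\QQ_{\ell+2}\otimes\CC$, the inclusion $\EE^\QQ_{\ell+2}\otimes\CC\hookrightarrow M^\QQ_{\ell+2}\otimes\CC$ is a surjection on the residue targets, hence an isomorphism of $\CC$-vector spaces; being compatible with $\QQ$-structures (residues are rational by construction) it descends to $\EE^\QQ_{\ell+2}(\Gamma(N))=M^\QQ_{\ell+2}(\Gamma(N))$. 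The isomorphism with $\hm{(\QQ(0),H^{\ell+1}(\E^{[\ell]}(N),\QQ(\ell+1)))}$ is the vertical map $\theta_{\ell+2}$ from the big diagram after Definition 5.7 (its injectivity is built in; surjectivity onto $M^\QQ_{\ell+2}$ is the definition of $\Theta_{\ell+2}$ being an iso onto its image composed with $F\mapsto(2\pi i)^{\ell+1}\Omega_F$). The dimension count $|\kappa(N)|$ is then just the computation of $|\kappa(N)|$ given in $\S5.1.6$, since $\mathfrak{R}$ is injective with image of $\QQ$-dimension equal to the number of unipotent cusps, which for $\Gamma(N)$ is all $|\kappa(N)|=\tfrac{N^2}{2}\prod_{p|N}(1-p^{-2})$ of them.

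For (iii): this is $(5.3)$, which the excerpt already observes follows formally from ``Claim 5.1 holds'' $\implies$ $M^\QQ_{\ell+2}\twoheadrightarrow\oplus\QQ$ $\implies$ $M^\QQ_{\ell+2}\otimes\CC\xrightarrow{\sim}\oplus\CC$, combined with Proposition 5.8(iii) ($(M^\QQ_{\ell+2}\otimes\CC)\oplus S_{\ell+2}\hookrightarrow M_{\ell+2}$). To upgrade the injection to an isomorphism one uses the localization sequence once more: the quotient $M_{\ell+2}(\Gamma(N))/S_{\ell+2}(\Gamma(N))$ injects via $\mathfrak{R}$ into $\oplus_{\kappa(N)}\CC$ (kernel of $\mathfrak{R}$ is exactly $S_{\ell+2}$ by Example 5.4 and the triangle after Definition 5.7), and since $M^\QQ_{\ell+2}\otimes\CC$ already surjects onto $\oplus_{\kappa(N)}\CC$, the map $M_{\ell+2}\to\oplus\CC$ is surjective with the stated splitting. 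Replacing $M^\QQ_{\ell+2}\otimes\CC$ by $\EE_{\ell+2}(\Gamma(N))$ using part (ii) gives $M_{\ell+2}(\Gamma(N))=\EE_{\ell+2}(\Gamma(N))\oplus S_{\ell+2}(\Gamma(N))$.

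The main obstacle is really bookkeeping rather than a deep idea: one must verify that the various surjectivity statements (Lemma 5.9, Lemma 5.12(i), Lemma 5.13, Lemma 5.16(ii), Decomposition Lemma) can be \emph{chained} so that the precycles $\mathfrak{Z}_{\mathbf{f}}$ — which are only well-defined up to $\db$-coboundary after the ``move'' over the $N$-torsion sections — nevertheless have cycle-classes whose residues realize \emph{every} element of $\Upsilon^\QQ_2(N)$. Concretely, the subtle point is that $\div:\mathcal{O}^*(U(N))\to\Phi_2(N)^\circ$ lands in degree-zero divisors on $N$-torsion while the $(p,q)$-vertical support condition of $\mathfrak{F}(N)_\gamma$ (needed for the KLM formula, cf.\ Remark 5.17(a)) must be respected; Lemma 5.16(ii) together with the $SL_2(\ZZ)$-equivariance used in the proof of Corollary 6.7 is precisely what makes this compatible. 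Once that compatibility is in hand, everything else is a formal diagram chase through $\S5.1.4$--$\S5.1.6$ and Theorem 6.6/Corollary 6.7.
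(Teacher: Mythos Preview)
Your proposal is correct and follows essentially the same route as the paper: both arguments chain the surjectivity of $\div$ (Lemma 5.13), of the Pontryagin product $*^{\ell+1}$ (Lemma 5.12), and of the horospherical map $\oplus_\sigma\mathsf{H}^{[\ell]}_\sigma$ (Lemma 5.9), together with the residue identification (the paper's Cor.\ 6.8) and the injectivity of $Res$ on $M^\QQ_{\ell+2}$, to obtain (i)--(ii); part (iii) is then (5.3). The paper packages this as a single large commutative diagram, whereas you spell out the chain verbally, but the content is the same; your closing paragraph about the ``main obstacle'' (well-definedness of $\mathfrak{Z}_{\mathbf f}$ up to $\db$-coboundary, $(p,q)$-vertical support) is over-cautious here --- those issues were already absorbed into Theorem 6.6 and Corollary 6.8, and the present corollary is purely formal once those are in hand. (A few of your cross-references are off by one or two --- e.g.\ what you call ``Lemma 5.16(ii)'' is Lemma 5.13(ii), and the residue formula you attribute to ``Corollary 6.7'' is Corollary 6.8 --- but the intended statements are clear.)
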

\begin{rem*}
Note that $\dim_{\CC}\EE=\dim_{\QQ}\EE^{\QQ}=\dim_{\QQ}M^{\QQ}\leq\dim_{\CC}M$
in general.
\end{rem*}
\begin{proof}
is basically contained in the diagram\\
\tiny \xymatrix{ & & \mathcal{O}^*(U(N))^{\otimes \ell+1} \ar @{->>} [lld]_{\otimes^{\ell+1}\div} \ar [rd]^{\mathfrak{Z}} \\ \left( \Phi^{\QQ}_2(N)^{\circ} \right)^{\otimes \ell + 1} \ar @{->>} [d]^{*^{^{\ell+1}}} & & & CH^{\ell+1}(\E^{[\ell]}(N),\ell+1) \ar [d]^{[\cdot ]} \\  \Phi^{\QQ}_2(N)^{\circ} \ar @{->>} [rrdd]_{\mathsf{H}=\oplus_{_{\sigma\in\kappa(N)}}\mathsf{H}^{[\ell]}_{\sigma}} \ar @{->>} [r] & \EE^{\QQ}_{\ell+2}(\Gamma(N)) \ar @{^(->} [r] \ar [rdd]^{\oplus \mathfrak{R}_{\sigma}|_{\EE^{\QQ}}} & M^{\QQ}_{\ell+2}(\Gamma(N)) \ar @<1ex> [r]^{[(2\pi i)^{\ell+1}\Omega_{(\cdot)}] \mspace{100mu}}_{\cong \mspace{100mu}} \ar [dd]^{\oplus \mathfrak{R}_{\sigma}|_{M^{\QQ}}} & \hm \left( \QQ(0), H^{\ell+1}(\E^{[\ell]}(N),\QQ(\ell+1)) \right) \ar @{^(->} [ldd]^{Res = \oplus \frac{Res_{\sigma}}{(2\pi i)^{\ell}}} \ar @<1ex> [l]^{\theta_{\ell+2} \mspace{100mu}} \\ \\ & & \Upsilon^{\QQ}_2(N)} \normalsize \\
\\
(The arrows around the outer left surject by $\S\S5.2.5,\,5.2.3,\,5.2.2$
(resp.), as does the map to $\EE_{\ell+2}^{\QQ}$ by $\S5.3.1$; the
map $from$ $\EE_{\ell+2}^{\QQ}$ injects by Cor. 6.7 and $Res$ by
$\S5.1.4$. The upper pentagon commutes by Theorem 6.6, and the lower
triangles by Cor 6.8.) We can track $\textbf{f}:=f_{1}\otimes\cdots\otimes f_{\ell+1}$
though the diagram:\\
\xymatrix{& & & \textbf{f} \ar @{|->} [ld] \ar @{|->} [rd]  \\ & & \varphi_{f_1} \otimes \cdots \otimes \varphi_{f_{\ell+1}} \ar @{|->} [d] & & \left\langle \mathfrak{Z}_{\textbf{f}} \right\rangle \ar @{|->} [d] \\ & & \varphi_{\textbf{f}} \ar @{|->} [r] \ar @{|-->} [rd] & F_{\textbf{f}} \ar @{|->} @<1ex> [r] \ar @{|->} [d] & [\Omega_{\mathfrak{Z}_{\textbf{f}}}] \ar @{|-->} [ld] \ar @{|->} @<1ex> [l] \\ & & & \mathfrak{R}(F_{\textbf{f}}) }\\
\\
To see (i), note the composition $\mathsf{H}\circ*^{\ell+1}\circ\otimes^{\ell+1}\div$
surjective $\implies$$Res\circ[\cdot]\circ\mathfrak{Z}$ surjective
$\implies$ $Res\circ[\cdot]$ surjective (=Claim 5.1) ($\implies$
$[\cdot]$ surjective (=Beilinson-Hodge)).

For (ii), $Res\circ[\cdot]\circ\mathfrak{Z}$ surjective $\implies$
$[\cdot]\circ\mathfrak{Z}$ surjective (and $Res$ $\cong$) $\implies$
$\theta_{\ell+2}\circ[\cdot]\circ\mathfrak{Z}$ surjective $\implies$
$\EE^{\QQ}\subseteq M^{\QQ}$ is equality. Finally, $\dim\Upsilon_{2}^{\QQ}(N)=|\kappa(N)|$.

Now (iii) follows from eqn. (5.3).
\end{proof}
\begin{rem}
Corollary $6.9$ holds for arbitrary congruence subgroups $\Gamma$
(between $\Gamma(N)$ and $SL_{2}(\ZZ)$), given an appropriate definition
of Eisenstein series for $\Gamma$. This is (referring to $\S5.1.6$)\[
\EE_{\ell+2}^{\QQ}(\Gamma):=\mathfrak{R}^{-1}\left(\mathsf{P}_{\nicefrac{\Gamma(N)}{\Gamma}}^{[\ell]}(\Upsilon_{2}^{\QQ}(\Gamma))\right)\cap\EE_{\ell+2}^{\QQ}(\Gamma(N)),\]
the important point being that these are generated by $\varphi\in\Phi_{2}(N)^{\circ}$
satisfying \emph{$\mathsf{H}_{[\frac{r}{s}]}^{[\ell]}(\varphi)=\mathsf{H}_{[\frac{r'}{s'}]}^{[\ell]}(\varphi)$
whenever $[\frac{r}{s}]$, $[\frac{r'}{s'}]$ $\in\kappa(N)$ map
to the same cusp in $\kappa(\Gamma)$.} We'll look at this condition
further below (in $\S6.2.1$).

Also, a version of the above construction can be made to work for
$\PP\Gamma(2)$ (by choosing an $\cong$ subgroup of $SL_{2}(\ZZ)$)
if $\ell$ is even, but we have omitted this.
\end{rem}

\subsubsection{Additional calculations for the cycle-class}

The results of $\S6.1.2$ lead naturally to a basis for $\EE_{\ell+2}^{\QQ}(\Gamma(N))$
whose elements correspond to holomorphic $(\ell+1)$-forms with $\QQ(\ell+1)$
periods and \emph{log poles along the fiber over exactly one cusp
$\sigma$. }(In some sense this is the most explicit confirmation
of Beilinson-Hodge.)

Writing \[
\Gamma(N)_{i\infty}:=Stab(i\infty\in\uhp^{*})=\left\{ \left(\begin{array}{cc}
1 & aN\\
0 & 1\end{array}\right)\right\} \subset\Gamma(N)\]
\[
PSL_{2}(\ZZ)_{i\infty}:=Stab(i\infty\in\uhp^{*})=\left\{ \pm\left(\begin{array}{cc}
1 & a\\
0 & 1\end{array}\right)\right\} \subset PSL_{2}(\ZZ),\]
we have a short-exact sequence\[
\Gamma(N)_{i\infty}\diagdown\Gamma(N)\,\,\longrightarrow\,\, PSL_{2}(\ZZ)_{i\infty}\diagdown PSL_{2}(\ZZ)\,\,\longrightarrow\,\,\begin{array}[t]{c}
\underbrace{\left\langle \imat\right\rangle \diagdown PSL_{2}(\ZZ/N\ZZ)}\\
\cong\kappa(N)\end{array}.\]
Hence \[
E_{\varphi}^{[\ell]}(\tau)=\frac{-(\ell+1)}{(2\pi i)^{\ell+2}}\sum_{(m,n)\in\ZZ^{2}}{}^{'}\frac{\widehat{\varphi}(m,n)}{(m\tau+n)^{\ell+2}}\]
\[
=\frac{-(\ell+1)}{(2\pi i)^{\ell+2}}\mspace{-50mu}\sum_{\tiny\begin{array}[t]{c}
\pm(m_{0},n_{0})\in\ZZ^{2}/\pm\\
\text{rel. prime}\\
\mspace{150mu}\updownarrow\gamma=\jmat\\
\sum_{\gamma\in\frac{PSL_{2}(\ZZ)}{PSL_{2}(\ZZ)_{i\infty}}}\end{array}}\mspace{-40mu}\sum_{\mathfrak{z}\in\ZZ}{}^{'}\frac{\widehat{\varphi}(\mathfrak{z}m_{0},\mathfrak{z}n_{0})}{(\mathfrak{z}m_{0}\tau+\mathfrak{z}n_{0})^{\ell+2}}\]
\[
=\frac{-(\ell+1)}{(2\pi i)^{\ell+2}}\sum_{\tiny\begin{array}[t]{c}
\sigma\in\kappa(N)\\
\parallel\mspace{60mu}\\
{}[\frac{r}{s}]\mspace{60mu}\end{array}}\mspace{-50mu}\sum_{\tiny\begin{array}[t]{c}
\gamma'\in\frac{\Gamma(N)}{\Gamma(N)_{i\infty}}\cdot\cmat\\
\mspace{150mu}\updownarrow\gamma'=\jmat\\
\sum_{\Tiny\begin{array}[t]{c}
(m_{0},n_{0})\text{ rel. prime,}\\
\emn(-s,r)\end{array}}\end{array}}\mspace{-40mu}\sum_{\mathfrak{z}\in\ZZ}{}^{'}\frac{\widehat{\varphi}(\mathfrak{z}m_{0},\mathfrak{z}n_{0})}{\mathfrak{z}^{\ell+2}(m_{0}\tau+n_{0})^{\ell+2}}.\]
Now since (in the sum) $(m_{0},n_{0})\emn(-s,r)$, $\widehat{\varphi}(\mathfrak{z}m_{0},\mathfrak{z}n_{0})=\widehat{\varphi}(-\mathfrak{z}s,\mathfrak{z}r)=(\iota_{[\frac{r}{s}]}^{*}\widehat{\varphi})(\mathfrak{z})=\widehat{\pi_{[\frac{r}{s}]_{*}}\varphi}(\mathfrak{z)}$
and the above \[
=\sum_{\sigma\in\kappa(N)}\left[\frac{-(\ell+1)}{(2\pi i)^{\ell+2}}\sum_{\mathfrak{z}\in\ZZ}{}^{'}\frac{\widehat{\pi_{[\frac{r}{s}]_{*}}\varphi}(\mathfrak{z})}{\mathfrak{z}^{\ell+2}}\right]\sum_{\tiny\begin{array}[t]{c}
(m_{0},n_{0})\emn(-s,r)\\
\gcd(m_{0},n_{0})=1\end{array}}\frac{1}{(m_{0}\tau+n_{0})^{\ell+2}}\]
\[
=:\sum_{\sigma\in\kappa(N)}\mathsf{H}_{\sigma}^{[\ell]}(\varphi)\tilde{E}_{\sigma}^{[\ell]}(\tau),\]
where the $\sum_{\mathfrak{z}}=\tilde{L}(\widehat{\pi_{[\frac{r}{s}]_{*}}\varphi},\ell+2)$
and $\mathsf{H}_{\sigma}^{[\ell]}(\varphi)$ ($\sigma=[\frac{r}{s}]$)
is the entire bracketed quantity.

\begin{prop}
(i) We have, for $\sigma=[\frac{r}{s}]$, \small \[
\tilde{E}_{\sigma}^{[\ell]}(\tau)=\sum_{\tiny\begin{array}[t]{c}
(m_{0},n_{0})\in\ZZ^{2}\\
\text{rel. prime,}\\
\emn(-s,r)\end{array}}\mspace{-20mu}\frac{1}{(m_{0}\tau+n_{0})^{\ell+2}}=\mspace{-30mu}\sum_{\tiny\begin{array}[t]{c}
(\alpha',\beta')\in\ZZ^{2}\\
\gcd(r+N\alpha',s+N\beta')=1\end{array}}\mspace{-50mu}\frac{1}{(r+N\alpha'-(s+N\beta')\tau)^{\ell+2}}\]
\[
=\mspace{-30mu}\sum_{\tiny\begin{array}[t]{c}
(\alpha,\beta)\in\ZZ^{2}\\
\gcd(1+N\alpha,N\beta)=1\end{array}}\mspace{-40mu}\frac{1}{[(1+\alpha N)(r-s\tau)+\beta N(q+p\tau)]^{\ell+2}}\]
\normalsize In particular,\[
\tilde{E}_{[i\infty]}^{[\ell]}(\tau)=\sum_{\tiny\begin{array}[t]{c}
(\alpha,\beta)\in\ZZ^{2}\\
\gcd(1+N\alpha,N\beta)=1\end{array}}\mspace{-20mu}\frac{1}{(1+N\alpha-N\beta\tau)^{\ell+2}}.\]

(ii) The $\{\tilde{E}_{\sigma}^{[\ell]}(\tau)\}_{\sigma\in\kappa(N)}$
give a basis for the $\EE_{\ell+2}^{\QQ}(\Gamma(N))$, satisfying
$\mathfrak{R}_{\sigma'}(\tilde{E}_{\sigma}^{[\ell]})=\delta_{\sigma\sigma'}.$

(iii) Given $\textbf{\emph{f}}\in\mathcal{O}^{*}(U(N))^{\otimes\ell+1}$,\[
F_{\textbf{\emph{f}}}(\tau)=\sum_{\sigma\in\kappa(N)}\mathsf{H}_{\sigma}^{[\ell]}(\varphi_{\textbf{\emph{f}}})\tilde{E}_{\sigma}^{[\ell]}(\tau).\]

\end{prop}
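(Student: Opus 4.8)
The plan is to verify the three statements by unwinding the definitions established in $\S\S5.3.1$, $6.1.2$, and $6.1.4$, using the $SL_2(\ZZ)$-equivariance already exploited in the proof of Corollary 6.8. For part (i), I would start from the definition of $\tilde{E}^{[\ell]}_\sigma(\tau)$ as the ``coefficient-free'' Eisenstein summand extracted in $\S6.1.4$, namely the sum of $(m_0\tau+n_0)^{-(\ell+2)}$ over coprime pairs $(m_0,n_0)$ congruent to $(-s,r)$ mod $N$. The first rewriting is the substitution $(m_0,n_0)=(-s-N\beta',\, r+N\alpha')$, which merely reparametrizes the congruence class by its lift to $\ZZ^2$ (noting $(-s-N\beta')\tau + (r+N\alpha') = (r+N\alpha') - (s+N\beta')\tau$ up to the sign convention in the display). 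The second rewriting uses the matrix $\cmat\in SL_2(\ZZ)$ attached to $\sigma$: since $pr+qs\equiv 1$, every pair congruent to $(-s,r)$ can be written as $(1+\alpha N)\cdot(-s,r) + \beta N\cdot(p,q)$ for unique $(\alpha,\beta)$, and coprimality of $(m_0,n_0)$ translates (because $\amat$-type changes of basis preserve gcd) into $\gcd(1+N\alpha, N\beta)=1$; substituting into $m_0\tau+n_0$ gives the bracketed expression $(1+\alpha N)(r-s\tau)+\beta N(q+p\tau)$. The special case $\sigma=[i\infty]$ is $(-s,r)=(0,1)$, $(p,q)=(1,0)$, which collapses the bracket to $1+N\alpha-N\beta\tau$. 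This part is essentially bookkeeping with lattice cosets; the only subtlety is keeping the sign conventions for $\omega(\gamma)=m\tau+n$ consistent with the displayed formulas, which I would state once and then suppress.

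For part (ii), the key input is Corollary 6.8, which computes $Res_\sigma/(2\pi i)^\ell$ of $\Omega_{\mathfrak{Z}_{\mathbf{f}}}$, i.e. $\mathfrak{R}_\sigma(F_{\mathbf{f}})$, as $\mathsf{H}^{[\ell]}_\sigma(\varphi_{\mathbf{f}})$. Applied to the Eisenstein series $E^{[\ell]}_\varphi$ directly (via Lemma 5.15 for $\sigma=[i\infty]$ and the $M_\sigma$-translation argument of Corollary 6.8 for general $\sigma$), this gives $\mathfrak{R}_\sigma(E^{[\ell]}_{\varphi}) = \mathsf{H}^{[\ell]}_\sigma(\varphi)$. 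I would then decompose $E^{[\ell]}_\varphi$ into its $\gamma$-orbit pieces exactly as in the chain of displays in $\S6.1.4$ preceding the Proposition, which expresses $E^{[\ell]}_\varphi(\tau) = \sum_{\sigma} \mathsf{H}^{[\ell]}_\sigma(\varphi)\, \tilde{E}^{[\ell]}_\sigma(\tau)$; feeding this back through $\mathfrak{R}_{\sigma'}$ and using linear independence of the residue functionals forces $\mathfrak{R}_{\sigma'}(\tilde{E}^{[\ell]}_\sigma) = \delta_{\sigma\sigma'}$. (One must check $\tilde{E}^{[\ell]}_\sigma$ is itself an element of $\EE^{\QQ}_{\ell+2}(\Gamma(N))$, which follows because it is $E^{[\ell]}_{\varphi}$ for the choice of $\varphi$ whose Fourier transform is the indicator of $\langle(-s,r)\rangle^*$ times a suitable constant; these $\varphi$ lie in $\Phi^{\QQ}_2(N)^{\circ}$.) Since $|\kappa(N)| = \dim_{\QQ}\EE^{\QQ}_{\ell+2}(\Gamma(N))$ by Corollary 6.9(ii), a family of $|\kappa(N)|$ elements on which the residue map acts as the identity matrix is automatically a basis.

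Part (iii) is then immediate: it is exactly the orbit-decomposition identity derived in the displays just above the Proposition statement, now read in light of (ii) — the coefficient of $\tilde{E}^{[\ell]}_\sigma$ in the expansion of $F_{\mathbf{f}} = E^{[\ell]}_{\varphi_{\mathbf{f}}}$ (Theorem 6.6) is precisely $\mathsf{H}^{[\ell]}_\sigma(\varphi_{\mathbf{f}})$, which also matches $\mathfrak{R}_\sigma(F_{\mathbf{f}})$ by Corollary 6.8. The main obstacle, such as it is, lies in part (i): making the three successive reindexings genuinely rigorous requires care that each change of summation variable is a bijection on the relevant coprime congruence classes and that coprimality is preserved under the unimodular substitutions — in particular verifying that $\{(1+\alpha N)(-s,r) + \beta N(p,q) : (\alpha,\beta)\in\ZZ^2,\ \gcd(1+N\alpha,N\beta)=1\}$ is exactly the set of primitive vectors $\equiv (-s,r) \bmod N$. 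Everything else is a direct consequence of results already proved in $\S\S5$–$6.1$.
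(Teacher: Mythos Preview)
Your proposal is correct and follows essentially the same approach as the paper. The paper's own proof is extremely terse (``for (ii), pick for each $\sigma$ a $\varphi\in\Phi_{2}^{\QQ}(N)^{\circ}$ so that $\mathsf{H}_{\sigma'}^{[\ell]}(\varphi)=\delta_{\sigma\sigma'}$, and plug into the computation above. The remainder is clear.''), and your write-up fleshes out exactly this: the displays immediately preceding the Proposition already give the decomposition $E_{\varphi}^{[\ell]}=\sum_{\sigma}\mathsf{H}_{\sigma}^{[\ell]}(\varphi)\tilde{E}_{\sigma}^{[\ell]}$, part (i) is the sequence of lattice-coset reindexings you describe, and (iii) is the decomposition specialized to $\varphi=\varphi_{\mathbf{f}}$. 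The only cosmetic difference is that for (ii) you invoke linear independence of the functionals $\varphi\mapsto\mathsf{H}_{\sigma}^{[\ell]}(\varphi)$ (equivalent to surjectivity of the horospherical map, Lemma 5.9), whereas the paper plugs in the specific $\varphi$ from Proposition 5.10 with $\mathsf{H}_{\sigma'}^{[\ell]}(\varphi)=\delta_{\sigma\sigma'}$ to read off $\tilde{E}_{\sigma}^{[\ell]}=E_{\varphi}^{[\ell]}$ directly---these are dual formulations of the same linear-algebra step.
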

\begin{proof}
for (ii), pick for each $\sigma$ a $\varphi\in\Phi_{2}^{\QQ}(N)^{\circ}$
so that $\mathsf{H}_{\sigma'}^{[\ell]}(\varphi)=\delta_{\sigma\sigma'}$,
and plug into the computation above. The remainder is clear.
\end{proof}
Next, we have a $q$-series expansion at $[i\infty]$ for the usual
Eisenstein series associated to a {}``divisor on $N$-torsion''
$\varphi\in\Phi_{2}^{\QQ}(N)^{\circ}$: write $q_{0}:=e^{\frac{2\pi i\tau}{N}}=${}``$q^{\frac{1}{N}}$'',
$\xi_{N}(a):=e^{\frac{2\pi ia}{N}}$, ${}^{\ell}\widehat{\varphi}(m,n):=\widehat{\varphi}(m,n)+(-1)^{\ell}\widehat{\varphi}(-m,-n).$

\begin{prop}
$E_{\varphi}^{[\ell]}(\tau)=$\[
\mathsf{H}_{[i\infty]}^{[\ell]}(\varphi)+\frac{(-1)^{\ell+1}}{N^{\ell+2}\ell!}\sum_{M\geq1}q_{0}^{M}\left\{ \sum_{r|M}r^{\ell+1}\left(\sum_{n_{0}\in\nicefrac{\ZZ}{N\ZZ}}\xi_{N}(n_{0}r)\cdot{}^{\ell}\widehat{\varphi}(\frac{M}{r},n_{0})\right)\right\} .\]

\end{prop}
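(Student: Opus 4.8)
The plan is to compute the Eisenstein series $E_\varphi^{[\ell]}(\tau)=\frac{-(\ell+1)}{(2\pi i)^{\ell+2}}\sum_{(m,n)\in\ZZ^2}{}'\frac{\widehat\varphi(m,n)}{(m\tau+n)^{\ell+2}}$ by the classical device of separating off its $\tau$-independent ``constant term'' and Fourier-expanding the remainder, the only new feature being that one must drag the $N$-periodic weight $\widehat\varphi$ through the computation. First I would split the outer sum according to whether $m=0$ or $m\neq0$. The part with $m=0$ is $\frac{-(\ell+1)}{(2\pi i)^{\ell+2}}\sum_{n\neq0}\widehat\varphi(0,n)\,n^{-(\ell+2)}$; it does not depend on $\tau$, and since every term with $m\neq0$ tends to $0$ as $\tau\to i\infty$, this is exactly $\lim_{\tau\to i\infty}E_\varphi^{[\ell]}(\tau)$, which by the limit lemma of $\S5.3.1$ equals $\mathsf{H}_{[i\infty]}^{[\ell]}(\varphi)$. (One may instead re-derive it directly, using $\widehat\varphi(0,n)=\widehat{(\pi_{[i\infty]})_*\varphi}(n)$ from $\S5.2.1$ together with the Bernoulli/$\tilde L$ identity of $\S5.2.1$, which is the same computation as in that lemma.)

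For the part with $m\neq0$ I would break the $n$-summation into residue classes mod $N$: writing $n=n_0+Nj$ with $0\le n_0\le N-1$ and $j\in\ZZ$, and invoking $N$-periodicity of $\widehat\varphi$,
\[
\sum_{n\in\ZZ}\frac{\widehat\varphi(m,n)}{(m\tau+n)^{\ell+2}}=\frac{1}{N^{\ell+2}}\sum_{n_0\bmod N}\widehat\varphi(m,n_0)\sum_{j\in\ZZ}\frac{1}{\bigl(\tfrac{m\tau+n_0}{N}+j\bigr)^{\ell+2}}.
\]
Then I would apply the Lipschitz summation formula $\sum_{j\in\ZZ}(z+j)^{-k}=\frac{(-2\pi i)^k}{(k-1)!}\sum_{d\ge1}d^{k-1}e^{2\pi i dz}$ (valid for $\mathrm{Im}\,z>0$, $k\ge2$; here $\ell+2\ge3$). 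For $m>0$ it applies with $z=(m\tau+n_0)/N$, producing $\frac{(-2\pi i)^{\ell+2}}{(\ell+1)!}\sum_{d\ge1}d^{\ell+1}q_0^{dm}\xi_N(dn_0)$; for $m<0$ the reflection $j\mapsto-j$ rewrites the inner sum as $(-1)^{\ell+2}\sum_j((-z)+j)^{-(\ell+2)}$ with $\mathrm{Im}(-z)>0$, and after the change of variables $m\mapsto-m$, $n_0\mapsto-n_0$ the whole $m<0$ block becomes the $m>0$ block with $\widehat\varphi(m,n_0)$ replaced by $(-1)^{\ell+2}\widehat\varphi(-m,-n_0)$. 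Since $(-1)^{\ell+2}=(-1)^\ell$ is precisely the sign occurring in ${}^\ell\widehat\varphi(m,n)=\widehat\varphi(m,n)+(-1)^\ell\widehat\varphi(-m,-n)$, adding the two blocks replaces $\widehat\varphi$ by ${}^\ell\widehat\varphi$, while the scalar prefactors collapse: $\frac{-(\ell+1)}{(2\pi i)^{\ell+2}}\cdot\frac{1}{N^{\ell+2}}\cdot\frac{(-2\pi i)^{\ell+2}}{(\ell+1)!}=\frac{(-1)^{\ell+1}}{N^{\ell+2}\,\ell!}$. Finally I would re-index the resulting double sum over $(m\ge1,\,d\ge1)$ by $M=dm$ (so $d\mid M$ and $m=M/d$), which collects the coefficient of $q_0^M$ into $\sum_{r\mid M}r^{\ell+1}\bigl(\sum_{n_0\in\ZZ/N\ZZ}\xi_N(n_0 r)\,{}^\ell\widehat\varphi(M/r,n_0)\bigr)$, giving the asserted expansion.

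The argument is essentially bookkeeping; the one step that genuinely demands care is the $m<0$ block, where two independent signs must be tracked — the $(-1)^{\ell+2}$ from the reflection $j\mapsto-j$ and the $(-1)^{\ell+2}$ from $(-2\pi i)^{\ell+2}/(2\pi i)^{\ell+2}$ — and the change of variables $(m,n_0)\mapsto(-m,-n_0)$ must be arranged so that the two halves add up to form ${}^\ell\widehat\varphi$ rather than cancel. Absolute convergence of all the series in sight (needed to legitimize both the regrouping by residue classes and the reindexing by $M$) is immediate from $\ell+2\ge3$, so there is no analytic subtlety beyond that.
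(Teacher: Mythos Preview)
Your proof is correct and complete. The paper itself does not give a detailed argument here: its proof amounts to a reference to Gunning and Miyake for the even case, plus a remark that one can derive it ``from scratch using ideas in \cite{Si}'', deferring the actual computation to the later regulator-period context (Prop.~7.8), where a variant using the product expansion of $\sin(\pi(\alpha+z))$ is carried out. Your direct route via the Lipschitz summation formula is the standard classical derivation (and is essentially what the cited references contain), so you have simply supplied what the paper chose to omit.
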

\begin{proof}
essentially in \cite{Gu} for $\ell$ even (also see \cite{Mi}),
but can be derived from scratch using ideas in \cite{Si} (will be
done below for $q$-series of regulator periods).
\end{proof}
Since $q_{0}$ is the local coordinate at $[i\infty]\in\overline{Y}(N)$,
this yields a $power$-$series$ $expansion$ for $F_{\textbf{f}}$
there. We have not tried to directly compute $q$-expansions for the
$\tilde{E}_{\sigma}^{[\ell]}$, but one can plug $\varphi:=\frac{1}{N}\pi_{\sigma}^{*}\varphi_{N}^{[\ell]}$
into $E_{\varphi}^{[\ell]}$ to have the same effect (see Prop. 5.10).
We are particularly interested in the case $\sigma=[i\infty]$. First,
a simplification of Prop. 6.12:

\begin{cor}
For $\varphi_{0}\in\Phi^{\QQ}(N)^{\circ}$, $\varphi:=\frac{1}{N}\pi_{[i\infty]}^{*}\varphi_{0}$,
we have\[
E_{\varphi}^{[\ell]}(\tau)=\frac{(-1)^{\ell}}{\ell!(\ell+2)}\sum_{a=0}^{N}\varphi_{0}(a)B_{\ell+2}(\frac{a}{N})+\frac{(-1)^{\ell+1}}{N^{\ell+1}\ell!}\sum_{\mu\geq1}q_{0}^{N\mu}\left\{ \sum_{r|\mu}r^{\ell+1}\cdot{}^{\ell}\varphi_{0}(r)\right\} ,\]
where ${}^{\ell}\varphi_{0}(a)=\varphi_{0}(a)+(-1)^{\ell}\varphi_{0}(-a).$
\end{cor}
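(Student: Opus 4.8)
The plan is to derive Corollary 6.13 by specializing Proposition 6.12 to the vector $\varphi := \frac{1}{N}\pi_{[i\infty]}^*\varphi_0$ and then simplifying the three ingredients that appear there: the constant term $\mathsf{H}_{[i\infty]}^{[\ell]}(\varphi)$, the quantity ${}^{\ell}\widehat{\varphi}(\tfrac{M}{r},n_0)$ inside the $q$-series, and the inner sum over $n_0$. First I would record that by the definition of $\pi_{[i\infty]}$ and $\iota_{[i\infty]}$ in $\S5.1.6$, the function $\pi_{[i\infty]}^*\varphi_0$ takes the value $\varphi_0(m)$ at $(m,n)$ (independently of $n$), so its Fourier transform $\widehat{\pi_{[i\infty]}^*\varphi_0}(\mu,\eta)$ is computed by summing the $n$-variable over $\ZZ/N\ZZ$: this gives $N\,\widehat{\varphi_0}(\mu)$ when $\eta\equiv 0\,(N)$ and $0$ otherwise. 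Equivalently, using formula $(5.4)$, $\frac{1}{N}\widehat{\pi_{[i\infty]}^*\varphi_0} = (\iota_{[i\infty]})_*\widehat{\varphi_0}$, which is supported on $\{\eta\equiv 0\}$ with value $\widehat{\varphi_0}(\mu)$ there. Thus $\widehat{\varphi}(m,n) = \widehat{\varphi_0}(m)$ if $n\equiv 0\,(N)$ and $0$ otherwise.

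Next I would handle the constant term: by Lemma 5.15 (or equivalently the formula for $\mathsf{H}_{[i\infty]}^{[\ell]}$ in $\S5.2.2$ applied to $\varphi$), $\mathsf{H}_{[i\infty]}^{[\ell]}(\varphi) = \mathsf{L}^{[\ell]}\big(\tfrac{1}{N}\widehat{\pi_{[i\infty]*}\varphi}\big)$; but $\pi_{[i\infty]*}\varphi = \pi_{[i\infty]*}\tfrac1N\pi_{[i\infty]}^*\varphi_0 = \varphi_0$ by the projection formula (trace of a pullback over the $N$-element fiber, with the $\tfrac1N$ absorbing the multiplicity), so this equals $\mathsf{H}_{[i\infty]}^{[\ell]}$ evaluated in the one-variable sense, namely $\frac{(-1)^\ell(\ell+1)}{(\ell+2)!}\sum_{a=0}^{N-1}\varphi_0(a)B_{\ell+2}(\tfrac aN)$. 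Pulling out the factor $\frac{(-1)^\ell}{\ell!(\ell+2)}$ matches the stated first summand since $\frac{\ell+1}{(\ell+2)!} = \frac{1}{\ell!(\ell+2)}$. For the $q$-series part, I substitute the support computation into Proposition 6.12: the inner sum $\sum_{n_0\in\ZZ/N\ZZ}\xi_N(n_0 r)\cdot{}^{\ell}\widehat{\varphi}(\tfrac Mr,n_0)$ collapses because ${}^{\ell}\widehat{\varphi}(\tfrac Mr,n_0) = {}^{\ell}\widehat{\varphi_0}(\tfrac Mr)$ for $n_0\equiv 0$ and vanishes otherwise, leaving just the single term $n_0=0$, i.e.\ $\xi_N(0)\cdot{}^{\ell}\widehat{\varphi_0}(\tfrac Mr) = {}^{\ell}\widehat{\varphi_0}(\tfrac Mr)$. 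Hence the double sum becomes $\frac{(-1)^{\ell+1}}{N^{\ell+2}\ell!}\sum_{M\ge 1}q_0^M\sum_{r|M}r^{\ell+1}\,{}^{\ell}\widehat{\varphi_0}(\tfrac Mr)$, and reindexing $M = N\mu$... — here I need to be careful — actually the exponent $N\mu$ in the statement together with the extra factor $N^{-\ell-1}$ versus $N^{-\ell-2}$ signals that a further Fourier-inversion/Poisson-type identity is being used to replace ${}^{\ell}\widehat{\varphi_0}$ by ${}^{\ell}\varphi_0$ at the cost of one factor of $N$ and a restriction of the exponents to multiples of $N$.

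The cleanest way to see that last step, and the one I would carry out, is to note that $E_\varphi^{[\ell]}$ with $\varphi$ supported (after transform) on $\eta\equiv 0$ is essentially a one-variable Eisenstein series: writing out $E_\varphi^{[\ell]}(\tau) = \frac{-(\ell+1)}{(2\pi i)^{\ell+2}}\sum_{(m,n)}'\frac{\widehat{\varphi_0}(m)}{(m\tau+n)^{\ell+2}}$, one applies the classical Lipschitz/Hurwitz formula $\sum_{n\in\ZZ}(w+n)^{-k} = \frac{(-2\pi i)^k}{(k-1)!}\sum_{d\ge 1}d^{k-1}e^{2\pi i d w}$ (valid for $\Im w>0$, $k\ge 2$) to the sum over $n$ for each fixed $m\ge 1$, then performs the discrete Fourier inversion $\sum_{m\bmod N}\widehat{\varphi_0}(m)\xi_N(\text{-})= N\varphi_0(\text{-})$, which produces the single factor of $N$ and forces the surviving exponents in $q_0=e^{2\pi i\tau/N}$ to be multiples of $N$. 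Separating the $m=0$ contribution recovers the constant term already computed, and combining the $m\ge 1$ and $m\le -1$ contributions produces the symmetrized ${}^{\ell}\varphi_0(r) = \varphi_0(r)+(-1)^\ell\varphi_0(-r)$. The main obstacle is purely bookkeeping: tracking the powers of $N$, the signs $(-1)^\ell$ versus $(-1)^{\ell+1}$, and the factorials $\ell!$ versus $(\ell+2)!$ through the Lipschitz formula and the Fourier inversion so that the final normalization exactly matches the stated coefficients $\frac{(-1)^\ell}{\ell!(\ell+2)}$ and $\frac{(-1)^{\ell+1}}{N^{\ell+1}\ell!}$; no conceptual difficulty arises once the support of $\widehat{\varphi}$ is pinned down.
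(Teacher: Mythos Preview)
You have swapped the two coordinates in the support of $\widehat{\varphi}$. Since $\iota_{[i\infty]}(a)=a(-s,r)=a(0,1)=(0,a)$, formula $(5.4)$ gives $\widehat{\varphi}=(\iota_{[i\infty]})_*\widehat{\varphi_0}$ supported on $\{\mu\equiv 0\}$, with value $\widehat{\varphi_0}(\eta)$ there; equivalently $\widehat{\varphi}(m,n)=\widehat{\varphi_0}(n)$ if $N\mid m$, and $0$ otherwise. (Your direct computation of $\widehat{\pi_{[i\infty]}^*\varphi_0}(\mu,\eta)$ should have produced a factor $\sum_n e^{\frac{2\pi i}{N}\mu n}=N\cdot[\mu\equiv 0]$, not a condition on $\eta$.)

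With the correct support, the inner sum in Proposition 6.12 is immediate: $\sum_{n_0}\xi_N(n_0 r)\,{}^{\ell}\widehat{\varphi}(\tfrac{M}{r},n_0)$ vanishes unless $N\mid \tfrac{M}{r}$, in which case it equals $\sum_{n_0}\xi_N(n_0 r)\,{}^{\ell}\widehat{\varphi_0}(n_0)=N\cdot{}^{\ell}\varphi_0(r)$ by discrete Fourier inversion. Writing $M=N\mu$ (so the condition becomes $r\mid\mu$) turns $q_0^M$ into $q_0^{N\mu}$ and the factor $N$ cancels one power of $N$ in $N^{-(\ell+2)}$, giving exactly the stated coefficients. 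This is the paper's two-line argument; your detour through the Lipschitz formula arose precisely from the coordinate swap, and is unnecessary once that is corrected.
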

\begin{proof}
${}^{\ell}\widehat{\varphi}={}^{\ell}\widehat{(\frac{1}{N}\pi_{[i\infty]}^{*}\varphi_{0})}=\iota_{[i\infty]_{*}}{}^{\ell}\widehat{\varphi_{0}}$
$\implies$$\sum_{n_{0}}\xi_{N}(n_{0}r)\cdot{}^{\ell}\widehat{\varphi}(\frac{M}{r},n_{0})=0$
if $N\nmid\frac{M}{r}$; otherwise $=\sum_{n_{0}}\xi_{N}(n_{0}r)\cdot{}^{\ell}\widehat{\varphi_{0}(n_{0})}=N\cdot{}^{\ell}\varphi_{0}(r)$.
Put $M=\mu N$.
\end{proof}
Now take $\varphi_{0}$ to be the {}``fundamental vector'' $\varphi_{N}^{[\ell]}$;
then\[
E_{\varphi}^{[\ell]}(\tau)=1+\frac{2(-1)^{\ell+1}}{N^{\ell+1}\ell!}\sum_{\mu\geq1}q_{0}^{N\mu}\left\{ \sum_{r|\mu}r^{\ell+1}\varphi_{N}^{[\ell]}(r)\right\} \]
has $\mathfrak{R}_{\sigma}(E_{\varphi}^{[\ell]})=\delta_{\sigma,[i\infty]}$.

\begin{example}
If $\ell=1$ and $N=3$, from Example 5.11 we get\[
1-9\sum_{\mu\geq1}q_{0}^{3\mu}\left\{ \sum_{r|\mu}r^{2}\chi_{-3}(r)\right\} .\]

\end{example}

\subsection{Push-forwards of the construction}

\subsubsection{Eisenstein symbols for other congruence subgroups $\Gamma$}

Recall that this means $\Gamma(N)\subseteq\Gamma\subseteq SL_{2}(\ZZ)$
($N\geq3$), $\{-\text{id}\}\notin\Gamma$; that automatically $\Gamma(N)\trianglelefteq\Gamma$;
and that there are corresponding quotients $(\E^{[\ell]}(N)\m\text{fibers})\rOnto^{\mathcal{P}_{\nicefrac{\Gamma(N)}{\Gamma}}^{[\ell]}}\E_{\Gamma}^{[\ell]}$,
$(Y(N)\m\text{pts.})\rOnto^{\rho_{\nicefrac{\Gamma(N)}{\Gamma}}}Y_{\Gamma}\m\varepsilon_{\Gamma}.$
Our main examples will be\[
\Gamma_{1}(N):=\left\{ \left.\amat\in SL_{2}(\ZZ)\,\right|\, a\emn1\emn d,\, c\emn0\right\} =\left\langle \Gamma(N),\kmat\right\rangle ,\]
\textbf{\[
\Gamma_{1}^{'}(N):=\left\{ \left.\amat\in SL_{2}(\ZZ)\,\right|\, a\emn1\emn d,\, b\emn0\right\} =\left\langle \Gamma(N),\lmat\right\rangle \]
\[
=\mmat\Gamma_{1}(N)\mmat.\]
}Already for $\Gamma_{1}^{(')}(N)$, $N$ not prime, one has type
$I_{m}^{*}$ cusps --- e.g. $\overline{Y}_{1}^{'}(4)$ has cusps $[i\infty]$
($I_{4}$), $[0]$ ($I_{1}$), $[2]$ ($I_{1}^{*}$). (Also, $Y_{1}^{(')}(3)$
has an elliptic point, but for simplicity our notation will ignore
this fact.)

However, we will consider also {}``traditional'' congruence subgroups
that don't fit our convention: e.g.

\textbf{\[
\Gamma_{0}(N):=\left\{ \left.\amat\,\in SL_{2}(\ZZ)\,\right|\, c\emn0\right\} \,(\ni\{-\text{id}\}),\]
}for which one has $\overline{Y}_{\Gamma}$ but no canonically defined
$\E_{\Gamma}^{[\ell]}$ (though when $N=3,4,6$ one can get around
this problem by observing that $SL_{2}(\ZZ)\twoheadrightarrow PSL_{2}(\ZZ)$
sends $\Gamma_{1}(N)\rTo^{\cong}\PP\Gamma_{0}(N)$). We will also
consider (in $\S6.2.2$) \[
\Gamma^{+N}:=\left\langle \Gamma,\iota_{N}:=\nmat\right\rangle \,(\nsubseteq SL_{2}(\ZZ))\]
 for $\Gamma=\Gamma_{0}(N),$$\Gamma_{1}(N).$

We will now (e.g. using $\mathcal{P}_{\nicefrac{\Gamma(N)}{\Gamma}}^{[\ell]}{}_{_{*}}$)
push the $\{\sZ_{\textbf{f}}\}$ constructed in $\S5.3.4$ forward
to cycles (on families) over these new $Y_{\Gamma}$. The aim in doing
this is to produce more Eisenstein symbols (on families of abelian
varieties or $CY$'s) that live over genus $0$ curves, in order to
link up with those cases of the construction of $\S\S1-2$ which are
classically modular. We note that, while $g(\overline{Y}(N))=0$ only
for $N=(2,)\,3,\,4,\,5$, on the other hand $Y_{1}^{(')}(2-10,12)$
and $Y_{0}(2-10,12,13,16,18,25)$ are all rational.

To get a feel for the behavior of cusps under the various $\overline{\rho}_{\nicefrac{\Gamma'}{\Gamma}}$,
consider the maps $\overline{Y}(N)\to\overline{Y}_{1}(N)\to\overline{Y}_{0}(N)\to\overline{Y}_{0}(N)^{+N}$
for $N$ prime, with (resp.) $\frac{N^{2}-1}{N}$ (all $I_{N}$),
$N-1$ (half each of $I_{N},\, I_{1}$), $2$ ($I_{N},I_{1}$), and
$1$ cusp(s). Since $N$ is prime, one has a correspondence $\kappa(N)\cong\frac{(\ZZ/N\ZZ)^{2}\m\{(0,0)\}}{\left\langle \pm\text{id}\right\rangle }$
, and one can picture how these get equated (e.g. for $N=5$) as in
Figure 6.1,%
\begin{figure}

\caption{\protect\includegraphics[scale=0.7]{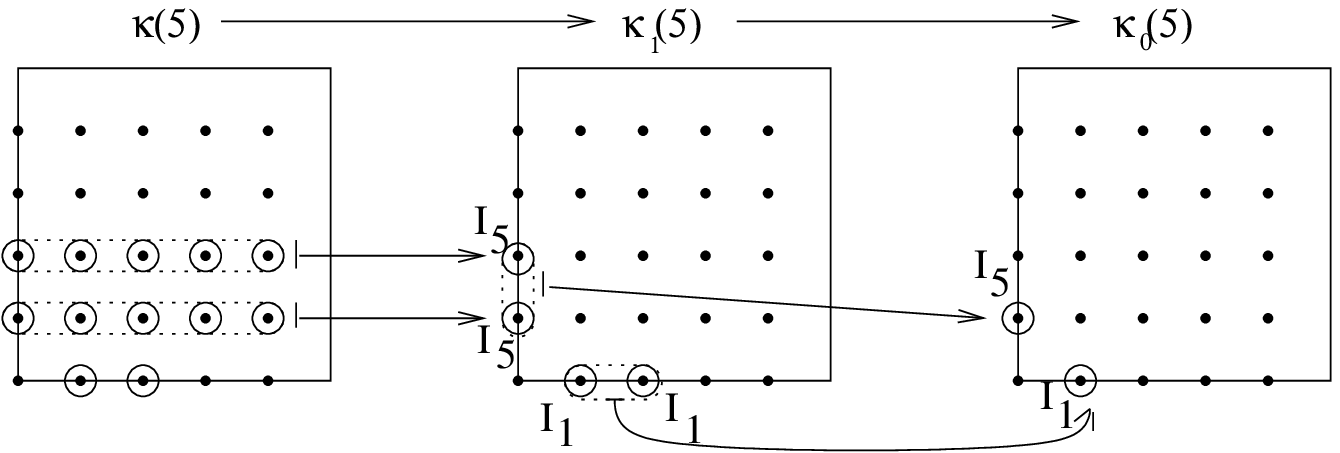}}

\end{figure}
 where circles are chosen representatives of equivalence classes.
Flipping about the diagonal gives the picture for $\kappa(5)\to\kappa_{1}'(5)$.

For $\Gamma'\subset\Gamma$ if index $r$, $\overline{\rho}_{\nicefrac{\Gamma'}{\Gamma}}:\,\overline{Y}_{\Gamma'}\to\overline{Y}_{\Gamma}$
is of degree $r$; if $\Gamma'\trianglelefteq\Gamma$ then $\overline{\rho}_{\nicefrac{\Gamma'}{\Gamma}}$
(omitting cusps/elliptic points and their preimages) is a Galois covering,
so that one has deck transformattions $\{\jmath_{j}\}_{j=1}^{r}$
satisfying $\sum\jmath_{j}^{*}=\rho^{*}\rho_{*}$ (on forms, cycles,
etc.), and corresponding transformations on the Kuga varieties. For
example, one has a diagram ($j=1,\ldots,N$)\\
\xymatrix{& & &
\E^{[\ell]} 
\ar [d]_{\pi^{[\ell]}} 
\ar [rrrr]^{\mathcal{P}^{[\ell]}_{\Gamma_1(N)}} 
\ar [rrdd]|{\mathcal{P}^{[\ell]}_{\Gamma(N)}} 
& & & & 
\E^{[\ell]}_{\Gamma_1(N)} 
\ar [d]^{\pi^{[\ell]}_1(N)} 
\\ & & &
\uhp 
\ar @{..>} [rrrr]^{\rho_{\Gamma_1(N)}} 
\ar [rrdd]|{\rho_{\Gamma(N)}} 
& & & & 
Y_1(N) 
\\ & & &
& & 
{\E^{[\ell]}(N)} 
\ar @(ul,ur)[]|{\J_j}
\ar [d]|{\pi^{[\ell]}(N)} 
\ar [rruu]|{\mathcal{P}^{[\ell]}_{\frac{\Gamma(N)}{\Gamma_1(N)}}} \\ & & &
& & 
Y(N) \ar @(dl,dr)[]|{\jmath_j}
\ar [rruu]|{\rho_{\frac{\Gamma(N)}{\Gamma_1(N)}}}
}\\
\\
(and a similar diagram for $\Gamma_{1}^{'}(N)$) where ${}^{(')}\J_{j}$
and ${}^{(')}\jmath_{j}$ are induced by the action of coset representatives
$\gamma_{j}^{(')}=\omat$ {[}resp. $\pmat$]$\in SL_{2}(\ZZ)$ for
$\nicefrac{\Gamma_{1}^{(')}(N)}{\Gamma(N)}$, on $\E^{[\ell]}$ and
$\uhp$. Now define \[
\mathfrak{Z}_{\textbf{f},1^{(')}}:=\frac{1}{N}\left(\mathcal{P}_{\nicefrac{\Gamma(N)}{\Gamma_{1}^{(')}(N)}}^{[\ell]}\right)_{*}\mathfrak{Z}_{\textbf{f}}\in CH^{\ell+1}(\E_{\Gamma_{1}^{(')}(N)}^{[\ell]},\ell+1);\]
then we have \[
F_{\textbf{f},1^{(')}}:=\theta_{\ell+2}(\Omega_{\mathfrak{Z}_{\textbf{f},1^{(')}}})=\theta_{\ell+2}\left(\left(\mathcal{P}_{\nicefrac{\Gamma(N)}{\Gamma_{1}^{(')}(N)}}^{[\ell]}\right)^{*}\mathfrak{Z}_{\textbf{f},1^{(')}}\right)=\frac{1}{N}\theta_{\ell+2}\left(\sum_{j=1}^{N}{}^{(')}\J_{j}^{*}\Omega_{\mathfrak{Z}_{\textbf{f}}}\right)\]
\[
=\frac{1}{N}\sum_{j=1}^{N}\theta_{\ell+2}(\Omega_{\mathfrak{Z}_{\textbf{f}}})|_{\gamma_{j}^{(')}}^{\ell+2}\,=\,\frac{1}{N}\sum_{j=1}^{N}F_{\textbf{f}}|_{\gamma_{j}^{(')}}^{\ell+2},\]
i.e. \[
F_{\textbf{f},1}(\tau)=\frac{1}{N}\sum_{j=0}^{N-1}F_{\textbf{f}}(\tau+j)\,\,\,\,\,\,\,\text{and}\,\,\,\,\,\,\, F_{\textbf{f},1'}(\tau)=\frac{1}{N}\sum_{j=0}^{N-1}\frac{F_{\textbf{f}}(\frac{\tau}{j\tau+1})}{(j\tau+1)^{\ell+2}}.\]
Writing \begin{equation}
(\rho_*\widehat{\varphi_{\textbf{f}}})(m,n):=\sum_j \widehat{\varphi_{\textbf{f}}}(m,n-mj) \, , \,\,\, ({\rho_*}'\widehat{\varphi_{\textbf{f}}})(m,n):=\sum_j \widehat{\varphi_{\textbf{f}}}(m-nj,n)\\
\end{equation} we get\[
F_{\textbf{f},1^{(')}}(\tau)=\frac{-(\ell+1)}{(2\pi i)^{\ell+2}}\sum_{m,n}{}^{'}\frac{\frac{1}{N}(\rho_{_{*}}^{(')}\widehat{\varphi_{\textbf{f}}})(m,n)}{(m\tau+n)^{\ell+2}}.\]
Using Corollary 6.9(ii) for $\Gamma_{1}^{(')}(N)$ and surjectivity
of $\kappa(N)\to\kappa_{1}^{(')}(N)$, this implies

\begin{prop}
$\left(\mathcal{P}_{\Gamma_{1}^{(')}(N)}^{[\ell]}\right)^{*}$ of
any class in $F^{\ell+1}\cap H^{\ell+1}(\E_{\Gamma_{1}^{(')}(N)}^{[\ell]},\QQ(\ell+1))$
is $(2\pi i)^{\ell+1}\Omega_{F}$ for $F=E_{\varphi}^{[\ell]}$, $\varphi\in\Phi_{2}^{\QQ}(N)$
with $\widehat{\varphi}=\frac{1}{N}\rho_{_{*}}^{(')}\widehat{\varphi}$.
\end{prop}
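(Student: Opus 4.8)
The plan is to assemble this from Corollary 6.9(ii) applied to the congruence subgroup $\Gamma_1^{(')}(N)$ together with the explicit formula for $F_{\textbf{f},1^{(')}}$ derived just above. By Corollary 6.9(ii) (valid for $\Gamma_1^{(')}(N)$, cf. Remark 6.10), the assignment $F\mapsto[(2\pi i)^{\ell+1}\Omega_F]$ identifies
\[
\EE^{\QQ}_{\ell+2}(\Gamma_1^{(')}(N))\;=\;M^{\QQ}_{\ell+2}(\Gamma_1^{(')}(N))\;\cong\;F^{\ell+1}\cap H^{\ell+1}(\E^{[\ell]}_{\Gamma_1^{(')}(N)},\QQ(\ell+1)),
\]
so it suffices to show that for every $F\in\EE^{\QQ}_{\ell+2}(\Gamma_1^{(')}(N))$ the pullback $(\mathcal{P}^{[\ell]}_{\Gamma_1^{(')}(N)})^{*}[(2\pi i)^{\ell+1}\Omega_F]$ equals $(2\pi i)^{\ell+1}\Omega_{E^{[\ell]}_\varphi}$ with $F=E^{[\ell]}_\varphi$ and $\widehat\varphi=\tfrac1N\rho_*^{(')}\widehat\varphi$.

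First I would realize each such $F$ as $F_{\textbf{f},1^{(')}}$ for a suitable $\textbf{f}\in\mathcal{O}^{*}(U(N))^{\otimes\ell+1}$. This follows from surjectivity of $\textbf{f}\mapsto F_{\textbf{f}}=E^{[\ell]}_{\varphi_{\textbf{f}}}$ onto $\EE^{\QQ}_{\ell+2}(\Gamma(N))$ (Corollary 6.7, i.e. surjectivity of $\div$, of Pontryagin $*$, and Theorem 6.6) together with the fact that $F\mapsto\frac1N\sum_{j}F|^{\ell+2}_{\gamma_j^{(')}}$ is a $\QQ$-linear projection of $\EE^{\QQ}_{\ell+2}(\Gamma(N))$ onto $\EE^{\QQ}_{\ell+2}(\Gamma_1^{(')}(N))$, the identity on the latter subspace by Remark 6.10 (the condition there — equality of the $\mathsf{H}^{[\ell]}_{[r/s]}(\varphi)$ over cusps lying above a common cusp of $\kappa(\Gamma_1^{(')}(N))$ — is precisely invariance under this averaging). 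Having fixed such an $\textbf{f}$, the class $[(2\pi i)^{\ell+1}\Omega_F]$ is the cycle-class of $\mathfrak{Z}_{\textbf{f},1^{(')}}$, and the computation immediately preceding the statement gives
\[
(\mathcal{P}^{[\ell]}_{\Gamma_1^{(')}(N)})^{*}\,\Omega_{\mathfrak{Z}_{\textbf{f},1^{(')}}}\;=\;(2\pi i)^{\ell+1}\Omega_{F_{\textbf{f},1^{(')}}},\qquad F_{\textbf{f},1^{(')}}=E^{[\ell]}_{\varphi},\quad\widehat\varphi=\tfrac1N\rho_*^{(')}\widehat{\varphi_{\textbf{f}}}.
\]

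Finally I would verify the self-consistency $\widehat\varphi=\tfrac1N\rho_*^{(')}\widehat\varphi$. Using $(\rho_*\widehat\psi)(m,n)=\sum_{j\in\ZZ/N\ZZ}\widehat\psi(m,n-mj)$ (resp. $(\rho_*'\widehat\psi)(m,n)=\sum_j\widehat\psi(m-nj,n)$), a reindexing of the resulting double sum yields $(\rho_*^{(')})^{2}=N\,\rho_*^{(')}$, so $\tfrac1N\rho_*^{(')}$ is idempotent and $\widehat\varphi=\tfrac1N\rho_*^{(')}\widehat{\varphi_{\textbf{f}}}$ is fixed by it; conversely any $\varphi\in\Phi^{\QQ}_2(N)$ with $\widehat\varphi=\tfrac1N\rho_*^{(')}\widehat\varphi$ arises this way (choose $\textbf{f}$ with $\varphi_{\textbf{f}}=\varphi$). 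The genuinely delicate point is the middle step — that the averaging operator surjects onto $\EE^{\QQ}_{\ell+2}(\Gamma_1^{(')}(N))$ and nothing larger or smaller — which rests on the precise behaviour of $\kappa(N)\to\kappa_1^{(')}(N)$ (the ``flip about the diagonal'' picture of $\S6.2.1$) and on Remark 6.10; everything else is bookkeeping with the finite Fourier transform and the slash action already set up in $\S\S6.1$–$6.2.1$.
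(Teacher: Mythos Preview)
Your proof is correct and follows essentially the same approach as the paper: the paper's justification is the single sentence ``Using Corollary 6.9(ii) for $\Gamma_{1}^{(')}(N)$ and surjectivity of $\kappa(N)\to\kappa_{1}^{(')}(N)$, this implies\ldots'' placed immediately after the formula $F_{\textbf{f},1^{(')}}=E_{\varphi}^{[\ell]}$ with $\widehat{\varphi}=\frac{1}{N}\rho_{*}^{(')}\widehat{\varphi_{\textbf{f}}}$. You have simply unpacked this, making explicit both the idempotence $(\rho_{*}^{(')})^{2}=N\rho_{*}^{(')}$ and the surjectivity of the averaging projection onto $\EE_{\ell+2}^{\QQ}(\Gamma_{1}^{(')}(N))$ --- details the paper leaves implicit.
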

The effect of $\rho_{*}$ on the $q$-expansion is especially simple:\[
F_{\textbf{f}}(\tau)=\sum_{M\geq0}\alpha_{M}q_{0}^{M}\,\,\implies\,\, F_{\textbf{f},1}(\tau)=\frac{1}{N}\sum_{M\geq0}\alpha_{M}\sum_{j=0}^{N-1}(\xi_{N}(j)\, q_{0})^{M}=\sum_{m\geq0}\alpha_{mN}q^{m},\]
which makes sense since $q$ is the local coordinate at $[i\infty]$
on $\overline{Y}_{1}(N)$.

We are interested in Eisenstein symbols with their only residue at
$[i\infty]$, in analogy to $\S\S1-2$. If $F_{\textbf{f}}=\tilde{E}_{[i\infty]}^{[\ell]}$,
then clearly\[
F_{\textbf{f},1}=\tilde{E}_{[i\infty]}^{[\ell]}\,,\,\,\,\,\,\text{while}\,\,\,\,\, F_{\textbf{f},1'}=\frac{1}{N}\sum_{j=0}^{N-1}\tilde{E}_{[\frac{1}{j}]}^{[\ell]}=\frac{1}{N}\mspace{-20mu}\sum_{\tiny\begin{array}[t]{c}
(\alpha,\beta)\in\ZZ^{2}\\
\gcd(1+N\alpha,\beta)=1\end{array}}\mspace{-20mu}\frac{1}{(1+N\alpha+\beta\tau)^{\ell+2}}.\]
Once $\Gamma$ and $\ell$ are specified, such symbols (or rather,
their cycle-classes) are unique (up to scaling), so for $\Gamma_{1}(N)$
and $\Gamma_{1}^{(')}(N)$ this is it!

\subsubsection{Eisenstein symbols for $K3$ surfaces and $CY$ $3$-fold families}

Given a cycle $\mathfrak{Z}\in CH^{\ell+1}(\E_{\Gamma}^{[\ell]},\ell+1)$
(e.g., $\Gamma=\Gamma(N)$ or $\Gamma_{1}^{(')}(N)$), we have $\Omega_{\mathfrak{Z}}=(2\pi i)^{\ell+1}F_{\mathfrak{Z}}(\tau)\, dz_{1}\wedge\cdots\wedge dz_{\ell}\wedge d\tau$
($F_{\mathfrak{Z}}\in\EE_{\ell+2}^{\QQ}(\Gamma)$), which we assume
$\neq0$. If $\ell=2$, then there is an involution $I:\,(\tau;z_{1},z_{2})\mapsto(\tau;-z_{1},-z_{2})$,
with $I^{*}\Omega_{\mathfrak{Z}}=\Omega_{\mathfrak{Z}}$. Set $\check{\X}_{\Gamma}^{[2]}:=\frac{\E_{\Gamma}^{[2]}}{I}$,
and let $\X_{\Gamma}^{[2]}\to\check{\X}_{\Gamma}^{[2]}$ be the (smooth)
Kummer $K3$ family over $Y_{\Gamma}\m\varepsilon_{\Gamma}$ obtained
by blowing up the $2$-torsion multisections. Using the diagram \begin{equation}
\xymatrix{
& 
\widetilde{\X_\Gamma^{[2]}\times_{\check{\X}^{[2]}_{\Gamma}} \E^{[2]}_{\Gamma}}
\ar @{-->} @/_1pc/ [ldd]_{p_2}
\ar [d]
\ar @{-->} @/^1pc/ [rdd]^{p_1}
\\
& 
\X_{\Gamma}^{[2]} \times_{\check{\X}^{[2]}_{\Gamma}} \E^{[2]}_{\Gamma}
\ar [ld]^{2:1}
\ar [rd]
\\
\X_{\Gamma}^{[2]}
\ar [rd]
& &
\E^{[2]}_{\Gamma}
\ar [ld]^{2:1}
\\
& 
\check{\X}^{[2]}_{\Gamma}
}\\
\end{equation} we define a (nontrivial) cycle by $\mathfrak{Z}_{\X}:=\frac{1}{2}p_{2}{}_{_{*}}p_{1}^{*}\mathfrak{Z}\in CH^{3}(\X_{\Gamma}^{[2]},3)$.
(This will have the same regulator periods and higher normal function
as $\mathfrak{Z}$ by the monodromy argument below. Note also that
if we take $\Gamma=\Gamma_{1}(N)$, then quotienting $\E_{\Gamma}^{[2]}$
by the action of $\Gamma_{0}(N)/\Gamma_{1}(N)$ and blowing up also
yields --- due to the presence of $\qmat$ --- a family of Kummer
$K3$ surfaces over $Y_{0}(N)\m\cdots$ and a nontrivial cycle.) There
is a fiberwise involution $I':\X_{\Gamma}^{[2]}\to\X_{\Gamma}^{[2]}$
induced by $(z_{1},z_{2})\mapsto(z_{1},-z_{2})$ {[}or equivalently
$(-z_{1},z_{2})$], sending $dz_{1}\wedge dz_{2}\mapsto-dz_{1}\wedge dz_{2}$
and fixing the exceptional divisors.

Passing to $\ell=3$, and taking $\mathfrak{Z}\in CH^{4}(\E_{\Gamma}^{[3]},4)$,
we can apply the process above to the first two fiber-factors to obtain
$\mathfrak{Z}'\in CH^{4}(\X_{\Gamma}^{[2]}\times_{_{Y_{\Gamma}\m\varepsilon_{\Gamma}}}\E_{\Gamma},4)$.
Writing $I'':\,\E_{\Gamma}\to\E_{\Gamma}$ {[}$z\mapsto-z$], we have
an involution $I'\times I''$ on $\X_{\Gamma}^{[2]}\times_{_{\cdots}}\E_{\Gamma}$
evidently fixing $\Omega_{\mathfrak{Z}'}$. Blowing up along the singular
set (in each fiber this looks like a disjoint union of $64$ rational
curves) and applying a process similar to the $\ell=2$ case, yields
a family $\X_{\Gamma}^{[3]}$ of Borcea-Voisin ($CY$) $3$-folds
over $Y_{\Gamma}\m\varepsilon_{\Gamma}$, and a nontrivial cycle $\mathfrak{Z}_{\X}\in CH^{4}(\X_{\Gamma}^{[3]},4)$.
(Again, this will have the same regulator periods as $\mathfrak{Z}$.)

Here is a more interesting construction, which yields a $K_{3}$-class
on a $K3$ surface family over $Y_{1}(N)^{+N}$. Recall that the Fricke
involution $\iota_{N}\in SL_{2}(\RR)$ acts on $\uhp$ by $\tau\mapsto-\frac{1}{N\tau}$;
this yields an action of $\Gamma_{1}(N)^{+N}$ on $\uhp^{*}$ with
$\overline{Y}_{1}(N)^{+N}$ as quotient. By normality of $\Gamma_{1}(N)\trianglelefteq\Gamma_{1}(N)^{+N}$,
$\iota_{N}$ also acts on $\overline{Y}_{1}(N)$ with quotient map
$\rho_{+N}:\,\overline{Y}_{1}(N)\twoheadrightarrow\overline{Y}_{1}(N)^{+N}$.

Set $'\E_{1}(N):=\E(N)\times_{\iota_{N}}Y_{1}(N)$, representing points
by $(\tau;\,[z]_{\frac{-1}{N\tau}})$, and consider the relative $N$-isogeny
($not$ an involution!) $J_{N}:{}'\E_{1}(N)\to\E_{1}(N)$ induced
by $(\tau;z)\mapsto(\tau;-N\tau z)$. Writing $'\E_{1}^{[2]}(N):=\E_{1}(N)\times_{_{Y_{1}(N)}}{}'\E_{1}(N)$,
we have $\text{id}\times J_{N}=:J_{N}^{[2]}:\,{}'\E_{1}^{[2]}(N)\to\E_{1}^{[2]}(N)$;
given $F\in M_{4}^{\QQ}(\Gamma_{1}(N))$, ${}'\Omega_{F}:=-\frac{1}{N}(J_{N}^{[2]})^{*}\Omega_{F}=\tau\Omega_{F}.$
Also write $\tilde{J}_{N}^{[2]}:\,\E_{1}^{[2]}(N)\to{}'\E_{1}^{[2]}(N)$
for $(\tau;z_{1},z_{2})\mapsto(\tau;z_{1},\frac{z_{2}}{\tau})$.

Now we are ready to consider the $involution$\\
\xymatrix{
& & & & '\E_1^{[2]}(N) \ar [d]^{\pi} \ar [r]^{I_N^{[2]}} & '\E^{[2]}_1(N) \ar [d]^{\pi} \\ & & & & \uhp \ar [r]^{\iota_N} & \uhp
}\\
\\
induced by exchanging factors: $(\tau;[z_{1}]_{\tau},[z_{2}]_{\frac{-1}{N\tau}})\mapsto(\frac{-1}{N\tau};[z_{2}]_{\frac{-1}{N\tau}},[z_{1}]_{\tau}).$
We have \[
(I_{N}^{[2]})^{*}('\Omega_{F})\,=\,\frac{-1}{N\tau}F\left(\frac{-1}{N\tau}\right)dz_{2}\wedge dz_{1}\wedge d\left(\frac{-1}{N\tau}\right)\]
\[
=\,\tau\left(\frac{1}{N^{2}\tau^{4}}F\left(\frac{-1}{N\tau}\right)\right)dz_{1}\wedge dz_{2}\wedge d\tau\]
\[
=\,'\Omega_{F|_{\iota_{N}}^{4}},\]
where $F|_{\iota_{N}}^{k}(\tau):=\frac{F(\iota_{N}(\tau))}{(\sqrt{N}\tau)^{k}}.$
Set \begin{equation}
F^+:=\frac{1}{2} \left( F+F|^4_{\iota_N}\right) . \\
\end{equation}

Taking the quotient by $I_{N}^{[2]}$\[
\E_{1}^{[2]}(N)^{+N}:=\frac{'\E_{1}^{[2]}(N)\m\pi^{-1}(\nicefrac{i}{\sqrt{N}})}{I_{N}^{[2]}}\lOnto^{\mathcal{P}_{+N}}{}'\E_{1}^{[2]}(N)\m\pi^{-1}(\nicefrac{i}{\sqrt{N}})\]
 and replacing $\E_{\Gamma}^{[2]}$ in $(6.3)$ by this, we get a
family%
\footnote{It may be more desirable to try to construct cycles on a Shioda-Inose
$K3$ family, especially one over $Y_{0}(N)^{+N}$ --- but this seems
difficult to do canonically.%
} $\X_{1}^{[2]}(N)^{+N}$ of (smooth) Kummer $K3$ surfaces over $Y_{1}(N)^{+N}\m\{\nicefrac{i}{\sqrt{N}}\}$.
If $\mathfrak{Z}\in CH^{3}(\E_{1}^{[2]}(N),3)$ with $\theta_{4}(\Omega_{\mathfrak{Z}})=:F_{\mathfrak{Z}}$,
we may define a cycle \begin{equation}
\mathfrak{Z}_{+N}:=\frac{-1}{4N} p_{2*}p_1^*(\mathcal{P}_{+N})_*(J^{[2]}_N)^*\mathfrak{Z} \in CH^3(\X_1^{[2]}(N)^{+N},3). \\
\end{equation} Also take $W\in CH^{3}(\X_{1}^{[2]}(N)^{+N},3)$ to be an arbitrary
cycle.

\begin{prop}
(i) $'\Omega_{F}$ descends to a holomorphic $3$-form with $\QQ(3)$
periods on $\X_{1}^{[2]}(N)^{+N}$ $\Longleftrightarrow$ $F\in M_{4}^{\QQ}\left(\Gamma_{1}(N)^{+N}\right):=[M_{4}^{\QQ}(\Gamma_{1}(N))]^{+}$.

(ii) $\widetilde{W}:=(\tilde{J}_{N}^{[2]})^{*}(\mathcal{P}_{+N})^{*}p_{1}{}_{*}p_{2}^{*}W$
(on $\E_{1}^{[2]}(N)$) has {}``cycle-class'' $\theta_{4}(\Omega_{\tilde{W}})\in$\linebreak
$M_{4}^{\QQ}\left(\Gamma_{1}(N)^{+N}\right)$.

(iii) $\theta_{4}(\Omega_{\widetilde{\mathfrak{Z}_{+N}}})=F_{\mathfrak{Z}}^{+}$.
\end{prop}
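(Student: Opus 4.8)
All three parts rest on a small number of ``intertwining'' identities that are essentially already in the text: $(J_N^{[2]})^*\Omega_F=-N\,{}'\Omega_F$ (from the definition ${}'\Omega_F:=-\tfrac1N(J_N^{[2]})^*\Omega_F=\tau\Omega_F$), the Fricke identity $(I_N^{[2]})^*({}'\Omega_F)={}'\Omega_{F|^4_{\iota_N}}$ computed in the display just before $(6.4)$, and the one-line chain-rule computation $(\tilde J_N^{[2]})^*\,{}'\Omega_F=\Omega_F$ (since $\tilde J_N^{[2]}$ replaces the $z_2$-coordinate by $z_2/\tau$, $(\tilde J_N^{[2]})^*dz_2=\tfrac{dz_2}{\tau}-\tfrac{z_2}{\tau^2}d\tau$, and wedging with $d\tau$ kills the second term, cancelling the $\tau$ in ${}'\Omega_F$). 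Combining these with the extension of Corollary 6.9(ii) to $\Gamma_1(N)$ (the Remark following Corollary 6.9), which identifies $\hm(\QQ(0),H^3(\E_1^{[2]}(N),\QQ(3)))$ with $M_4^\QQ(\Gamma_1(N))$ and shows every log-pole holomorphic $3$-form with $\QQ(3)$ periods on $\E_1^{[2]}(N)$ is $(2\pi i)^3\Omega_G$ for a unique $G\in M_4^\QQ(\Gamma_1(N))$, and with the injectivity of $(\tilde J_N^{[2]})^*$ on $3$-forms (it is dominant), one gets the key lemma: an $I_N^{[2]}$-invariant log-pole $3$-form on ${}'\E_1^{[2]}(N)$ is necessarily ${}'\Omega_G$ for a unique $G\in M_4^\QQ(\Gamma_1(N))$, and such a $G$ lies in $[M_4^\QQ(\Gamma_1(N))]^+$.

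For \textbf{(i)} I would show that ${}'\Omega_F$ descends to $\X_1^{[2]}(N)^{+N}$ in three stages, only the first of which imposes a condition. The Kummer quotient by $(z_1,z_2)\mapsto(-z_1,-z_2)$ leaves $dz_1\wedge dz_2$, hence ${}'\Omega_F$, invariant, so descent across it is automatic; the blow-up of the $2$-torsion multisections is crepant along the fixed locus, so the descended form extends holomorphically. Thus ${}'\Omega_F$ descends iff it is invariant under the deck involution $I_N^{[2]}$ of the $2{:}1$ cover $\mathcal{P}_{+N}$, which by the Fricke identity is equivalent to $F=F|^4_{\iota_N}$, i.e. $F\in[M_4^\QQ(\Gamma_1(N))]^+$. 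For the period statement — the only genuinely subtle point of (i), since the factor $\tau$ in ${}'\Omega_F=\tau\Omega_F$ is transcendental — I would use ${}'\Omega_F=-\tfrac1N(J_N^{[2]})^*\Omega_F$ together with the fact that $J_N$ is an honest \emph{algebraic} isogeny of the universal families (scaling by $-N\tau$, an isomorphism onto $\CC/(\ZZ+\ZZ N\tau)$, followed by the degree-$N$ projection to $E_\tau$); hence $(J_N^{[2]})^*$ carries $H^3(\E_1^{[2]}(N),\QQ(3))$ into $H^3({}'\E_1^{[2]}(N),\QQ(3))$, the $\QQ(3)$-rationality of the periods of $\Omega_F$ (valid because $F\in M_4^\QQ$) passes to ${}'\Omega_F$, and then survives the blow-ups and quotients of the Kummer construction.

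For \textbf{(ii)}, by functoriality of fundamental classes $\Omega_{\widetilde W}=(\tilde J_N^{[2]})^*(\mathcal{P}_{+N})^*p_{1*}p_2^*\Omega_W$, where $\Omega_W$ is a log-pole holomorphic $3$-form on $\X_1^{[2]}(N)^{+N}$. Since $(\mathcal{P}_{+N})^*$ of anything is $I_N^{[2]}$-invariant, $\eta:=(\mathcal{P}_{+N})^*p_{1*}p_2^*\Omega_W$ is an $I_N^{[2]}$-invariant log-pole $3$-form on ${}'\E_1^{[2]}(N)$. By the extension of Corollary 6.9(ii), $(\tilde J_N^{[2]})^*\eta=(2\pi i)^3\Omega_G$ for a unique $G\in M_4^\QQ(\Gamma_1(N))$; since $(\tilde J_N^{[2]})^*\,{}'\Omega_G=\Omega_G$ and $(\tilde J_N^{[2]})^*$ is injective, $\eta=(2\pi i)^3{}'\Omega_G$, and then $I_N^{[2]}$-invariance plus the Fricke identity force $G=G|^4_{\iota_N}$. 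Hence $\theta_4(\Omega_{\widetilde W})=G\in M_4^\QQ(\Gamma_1(N)^{+N})$.

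For \textbf{(iii)} I would chase $\Omega_\mathfrak{Z}=(2\pi i)^3\Omega_{F_\mathfrak{Z}}$ through the composition defining $\widetilde{\mathfrak{Z}_{+N}}$ (namely $(6.5)$ followed by the reversed Kummer diagram $(6.3)$), using the building blocks above and two facts about the Kummer operators of $(6.3)$: $p_1$ is birational and $p_2$ is $2{:}1$ with deck involution corresponding to $(z_1,z_2)\mapsto(-z_1,-z_2)$, so on the $I$-invariant (= modular-form) part of cycle-classes $\tfrac12 p_{2*}p_1^*$ and $p_{1*}p_2^*$ are mutually inverse. The chase then reads: $(J_N^{[2]})^*$ multiplies the class by $-N$ and turns $\Omega_{F_\mathfrak{Z}}$ into ${}'\Omega_{F_\mathfrak{Z}}$; the later composite $(\mathcal{P}_{+N})^*(\mathcal{P}_{+N})_*=1+(I_N^{[2]})^*$ replaces ${}'\Omega_{F_\mathfrak{Z}}$ by ${}'\Omega_{F_\mathfrak{Z}}+{}'\Omega_{F_\mathfrak{Z}|^4_{\iota_N}}=2\,{}'\Omega_{F_\mathfrak{Z}^+}$; the Kummer operators contribute a factor exactly compensated by the constants $\tfrac12$ in $(6.3)$ and $\tfrac{-1}{4N}$ in $(6.5)$, so that $-N$, $2$, $\tfrac{-1}{4N}$ and the covering degrees cancel; and $(\tilde J_N^{[2]})^*$ sends ${}'\Omega_{F_\mathfrak{Z}^+}\mapsto\Omega_{F_\mathfrak{Z}^+}$, giving $\theta_4(\Omega_{\widetilde{\mathfrak{Z}_{+N}}})=F_\mathfrak{Z}^+$. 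The main obstacle, I expect, is making the Kummer/blow-up steps precise — proving that $\tfrac12 p_{2*}p_1^*$ really transports the class of $\mathfrak{Z}$ to one with the \emph{same} associated modular form (the analogous $\ell=2,3$ assertions in \S6.2.2 are stated without proof) — and keeping the rational normalizing constants and the isogeny and cover degrees straight throughout (iii); the period claim in (i), though conceptually clean once one sees that $J_N$ is algebraic, also deserves care precisely because a naive reading makes the periods of $\tau\Omega_F$ look transcendental.
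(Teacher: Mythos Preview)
The paper states this proposition without an explicit proof; it is intended to follow directly from the identities assembled in the paragraphs preceding it (the definition ${}'\Omega_F=-\tfrac1N(J_N^{[2]})^*\Omega_F=\tau\Omega_F$, the Fricke computation $(I_N^{[2]})^*({}'\Omega_F)={}'\Omega_{F|^4_{\iota_N}}$, and the definitions of $\tilde J_N^{[2]}$, $\mathcal{P}_{+N}$, and $(6.5)$), together with the $\Gamma_1(N)$ extension of Corollary~6.9(ii) and the Kummer diagram $(6.3)$. Your proposal identifies precisely these ingredients and carries out the verification the paper leaves implicit: the $I_N^{[2]}$-invariance criterion for (i), the reduction of (ii) to the ``key lemma'' via injectivity of $(\tilde J_N^{[2]})^*$ and the identity $(\tilde J_N^{[2]})^*\,{}'\Omega_G=\Omega_G$, and the constant-tracking for (iii) (where your factors $-N$, $2$ from $(\mathcal{P}_{+N})^*(\mathcal{P}_{+N})_*$, $2$ from $p_2^*p_{2*}$ on the Kummer-invariant part, and $\tfrac{-1}{4N}$ do indeed cancel).

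In short, your approach \emph{is} the intended one, just made explicit. Your caveats are well placed: the ``same modular form'' claim for the Kummer transfer in $(6.3)$ is asserted parenthetically in \S6.2.2 without proof, and the $\QQ(3)$-periods argument in (i) via the algebraicity of $J_N$ is exactly the right way to handle the apparently transcendental factor $\tau$.
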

Because $'\E_{1}^{[2]}(N)^{+N}$ is not a Kuga variety, we no longer
have that pullbacks $\Omega_{\widetilde{W}}$ to $\E_{1}^{[2]}(N)$
have equal residues at cusps $\in\kappa_{1}(N)$ mapping to the same
cusps $\in\kappa(N)^{+N}$. Consider for simplicity the residues at%
\footnote{Note: the residues of $F$ (hence $F^{+}$) at all $[j]$ ($j\in\ZZ$)
are the same (as the residue at $[0]$).%
} $[0]$ and $[i\infty]$, which are exchanged by the involution on
$\E_{1}^{[2]}(N)$ induced by $\gamma_{0}=\rmat\in SL_{2}(\ZZ)$,
and assume $F\in M_{4}^{\QQ}(\Gamma_{1}(N)^{+N})$ ($\implies$ $N^{-2}\tau^{-4}F\left(\frac{-1}{N\tau}\right)=F(\tau)$).
Then \[
\mathfrak{R}_{[0]}(F)=\lim_{\tau\to i\infty}F|_{\gamma_{0}}^{4}(\tau)=\lim_{\tau\to i\infty}\tau^{-4}F\left(-\frac{1}{\tau}\right)\rEq_{\tau_{0}:=\frac{\tau}{N}}\lim_{\tau_{0}\to i\infty}N^{-4}\tau_{0}^{-4}F\left(\frac{-1}{N\tau_{0}}\right)\]
\[
=N^{-2}\lim_{\tau_{0}\to i\infty}F(\tau_{0})=\frac{\mathfrak{R}_{[i\infty]}(F)}{N^{2}}.\]
If we assume only $F\in M_{4}^{\QQ}(\Gamma_{1}(N))$, then \[
\lim_{\tau\to i\infty}N^{-2}\tau^{-4}F\left(\frac{-1}{N\tau}\right)\rEq_{\tau_{1}:=N\tau}N^{2}\lim_{\tau_{1}\to i\infty}\tau_{1}^{-4}F\left(-\frac{1}{\tau_{1}}\right)=N^{2}\lim_{\tau_{1}\to i\infty}F|_{\gamma_{0}}^{4}(\tau_{1})\]
\[
=N^{2}\mathfrak{R}_{[0]}(F).\]
So \begin{equation}
\mathfrak{R}_{[i\infty]}(F^+)=\frac{1}{2}\left\{ \sR_{[i\infty]}(F) + N^2 \sR_{[0]}(F) \right\} \, , \, \, \, \, \sR_{[0]}(F^+)=\frac{1}{2} \left\{ \frac{1}{N^2}\sR_{[i\infty]}(F)+\sR_{[0]}(F) \right\} . \\
\end{equation} This calculation shows $\left\langle \widetilde{\mathfrak{Z}_{+N}}\right\rangle $
is nontrivial if one picks $\mathfrak{Z}$ so that $\sR_{[i\infty]}(F_{\sZ})\neq-N^{2}\sR_{[0]}(F_{\sZ})$
(obviously possible by $\S6.1.2$).

\begin{rem}
If we replace $I_{N}^{[2]}$ by the order $4$ automorphism $'I_{N}^{[2]}(\tau;[z_{1}]_{\tau},[z_{2}]_{\frac{-1}{N\tau}})=(\frac{-1}{N\tau};[-z_{2}]_{\frac{-1}{N\tau}},[z_{1}]_{\tau}),$
then the corresponding quotient $'\mathcal{P}_{+N}$ yields a family
of singular Kummer surfaces which is then resolved to yield a smooth
$K3$ family $'\X_{1}^{[2]}(N)^{+N}\rOnto^{\pi}Y_{1}(N)^{+N}$. Reworking
this in analogy to $(6.3)$ (so as not to pass through a singular
variety), one constructs a cycle $'\sZ_{+N}$ and most of the exposition
goes through as above with the crucial replacement of $F|_{\iota_{N}}^{4}$
by $-F|_{\iota_{N}}^{4}$ (and $N^{2}$ by $-N^{2}$ in $(6.6)$).
In some sense this is the more natural construction (as the examples
in $\S8$ will suggest). 
\end{rem}

\section{\textbf{Regulator periods and higher normal functions (bis)}}

\subsection{Setup for the fiberwise $AJ$ computation}

We restrict once more to $\Gamma=\Gamma(N)$ and the Kuga modular
varieties $\E^{[\ell]}(N)\rOnto^{\pi^{[\ell]}(N)}Y(N)$, and write
their middle relative cohomology groups: $\HH_{N}^{[\ell]}:=R^{\ell}\pi^{[\ell]}(N)_{_{*}}\ZZ$,
$\H_{N}^{[\ell]}:=\HH_{N}^{[\ell]}\otimes\mathcal{O}_{Y(N)}$, $\H_{N}^{[\ell],\infty}:=\HH_{N}^{[\ell]}\otimes\mathcal{O}_{Y(N)^{\infty}}$,
etc. --- dropping the {}``$N$'' to work on $\E^{[\ell]}/\uhp$,
and flipping super/sub-scripts for homology. One has the subsheaves
of $\G^{*}$($\implies\tilde{\G}^{*}$)-invariants $\text{Sym}^{\ell}\HH_{N,\QQ}^{[1]}\subset\HH_{N,\QQ}^{[\ell]}$,
$\text{Sym}^{\ell}\H_{N}^{[1]}\subset\H_{N}^{[\ell]}$; as well as
$\G^{*}$-coinvariants $\HH_{[\ell]}^{N,\QQ}\twoheadrightarrow\text{Sym}_{\ell}\HH_{[1]}^{N,\QQ}\rTo_{\cong}^{\G^{*}\circ\text{P.D.}}\text{Sym}^{\ell}\HH_{N,\QQ}^{[1]}$.
There are the following well-defined sections $/\uhp$ (multivalued
$/Y(N)$):\[
\alpha=\overrightarrow{[0,1]},\,\beta=\overrightarrow{[0,\tau]}\,\in\Gamma(\uhp,\HH_{[1]})\]
\[
\gamma_{k}^{[\ell]}:=\alpha^{\ell-k}\beta^{k}\in\Gamma(\uhp,\text{Sym}_{\ell}\HH_{[1]}^{\QQ})\]
\[
\tilde{\gamma}_{k}^{[\ell]}:=\G^{*}(\alpha_{1}\times\cdots\times\alpha_{\ell-k}\times\beta_{\ell-k+1}\times\cdots\times\beta_{\ell})\in\Gamma(\uhp,\text{Sym}^{\ell}\HH_{\QQ}^{[1]})\]
\[
\eta_{\ell-k}^{[\ell]}:=\G^{*}(dz_{1}\wedge\cdots\wedge dz_{\ell-k}\wedge d\bar{z}_{\ell-k+1}\wedge\cdots\wedge d\bar{z}_{\ell})\in\Gamma(\uhp,\F^{\ell-k}\text{Sym}^{\ell}\H^{[1],\infty}),\]
where one should think of $\G^{*}$ as reordering the $dz/d\bar{z}$'s
or $\alpha/\beta$'s in all possible ways and dividing by ${\ell \choose k}$.
Writing $[\cdot]_{k}=${}``term of homogeneous degree $k$ in $\tau,\bar{\tau}$'',
\begin{equation}
\left\langle \gamma^{[\ell]}_k ,\eta_{\ell-j}^{[\ell]} \right\rangle = \binom{\ell}{k}^{-1} \left[ (1+\tau)^{\ell-j}(1+\bar{\tau})^j \right]_k = \frac{\sum_{a=0}^k \binom{\ell-j}{a} \binom{j}{k-a}\tau^a\bar{\tau}^{k-a}}{\binom{\ell}{k}} =:\mathfrak{P}^{[\ell]}_{jk} \\
\end{equation} Viewed as the monodromy transformation corresponding to an element
of $\pi_{1}(Y(N))$, $\gamma\in\Gamma(N)$ acts on $(\gamma_{0}^{[\ell]},\ldots,\gamma_{\ell}^{[\ell]})$
from the right, as $\text{Sym}^{\ell}\gamma$; we think of the $\gamma_{i}^{[\ell]}$
as degree-$\ell$ homogeneous polynomials in $\alpha$ and $\beta$,
with $\mu_{i\infty}:=\smat:\,\beta\mapsto\beta+N\alpha,\,\alpha\mapsto\alpha$
and $\mu_{0}:=\tmat:\,\beta\mapsto\beta,\,\alpha\mapsto\alpha+N\beta.$
(Also, $\gamma$ sends $\eta_{\ell-k}^{[\ell]}\mapsto\frac{\eta_{\ell-k}^{[\ell]}}{(c\tau+d)^{\ell-k}(c\bar{\tau}+d)^{k}}$;
note that the $\{\eta_{\ell-k}^{[\ell]}\}$ and $\gamma_{0}^{[\ell]}$
are well-defined over an analytic neighborhood of $[i\infty]$ in
$Y(N)$.)

Now refer to the cycle-construction of $\S5.3.4$, denote the fiberwise
{}``slices'' (pullbacks) of $\left\langle \sZ_{\textbf{f}}\right\rangle $
by $\left\langle \sZ_{\textbf{f}}\right\rangle _{y\text{ (or }\tau\text{)}}$,
etc.; and consider the diagram\\
\xymatrix{
\mathcal{O}^*(U(N))^{\otimes \ell+1}
\ar [d]^{\textbf{f}\mapsto \left\langle \sZ_{\textbf{f}} \right\rangle } 
\ar @/_6pc/ [ddd]_{\R_N^{[\ell]}:=} 
\ar @{->>} [rr]^{\mathsf{H} \circ *^{\ell+1} \circ \otimes^{\ell+1} \div}
& &
\Upsilon^{\QQ}_2(N)
\ar [d]^{Res^{-1}}_{\cong}
\\
\left[ CH^{\ell+1}(\E^{[\ell]}(N),\ell+1) \right]^{\tilde{\G}}
\ar [dd]^{\label}
\ar @{->>} [rr]^{[\cdot]\mspace{50mu}}_{\left\langle \sZ_{\textbf{f}}\right\rangle \mapsto [\Omega_{\sZ_{\textbf{f}}}] \mspace{50mu}}
& & 
\hm\left(\QQ(0),H^{\ell+1}(\E^{[\ell]}(N),\QQ(\ell+1))\right)
\ar @{^(->} [ddd]^{\blabel}_{\alabel}
\\ \\
\Gamma \left( Y(N), \frac{\text{Sym}^{\ell}\H^{[1]}_N}{(\text{Sym}^{\ell}\HH^{[1]}_N)\otimes \QQ(\ell+1)} \right) 
\ar @{^(->} [d]
\\
\Gamma \left( Y(N),\frac{\H^{[\ell]}_N}{\HH^{[\ell]}_{N,\QQ(\ell+1)}} \right)
\ar [rr]^{(-1)^{\ell}\cdot \nabla}
& & 
\Gamma \left( Y(N), \F^{\ell}\H^{[\ell]}_N\otimes \Omega^1_{Y(N)} \right)
}\\
\\
in which the upper square commutes by the proof of Cor. 6.9. Write
simply $\R_{\textbf{f}}(y)$ for the $\R_{N}^{[\ell]}$-image of $\textbf{f}$;
if we pull this back to $\uhp$, we may choose a well-defined lift
$\tilde{\R}_{\textbf{f}}(\tau)\in\Gamma(\uhp,\text{Sym}^{\ell}\H^{[1]})$. 

\begin{lem}
(i) The bottom square commutes.

(ii) $\nabla$ is surjective.
\end{lem}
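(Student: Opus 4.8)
�The plan is to prove both assertions by interpreting the Gauss--Manin connection $\nabla$ explicitly on the subsheaf $\text{Sym}^\ell\H_N^{[1]}\subset\H_N^{[\ell]}$, using the flat sections $\gamma_k^{[\ell]}$ and the holomorphic frame coming from $\eta_{\ell-k}^{[\ell]}$, and then combining this with the surjectivity results already established in \S6 (Corollary 6.9) and the computation of $\Omega_{\sZ_{\textbf f}}$ in Theorem 6.6.

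\textbf{Part (ii): surjectivity of $\nabla$.} First I would note that $\nabla$ in the bottom row of the diagram is the map induced on sections by the Gauss--Manin connection $\nabla:\H_N^{[\ell]}\to\H_N^{[\ell]}\otimes\Omega^1_{Y(N)}$, restricted to the Hodge-graded quotient so that it lands in $\F^\ell\H_N^{[\ell]}\otimes\Omega^1_{Y(N)}$ (this is Griffiths transversality: $\nabla$ carries $\F^{\ell+1}=0$-modulo business correctly, and the relevant sheaf $\F^\ell\text{Sym}^\ell\H^{[1]}$ is a line bundle spanned fibrewise by $\eta_\ell^{[\ell]}=\G^*(dz_1\wedge\cdots\wedge dz_\ell)$, the holomorphic top form on the abelian-variety fibre). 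Working on $\uhp$, a section of the source is represented by $\sum_k c_k(\tau)\,\gamma_k^{[\ell]}$ with $c_k$ meromorphic, and one computes $\nabla_{\partial_\tau}$ by differentiating the periods: since the $\gamma_k^{[\ell]}$ are flat, only the $c_k$ get differentiated, and the pairing $\langle\gamma_k^{[\ell]},\eta_{\ell}^{[\ell]}\rangle=\mathfrak P^{[\ell]}_{0k}$ from $(7.1)$ is a polynomial in $\tau$ whose top coefficient is $\binom{\ell}{k}^{-1}\tau^k$ (taking $j=0$). Hence the image of $\sum_k c_k\gamma_k^{[\ell]}$ under $\nabla$ pairs against $\eta_\ell^{[\ell]}$ to give $\bigl(\sum_k c_k'(\tau)\mathfrak P^{[\ell]}_{0k}\bigr)d\tau$ plus lower-order corrections from the non-flat part; choosing the $c_k$ so that this produces a prescribed holomorphic form (with log poles at the cusps) is solvable because the matrix of the $\mathfrak P^{[\ell]}_{jk}$ is invertible, and one can match the required pole orders at $\kappa(N)$ by allowing the $c_k$ appropriate poles. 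I would combine this with the surjectivity of $[\cdot]$ (Beilinson--Hodge for $\E^{[\ell]}(N)$, Cor. 6.9(i)) and the fact that the left vertical column of the diagram is the normal-function map $\R_N^{[\ell]}$, so that $\nabla\circ\R_N^{[\ell]}$ has image containing $\nabla$ applied to all of $F^\ell\cap H^{\ell+1}$; since $\F^\ell\text{Sym}^\ell\H^{[1]}\otimes\Omega^1$ is one-dimensional over the function field and is hit by the Eisenstein forms $\Omega_{E_\varphi^{[\ell]}}$ (which span $\EE_{\ell+2}^\QQ\otimes\CC=M_{\ell+2}^\QQ\otimes\CC$ by Cor. 6.7), $\nabla$ is surjective.

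\textbf{Part (i): commutativity of the bottom square.} This is the statement that $\nabla\circ\R_N^{[\ell]}=(-1)^\ell\cdot$(the infinitesimal-invariant map sending $\textbf f$ to $[\Omega_{\sZ_{\textbf f}}]$ regarded as a section of $\F^\ell\H^{[\ell]}_N\otimes\Omega^1_{Y(N)}$), and it is essentially the defining compatibility of the Abel--Jacobi/normal-function formalism: for a family of higher Chow cycles $\sZ_{\textbf f}$ on $\E^{[\ell]}(N)\to Y(N)$, the Gauss--Manin derivative of the normal function $AJ_{E_y^{[\ell]}}((\sZ_{\textbf f})_y)$ equals (up to the sign $(-1)^\ell$ that enters through the orientation conventions in the KLM map of complexes, cf. the footnote after $(0.17)$) the cycle-class $\Omega_{\sZ_{\textbf f}}=(2\pi i)^{\ell+1}\Omega_{F_{\textbf f}}$ projected to $\F^\ell$. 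Concretely I would argue as in Corollary 2.4: represent $\R_{\textbf f}(y)$ by the completed regulator current $R'_{\sZ_{\textbf f}}$ on $\E^{[\ell]}(N)$, note that $d[R'_{\sZ_{\textbf f}}]=\Omega_{\sZ_{\textbf f}}$ (using $(0.21)$ with $\dim<p$), and observe that fibrewise integration against a flat frame $\gamma_k^{[\ell]}$ followed by $\partial_\tau$ produces exactly the pairing of $\Omega_{\sZ_{\textbf f}}$ with the Lefschetz thimble, i.e. the $\F^\ell$-component of the class; the sign bookkeeping comes from the $(-1)^\ell$ already displayed in the right-hand vertical arrow of the big diagram (the ``$Fd\underline z\wedge d\tau\leadsto(Fd\underline z)\otimes d\tau$'' reindexing).

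\textbf{Main obstacle.} The routine parts are the invertibility of the $\mathfrak P^{[\ell]}$ matrix and the pole-order matching at the cusps. The real work is in (i): carefully tracking the sign and the precise identification of $\nabla$ of the fibrewise $AJ$ current with the $\F^\ell$-projection of $[\Omega_{\sZ_{\textbf f}}]$, since this requires knowing that the lift $\tilde\R_{\textbf f}(\tau)\in\Gamma(\uhp,\text{Sym}^\ell\H^{[1]})$ is computed by integrating $R'_{\sZ_{\textbf f}}$ against the $\eta_{\ell-k}^{[\ell]}$ frame (not merely against topological cycles), and that $\nabla$ acting on this integral picks off only the holomorphic-form contribution. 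I expect to handle this by the same ``generalize the argument proving $(4.1)$'' device invoked in the proof of Prop. 4.2, reducing to the fibrewise KLM formula and Griffiths transversality; the bookkeeping with the $(\ZZ/N\ZZ)$-torsion translates and the $\tilde\G$-averaging is then purely formal.
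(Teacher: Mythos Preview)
For part (i) your approach coincides with the paper's: the paper's proof is the one-line observation that over a disk $T_{\sZ}\homeq 0$, so one may form $R'_{\sZ}:=R_{\sZ}+(2\pi i)^{\ell+1}\delta_{\d^{-1}T_{\sZ}}$ and compute $\nabla[R'_{\sZ}]_y=(d[R'_{\sZ}])^{\{1,\ell\}}=\Omega_{\sZ}^{\{1,\ell\}}$. Your version is more verbose but runs on the same mechanism (and your identification of the sign bookkeeping is correct).

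For part (ii) there is a genuine gap. The paper does \emph{not} attempt to construct preimages under $\nabla$. Instead it proves the vanishing of flat $\G^*$-symmetric normal functions by a direct monodromy computation: writing a flat section as $\sum_{k}\epsilon_k\tilde\gamma_k^{[\ell]}$ with $\epsilon_k\in\CC$, one applies $\mu_{i\infty}-\text{id}$ and reads off from the coefficient of $\tilde\gamma_j^{[\ell]}$ (namely $\sum_{k>j}\binom{k}{j}N^{k-j}\epsilon_k$) that $\epsilon_\ell,\epsilon_{\ell-1},\ldots,\epsilon_1\in\QQ$ inductively; then $\mu_0-\text{id}$ forces $\epsilon_0\in\QQ$. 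This injectivity of $\nabla$ on the normal-function group is exactly what Corollary 7.2 uses (two normal functions with the same $\nabla$-image must agree).

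Your proposal misses this. The paragraph about ``solving for the $c_k$'' and ``matching pole orders'' takes place on $\uhp$ and says nothing about whether the resulting section descends to $Y(N)$ modulo $\HH^{[\ell]}_{\QQ(\ell+1)}$ --- that descent is precisely a monodromy condition, and it fails for generic sections of $\F^\ell\H^{[\ell]}\otimes\Omega^1$ (so $\nabla$ is not literally surjective onto that target). The second half of your argument, appealing to Corollary 6.9 and the Eisenstein span, at best shows the image of $\nabla\circ\R_N^{[\ell]}$ contains the Eisenstein classes; but this already requires (i), and in any case it does not yield the injectivity statement that the rest of \S7 relies on. Replace your argument for (ii) with the two-line monodromy computation using $\mu_{i\infty}$ and $\mu_0$.
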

\begin{proof}
(i) $\left\langle \sZ\right\rangle \in CH^{\ell+1}(\E^{[\ell]}(N),\ell+1)$
has $T_{\sZ}\homeq0$ on $(\pi^{[\ell]}(N))^{-1}(\text{disk})$; so
locally we may write $R_{\sZ}':=R_{\sZ}+(2\pi i)^{\ell+1}\delta_{\d^{-1}T_{\sZ}}$
and compute $\nabla[R_{\sZ}']_{y}=(d[R_{\sZ}'])^{\{1,\ell\}}=\Omega_{\sZ}^{\{1,\ell\}}.$

(ii) follows from irreducibility of the monodromy action on $\text{Sym}^{[\ell]}\HH_{N}^{[1]}$
and consequent vanishing of the space of ($\nabla$-)flat $\G^{*}$-symmetric
normal functions $\Gamma\left(Y(N),\frac{(\text{Sym}^{\ell}\HH_{N}^{[1]})\otimes\CC}{(\text{Sym}^{\ell}\HH_{N}^{[1]})\otimes\QQ(\ell+1)}\right).$
Explicitly, given any $\Gamma=\sum_{k=0}^{\ell}\e_{k}\tilde{\gamma}_{k}^{[\ell]}$
($\{\e_{k}\}\in\CC$), the coefficients of $\tilde{\gamma}_{j}^{[\ell]}$
in $\mu_{i\infty}(\Gamma)-\Gamma=\sum_{j=0}^{\ell-1}\left(\sum_{k=j+1}^{\ell}{k \choose j}\e_{k}N^{k-j}\right)\tilde{\gamma}_{j}^{[\ell]}$
must belong to $\QQ(\ell+1)$; inductively one has $\e_{\ell},\,\e_{\ell-1},\ldots,\e_{1}\in\QQ$.
To show $\e_{0}\in\QQ$, similarly apply $\mu_{0}-\text{id}$.
\end{proof}
\begin{cor}
$\R_{\textbf{\emph{f}}}(y)$ depends only on $\{\mathsf{H}_{\sigma}^{[\ell]}(\varphi_{\textbf{\emph{f}}})\}\in\Upsilon_{2}^{\QQ}(N)$
(or on $\varphi_{\textbf{\emph{f}}}\in\Phi_{2}^{\QQ}(N)$).
\end{cor}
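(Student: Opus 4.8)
The claim is essentially a corollary of Lemma 7.1 together with the cycle-class computations of $\S6.1$, the mechanism being that a $\G^*$-symmetric normal function here is rigid, hence pinned down by its infinitesimal invariant. First I would upgrade Lemma 7.1(ii) to the statement that $\nabla$ is \emph{injective} on $\Gamma\!\left(Y(N),\frac{\text{Sym}^{\ell}\H_N^{[1]}}{(\text{Sym}^{\ell}\HH_N^{[1]})\otimes\QQ(\ell+1)}\right)$: if two sections of this group have the same $\nabla$, their difference is $\nabla$-flat, hence lifts over $\uhp$ to a flat section of $\text{Sym}^{\ell}\H^{[1]}$, i.e.\ comes from a global section of $\frac{(\text{Sym}^{\ell}\HH_N^{[1]})\otimes\CC}{(\text{Sym}^{\ell}\HH_N^{[1]})\otimes\QQ(\ell+1)}$, and such a section vanishes by the explicit irreducible-monodromy computation in the proof of Lemma 7.1(ii). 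Since $\R_{\textbf{f}}(y)$ lies in exactly this group (by the large diagram preceding Lemma 7.1), it is determined by $\nabla\R_{\textbf{f}}(y)$.

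Next, by commutativity of the bottom square in that diagram (Lemma 7.1(i)), $(-1)^{\ell}\nabla\R_{\textbf{f}}(y)$ is the image of the cycle-class $[\Omega_{\sZ_{\textbf{f}}}]=[\,\cdot\,]\langle\sZ_{\textbf{f}}\rangle$ under the ``Leray $\{1,\ell\}$-part'' inclusion into $\Gamma(Y(N),\F^{\ell}\H_N^{[\ell]}\otimes\Omega^1_{Y(N)})$; in particular it depends only on the form $\Omega_{\sZ_{\textbf{f}}}$. But $\Omega_{\sZ_{\textbf{f}}}$ is itself governed by the residue vector: by Theorem 6.6, $\mathcal{P}_N^{*}\Omega_{\sZ_{\textbf{f}}}=(2\pi i)^{\ell+1}\Omega_{F_{\textbf{f}}}$ with $F_{\textbf{f}}=E^{[\ell]}_{\varphi_{\textbf{f}}}$, and by the basis expansion of $\S6.1.3$ (Proposition 6.11(iii)), $E^{[\ell]}_{\varphi_{\textbf{f}}}(\tau)=\sum_{\sigma\in\kappa(N)}\mathsf{H}_{\sigma}^{[\ell]}(\varphi_{\textbf{f}})\,\tilde{E}^{[\ell]}_{\sigma}(\tau)$, which is plainly a function of $\{\mathsf{H}_{\sigma}^{[\ell]}(\varphi_{\textbf{f}})\}_{\sigma\in\kappa(N)}\in\Upsilon_2^{\QQ}(N)$ alone. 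Chaining the dependences $\{\mathsf{H}_{\sigma}^{[\ell]}(\varphi_{\textbf{f}})\}\Rightarrow\Omega_{\sZ_{\textbf{f}}}\Rightarrow\nabla\R_{\textbf{f}}(y)\Rightarrow\R_{\textbf{f}}(y)$ yields the first assertion; since $\textbf{f}\mapsto\varphi_{\textbf{f}}$ (via the divisor map and Pontryagin product) and $\varphi\mapsto\{\mathsf{H}_{\sigma}^{[\ell]}(\varphi)\}$ are well-defined, dependence on $\varphi_{\textbf{f}}$ alone follows a fortiori.

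I do not expect a genuine obstacle. All the substance is already in Lemma 7.1 --- the identification of $\nabla\R_{\textbf{f}}(y)$ with the cycle-class, and the rigidity coming from irreducibility of the monodromy on $\text{Sym}^{\ell}\HH_N^{[1]}$ --- and in $\S6.1.2$--$\S6.1.3$, where $F_{\textbf{f}}=E^{[\ell]}_{\varphi_{\textbf{f}}}$ is expressed through the $\tilde{E}^{[\ell]}_{\sigma}$. The one step needing a little care is the lifting in the injectivity argument, passing from a $\nabla$-flat section of a quotient of holomorphic bundles to a section of the corresponding quotient of local systems; but this is precisely the situation handled inside the proof of Lemma 7.1(ii) and can be invoked verbatim. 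So the corollary is, once Lemma 7.1 and Proposition 6.11 are in hand, immediate.
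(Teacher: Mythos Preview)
Your argument is correct and is precisely what the paper has in mind: the corollary is stated without proof immediately after Lemma 7.1, and the intended mechanism is exactly the chain you give --- the vanishing of flat $\G^*$-symmetric normal functions (proved inside Lemma 7.1(ii)) makes $\nabla$ injective on the relevant quotient, Lemma 7.1(i) identifies $\nabla\R_{\textbf{f}}$ with the Leray $\{1,\ell\}$-part of $[\Omega_{\sZ_{\textbf{f}}}]$, and the results of \S6.1 (Theorem 6.6, Proposition 6.11(iii)) pin the latter to the residue vector $\{\mathsf{H}_\sigma^{[\ell]}(\varphi_{\textbf{f}})\}$. The paper later summarizes this as ``the monodromy argument (Lemma 7.1(ii)) together with \S6.1.2'', which is your proof in a single phrase.
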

According to $\S\S5.2.4$-$5$, it therefore suffices to compute $\R_{\textbf{f}}$
for $\textbf{f}\in\QQ\left[\mathfrak{F}(N)_{\cmat}^{\times(\ell+1)}\right]$
for {}``each'' $(p,q)$. (In fact, it suffices to do so for $(p,q)=(0,1)$
and $(1,0)$, but it is computationally $convenient$ to consider
at least our choices of $\cmat$ for each $cusp$ $\sigma\in\kappa(N)$.)

For a fixed choice of lift $\tilde{\R}_{\textbf{f}}^{[\ell]}$ (to
be discussed), write \begin{equation}
\tilde{\R}^{[\ell]}_{\textbf{f}}(\tau) =: \sum_{k=0}^{\ell} R^{[\ell]}_{\textbf{f},j}(\tau)\left[ \eta_{\ell-j}^{[\ell]} \right]. \\
\end{equation} We then define regulator periods \begin{equation}
\Psi^{[\ell]}_{\textbf{f},k}(\tau):=\left\langle \gamma^{[\ell]}_k, \tilde{\R}^{[\ell]}_{\textbf{f}}(\tau) \right\rangle \, \, \, \, \, (k=0,\ldots,\ell ) \\
\end{equation} and a higher normal function%
\footnote{It would make more sense on $Y(N)$ to take $V(\tau)=\left\langle \tilde{\R},\mathsf{F}\eta_{\ell}\right\rangle $
for some $\mathsf{F}\in M_{\ell}(\Gamma(N))$; we will essentially
do this later.%
} \begin{equation}
V_{\textbf{f}}^{[\ell]}(\tau):= \left\langle \tilde{\R}^{[\ell]}_{\textbf{f}}(\tau),\eta^{[\ell]}_{\ell} \right\rangle = (-1)^{\binom{\ell+1}{2}}\nu^{\ell}R^{[\ell]}_{\textbf{f},\ell}(\tau). \\
\end{equation} These are the objects which we aim (in the next subsection) to compute
with the \cite{KLM} formula; first we can derive a number of their
properties by {}``pure thought''.

\paragraph*{$\underline{\text{Holomorphicity}}$:}

Since $\nabla_{\d_{\bar{\tau}}}\tilde{\R}_{\textbf{f}}(\tau)=0\in\Gamma(\uhp,\H^{[\ell]})$,
$V_{\textbf{f}}^{[\ell]}$ and the $\{\Psi_{\textbf{f},k}^{[\ell]}\}$
belong to $\mathcal{O}(\uhp)$. The $\{R_{\textbf{f},j}^{[\ell]}\}$
are $not$ holomorphic since the $[\eta_{j}^{[\ell]}]$ aren't (except
for $\eta_{\ell}^{[\ell]}$): \begin{equation}
\nabla \eta^{[\ell]}_j = j\frac{[\eta_{j-1}^{[\ell]}]-[\eta^{[\ell]}_j]}{\nu}\otimes d\tau -(\ell-j)\frac{[\eta_{j+1}^{[\ell]}]-[\eta_j^{[\ell]}]}{\nu}\otimes d\bar{\tau}. \\
\end{equation}

\paragraph*{$\underline{\text{Picard-Fuchs equations}}$:}

Let $\nabla_{\text{PF}}^{\textbf{f}}=\nabla_{\d_{\tau}}^{\ell+1}+\cdots$
denote the PF operator for $\Omega_{\textbf{f}}^{[\ell]}(\tau):=(2\pi i)^{\ell+1}F_{\textbf{f}}(\tau)[\eta_{\ell}^{[\ell]}]\in\Gamma(\uhp,\F^{\ell}\H^{[\ell]}).$
Writing $\bar{\nabla}_{\d_{\tau}}:\nicefrac{\F^{j}}{\F^{j+1}}\to\nicefrac{\F^{j-1}}{\F^{j}}$,
$(7.5)$ $\implies$ $\bar{\nabla}_{\d_{\tau}}\eta_{j}^{[\ell]}=\frac{j}{\nu}[\eta_{j-1}^{[\ell]}]$
$\implies$$\bar{\nabla}_{\d_{\tau}}^{\ell}\eta_{\ell}^{[\ell]}=\frac{\ell!}{\nu^{\ell}}[\eta_{0}^{[\ell]}]$,
which yields the {}``stupid Yukawa coupling''\[
Y_{\tau^{\ell}}(\tau):=\left\langle \eta_{\ell}^{[\ell]},\,\nabla_{\d_{\tau}}^{\ell}\eta_{\ell}^{[\ell]}\right\rangle =(-1)^{{\ell \choose 2}}\frac{\ell!}{\nu^{\ell}}\int dz_{1}\wedge d\bar{z}_{1}\wedge\cdots\wedge dz_{\ell}\wedge d\bar{z}_{\ell}=(-1)^{{\ell \choose 2}}\ell!.\]
Moreover, $\nabla_{\d_{\tau}}^{\ell+1}\eta_{\ell}^{[\ell]}=0$ as
$\eta_{\ell}^{[\ell]}$ has periods $1,\tau,\ldots,\tau^{\ell}$.

\begin{prop}
(i) The $\{\Psi_{\textbf{\emph{f}},k}^{[\ell]}\}$ satisfy the \emph{homogeneous
equation} $(D_{\text{\emph{PF}}}^{\textbf{\emph{f}}}\circ\d_{\tau})(\cdot)=0$.
More precisely, $\frac{d\Psi_{\textbf{\emph{f}},k}^{[\ell]}}{d\tau}=(-1)^{\ell}(2\pi i)^{\ell+1}\tau^{k}F_{\textbf{\emph{f}}}(\tau).$

(ii) $V_{\textbf{\emph{f}}}^{[\ell]}$ satisfies, for any lift $\tilde{\R}_{\textbf{\emph{f}}}$,
the inhomogenous equation \begin{equation}
\d_{\tau}^{\ell+1}(\cdot)=(-1)^{\binom{\ell+1}{2}}(2\pi i)^{\ell+1}\ell!F_{\textbf{\emph{f}}}(\tau) ; \\
\end{equation} i.e. the higher normal function is (const. $\times$) an Eichler
integral of $F_{\textbf{\emph{f}}}$. The various $\{V_{\textbf{\emph{f}}}^{[\ell]}\}$
resulting from the different lifts yield a basis of solutions for
$(7.6)$.
\end{prop}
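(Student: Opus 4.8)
The plan is to prove Proposition 7.5 by unwinding the definitions of $\Psi^{[\ell]}_{\textbf{f},k}$, $V^{[\ell]}_{\textbf{f}}$, and $\tilde{\R}^{[\ell]}_{\textbf{f}}$ together with the commutativity of the bottom square of the big diagram preceding Lemma 7.3, which identifies $\nabla[\R^{[\ell]}_{\textbf f}]$ (up to sign) with $\Omega^{\{1,\ell\}}_{\mathfrak{Z}_{\textbf f}}$, i.e. with $(2\pi i)^{\ell+1}F_{\textbf f}(\tau)[\eta^{[\ell]}_\ell]$ by Theorem 6.6. First I would establish (i): differentiating $\Psi^{[\ell]}_{\textbf f,k}(\tau)=\langle\gamma^{[\ell]}_k,\tilde\R^{[\ell]}_{\textbf f}(\tau)\rangle$ using the fact that $\gamma^{[\ell]}_k\in\Gamma(\uhp,\mathrm{Sym}_\ell\HH^{\QQ}_{[1]})$ is $\nabla$-flat, so $\d_\tau\Psi^{[\ell]}_{\textbf f,k}=\langle\gamma^{[\ell]}_k,\nabla_{\d_\tau}\tilde\R^{[\ell]}_{\textbf f}\rangle$. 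By commutativity of the bottom square this is $(-1)^\ell(2\pi i)^{\ell+1}F_{\textbf f}(\tau)\langle\gamma^{[\ell]}_k,\eta^{[\ell]}_\ell\rangle$, and $\langle\gamma^{[\ell]}_k,\eta^{[\ell]}_\ell\rangle=\mathfrak{P}^{[\ell]}_{\ell k}=\tau^k$ by $(7.1)$ (the $j=\ell$ case: $(1+\tau)^\ell(1+\bar\tau)^0$ has degree-$k$ part divided by $\binom{\ell}{k}$ equal to $\tau^k$). Applying $\nabla_{\mathrm{PF}}^{\textbf f}$, which kills $\Omega^{[\ell]}_{\textbf f}$, then kills $\d_\tau\Psi^{[\ell]}_{\textbf f,k}$, giving $(D_{\mathrm{PF}}^{\textbf f}\circ\d_\tau)(\Psi^{[\ell]}_{\textbf f,k})=0$ as claimed.

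Next I would prove (ii). From $(7.4)$, $V^{[\ell]}_{\textbf f}(\tau)=\langle\tilde\R^{[\ell]}_{\textbf f}(\tau),\eta^{[\ell]}_\ell\rangle=(-1)^{\binom{\ell+1}{2}}\nu^\ell R^{[\ell]}_{\textbf f,\ell}(\tau)$, and since $\eta^{[\ell]}_\ell$ is holomorphic with $\nabla_{\d_\tau}^{\ell+1}\eta^{[\ell]}_\ell=0$ (periods $1,\tau,\dots,\tau^\ell$), one has $\d_\tau^{\ell+1}V^{[\ell]}_{\textbf f}=\langle\nabla_{\d_\tau}^{\ell+1}\tilde\R^{[\ell]}_{\textbf f},\eta^{[\ell]}_\ell\rangle$. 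Here I would iterate: $\nabla_{\d_\tau}\tilde\R^{[\ell]}_{\textbf f}=(-1)^\ell\Omega^{[\ell]}_{\textbf f}=(-1)^\ell(2\pi i)^{\ell+1}F_{\textbf f}(\tau)\eta^{[\ell]}_\ell$, then apply $\nabla_{\d_\tau}^\ell$ to this and pair with $\eta^{[\ell]}_\ell$. Using $\bar\nabla_{\d_\tau}\eta^{[\ell]}_j=\frac{j}{\nu}[\eta^{[\ell]}_{j-1}]$ iterated to $\bar\nabla_{\d_\tau}^\ell\eta^{[\ell]}_\ell=\frac{\ell!}{\nu^\ell}[\eta^{[\ell]}_0]$, together with the ``stupid Yukawa coupling'' $\langle\eta^{[\ell]}_\ell,\nabla_{\d_\tau}^\ell\eta^{[\ell]}_\ell\rangle=(-1)^{\binom{\ell}{2}}\ell!$ computed just before the Proposition, one gets $\d_\tau^{\ell+1}V^{[\ell]}_{\textbf f}=(-1)^\ell\cdot(-1)^{\binom{\ell}{2}}(2\pi i)^{\ell+1}\ell!\,F_{\textbf f}(\tau)$, and $(-1)^\ell(-1)^{\binom{\ell}{2}}=(-1)^{\binom{\ell+1}{2}}$ since $\binom{\ell+1}{2}=\binom{\ell}{2}+\ell$. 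This is exactly $(7.6)$. The recognition of $V^{[\ell]}_{\textbf f}$ as an Eichler integral is then just the observation that $(7.6)$ is the classical Eichler differential equation for a weight-$(\ell+2)$ form, the factor $\d_\tau^{\ell+1}$ reflecting weight $\ell+2$.

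Finally, for the statement that the various lifts yield a basis of solutions of $(7.6)$: any two lifts $\tilde\R^{[\ell]}_{\textbf f}$ differ by a $\nabla$-flat section of $\mathrm{Sym}^\ell\HH^{\QQ(\ell+1)}_{[1]}$, hence by a $\QQ(\ell+1)$-linear combination of the $\tilde\gamma^{[\ell]}_k$; pairing with $\eta^{[\ell]}_\ell$ shows the corresponding $V^{[\ell]}_{\textbf f}$'s differ by $\QQ(\ell+1)$-linear combinations of $\langle\tilde\gamma^{[\ell]}_k,\eta^{[\ell]}_\ell\rangle$, which are (essentially, up to the $\nu^\ell$ and $\binom{\ell}{k}$ normalizations implicit in $\mathfrak{P}^{[\ell]}_{\ell k}$) the polynomials $1,\tau,\dots,\tau^\ell$ — i.e. a full system of solutions of the homogeneous equation $\d_\tau^{\ell+1}(\cdot)=0$. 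Adjoining these to one particular solution $V^{[\ell]}_{\textbf f}$ of the inhomogeneous $(7.6)$ spans the affine solution space, and their $\QQ(\ell+1)$-linear independence (equivalently, $\mathrm{Sym}^\ell$ of the standard representation being $(\ell+1)$-dimensional, with monodromy acting irreducibly so no proper flat subspace is lost) gives that they form a basis in the intended sense. I expect the only mildly delicate point to be bookkeeping of the $(-1)$-signs and the $\nu^\ell/\binom{\ell}{k}$ normalization constants across $(7.1)$, $(7.4)$, $(7.5)$ and the Yukawa computation; the conceptual content — flatness of $\gamma^{[\ell]}_k$, commutativity of the bottom square, and $\nabla^{\ell+1}_{\d_\tau}\eta^{[\ell]}_\ell=0$ — is already in place, so no genuinely new estimate or construction is needed.
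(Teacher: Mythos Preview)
Your proof of (i) is correct and matches the paper's: the key is $\nabla_{\d_\tau}\tilde{\R}^{[\ell]}_{\textbf{f}}=(-1)^\ell\Omega^{[\ell]}_{\textbf{f}}$ (Lemma 7.1(i)) together with $\langle\gamma^{[\ell]}_k,\eta^{[\ell]}_\ell\rangle=\tau^k$. (One trivial bookkeeping slip: that period is $\mathfrak{P}^{[\ell]}_{0k}$, i.e.\ the case $j=0$ of $(7.1)$, not $j=\ell$.)

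Your argument for (ii), however, has a genuine gap. The claim
\[
\d_\tau^{\ell+1}V^{[\ell]}_{\textbf f}\;=\;\bigl\langle\nabla_{\d_\tau}^{\ell+1}\tilde{\R}^{[\ell]}_{\textbf f},\,\eta^{[\ell]}_\ell\bigr\rangle
\]
does \emph{not} follow from $\nabla_{\d_\tau}^{\ell+1}\eta^{[\ell]}_\ell=0$ alone: Leibniz produces cross-terms $\binom{\ell+1}{j}\langle\nabla^{j}\tilde{\R},\nabla^{\ell+1-j}\eta_\ell\rangle$ for $1\le j\le\ell$ which are nonzero. For instance when $\ell=1$ one checks directly that $\langle\nabla^{2}\tilde{\R},\eta_1\rangle=-cF$ while $\d_\tau^{2}V=cF$ (with $c=-(2\pi i)^2$). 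Your final answer happens to be right only because a second slip --- pairing $\nabla^\ell(F\eta_\ell)$ with $\eta_\ell$ but quoting $\langle\eta_\ell,\nabla^\ell\eta_\ell\rangle$ rather than $\langle\nabla^\ell\eta_\ell,\eta_\ell\rangle=(-1)^\ell\langle\eta_\ell,\nabla^\ell\eta_\ell\rangle$ --- introduces exactly the compensating $(-1)^\ell$.

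The paper's argument avoids this by moving the derivatives the other way: since $\nabla_{\d_\tau}\tilde{\R}=(-1)^\ell(2\pi i)^{\ell+1}F\eta_\ell$ lies in $F^\ell$, one has $\langle\nabla_{\d_\tau}\tilde{\R},\nabla_{\d_\tau}^{p}\eta_\ell\rangle=0$ for $p<\ell$ (Hodge type plus Griffiths transversality), so the one-step Leibniz rule gives inductively $\d_\tau^{p}\langle\tilde{\R},\eta_\ell\rangle=\langle\tilde{\R},\nabla^{p}\eta_\ell\rangle$ for $p\le\ell$, and then one more $\d_\tau$ produces the Yukawa term $\langle\nabla\tilde{\R},\nabla^\ell\eta_\ell\rangle$ (the other term vanishing since $\nabla^{\ell+1}\eta_\ell=0$). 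This is the substantive cancellation mechanism you are missing. Your treatment of the ``basis'' assertion at the end is fine and agrees with the paper's.
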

\begin{proof}
(i) Lemma 7.1(i) says $\nabla_{\d_{\tau}}\tilde{\R}_{\textbf{f}}^{[\ell]}=(-1)^{\ell}\Omega_{\textbf{f}}^{[\ell]}$;
the result follows.

(ii) There are two ways to do this, both instructive:

$\underline{\text{Method I}}$: $\d_{\tau}^{\ell+1}\left\langle \tilde{\R}_{\textbf{f}},\eta_{\ell}\right\rangle =\d_{\tau}^{\ell}\left\langle \tilde{\R}_{\textbf{f}},\nabla_{\d_{\tau}}\eta_{\ell}\right\rangle =\cdots$
\[
\text{[using }\left\langle \eta_{\ell},\nabla_{\d_{\tau}}^{p}\eta_{\ell}\right\rangle =0\,\,\,\,\forall p<\ell\text{]}\]
\[
\cdots\,=\,\d_{\tau}\left\langle \tilde{\R}_{\textbf{f}},\nabla_{\d_{\tau}}^{\ell}\eta_{\ell}\right\rangle \,=\,(-1)^{\ell}(2\pi i)^{\ell+1}\left\langle F_{\textbf{f}}\eta_{\ell},\nabla_{\d_{\tau}}^{\ell}\eta_{\ell}\right\rangle +\left\langle \tilde{\R}_{\textbf{f}},\nabla_{\d_{\tau}}^{\ell+1}\eta_{\ell}\,[=0]\right\rangle \]
\[
=\,(-1)^{\ell}(2\pi i)^{\ell+1}\left\langle F_{\textbf{f}}\eta_{\ell}^{[\ell]},\frac{\ell!}{\nu^{\ell}}\eta_{0}^{[\ell]}+\F^{1}\right\rangle \,=\,(-1)^{\ell+{\ell \choose 2}}(2\pi i)^{\ell+1}\ell!\frac{F_{\textbf{f}}}{\nu^{\ell}}\nu^{\ell}.\]

$\underline{\text{Method II}}$: Note that $\log(\mu_{i\infty})\tilde{\gamma}_{j}^{[\ell]}=j\tilde{\gamma}_{j-1}^{[\ell]}$($=0$
if $j=0$). Taking the priveleged extension basis (single-valued on
$\overline{Y}(N)$, in a neighborhood of $[i\infty]$)\[
\hat{\gamma}_{j}^{[\ell]}:=e^{-\tau\log(\mu_{i\infty})}\tilde{\gamma}_{j}^{[\ell]}\,\,\,\overset{\nabla_{\d_{\tau}}}{\mapsto}\,\,\,-e^{-\tau\log(\mu_{i\infty})}\log(\mu_{i\infty})\tilde{\gamma}_{j}^{[\ell]}=-j\hat{\gamma}_{j-1}^{[\ell]},\]
we write $\tilde{\R}_{\textbf{f}}^{[\ell]}=\sum\hat{\psi}_{j}\hat{\gamma}_{j}^{[\ell]}.$
Applying $\nabla_{\d_{\tau}}$, and using $\hat{\gamma}_{\ell}^{[\ell]}\equiv\eta_{\ell}^{[\ell]}$,
yields\[
\left(\sum_{j=0}^{\ell-1}\left\{ \frac{\d\hat{\psi}_{j}}{d\tau}-(j+1)\hat{\psi}_{j+1}\right\} \hat{\gamma}_{j}^{[\ell]}+\frac{d\hat{\psi}_{\ell}}{d\tau}\hat{\gamma}_{\ell}^{[\ell]}\right)\otimes d\tau=(-1)^{\ell}\Omega_{\textbf{f}}^{[\ell]}\otimes d\tau\]
\[
=(-1)^{\ell}(2\pi i)^{\ell+1}F_{\textbf{f}}\hat{\gamma}_{\ell}^{[\ell]}\otimes d\tau.\]
So \begin{equation}
\left\{ \begin{matrix} \hat{\psi}_{\ell} = (-1)^{\ell} (2\pi i)^{\ell+1}\int F_{\textbf{f}} d\tau \\ \\ \hat{\psi}_j = (j+1)\int \hat{\psi}_{j+1} d\tau \, \, \, \, (j=0,\ldots ,\ell-1); \end{matrix} \right. \\
\end{equation} while $V_{\textbf{f}}^{[\ell]}=\sum\hat{\psi}_{j}\left\langle \hat{\gamma}_{j},\hat{\gamma}_{\ell}\right\rangle =(-1)^{{\ell \choose 2}}\hat{\psi}_{0}.$
To see the {}``basis'' assertion: modifying $\tilde{\R}_{\textbf{f}}$
changes $V_{\textbf{f}}$ by a polynomial in $\tau$ (coefficients
$\in\QQ(\ell+1)$) of degree $\leq\ell$.
\end{proof}
\begin{rem*}
If we notate $\tilde{\R}_{\textbf{f}}^{[\ell]}=\sum\psi_{j}\tilde{\gamma}_{j}$,
then $\left(\begin{array}{c}
\psi_{\ell}\\
\vdots\\
\psi_{0}\end{array}\right)=e^{\tau\log[\mu_{i\infty}]_{\gamma}}\left(\begin{array}{c}
\hat{\psi}_{\ell}\\
\vdots\\
\hat{\psi}_{0}\end{array}\right)$ and this may be used to {}``compute'' $\Psi_{\textbf{f},k}^{[\ell]}=\left\langle \tilde{\gamma}_{k},\tilde{\gamma}_{\ell-k}\right\rangle \psi_{\ell-k}=\frac{(-1)^{k+{\ell \choose 2}}}{{\ell \choose k}}\psi_{\ell-k}.$
\end{rem*}

\paragraph*{$\underline{\text{Monodromy and special values at }[i\infty]}$:}

(This cusp will play a distinguished role later.) If $F_{\textbf{f}}(\tau)\to0$
as $\tau\to i\infty$, then integrating $(-1)^{\ell}(2\pi i)^{\ell}F_{\textbf{f}}(q)\hat{\gamma}_{\ell}^{[\ell]}\otimes\frac{dq}{q}=\nabla\tilde{\R}_{\textbf{f}}^{[\ell]}$
yields on a disk $\Delta\subset Y(N)$ (containing $\{y=0\}=[i\infty]$):
\begin{equation}
(2\pi i)^{\ell+1} \sum_{j=0}^{\ell}\left( Q_j + qP_j(\tau) \right) \tilde{\gamma}^{[\ell]}_j \, , \, \, \, \, Q_j\in\CC \text{ and } P_j\in\mathcal{O}(\Delta)[X]. \\
\end{equation} Since $(\mu_{i\infty}-\text{id})\tilde{\R}_{\textbf{f}}^{[\ell]}$
is of the form $(2\pi i)^{\ell+1}\sum_{j=0}^{\ell}Q_{j}'\tilde{\gamma}_{j}^{[\ell]}$,
we deduce that the $Q_{j}\in\QQ$ for $j\neq0$. A change of lift
$\tilde{\R}_{\textbf{f}}$ merely changes the $\{Q_{j}\}$ (including
$Q_{0}$) by rational numbers.

\begin{prop}
Suppose $\mathsf{H}_{[i\infty]}^{[\ell]}(\varphi_{\textbf{\emph{f}}})=0$,
and set $\mathfrak{K}_{i}:=\lim_{\tau\to i\infty}\Psi_{\textbf{\emph{f}},i}^{[\ell]}(\tau)$.

(i) $\mathfrak{K}_{i}\in\QQ(\ell+1)$ for $0\leq i<\ell$.

(ii) The value of $\mathfrak{K}_{\ell}\in\CC/\QQ(\ell+1)$ is independent
of the lift (i.e. depends only on the other $\{\mathsf{H}_{\sigma}^{[\ell]}(\varphi_{\textbf{\emph{f}}})\}_{(\sigma\neq i\infty)}$).

(iii) Lift $\tilde{\R}_{\textbf{\emph{f}}}^{[\ell]}$ chosen so that
$\{\mathfrak{K}_{i}\}_{i=0}^{\ell-1}$ vanish $\Longleftrightarrow$
$\mathfrak{K}:=\lim_{\tau\to i\infty}V_{\textbf{\emph{f}}}^{[\ell]}(\tau)$
defined. In this case, $\mathfrak{K}=(-1)^{\ell}\mathfrak{K}_{\ell}$
and \begin{equation}
V_{\textbf{\emph{f}}}^{[\ell]}(q)=\mathfrak{K}+(-1)^{\binom{\ell+1}{2}} \ell! \int_0 F_{\textbf{\emph{f}}}(q) \frac{dq}{q} \circ \cdots \circ \frac{dq}{q}. \\
\end{equation}
\end{prop}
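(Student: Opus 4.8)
The plan is to combine the structural facts established just before the statement: namely that $V^{[\ell]}_{\textbf{f}} = \langle \tilde{\R}^{[\ell]}_{\textbf{f}}, \eta^{[\ell]}_\ell \rangle = (-1)^{\binom{\ell+1}{2}}\nu^\ell R^{[\ell]}_{\textbf{f},\ell}$ is holomorphic, that it satisfies the inhomogeneous equation $(7.6)$, $\partial_\tau^{\ell+1} V^{[\ell]}_{\textbf{f}} = (-1)^{\binom{\ell+1}{2}}(2\pi i)^{\ell+1}\ell! \, F_{\textbf{f}}(\tau)$ (Prop.~7.4(ii)), and that $\Psi^{[\ell]}_{\textbf{f},i}$ and $V^{[\ell]}_{\textbf{f}}$ are related by the expansion $(7.8)$ together with the pairing $\mathfrak{P}^{[\ell]}_{jk}$ of $(7.2)$. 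In particular $V^{[\ell]}_{\textbf{f}} = \langle \tilde{\R}^{[\ell]}_{\textbf{f}}, \eta^{[\ell]}_\ell \rangle$ and $\Psi^{[\ell]}_{\textbf{f},k} = \langle \gamma^{[\ell]}_k, \tilde{\R}^{[\ell]}_{\textbf{f}} \rangle$ are two different pairings of the same section $\tilde{\R}^{[\ell]}_{\textbf{f}}$; since $\gamma^{[\ell]}_\ell = \beta^\ell$ and $\eta^{[\ell]}_0$ are the extreme terms, and $\langle \gamma^{[\ell]}_\ell, \eta^{[\ell]}_{\ell-j}\rangle = \mathfrak{P}^{[\ell]}_{j\ell} = \binom{\ell}{\ell}^{-1}\bar\tau^{\,0}\tau^\ell \cdot (\text{only } j=0 \text{ survives}) $, I would first record the elementary linear-algebra identity expressing $\Psi^{[\ell]}_{\textbf{f},\ell}$ in terms of the $R^{[\ell]}_{\textbf{f},j}$, and isolate the $R^{[\ell]}_{\textbf{f},\ell}$ term, whose coefficient is $(-1)^{\binom{\ell+1}{2}}\nu^\ell$ up to the combinatorial factor — this is exactly $V^{[\ell]}_{\textbf{f}}$. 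The cleanest route, however, is to avoid the $\eta_j$ bookkeeping entirely and work in the priveleged basis $\{\hat\gamma^{[\ell]}_j\}$ of $\S$7.1 (Method II of Prop.~7.4): there $\tilde{\R}^{[\ell]}_{\textbf{f}} = \sum \hat\psi_j \hat\gamma^{[\ell]}_j$ with $\hat\psi_\ell = (-1)^\ell(2\pi i)^{\ell+1}\int F_{\textbf{f}}\,d\tau$ and $\hat\psi_j = (j+1)\int \hat\psi_{j+1}\,d\tau$, while $V^{[\ell]}_{\textbf{f}} = (-1)^{\binom{\ell}{2}}\hat\psi_0$ and $\Psi^{[\ell]}_{\textbf{f},k} = \frac{(-1)^{k+\binom{\ell}{2}}}{\binom{\ell}{k}}\psi_{\ell-k}$ where $(\psi_\ell,\dots,\psi_0)^t = e^{\tau\log[\mu_{i\infty}]_\gamma}(\hat\psi_\ell,\dots,\hat\psi_0)^t$ (the Remark after Prop.~7.4).

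Next I would carry out the asymptotic analysis at $[i\infty]$. Under the hypothesis $\mathsf{H}^{[\ell]}_{[i\infty]}(\varphi_{\textbf{f}}) = 0$, Lemma 5.15 (and Theorem 6.6) gives $\lim_{\tau\to i\infty}F_{\textbf{f}}(\tau) = \lim_{\tau\to i\infty} E^{[\ell]}_{\varphi_{\textbf{f}}}(\tau) = \mathsf{H}^{[\ell]}_{[i\infty]}(\varphi_{\textbf{f}}) = 0$, so $F_{\textbf{f}}$ vanishes at the cusp and has a $q$-expansion $F_{\textbf{f}}(q) = \sum_{M\geq 1}\alpha_M q^M$ with no constant term (Prop.~6.12 / Cor.~6.13). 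Hence the iterated primitive $\int_0 F_{\textbf{f}}(q)\tfrac{dq}{q}\circ\cdots\circ\tfrac{dq}{q}$ ($\ell+1$ factors) converges as $q\to 0$ and vanishes there: this is the key point making the right side of $(7.9)$ well-defined with value $\mathfrak{K}$ at $q=0$. From $(7.7)$ we then get $\hat\psi_\ell(q) \to 0$ and inductively $\hat\psi_j(q) \to$ a \emph{constant} $C_j$ (the integration constant) as $q\to 0$, for $j = \ell-1,\dots,0$. Translating via the transition matrix $e^{\tau\log[\mu_{i\infty}]_\gamma}$ and the Remark's formula $\Psi^{[\ell]}_{\textbf{f},i} = \tfrac{(-1)^{i+\binom{\ell}{2}}}{\binom{\ell}{i}}\psi_{\ell-i}$, and using that $e^{\tau\log[\mu_{i\infty}]}$ acts unipotently with entries polynomial in $\tau$ while each $\hat\psi_j - C_j$ is $O(q)$ (hence decays faster than any power of $\tau$), I would show $\mathfrak{K}_i := \lim_{\tau\to i\infty}\Psi^{[\ell]}_{\textbf{f},i}(\tau)$ exists and equals a rational-linear combination of the $C_j$ for $i < \ell$ — which, combined with the fact from $(7.8)$ above (the paragraph on monodromy/special values, where $(\mu_{i\infty}-\mathrm{id})\tilde{\R}$ has $\QQ(\ell+1)$-coefficients) that the $Q_j$ ($j\neq 0$) are rational, yields part (i): $\mathfrak{K}_i \in \QQ(\ell+1)$ for $0\leq i<\ell$. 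Part (ii) is then immediate: a change of lift $\tilde{\R}^{[\ell]}_{\textbf{f}} \mapsto \tilde{\R}^{[\ell]}_{\textbf{f}} + (2\pi i)^{\ell+1}\sum c_j \tilde\gamma^{[\ell]}_j$ with $c_j\in\QQ(\ell+1)$ changes $\Psi^{[\ell]}_{\textbf{f},\ell}$ only by an element of $\QQ(\ell+1)$ (since $\langle\gamma^{[\ell]}_\ell,\tilde\gamma^{[\ell]}_j\rangle$ is rational), so $\mathfrak{K}_\ell \bmod \QQ(\ell+1)$ is lift-independent.

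For part (iii): the implication ``$\Longleftarrow$'' (existence of $\mathfrak{K}$) forces $V^{[\ell]}_{\textbf{f}}$ to have a finite limit, and since $V^{[\ell]}_{\textbf{f}} = (-1)^{\binom{\ell}{2}}\hat\psi_0 = (-1)^{\binom{\ell}{2}}C_0 + O(q)$ this is automatic once the iterated integral is normalized to vanish at $0$; conversely, if $\mathfrak{K}$ is to be well-defined we need the integration constant $C_0$ (equivalently all $\{\mathfrak{K}_i\}_{i<\ell}$, by the triangular relation between the $C_j$ and the $\mathfrak{K}_i$) to be pinned down, i.e.\ we must choose the lift so that $\mathfrak{K}_0 = \cdots = \mathfrak{K}_{\ell-1} = 0$ — there is a unique such choice modulo $\QQ(\ell+1)$ by part (i). Once that lift is fixed, $\hat\psi_j \to 0$ for $j<\ell$, hence all $\psi_{\ell-i}\to 0$ for $i<\ell$ and $\Psi^{[\ell]}_{\textbf{f},\ell} \to \tfrac{(-1)^{\ell+\binom{\ell}{2}}}{\binom{\ell}{\ell}}\lim\psi_0 = (-1)^\ell(-1)^{\binom{\ell}{2}}\lim\psi_0 = (-1)^\ell \lim V^{[\ell]}_{\textbf{f}} = (-1)^\ell\mathfrak{K}$, giving $\mathfrak{K} = (-1)^\ell\mathfrak{K}_\ell$; and integrating $(7.6)$ $\ell+1$ times starting from $q=0$ with all lower-order terms killed gives exactly $(7.9)$, $V^{[\ell]}_{\textbf{f}}(q) = \mathfrak{K} + (-1)^{\binom{\ell+1}{2}}\ell!\int_0 F_{\textbf{f}}(q)\tfrac{dq}{q}\circ\cdots\circ\tfrac{dq}{q}$, where the constant of the outermost integration is precisely $\mathfrak{K}$. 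The main obstacle I anticipate is purely bookkeeping: keeping the signs, the powers of $2\pi i$, the binomial normalizations $\binom{\ell}{k}^{-1}$, and the unipotent transition matrix $e^{\tau\log[\mu_{i\infty}]_\gamma}$ all consistent between the $\{\eta_j\}$-expansion $(7.8)$, the $\{\tilde\gamma_j\}$/$\{\hat\gamma_j\}$ bases, and the formula $(7.6)$ — none of it is deep, but it is the kind of computation where a single transposed index derails the claimed equality $\mathfrak{K} = (-1)^\ell\mathfrak{K}_\ell$. A secondary technical point worth isolating as a lemma is the faster-than-polynomial decay of $\hat\psi_j - C_j$ in $\tau$ near $[i\infty]$, so that multiplying by the polynomial-in-$\tau$ monodromy matrix still produces honest finite limits; this follows from $F_{\textbf{f}}(q) = O(q)$ and the iterated-integral structure $(7.7)$, and I would state it explicitly before invoking it.
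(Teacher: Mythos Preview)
Your proposal is correct and follows essentially the same route as the paper's proof: parts (i)--(ii) come from the expansion $(7.8)$ and the monodromy argument establishing $Q_j\in\QQ$ for $j\neq 0$, while part (iii) and $(7.9)$ come from the chain of equivalences $\{\Psi_{\textbf{f},i}\to 0\text{ for }i<\ell\}\Longleftrightarrow\{\psi_i\to 0\text{ for }i>0\}\Longleftrightarrow\{\hat\psi_i\to 0\text{ for }i>0\}\Longleftrightarrow$ every $\int$ but the last in $(7.7)$ is taken from $\tau=i\infty$. The paper's proof is far terser (three lines), whereas you spell out the asymptotic decay $\hat\psi_j-C_j=O(q)$ and the transition-matrix bookkeeping explicitly; but the underlying mechanism is identical. (One minor slip: what you cite as ``Prop.~7.4(ii)'' for the inhomogeneous equation $(7.6)$ is actually the preceding Proposition~7.3(ii), since 7.4 is the statement you are proving.)
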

\begin{proof}
(i) and (ii) are clear from $(7.8)$. For (iii) (except $(7.9)$),
plug $(7.8)$ into $\left\langle \,\cdot\,,\,\eta_{\ell}^{[\ell]}\right\rangle $.
$(7.9)$ follows from ($\tau\to i\infty$) $\{\Psi_{\textbf{f},i}^{[\ell]}\to0\text{ for }0\leq i<\ell\}\Longleftrightarrow\{\psi_{i}\to0\text{ for }0<i\leq\ell\}\Longleftrightarrow\{\hat{\psi}_{i}\to0\text{ for }0<i\leq\ell\}\Longleftrightarrow$every
$\int$ but the last in $(7.7)$ is taken from $\tau=i\infty$.
\end{proof}
\begin{rem}
(a) $\mathsf{H}_{[i\infty]}^{[\ell]}(\varphi_{\textbf{f}})=0$ means
that (an $AJ$-trivial modification of) $\left\langle \sZ_{\textbf{f}}\right\rangle $
extends across the N\'eron $N$-gon $\hat{E}_{[i\infty]}^{[\ell]}(N)$,
and $\mathfrak{K}_{\ell}$ is essentially $AJ$ of its restriction
(in $H^{\ell}(\hat{E}_{[i\infty]}^{[\ell]}(N),\CC/\QQ(\ell+1))$).
Even with this being well-defined, and even if $\tilde{\R}_{\textbf{f}}^{[\ell]}$
is normalized as in (iii) above, it $need$ $not$ be free of monodromy
about $y=0$! (Of course, when it $is$ monodromy-free, the $\{R_{\textbf{f},k}\},\, V_{\textbf{f}},$
and $\Psi_{\textbf{f},0}$ all follow suit.) This issue has to do
with $\pi^{[\ell]}(N)\left(\left|T_{\mathfrak{Z}_{\textbf{f}}}\right|\right)\subset Y(N)$
and is related to Prop. 2.11.

(b) The lifts used below are chosen for computability rather than
vanishing of $\{\mathfrak{K}_{i}\}.$

(c) One reason we have to do the $AJ$ computation below is to find
$\mathfrak{K}_{\ell}$, if $\mathsf{H}_{[i\infty]}^{[\ell]}(\varphi_{\textbf{f}})=0$
(though we are most interested in the case $\mathsf{H}_{[i\infty]}^{[\ell]}(\varphi_{\textbf{f}})\neq0$).
\end{rem}
For an arbitrary $\textbf{f}$, here is the {}``lift'' we use to
apply KLM:

\begin{itemize}
\item break it up in $\mathcal{O}^{*}(U(N))^{\otimes(\ell+1)}$ into $\sum_{\alpha}\textbf{f}^{\alpha}$,
with each $\varphi_{\textbf{f}^{\alpha}}\in\Phi_{2}^{\QQ}(N)_{(p,q)}^{\circ}$
for some $(p,q)$ as in $\S5.2.4$. This step is not well-defined
w.r.t. the final outcome. Next,
\item break each $\textbf{f}^{\alpha}$ into $\sum_{\beta}\textbf{f}^{\alpha\beta}$,
with each $\textbf{f}^{\alpha\beta}=(f_{1}^{\alpha\beta},\ldots,f_{\ell+1}^{\alpha\beta})\in\mathfrak{F}(N)_{\cmat}^{\times(\ell+1)}$
for some $(-s,r)$ as in $\S5.2.5$; then
\item construct $\tilde{\R}_{\textbf{f}^{\alpha\beta}}$ as in the next
section, and apply KLM.
\end{itemize}
The last 2 steps will yield a well-defined map \[
\Phi_{2}^{\QQ}(N)_{(p,q)}^{\circ}\to\Gamma(\uhp,\text{Sym}^{\ell}\H^{[1]}),\]
 as will be clear from the computations.

\begin{rem*}
$\mathsf{H}_{\sigma}(\varphi_{\textbf{f}^{\alpha}})$ (or $\mathsf{H}_{\sigma}(\varphi_{\textbf{f}^{\alpha\beta}})$)
is $0$ for those $\sigma\longleftrightarrow(-s_{0},r_{0})\in\left\langle (p,q)\right\rangle \subset(\ZZ/N\ZZ)^{2},$
but not necessarily for any other $\sigma\in\kappa(N)$.
\end{rem*}

\subsection{Applying the KLM formula}

This will take place on (subsets of) $\E^{[\ell]}$ rather than $\E^{[\ell]}(N)$;
instead of writing $\mathcal{P}_{N}^{*}$ constantly to pull functions
and cycles back to $\E$ ($\rTo^{\pi}\uhp$), we will take this to
be understood.

Fix a choice of $p,q\in\ZZ$ such that $\left\langle (\bar{p},\bar{q})\right\rangle \cong\ZZ/N\ZZ\subset(\ZZ/N\ZZ)^{2}.$
Taking any $r,s$ {}``completing'' this to an element $M=\cmat\in SL_{2}(\ZZ)$,
we consider $\textbf{f}=(f_{1},\ldots,f_{\ell+1})\in\mathfrak{F}(N)_{M}^{\times(\ell+1)}$,
and compute the $\{R_{\textbf{f},k}^{[\ell]}(\tau)\}$ for a particular
choice of lift $\tilde{\R}_{\textbf{f}}^{[\ell]}(\tau)$ over $(\tau\in)\,\A_{M}$.%
\footnote{$\mathfrak{F}(N)_{M}$ and $\A_{M}$ as in $\S5.2.5$.%
} We then use this to compute the $\Psi_{\textbf{f},j}^{[\ell]}$ over
$\A_{M}$, analytically continue these to $\uhp$, and employ the
result to find the (nonholomorphic) $\{R_{\textbf{f},k}^{[\ell]}(\tau)\}$
over all of $\uhp$.

The choice of lift over $\A_{M}$ must be dealt with in two cases,
according as whether for the Pontryagin product of $(p,q)$-vertical
sets \begin{equation}
0\notin |T_{f_1}|*\cdots *|T_{f_{\ell+1}}| \, \text{ on }\, \pi^{-1}(\A_M)\subset\E . \\
\end{equation} If this is true, then (on all of $\E$) $\{0\}\notin|(f_{1})|*\cdots*|(f_{\ell+1})|$
and (on $\E^{[\ell]}$) we can take $\mathfrak{Z}_{\textbf{f}}:=\,$Zariski
closure of $Z_{\textbf{f}}=\tilde{\G}^{*}\iota^{*}\{\textbf{f}\}$
(see $\S5.3.4$). With this understood, we have

\begin{lem}
$(7.10)$ $\Longleftrightarrow$ $|T_{\mathfrak{Z}_{\textbf{\emph{f}}}}|=\emptyset$
on $\E_{\A_{M}}^{[\ell]}:=(\pi^{[\ell]})^{-1}(\A_{M})\subset\E^{[\ell]}$.
\end{lem}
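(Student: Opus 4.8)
The plan is to unwind the definitions on both sides and reduce the two-sided implication to a statement about supports of KLM chains under pullback, pushforward, and the symmetrization operator $\tilde{\G}^*$. Recall from $\S5.3.4$ that $\mathfrak{Z}_{\textbf{f}}$ is (the Zariski closure of) $\tilde{\G}^*\iota^*\{\textbf{f}\}$, where $\{\textbf{f}\}=\{f_1(u_1),\ldots,f_{\ell+1}(u_{\ell+1})\}\subset U^{[\ell+1]}(N)\times\square^{\ell+1}$ is the graph cycle and $\iota$ is the inclusion $(z_1,\ldots,z_\ell)\mapsto(-z_1,z_1-z_2,\ldots,z_{\ell-1}-z_\ell,z_\ell)=(u_1,\ldots,u_{\ell+1})$. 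By the definition of the KLM currents in $(0.19)$--$(0.20)$, $T_{\mathfrak{Z}_{\textbf{f}}}=\pi_{\E^{[\ell]}}\{\mathfrak{Z}_{\textbf{f}}\cdot(\E^{[\ell]}\times T_{\ell+1})\}$, i.e. the image in $\E^{[\ell]}$ of the locus where all $\ell+1$ graphed function-values simultaneously lie in $\RR^{\leq 0}\cup\{\infty\}$. For the graph of $f_i$, the condition $f_i(u_i)\in\RR^{\leq 0}\cup\{\infty\}$ is exactly $u_i\in T_{f_i}$ (including divisor components). Hence $|T_{\{\textbf{f}\}}|=\bigcap_i \{u_i\in T_{f_i}\}$ and, after pulling back by $\iota$ and restricting to a fiber $E^{[\ell]}_\tau$, a point $(z_1,\ldots,z_\ell)$ lies in $|\iota^*T_{\{\textbf{f}\}}|$ iff $-z_1\in T_{f_1}$, $z_i-z_{i+1}\in T_{f_{i+1}}$ for $1\le i\le \ell-1$, and $z_\ell\in T_{f_{\ell+1}}$ — i.e. iff $(z_1,\ldots,z_\ell)$ represents an $(\ell+1)$-tuple of points of $T_{f_1},\ldots,T_{f_{\ell+1}}$ summing to $0$ in $E_\tau$. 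Under $\pi^{[\ell]}$ this projects onto $\A_M$ precisely when the Pontryagin/Minkowski sum $T_{f_1}*\cdots *T_{f_{\ell+1}}$ contains $0$ over $\A_M$; this already gives the equivalence before symmetrization.

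The remaining point is that applying $\tilde{\G}^*=\frac{1}{\ell!2^\ell N^{2\ell}}\sum_{(c,\underline\epsilon,\underline\lambda)}(-1)^{\cdots}(c,\underline\epsilon)^*(\mathrm{tr}_{\underline\lambda})^*$ cannot create $|T_{\mathfrak{Z}_{\textbf{f}}}|$ where $|\iota^*T_{\{\textbf{f}\}}|$ projected emptily, and conversely cannot destroy it. For one direction this is immediate: $|T_{\tilde{\G}^*\iota^*\{\textbf{f}\}}|\subseteq\bigcup_{(c,\underline\epsilon,\underline\lambda)}\big((c,\underline\epsilon)\circ\mathrm{tr}_{\underline\lambda}\big)^{-1}|\iota^*T_{\{\textbf{f}\}}|$, and since each $(c,\underline\epsilon)$ and $\mathrm{tr}_{\underline\lambda}$ commutes with $\pi^{[\ell]}$ (they fix $\tau$), the image under $\pi^{[\ell]}$ of the right side is still contained in $\A_M$-fibers over points where the relevant translated/permuted $T$-intersections project; since translations by $N$-torsion and sign changes $z_i\mapsto -z_i$ preserve the property ``$0\in T_{f_1}*\cdots*T_{f_{\ell+1}}$'' over each $\tau$ (using that the $f_i\in\mathfrak{F}(N)_M$ have $(p,q)$-vertical $T_{f_i}$, and $T_{f_i(-u)}=-T_{f_i(u)}$, translates of vertical sets are vertical), emptiness of $|T_{\mathfrak{Z}_{\textbf{f}}}|$ over $\A_M$ follows from $(7.10)$. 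For the converse I would observe that the untwisted summand corresponding to $(c,\underline\epsilon,\underline\lambda)=(\mathrm{id},\underline 0,\underline 0)$ is $\iota^*\{\textbf{f}\}$ itself, which appears in $\tilde{\G}^*\iota^*\{\textbf{f}\}$ with nonzero coefficient and cannot be cancelled by the other summands on the level of $T$-chains because the images $(c,\underline\epsilon)\circ\mathrm{tr}_{\underline\lambda}$ of $|\iota^*T_{\{\textbf{f}\}}|$ lie on different components of $\hat{W}^{[\ell]}_N$-translates; hence if $|\iota^*T_{\{\textbf{f}\}}|$ has nonempty image in $\A_M$ then so does $|T_{\mathfrak{Z}_{\textbf{f}}}|$, so $\lnot(7.10)\implies |T_{\mathfrak{Z}_{\textbf{f}}}|\neq\emptyset$.

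The main obstacle I anticipate is the bookkeeping in the converse direction — verifying that the symmetrization $\tilde{\G}^*$ genuinely does not introduce cancellations of $T$-chains that would spuriously empty the support over $\A_M$. The cleanest way around this is to choose, within each equivalence class of functions, the representatives $\textbf{f}^{\alpha\beta}\in\mathfrak{F}(N)_M^{\times(\ell+1)}$ exactly as in $\S5.2.5$ (whose $T_{f_i}$ lie in a single connected component of $W^{(p,q)}_\tau(N)$), so that over $\A_M$ the translated images $\mathrm{tr}_{\underline\lambda}^{-1}|\iota^*T_{\{\textbf{f}\}}|$ for distinct $\underline\lambda\in\Lambda^\ell$ are disjoint (they live over distinct translates of the basepoint component), and the sign-permutations $(c,\underline\epsilon)$ likewise send the support to geometrically distinct strata; disjointness of supports forbids cancellation. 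A secondary technical point, which I would dispatch with a perturbation argument of the type used in $\S9$ of \cite{Ke1} (already invoked elsewhere in $\S4.2$), is that when some $T_{f_i}$ and $T_{f_j}$ partially overlap one must first put the chains in good (real-)position so that the Minkowski-sum description of $|T_{\mathfrak{Z}_{\textbf{f}}}|$ is exact; this does not affect the projection to $\A_M$.
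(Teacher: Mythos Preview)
Your first paragraph is exactly the paper's proof: unwind the definitions of $\iota$ and of the Pontryagin product to obtain the chain
\[
0\in|T_{f_1}|*\cdots*|T_{f_{\ell+1}}|\ \Longleftrightarrow\ \exists\,(u_1,\ldots,u_{\ell+1})\in (T_{f_1}\times\cdots\times T_{f_{\ell+1}})\cap\iota(E_\tau^{[\ell]})\ \Longleftrightarrow\ |T_{\iota^*\{\textbf{f}\}}|\neq\emptyset,
\]
and the paper's argument stops precisely there --- it never passes from $|T_{\iota^*\{\textbf{f}\}}|$ to $|T_{\mathfrak{Z}_{\textbf{f}}}|$ explicitly.

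Your second and third paragraphs go beyond the paper by treating the symmetrization $\tilde{\G}^*$. For the implication $(7.10)\Rightarrow|T_{\mathfrak{Z}_{\textbf{f}}}|=\emptyset$ this is immediate (a signed sum of empty chains is empty), and that is the only direction actually used afterward --- to conclude $dR_{\mathfrak{Z}_{\textbf{f},\tau}}=(2\pi i)^{\ell+1}\delta_{T_{\mathfrak{Z}_{\textbf{f},\tau}}}=0$. For the converse, however, your disjoint-supports claim is not quite right: if $C_0\subset E_\tau$ denotes the connected component of $W_\tau^{(p,q)}(N)$ through $0$, then $C_0$ is a subgroup closed under $z\mapsto -z$, so all of $\mathfrak{S}_\ell\ltimes(\ZZ/2\ZZ)^\ell$ and the entire subgroup $\langle(p,q)\rangle^\ell\subset\Lambda^\ell$ stabilize the stratum $C_0^\ell$ containing $|T_{\iota^*\{\textbf{f}\}}|$. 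Hence many summands of $\tilde{\G}^*$ land in the same stratum, and ``distinct translates of the basepoint component'' does not separate them. A genuine non-cancellation argument would require finer position or orientation considerations. You correctly flag this as the main obstacle; since the converse is never invoked in the paper, the gap is harmless, and your first paragraph (equivalently, the paper's three-line proof) already suffices for everything that follows.
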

\begin{proof}
Since $\iota(E_{\tau}^{[\ell]})=\{u_{1}+\cdots+u_{\ell+1}=0\}\subset E_{\tau}^{[\ell+1]}$,
$0\in|T_{f_{1}}|*\cdots*|T_{f_{\ell+1}}|\subset E_{\tau}\Longleftrightarrow$
$0\equiv u_{1}+\cdots+u_{\ell+1}\text{ for some }(u_{1},\ldots,u_{\ell+1})\in|T_{f_{1}}|\cap\cdots\cap|T_{f_{\ell+1}}|\subset E_{\tau}^{[\ell+1]}\Longleftrightarrow$
$\exists(u_{1},\ldots,u_{\ell+1})\in T_{f_{1}}\cap\cdots\cap T_{f_{\ell+1}}\cap\iota(E_{\tau}^{[\ell]})\Longleftrightarrow$
$|T_{\iota^{*}\{\textbf{f}\}}|$ nonempty.
\end{proof}
As a consequence we can take as our lift\[
\tilde{\R}_{\textbf{f}}^{[\ell]}(\tau):=[R_{\mathfrak{Z}_{\textbf{f},\tau}}]\in H^{\ell}(E_{\tau}^{[\ell]},\CC)\,\text{ for }\,\tau\in\A_{M},\]
since (on each fiber) $dR_{\mathfrak{Z}_{\textbf{f},\tau}}=(2\pi i)^{\ell+1}\delta_{T_{\mathfrak{Z}_{\textbf{f},\tau}}}=0.$

\paragraph*{$\underline{\text{Informal remarks on well-definedness}}$:}

Given $\textbf{f}\in\mathfrak{F}(N)_{\cmat}^{\times(\ell+1)}$, $\textbf{g}\in\mathfrak{F}(N)_{\cpmat}^{\times(\ell+1)}$,
with $\varphi_{\textbf{f}}=\varphi_{\textbf{g}}\in\Phi_{2}^{\QQ}(N)_{(p,q)}^{\circ}$
and satisfying $(7.10)$, taking limits along $\A_{M}$ resp. $\A_{M'}$
one finds that $\lim_{\tau\to-\frac{q}{p}}\tilde{\R}_{\textbf{f}}^{[\ell]}$,
$\lim_{\tau\to-\frac{q}{p}}\tilde{\R}_{\textbf{g}}^{[\ell]}$ yield
classes in $H^{\ell}(\hat{E}_{-\frac{q}{p}}^{[\ell]},\CC)$ (the $\{\mathfrak{K}_{i}^{(')}\}_{i=0}^{\ell-1}$
vanish). Also, by Prop. 7.4(ii) these classes are equal up to \linebreak
$H^{\ell}(\hat{E}_{-\frac{q}{p}}^{[\ell]},\QQ(\ell+1))$; hence the
lifts differ at most by $\QQ(\ell+1)\left\langle p[\beta]+q[\alpha]\right\rangle $
on $\uhp.$ That they are in fact equal may be argued from Lemma 6.4,
but the computations below will bear witness to all of this (including
the irrelevancy of $(-s,r)$).$\vspace{2mm}$\\
Now we compute the $\{R_{\textbf{f},j}^{[\ell]}\}$ for our lift.
the diagram $(6.1)$ is replaced for this purpose by%
\footnote{with resp. coordinates $z_{1},\ldots,z_{\ell};\, u_{1},\ldots,u_{\ell+1};\, u$. %
}\[
E_{\tau}^{\ell}\rInto^{\iota}E_{\tau}^{\ell+1}\rOnto^{P}E_{\tau}\,\,,\,\,\,\,\tau\in\A_{M},\]
and the $\pi$'s by integration. Write $\Gamma:=H^{1}(E_{\tau},\ZZ)=\ZZ\left\langle [\alpha],[\beta]\right\rangle ,$
$\gamma=m[\beta]+n[\alpha]=(m,n)\in\Gamma$.

\paragraph*{$\underline{\text{Remarks on currents}}$:}

(i) The fact that $\sZ_{\textbf{f}}=\overline{Z_{\textbf{f}}}$ means
that if $\bar{\mathsf{U}}_{N,\e}\subset E_{\tau}$ denotes the complement
of $\e$-disks about the $N$-torsion points, then $\left\langle [R_{\sZ_{\textbf{f}}}],\eta_{j}^{[\ell]}\right\rangle =\lim_{\e\to0}\int_{\bar{\mathsf{U}}_{N,\e}^{\ell}}R_{Z_{\textbf{f}}}\wedge\eta_{j}^{[\ell]}$
--- but we will just view $R_{Z_{\textbf{f}}}$ as an $L^{1}$-form
on $E_{\tau}^{\ell}$ (rather than write this).

(ii) $R_{\{\textbf{f}\}}=\sum_{j=1}^{\ell+1}(2\pi i)^{j-1}(-1)^{\ell(j-1)}\log f_{j}(u_{j})\dlog f_{j+1}(u_{j+1})\wedge\cdots\wedge\dlog f_{\ell+1}(u_{\ell+1})\cdot\delta_{T_{f_{1}(u_{1})}}\cdot\cdots\cdot\delta_{T_{f_{j-1}(u_{j-1})}}$
is a normal current (of intersection type with respect to $\iota(E_{\tau}^{\ell})$)
on $E_{\tau}^{\ell+1}$, so admits pullback $\iota^{*}R_{\{\textbf{f}\}}=R_{\iota^{*}\{\textbf{f}\}}$
to $E_{\tau}^{\ell}$ (see $\S8$ of \cite{KL}). We also note that
the {}``singularities'' of $P_{*}(R_{\{\textbf{f}\}}\wedge\tilde{\eta}_{j}^{[\ell]})$
are contained in $|T_{f_{1}}|*\cdots*|T_{f_{\ell+1}}|\subset E_{\tau},$
and so are as in Lemma 6.1(ii). Write $\hat{\sum}_{\gamma\in\Gamma}$
for the $\sum_{k}\sum_{j}^{P.V.}$ described there (and depending
on $(p,q)$).$\vspace{2mm}$\\
Writing\[
E_{\tau}^{\ell+1}\rOnto^{\pi_{\widehat{\ell+1}}}E_{\tau}^{\ell}\]
\[
(u_{1},\ldots,u_{\ell},u_{\ell+1})\mapsto(u_{1},\ldots,u_{\ell}),\]
let\[
\tilde{\eta}_{j}^{[\ell]}:=(-1)^{\ell}\pi_{\widehat{\ell+1}}^{*}\eta_{j}^{[\ell]}\,=\,\,(-1)^{\ell}{\ell \choose j}^{-1}\mspace{-20mu}\sum_{\tiny\begin{array}[t]{c}
|J|=j\\
J\subseteq\{1,\ldots,\ell\}\end{array}}\mspace{-20mu}du_{1}^{\{J\}}\wedge\cdots\wedge du_{\ell}^{\{J\}}\in A^{\ell-k,\, k}(E_{\tau}^{\ell+1})\]
where $du_{i}^{\{J\}}:=\left\{ \begin{array}{cc}
du_{i}, & i\in J\\
d\bar{u}_{i}, & i\notin J\end{array}\right..$ We then have $\iota^{*}\tilde{\eta}_{j}^{[\ell]}=\eta_{j}^{[\ell]}$,
and so:\[
\frac{(-1)^{{\ell+1 \choose 2}}(-1)^{\ell-j}\nu^{\ell}}{{\ell \choose j}}R_{\textbf{f},j}^{[\ell]}(\tau)\,\,=\,\, R_{\textbf{f},j}^{[\ell]}(\tau)\int_{E_{\tau}^{\ell}}\eta_{\ell-j}^{[\ell]}\wedge\eta_{j}^{[\ell]}\]
\[
=\,\left\langle \tilde{R}_{\textbf{f}}^{[\ell]},\eta_{j}^{[\ell]}\right\rangle \,\,=\,\,\int_{E_{\tau}^{\ell}}R_{Z_{\textbf{f}}}\wedge\eta_{j}^{[\ell]}\,\,=\,\,\int_{E_{\tau}^{\ell}}\tilde{\G}^{*}R_{\iota^{*}\{\textbf{f}\}}\wedge\tilde{\G}^{*}\eta_{j}^{[\ell]}\]
\[
=\,\int_{E_{\tau}^{\ell}}R_{\iota^{*}\{\textbf{f}\}}\wedge\eta_{j}^{[\ell]}\,\,=\,\,\int_{\iota(E_{\tau}^{\ell})}R_{\{\textbf{f}\}}\wedge\tilde{\eta}_{j}^{[\ell]}\,\,=\,\,\left\{ P_{*}\left(R_{\{\textbf{f}\}}\wedge\tilde{\eta}_{j}^{[\ell]}\right)\right\} (0)\]
\[
=\,\hat{\sum_{\gamma\in\Gamma}}\widehat{P_{_{*}}(R_{\{\textbf{f}\}}\wedge\tilde{\eta}_{j}^{[\ell]})}(\gamma)\,\,=\,\,\nu^{-1}\hat{\sum_{\gamma\in\Gamma}}\int_{E_{\tau}}\overline{\chi_{\gamma}}P_{*}(R_{\{\textbf{f}\}}\wedge\tilde{\eta}_{j}^{[\ell]})\wedge du\wedge d\bar{u}\]
\[
=\,\nu^{-1}\hat{\sum_{\gamma\in\Gamma}}\int_{E_{\tau}^{\ell+1}}P^{*}\overline{\chi_{\gamma}}\cdot R_{\{\textbf{f}\}}\wedge\tilde{\eta}_{j}^{[\ell]}\wedge P^{*}(du\wedge d\bar{u})\]
$=\,\nu^{-1}{\ell \choose j}^{-1}\sum_{j_{0}=1}^{\ell+1}(2\pi i)^{j_{0}-1}(-1)^{\ell j_{0}}$\[
\sum_{\tiny\begin{array}[t]{c}
|J|=j\\
J\subseteq\{1,\ldots,\ell\}\end{array}}\mspace{-20mu}\hat{\sum_{\gamma\in\Gamma}}\int_{E_{\tau}^{\ell+1}}\tiny\begin{array}{c}
P^{*}\overline{\chi_{\gamma}}\cdot\left(\begin{array}{c}
\log f_{j_{0}}\dlog f_{j_{0}+1}\wedge\cdots\wedge\dlog f_{\ell+1}\\
\cdot\delta_{T_{f_{1}}}\cdot\cdots\cdot\delta_{T_{f_{j_{0}-1}}}\end{array}\right)\wedge\\
du_{1}^{\{J\}}\wedge\cdots\wedge du_{\ell}^{\{J\}}\wedge(du_{1}+\cdots+du_{\ell+1})\wedge(d\bar{u}_{1}+\cdots+d\bar{u}_{\ell+1})\end{array}\]
 $=\,\nu^{-1}{\ell \choose j}^{-1}\sum_{j_{0}=1}^{\ell+1}(2\pi i)^{j_{0}-1}(-1)^{(\ell+1)(j_{0}+1)}$\[
\sum_{\tiny\begin{array}[t]{c}
|J_{0}|=j\\
J_{0}\subseteq\{1,\ldots,j_{0}-1\}\end{array}}\mspace{-20mu}\hat{\sum_{\gamma\in\Gamma}}\int_{E_{\tau}^{\ell+1}}\tiny\begin{array}{c}
P^{*}\overline{\chi_{\gamma}}\left(\begin{array}{c}
\log f_{j_{0}}\dlog f_{j_{0}+1}\wedge\cdots\wedge\dlog f_{\ell+1}\\
\cdot\delta_{T_{f_{1}}}\cdot\cdots\cdot\delta_{T_{f_{j_{0}-1}}}\end{array}\right)\wedge\\
du_{1}^{\{J_{0}\}}\wedge\cdots\wedge du_{j_{0}-1}^{\{J_{0}\}}\wedge du_{j_{0}}\wedge d\bar{u}_{j_{0}}\wedge d\bar{u}_{j_{0}+1}\wedge\cdots\wedge d\bar{u}_{\ell+1}\end{array}\]
$=\,(-1)^{{\ell \choose 2}}\nu^{-1}{\ell \choose j}^{-1}\sum_{j_{0}=j+1}^{\ell+1}(2\pi i)^{j_{0}-1}$\[
\sum_{\tiny\begin{array}[t]{c}
|J_{0}|=j\\
J_{0}\subseteq\{1,\ldots,j_{0}-1\}\end{array}}\mspace{-20mu}\hat{\sum_{\gamma\in\Gamma}}\begin{array}[t]{c}
\left(\prod_{m=1}^{j_{0}-1}\int_{T_{f_{m}}}\overline{\chi_{\gamma}}du_{m}^{\{J\}}\right)\left(\int_{E_{\tau}}\overline{\chi_{\gamma}}\log f_{j_{0}}du_{j_{0}}\wedge d\bar{u}_{j_{0}}\right)\\
\times\left(\prod_{m=j_{0}+1}^{\ell+1}\int_{E_{\tau}}\overline{\chi_{\gamma}}\dlog f_{m}\wedge d\bar{u}_{m}\right)\end{array}\]
$\rEq^{\tiny\begin{array}[b]{c}
\text{Lemmas}\\
\text{6.3-4}\end{array}}\,(-1)^{{\ell \choose 2}}\nu^{-1}{\ell \choose j}^{-1}\sum_{j_{0}=j+1}^{\ell+1}(2\pi i)^{j_{0}-1}(-1)^{\ell+1-j_{0}}{j_{0}-1 \choose j}\times$\[
\hat{\sum_{\gamma\in\Gamma}}{}^{'}\,\frac{(p\tau+q)^{j+1}(p\bar{\tau}+q)^{j_{0}-j-1}\nu^{\ell-j_{0}+2}\prod_{m=1}^{\ell+1}\widehat{\varphi_{f_{m}}}(\gamma)}{(2\pi i)^{j_{0}}(mq-np)^{j_{0}}\omega(\gamma)^{\ell-j_{0}+2}}\]
$=\,\frac{(-1)^{{\ell+1 \choose 2}}\nu^{\ell}}{2\pi i{\ell \choose j}}\sum_{j_{0}=j+1}^{\ell+1}(-1)^{j_{0}-1}{j_{0}-1 \choose j}\frac{(p\tau+q)^{j+1}(p\bar{\tau}+q)^{j_{0}-j-1}}{\nu^{j_{0}-1}}\times$\[
\hat{\sum_{\gamma\in\Gamma}}{}^{'}\frac{\widehat{\varphi_{\textbf{f}}}(m,n)}{(m\tau+n)^{\ell-M-j+1}(mq-np)^{M+j+1}},\]
where the primed sum means to omit terms with $mq-np=0.$ Taking $M=j_{0}-j-1$
as summation index, we have therefore \tiny \begin{equation} \boxed{
R^{[\ell]}_{\textbf{f},j}(\tau) = \frac{(-1)^{\ell}}{2\pi i} \sum_{M=0}^{\ell-j} (-1)^M \binom{M+j}{j} \frac{(p\tau+q)^{j+1}(p\bar{\tau}+q)^M}{\nu^{M+j}} \hat{\sum_{(m,n)\in\ZZ^2}} ' \frac{\widehat{\varphi_{\textbf{f}}}(m,n)}{(m\tau+n)^{\ell-M-j+1}(mq-np)^{M+j+1}} .\\
} \end{equation} \normalsize

We now treat the second case, where\[
\{0\}\in\left|T_{f_{1}}\right|*\cdots*\left|T_{f_{\ell+1}}\right|\,\,\,\text{ over }\A_{M}\]
 so that $|T_{\sZ_{\textbf{f}}}|\neq\emptyset$ there. Without loss
of generality the reader can have in mind the case where each $T_{f_{i}}$
(hence $|(f_{i})|$) lies in the connected component of $W_{\tau}^{(p,q)}(N)$
containing $\{0\}$. Let $(\varepsilon_{1},\ldots,\varepsilon_{\ell+1})\in\{|x|<\varepsilon\,|\, x\in\RR\}^{\times(\ell+1)}$
be a very general point in a small polycylinder; we sketch a deformation
argument which shows a lift of $\R_{\textbf{f}}^{[\ell]}(\tau)$ ($\tau\in\A_{M}$)
is still given by $(7.11)$.

Begin by replacing each $f_{j}$ by $f_{j}e^{i\varepsilon_{j}}$ globally
on $\E(N)$, denoting the resulting cycles (from $\S5.3.4$) by $\{\textbf{f}^{\underline{\varepsilon}}\}$,
$Z_{\textbf{f}}^{\underline{\varepsilon}}=\tilde{\G}^{*}\iota^{*}\{\textbf{f}^{\underline{\varepsilon}}\}$;
and note that $\overline{Z_{\textbf{f}}^{\underline{\varepsilon}}}$
is still closed, and now in real good position, on the complement
$\bar{U}^{[\ell]}(N)$ of the $N^{2\ell}$ $N$-torsion sections.
To obtain $\sZ_{\textbf{f}}^{\underline{\varepsilon}}$, we must {}``move
and complete'' $\overline{Z_{\textbf{f}}^{\underline{\varepsilon}}}$;
that is,\[
\left.\sZ_{\textbf{f}}^{\underline{\varepsilon}}\right|_{\bar{U}^{[\ell]}(N)}=\overline{Z_{\textbf{f}}^{\underline{\varepsilon}}}+\db\W_{\textbf{f}}^{\underline{\varepsilon}}\]
for some $\W_{\textbf{f}}^{\underline{\varepsilon}}\in Z_{\RR}^{\ell+1}(\bar{U}^{[\ell]}(N),\ell+2).$
Since obviously $\varphi_{\textbf{f}}=\varphi_{\textbf{f}^{\underline{\varepsilon}}}$,
we have $\Omega_{\sZ_{\textbf{f}}^{\underline{\varepsilon}}}=\Omega_{\sZ_{\textbf{f}}}$
(Theorem 6.6) and therefore $\R_{\textbf{f}^{\underline{\varepsilon}}}^{[\ell]}\equiv\R_{\textbf{f}}^{[\ell]}$
(Corollary 7.2). So it suffices to calculate a lift $\tilde{\R}_{\textbf{f}^{\underline{\varepsilon}}}^{[\ell]}$
for any $\underline{\varepsilon}$, or $\lim_{\underline{\varepsilon}\to\underline{0}}\tilde{\R}_{\textbf{f}^{\underline{\varepsilon}}}$
--- which is in fact what we shall do, working henceforth over a point
$\tau\in\A_{M}$.

Inside $E_{\tau}^{[\ell]}$ we have the open sets\[
\bar{U}_{N,\e}^{[\ell]}\subset\bar{U}_{N}^{[\ell]}:=\,\text{complement of }N^{2\ell}\,\, N\text{-torsion points}\]
\[
\hat{U}_{N,\e}^{[\ell]}\subset\hat{U}_{N}^{[\ell]}:=\,\text{complement of the }\{z_{i}=0,z_{j},-z_{j}\},\]
where the $\e$-subscript denotes removing a closed $\e$-ball/tube
neighborhood. We want to compute (compatible lift-components)\[
\frac{(-1)^{\binom{\ell}{2}+j}\nu^{\ell}}{\binom{\ell}{j}}R_{\textbf{f}^{\underline{\varepsilon}},j}^{[\ell]}(\tau)\,=\,\int_{E_{\tau}^{\ell}}R_{\sZ_{\textbf{f}}^{\underline{\varepsilon}}}\wedge\eta_{j}^{[\ell]}\]
\[
\lim_{\e\to0}\int_{\bar{U}_{N,\e}^{[\ell]}}R_{\sZ_{\textbf{f}}^{\underline{\varepsilon}}}\wedge\eta_{j}^{[\ell]}\,=\,\lim_{\e\to0}\int_{\bar{U}_{N,\e}^{[\ell]}}\left(R_{\overline{Z_{\textbf{f}}^{\underline{\varepsilon}}}}+d[R_{\W_{\textbf{f}}^{\underline{\varepsilon}}}]+(2\pi i)^{\ell+1}\delta_{\mathcal{S}_{\textbf{f}}^{\underline{\varepsilon}}}\right)\wedge\eta_{j}^{[\ell]}\]
\begin{equation}
=\, \lim_{\e \to 0} \int_{\hat{U}^{[\ell]}_{N,\e}} R_{\overline{Z_{\textbf{f}}^{\underline{\varepsilon}}}} \wedge \eta_j^{[\ell]}  \, + \, \lim_{\e\to 0} \int_{\d \bar{U}^{[\ell]}_{N,\e}} R_{\W^{\underline{\varepsilon}}_{\textbf{f}}} \wedge \eta^{[\ell]}_j \, + \, (2\pi i)^{\ell+1} \int_{\mathcal{S}_{\textbf{f}}^{\underline{\varepsilon}}} \eta^{[\ell]}_j ,
\\
\end{equation} where $\mathcal{S}_{\textbf{f}}^{\underline{\varepsilon}}$ is an
$\ell$-chain with $\d(\mathcal{S}_{\textbf{f}}^{\underline{\varepsilon}})=T_{\overline{Z_{\textbf{f}}^{\underline{\varepsilon}}}}+\N$
(with $|\N|\subset N$-torsion points, and nonzero only for $\ell=1$).
One can show that the middle term of $(7.12)$ goes to zero (with
$\e\to0$) at worst like $\e\log^{\kappa}\e$.

Now take the (previously very general) $\varepsilon_{2},\ldots,\varepsilon_{\ell+1}\to\underline{0}$;
then $|T_{\iota^{*}\{\textbf{f}^{\underline{\varepsilon}}\}}|$ limits
into $\{z_{1}\equiv0\}$ and so $|T_{\overline{Z_{\textbf{f}}^{\underline{\varepsilon}}}}|$
limits into $\hat{W}_{N}^{[\ell]}$ (while $R_{\overline{Z_{\textbf{f}}^{\underline{\varepsilon}}}}$
still makes sense on the complement). Since $\overline{Z_{\textbf{f}}^{\underline{\varepsilon}}}$
is $\tilde{\G}^{*}$-invariant by construction, everything else in
$(7.12)$ --- $\W_{\textbf{f}}^{\underline{\varepsilon}}$, $\mathcal{S}_{\textbf{f}}^{\underline{\varepsilon}}$,
etc. --- can be taken to be $\tilde{\G}^{*}$-invariant as well. But
if $\mathcal{S}_{\textbf{f}}^{(\varepsilon_{1},0,\ldots,0)}$ is $\tilde{\G}^{*}$-invariant
and bounds on $\hat{W}_{N}^{[\ell]}$ it must in fact be a cycle on
$E_{\tau}^{\ell}$. This means that in constructing our lift, the
third term of $(7.12)$ can simply be thrown out (which must be done
$(\forall j)$). Finally, taking the limit as $\varepsilon_{1}\to0$
and using $\tilde{\G}^{*}$-invariance of $\eta_{j}^{[\ell]}$, the
first term of $(7.12)$ becomes $\lim_{\e\to0}\int_{\hat{U}_{N,\e}^{[\ell]}}R_{\iota^{*}\{\textbf{f}\}}\wedge\eta_{j}^{[\ell]}$
which puts us back at the start of the computation which led to $(7.11)$.

\subsection{Regulator periods and analytic continuation}

The computations using $(7.11)$ that follow may be justified by appealing
to absolute convergence of the series of the form \begin{equation}
\hat{\sum_{(m,n)\in \ZZ^2}}'  := \sum_{\tiny \begin{matrix} \varkappa\in \ZZ \\ \varkappa \neq 0 \end{matrix} \normalsize} \lim_{J\to\infty} \sum_{\jmath=-J}^J  \mspace{80mu} \tiny \left\{\begin{matrix}m=\jmath p-\varkappa s & & \varkappa=np-mq \\ & \longleftrightarrow \\ n=\jmath q+\varkappa r & & \jmath=ns+mr \end{matrix}\right\} \normalsize \\
\end{equation} if $\pm\jmath$ terms are added first (replacing the {}``$\lim\sum$''
by $\sum_{\jmath\geq0}$). Moreover, the series of this form which
occur do not actually depend on the choice of $(r,s)$.

We start by computing the $\Psi_{\textbf{f},k}^{[\ell]}(\tau)$ for
the lifts $\tilde{\R}_{\textbf{f}}^{[\ell]}(\tau)$ ($\tau\in\A_{M}$)
of the last section. Recycling {}``$\e$'', we let it now denote
a formal variable, and work in $\CC[[\e]]$. Referring to $(7.1)$,
if we write \[
\gamma^{[\ell]}:=\sum_{k=0}^{\ell}\e^{k}{\ell \choose k}\gamma_{k}^{[\ell]},\]
then $\left\langle \gamma^{[\ell]},\eta_{\ell-j}^{[\ell]}\right\rangle =(1+\tau\e)^{\ell-j}(1+\bar{\tau}\e)^{j}$,
so that \[
\sum_{k=0}^{\ell}\Psi_{\textbf{f},k}^{[\ell]}(\tau){\ell \choose k}\e^{k}\,=\,\left\langle \gamma^{[\ell]},\tilde{\R}_{\textbf{f}}^{[\ell]}\right\rangle \]
\begin{equation}
= \, \sum_{j=0}^{\ell}R_{\textbf{f},j}^{[\ell]}(1+\tau \e)^{\ell-j}(1+\bar{\tau}\e)^j \\
\end{equation}\begin{equation} \begin{matrix}
=\, \frac{(-1)^{\ell}}{2\pi i} (1+\tau \e)^{\ell}(p\tau+q) \hat{\sum}_{m,n}^{'} \frac{\widehat{\varphi_{\textbf{f}}}(m,n)}{(m\tau+n)^{\ell+1}(mq-np)}  \times \\ \mspace{120mu} \sum_{j=0}^{\ell}\sum_{M=0}^{\ell-j} \left( \frac{(m\tau+n)(p\bar{\tau}+q)}{(np-mq)\nu } \right)^{M+j} \binom{M+j}{j} \left( -\frac{(1+\bar{\tau}\e)(p\tau+q)}{(1+\tau \e)(p\bar{\tau}+q)} \right)^j. 
\end{matrix} \\ \end{equation} Replacing $M+j$ by $K$ and $\sum_{j}\sum_{M}$ by $\sum_{K=0}^{\ell}\sum_{j=0}^{K}$,
and using\small \[
\sum_{j=0}^{K}\binom{K}{j}\left(-\frac{(1+\bar{\tau}\e)(p\tau+q)}{(1+\tau\e)(p\bar{\tau}+q)}\right)^{j}=\left(1-\frac{(1+\bar{\tau}\e)(p\tau+q)}{(1+\tau\e)(p\bar{\tau}+q)}\right)^{K}=\left(\frac{\nu(p-\e q)}{(1+\tau\e)(p\bar{\tau}+q)}\right)^{K}\]
\normalsize the double-sum in $(7.15)$ becomes\tiny\[
\sum_{K=0}^{\ell}\left(\frac{(m\tau+n)(p-\e q)}{(np-mq)(1+\tau\e)}\right)^{K}=\frac{(np-mq)^{\ell+1}(1+\tau\e)^{\ell+1}-(m\tau+n)^{\ell+1}(p-\e q)^{\ell+1}}{(np-mq)^{\ell}(1+\tau\e)^{\ell}[(np-mq)(1+\tau\e)-(m\tau+n)(p-\e q)]}.\]
\normalsize Simplifying the expression in square brackets to $(p\tau+q)(n\e-m)$,
$(7.15)$ becomes\[
\frac{(-1)^{\ell+1}}{2\pi i}\hat{\sum_{m,n}}{}^{'}\frac{\widehat{\varphi_{\textbf{f}}}(m,n)\left\{ (np-mq)^{\ell+1}(1+\tau\e)^{\ell+1}-(m\tau+n)^{\ell+1}(p-\e q)^{\ell+1}\right\} }{(np-mq)^{\ell+1}(m\tau+n)^{\ell+1}(n\e-m)}\]
--- a {}``zipped'' formula for the $\{\Psi_{\textbf{f},k}^{[\ell]}\}$
which is obviously holomorphic in $\tau$, and hence yields the analytic
continuation to $\uhp$. Since it was substituting $(7.11)$ in $(7.14)$
which yielded this continuation, $(7.11)$ is the correct lift over
$all$ $of$ $\uhp$ (not just $\A_{M}$).

To get explicit formulas for the regulator periods, we reverse the
last step to get $(7.15)=$\[
\frac{(-1)^{\ell+1}}{2\pi i}\hat{\sum_{m,n}}{}^{'}\widehat{\varphi_{\textbf{f}}}(m,n)(p\tau+q)\sum_{\mu=0}^{\ell}\frac{(1+\tau\e)^{\mu}(p-q\e)^{\ell-\mu}}{(np-mq)^{\ell-\mu+1}(n+m\tau)^{\mu+1}},\]
and take coefficients of $\{\e^{k}\}_{k=0}^{\ell}$ (and divide by
${\ell \choose k}$) to find \small \begin{equation} \boxed{
\Psi^{[\ell]}_{\textbf{f},k}(\tau) = \frac{(-1)^{\ell+1}}{2\pi i}(p\tau +q) \hat{\sum_{m,n}}' \widehat{\varphi_{\textbf{f}}}(m,n)  \sum_{\mu=0}^{\ell} \mspace{-3mu} \sum_{a=\text{max}\{0,k-\mu \}}^{\text{min}\{k,\ell-\mu\}} \mspace{-10mu} \frac{(-1)^a\binom{\ell-\mu}{a}\binom{\mu}{k-a}\binom{\ell}{k}^{-1}p^{\ell-\mu-a}q^a \tau^{k-a}}{(np-mq)^{\ell-\mu+1}(m\tau+n)^{\mu+1}}.\\
} \end{equation} \normalsize One can check that this is compatible with Prop. 7.1(i).

Now if we write\[
\mathfrak{F}(N)_{(p,q)}:=\mspace{-20mu}\bigcup_{\tiny\begin{array}[t]{c}
(r,s):\\
\cmat\in SL_{2}(\ZZ)\end{array}}\mspace{-20mu}\mathfrak{F}(N)_{\cmat},\]
then $(7.11)$ and $(7.16)$ extend linearly in an obvious way to
$sums$ of {}``monomials''$\in\mathfrak{F}(N)_{(p,q)}^{\times(\ell+1)}$
(we did this for $\textbf{f}\mapsto\varphi_{\textbf{f}}$ in $\S6.1.2$).

\begin{thm}
Formulas $(7.11)$ and $(7.16)$ yield an abelian group homomorphism
$\tilde{\R}_{(p,q)}^{[\ell]}$ inducing $AJ$ on {}``$(p,q)$-vertical
Eisenstein symbols'', as described in the diagram\\
\xymatrix{
&
\Phi^{\QQ}_2(N)^{\circ}_{(p,q)}
\ar [rdd]^{(7.11)}
\ar [ldd]_{(7.16)}
\\
&
\QQ\left[\mathfrak{F}(N)_{(p,q)}^{\times(\ell+1)}\right]
\ar @/_15pc/ [ddd]_{\textbf{\emph{f}}\mapsto \left\langle  \sZ_{\textbf{\emph{f}}} \right\rangle}
\ar [rd]
\ar [ld]
\ar [d]^{\tilde{\R}^{[\ell]}_{(p,q)}}
\ar [u]_{\textbf{\emph{f}}\mapsto \varphi_{\textbf{\emph{f}}}}
\\
\left( \mathcal{O}_{\uhp} \right)^{\ell+1}
\ar @{->>} [d]
&
\Gamma \left( \uhp, \text{Sym}^{\ell}\H^{[1]} \right)
\ar [l]_{\cong\mspace{20mu}}^{\text{ev}_{\left\{\gamma^{[\ell]}_k\right\}^*}\mspace{20mu}}
\ar @{^(->} [r]_{\mspace{20mu}\text{ev}_{\left\{ \eta^{[\ell]}_{\ell-j}\right\}}}
\ar [d]
&
\left( \mathcal{O}_{\uhp^{\infty}} \right)^{\ell+1}
\ar @{->>} [d]
\\
\frac{\left(\mathcal{O}_{\uhp}\right)^{\ell+1}}{\LL}
&
\Gamma \left( \uhp, \frac{\text{Sym}^{\ell}\H^{[1]}}{(\text{Sym}^{\ell}\HH^{[1]})_{\QQ(\ell+1)}} \right)
\ar [l]_{\cong \mspace{40mu}}
\ar @{^(->} [r]
&
\frac{\left( \mathcal{O}_{\uhp^{\infty}}\right)^{\ell+1}}{\LL_{\infty}}
\\
&
\Gamma \left( \uhp , \underline{\underline{\mathcal{CH}}}\left( \E^{[\ell]}/\uhp , \ell+1 \right)\right) 
\ar [u]_{AJ}
}\\
\\
where {}``ev'' means to write a vector with respect to the given
basis, $\{\,\,\}^{*}$ is the dual basis, while \tiny $\LL=\QQ(\ell+1)\left\langle \left(\begin{array}{c}
1\\
0\\
\vdots\\
0\end{array}\right),\left(\begin{array}{c}
0\\
1\\
0\\
\vdots\\
0\end{array}\right),\ldots,\left(\begin{array}{c}
0\\
\vdots\\
0\\
1\end{array}\right)\right\rangle $\normalsize and \tiny $\LL_{\infty}\rEq^{(7.1)}\QQ(\ell+1)\left\langle \left(\begin{array}{c}
\mathfrak{P}_{00}^{[\ell]}\\
\vdots\\
\mathfrak{P}_{\ell0}^{[\ell]}\end{array}\right),\ldots,\left(\begin{array}{c}
\mathfrak{P}_{0\ell}^{[\ell]}\\
\vdots\\
\mathfrak{P}_{\ell\ell}^{[\ell]}\end{array}\right)\right\rangle .$\normalsize
\end{thm}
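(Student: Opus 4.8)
The plan is to assemble Theorem 7.7 as a synthesis of the structural results already established in $\S\S5$--$7$, reading off the commutativity of the large diagram from the compatibilities proved along the way. First I would observe that the top two triangles are definitions in disguise: the map $\Phi_2^{\QQ}(N)^{\circ}_{(p,q)}\to\QQ[\mathfrak{F}(N)_{(p,q)}^{\times(\ell+1)}]$ is surjective by the Decomposition Lemma(i) together with Lemma 5.13(ii) (which realizes each element of $\mathfrak{S}(N)_{(p,q)}$ as a divisor of a function in $\mathfrak{F}(N)_{\gamma}$), so it suffices to define $\tilde{\R}^{[\ell]}_{(p,q)}$ on monomials $\textbf{f}\in\mathfrak{F}(N)_{\gamma}^{\times(\ell+1)}$ and check the answer factors through $\varphi_{\textbf{f}}$. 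That factorization is exactly Corollary 7.2 (a consequence of surjectivity of $\nabla$, Lemma 7.1(ii)): $\R_{\textbf{f}}(y)$ depends only on $\{\mathsf{H}^{[\ell]}_{\sigma}(\varphi_{\textbf{f}})\}$, hence on $\varphi_{\textbf{f}}$; so $(7.16)$ is forced to depend only on $\varphi_{\textbf{f}}$ and $(7.11)$ likewise, which is what lets the diagram's upper route close up. Linearity in $\textbf{f}$ of both $(7.11)$ and $(7.16)$ is visible from the formulas themselves, giving the asserted abelian group homomorphism.

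Next I would verify that $(7.11)$ genuinely computes a lift $\tilde{\R}^{[\ell]}_{\textbf{f}}(\tau)$ of $AJ\langle\sZ_{\textbf{f}}\rangle$, i.e. that the bottom rectangle commutes. Over the arc $\A_M$, in the ``good'' case $(7.10)$ this is Lemma 7.6 plus the KLM computation of $\S7.2$ culminating in the boxed formula $(7.11)$; in the ``bad'' case one invokes the deformation argument of $\S7.2$ (replacing $f_j$ by $f_je^{i\varepsilon_j}$, using $\varphi_{\textbf{f}}=\varphi_{\textbf{f}^{\underline{\varepsilon}}}$, Theorem 6.6 and Corollary 7.2 to see $\R^{[\ell]}_{\textbf{f}^{\underline{\varepsilon}}}\equiv\R^{[\ell]}_{\textbf{f}}$) to reduce to the same calculation. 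The key point that the resulting section extends to all of $\uhp$ — not just $\A_M$ — comes from the ``zipped'' formula in $\S7.3$: substituting $(7.11)$ into $(7.14)$ produces a manifestly $\tau$-holomorphic expression for $\sum_k\Psi^{[\ell]}_{\textbf{f},k}\binom{\ell}{k}\e^k$, so it is the analytic continuation, and therefore $(7.11)$ is the correct lift over $\uhp$. Finally, from Lemma 7.1(i) ($\nabla_{\d_\tau}\tilde{\R}^{[\ell]}_{\textbf{f}}=(-1)^{\ell}\Omega^{[\ell]}_{\textbf{f}}$) one has that going modulo $(\mathrm{Sym}^{\ell}\HH^{[1]})_{\QQ(\ell+1)}$ — i.e. replacing $\LL$ by its image and $\LL_{\infty}$ by the lattice $(7.1)$ spanned by the $\mathfrak{P}^{[\ell]}_{jk}$ — is compatible with the monodromy ambiguity of $[R_{\sZ_{\textbf{f}}}]$; the identification $AJ\circ(\textbf{f}\mapsto\langle\sZ_{\textbf{f}}\rangle) = $ (class of $\tilde{\R}^{[\ell]}_{(p,q)}(\varphi_{\textbf{f}})$ mod $\LL$) is then the content of the commuting bottom square of the $\R^{[\ell]}_N$ diagram in $\S7.1$ combined with the KLM identification of currents with $AJ$ from $\cite{KLM,KL}$.

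The remaining bookkeeping is to check the two ``ev'' isomorphisms and the identification of the lattices $\LL,\LL_{\infty}$. Here one uses $(7.1)$: $\langle\gamma^{[\ell]}_k,\eta^{[\ell]}_{\ell-j}\rangle=\mathfrak{P}^{[\ell]}_{jk}$, so the change-of-basis matrix between the $\{\gamma^{[\ell]}_k\}^*$-coordinates and the $\{\eta^{[\ell]}_{\ell-j}\}$-coordinates is exactly $(\mathfrak{P}^{[\ell]}_{jk})$, which carries the standard basis vectors (spanning $\LL$) to the columns $(\mathfrak{P}^{[\ell]}_{0k},\dots,\mathfrak{P}^{[\ell]}_{\ell k})^t$ (spanning $\LL_{\infty}$), matching the displayed $\LL_{\infty}\rEq^{(7.1)}\QQ(\ell+1)\langle\cdots\rangle$. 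One also records that $\mathrm{Sym}^{\ell}\HH^{[1]}_{\QQ(\ell+1)}$ is generated by the $\{\tilde{\gamma}^{[\ell]}_k\}$ (equivalently $\{\gamma^{[\ell]}_k\}$ under $\G^*\circ\mathrm{P.D.}$), giving the lower-left horizontal isomorphism. The main obstacle, I expect, is not any one step but the verification that the deformation argument in the ``bad'' case $\{0\}\in|T_{f_1}|*\cdots*|T_{f_{\ell+1}}|$ really reproduces formula $(7.11)$ with no extra boundary contribution — one must argue that the $\tilde{\G}^*$-invariant chain $\mathcal{S}^{(\varepsilon_1,0,\dots,0)}_{\textbf{f}}$ bounding on $\hat{W}^{[\ell]}_N$ is forced to be a genuine cycle on $E_\tau^\ell$ (so its $\delta$-current contributes zero to every $\langle\cdot,\eta^{[\ell]}_j\rangle$), and that the ``middle'' boundary term over $\d\bar{U}^{[\ell]}_{N,\e}$ vanishes like $\e\log^\kappa\e$; these require the real-position hypotheses from $\cite{KL,KLM}$ and careful control of the limits $\varepsilon_j\to 0$, and are the place where the argument is genuinely analytic rather than formal.
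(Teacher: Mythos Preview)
Your synthesis is correct and matches the paper's approach: Theorem 7.7 has no separate proof block in the paper; it is stated as the culmination of the computations in \S\S7.1--7.3, and you have accurately traced the logical dependencies (Decomposition Lemma and Lemma 5.13(ii) for the top, Corollary 7.2 for factorization through $\varphi_{\textbf{f}}$, the KLM computation and deformation argument of \S7.2 for $(7.11)$, the ``zipped'' formula of \S7.3 for analytic continuation yielding $(7.16)$, and $(7.1)$ for the lattice identifications). One minor wording issue: the arrow $\textbf{f}\mapsto\varphi_{\textbf{f}}$ in the diagram goes upward from $\QQ[\mathfrak{F}(N)_{(p,q)}^{\times(\ell+1)}]$ to $\Phi^{\QQ}_2(N)^{\circ}_{(p,q)}$, so what you need is that this map is surjective (which is what the Decomposition Lemma gives) and that $(7.11)$, $(7.16)$ factor through it --- but your substance is right.
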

The two {}``extreme'' periods are of special interest. For the $\alpha^{\ell}$-period,
$(7.16)$ yields \begin{equation}
\begin{matrix} 
\Psi^{[\ell]}_{\textbf{f},0}(\tau) \,=\, (-1)^{\ell}(2\pi i)^{\ell+1}(\tau+\frac{q}{p})\mathsf{H}^{[\ell]}_{[i\infty]}(\varphi_{\textbf{f}}) \mspace{200mu} 
\\ \mspace{100mu}
+\frac{(-1)^{\ell+1}}{2\pi i}\hat{\sum}_{\tiny \begin{matrix} m,n \\ m\neq 0 \end{matrix}}^{'} \widehat{\varphi_{\textbf{f}}}(m,n)\frac{(m\tau+n)^{\ell+1}p^{\ell+1}-(np-mq)^{\ell+1}}{m(m\tau+n)^{\ell+1}(np-mq)^{\ell+1}}
\end{matrix} \\
\end{equation} if $p\neq0$, and \begin{equation}
\Psi^{[\ell]}_{\textbf{f},0}(\tau)\,=\, \frac{(-1)^{\ell}}{2\pi i} \hat{\sum}^{'}_{m,n} \frac{\widehat{\varphi_{\textbf{f}}}(m,n)}{m(m\tau+n)^{\ell+1}}
\\
\end{equation} if $p=0$ ($q=1$). For the $\beta^{\ell}$-period, we have \begin{equation} \begin{matrix}
\Psi^{[\ell]}_{\textbf{f},\ell} \, = \, (-1)^{\ell+1}(2\pi i)^{\ell+1} (\frac{1}{\tau}+\frac{p}{q})\mathsf{H}^{[\ell]}_{[0]}(\varphi_{\textbf{f}}) \mspace{200mu} \\ \mspace{100mu}
+\frac{(-1)^{\ell+1}}{2\pi i} \hat{\sum}^{'}_{\tiny \begin{matrix} m,n \\ n \neq 0 \end{matrix}}\widehat{\varphi_{\textbf{f}}}(m,n)\frac{(np-mq)^{\ell+1}\tau^{\ell+1}+(-1)^{\ell}(m\tau+n)^{\ell+1}q^{\ell+1}}{n(m\tau+n)^{\ell+1}(np-mq)^{\ell+1}}
\end{matrix} \\
\end{equation} if $q\neq0$ and \begin{equation}
\Psi^{[\ell]}_{\textbf{f},\ell}(\tau ) \, =\, \frac{(-1)^{\ell+1}}{2\pi i}\tau^{\ell+1}\hat{\sum}^{'}_{m,n} \frac{\widehat{\varphi_{\textbf{f}}}(m,n)}{n(m\tau+n)^{\ell+1}} \\
\end{equation} if $q=0$ ($p=1$). We also record the higher normal function for
convenience: using $(7.4)$ and $(7.11)$, this is \begin{equation}
V_{\textbf{f}}^{[\ell]}(\tau) \, = \, \frac{(-1)^{\binom{\ell}{2}}}{2\pi i} (p\tau+q)^{\ell+1} \hat{\sum}^{'}_{(m,n)\in \ZZ^2}\frac{\widehat{\varphi_{\textbf{f}}}(m,n)}{(m\tau+n)(mq-np)^{\ell+1}}.
\\
\end{equation}

By the monodromy argument (Lemma 7.1(ii)) together with $\S6.1.2$,
$AJ$ factors through $\Upsilon_{2}^{\QQ}(N)$. That is, for $any$
$\textbf{f}\in\mathcal{O}^{*}(U(N))^{\otimes(\ell+1)}$ \begin{equation}
\Psi^{[\ell]}_{\textbf{f},k}(\tau) \, = \, \sum_{\sigma \in \kappa(N)} \mathsf{H}_{\sigma}^{[\ell]}(\varphi_{\textbf{f}})\tilde{\Psi}^{[\ell]}_{\sigma,k}(\tau) \, \, \, \, \text{mod} \, \, \QQ(\ell+1) \\
\end{equation} where (using our chosen $\cmat\in SL_{2}(\ZZ)$ for each $\sigma=[\frac{r}{s}]$)
$\tilde{\Psi}_{\sigma,k}^{[\ell]}=\Psi_{\textbf{f}_{\sigma},k}^{[\ell]}$
for some $\textbf{f}_{\sigma}\in\QQ[\mathfrak{F}(N)_{\cmat}^{\times(\ell+1)}]$
satisfying $\mathsf{H}_{\sigma'}^{[\ell]}(\varphi_{\textbf{f}_{\sigma}})=\delta_{\sigma\sigma'}$.
We take $\varphi_{\textbf{f}_{\sigma}}=\frac{1}{N}\pi_{\sigma}^{*}\varphi_{N}^{[\ell]}$,
so that $(7.16)$%
\footnote{choice of $(p,q)$ in $(7.16)$ is different for each $\sigma$. %
} yields \tiny \begin{equation}
\tilde{\Psi}^{[\ell]}_{\sigma,k}(\tau) := \frac{(-1)^{\ell+1}}{\ell+1}(2\pi i)^{\ell+1}(p\tau+q) \mspace{-50mu} \hat{\sum_{\tiny \begin{matrix} \alpha, \beta\in \ZZ^2 \\ \gcd(1+N\alpha,N\beta)=1 \end{matrix}}} \mspace{-40mu} \sum_{\mu=0}^{\ell}\frac{\sum_{a=\text{max}\{0,k-\mu\}}^{\text{min}\{k,\ell-\mu \}}(-1)^a\binom{\ell-\mu}{a}\binom{\mu}{k-a}\binom{\ell}{k}^{-1}p^{\ell-\mu-a}q^a\tau^{k-a}}{(1+N\alpha)^{\ell-\mu+1}\{(1+N\alpha)(r-s\tau)+N\beta(q+p\tau)\}^{\mu+1}},
\\
\end{equation} \normalsize where we have computed as in $\S6.1.3$ with $(m,n)=:\mathfrak{z}(m_{0},n_{0})$,
$(m_{0},n_{0})=:(r+N(\beta q+\alpha r),\,-s+N(\beta p-\alpha r))$
and where $\hat{\sum}$ means to sum $\pm\beta$ first. A similar
result holds for $V_{\textbf{f}}^{[\ell]}(\tau)$, only modulo polynomials
(of degree $\leq\ell$ with $\QQ(\ell+1)$ coefficients).

Also as in $\S6.1.3$ one can do the Fourier expansions in some cases
(and we need these for the examples below). For instance, for $(p,q)=(1,0)$
and $k=0$, $(7.16)$ becomes \begin{equation}
(-1)^{\ell}(2\pi i)^{\ell+1}\tau \mathsf{H}^{[\ell]}_{[i\infty]}(\varphi_{\textbf{f}})+ \frac{(-1)^{\ell+1}}{2\pi i}\hat{\sum}^{'}_{\tiny \begin{matrix} m,n \\ m\neq 0 \end{matrix}} \widehat{\varphi_{\textbf{f}}}(m,n)\frac{(m\tau+n)^{\ell+1}-n^{\ell+1}}{m(m\tau+n)^{\ell+1}n^{\ell+1}}\\
\end{equation} where $\hat{\sum}_{m,n}'$ means $\sum_{\tiny\begin{array}[t]{c}
n\in\ZZ\\
n\neq0\end{array}}\lim_{M\to\infty}\sum_{m=-M}^{M}.$ Assuming additionally that\linebreak $\varphi_{\textbf{f}}(m,n)=\varphi_{\textbf{f}}(m,-n)$
{[}$\Longleftrightarrow\widehat{\varphi_{\textbf{f}}}(m,n)=\widehat{\varphi_{\textbf{f}}}(-m,n)$],
the $\hat{\sum}_{\tiny\begin{array}[t]{c}
m,n\\
m\neq0\end{array}}^{'}\frac{\widehat{\varphi_{\textbf{f}}}(m,n)}{mn^{\ell+1}}=0$ and the second term of $(7.24)$ becomes \begin{equation}
\frac{(-1)^{\ell}}{2\pi i} \sum_{(m,n)\in (\ZZ\m {0})^2}\frac{\widehat{\varphi_{\textbf{f}}}(m,n)}{m(m\tau+n)^{\ell+1}}. \\
\end{equation}

\begin{prop}
If $\varphi_{\textbf{\emph{f}}}=\frac{1}{N}\pi_{[i\infty]}^{*}\varphi$
($\varphi\in\Phi^{\QQ}(N)^{\circ}$) then $\widehat{\varphi_{\textbf{\emph{f}}}}=\iota_{[i\infty]_{*}}\widehat{\varphi}$
and we have \begin{equation} \begin{matrix}
\Psi^{[\ell]}_{\textbf{\emph{f}},0}(\tau) \, = \, \frac{(2\pi i)^{\ell}(\ell+1)N}{(\ell+2)!} \left( \sum_{b=0}^{N-1} \varphi(b) B_{\ell+2}(\frac{b}{N}) \right) \log q_0 
\mspace{100mu} \\ \mspace{200mu} -\frac{(2\pi i)^{\ell}}{\ell ! N^{\ell+1}} \sum_{M\geq 1} \frac{\left( \sum_{r|M} r^{\ell+1} \cdot ^{\ell} \varphi (r) \right)}{M} q_0^{MN} ,
\end{matrix} \\
\end{equation} where ${}^{\ell}\varphi(r)=\varphi(r)+(-1)^{\ell}\varphi(-r)$.
\end{prop}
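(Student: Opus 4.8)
\emph{Proof proposal.} The cleanest route is to integrate the $q$-expansion of the associated Eisenstein series, which is already available. First I would observe that $\varphi_{\textbf{f}}:=\tfrac1N\pi_{[i\infty]}^{*}\varphi$ lies in $\Phi_{2}^{\QQ}(N)_{(1,0)}^{\circ}$ (Decomposition Lemma (ii), with $\cmat$ the identity matrix), so by Corollary 7.2 the period $\Psi_{\textbf{f},0}^{[\ell]}$ depends only on $\varphi_{\textbf{f}}$ and is computed by $(7.16)$, equivalently by $(7.24)$; moreover $\widehat{\varphi_{\textbf{f}}}=\iota_{[i\infty]_{*}}\widehat{\varphi}$ by $(5.4)$, which vanishes unless $m\equiv0\ (N)$, in which case it equals $\widehat{\varphi}(n)$. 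By Proposition 7.3(i) together with Theorem 6.6 one has $\tfrac{d}{d\tau}\Psi_{\textbf{f},0}^{[\ell]}(\tau)=(-1)^{\ell}(2\pi i)^{\ell+1}F_{\textbf{f}}(\tau)=(-1)^{\ell}(2\pi i)^{\ell+1}E_{\varphi_{\textbf{f}}}^{[\ell]}(\tau)$, and the corollary to Proposition 6.12 (the $q$-expansion of $E_{\varphi}^{[\ell]}$ for $\varphi=\tfrac1N\pi_{[i\infty]}^{*}\varphi_{0}$, applied with $\varphi_{0}=\varphi$) gives $E_{\varphi_{\textbf{f}}}^{[\ell]}(\tau)=c_{0}+\sum_{M\geq1}c_{M}q_{0}^{MN}$ with $c_{0}=\tfrac{(-1)^{\ell}}{\ell!(\ell+2)}\sum_{a=0}^{N-1}\varphi(a)B_{\ell+2}(\tfrac aN)$ and $c_{M}=\tfrac{(-1)^{\ell+1}}{N^{\ell+1}\ell!}\sum_{r\mid M}r^{\ell+1}\cdot{}^{\ell}\varphi(r)$; here $c_{0}=\mathsf{H}_{[i\infty]}^{[\ell]}(\varphi_{\textbf{f}})$ by the limit lemma of \S5.3.1 and the identity $(\pi_{[i\infty]})_{*}\varphi_{\textbf{f}}=\varphi$.

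Next I would integrate term by term. Since $q_{0}^{MN}=e^{2\pi iM\tau}$ has antiderivative $q_{0}^{MN}/(2\pi iM)$ and $\tau=\tfrac{N}{2\pi i}\log q_{0}$, antidifferentiating and using the elementary simplifications $\tfrac1{\ell!(\ell+2)}=\tfrac{\ell+1}{(\ell+2)!}$ and $(-1)^{\ell}(2\pi i)^{\ell+1}\cdot(-1)^{\ell+1}/(2\pi i)=-(2\pi i)^{\ell}$ yields
\[
\Psi_{\textbf{f},0}^{[\ell]}(\tau)=\frac{(2\pi i)^{\ell}(\ell+1)N}{(\ell+2)!}\Bigl(\sum_{a=0}^{N-1}\varphi(a)B_{\ell+2}(\tfrac aN)\Bigr)\log q_{0}-\frac{(2\pi i)^{\ell}}{N^{\ell+1}\ell!}\sum_{M\geq1}\frac{\sum_{r\mid M}r^{\ell+1}\cdot{}^{\ell}\varphi(r)}{M}q_{0}^{MN}+C.
\]
It then remains only to show that the constant of integration $C$ vanishes.

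To pin down $C$ I would use $(7.24)$ (or its simplification $(7.25)$, which applies since $\varphi_{\textbf{f}}(m,n)=\varphi_{\textbf{f}}(m,-n)$, $\varphi_{\textbf{f}}$ being independent of $n$). The $m\neq0$ part of $(7.24)$ is $\tfrac{(-1)^{\ell+1}}{2\pi i}$ times a sum (over $\pm m$ first, then $n\neq0$) of $\widehat{\varphi_{\textbf{f}}}(m,n)\bigl(\tfrac1{mn^{\ell+1}}-\tfrac1{m(m\tau+n)^{\ell+1}}\bigr)$; as $\tau\to i\infty$ each $(m\tau+n)^{-(\ell+1)}\to0$, so the limit is $\tfrac{(-1)^{\ell+1}}{2\pi i}\sum'\tfrac{\widehat{\varphi_{\textbf{f}}}(m,n)}{mn^{\ell+1}}$. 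Because $\widehat{\varphi_{\textbf{f}}}(m,n)$ depends on $m$ only through the congruence $m\equiv0\ (N)$, and summation is performed over $\pm m$ symmetrically first, the inner sum $\sum_{m\equiv0(N),\,m\neq0}\tfrac1m$ vanishes in the principal-value sense; hence $\Psi_{\textbf{f},0}^{[\ell]}(\tau)-(-1)^{\ell}(2\pi i)^{\ell+1}\tau\,\mathsf{H}_{[i\infty]}^{[\ell]}(\varphi_{\textbf{f}})\to0$. On the other hand the $\log q_{0}$-term of the displayed formula equals $(-1)^{\ell}(2\pi i)^{\ell+1}\tau\,\mathsf{H}_{[i\infty]}^{[\ell]}(\varphi_{\textbf{f}})$ and the $q_{0}^{MN}$-series tends to $0$ as $\tau\to i\infty$; comparing the two limits forces $C=0$, giving $(7.26)$.

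The only non-formal step is this last one, and even it reduces to the oddness of $1/m$ under symmetric summation. I expect that to be the main (mild) obstacle — one must be careful that no $\tau^{-(\ell+1)}$ tail survives. If one instead wants a derivation independent of the corollary to Proposition 6.12 — the ``from scratch'' approach in the spirit of \cite{Si} — one substitutes $\widehat{\varphi_{\textbf{f}}}$ directly into $(7.25)$, writes $m=N\mu$ and $n=Nj+n_{0}$, and applies the Lipschitz summation $\sum_{j\in\ZZ}(w+j)^{-(\ell+1)}=\tfrac{(-2\pi i)^{\ell+1}}{\ell!}\sum_{d\geq1}d^{\ell}e^{2\pi idw}$ to the inner $j$-sum; the delicate bookkeeping there is the exceptional $n_{0}=j=0$ term, which contributes a multiple of $\widehat{\varphi}(0)\,\tau^{-(\ell+1)}$ that vanishes because $\widehat{\varphi}(0)=\sum_{a}\varphi(a)=0$ ($\varphi\in\Phi^{\QQ}(N)^{\circ}$), together with the character-sum identity $\sum_{n_{0}}\widehat{\varphi}(n_{0})\bigl(\xi_{N}(dn_{0})+(-1)^{\ell}\xi_{N}(-dn_{0})\bigr)=N\cdot{}^{\ell}\varphi(d)$.
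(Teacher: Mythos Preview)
Your argument is correct, but your primary route differs from the paper's. The paper does \emph{not} go through Corollary~6.13 and Proposition~7.3(i); instead it works directly from $(7.25)$, deriving the identity
\[
\alpha(\xi,m_0)\;:=\;\sum_{n_0\in\ZZ}\frac{1}{(Nn_0+\xi+Nm_0\tau)^{\ell+1}}
\;=\;\frac{(-1)^{\ell+1}(2\pi i)^{\ell+1}}{\ell!\,N^{\ell+1}}\sum_{r\geq 1}r^{\ell}e^{2\pi i\xi r/N}q_0^{rm_0 N}
\]
by computing $\tfrac{d^{\ell+1}}{d\tau^{\ell+1}}\log\sin\bigl(\tfrac{\pi\xi}{N}+\pi m_0\tau\bigr)$ two ways (product expansion of $\sin$ vs.\ Taylor expansion of $\log(1-e^{2\pi i(\cdot)})$), then substituting $\widehat{\varphi_{\textbf{f}}}=\iota_{[i\infty]_*}\widehat{\varphi}$ into $(7.25)$ and reindexing $M=m_0'r$. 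This is precisely the ``from scratch'' Lipschitz-type computation promised in the proof sketch of Proposition~6.12, and it avoids any appeal to the $q$-expansion of $E_{\varphi}^{[\ell]}$. Your alternative route at the end (substitute into $(7.25)$ and apply Lipschitz summation) \emph{is} the paper's proof.

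Your main route is shorter and perfectly valid, since Corollary~6.13 is already established (via \cite{Gu,My}); the determination of the constant $C$ via the vanishing of the PV sum $\sum_{N\mid m,\,m\neq 0}1/m$ is clean. What the paper's direct approach buys is logical independence from the literature citation in Proposition~6.12: it actually \emph{supplies} the promised self-contained derivation, so that in principle one could reverse the logic and recover Corollary~6.13 from $(7.26)$ plus Proposition~7.3(i).
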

\begin{proof}
Let $\xi\in\{1,2,\ldots,N-1\}$, and $m_{0}\in\NN$. Using the product
expansion of $\sin(\pi(\alpha+z))$ from \cite[sec. 2.3, Ex. 2]{Ah},
we have \begin{equation}
\frac{d^{\ell+1}}{d\tau^{\ell+1}} \log \left\{ \sin \left( \frac{\pi\xi}{N}+\pi m_0 \tau \right) \right\} \, =
\\
\end{equation}\[
\frac{d^{\ell+1}}{d\tau^{\ell+1}}\left\{ \pi m_{0}\tau\cot\left(\frac{\pi\xi}{N}\right)+\sum_{n_{0}\in\ZZ}\left[\log\left(1+\frac{Nm_{0}\tau}{Nn_{0}+\xi}\right)-\frac{Nn_{0}\tau}{Nn_{0}+\xi}\right]\right\} \,=\]
\[
-\frac{d^{\ell}}{d\tau^{\ell}}\left\{ \sum_{n_{0}\in\ZZ}\frac{N^{2}m_{0}^{2}\tau}{(Nm_{0}+\xi)(Nn_{0}+\xi+Nm_{0}\tau)}\right\} \,=\,\]
\[
(-1)^{\ell}\ell!N^{\ell+1}m_{0}^{\ell+1}\sum_{n_{0}\in\ZZ}\frac{1}{(Nn_{0}+\xi+Nm_{0}\tau)^{\ell+1}}.\]
On the other hand using the Taylor expansion for $\log$, $(7.27)$
becomes \[
\frac{d^{\ell+1}}{d\tau^{\ell+1}}\log\left\{ \frac{1}{2i}\left(e^{\frac{\pi i}{N}\left(\xi+m_{0}N\tau\right)}-e^{-\frac{\pi i}{N}\left(\xi+m_{0}N\tau\right)}\right)\right\} \,=\]
\[
\frac{d^{\ell+1}}{d\tau^{\ell+1}}\log\left(1-e^{2\pi im_{0}\tau}e^{\frac{2\pi i}{N}\xi}\right)\,=\,-\frac{d^{\ell+1}}{d\tau^{\ell+1}}\sum_{r\geq1}\frac{1}{r}e^{\frac{2\pi ir\xi}{N}}e^{2\pi im_{0}r\tau}\,=\]
\[
-(2\pi i)^{\ell+1}m_{0}^{\ell+1}\sum_{r\geq1}r^{\ell}e^{\frac{2\pi ir\xi}{N}}q_{0}^{rm_{0}N};\]
hence we have (for $m_{0}>0$) $\alpha(\xi,m_{0}):=$\[
\sum_{n_{0}\in\ZZ}\frac{1}{(Nn_{0}+\xi+Nm_{0}\tau)^{\ell+1}}\,=\,\frac{(-1)^{\ell+1}(2\pi i)^{\ell+1}}{\ell!N^{\ell+1}}\sum_{r\geq1}r^{\ell}e^{\frac{2\pi i\xi r}{N}}q_{0}^{rm_{0}N}.\]
Substituting $\widehat{\varphi_{\textbf{f}}}=\iota_{[i\infty]_{*}}\widehat{\varphi}$
in $(7.25)$ therefore yields\[
\frac{(-1)^{\ell}}{2\pi i}\sum_{(n,m_{0})\in(\ZZ\m\{0\})^{2}}\frac{\widehat{\varphi}(n)}{Nm_{0}(n+Nm_{0}\tau)^{\ell+1}}\,=\]
\[
\frac{(-1)^{\ell}}{2\pi iN}\sum_{\xi=1}^{N-1}\widehat{\varphi}(\xi)\sum_{m_{0}'\geq1}\frac{1}{m_{0}'}\left\{ \alpha(\xi,m_{0}')+(-1)^{\ell}\alpha(-\xi,m_{0}')\right\} \,=\]
\[
\frac{-(2\pi i)^{\ell}}{\ell!N^{\ell+2}}\sum_{M\geq1}q_{0}^{MN}\sum_{r|M}\frac{r^{\ell+1}}{M}\sum_{\xi\in\nicefrac{\ZZ}{N\ZZ}}\widehat{\varphi}(\xi)\left\{ e^{\frac{2\pi i\xi r}{N}}+(-1)^{\ell}e^{-\frac{2\pi i\xi r}{N}}\right\} \,=\]
\[
\frac{-(2\pi i)^{\ell}}{\ell!N^{\ell+1}}\sum_{M\geq1}q_{0}^{MN}\sum_{r|M}\frac{r^{\ell+1}}{M}{}^{\ell}\varphi(r),\]
where we have reindexed $M=m_{0}'r$. The first term of $(7.26)$
is much easier. 
\end{proof}
We turn briefly to the higher normal function. In analogy to $(7.24)$,
for $(p,q)=(1,0)$ equation $(7.21)$ becomes \begin{equation}
V_{\textbf{f}}^{[\ell]}(\tau) \, = \, \frac{(-1)^{\binom{\ell+1}{2}}(2\pi i)^{\ell+1}}{\ell+1} \tau^{\ell+1}\mathsf{H}^{[\ell]}_{[i\infty]}(\varphi_{\textbf{f}}) - \frac{(-1)^{\binom{\ell+1}{2}}}{2\pi i} \tau^{\ell+1} \hat{\sum}^{'}_{\tiny \begin{matrix}m,n \\ m\neq 0 \end{matrix}} \frac{\widehat{\varphi_{\textbf{f}}}(m,n)}{(m\tau+n)n^{\ell+1}},  \\
\end{equation} and if $\varphi_{\textbf{f}}=\frac{1}{N}\pi_{[i\infty]}^{*}\varphi$
we can calculate its $q_{0}$-expansion as follows. Using\[
\frac{\tau^{\ell+1}}{(Nm_{0}\tau+n)n^{\ell+1}}\,=\,\sum_{j=1}^{\ell}\frac{(-1)^{j-1}\tau^{\ell-j+1}}{(Nm_{0})^{j}m^{\ell-j+2}}\,+\,\frac{(-1)^{\ell}\tau}{(Nm_{0}\tau+n)(Nm_{0})^{\ell}n}\,,\]
the second term of $(7.28)$ becomes\[
\frac{(-1)^{\binom{\ell+1}{2}}}{2\pi i}\sum_{J=1}^{\left\lfloor \frac{\ell}{2}\right\rfloor }\frac{\tau^{\ell-2J+1}}{N^{2J}}\sum_{(m_{0},n)\in(\ZZ\m\{0\})^{2}}\frac{\widehat{\varphi}(n)}{m_{0}^{2J}n^{\ell-2J+2}}\mspace{100mu}\]
\[
\mspace{100mu}-\frac{(-1)^{\binom{\ell}{2}}}{2\pi iN^{\ell+2}}\sum_{\xi=1}^{N-1}\widehat{\varphi}(\xi)\sum_{m_{0}\in\ZZ}{}^{'}\frac{1}{m_{0}^{\ell+1}}\sum_{n_{0}\in\ZZ}\frac{N^{2}m_{0}\tau}{(\xi+Nn_{0})(\xi+Nn_{0}+Nm_{0}\tau)}.\]
For $m_{0}>0$ the $\sum_{n_{0}\in\ZZ}$ is \[
\pi\left(i+\cot\left(\frac{\pi\xi}{N}\right)\right)+2\pi i\sum_{r\geq1}e^{\frac{2\pi ir\xi}{N}}q_{0}^{m_{0}Nr}\]
by an argument like that in the above proof. Writing \[
\Theta_{\ell}(\varphi):=\left\{ \begin{array}{cc}
-\frac{i}{N}\sum_{\xi\in\nicefrac{\ZZ}{N\ZZ}}\widehat{\varphi}(\xi)\cot\left(\frac{\pi\xi}{N}\right), & \ell\text{ odd}\\
\\\varphi(0)\,, & \ell\text{ even}\end{array}\right.\]
and noting $\zeta(2J)=\frac{-(2\pi i)^{2J}}{2(2J)!}B_{2J}$, we eventually
arrive at this expression for the higher normal function (associated
to our lift):%
\footnote{the first big braced expression in $(7.29)$ is a polynomial in $\tau$
with $\QQ(\ell+1)$-coefficients.%
} \begin{equation}
\begin{matrix}
\frac{(-1)^{\binom{\ell}{2}}N^{\ell+1}}{(\ell+2)!}
\left\{
\begin{matrix}
\left( \sum_{a=0}^{N-1} \varphi(a)B_{\ell+2}\left( \frac{a}{N} \right) \right) \log^{\ell+1}q_0 
\\ + \, \sum_{J=1}^{\left\lfloor \frac{\ell}{2} \right\rfloor} \frac{(-2\pi i)^{2J}}{N^{4J}} \binom{\ell+2}{2J}B_{2J}\left( \sum_{a=0}^{N-1} \varphi(a) B_{\ell-2J+2}\left( \frac{a}{N} \right) \right) \log^{\ell-2J+1}q_0 
\end{matrix}
\right\}
\\
-\frac{(-1)^{\binom{\ell}{2}}}{N^{\ell+1}} \left\{ \zeta(\ell+1) \Theta_{\ell}(\varphi) +\sum_{M\geq 1} q_0^{MN}\left( \frac{\sum_{r|M}r^{\ell+1}\cdot ^{\ell}\varphi(r)}{M^{\ell+1}} \right) \right\}.
\end{matrix} \\ 
\end{equation} Both $(7.29)$ and $(7.26)$ check against Proposition 7.3 and Corollary
6.13, as the reader may verify.

Finally, one can evaluate the regulator periods at cusps where $\Omega_{\sZ_{\textbf{f}}}$
has no residue. We demonstrate this for the $\alpha^{\times\ell}$-period.

\begin{prop}
Assume that $\mathsf{H}_{[\frac{r}{s}]}^{[\ell]}(\varphi_{\textbf{\emph{f}}})\,\left[=\frac{-(\ell+1)}{(2\pi i)^{\ell+2}}\tilde{L}(\widehat{\varphi_{\textbf{\emph{f}}}},\ell+2)\right]=0$;
then \[
\lim_{\tau\to\frac{r}{s}}\Psi_{\textbf{\emph{f}},0}^{[\ell]}(\tau)\equiv\frac{-s^{\ell}}{2N}\tilde{L}_{-}(\pi_{[\frac{r}{s}]_{*}}\widehat{\varphi_{\textbf{\emph{f}}}},\ell+1)\,\,\,\,\,\text{mod }\QQ(\ell+1),\]
 where $\tilde{L}_{-}(\phi,\ell+1):=\sum_{m\in\ZZ\m\{0\}}\frac{\phi(m)\cdot\frac{|m|}{m}}{m^{\ell+1}}.$
\end{prop}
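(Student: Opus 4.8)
The plan is to reduce to the explicit lattice--sum formula for the $\alpha^{\times\ell}$--period, transport the cusp $[\tfrac rs]$ to $[i\infty]$ by $SL_2(\ZZ)$--equivariance, and recognize the surviving part of the limit as the ``Eichler constant'' of a $\beta^{\times\ell}$--period. First I would use Corollary 7.2 together with the Decomposition Lemma and \S5.2.4--5 to reduce to $\textbf{f}\in\QQ[\mathfrak{F}(N)_{\gamma}^{\times(\ell+1)}]$, where $\gamma=\cmat$ is the matrix attached to $\sigma=[\tfrac rs]$; since $\Psi^{[\ell]}_{\textbf{f},0}$ depends only on $\varphi_{\textbf{f}}\in\Phi_2^{\QQ}(N)^{\circ}$, this costs nothing, and $(7.17)$ (resp. $(7.18)$, when $p=0$, i.e. $\sigma$ equivalent to $[0]$) then expresses $\Psi^{[\ell]}_{\textbf{f},0}$ as a linear term plus the lattice sum. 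As $\tau\to\tfrac rs$ one has $p\tau+q\to\tfrac1s$, so the linear term of $(7.17)$ tends to $(-1)^{\ell}(2\pi i)^{\ell+1}\tfrac1{sp}\mathsf{H}^{[\ell]}_{[i\infty]}(\varphi_{\textbf{f}})\in\QQ(\ell+1)$ and may be discarded.

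Next I would invoke the $SL_2(\ZZ)$--equivariance of the Eisenstein symbol, exactly as in the proof of Corollary 6.8: with $M_\sigma=\dmat$ carrying $[i\infty]$ to $\sigma$, the fiberwise isomorphism $z\mapsto z/(s\tau'+p)$ of $\E^{[\ell]}$ over $\tau'\mapsto M_\sigma\tau'$ gives $\Psi^{[\ell]}_{\textbf{f},0}(M_\sigma\tau')=\sum_{k=0}^{\ell}\binom{\ell}{k}p^{\ell-k}s^{k}\,\Psi^{[\ell]}_{M_\sigma^{*}\textbf{f},k}(\tau')$ (because $\alpha$ on $E_{M_\sigma\tau'}$ pulls back to $p\alpha+s\beta$ on $E_{\tau'}$). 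By Corollary 6.8, $\mathsf{H}^{[\ell]}_{[i\infty]}(\varphi_{M_\sigma^{*}\textbf{f}})=\mathsf{H}^{[\ell]}_{[\frac rs]}(\varphi_{\textbf{f}})=0$ by hypothesis, so Proposition 7.4 applies to $M_\sigma^{*}\textbf{f}$: the limits $\mathfrak{K}'_k:=\lim_{\tau'\to i\infty}\Psi^{[\ell]}_{M_\sigma^{*}\textbf{f},k}(\tau')$ all exist, with $\mathfrak{K}'_k\in\QQ(\ell+1)$ for $k<\ell$ and $\mathfrak{K}'_\ell$ lift--independent modulo $\QQ(\ell+1)$. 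Hence $\lim_{\tau\to r/s}\Psi^{[\ell]}_{\textbf{f},0}(\tau)\equiv s^{\ell}\mathfrak{K}'_\ell\pmod{\QQ(\ell+1)}$, which already isolates the $s^{\ell}$ in the target formula.

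It then remains to compute $\mathfrak{K}'_\ell$, the constant term at $[i\infty]$ of the $\beta^{\times\ell}$--period of $M_\sigma^{*}\textbf{f}$. By Proposition 7.4(i)--(ii) this period is, up to a $\tau'$--polynomial with $\QQ(\ell+1)$--coefficients, an $\ell$--fold iterated integral of $F_{M_\sigma^{*}\textbf{f}}=E^{[\ell]}_{\varphi_{M_\sigma^{*}\textbf{f}}}$; since $\mathsf{H}^{[\ell]}_{[i\infty]}(\varphi_{M_\sigma^{*}\textbf{f}})=0$ this Eisenstein series is $O(q_0)$ at $i\infty$, so the integral converges there and $\mathfrak{K}'_\ell$ is a genuine period ``constant of integration.'' I would pin it down by carrying out the $q_0$--expansion of this period exactly as in the derivation of $(7.26)$ and $(7.29)$ --- i.e. via the product expansion of $\sin$ and the finite--Fourier/Bernoulli identities of \S5.2.1--2 used in Proposition 7.17 --- the upshot being that the transcendental (``$\zeta(\ell+1)\Theta_\ell$''--type) term in $(7.29)$ now reads $-\tfrac1{2N}\tilde L_{-}(\widehat{\pi_{[i\infty]_*}\varphi_{M_\sigma^{*}\textbf{f}}},\ell+1)$, the $\frac1{2N}$ and the $\mathrm{sgn}$ in $\tilde L_{-}$ coming respectively from the trace $\pi_{[i\infty]_*}$ and from the symmetric (principal--value) summation. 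Finally, Corollary 6.8 gives $\pi_{[i\infty]_*}\varphi_{M_\sigma^{*}\textbf{f}}=\pi_{[\frac rs]_*}\varphi_{\textbf{f}}$, and with the Fourier duality $(5.6)$ this rewrites $\mathfrak{K}'_\ell\equiv-\tfrac1{2N}\tilde L_{-}(\pi_{[\frac rs]_*}\widehat{\varphi_{\textbf{f}}},\ell+1)$, yielding the claimed formula.

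I expect the genuine difficulty to lie in the last step: establishing that the Eichler constant $\mathfrak{K}'_\ell$ is exactly the transcendental value $-\tfrac1{2N}\tilde L_{-}(\pi_{[\frac rs]_*}\widehat{\varphi_{\textbf{f}}},\ell+1)$, and not merely ``$\tilde L_{-}$ modulo something.'' Morally, this reflects the fact that the iterated principal--value lattice sum does \emph{not} commute with the limit $\tau\to\tfrac rs$ (equivalently $\tau'\to i\infty$) --- a naive term--by--term passage to the limit only ever produces $\pi$--power (hence $\QQ(\ell+1)$) contributions --- so one really must resum the series in a way adapted to the cusp, as in Proposition 7.17 and formula $(7.29)$, where the $\zeta(\ell+1)$--type boundary term is manifest. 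A secondary bookkeeping chore will be checking, in the $p=0$ case and whenever $\gcd(p,N)>1$, that the contributions of the ``$m=0$'' directions still collapse into $\QQ(\ell+1)$ via the vanishing $\tilde L(\widehat{(\pi_\sigma)_*\varphi_{\textbf{f}}},\ell+2)=-\tfrac{(2\pi i)^{\ell+2}}{\ell+1}\mathsf{H}^{[\ell]}_{[\frac rs]}(\varphi_{\textbf{f}})=0$.
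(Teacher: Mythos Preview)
Your overall architecture coincides with the paper's: reduce by $SL_2(\ZZ)$-equivariance to the computation of $\mathfrak{K}'_\ell:=\lim_{\tau'\to i\infty}\Psi^{[\ell]}_{\textbf{g},\ell}(\tau')$ for $\textbf{g}=M_\sigma^*\textbf{f}$, invoking Proposition~7.4 to discard the $k<\ell$ terms modulo $\QQ(\ell+1)$ and thereby isolate the factor $s^\ell$. The paper does exactly this, performing the equivariance step at the end rather than the beginning.

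Where you diverge is in the evaluation of $\mathfrak{K}'_\ell$. The paper does \emph{not} appeal to a $q_0$-expansion. Instead it splits $\varphi_{\textbf{g}}=\varphi_{\textbf{g}'}+\varphi_{\textbf{g}''}$ with $\varphi_{\textbf{g}'}\in\pi_{[0]}^*\Phi^{\QQ}(N)^\circ$ (so $(0,1)$-vertical) and $\varphi_{\textbf{g}''}$ $(1,0)$-vertical, applies $(7.19)$ with $(p,q)=(0,1)$ and $(7.20)$ to the two pieces, and reassembles to obtain
\[
\Psi^{[\ell]}_{\textbf{g},\ell}(\tau)\equiv\frac{(-1)^{\ell+1}}{2\pi i}\sum_{(m,n)\in(\ZZ\setminus\{0\})^2}\frac{\widehat{\varphi_{\textbf{g}}}(m,n)}{n(m+\tfrac{n}{\tau})^{\ell+1}}\pmod{\QQ(\ell+1)}.
\]
Writing $\tau=it$ and letting $t\to\infty$, the inner sum (over $n_0$, with $\pm n_0$ paired) becomes a Riemann sum for $\int_0^\infty\frac{dX}{(X+im)^{\ell-k+1}(X-im)^{k+1}}$, which is evaluated by residues to give $\frac12(2\pi i)(-1)^{\ell+k}\tfrac{|m|}{m}\binom{\ell}{k}(2mi)^{-\ell-1}$. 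Summing over $k$ and $m$ produces the $\tfrac{1}{2N}$ and the $\tfrac{|m|}{m}$ in $\tilde L_-$ directly. This is where the $\mathrm{sgn}$-weighted $L$-value actually \emph{appears}; it is not a feature of the principal-value convention per se but of the residue calculation.

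Your proposed alternative --- extracting the constant term from a $q_0$-expansion ``as in $(7.26)$ and $(7.29)$'' --- is not obviously workable as stated. Both $(7.26)$ and $(7.29)$ are derived only for the special shape $\varphi_{\textbf{f}}=\tfrac1N\pi_{[i\infty]}^*\varphi_0$, and $(7.29)$ computes $V^{[\ell]}_{\textbf{f}}$, not $\Psi^{[\ell]}_{\textbf{f},\ell}$; the link via Proposition~7.4(iii) requires a specific normalization of the lift that the $(p,q)=(1,0)$ formula need not satisfy. More seriously, the general $q_0$-expansion of $\Psi^{[\ell]}_{\cdot,0}$ (Corollary~7.10) is \emph{deduced from} the present proposition, so you cannot borrow it here without circularity. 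The paper's Riemann-sum argument sidesteps all of this by working directly with the closed lattice-sum formulas already established in Theorem~7.7.
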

\begin{proof}
will proceed by $first$ showing that \begin{equation}
\lim_{\tau\to i\infty} \Psi^{[\ell]}_{\textbf{f},\ell}(\tau) \equiv \frac{-1}{2N} \tilde{L}_{-} (\pi_{[i\infty]_*}\widehat{\varphi_{\textbf{f}}},\ell+1) \\
\end{equation} when $\mathsf{H}_{[i\infty]}^{[\ell]}(\varphi_{\textbf{f}})=0$.
We can write $\varphi_{\textbf{f}}=\varphi_{\textbf{f}'}+\varphi_{\textbf{f}''}$
where $\varphi_{\textbf{f}'}\in\pi_{[0]}^{*}\Phi^{\QQ}(N)^{\circ}\subset\Phi_{2}^{\QQ}(N)_{(0,1)}^{\circ}$
and $\varphi_{\textbf{f}''}\in\Phi_{2}^{\QQ}(N)_{(1,0)}^{\circ}$,
then apply $(7.19)$ {[}with $(p,q)=(0,1)$] resp. $(7.20)$ to conclude
\begin{equation}
\lim_{\tau\to i\infty}\Psi^{[\ell]}_{\textbf{f},\ell}(\tau) \equiv \lim_{\tau\to i\infty}\frac{(-1)^{\ell+1}}{2\pi i}\sum_{(m,n)\in(\ZZ\m {0})^2}\frac{\widehat{\varphi_{\textbf{f}}}(m,n)}{n(m+\frac{n}{\tau})^{\ell+1}} \, \, \, \, \text{mod}\, \, \QQ(\ell+1)\\
\end{equation} after {}``reassembling'' the results. (In $(7.19)$ the sum becomes
\[
\frac{1}{N}\hat{\sum}_{\tiny\begin{array}[t]{c}
m,n_{0}\\
n_{0}\neq0\end{array}}^{'}\left(\frac{\widehat{\varphi_{\textbf{f}}}(m,0)}{n_{0}(m+\frac{Nn_{0}}{\tau})^{\ell+1}}-\frac{\widehat{\varphi_{\textbf{f}}}(m,0)}{n_{0}m^{\ell+1}}\right),\]
 where the $\hat{\sum}$ means to sum $\pm n_{0}$ first, so that
one can delete the second term inside the sum. Then one can remove
the {}``$\widehat{\,\,}$'', in both $(7.19)$ and $(7.20)$%
\footnote{where it means to sum $\pm m$ first.%
}, since the double-sum is now absolutely convergent.) The r.h.s. of
$(7.31)$ is now (summing $\pm n$ first)\tiny \[
\lim_{\tau\to i\infty}\frac{(-1)^{\ell+1}}{2\pi i}\sum_{\xi=0}^{N-1}\sum_{m\in\ZZ}{}^{'}\widehat{\varphi_{\textbf{f}}}(m,\xi)\sum_{n\geq1}\left(\frac{1}{(n_{0}N-\xi)\left(M+\frac{n_{0}N-\xi}{\tau}\right)^{\ell+1}}-\frac{1}{(n_{0}N-\xi)\left(m-\frac{n_{0}N-\xi}{\tau}\right)^{\ell+1}}\right)\]
\normalsize where we have made the (unnecessary) assumption that
$\widehat{\varphi_{\textbf{f}}}(m,-n)=\widehat{\varphi_{\textbf{f}}}(m,n)$
to simplify the exposition. This becomes (writing $\tau=it$)\tiny \[
\frac{2(-1)^{\ell+1}i^{\ell+1}}{2\pi iN}\sum_{m\in\ZZ}{}^{'}\sum_{\xi=0}^{N-1}\widehat{\varphi_{\textbf{f}}}(m,\xi)\sum_{k=0}^{\ell}(-1)^{k}\left\{ \lim_{t\to\infty}\sum_{n_{0}\geq1}\frac{N/t}{\left(\frac{n_{0}N-\xi}{t}+im\right)^{\ell-k+1}\left(\frac{n_{0}N-\xi}{t}-im\right)^{k+1}}\right\} \]
\normalsize where the limit in braces is the Riemann sum for \[
\int_{0}^{\infty}\frac{dX}{(X+im)^{\ell-k+1}(X-im)^{k+1}}=\frac{1}{2}(2\pi i)(-1)^{\ell+k}\frac{|m|}{m}{\ell \choose k}\frac{1}{(2mi)^{\ell+1}}\]
(using residues), and so we get\[
-\frac{\sum_{k=0}^{\ell}{\ell \choose k}}{2^{\ell+1}N}\sum_{m\in\ZZ}{}^{'}\frac{|m|}{m^{\ell+2}}\sum_{\xi=0}^{N-1}\widehat{\varphi_{\textbf{f}}}(m,\xi)\]
which is just the r.h.s. of $(7.30)$.

Now let $\textbf{f}$ be as in the statement of the Proposition:\[
\lim_{\tau\to\frac{r}{s}}\Psi_{\textbf{f},0}^{[\ell]}(\tau)=\left\langle [\alpha^{\times\ell}],\,\lim_{\tau\to\frac{r}{s}}\R_{\textbf{f}}^{[\ell]}(\tau)\right\rangle =\left\langle [\alpha^{\times\ell}],\,\cmat^{*}\R_{\dmat^{*}\textbf{f}}^{[\ell]}(\tau)\right\rangle .\]
By $(7.30)$ this is\[
-\frac{(-1)^{\binom{\ell+1}{2}}}{2N}\tilde{L}_{-}\left(\pi_{[i\infty]_{*}}\dmat^{*}\widehat{\varphi_{\textbf{f}}},\,\ell+1\right)\left\langle [\alpha^{\times\ell}],\,\cmat^{*}[\alpha^{\times\ell}]\right\rangle \]
\[
=-\frac{(-1)^{\binom{\ell+1}{2}}}{2N}\tilde{L}_{-}\left(\pi_{[\frac{r}{s}]_{*}}\widehat{\varphi_{\textbf{f}}},\,\ell+1\right)\left\langle [\alpha^{\times\ell}],\,[(r\alpha-s\beta)^{\times\ell}]\right\rangle \]
which yields the result. 
\end{proof}
\begin{rem*}
In fact, Prop. 7.8 leads to a more general result when combined with
results from previous sections:
\end{rem*}
\begin{cor}
For any $\textbf{\emph{f}}\in\mathcal{O}^{*}(U(N))^{\otimes(\ell+1)}$,\[
\Psi_{\textbf{\emph{f}},0}^{[\ell]}(\tau)\,\,\emql\,\,(-2\pi i)^{\ell}\mathsf{H}_{[i\infty]}^{[\ell]}(\varphi_{\textbf{\emph{f}}})N\log q_{0}\mspace{200mu}\]
\[
\small\mspace{100mu}-\frac{(2\pi i)^{\ell}}{N^{\ell+1}\ell!}\sum_{M\geq1}\frac{q_{0}^{M}}{M}\left\{ \sum_{r|M}r^{\ell+1}\left(\sum_{n_{0}\in\nicefrac{\ZZ}{N\ZZ}}e^{\frac{2\pi in_{0}r}{N}}\cdot{}^{\ell}\widehat{\varphi_{\textbf{\emph{f}}}}\left(\frac{M}{r},n_{0}\right)\right)\right\} .\normalsize\]

\end{cor}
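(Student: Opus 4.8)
The plan is to reduce the general statement, for arbitrary $\textbf{f}\in\mathcal{O}^{*}(U(N))^{\otimes(\ell+1)}$, to the already-established special cases $\varphi_{\textbf{f}}=\frac{1}{N}\pi_{[i\infty]}^{*}\varphi$ by invoking the fact (the monodromy argument of Lemma 7.1(ii), together with the factorization $(7.22)$) that $\Psi_{\textbf{f},0}^{[\ell]}(\tau)$ depends on $\textbf{f}$ only through $\{\mathsf{H}_{\sigma}^{[\ell]}(\varphi_{\textbf{f}})\}_{\sigma\in\kappa(N)}$ modulo $\QQ(\ell+1)$, hence only through $\widehat{\varphi_{\textbf{f}}}$ (or more precisely through its restrictions $\iota_{\sigma}^{*}\widehat{\varphi_{\textbf{f}}}$ to the lines corresponding to cusps). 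First I would invoke $(7.22)$ to write $\Psi_{\textbf{f},0}^{[\ell]}(\tau)\equiv\sum_{\sigma}\mathsf{H}_{\sigma}^{[\ell]}(\varphi_{\textbf{f}})\tilde{\Psi}_{\sigma,0}^{[\ell]}(\tau)$ mod $\QQ(\ell+1)$, where $\tilde{\Psi}_{\sigma,0}^{[\ell]}$ is computed from a test function $\textbf{f}_{\sigma}$ with $\varphi_{\textbf{f}_{\sigma}}=\frac{1}{N}\pi_{\sigma}^{*}\varphi_{N}^{[\ell]}$. The point of the corollary, though, is to produce a single clean $q_0$-expansion rather than a sum over cusps, so the right move is actually to avoid the cusp-by-cusp decomposition: instead, observe that by Corollary 7.2 the quantity $\Psi_{\textbf{f},0}^{[\ell]}(\tau)$ mod $\QQ(\ell+1)$ is unchanged if we replace $\varphi_{\textbf{f}}$ by \emph{any} $\varphi\in\Phi_2^{\QQ}(N)^{\circ}$ with the same image in $\Upsilon_2^{\QQ}(N)$, and in particular by $\widehat{\varphi}$ having the same restriction to each $\iota_{[i\infty]}$-line as $\widehat{\varphi_{\textbf{f}}}$.

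The cleanest route is to prove the formula directly from $(7.24)$. Start with the $(p,q)=(1,0)$ form of $(7.16)$ for $k=0$, which is exactly $(7.24)$:
\[
\Psi_{\textbf{f},0}^{[\ell]}(\tau)=(-1)^{\ell}(2\pi i)^{\ell+1}\tau\,\mathsf{H}_{[i\infty]}^{[\ell]}(\varphi_{\textbf{f}})+\frac{(-1)^{\ell+1}}{2\pi i}\hat{\sum}_{\substack{m,n\\ m\neq0}}{}^{'}\widehat{\varphi_{\textbf{f}}}(m,n)\frac{(m\tau+n)^{\ell+1}-n^{\ell+1}}{m(m\tau+n)^{\ell+1}n^{\ell+1}}.
\]
Strictly this formula as stated applies only to $\textbf{f}$ built out of $(1,0)$-vertical functions; the first step is to argue that the right-hand side makes sense and equals $\Psi_{\textbf{f},0}^{[\ell]}$ mod $\QQ(\ell+1)$ for \emph{arbitrary} $\textbf{f}$, using the Decomposition Lemma to write $\varphi_{\textbf{f}}=\sum_{(p,q)}\varphi_{\textbf{f}^{(p,q)}}$, applying $(7.22)$/Corollary 7.2 to each piece, and checking that the non-$(1,0)$ contributions either vanish mod $\QQ(\ell+1)$ in the limit or combine into the stated series. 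Then I would split the double sum into the $n\neq0$ part (absolutely convergent, and handled by the Riemann-sum/residue technique used in the proof of Prop. 7.8, or by the $\log\sin$-product manipulation in the proof of Prop. 7.7) and, upon expanding $(m\tau+n)^{\ell+1}-n^{\ell+1}$, collecting powers of $q_0$ via the expansion
\[
\sum_{n_0\in\ZZ}\frac{1}{(Nn_0+\xi+Nm_0\tau)^{\ell+1}}=\frac{(-1)^{\ell+1}(2\pi i)^{\ell+1}}{\ell!\,N^{\ell+1}}\sum_{r\geq1}r^{\ell}e^{\frac{2\pi i\xi r}{N}}q_0^{rm_0N}
\]
derived in the proof of Proposition 7.7. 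The difference between the present corollary's answer (with $q_0^M/M$, $r^{\ell+1}$, and the full $n_0$-sum $\sum_{n_0}e^{2\pi i n_0 r/N}\,{}^{\ell}\widehat{\varphi_{\textbf{f}}}(M/r,n_0)$) and Prop. 7.7's answer (with $q_0^{MN}$ and $\varphi$ in place of $\widehat{\varphi_{\textbf{f}}}$) is precisely accounted for by the fact that $\widehat{\varphi_{\textbf{f}}}$ is not assumed to be supported on $m\equiv0$; reindexing $M=m_0 r$ and summing over $m_0\geq1$ produces the $\frac{1}{M}$ and converts $r^{\ell}$ to $r^{\ell+1}$ via $\sum_{m_0|M}$-bookkeeping exactly as at the end of that proof. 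The symmetrization $\widehat{\varphi_{\textbf{f}}}\mapsto{}^{\ell}\widehat{\varphi_{\textbf{f}}}$ comes from adding the $\pm m$ terms first in $\hat{\sum}$, as in $(7.25)$.

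The main obstacle I anticipate is \emph{not} any single calculation but rather the bookkeeping needed to legitimately pass from the $(p,q)=(1,0)$-adapted formula $(7.24)$ to arbitrary $\textbf{f}$: one must be careful that the conditionally convergent $\hat{\sum}$ with its prescribed summation order ($\pm m$ first, then $n\neq0$) is consistent across the Decomposition-Lemma splitting, and that the $m=0$ terms (which carry the $\mathsf{H}_{[i\infty]}^{[\ell]}(\varphi_{\textbf{f}})N\log q_0$ contribution and are otherwise deleted in the $\hat{\sum}$) are handled uniformly. Concretely, for the non-$(1,0)$ pieces the analogue of $(7.24)$ has the leading term governed by $\mathsf{H}_{[\frac{r}{s}]}^{[\ell]}$ rather than $\mathsf{H}_{[i\infty]}^{[\ell]}$, and one needs Prop. 7.8 (its equation $(7.30)$) plus the observation that the corresponding correction lands in $\QQ(\ell+1)$ once we take the limit relevant to the $q_0$-expansion at $[i\infty]$ — i.e. that all cusps other than $[i\infty]$ contribute only to the constant ($q_0^{0}$) term and that constant is rational when paired against $[\alpha^{\times\ell}]$. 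Granting that, the corollary follows by assembling the pieces and matching against the series already computed in Proposition 7.7 and Corollary 6.13. Everything else is the residue computation and geometric-series manipulation already carried out in the proofs of Propositions 7.7 and 7.8, which I would cite rather than repeat.
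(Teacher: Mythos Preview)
Your approach is workable but substantially more laborious than the paper's, and you have missed the key shortcut. The paper's proof is essentially two lines: first split $\varphi_{\textbf{f}}=\varphi_{\textbf{f}'}+\varphi_{\textbf{f}''}$ with $\varphi_{\textbf{f}'}\in\pi_{[i\infty]}^{*}\Phi^{\QQ}(N)^{\circ}$ and $\varphi_{\textbf{f}''}$ $(0,1)$-vertical (so $\mathsf{H}_{[i\infty]}^{[\ell]}(\varphi_{\textbf{f}''})=0$); then invoke Prop.~7.4(i) to see that $\lim_{\tau\to i\infty}\Psi_{\textbf{f}'',0}^{[\ell]}\in\QQ(\ell+1)$, while the already-computed special case (your ``Prop.~7.7'') handles the $\log q_{0}$ term from $\textbf{f}'$. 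For the $q_{0}^{>0}$ terms the paper does \emph{not} return to the double-sum $(7.24)$ at all: it simply uses Prop.~7.3(i), which says $\frac{d}{d\tau}\Psi_{\textbf{f},0}^{[\ell]}=(-1)^{\ell}(2\pi i)^{\ell+1}F_{\textbf{f}}=(-1)^{\ell}(2\pi i)^{\ell+1}E_{\varphi_{\textbf{f}}}^{[\ell]}$, and integrates term-by-term the $q$-expansion of the Eisenstein series already given in Prop.~6.12 for \emph{arbitrary} $\varphi$. The constant of integration is fixed by the first step.

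What you propose instead --- carrying the Decomposition Lemma through $(7.24)$, tracking summation orders across different $(p,q)$-pieces, and redoing the $\log\sin$ Fourier computation --- is exactly the work that Prop.~6.12 + Prop.~7.3(i) allow you to avoid. Your ``main obstacle'' (reconciling the conditionally convergent $\hat{\sum}$ across the splitting) simply never arises in the paper's argument, because once you know $\Psi_{\textbf{f},0}^{[\ell]}$ is an antiderivative of a function whose $q$-expansion is known, only the constant remains, and for that Prop.~7.4(i) is both simpler and more directly applicable than the explicit limit formula (your ``Prop.~7.8'') you planned to invoke.
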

\begin{proof}
Split $\varphi_{\textbf{f}}=\varphi_{\textbf{f}'}+\varphi_{\textbf{f}''}$
with $\varphi_{\textbf{f}'}\in\pi_{[i\infty]}^{*}(\Phi^{\QQ}(N)^{\circ})$,
and $\varphi_{\textbf{f}''}$ $(0,1)$-vertical so that $\mathsf{H}_{[i\infty]}^{[\ell]}(\varphi_{\textbf{f}''})=0$.
By Prop. 7.4(i), $\lim_{\tau\to i\infty}\Psi_{\textbf{f}'',0}^{[\ell]}(\tau)=0$
while the constant and divergent terms (as $\tau\to i\infty$) for
$\Psi_{\textbf{f}',0}^{[\ell]}$ (hence $\Psi_{\textbf{f},0}^{[\ell]}$)
are given by Prop. 7.8. Using this together with Prop. 6.12 and Prop.
7.3(i) (which says that $\Psi_{\textbf{f},0}^{[\ell]}=(-1)^{\ell}(2\pi i)^{\ell+1}\int E_{\varphi_{\textbf{f}}}(\tau)d\tau$)
gives the result.
\end{proof}

\section{\textbf{Toric vs. Eisenstein: comparing constructions}}

In this final section we consider the possible coincidence of (push-forwards
of) Beilinson's Eisenstein symbol over genus zero modular curves,
and the toric symbol on suitably {}``modular'' hypersurface pencils.
This will be done on the level of regulator periods and cycle-classes,
and the general result in $\S8.3$ is followed by many examples. To
whet the reader's appetite we include two motivating examples in $\S8.1,$
which come from extending the computations of regulator periods and
their special values to the cycles considered in $\S6.2$.

\subsection{Regulator periods for other congruence subgroups}

It is worth mentioning a subtlety that enters into computations for
the {}``push-forward cycles'' of $\S6.2.1$ $\sZ_{\textbf{f},1^{(')}}:=\frac{1}{N}\left(\mathcal{P}_{\nicefrac{\Gamma(N)}{\Gamma_{1}^{(')}(N)}}^{[\ell]}\right)_{*}\sZ_{\textbf{f}}\in CH^{\ell+1}(\E_{\Gamma_{1}^{(')}(N)}^{[\ell]},\ell+1)$
(equivalently one can consider $\widetilde{\sZ_{\textbf{f},1^{(')}}}:=\left(\mathcal{P}_{\nicefrac{\Gamma(N)}{\Gamma_{1}^{(')}(N)}}^{[\ell]}\right)^{*}\sZ_{\textbf{f},1^{(')}}$
on $\E^{[\ell]}(N)$). Letting $\Psi_{\textbf{f},1^{(')};k}^{[\ell]}$
denote the period over $\gamma_{k}^{[\ell]}\,(=\alpha^{\ell-k}\beta^{k})$
for an appropriate lift of the fiberwise $AJ$ of $\sZ_{\textbf{f},1^{(')}}$
over $Y_{1}^{(')}(N)$, we have obviously \begin{equation}
\Psi^{[\ell]}_{\textbf{f},1;0}(\tau) = \frac{1}{N}\sum_{j=0}^{N-1} \Psi^{[\ell]}_{\textbf{f},0} (\tau+j)\\
\end{equation} but also \begin{equation}
\Psi^{[\ell]}_{\textbf{f},1;\ell}(\tau)=\frac{1}{N} \sum_{j=0}^{N-1}\sum_{k=0}^{\ell} \binom{\ell}{k}(-j)^{\ell-k} \Psi^{[\ell]}_{\textbf{f},k}(\tau+j) \\
\end{equation} \begin{equation}
\Psi^{[1]}_{\textbf{f},1';0}(\tau) = \frac{1}{N} \sum_{j=0}^{N-1} \left\{ \Psi^{[1]}_{\textbf{f},0} \left( \frac{\tau}{j\tau+1} \right) -j \Psi^{[1]}_{\textbf{f},1}\left( \frac{\tau}{j\tau +1} \right) \right\} \\
\end{equation} since (see $\S6.2.1$) $\J_{j}{}_{_{*}}\beta=\beta-j\alpha$ (resp.
$\J_{j}^{'}{}_{_{*}}\alpha=\alpha-j\beta$). Likewise, for the {}``$K_{3}(K3)$''
cycles $\sZ_{\textbf{f},+N}:=\frac{-1}{4N}(p_{2})_{*}(p_{1})^{*}(\mathcal{P}_{+N})_{*}(J_{N}^{[2]})^{*}\sZ_{\textbf{f},1}\in CH^{3}(\X_{1}^{[2]}(N)^{+N},3)$
(resp. $'\sZ_{\textbf{f},+N}$) of $\S6.2.2$, we find \begin{equation}
{}^{(')}\Psi^{[2]}_{\textbf{f},+N;0}(\tau)=\frac{1}{2}\left\{ \Psi^{[2]}_{\textbf{f},1;0}(\tau) \tiny \begin{matrix} + \\ (-) \end{matrix} \normalsize N\Psi^{[2]}_{\textbf{f},1;2}\left( \frac{-1}{N\tau} \right) \right\} \\
\end{equation} for the periods of $AJ\left(\left\langle {}^{(')}\sZ_{\textbf{f},+N}\right\rangle _{[\tau]\in Y_{1}(N)^{+N}}\right)$
against $(\mathcal{P}_{+N})_{*}(J_{N}^{[2]})_{*}(\alpha\times\alpha)$.
(The latter, it turns out, is divisible by $2N$ in the integral homology
of the $K3$ fibers.) To obtain limiting values of $(8.1\text{-}4)$
at a cusp, one could apply the proof of Prop. 7.9 to each term.

An easier approach is to consider the effect of $\sZ_{\textbf{f}}\mapsto\widetilde{\sZ_{\textbf{f},1^{(')}}}$
(or $'\widetilde{\sZ_{\textbf{f},+N}}$) on the residues of the cycle-class,
transform $\widehat{\varphi_{\textbf{f}}}$ accordingly (cf. $(6.2)$),
and plug the result into Prop. 7.9. We carry this out in two examples
related to toric constructions in this paper.

\begin{example}
($\ell=1$, $N=4$, $\Gamma=\Gamma_{1}^{'}(4)$)\\
Begin with $\textbf{f}$ so that $\varphi_{\textbf{f}}=-\frac{1}{4}\pi_{[i\infty]}^{*}\varphi_{4}^{[1]}$
(see Prop. 5.10) and consider $\sZ_{\textbf{f},1'}$; the corresponding
divisor $\varphi_{\textbf{f},1'}$ has $\widehat{\varphi_{\textbf{f},1'}}=\frac{1}{4}\rho_{*}^{'}\widehat{\varphi_{\textbf{f}}}=-\frac{1}{4}\rho_{*}^{'}\iota_{[i\infty]_{*}}\widehat{\varphi_{4}^{[1]}}=-\frac{1}{4}\pi_{[0]}^{*}\widehat{\varphi_{4}^{[1]}}$
where $\widehat{\varphi_{4}^{[1]}}=0,2^{6}i,0,-2^{6}i$. We have $\pi_{[0]_{*}}\widehat{\varphi_{\textbf{f},1'}}=-\widehat{\varphi_{4}^{[1]}}$
and so \[
\lim_{\tau\to0}\Psi_{\textbf{f},1';0}^{[1]}(\tau)\equiv\frac{1}{8}\tilde{L}_{-}(\widehat{\varphi_{4}^{[1]}},2)=-16iG\,\,\,\,\,\,\,\text{mod}\,\,\QQ(2);\]
this corresponds exactly to the $D5$ example of $\S4.3$.
\end{example}
${}$

\begin{example}
($\ell=2$, $N=6$, $\Gamma=\Gamma_{1}(6)^{+6}$)\\
Start with $\varphi_{\textbf{f}}=-4\pi_{[i\infty]}^{*}\varphi_{6}^{[2]}$,
and consider $'\widetilde{\sZ_{\textbf{f},+6}}$: from $(6.6)$ (and
Remark $6.17$) we know that if $\mathsf{H}_{\sigma}(\varphi_{\textbf{f}})=-24\delta_{\sigma,[i\infty]}$
then $\mathsf{H}_{[i\infty]}('\varphi_{\textbf{f},+6})=-12$ and $\mathsf{H}_{[j]}('\varphi_{\textbf{f},+6})=\frac{1}{3}$
($\forall j\in\ZZ$). As $\widehat{\varphi_{6}^{[2]}}=0,-\frac{6^{4}}{5},0,0,0,-\frac{6^{4}}{5}$,
this leads to\begin{equation}
'\widehat{\varphi_{\textbf{f},+6}}(m,n) = \left\{ 
\begin{matrix}
\frac{2\cdot 6^5}{5}, & (m,n)\ems \pm (0,1)
\\
-\frac{2\cdot 6^3}{5}, & m\ems \pm 1 
\\
0, & \text{otherwise},
\end{matrix}
\right.\\
\end{equation} and $\pi_{[\frac{-1}{2}]_{*}}{}'\widehat{\varphi_{\textbf{f},+6}}=-\frac{8\cdot6^{3}}{5}\cdot\{0,1,-9,1,-9,1;\ldots\}$
so that \[
\lim_{\tau\to-\frac{1}{2}}{}'\Psi_{\textbf{f},+6;0}^{[2]}(\tau)\,\emqt\,-\frac{4}{12}\cdot\frac{-8\cdot6^{3}}{5}\cdot2L(\{0,1,-9,1,-9,1;\ldots\},\,3)\]
\[
=\,\frac{2^{5}\cdot6^{2}}{5}\zeta(3)\cdot\left(1-\frac{10}{2^{3}}+\frac{9}{6^{3}}\right)\,=\,-48\zeta(3).\]
This means that the $AJ$ class of $\left\langle '\widetilde{\sZ_{\textbf{f},+6}}\right\rangle _{\tau}$
limits to $12\zeta(3)\left[(\alpha+2\beta)^{\times2}\right],$ which
is the pullback from the $K3$ family of $2\zeta(3)$ times a vanishing
cycle at $[\frac{-1}{2}]\in\overline{Y}_{1}(6)^{+6}$. This suggests
a link to the Ap\'ery-Beukers higher normal function from the introduction;
the precise relation will be established in $\S8.5$ below.
\end{example}

\subsection{Uniformizing the genus zero case}

Let $\Gamma\subset SL_{2}(\ZZ)$ be a congruence subgroup in the sense
of $\S5.1.1$ ($\{-\text{id}\}\notin\Gamma$, $\Gamma\supset\Gamma(N)$
for some $N\geq3$), and assume $\overline{Y}_{\Gamma}\cong\PP^{1}$.
To fix a uniformizing parameter, note that $\overline{Y}_{\Gamma}$
has local coordinate%
\footnote{e.g. $N_{\Gamma}=N$ for $\Gamma=\Gamma(N)$ or $\Gamma_{1}'(N)$,
while $N_{\Gamma}=1$ for $\Gamma=\Gamma_{1}(N)$ (or $\Gamma_{1}(N)^{+N}$,
though we don't treat this yet).%
} $q_{0}:=q^{\frac{1}{N_{\Gamma}}}=e^{\frac{2\pi i\tau}{N_{\Gamma}}}$
in a neighborhood of $[i\infty]$, and let $H\in\check{M}_{0}(\Gamma)$
be the (unique) Hauptmodul with Fourier expansion $H(q_{0})=\text{const.}\cdot q_{0}+\text{h.o.t.}$.%
\footnote{We will assume $H$ is normalized so that this constant is a root
of unity.%
} Given an {}``Eisenstein symbol'' $\sZ\in CH^{\ell+1}(\E_{\Gamma}^{[\ell]},\ell+1)$
(with $(\mathcal{P}_{\nicefrac{\Gamma(N)}{\Gamma}}^{[\ell]})^{*}\sZ\equiv\sZ_{\textbf{f}}\in CH^{\ell+1}(\E^{[\ell]}(N),\ell+1)$),
writing the data \{$\Omega_{\sZ_{\textbf{f}}},\Psi_{\textbf{f},0}^{[\ell]},V_{\textbf{f}}^{[\ell]},$
PF-equations, etc.\} in terms of $t:=H(\tau)$ yields expressions
resembling those of $\S\S1-2$ arising from the {}``toric symbols''.

While there are intersections between the two constructions (systematically
developed in $\S\S8.3$-$6$), neither one includes the other. Let
$\omega_{\nicefrac{\E}{Y}}^{\Gamma}:=K_{\overline{\E}_{\Gamma}^{[\ell]}}\otimes\overline{\pi}^{-1}(\theta_{\overline{Y}_{\Gamma}}^{1})$
denote the relative dualizing sheaf; if $\deg(\overline{\pi}_{\Gamma}{}_{_{*}}\omega_{\nicefrac{\E}{Y}}^{\Gamma})$
(always $\geq1$) is $>1$, then $\overline{\E}_{\Gamma}$ cannot
be birational to a Fano $n\,(=\ell+1)$-fold $\PP_{\Delta}$. Conversely,
the construction of Theorem 1.7 need not yield a modular family ---
e.g. the $E_{7}$ and $E_{8}$ families of elliptic curves (cf. $\S4.3$)
have marked $non$torsion points (which are used in the construction
of the toric symbol); other examples will be given in $\S\S8.4$-$6$.

To begin {}``uniformizing'' the data, let $\{\sigma_{j}\}\subset\kappa_{\Gamma}$
be the cusps $other$ than $[i\infty]$ where $\sZ$ has nonvanishing
residue, and differentiate the $AJ$ class over $\PP^{1}$ to get\[
\omega_{\textbf{f}}:=\nabla_{\delta_{t}}\tilde{\R}_{\textbf{f}}^{[\ell]}\in\Gamma\left(\overline{Y}_{\Gamma},\,\omega_{\nicefrac{\E}{Y}}^{\Gamma}\otimes\mathcal{O}_{\overline{Y}_{\Gamma}}(\sum\sigma_{j})\right).\]
Pulling this back to $(\E^{[\ell]}\to)\,\uhp$ yields\[
(-2\pi i)^{\ell}A_{\textbf{f}}(\tau)\eta_{\ell}^{[\ell]}\,,\,\,\,\,\,\, A_{\textbf{f}}(\tau)\in\check{M}_{\ell}(\Gamma);\]
here $A_{\textbf{f}}$ may have {}``poles'' (as an automorphic form)
at elliptic points, non-unipotent cusps, and the $\{\sigma_{j}\}$.
Similarly, writing $H':=\frac{dH}{dq_{0}}$, $\frac{dt}{t}$ pulls
back to $2\pi iB_{\textbf{f}}(\tau)d\tau$, where\[
B_{\textbf{f}}(\tau):=\frac{\dlog t}{\dlog q}=\frac{q_{0}}{N_{\Gamma}}\cdot\frac{H'}{H}\in M_{2}^{\QQ}(\Gamma).\]
Pulling back the cycle-class $\Omega_{\sZ_{\textbf{f}}}=(-1)^{\ell}\nabla_{\delta_{t}}\tilde{\R}_{\textbf{f}}^{[\ell]}\wedge\frac{dt}{t}$,
we see that\[
F_{\textbf{f}}(\tau)=A_{\textbf{f}}(\tau)\cdot B_{\textbf{f}}(\tau)\,\,(\in M_{\ell+2}^{\QQ}(\Gamma)).\]

Now we can write down a power-series expansion for the period of $\omega_{\textbf{f}}$
over the (locally defined) family of topological cycles $\alpha^{\times\ell}\in H_{\ell}(E_{\Gamma,t}^{[\ell]},\ZZ)$
vanishing at $t=0$. Using Prop. 6.12 and inverting the Fourier expansion
of $H$, one has\[
\int_{\alpha^{\times\ell}}\omega_{\textbf{f}}(t)\,=\,(-2\pi i)^{\ell}\left(\frac{F_{\textbf{f}}}{B_{\textbf{f}}}\circ H^{-1}\right)(t)\,=\,(-2\pi i)^{\ell}N_{\Gamma}\frac{t(H^{-1})'(t)}{H^{-1}(t)}\cdot F_{\textbf{f}}(H^{-1}(t))\]
\[
=:\,(2\pi i)^{\ell}\sum_{m\geq0}a_{m}t^{m},\]
 where $(H^{-1})'=\frac{dq_{0}}{dt}$. Moreover $a_{0}=(-1)^{\ell}N_{\Gamma}\cdot\mathsf{H}_{[i\infty]}^{[\ell]}(\varphi_{\textbf{f}})$,
and\[
\Psi_{\textbf{f}}^{\Gamma}(t):=\int_{\alpha^{\times\ell}}R_{\left(\sZ|_{E_{\Gamma,t}^{[\ell]}}\right)}\emql\Psi_{\textbf{f},0}^{[\ell]}(H^{-1}(t))=(2\pi i)^{\ell}\left\{ a_{0}\log t+\sum_{m\geq1}\frac{a_{m}}{m}t^{m}\right\} \]
(compare Theorem 2.2).

A key observation is that $A_{\textbf{f}}(\tau)\eta_{\ell}^{[\ell]}$
descends to $\E_{\Gamma}$, whereas the relative differentials ($\eta_{\ell}^{[\ell]}$
or $F_{\textbf{f}}(\tau)\eta_{\ell}^{[\ell]}$) used in previous sections
did not. This leads to a higher normal function and PF equations which
make sense over $Y_{\Gamma}$. Recalling $\nabla_{\text{PF}}^{\textbf{f}}=\nabla_{\d_{\tau}}^{\ell+1}+\text{l.o.t.}$
from $\S7.1$,\[
\nabla_{\text{PF}}^{\omega}:=\frac{1}{(2\pi iB_{\textbf{f}}(\tau))^{\ell+2}}\circ\nabla_{\text{PF}}^{\textbf{f}}\circ(2\pi iB_{\textbf{f}}(\tau))\,=\,\nabla_{\delta_{t}}^{\ell+1}+\text{l.o.t.}\]
descends to $\PP^{1}$, yielding the homogeneous equation\[
(D_{\text{PF}}^{\omega}\circ\delta_{t})\Psi_{\textbf{f}}^{\Gamma}=0.\]
Writing \[
\nu_{\textbf{f}}(\tau):=\left\langle \tilde{\R}_{\textbf{f}}^{[\ell]},\omega_{\textbf{f}}\right\rangle =(-2\pi i)^{\ell}V_{\textbf{f}}^{[\ell]}(\tau)\cdot A_{\textbf{f}}(\tau),\]
 we have the inhomogeneous equation\[
D_{\text{PF}}^{\omega}\nu_{\textbf{f}}=\left\langle \nabla_{\delta_{t}}\tilde{\R}_{\textbf{f}}^{[\ell]},\nabla_{\delta_{t}}^{\ell}\omega_{\textbf{f}}\right\rangle =\left\langle \omega_{\textbf{f}},\nabla_{\delta_{t}}^{\ell}\omega_{\textbf{f}}\right\rangle =:\Y_{\textbf{f}}^{[\ell]}(t),\]
where the Yukawa coupling\[
\Y_{\textbf{f}}^{[\ell]}(H(\tau))=(-2\pi i)^{2\ell}A_{\textbf{f}}^{2}(\tau)\left\langle \eta_{\ell}^{[\ell]},\frac{1}{(2\pi iB_{\textbf{f}})^{\ell}}\nabla_{\d_{\tau}}^{\ell}\eta_{\ell}^{[\ell]}\right\rangle =(2\pi i)^{\ell}\frac{A_{\textbf{f}}^{2}}{B_{\textbf{f}}^{\ell}}Y_{\tau^{\ell}}(\tau)\]
\[
=(-1)^{\binom{\ell}{2}}\ell!\left(\frac{2\pi i}{B_{\textbf{f}}(\tau)}\right)^{\ell}(A_{\textbf{f}}(\tau))^{2}.\]
Obviously the weights cancel so that $\Y_{\textbf{f}}^{[\ell]}\circ H\in\check{M}_{0}(\Gamma)$,
i.e. $\Y_{\textbf{f}}^{[\ell]}$ yields a rational function on $\PP^{1}$.

Suppose $\mathsf{H}_{[i\infty]}^{[\ell]}(\varphi_{\textbf{f}})\neq0$
and $|\kappa_{\Gamma}^{[\ell]}|>1$, so that one can choose $\textbf{g}\in\Phi_{2}^{\QQ}(N)^{\circ}$
(such that $\sZ_{\textbf{g}}$ also descends to $\E_{\Gamma}^{[\ell]}$)
with $\mathsf{H}_{[i\infty]}^{[\ell]}(\varphi_{\textbf{g}})=0$ but
$\mathsf{H}_{\sigma}^{[\ell]}(\varphi_{\textbf{g}})\neq0$ (for some
$\sigma\neq[i\infty]$). Then one can consider $A_{\textbf{f}}\cdot V_{\textbf{g}}^{[\ell]}=\frac{1}{(-2\pi i)^{\ell}}\left\langle \tilde{\R}_{\textbf{g}}^{[\ell]},\omega_{\textbf{f}}\right\rangle $,
where $\tilde{\R}_{\textbf{g}}^{[\ell]}$ is a lift with all $\mathfrak{K}_{\textbf{g},i}=0$
($0\leq i<\ell).$%
\footnote{cf. Prop. 7.4: in this case $\mathfrak{K}_{\textbf{g}}:=\lim_{\tau\to i\infty}V_{\textbf{g}}^{[\ell]}(\tau)=(-1)^{\ell}\lim_{\tau\to i\infty}V_{\textbf{g}}^{[\ell]}(\tau).$%
} This is the more general type of higher normal function implicit
in the Ap\'ery-Beukers irrationality proofs (cf. Introduction). (The
general idea is this: one must show the radius of convergence of its
$t$-series expansion to be {}``much larger'' than that for either
$A_{\textbf{f}}$ or $A_{\textbf{f}}\cdot(V_{\textbf{g}}^{[\ell]}-\mathfrak{K}_{\textbf{g}})$,
while the latter expansions must satisfy certain integrality properties.)
The story will be related from a less {}``modular'' perspective
in \cite{Ke2}.

\subsection{Identifying pullbacks of toric symbols}

If (in oversimplified terms) the idea of $\S8.2$ was to pull back
the Eisenstein construction along $H^{-1}$ (when it exists), here
we pull back a given toric symbol (if possible) along some $H$, and
try to recognize the result as an Eisenstein symbol. This leads to
motivic proofs of several of the Mahler measure computations in \cite{S,Be1,Be2}.

We begin with an {}``anticanonical pencil'' $\tilde{\X}=\overline{\{1-t\phi(\underline{x})=0\}}\subset\PP^{1}\times\PP_{\tilde{\Delta}}$
satisfying the assumptions of Theorem 1.7, with its attendant cycle
$\tilde{\Xi}\in H_{\M}^{n}(\TXM,\QQ(n))$ for $n=2,3,4$. We also
require $\phi$ to have root-of-1 vertex coefficients so that theorem
$2.2$ holds. Set $\ell:=n-1$, and restrict/refine this family in
several steps:

\begin{itemize}
\item \textbf{(1)} $\ell=3$: assume that $\PP_{\tilde{\Delta}}$ is smooth
(so that $t=0$ is a point of maximal unipotent monodromy).
\item \textbf{(2)} If $\phi$ is regular, define%
\footnote{preferring inconsistent notation to writing everywhere $\widetilde{\tilde{\X}}$.
We retain this convention for the rest of the paper.%
} $\X\,(\mapg\PP^{1})$ to be the (smooth) proper transform of $\tilde{\X}$
under successive blow-up of the components of the base locus%
\footnote{$X_{\eta}$ denotes a very general fiber.%
} $\PP^{1}\times(\tilde{X}_{\eta}\cap\tilde{\DD})\subset\PP^{1}\times\PP_{\tilde{\Delta}}$;
this accomplishes semistable reduction at $t=0$. When $\phi$ is
$not$ regular this must be combined with the desingularization of
$\TXM$ from the proof of Theorem 1.7 (to produce $\X$). Denote that
pulled-back cycle by $\Xi\in CH^{\ell+1}(\X\m X_{0},\ell+1)$.
\end{itemize}
(In what follows, one could also replace $\X$ by a {[}desingularized]
quotient --- if one exists --- over a $t\mapsto t^{\kappa}$ quotient
of the base preserving unipotency at $t=0$, and $\Xi$ by the push-forward
cycle.)

\begin{itemize}
\item \textbf{(3)} $\begin{array}[t]{cc}
\ell=2: & \text{assume rk}(Pic(X_{\eta}))=19\\
\ell=3: & \text{assume }h^{2,1}(X_{\eta})=1\text{, and that}\\
 & \text{the VHS has no "instanton}\\
 & \text{corrections" (cf. [Do1])}\end{array}$ \\
Then $H^{\ell}(X_{t})$ (or $H_{tr}^{2}(X_{t})$ for $\ell=2$) is
the symmetric $\ell^{th}$ power of a weight $1$ (rank $2$) VHS;
likewise for the PF equation of the section of $\omega_{\nicefrac{\X}{\PP^{1}}}:=K_{\X}\otimes\pi^{-1}\theta_{\PP^{1}}^{1}$
given by $\omega:=\nabla_{\delta_{t}}\R_{t}$ (cf. $\S\S2.2$-$3$).
\end{itemize}
In fact, $\omega$ is (up to scaling) the unique section of $\omega_{\nicefrac{\X}{\PP^{1}}}\otimes\mathcal{O}_{\PP^{1}}(-[\infty])\cong\mathcal{O}_{\PP^{1}}$.

Now let $U\subset\PP^{1}$ be a small neighborhood of $t=0$. Working
over $U^{*}$, denote by $W_{\bullet}$ the weight monodromy filtration
on $H^{\ell}(X_{t},\QQ)$ ($H_{tr}^{2}$ if $\ell=2$) and set $W_{\bullet}^{\ZZ}:=W_{\bullet}\cap H_{(tr)}^{\ell}(X_{t},\ZZ)$.
There are unique generating sections $\varphi_{0}\in\Gamma(U,W_{0}^{\ZZ})$,
$-\overline{\varphi_{1}}\in\Gamma(U^{*},W_{2}^{\ZZ}/W_{0}^{\ZZ})$
positively oriented as topological cycles; the latter lifts to a multivalued
section of $W_{2}^{\ZZ}$ with monodromy $\varphi_{1}\mapsto\varphi_{1}+N_{\X}\varphi_{0}$.
The mirror map \begin{equation}
(q=)\, \M(t)=\exp \left\{ 2\pi i \frac{\int_{\varphi_1(t)}\omega_t}{\int_{\varphi_0(t)}\omega_t} \right\} \\
\end{equation} is well-defined on $U^{*}$; its logarithm $\mu=\frac{\log\M}{2\pi i}$
extends to a multivalued map $\PP^{1}\rightsquigarrow\uhp^{*}$. Recall
$A(t):=\int_{\varphi_{0}(t)}\omega_{t}$, $\Psi(t):=\int_{\varphi_{0}(t)}\R_{t}$
(with $\d_{t}\Psi=A$).

\begin{itemize}
\item \textbf{(4)} Assume the mirror map is {}``modular'': that is, $\exists\,\tilde{N}\geq3$
such that $\mu^{-1}=:\tilde{H}(\tau)$ is a well-defined automorphic
function for $\Gamma(\tilde{N})$ ($H\in\check{M}_{0}(\Gamma(\tilde{N}))$);
for odd $\ell$, we also demand that $\{-\text{id}\}\notin$monodromy
group of $R^{\ell}\pi_{*}\ZZ$. (Obviously this implies $N_{\X}|\tilde{N}$
and $\tilde{H}(\tau)=C\cdot\tilde{q}_{0}+\text{h.o.t.}$ where $\tilde{q}_{0}=q^{\frac{1}{N_{\X}}}$.)
Then \[
A(\tilde{H}(\tau))\in\check{M}_{\ell}(\Gamma(\tilde{N})),\]
where the {}``poles'' come from non-unipotent singular fibers and
are cancelled by $\tilde{H}^{*}\frac{dt}{t}$ to yield\[
\mathsf{F}(t):=\frac{(-1)^{\ell}}{(2\pi i)^{\ell+1}}\d_{\tau}\Psi(\tilde{H}(\tau))=(-1)^{\ell}\frac{d\log\tilde{H}}{d\tau}\cdot\frac{A(\tilde{H}(\tau))}{(2\pi i)^{\ell+1}}\in M_{\ell+2}(\Gamma(\tilde{N})).\]

\end{itemize}
Now we want to force $\mathsf{F}$ to be an Eisenstein series; the
following stronger assumption (which for $\ell=1$ follows from the
previous) does the job after a slight adjustment to $\tilde{H}$ (and
$\tilde{N}$).

\begin{itemize}
\item \textbf{(5)} Assume $\X$ is {}``modular'': i.e. in addition to
assumptions (1)-(3), $\exists\, N\geq3$, $H\in\check{M}_{0}(\Gamma(N))$,
and a (surjective) rational map $\theta:\,\overline{\E}^{[\ell]}(N)\dashrightarrow\X$
over $H:\,\overline{Y}(N)\twoheadrightarrow\PP_{t}^{1}$ (which can
include e.g. a fiberwise Kummer- or Borcea-Voisin- type construction).%
\footnote{While there are plenty of examples for $\ell=1,2$, we will see that
for $\ell=3$ there are $no$ modular anticanonical families of this
form; the problem already arises in hypothesis (3). However, there
are relaxations of the hypothses that $are$ likely to produce examples.
See $\S8.6$. %
} Define $\theta^{*}\Xi\in CH^{\ell+1}(\E^{[\ell]}(N),\ell+1)$ by
pulling back (to an appropriate blow-up of $\E^{[\ell]}(N)$) and
pushing forward. Then \begin{equation}
\Omega_{\theta^*\Xi}=(2\pi i)^{\ell+1}F_{\theta^*\Xi}(\tau)\eta_{\ell}^{[\ell]}\wedge d\tau \in F^{\ell+1}\cap H^{\ell+1}(\E ^{[\ell]}(N),\QQ(\ell+1)),\\
\end{equation} where $F_{\theta^{*}\Xi}\in M_{\ell+2}^{\QQ}(\Gamma(N)).$ If we
know the divisor \begin{equation}
\theta^*(X_0)=:(-1)^{\ell}\sum_{\sigma \in \kappa(N)} r_{\sigma}(\Xi)\cdot \overline{\pi}^{-1}_{\Gamma(N)}(\sigma),\\
\end{equation} then taking $\textbf{f}\in\mathcal{O}^{*}(U(N))^{\otimes(\ell+1)}$
with $\mathsf{H}_{\sigma}^{[\ell]}(\varphi_{\textbf{f}})=r_{\sigma}(\Xi)$
($\forall\sigma\in\kappa(N)$), $\Omega_{\sZ_{\textbf{f}}}$ and $\Omega_{\theta^{*}\Xi}$
have the same residues. By $\S5.1.5$ they are equal (i.e. $F_{\theta^{*}\Xi}=F_{\textbf{f}}$)
hence (by Lemma 7.1(ii)) so are the fiberwise $AJ$ classes.
\end{itemize}
To compute further we need precise information about $\theta$: consider
the positive integers $M_{\theta}:=\deg(\theta)$, $m_{0}:=\frac{\theta_{*}(\alpha^{\ell})}{\varphi_{0}}$,
$m_{1}:=\frac{\theta_{*}(\G^{*}(\alpha^{\ell-1}\beta))}{\varphi_{1}}$
(see $\S7.1$), $m_{\theta}:=\frac{m_{0}}{m_{1}}$, and (in suggestive
notation) $N_{\Gamma}:=\frac{N_{\X}}{m_{\theta}}$.%
\footnote{For $\ell=1$ we just have $m_{0}=m_{1}=m_{\theta}=1$ ($\implies\, N_{\Gamma}=N_{\X}$),
$M_{\theta}=\kappa$.%
} One easily checks that $H(\tau)=\tilde{H}(m_{\theta}\tau)=C_{0}\cdot q_{0}+\text{h.o.t.}$,
when $q_{0}:=q^{\frac{1}{N_{\Gamma}}}$ (by abuse of notation we will
write this $H(q_{0})$, and $H'(q_{0}):=\frac{dH}{dq_{0}}$). We then
have\[
\theta^{*}\omega=m_{0}A(H(q_{0}))\eta_{\ell}^{[\ell]}\in\Gamma(\overline{Y}(N),\omega_{\nicefrac{\E}{Y}}^{\Gamma(N)}),\]
\[
H^{*}\frac{dt}{t}=\frac{2\pi i}{N_{\Gamma}}\frac{q_{0}}{H(q_{0})}H'(q_{0})d\tau\in\Omega^{1}(\overline{Y}(N))\left\langle \log(H^{-1}(0)\cup H^{-1}(\infty))\right\rangle ,\]
\[
\theta^{*}\Omega_{\Xi}=\theta^{*}\left(\frac{dt}{t}\wedge\nabla_{\delta_{t}}\R_{t}\right)=(-1)^{\ell}\theta^{*}\omega\wedge H^{*}\frac{dt}{t}\]
\[
=(-1)^{\ell}\frac{2\pi im_{0}}{N_{\Gamma}}\frac{q_{0}}{H(q_{0})}H'(q_{0})A(H(q_{0}))\eta_{\ell}^{[\ell]}\wedge d\tau\in\Omega^{\ell+1}(\overline{\E}^{[\ell]}(N))\left\langle \log\theta^{*}(X_{0})\right\rangle .\]
Under pullback the regulator period becomes (for $f$ as above) \begin{equation}
\Psi(H(\tau ))=\int_{\varphi_0(H(\tau))}\tilde{\R}_{H(\tau)} = \frac{1}{m_0}\int_{\alpha^{\ell}(\tau )}\tilde{\R}_{\theta^*\Xi}(\tau)=\frac{1}{m_0} \int_{\alpha^{\ell}}\tilde{\R}^{[\ell]}_{\textbf{f}}(\tau) = \frac{1}{m_0}\Psi^{[\ell]}_{\textbf{f},0}(\tau), \\
\end{equation} so that (by Prop. 7.3(i)) \[
\d_{\tau}\Psi(H(\tau))=(-1)^{\ell}(2\pi i)^{\ell+1}m_{0}^{-1}F_{\theta^{*}\Xi}(\tau).\]
That \[
\Psi(H(\tau))\text{ is of the form }(7.17)\]
is of fundamental importance; if one divides by $(2\pi i)^{\ell}$
and takes the real parts it essentially says the real regulator period
(or Mahler measure, in the region described in Cor. 2.7) pulls back
to an Eisenstein-Kronecker-Lerch series (noticed in examples by \cite{Be1,Be2,RV}).
Furthermore, this allows us to use Prop. 7.9 to compute its special
values at $H\{\text{unipotent cusps}\}$, which therefore must be
a sum (with coefficients $\in\QQ(e^{\frac{2\pi i}{N}})$) of $(\ell+1)^{\underline{\text{st}}}$
special values of Dirichlet $L$-functions.%
\footnote{This is similar to the case in $\S4$ of $L/\QQ$ abelian (which however
does not imply modularity).%
}

Our last object of interest is the Yukawa coupling $Y(t)=\left\langle \omega_{t},\nabla_{\delta_{t}}^{\ell}\omega_{t}\right\rangle ,$
which becomes \[
Y(H(q_{0}))\,=\, M_{\theta}^{-1}\left\langle \theta^{*}\omega,\theta^{*}\nabla_{\delta_{t}}^{\ell}\omega\right\rangle \]
\[
=\,\frac{N_{\Gamma}^{\ell}}{(2\pi i)^{\ell}M_{\theta}}\cdot\frac{1}{\{H'(q_{0})\}^{\ell}}\left\langle \theta^{*}\omega,\nabla_{\d_{\tau}}^{\ell}\theta^{*}\omega\right\rangle \]
\[
=\,\frac{N_{\Gamma}^{\ell}m_{0}^{2}}{(2\pi i)^{\ell}M_{\theta}}\cdot\frac{\{A(H(q_{0}))\}^{2}}{\{H'(q_{0})\}^{\ell}}\left\langle \eta_{\ell}^{[\ell]},\nabla_{\d_{\tau}}^{\ell}\eta_{\ell}^{[\ell]}\right\rangle \]
\begin{equation}
= \frac{(-1)^{\binom{\ell}{2}}\ell ! N_{\Gamma}^{\ell}m_0^2}{(2\pi i)^{\ell}M_{\theta}}\cdot \frac{\left\{ A(H(q_0))\right\}^2 }{\left\{ H'(q_0) \right\}^{\ell} }, \\
\end{equation} a rational function on $\overline{Y}(N)$. Noting $A(0)=(2\pi i)^{\ell}$
and using $(8.7)$ and Prop. 7.9 gives

\begin{thm}
Assuming modularity of a family of $CY$ $\ell$-folds $\X$ arising
(as described) from the toric construction, we have \begin{equation} 
\begin{matrix}
\frac{(-1)^{\ell}m_0}{(2\pi i)^{\ell}N_{\Gamma}}\delta_{q_0}\Psi(H(q_0)) \, = \, \frac{(-1)^{\ell}m_0}{(2\pi i)^{\ell}N_{\Gamma}}\frac{q_0}{H(q_0)}H'(q_0)A(H(q_0))
\\
= \, F_{\theta^*\Xi}(q_0) \, = \, \sum_{\sigma \in \kappa(N)}r_{\sigma}(\Xi)\tilde{E}_{\sigma}^{[\ell]}(q_0)
\end{matrix} \\
\end{equation} for the pulled-back cycle-class of the toric symbol, and also \begin{equation}
\frac{Y(0)}{(2\pi i)^{\ell}} \, = \, \frac{(-1)^{\binom{\ell}{2}}\ell ! N_{\Gamma}^{\ell}m_0^2}{M_{\theta}C_0^{\ell}}\in \QQ(C_0). \\
\end{equation} Finally, if $X_{t_{0}\neq0}$ is a maximally unipotent singular fiber,
then%
\footnote{The specific choice of representative $\frac{r_{0}}{s_{0}}$ of the
cusp $\mu(t_{0})$ depends on the path along which $\Psi(t)$ has
been continued prior to taking $\lim_{t\to t_{0}}$.%
} $\mu(t_{0})\equiv[\frac{r_{0}}{s_{0}}]\in\kappa(N)$ and \begin{equation}
\lim_{t\to t_0}\Psi(t) \emql \frac{(-1)^{\ell+1}}{2N} \sum_{\tiny \begin{matrix} [\frac{r}{s}]\in \kappa(N) \\ [\frac{r}{s}] \nequiv [\frac{r_0}{s_0}] \end{matrix}} s^{\ell}r_{[\frac{r}{s}]}(\Xi) \tilde{L}_{-}\left( \pi_{[\frac{r_0}{s_0}]_*}\iota_{[\frac{r}{s}]_*} \widehat{\varphi_N^{[\ell]}}, \ell+1 \right) .\\
\end{equation}
\end{thm}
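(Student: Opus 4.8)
The three assertions $(8.11)$, $(8.12)$, $(8.13)$ are really consequences of a single identification: under modularity hypothesis (5), the pulled-back cycle $\theta^*\Xi\in CH^{\ell+1}(\E^{[\ell]}(N),\ell+1)$ \emph{is} (up to rational-equivalence and the usual $\tilde{\G}^*$-symmetrization) one of the Eisenstein symbols $\sZ_{\textbf{f}}$ constructed in $\S5.3.4$, and once that is known the formulas from $\S7$ apply verbatim. So the plan is: (i) establish $F_{\theta^*\Xi}=F_{\textbf{f}}$ for the $\textbf{f}$ determined by the residue data; (ii) deduce the cycle-class formula $(8.11)$; (iii) extract the Yukawa normalization $(8.12)$; and (iv) apply Proposition 7.9 to obtain the limiting value $(8.13)$.

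\emph{Step (i): matching the modular forms.} By hypothesis (5), $\Omega_{\theta^*\Xi}=(2\pi i)^{\ell+1}F_{\theta^*\Xi}(\tau)\,\eta_\ell^{[\ell]}\wedge d\tau$ with $F_{\theta^*\Xi}\in M_{\ell+2}^{\QQ}(\Gamma(N))$, by Shokurov's Prop.\ 5.6(ii) (since $\theta^*\Xi$ has log poles only along $\theta^*(X_0)$, a union of cuspidal fibers). The divisor $\theta^*(X_0)$ is recorded by the residues $r_\sigma(\Xi)$ in $(8.9)$; choosing $\textbf{f}\in\mathcal{O}^*(U(N))^{\otimes(\ell+1)}$ with $\mathsf{H}^{[\ell]}_\sigma(\varphi_{\textbf{f}})=r_\sigma(\Xi)$ for all $\sigma$ (possible by Lemma 5.9 / Prop.\ 5.10, i.e.\ surjectivity of the horospherical map), Corollary 6.8 gives $\frac{1}{(2\pi i)^\ell}Res_\sigma(\Omega_{\sZ_{\textbf{f}}})=\mathsf{H}^{[\ell]}_\sigma(\varphi_{\textbf{f}})=r_\sigma(\Xi)$. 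Hence $\Omega_{\sZ_{\textbf{f}}}$ and $\Omega_{\theta^*\Xi}$ are holomorphic $(\ell+1)$-forms with log poles along the same fibers and identical residues, so their difference extends holomorphically across $\pi^{-1}(\kappa(N))$; by Prop.\ 5.6(iii) the difference lies in $S_{\ell+2}(\Gamma(N))$. But it also has $\QQ(\ell+1)$ periods and its residues vanish, so by the injectivity statement Prop.\ 5.8(ii) (equivalently $M^{\QQ}_{\ell+2}\cap S_{\ell+2}=\{0\}$, which is part of $(5.3)$) it is zero: $F_{\theta^*\Xi}=F_{\textbf{f}}$. Then Lemma 7.1(ii) (the monodromy/irreducibility argument) forces the fiberwise $AJ$-classes $\tilde{\R}^{[\ell]}_{\theta^*\Xi}$ and $\tilde{\R}^{[\ell]}_{\textbf{f}}$ to agree modulo $\QQ(\ell+1)$-periods.

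\emph{Steps (ii)-(iv).} For $(8.11)$: combine $\d_\tau\Psi(H(\tau))=(-1)^\ell(2\pi i)^{\ell+1}m_0^{-1}F_{\theta^*\Xi}(\tau)$ (which follows from $(8.10)$, itself just functoriality of KLM currents under $\theta$ together with $\theta_*(\alpha^\ell)=m_0\varphi_0$) with $F_{\theta^*\Xi}=F_{\textbf{f}}=E^{[\ell]}_{\varphi_{\textbf{f}}}$ (Theorem 6.6) and Prop.\ 6.10(iii), which expands $E^{[\ell]}_{\varphi_{\textbf{f}}}=\sum_\sigma \mathsf{H}^{[\ell]}_\sigma(\varphi_{\textbf{f}})\tilde{E}^{[\ell]}_\sigma=\sum_\sigma r_\sigma(\Xi)\tilde{E}^{[\ell]}_\sigma$; rewriting $\d_\tau$ as $\tfrac{N_\Gamma}{2\pi i}\tfrac{H(q_0)}{q_0 H'(q_0)}\delta_{q_0}$ and using $\tfrac{q_0}{H}H'A=\frac{2\pi i N_\Gamma}{(-1)^\ell(2\pi i)^{\ell+1}m_0}\cdot(\delta_\tau\Psi)\cdot(\text{chain rule factor})$ gives the displayed identity. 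For $(8.12)$: plug $A(0)=(2\pi i)^\ell$ and $H(q_0)=C_0 q_0+\cdots$ (so $H'(0)=C_0$) into $(8.10')$—the Yukawa formula $Y(H(q_0))=\frac{(-1)^{\binom{\ell}{2}}\ell! N_\Gamma^\ell m_0^2}{(2\pi i)^\ell M_\theta}\cdot\frac{A(H(q_0))^2}{H'(q_0)^\ell}$ derived in $\S8.2$—and evaluate at $q_0=0$; the value is manifestly in $\QQ(C_0)$. For $(8.13)$: since $X_{t_0}$ is maximally unipotent, $\mu(t_0)\equiv[\tfrac{r_0}{s_0}]\in\kappa(N)$ is a unipotent cusp and, because $t_0\ne 0$ means $\sZ$ has no residue there along the chosen continuation, $\mathsf{H}^{[\ell]}_{[r_0/s_0]}(\varphi_{\textbf{f}})=r_{[r_0/s_0]}(\Xi)=0$. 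Using $(8.10)$ to convert $\lim_{t\to t_0}\Psi(t)=m_0^{-1}\lim_{\tau\to r_0/s_0}\Psi^{[\ell]}_{\textbf{f},0}(\tau)$ and then applying Prop.\ 7.9 (with $\varphi_{\textbf{f}}=\sum_\sigma r_\sigma(\Xi)\cdot\tfrac1N\pi_\sigma^*\varphi_N^{[\ell]}$, so $\pi_{[r_0/s_0]_*}\widehat{\varphi_{\textbf{f}}}=\sum_{[r/s]}r_{[r/s]}(\Xi)\,\pi_{[r_0/s_0]_*}\iota_{[r/s]_*}\widehat{\varphi_N^{[\ell]}}$ with the $[r/s]\equiv[r_0/s_0]$ term killed) yields $(8.13)$ after tracking the constant $\langle[\alpha^{\times\ell}],[(r\alpha-s\beta)^{\times\ell}]\rangle=s^\ell$ and the factor $-\tfrac{(-1)^{\binom{\ell+1}{2}}}{2N}$.

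\emph{Main obstacle.} The delicate point is Step (i): one must be sure that the pushforward-pullback construction of $\theta^*\Xi$ (on a blow-up of $\E^{[\ell]}(N)$) really lands in the span of the $\{\sZ_{\textbf{f}}\}$, i.e.\ that no ``extra'' cusp forms or exotic $\tilde{\G}$-invariant classes are introduced, and that the residue computation $(8.9)$ correctly identifies $\theta^*(X_0)$. This rests on (a) $\theta$ being defined over $\bar\QQ$ compatibly with $H$, so that $cl(\theta^*\Xi)$ has $\QQ(\ell+1)$-structure and the image lies in $M^{\QQ}_{\ell+2}$; (b) the decomposition $(5.3)$ together with Prop.\ 5.8(ii), which is exactly what rules out a cusp-form discrepancy; and (c) functoriality of the KLM currents under the (possibly singular, resolved) map $\theta$, which is where the hypotheses ``$\PP_{\tilde\Delta}$ smooth'' ($\ell=3$), ``$\mathrm{rk}\,Pic=19$'' or ``$h^{2,1}=1$ with no instanton corrections'' enter to guarantee $H^\ell(X_t)$ is $\mathrm{Sym}^\ell$ of a rank-2 VHS so that everything matches the Kuga picture. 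I expect verifying (c)—the precise behaviour of $R_\Xi$ under blow-up and quotient in the Kummer/Borcea--Voisin construction—to be the technically heaviest part, though conceptually it is routine given $\S6$ of \cite{KLM} and the monodromy rigidity of Lemma 7.1(ii).
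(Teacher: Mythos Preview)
Your proposal is correct and follows essentially the same route as the paper: the theorem is presented there as the culmination of the running development in \S8.3, with the one-line justification ``Noting $A(0)=(2\pi i)^{\ell}$ and using $(8.7)$ and Prop.\ 7.9 gives\ldots''; your steps (i)--(iv) simply make that explicit, invoking precisely the same ingredients (the residue-matching under hypothesis (5) via \S5.1.5, the identity $(8.9)$, Prop.\ 6.11(iii), the Yukawa formula $(8.10)$, and Prop.\ 7.9). One small slip: in Step (iv) the factor $s^{\ell}$ coming out of Prop.\ 7.9 is the $s_0$ of the \emph{target} cusp $[\tfrac{r_0}{s_0}]$, not the varying $s$ of the cusps in the sum---the decomposition of $\widehat{\varphi_{\textbf{f}}}$ contributes the $\tilde{L}_-$-terms but not extra $s^{\ell}$ weights, so you should revisit that bookkeeping (and absorb the $m_0^{-1}$ from $(8.9)$).
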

By comparing values at $[i\infty]$ (i.e. $q_{0}=0$) in $(8.11)$,
we have the interesting

\begin{cor}
$r_{[i\infty]}(\Xi)=(-1)^{\ell}\frac{m_{0}}{N_{\Gamma}}.$
\end{cor}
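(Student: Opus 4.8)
The plan is to read off constant terms on both sides of the first identity $(8.11)$ of Theorem~8.2 in the limit $q_{0}\to 0$; once Theorem~8.2 is granted, the corollary is a one-line limit computation, so the proof is genuinely a corollary in the strict sense.

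First I would treat the right-hand side of $(8.11)$. By the normalization of the basis $\{\tilde{E}_{\sigma}^{[\ell]}\}_{\sigma\in\kappa(N)}$ of $\EE_{\ell+2}^{\QQ}(\Gamma(N))$ (Prop.~6.11(ii)), one has $\mathfrak{R}_{\sigma'}(\tilde{E}_{\sigma}^{[\ell]})=\delta_{\sigma\sigma'}$. Since $\mathfrak{R}_{[i\infty]}$ is by definition $\lim_{\tau\to i\infty}$, equivalently $\lim_{q_{0}\to 0}$, this gives
\[
\lim_{q_{0}\to 0}\;\sum_{\sigma\in\kappa(N)} r_{\sigma}(\Xi)\,\tilde{E}_{\sigma}^{[\ell]}(q_{0})\;=\;r_{[i\infty]}(\Xi).
\]

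Next I would evaluate the middle expression $\dfrac{(-1)^{\ell}m_{0}}{(2\pi i)^{\ell}N_{\Gamma}}\,\dfrac{q_{0}}{H(q_{0})}\,H'(q_{0})\,A(H(q_{0}))$ at $q_{0}=0$. Recall from the discussion preceding Theorem~8.2 that $A(0)=(2\pi i)^{\ell}$ and that $A$ is holomorphic at $t=0$; since $H(0)=0$ this forces $A(H(q_{0}))\to(2\pi i)^{\ell}$. Moreover, by the modularity hypotheses~(4)--(5) the Hauptmodul-type coordinate satisfies $H(q_{0})=C_{0}q_{0}+\text{h.o.t.}$ with $C_{0}\neq 0$ (a root of unity, by the chosen normalization of $H$), hence $H'(q_{0})=C_{0}+\text{h.o.t.}$ and
\[
\frac{q_{0}H'(q_{0})}{H(q_{0})}\;=\;\frac{C_{0}q_{0}+\cdots}{C_{0}q_{0}+\cdots}\;\xrightarrow[\;q_{0}\to 0\;]{}\;1 ,
\]
i.e. $t=H(q_{0})$ and $q_{0}$ agree to first order. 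Combining these limits, the middle expression tends to $\dfrac{(-1)^{\ell}m_{0}}{(2\pi i)^{\ell}N_{\Gamma}}\cdot 1\cdot(2\pi i)^{\ell}=\dfrac{(-1)^{\ell}m_{0}}{N_{\Gamma}}$. Equating this with $r_{[i\infty]}(\Xi)$ from $(8.11)$ yields the assertion.

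There is no real obstacle here; the only point that merits a line of justification is the claim that $q_{0}H'/H$ has a finite nonzero limit at $q_{0}=0$, which is precisely the first-order agreement of the local parameter $q_{0}$ with the mirror parameter $t$ — and that is built into hypotheses~(4)--(5) together with the normalization of the Hauptmodul. All of the substantive content (the identity $(8.11)$ itself, resting on the equality $F_{\theta^{*}\Xi}=F_{\mathbf{f}}$ via $\S5.1.5$ and the monodromy argument Lemma~7.1(ii), and the special-value input from Prop.~7.9) is already in place by the time one reaches the corollary.
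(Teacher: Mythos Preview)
Your proof is correct and follows exactly the approach the paper takes: the paper's entire argument is the one-line remark ``by comparing values at $[i\infty]$ (i.e.\ $q_{0}=0$) in $(8.11)$,'' and you have simply made the two limits explicit. One small slip: the relevant theorem is numbered~8.3 in the paper, not~8.2.
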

\begin{rem*}
If the $r_{\sigma}(\Xi)$ are known but the series expansion $t=H(q_{0})=C_{0}q_{0}+\cdots$
for the mirror map is not, one can in principle determine the latter
from \[
\Psi(H(\tau))=\frac{1}{m_{0}}\Psi_{\textbf{f},0}^{[\ell]}(\tau)\]
(cf. $(8.9)$), by using $(2.6)$ for the l.h.s. and Cor. 7.10 for
the r.h.s. (In the computations below, we have preferred to take $H$
from other sources, in order to partially vet our formulas.) Since
the {}``log + constant'' terms of both sides must agree (mod $\QQ(n)$),
an immediate consequence is
\end{rem*}
\begin{cor}
$C_{0}$ (hence $\frac{Y(0)}{(2\pi i)^{\ell}}$) is a root of unity.
\end{cor}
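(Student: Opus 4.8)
The plan is to extract the statement directly from the first displayed equation in Theorem 8.5, namely $(8.11)$, by comparing the two sides near the cusp $[i\infty]$ (i.e. at $q_0 = 0$). Recall that under hypothesis (5) we have a modular family, and by $(2.6)$ (Theorem 2.2) the regulator period has the expansion
\[
\Psi(t) \,\equiv\, (2\pi i)^{\ell}\left\{\log t + \sum_{m\geq 1}\frac{[\phi^m]_0}{m}t^m\right\} \quad \text{mod }\QQ(\ell+1),
\]
with $\ell = n-1$. Substituting the mirror map $t = H(q_0) = C_0\, q_0 + \text{h.o.t.}$ gives $\Psi(H(q_0)) \equiv (2\pi i)^{\ell}\{\log C_0 + \log q_0 + O(q_0)\}$ modulo $\QQ(\ell+1)$; equivalently $\frac{1}{(2\pi i)^{\ell}}\Psi(H(q_0))$ has, as its only non-$q_0$-power contribution, the term $\log q_0$ plus the constant $\log C_0$.

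First I would compare this with $(8.9)$, which identifies $\Psi(H(\tau)) = \frac{1}{m_0}\Psi^{[\ell]}_{\mathbf{f},0}(\tau)$, and then invoke Corollary 7.10 to get the $q_0$-expansion of $\Psi^{[\ell]}_{\mathbf{f},0}$:
\[
\Psi^{[\ell]}_{\mathbf{f},0}(\tau) \,\emql\, (-2\pi i)^{\ell}\mathsf{H}^{[\ell]}_{[i\infty]}(\varphi_{\mathbf{f}})\, N\log q_0 \,-\, \frac{(2\pi i)^{\ell}}{N^{\ell+1}\ell!}\sum_{M\geq 1}\frac{q_0^M}{M}\{\cdots\}.
\]
The crucial point is that the right-hand side of Corollary 7.10 contains \emph{no constant term}: it is a multiple of $\log q_0$ plus a pure $q_0$-power series with no $M=0$ contribution. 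Therefore, dividing by $m_0(2\pi i)^{\ell}$ and equating the ``$\log$ + constant'' part with that of $\Psi(H(q_0))/(2\pi i)^{\ell}$ from the previous paragraph, the constant terms must match modulo $\QQ(\ell+1)$. On one side the constant is $\log C_0$; on the other it is $0$. Hence $\log C_0 \in \QQ(\ell+1) = (2\pi i)^{\ell+1}\QQ$, which forces $C_0$ to be a root of unity (an elementary fact: $\log C_0 \in 2\pi i\,\QQ$ means $C_0 = e^{2\pi i q}$ for some $q \in \QQ$). This is precisely the assertion that $C_0$ is a root of unity; the claim about $\frac{Y(0)}{(2\pi i)^{\ell}}$ then follows immediately from $(8.12)$, which expresses it as $\frac{(-1)^{\binom{\ell}{2}}\ell!\,N_{\Gamma}^{\ell}m_0^2}{M_{\theta}C_0^{\ell}}$, a rational number times $C_0^{-\ell}$, hence an algebraic number lying in $\QQ(C_0)$ which is now a root of unity — so $\frac{Y(0)}{(2\pi i)^{\ell}}$ is itself (rational times) a root of unity.

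The main obstacle I anticipate is bookkeeping the various normalizations consistently: one must be careful that the "$\log q_0$" coefficients on the two sides are genuinely compatible — this is where $\mathsf{H}^{[\ell]}_{[i\infty]}(\varphi_{\mathbf{f}}) = r_{[i\infty]}(\Xi) = (-1)^{\ell}\frac{m_0}{N_\Gamma}$ (Corollary 8.6) and the relation $N = N_\Gamma \cdot(\text{something})$ enter, and a sign or factor error there would be fatal. But for the statement at hand only the \emph{constant} term matters, and since the right side of Corollary 7.10 manifestly has none, the argument is robust: whatever the precise normalization, a constant $\log C_0$ on the left must be congruent to $0$ mod $\QQ(\ell+1)$. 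One should also note explicitly that the vertex coefficients of $\phi$ being roots of unity (required for Theorem 2.2) is what guarantees the limit-MHS normalization making $(2.6)$ valid with this particular $t$; this hypothesis is already in force under (5) via the remark preceding $\S8.3$. I would close by remarking that this gives, a posteriori, a purely arithmetic constraint on the leading Fourier coefficient of any modular Hauptmodul arising as a toric mirror map.
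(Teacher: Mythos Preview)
Your argument is correct and follows exactly the paper's approach: the Remark immediately preceding the Corollary spells out the same comparison of the ``log + constant'' terms of $\Psi(H(\tau))=\frac{1}{m_0}\Psi^{[\ell]}_{\textbf{f},0}(\tau)$, using $(2.6)$ on the left and Corollary~7.10 on the right. One small bookkeeping slip: after dividing through by $(2\pi i)^{\ell}$ the congruence is modulo $\QQ(1)=2\pi i\,\QQ$, not modulo $\QQ(\ell+1)$---but you land on the correct conclusion $\log C_0\in 2\pi i\,\QQ$ in the parenthetical anyway.
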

Clearly one can normalize $\phi$ (retaining the assumption on vertex
coefficients) so that $Y(0)\in\QQ(\ell)$.

\subsection{The elliptic curve case}

Start with a reflexive tempered Laurent polynomial $\phi\in\bar{\QQ}[x^{\pm1},y^{\pm1}]$
defining a family of (generically smooth) elliptic curves, $\tilde{\X}\subset\PP_{t}^{1}\times\PP_{\widetilde{\Delta_{\phi}}}$.
Possibly after a finite $(t\mapsto t^{\kappa})$ quotient%
\footnote{again preserving unipotency at $t=0$%
} we desingularize this and blow down all $(-1)$-curves contained
in fibers. The resulting elliptic surface is denoted $\X$, and is
relatively minimal in the sense that $\omega_{\nicefrac{\X}{\PP^{1}}}\cong\pi^{*}\pi_{*}\omega_{\nicefrac{\X}{\PP^{1}}}$;
the singular fibers are therefore of the types described by Kodaira
\cite{Ko}. Clearly $\chi(\X)=12\cdot\deg(\pi_{*}\omega_{\nicefrac{\X}{\PP^{1}}})$
is $12$, either by looking at zeroes of $\omega=\nabla_{\delta_{t}}\R_{t}\in\Gamma(\pi_{*}\omega_{\nicefrac{\X}{\PP^{1}}})$
or the fact that $\X$ is birational to $\PP_{\Delta_{\phi}}$hence
to $\PP^{2}$. This constrains the possible combinations of singular
fibers in light of the table:\\
\begin{tabular}{|c|c|c|c|}
\hline 
sing. fiber type & contrib. to $\chi(\X)$ & ord. of monodromy & no. of components\tabularnewline
\hline
\hline 
$\begin{array}{c}
I_{n\geq1}\\
I_{n\geq0}^{*}\end{array}$ & $\begin{array}{c}
n\\
n+6\end{array}$ & $\begin{array}{c}
\infty\\
\infty\end{array}$ & $\begin{array}{c}
n\\
n+5\end{array}$\tabularnewline
\hline 
$\begin{array}{c}
II\\
IV^{*}\end{array}$ & $\begin{array}{c}
2\\
8\end{array}$ & $\begin{array}{c}
6\\
3\end{array}$ & $\begin{array}{c}
1\\
7\end{array}$\tabularnewline
\hline 
$\begin{array}{c}
III\\
III^{*}\end{array}$ & $\begin{array}{c}
3\\
9\end{array}$ & $\begin{array}{c}
4\\
4\end{array}$ & $\begin{array}{c}
2\\
8\end{array}$\tabularnewline
\hline 
$\begin{array}{c}
IV\\
II^{*}\end{array}$ & $\begin{array}{c}
4\\
10\end{array}$ & $\begin{array}{c}
3\\
6\end{array}$ & $\begin{array}{c}
3\\
9\end{array}$\tabularnewline
\hline
\end{tabular}\\
\\
where we have paired those types related by a quadratic transformation
({}``adding a $*$''). We identify families by the set of fiber
types, e.g. $I_{1}^{4}/I_{4}^{*}$ means $4$ $I_{1}$'s and $1$
$I_{4}^{*}$.

Now referring to $(8.6)$, we make a precise

\begin{defn}
$\M$ is \textbf{weakly modular} $\Longleftrightarrow$ $\mu^{-1}(=:H)$
is a Hauptmodul for $\Gamma\subset SL_{2}(\ZZ)$ of finite index.
We say $\M$ is \textbf{modular} if in addition $\{-\text{id}\}\notin\Gamma$
and $\Gamma\supset\Gamma(N)$ for some $N\geq3$.
\end{defn}
Obviously if $\M$ is modular then one has a canonical quotient $\overline{\E}_{\Gamma(N)}^{[1]}\thda\overline{\E}_{\Gamma}^{[1]}\cong\X$
and $\X$ is modular in the sense of $\S8.3$.

\begin{lem}
\cite[Prop. 2]{Do1} $\M$ is weakly modular $\Longleftrightarrow$
the $J$-invariant $J(\mu(t))$ ramifies only over $J=0$ (to order
$1$ or $3$), $J=1$ (to order $1$ or $2$), and $J=\infty$ (to
any order).
\end{lem}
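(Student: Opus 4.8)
<br>

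The statement to prove is Lemma 8.7, attributed to \cite{Do1}: the mirror map $\M$ is weakly modular (i.e. its inverse $H=\mu^{-1}$ is a Hauptmodul for a finite-index $\Gamma\subset SL_2(\ZZ)$) if and only if the composite $J\circ\mu:\PP^1_t\rightsquigarrow\PP^1_J$ ramifies only over $J=0$ (with local ramification index $1$ or $3$), over $J=1$ (with index $1$ or $2$), and over $J=\infty$ (with arbitrary index).

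The plan is to recall that a finite-index subgroup $\Gamma\subset PSL_2(\ZZ)$ corresponds, via $\overline{Y}_\Gamma=\Gamma\backslash\uhp^*$, to a (possibly ramified, connected) cover $\overline{Y}_\Gamma\to\overline{Y}_{PSL_2(\ZZ)}\cong\PP^1_J$ whose ramification is constrained exactly as in the statement: the only possible ramification of $\overline{Y}_\Gamma\to\PP^1_J$ occurs over the two elliptic points $J=0$ (order-$3$ stabilizer in $PSL_2(\ZZ)$, so ramification index divides $3$, i.e. is $1$ or $3$) and $J=1$ (order-$2$ stabilizer, index $1$ or $2$), and over the cusp $J=\infty$ (index unrestricted). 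Conversely, by the Riemann existence theorem / the theory of coverings of the orbifold $PSL_2(\ZZ)\backslash\uhp$, any connected finite cover of $\PP^1_J$ with ramification of this restricted type arises from a conjugacy class of finite-index subgroups $\Gamma\subset PSL_2(\ZZ)$, and when $\overline{Y}_\Gamma\cong\PP^1$ the covering map is a rational function, namely the composite $\PP^1\cong\overline{Y}_\Gamma\xrightarrow{J}\PP^1_J$, realized through the Hauptmodul $H$. So I would prove both implications against this dictionary.

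First, for the ``only if'' direction: assume $\M$ is weakly modular, so $H=\mu^{-1}$ is a Hauptmodul for some finite-index $\Gamma$. By construction (step (3)ff in $\S8.3$, specialized to $\ell=1$) the VHS $R^1\pi_*\ZZ$ of $\X\to\PP^1_t$ is the one pulled back from the universal family over $\uhp$ along $\mu=H^{-1}$, hence its $J$-invariant function is $t\mapsto J(\mu(t))=J\circ H^{-1}$. Since $H:\overline{Y}_\Gamma\xrightarrow{\sim}\PP^1_t$ is an isomorphism, $J\circ\mu$ is identified with the covering map $\overline{Y}_\Gamma\to\PP^1_J$, and the ramification assertion is then exactly the general fact about finite-index subgroups of $PSL_2(\ZZ)$ recalled above (local structure of $\uhp\to PSL_2(\ZZ)\backslash\uhp$ at elliptic points and cusps). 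For the ``if'' direction: suppose $J(\mu(t))$ (a priori a multivalued, or after passing to the relevant algebraic closure a single-valued rational, function on $\PP^1_t$ once we know the family is defined over the base and the monodromy is the pulled-back one) has only the prescribed ramification. Then $\spec$-ing out, the map $\PP^1_t\to\PP^1_J$ defines a connected finite cover of the $J$-line ramified only over $0$ (to order dividing $3$), $1$ (order dividing $2$), $\infty$ (arbitrarily); by the orbifold-covering correspondence this is $\overline{Y}_\Gamma\to\PP^1_J$ for a finite-index $\Gamma\subset PSL_2(\ZZ)$ with $\overline{Y}_\Gamma\cong\PP^1$. One checks the period map $\mu$ and the covering $\overline{Y}_\Gamma\to\PP^1_t$ agree (both invert $J$ compatibly with the VHS, and the VHS rigidifies the choice up to the deck group, which is absorbed into $\Gamma$), so $H:=\mu^{-1}$ is precisely the Hauptmodul of $\Gamma$ and $\M$ is weakly modular.

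The main obstacle I anticipate is the ``if'' direction's rigidity input: one must argue that the abstract cover of $\PP^1_J$ extracted from the ramification data of $J\circ\mu$ is not merely \emph{some} modular curve cover but \emph{the} one carrying the Hodge-theoretic period map $\mu$ — i.e. that the $\SL_2$-monodromy representation of $\pi_1(\PP^1_t\setminus\{\text{sing. fibers}\})$ on $R^1\pi_*\ZZ$ is conjugate into $\SL_2(\ZZ)$ with image whose image in $PSL_2(\ZZ)$ is the group $\Gamma$ dictated by the ramification, and that the local monodromies at the singular fibers match the Kodaira types predicted by the cusp widths/elliptic ramification indices. This is where one uses that $H^1(X_t)$ is a weight-one VHS with integral structure and quasi-unipotent (here unipotent at the $I_n$-type fibers) local monodromy, so that the period map to $\uhp$ is well-defined and the composite with $J$ is genuinely $J\circ\mu$; the local analysis at elliptic points ($III,IV,II^*$, etc.) requires keeping track of whether $-\text{id}$ is in the monodromy group, which is exactly the extra hypothesis separating ``weakly modular'' from ``modular.'' Since the statement is only ``weakly modular,'' one stays at the level of $PSL_2(\ZZ)$ and this subtlety is deferred; I would cite \cite{Do1} Prop.~2 for the detailed verification and present the argument above as the conceptual skeleton.
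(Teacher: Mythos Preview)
The paper does not prove this lemma at all; it is simply quoted from \cite{Do1} as Proposition~2 there, and the surrounding text only uses the statement (to rule out certain Kodaira fiber types and configurations). So there is no ``paper's own proof'' to compare against beyond the citation.

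Your sketch is a correct reconstruction of the standard argument: the forward direction is the classical fact that $\overline{Y}_\Gamma\to\PP^1_J$ ramifies only over the elliptic points and the cusp with indices dividing the stabilizer orders, and the backward direction is the orbifold Riemann existence theorem identifying finite covers of $\PP^1_J$ with the prescribed ramification constraints with (conjugacy classes of) finite-index subgroups of $PSL_2(\ZZ)$. You have also correctly isolated the one nontrivial step in the ``if'' direction, namely that the abstract lift of $J\circ\mu$ to $\uhp$ coincides with the Hodge-theoretic period map $\mu$; this follows because $J$ determines the weight-one Hodge structure on $H^1(X_t)$ up to isomorphism, so any two lifts of $J\circ\mu$ through $\uhp\to\PP^1_J$ differ by postcomposition with an element of $PSL_2(\ZZ)$, which is absorbed into the choice of $\Gamma$. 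Your remark that ``weakly modular'' works at the $PSL_2$ level (deferring the $-\text{id}$ issue to the stronger ``modular'' hypothesis) is exactly right and matches how the paper uses the lemma in the paragraph following it.
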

The point is that $\mu^{-1}$ cannot possibly be single-valued if
$J\circ\mu$ has {}``excess ramification'' (which explains why we
wanted to allow order-$\kappa$ quotients of the base in constructing
$\X$). It folllows (cf. \cite{Do1}) that fiber types $II^{*}$ and
$IV$ are not permitted (so no $I_{1}^{2}/II^{*}$), and neither are
certain other combinations (e.g. $I_{1}^{6}/I_{6}$); in \cite[Thm. 4.12]{Do2}
the remaining possibilities are listed (up to {}``transfer of $*$'').
Disallowing those fiber types left which contain $-\text{id}$ in
their local monodromy group ($II,\, III,\, III^{*}$), and checking
for $-\text{id}$ also in global monodromy, one arrives at the list
below.

\begin{prop}
Suppose the singular fiber configuration of $\X$ is one of those
shown in the table, with fiber $I_{n_{\X}}$ at $t=0$. (This gives
an additional degree of freedom.) Then $\M$ is modular, $\X\cong\overline{\E}_{\Gamma}$
(for $\Gamma\supset\Gamma(N)$ as displayed), and%
\footnote{here $q_{0}=q^{\frac{1}{n_{\X}}}$%
} \begin{equation}
-\frac{1}{n_{\X}}\sum_{\sigma \in |H^{-1}(0)|\subset \kappa(N)} \tilde{E}^{[1]}_{\sigma}(q_0) = F_{\theta^*\Xi}(q_0)\\
\end{equation} where $|H^{-1}(0)|$ is $not$ counted with multiplicity. Finally,
all the configurations below occur in the toric construction.
\end{prop}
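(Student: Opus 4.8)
The statement has two parts, and I would treat them separately. First, once one knows $\M$ is modular with $\X \cong \overline{\E}_\Gamma$ for the stated $\Gamma$, equation $(8.12)$ should follow almost formally from Theorem 8.3: the pullback $\theta^*\Xi$ to $\E^{[1]}(N)$ is an Eisenstein symbol whose cycle-class is $F_{\theta^*\Xi} = F_{\mathbf{f}}$ for $\mathbf{f}$ with $\mathsf{H}^{[1]}_\sigma(\varphi_{\mathbf{f}}) = r_\sigma(\Xi)$, and by $(8.11)$ this equals $\sum_\sigma r_\sigma(\Xi)\tilde{E}^{[1]}_\sigma(q_0)$. So the real content of $(8.12)$ is the identification of the residue divisor $(8.8)$: I claim $r_\sigma(\Xi) = -\frac{1}{n_\X}$ for each cusp $\sigma$ lying over $t=0$ under $H$, and $r_\sigma(\Xi)=0$ otherwise. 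The vanishing away from $H^{-1}(0)$ is because $\tilde\Xi$ (the toric symbol) is tempered, so by Proposition 1.4 and the discussion following it, $\Xi$ has trivial residues along $\tilde D$ and along all singular fibers $X_{t\neq 0}$ — the only place $\Xi$ fails to extend is $X_0$. The coefficient $-\frac{1}{n_\X}$ comes from Corollary 8.4 ($r_{[i\infty]}(\Xi) = (-1)^\ell \frac{m_0}{N_\Gamma}$, here $\ell=1$, $m_0=1$, $N_\Gamma = N_\X = n_\X$) together with the fact that all the cusps over $t=0$ are symmetric under the appropriate Galois/monodromy action, so all $r_\sigma(\Xi)$ for $\sigma \in |H^{-1}(0)|$ are equal. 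That the sum is over $|H^{-1}(0)|$ \emph{without} multiplicity reflects that the $I_{n_\X}$ fiber at $t=0$ is already reduced in a way compatible with $H$ being ramified to the "expected" order there — this needs a short local check comparing the ramification of $H$ at cusps of $\overline Y(N)$ with the Kodaira type.

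The modularity claim — that each configuration in the table forces $\X \cong \overline{\E}_\Gamma$ with the displayed $\Gamma$ — I would handle by invoking Lemma 8.6 (Doran's criterion) together with \cite{Do2}: the listed fiber configurations are precisely (up to transfer of $*$) those for which $J\circ\mu$ has no excess ramification and for which $-\mathrm{id}$ lies in neither the local monodromy groups (ruling out types $II, III, III^*$) nor the global monodromy group. Given these exclusions, $\mu^{-1}$ is single-valued on $\uhp^*$, defines a Hauptmodul for a genus-zero $\Gamma$ of finite index with $\{-\mathrm{id}\}\notin\Gamma$, and one reads off $\Gamma$ (one of $\Gamma(3),\Gamma(4),\Gamma(5),\Gamma_1(N)$ for small $N$, etc.) from the number and types of cusps via the cusp-counting formulas of $\S5.1.6$. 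The extra degree of freedom (the choice of which fiber sits at $t=0$, of type $I_{n_\X}$) is exactly the freedom in the normalization of $H$, i.e. in the choice of Hauptmodul normalized so that $H = C_0 q_0 + \cdots$ at the cusp we call $[i\infty]$.

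The last sentence — that all these configurations actually arise from the toric construction — is where the real work lies, and I expect it to be the main obstacle. The plan is to exhibit, for each row of the table, an explicit reflexive tempered $\phi \in \bar\QQ[x^{\pm 1}, y^{\pm 1}]$ (or a $t\mapsto t^\kappa$ quotient of such a family) whose associated elliptic surface $\X$ has the prescribed singular fibers. For many rows this is already done elsewhere in the paper: the four families $D_5, E_6, E_7, E_8$ of $\S4.3$ (with configurations $I_4,2I_1,I_1^*$; $I_3,I_1,IV^*$; $I_2,I_1,III^*$; $I_1,I_1,II^*$ after quotient), the $A_5$ family $\phi = \frac{(1-x)(1-y)(1-x-y)}{xy}$ with $I_5,I_1,I_1,I_5$, and the Ap\'ery/$\Gamma_0(6)^{+6}$-related examples, cover a substantial part of the list. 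For the remaining configurations I would proceed systematically: for each of the $16$ two-dimensional reflexive polytopes (cf. \cite{BSk}), write down the characteristic Laurent polynomial of the vertex set with root-of-unity (cyclotomic-edge) coefficients — which is automatically tempered by Corollary 1.9 — compute the discriminant, read off Kodaira types at the critical values of $\phi$ (including $t=0$ and $t=\infty$), and match against the table; the $t\mapsto t^\kappa$ quotients enlarge the supply. The bookkeeping is finite but lengthy, and the genuinely delicate point is verifying that in each case the blow-down-to-relatively-minimal step does not destroy the cycle $\Xi$ (i.e. that $\Xi$ descends to $\X$ and still has the claimed residue $-\frac{1}{n_\X}$ at the $I_{n_\X}$ fiber) — this uses the spectral-sequence argument from the proof of Theorem 1.7 exactly as in Remark 1.5(i), but must be checked case by case, and I expect handling the non-regular $\phi$'s (where one must also contract the $A$-$D$-$E$ resolution divisors as in that proof) to be the fussiest part.
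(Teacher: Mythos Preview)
Your treatment of the modularity classification (via Lemma $8.6$ and \cite{Do2}) and of equation $(8.14)$ (via Theorem $8.3$ and Corollary $8.4$) matches the paper. The ``symmetry'' equalizing the residues at all cusps of $H^{-1}(0)$ is precisely the pullback map $\mathsf{P}^{[\ell]}_{\Gamma(N)/\Gamma}$ of \S$5.1.6$: since $\X\cong\overline{\E}_\Gamma$ and $F_{\theta^*\Xi}$ is pulled back from $Y_\Gamma$, its residues are constant on fibers of $\kappa(N)\to\kappa_\Gamma$, so no separate ramification or multiplicity check is required.

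The slip is in your inventory of toric examples for the final sentence. The $E_7$ and $E_8$ quotient families carry fibers $III^*$ and $II^*$, and these configurations are \emph{not} in the table: the paragraph preceding the Proposition explicitly excludes $III^*$ (its local monodromy contains $-\text{id}$) and $II^*$ (excess ramification of $J\circ\mu$), and the paper invokes $E_7$, $E_8$ immediately \emph{after} the Proposition precisely as the paradigmatic non-modular cases. The Ap\'ery/$\Gamma_0(6)^{+6}$ family you cite is an $\ell=2$ ($K3$) pencil, not an elliptic-curve family. So from \S$4.3$--$4.4$ only $D_5$, $E_6$, $A_5$ survive (rows $3$, $1$/$2$, $6$); the paper also notes that \cite{S}, Ex.\ $4$--$6$, realize rows $3$, $6$, $7$. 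The remaining rows still need the polytope-by-polytope search you outline --- which is the correct plan --- but your worry about descending $\Xi$ through the blow-downs is overstated: for $\ell=1$ one only needs proper push-forward on $CH^2(\cdot,2)$, and no delicate case-by-case compatibility arises.
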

$\mspace{100mu}$\begin{tabular}{|c|c|c|}
\hline 
configuration & $\Gamma$ & $N$\tabularnewline
\hline
\hline 
$I_{3}^{4}$ & $\Gamma(3)$ & 3\tabularnewline
\hline 
$I_{1}/I_{3}/IV^{*}$ & $\Gamma^{(')}(3)$ & 3\tabularnewline
\hline 
$I_{1}/I_{1}^{*}/I_{4}$ & $\Gamma_{1}^{(')}(4)$ & 4\tabularnewline
\hline 
$I_{2}^{2}/I_{4}^{2}$ & $\left\langle \Gamma(4),\umat\right\rangle $ & 4\tabularnewline
\hline 
$I_{2}^{2}/I_{2}^{*}$ & $\widetilde{\Gamma(2)}:=\left\langle \umat,\vmat\right\rangle $ & 4\tabularnewline
\hline 
$I_{1}^{2}/I_{5}^{2}$ & $\Gamma_{1}^{(')}(5)$ & 5\tabularnewline
\hline 
$I_{1}/I_{2}/I_{3}/I_{6}$ & $\Gamma_{1}^{(')}(6)$ & 6\tabularnewline
\hline 
$I_{1}^{2}/I_{2}/I_{8}$ & $\left\langle \Gamma_{1}^{'}(8),\wmat\right\rangle $ & 8\tabularnewline
\hline 
$I_{1}^{2}/I_{4}^{*}$ & $\left\langle \Gamma_{1}^{'}(8),\wmat,\xmat\right\rangle $ & 8\tabularnewline
\hline 
$I_{1}^{3}/I_{9}$ & $\left\langle \Gamma_{1}^{'}(9),\ymat\right\rangle $ & 9\tabularnewline
\hline
\end{tabular}\\
\\
For computations it is desirable to replace $-\frac{1}{n_{\X}}\sum\tilde{E}_{\sigma}^{[1]}$
by $F_{\textbf{f}}$ with $\varphi_{\textbf{f}}$ chosen to have $\mathsf{H}_{\sigma}(\varphi_{\textbf{f}})=\left\{ \begin{array}{cc}
\frac{-1}{n_{\X}}, & \sigma\in|H^{-1}(0)|\\
0, & \text{otherwise}\end{array}\right..$ Note that by $(8.9)$, for $\tau\in\uhp$ \begin{equation}
\Psi(H(\tau)) \equiv \Psi^{[1]}_{\textbf{f},0} (\tau) \, \, \, \, \, \text{mod}\, \QQ(2). \\
\end{equation} The two {}``$E_{6}$'' examples below both correspond to the second
row of the table, and their difference illustrates a technical subtlety.
The first computation is essentially that in \cite[Ex. 3]{S}; Ex.
4,5,6 in op. cit. also fall under Prop. 8.7's aegis, and correspond
to lines 3,6,7 (resp.) in the table.

\begin{example}
$\phi=x^{2}y^{-1}+x^{-1}y^{2}+x^{-1}y^{-1}$, $\kappa=3$ (quotient).\\
This yields $\X$ with fibers $X_{t}\cong\overline{\{1-t^{\frac{1}{3}}\phi=0\}}\subset\PP^{2}$,
$\Gamma=\Gamma_{1}(3)$, and $n_{\X}=1$. (This is just the Hesse
pencil, which appears as Ex. 1 in \cite{RV} and Ex. 3 in \cite{S}.)
The singular fibers occur at $t=0\,(I_{1})$, $\frac{1}{3^{3}}\,(I_{3})$,
$\infty\,(IV^{*})$; whereas if we had not taken the quotient ($\kappa=1$),
there would be $4$ $I_{1}$'s (at $t=0,\frac{1}{3},\frac{\zeta_{3}}{3},\frac{\zeta_{3}^{2}}{3}$)
with $\Gamma=\Gamma(3).$ This will be useful in Example 8.9.

From \cite{S},\[
H(q)=H_{\Gamma_{1}(3)}(q):=\left(27+\frac{\eta(q)^{12}}{\eta(q^{3})^{12}}\right)^{-1}=q(1-15q+171q^{2}-1679q^{3}+\cdots)\]
where of course $\eta(q)=q^{\frac{1}{24}}\prod_{n\geq1}(1-q^{n})$,
and we have\[
A(t)=2\pi i\sum_{m\geq0}\frac{(3m)!}{(m!)^{3}}t^{m}=2\pi i(1+6t+90t^{2}+1680t^{3}+\cdots).\]
Since $|H^{-1}(0)|=\{[i\infty]\}$, we put $\varphi_{\textbf{f}}:=-\frac{1}{3}\pi_{[i\infty]}^{*}\varphi_{3}^{[1]}$;
by Example 6.14\[
F_{\textbf{f}}(q)=-1+9\sum_{K\geq1}q^{K}\sum_{r|K}r^{2}\chi_{-3}(r)=-1+9q-27q^{2}+9q^{3}+\cdots.\]
The Proposition says this equals\[
\frac{-q}{H(q)}H'(q)\frac{A(H(q))}{2\pi i}\,\,=\]
\[
-(1+15q+54q^{2}-76q^{3}+\cdots)(1-30q+513q^{2}-6716q^{3}+\cdots)(1+6q+6q^{3}+\cdots),\]
which is clearly plausible from the first 3 terms of the series. From
$(7.26)$ we have \[
\Psi_{\textbf{f},0}^{[1]}(q)=2\pi i\left\{ \log q-9\sum_{K\geq1}\left(\frac{\sum_{r|K}r^{2}\chi_{-3}(r)}{K}\right)q^{K}\right\} \]
while $\Psi(t)=2\pi i\left\{ \log t+\sum_{m\geq1}\frac{(3m)!}{(m!)^{3}}t^{m}\right\} $;
computation again suggests that $\Psi(H(q))=\Psi_{\textbf{f},0}^{[1]}(q)$,
which is (mod $\QQ(2)$) exactly what $(8.15)$ asserts.
\end{example}
${}$

\begin{example}
$\phi=x+y+x^{-1}y^{-1}$, $\kappa=3$.\\
This gives $\X$ with $\Gamma=\Gamma_{1}^{'}(3)$, $n_{\X}=3$, and
singular fibers at $t=0\,(I_{3})$, $\frac{1}{3^{3}}\,(I_{1})$, $\infty\,(IV^{*})$;
before the quotient these are $t=0\,(I_{9})$ and $t=\frac{1}{3},\frac{\zeta_{3}}{3},\frac{\zeta_{3}^{2}}{3}\,(I_{1})$.
Put $\mathfrak{g}(u)=1-\left(\frac{1-3u}{1+6u}\right)^{3}$; by considering
locations of singular fibers one deduces\[
H(q_{0})=H_{\Gamma_{1}'(3)}(q_{0})=\frac{1}{3^{3}}\mathfrak{g}\left(H_{\Gamma(3)}(q_{0})\right)=\frac{1}{3^{3}}\mathfrak{g}\left[\left(H_{\Gamma_{1}(3)}(q_{0}^{3})\right)^{\frac{1}{3}}\right]\]
\[
=q_{0}(1-15q_{0}+171q_{0}^{2}-5q_{0}^{3}+\cdots).\]
This is so similar to the previous example that the $A(t)$'s are
the same, and\[
-\frac{1}{3}\frac{q_{0}}{H(q_{0})}H'(q_{0})\frac{A(H(q_{0}))}{2\pi i}=-\frac{1}{3}+3q_{0}-9q_{0}^{2}+\cdots.\]
We want $\widehat{\varphi_{\textbf{f}}}=-\frac{1}{3}\rho'_{*}(\iota_{[i\infty]_{*}}\widehat{\varphi_{3}^{[1]}})=-\frac{1}{3}\pi_{[0]}^{*}\widehat{\varphi_{3}^{[1]}}$
($\implies$ $\varphi_{\textbf{f}}=\frac{1}{3}\iota_{[0]_{*}}\varphi_{3}^{[1]}$)
since $|H^{-1}(0)|=\{[i\infty],[1],[\frac{1}{2}]\}.$ Using Prop.
6.12\[
F_{\textbf{f}}(q_{0})=-\frac{1}{3}+3\sum_{K\geq1}q_{0}^{K}\sum_{r|K}r^{2}\chi_{-3}(r),\]
in agreement with the above.
\end{example}
It is interesting to explain why the {}``$E_{8}$'' family (\cite[Ex. 1]{S},
\cite[Ex. 3]{RV})\[
\phi=xy^{-1}+x^{-1}y^{2}+x^{-1}y^{-1},\,\,\,\,\kappa=6,\,\,\,\, I_{1}^{2}/II^{*}\]
and {}``$E_{7}$'' family\[
\phi=xy^{-1}+x^{-1}y^{3}+x^{-1}y^{-1},\,\,\,\,\kappa=4,\,\,\,\, I_{1}/I_{2}/III^{*}\]
fail to yield Eisenstein series (despite nontriviality of $\Xi\in CH^{2}(\X\m X_{0},2)$).
More to the point,\begin{equation}
\frac{q}{\mu^{-1}(q)}(\mu^{-1})'(q)\frac{A(\mu^{-1}(q))}{2\pi i} =: \sum_{m\geq 0}\alpha_m q^m \\
\end{equation} does not even yield a modular form (of any level) since $\limsup_{M\to\infty}\sqrt[M]{|\alpha_{M}|}=:\gamma>1$.
(At least one infers this from the data $\{b_{n}\}$ in \cite{S}.)
It is insufficient to say that the divisors of $\{x|_{X_{t}},y|_{X_{t}}\}$
are not supported on torsion (perhaps this could be fixed by an $AJ$-equivalence),
although this is probably required for instances where Prop. 8.7 fails.

In the $E_{8}$ case, $J(\mu(t))$ vanishes to order $2$ at $t=\infty$
(the $II^{*}$ fiber), so that $\mu^{-1}$ is multivalued at $\tau=e^{\frac{2\pi i}{6}}$.
As a result $(8.16)$ both is multivalued and blows up there.

According to Lemma 8.6, for the $E_{7}$ family $\mu^{-1}$ is a Hauptmodul.
However, the fact that $\Gamma=\Gamma_{1}(2)\ni\{-\text{id}\}$ manifests
itself in $(\pm)$ multivaluedness of $A\circ H$ about $\tau=\frac{1+i}{2}$
(where $J=1$ and $t=\infty$).

In neither case does one have $\theta:\,\E_{\Gamma(N)}^{[1]}\dashrightarrow\X$
along which to pull back $\Xi$. Perhaps this suggests a study of
{}``generalized Eisenstein symbols'' on families over finite covers
of $\uhp$, with additional (nontorsion) marked structure; the elliptic
Bloch groups of Wildeshaus \cite{Wi} seem quite suitable for this
purpose.

\subsection{Examples in the $K3$ case}

Up to unimodular transformation, there are $4319$ reflexive polytopes
in $\RR^{3}$ \cite{KS}; according to Cor. 1.9ff we immediately get
(at least) 358 examples for $\ell=2$ where the toric symbol completes
by taking $\phi=$ characteristic polynomial of vertices. (Putting
{}``random'' roots of unity instead of {}``1'' on each vertex
renders all 1071 polytopes from Remark 1.11 usable.) For each $\X$/$\Xi$
to be a candidate for modularity/Eisenstein-ness, we must have $\text{rk}(Pic(X_{\eta}))=19$,
in which case $X_{\eta}$ has the Shioda-Inose structure \cite{Mo}
(and one can then ask whether the underlying family of elliptic curves
is suitably modular). Such candidates are nontrivial to produce, but
{}``non-candidates'' seem much more elusive.

\begin{problem*}
Does Theorem 1.7 produce any families of $K3$'s with generic Picard
rank $\leq18$? Or does the tempered condition indirectly furnish
enough additional divisors to preclude this possibility?
\end{problem*}
Here are eight Laurent polynomials which satisfy Theorem 1.7 and produce
(after desingularization%
\footnote{see $\S8.3$ for the definition of $\X$%
}) $1$-parameter $K3$ families $\X$ provably of generic Picard rank
19 (together with the method of proof).\Tiny\\
\begin{tabular}{|c|c|c|c|c|}
\hline 
 & family & $\phi(x,y,z)$ & $\frac{A(t)}{(2\pi i)^{2}}$ & method\tabularnewline
\hline
\hline 
1  & Fermat quartic & $\frac{1+x^{4}+y^{4}+z^{4}}{xyz}$ & $\sum_{m\geq0}\frac{(4m)!}{(m!)^{4}}t^{4m}$ & $\begin{array}{c}
\text{symmetry}\\
\mathfrak{G}\cong(\ZZ/4\ZZ)^{2}\end{array}$\tabularnewline
\hline 
2 & quartic mirror & $x+y+z+\frac{1}{xyz}$ & same & restrict from $\PP_{\tilde{\Delta}}$\tabularnewline
\hline 
3 & $\begin{array}{c}
\mathbb{W}\PP(1,1,1,3)\\
\text{"Fermat"}\end{array}$ & $\frac{1+x^{6}+y^{6}+z^{2}}{xyz}$ & $\sum_{m\geq0}\frac{(6m)!}{(m!)^{3}(3m)!}t^{6m}$ & $\begin{array}{c}
\text{symmetry}\\
\mathfrak{G}\cong\ZZ/6\ZZ\times\ZZ/2\ZZ\end{array}$\tabularnewline
\hline 
4 & $\begin{array}{c}
\mathbb{W}\PP(1,1,1,3)\\
\text{mirror}\end{array}$ & $x+y+z+\frac{1}{xyz^{3}}$ & same & restrict from $\PP_{\tilde{\Delta}}$\tabularnewline
\hline 
5 & {}``box'' & $\frac{(x-1)^{2}(y-1)^{2}(z-1)^{2}}{xyz}$ & $\sum_{m\geq0}\binom{2m}{m}^{3}t^{m}$ & Shioda\tabularnewline
\hline 
6 & Fermi \cite{PS} & $x+\frac{1}{x}+y+\frac{1}{y}+z+\frac{1}{z}$ & $\sum_{m\geq0}t^{2m}\binom{2m}{m}\sum_{k=0}^{m}\binom{m}{k}^{2}\binom{2k}{k}$ & $\begin{array}{c}
\text{"double cover"}\\
\text{of Apery}\end{array}$\tabularnewline
\hline 
7 & Ap\'ery & $\frac{(x-1)(y-1)(z-1)[(x-1)(y-1)-xyz]}{xyz}$ & $\sum_{m\geq0}t^{m}\sum_{k=0}^{m}\binom{m}{k}^{2}\binom{m+k}{k}^{2}$ & Shioda\tabularnewline
\hline 
8 & Verrill \cite{Ve} & $\frac{(1+x+xy+xyz)(1+z+zy+zyx)}{xyz}$ & $\sum_{m\geq0}t^{m}\sum_{p+q+r+s=m}\left(\frac{m!}{p!q!r!s!}\right)^{2}$ & intersection form\tabularnewline
\hline
\end{tabular}\normalsize\\
\\
(The {}``Ap\'ery'' family is birational to the one studied in \cite{Bk,BP,Pe}.)
Families \#1-4 and 6 are instances of Example 1.10 (with Remark 1.11
for \#1 and \#3). The other three $\phi$'s are not regular and need
Theorem 1.7 with $K=\QQ$ (for \#5 and \#7) or Remark 1.8(iv) (for
\#8).%
\footnote{applied to the equivalent symbol $\{xy,y,z\}$%
}

We quickly summarize the {}``methods'' in the r.h. column; a study
(including most of these examples) can be found in \cite{Wh}. If
$\tilde{X}_{\eta}$ is nonsingular ($=X_{\eta}$) then \cite{Ro}\[
\text{rk}(Pic(X_{\eta}))\,\,\geq\,\,\text{rk}\{\text{im}(Pic(\PP_{\tilde{\Delta}})\to Pic(X_{\eta}))\}\,\,=\,\,\ell(\Delta^{\circ})-\sum_{\sigma\in\Delta^{\circ}(1)}\ell^{*}(\sigma)-4,\]
which $=19$ for families \#2 and \#4 and $=1$ for \#1 and \#3. For
the latter cases, the action on $X_{\eta}$ by a finite subgroup $\mathfrak{G}\subset(\CC^{*})^{3}$
augments the Picard rank by \[
\text{rk}\left[\left(H^{2}(X_{\eta},\ZZ)^{\mathfrak{G}}\right)^{\perp}\right]\]
 (\cite{Ni,Wh}), which turns out to be $18$. For \#5 (resp \#7),
$X_{\eta}$ is obtained from $\tilde{X}_{\eta}$ (remember $X_{\eta}$
is really $\widetilde{\tilde{X}_{\eta}}$) by blowing up the $12$
(resp. $7$) $A_{1}$ singularities. The elliptic fibration $X_{\eta}\to\PP_{z}^{1}$
has singular fibers $(I_{1}^{*})^{2}/I_{8}/I_{1}^{2}$ (resp. $I_{1}^{*}/I_{5}/I_{8}/I_{1}^{4}$).
By \cite{Sd} \[
\text{rk}(Pic(X_{\eta}))=2+r+\sum(\mathfrak{M}_{i}-1),\]
where $r=\,\text{rank of group of sections}\,=0$ (resp. $1$%
\footnote{the existence of a nontorsion section is demonstrated in \cite{BP}%
}) and $\mathfrak{M}_{i}=$ \# of fiber components in each singular
fiber; this yields $19$. This result is transferred to the Fermi
family by observing that its pullback $\{1-\frac{1}{u+u^{-1}}\phi_{\text{Fermi}}=0\}$
has a $2:1$ rational map (over $u\mapsto u^{2}=t$) onto the Ap\'ery
family $\{1-t\phi_{\text{Apery}}=0\}$ (see \cite{PS}). Finally,
to deal with \#8, \cite{Ve} adds some lines to the components of
$D\subset X_{t}$ and shows the rank of the resulting intersection
form is $19$.

The Fermi, Ap\'ery and Verrill pencils (which are modular) yield
an instructive set of examples for Theorem 8.3: $N=6$ in all three
cases but the $\{r_{\sigma}(\Xi)\}$, hence $\{F_{\theta^{*}\Xi}\}$,
are all different.

\begin{example}
By \cite{Pe}, the Ap\'ery pencil's $\ZZ$-PVHS is equivalent to
that coming from the construction of of Remark 6.17 for $N=6$ (and
we will assume the 2 $\X$'s birational). This gives%
\footnote{See below for $C_{0}.$ Singularities: monodromy is maximally unipotent
about $0,\,\infty$($=t$), finite (order $2$) about $(\sqrt{2}+1)^{4},\,(\sqrt{2}-1)^{4}$. %
} (with $\Gamma=\Gamma_{1}(6)^{+6}$)\[
m_{0}=-12,\, m_{1}=1,\, N_{\Gamma}=1,\, M_{\theta}=24\,\,\,\implies\,\,\,\frac{Y(0)}{(2\pi i)^{2}}=-12;\]
moreover, $\widehat{\varphi_{\textbf{f}}}$ should be a constant multiple
of $(8.5)$. Since (by Cor. 8.4) $r_{[i\infty]}(\Xi)=-12$, we take\[
\widehat{\varphi_{\textbf{f}}}:=(8.5)\,=\,\]
\[
-\frac{2\cdot6^{3}}{5}\{\widehat{\varphi_{\{1,1\}}}-\widehat{\varphi_{\{2,1\}}}-\widehat{\varphi_{\{3,1\}}}+\widehat{\varphi_{\{6,1\}}}\}\,+\,\frac{2\cdot6^{5}}{5}\{\widehat{\varphi_{\{6,1\}}}-\widehat{\varphi_{\{6,2\}}}-\widehat{\varphi_{\{6,3\}}}+\widehat{\varphi_{\{6,6\}}}\}\]
where $\widehat{\varphi_{\{a,b\}}}(m,n):=\left\{ \begin{array}{cc}
1, & a|m\text{ and }b|n\\
0, & \text{otherwise}\end{array}\right.$. (See Figure 8.1%
\begin{figure}
\caption{\protect\includegraphics[scale=0.7]{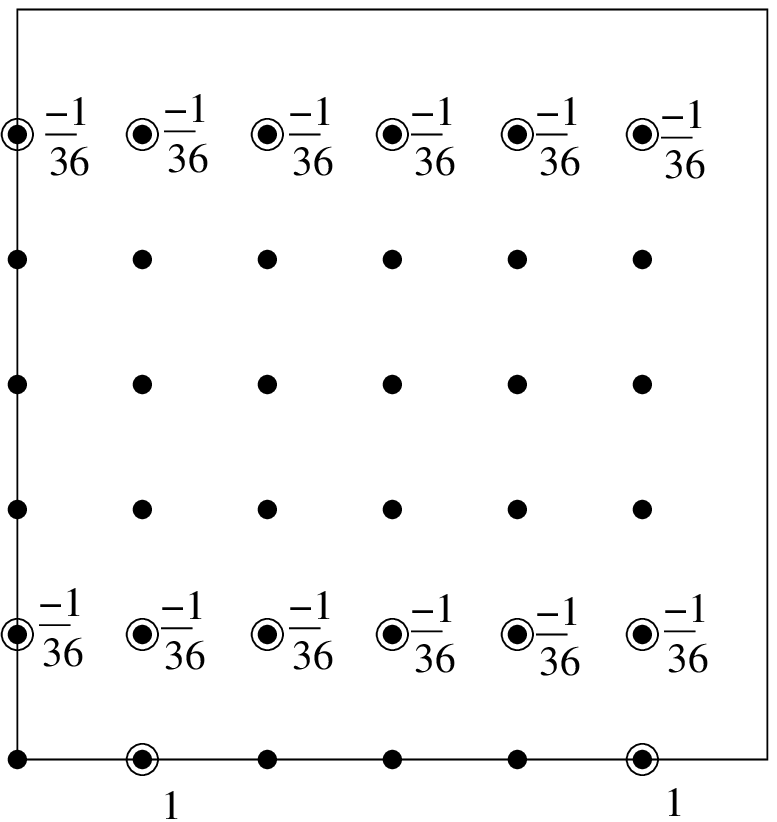}}

\end{figure}
 for a depiction of $\frac{5}{2\cdot6^{5}}\widehat{\varphi_{\textbf{f}}}$;
any places where it takes the value $0$ are simply left blank.) By
Prop. 6.12, $E_{\varphi_{\{a,b\}}}^{[2]}(q)\,=$\small \[
=\,\frac{-3}{(2\pi i)^{4}}\tilde{L}\left(\iota_{[i\infty]}^{*}\widehat{\varphi_{\{a,b\}}},\,4\right)-\frac{1}{6^{4}}\sum_{M\geq1}q^{\frac{M}{6}}\left\{ \sum_{r|M}r^{3}\left(\sum_{n_{0}\in\ZZ/6\ZZ}e^{\frac{2\pi in_{0}r}{6}}\widehat{\varphi_{\{a,b\}}}\left(\frac{M}{r},n_{0}\right)\right)\right\} \]
\normalsize \[
=\,\frac{-1}{240b^{4}}-\frac{1}{b^{4}}\sum_{K\geq1}q^{\frac{a}{b}K}\left\{ \sum_{\mathfrak{r}|K}\mathfrak{r}^{3}\right\} \]
\[
=\,\frac{-1}{240b^{4}}E_{4}(q^{\frac{a}{b}}),\]
using substitutions $M=6\frac{a}{b}K$ and $r=\frac{6}{b}\mathfrak{r}$.
So we have, with $E_{4}(q)=1+240(q+9q^{2}+28q^{3}+73q^{4}+\cdots)$,\small\[
E_{\varphi_{\textbf{f}}}^{[2]}(q)\,=\,\frac{-12}{240\cdot5}\left\{ (1-6^{2})E_{4}(q)+(6^{2}-2^{4})E_{4}(q^{2})+(6^{2}-3^{4})E_{4}(q^{3})+(6^{4}-6^{2})E_{4}(q^{6})\right\} \]
\normalsize \[
=\,\frac{7}{20}E_{4}(q)-\frac{1}{5}E_{4}(q^{2})+\frac{9}{20}E_{4}(q^{3})-\frac{63}{5}E_{4}(q^{6})\]
\[
=\,-12+84q+708q^{2}+2460q^{3}+\cdots.\]
On the other hand, from \cite{Be2} $u=\frac{\eta(\tau)^{6}\eta(6\tau)^{6}}{\eta(2\tau)^{6}\eta(3\tau)^{6}}$
$\implies$\[
H(q)=u^{2}=q(1-12q+66q^{2}-220q^{3}+\cdots),\]
while from the table\[
A(t)=(2\pi i)^{2}(1+5t+73t^{2}+1445t^{3}+\cdots);\]
therefore (from Theorem 8.3)\[
F_{\theta^{*}\Xi}\,=\,\frac{m_{0}}{(2\pi i)^{2}N_{\Gamma}}\frac{q}{H(q)}H'(q)A(H(q))\,=\,-12+84q+708q^{2}+2460q^{3}+\cdots.\]
So here we were able to correctly predict the Eisenstein series; in
the remaining examples (where obviously Thm. 8.3 predicts (8.11) is
$an$ Eisenstein series) we have found $\varphi_{\textbf{f}}$ essentially
by solving for the correct combination of $\varphi_{\{a,b\}}$'s.
\end{example}
${}$

\begin{example}
(Compare \cite[Ex. 1]{Be2}.) For the Fermi family, one deduces from
Ap\'ery (and the relationship between the two) that\[
m_{0}=-12,\, m_{1}=1,\, C_{0}=1,\, N_{\Gamma}=2,\, M_{\theta}=24\,\,\,\,\]
\[
\implies\,\,\, r_{[i\infty]}(\Xi)=-6,\,\frac{Y(0)}{(2\pi i)^{2}}=-48;\]
so $q_{0}=q^{\frac{1}{2}}$ and \[
H(q_{0})=\frac{1}{u+\frac{1}{u}}=q_{0}(1-7q_{o}^{2}+34q_{0}^{4}-204q_{0}^{6}+\cdots).\]
(The family has order $2$ monodromy about $t=\pm\frac{1}{2},\pm\frac{1}{6}$
and maximally unipotent monodromy about $t=0$.) From the table $A(t)=(2\pi i)^{2}(1+6t{}^{2}+90t^{4}+1860t^{6}+\cdots),$
and by Theorem 8.3\[
F_{\theta^{*}\Xi}(q_{0})=-6\frac{q_{0}}{H(q_{0})}H'(q_{0})\frac{A(H(q_{0}))}{(2\pi i)^{2}}=-6+48q_{0}^{2}+240q_{0}^{4}+1776q_{0}^{6}+\cdots.\]
An educated guess for $\widehat{\varphi_{\textbf{f}}}(m,n)$ is $\frac{6^{5}}{5}$
times Figure 8.2 %
\begin{figure}
\caption{\protect\includegraphics[scale=0.7]{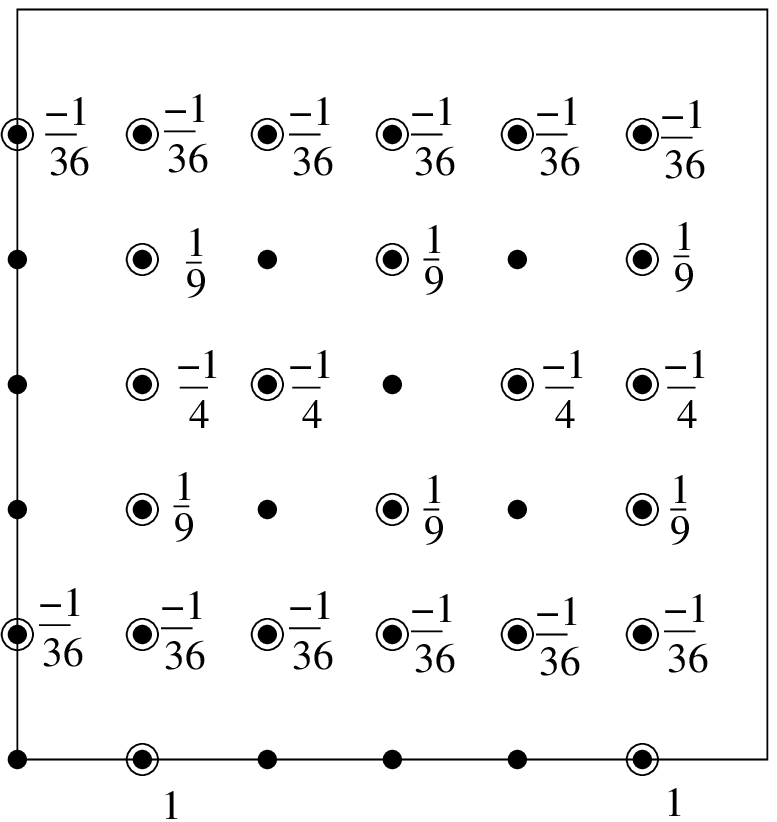}}

\end{figure}
 \[
=\,\left(\widehat{\varphi_{\{6,1\}}}-\widehat{\varphi_{\{6,2\}}}-\widehat{\varphi_{\{6,3\}}}+\widehat{\varphi_{\{6,6\}}}\right)-\frac{1}{36}\left(\widehat{\varphi_{\{1,1\}}}-\widehat{\varphi_{\{2,1\}}}-\widehat{\varphi_{\{3,1\}}}+\widehat{\varphi_{\{6,1\}}}\right)\]
\[
+\frac{1}{9}\left(\widehat{\varphi_{\{2,1\}}}-\widehat{\varphi_{\{2,2\}}}-\widehat{\varphi_{\{6,1\}}}+\widehat{\varphi_{\{6,2\}}}\right)-\frac{1}{4}\left(\widehat{\varphi_{\{3,1\}}}-\widehat{\varphi_{\{3,3\}}}-\widehat{\varphi_{\{6,1\}}}+\widehat{\varphi_{\{6,3\}}}\right),\]
which yields\[
E_{\varphi_{\textbf{f}}}^{[2]}(q)\,=\,\frac{1}{5}E_{4}(q)-\frac{4}{5}E_{4}(q^{2})+\frac{9}{5}E_{4}(q^{3})-\frac{36}{5}E_{4}(q^{6})\]
\[
=\,-6+48q+240q^{2}+1776q^{3}+\cdots\]
in agreement with the above.
\end{example}
${}$

\begin{example}
Verrill's pencil has order $2$ monodromy at $t=\frac{1}{16},\,\frac{1}{4}$
and maximal unipotent monodromy at $0,\,\infty$; it is modular with
$\Gamma=\Gamma_{1}(6)^{+3}$, and presumably a construction analogous
to that in Remark $6.17$ (with $\iota_{3}$ replacing $\iota_{6}$)
yields the total space (up to birational equivalence). This implies\[
m_{0}=-6,\, m_{1}=1,\, N_{\Gamma}=1,\, M_{\theta}=12\,\,\implies\,\, r_{[i\infty]}=-6,\,\frac{Y(0)}{(2\pi i)^{2}}=-6.\]
Verrill's $\Lambda=-\frac{\eta(\tau)^{6}\eta(3\tau)^{6}}{\eta(2\tau)^{6}\eta(6\tau)^{6}}-4$
$\implies$ our $t=$\[
H(q)=\frac{1}{\Lambda+4}=-\frac{\eta(2\tau)^{6}\eta(6\tau)^{6}}{\eta(\tau)^{6}\eta(3\tau)^{6}}=-9(1+6q+21q^{2}+68q^{3}+198q^{4}+\cdots);\]
together with $\frac{A(t)}{(2\pi i)^{2}}=1+4t+28t^{2}+256t^{3}=\cdots$,
this gives\[
F_{\theta^{*}\Xi}=-6\frac{q}{H(q)}H'(q)\frac{A(H(q))}{(2\pi i)^{2}}=-6-12q+84q^{2}-228q^{3}+\cdots.\]
Put $\widehat{\varphi_{\textbf{f}}}:=\frac{6^{5}}{5}$ times Figure
8.3%
\begin{figure}
\caption{\protect\includegraphics[scale=0.7]{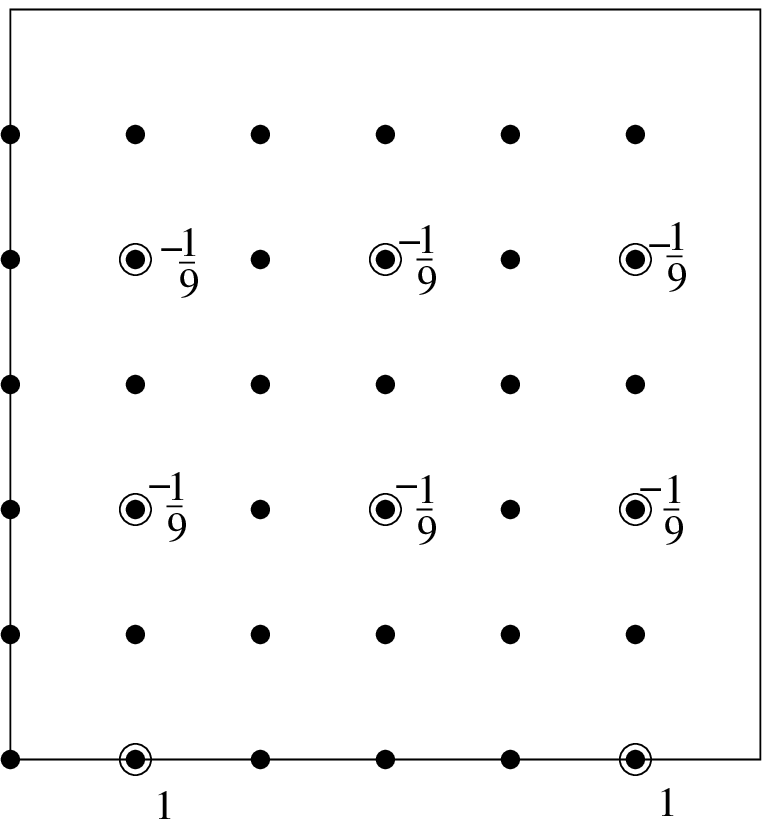}}

\end{figure}
\[
=\left(\widehat{\varphi_{\{6,1\}}}-\widehat{\varphi_{\{6,2\}}}-\widehat{\varphi_{\{6,3\}}}+\widehat{\varphi_{\{6,6\}}}\right)-\frac{1}{9}\left(\widehat{\varphi_{\{2,1\}}}-\widehat{\varphi_{\{2,2\}}}-\widehat{\varphi_{\{6,1\}}}+\widehat{\varphi_{\{6,2\}}}\right);\]
then indeed\[
E_{\varphi_{\textbf{f}}}^{[2]}(q)=-\frac{1}{20}E_{4}(q)+\frac{4}{5}E_{4}(q^{2})+\frac{9}{20}E_{4}(q^{3})-\frac{36}{5}E_{4}(q^{6})\]
\[
=-6-12q+84q^{2}-228q^{3}+\cdots.\]

\end{example}

\subsection{Remarks on the $CY$ $3$-fold case}

In this subsection we present no further examples of Theorem 8.3,
because there aren't any (Prop. 8.15). To illustrate what the problem
is, we begin by describing a \emph{local} modularity criterion for
$\pi:\X\to\PP^{1}$ in terms of the associated limit mixed Hodge structure
at $t=0$. This is a necessary condition for applying that result,
and it fails dramatically for the celebrated quintic mirror family
(as we shall see).

Let $(H_{\ZZ},\H,\F^{\bullet})$ be a weight $3$ rank $4$ polarized
$\ZZ$-VHS over a punctured disk $U=D_{\e}^{*}(0)$ with maximal unipotent
monodromy $T\in Aut(H_{\ZZ})$ about $t=0$. The weight monodromy
filtration $W_{\bullet}$ can be defined on $H_{\ZZ}$, with adapted
symplectic $\ZZ$-basis $\{\varphi_{i}\}_{i=0}^{3}$:\[
Gr^{W}\varphi_{i}\in\Gamma\left(U,\,\frac{W_{2i}}{W_{2i-2}}H_{\ZZ}\right)\,,\,\,\,\,\,\,\,\left[\left\langle \varphi_{i},\varphi_{j}\right\rangle \right]=\tiny\left(\begin{array}{cccc}
 &  &  & 1\\
 &  & 1\\
 & -1\\
-1\end{array}\right).\normalsize\]
Moreover, there is a unique ($\mathcal{O}_{U}$-) basis $\{\omega_{i}\}_{i=0}^{3}$
for $\H$ adapted to the Hodge filtration ($\omega_{i}\in\Gamma(U,\F^{i})$)
and satisfying\[
Gr^{W}\omega_{i}=Gr^{W}\varphi_{i}\in\Gamma\left(U,\,\frac{W_{2i}}{W_{2i-2}}\H\right).\]
Replacing $t$ by $q:=\exp\left(2\pi i\frac{\left\langle \varphi_{1},\omega_{3}\right\rangle }{\left\langle \varphi_{0},\omega_{3}\right\rangle }\right)$,
an {}``integral'' basis for the LMHS \linebreak $(H_{\ZZ}^{\lim},W_{\bullet},\H_{\lim},\F_{\lim})$
is $\{e_{i}:=\tilde{\varphi}_{i}(0)\}$, where $\tilde{\varphi}_{i}(q):=\exp\left(-\frac{\log q}{2\pi i}\log T\right)\varphi_{i}(q)$.

The \emph{period matrix} $\Omega$ of $\H_{\lim}$ is given by writing
the $\omega_{i}(0)\in\F_{\lim}^{i}$ as vectors w.r.t. the basis $\{e_{i}\}$.
If $\H=\text{Sym}^{3}\H^{[1]}$ as in the beginning of $\S7.1$, then
since $\tilde{\beta}=\beta-\frac{\log q}{2\pi i}\alpha$ and $[\tilde{\beta}(0)]=\lim_{q\to0}[dz]\in\H_{\lim}^{[1]}$,
$\Omega=\text{Sym}^{3}\Omega^{[1]}=$ identity (up to unimodular transformations
preserving $W_{\bullet}$). This leads to (ii) in the following

\begin{prop}
(i) \cite{GGK2} In the above situation,\[
\Omega=\left(\begin{array}{cccc}
1 & 0 & \frac{f}{2a} & \xi\\
 & 1 & \frac{e}{a} & \frac{f}{2a}\\
 &  & 1 & 0\\
 &  &  & 1\end{array}\right)\,\,\,\,\,\,\text{with }a,e,f\in\ZZ\,\text{ (but }\xi\in\CC).\]

(ii) If $\H=R^{3}\pi_{*}\CC\otimes\mathcal{O}_{U}$ comes from a modular
family $\pi:\,\X\to\PP^{1}$ of CY 3-folds (in the sense of $\S8.3$),
then $\xi\in\QQ$.
\end{prop}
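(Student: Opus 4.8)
The approach is to use the modularity hypothesis to rewrite $\mathcal{H}$ as $\mathrm{Sym}^{3}$ of a rank-two weight-one VHS, to compute the limiting period matrix of that weight-one VHS at the relevant cusp, and then to recognize the quantity $\xi$ of Proposition 8.14(i) as a rational combination of its entries.

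First I would unwind what ``$\mathcal{H}$ comes from a modular family'' means. By hypotheses (3) and (5) of \S8.3, $h^{2,1}(X_{\eta})=1$ and the VHS carries no ``instanton corrections'' (cf. \cite{Do1}), so that $H^{3}(X_{t})$ is the $\CC$-realization of $\mathrm{Sym}^{3}\mathcal{H}^{[1]}$ for a weight-one rank-two polarized $\ZZ$-VHS $\mathcal{H}^{[1]}$; moreover there is a surjective rational map $\theta:\overline{\E}^{[3]}(N)\dashrightarrow\X$ over $H:\overline{Y}(N)\twoheadrightarrow\PP^{1}_{t}$ which, near the cusp lying over $t=0$, identifies this rank-four sub-VHS of $R^{3}\pi_{*}\QQ(3)$ with $\mathrm{Sym}^{3}$ of a sub/quotient of $R^{1}f_{*}\QQ$ for the universal elliptic curve $f:\E(N)\to Y(N)$. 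Thus $\mathcal{H}^{[1]}$ is (a twist of) a modular weight-one VHS; after passing to level $N$ it degenerates at the corresponding cusp to a Kodaira fiber of type $I_{m}$, and this degeneration is exactly what produces the maximal unipotent monodromy of $\mathcal{H}$ at $t=0$.

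Next I would compute the limiting period matrix. By Shokurov's splitting of the limit MHS at an $I_{m}$-cusp of a modular elliptic VHS (\S5.1.3(C), \cite{So}), the limit MHS $H^{1}_{\lim}$ of $\mathcal{H}^{[1]}$, taken with respect to the canonical local parameter, is $\QQ$-split: $H^{1}_{\lim}\cong\QQ(0)\oplus\QQ(-1)$. Hence, in the $W_{\bullet}$- and $F^{\bullet}$-adapted basis, its $2\times2$ period matrix $\Omega^{[1]}$ is the identity. The parameter $q$ used in Proposition 8.14(i) is $\exp(2\pi i\,\langle\varphi_{1},\omega_{3}\rangle/\langle\varphi_{0},\omega_{3}\rangle)$; under the identification $\mathcal{H}=\mathrm{Sym}^{3}\mathcal{H}^{[1]}$ one has $\omega_{3}\propto\omega_{1}^{3}$, $\varphi_{0}\propto\alpha^{3}$, $\varphi_{1}\propto\alpha^{2}\beta$ (in the notation of \S7.1), so $\langle\varphi_{1},\omega_{3}\rangle/\langle\varphi_{0},\omega_{3}\rangle$ equals $\langle\beta,\omega_{1}\rangle/\langle\alpha,\omega_{1}\rangle=\tau$ up to a nonzero rational affine change of $\tau$ coming from normalization constants; since such a change only multiplies and shifts $\tau$ by rationals, it preserves $\QQ$-splitness of the elliptic limit. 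Therefore the limiting period matrix of $\mathcal{H}$, in the normalization of Proposition 8.14(i), is $\mathrm{Sym}^{3}(\Omega^{[1]})=\mathrm{Sym}^{3}(\mathrm{id}_{2})=\mathrm{id}_{4}$, whose distinguished entry is $0$. Bringing $\mathrm{id}_{4}$ into the normal form of Proposition 8.14(i) requires only a unimodular change of the adapted symplectic integral basis preserving $W_{\bullet}$; such a change alters the entry $\xi$ only by a $\ZZ$-linear combination of $1$ and of the integer entries $a,e,f$, hence leaves it rational. So $\xi\in\QQ$.

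The step I expect to be the real work is the first one: verifying that ``modular family'' in the precise sense of \S8.3 forces $\mathcal{H}$, with its \emph{rational} (a fortiori integral) structure, its weight and Hodge filtrations, and its monodromy near $t=0$, to be $\mathrm{Sym}^{3}\mathcal{H}^{[1]}$ for a genuinely \emph{modular} $\mathcal{H}^{[1]}$ — and not merely abstractly $\CC$-isomorphic to such — so that the Shokurov splitting applies and the canonical parameter of Proposition 8.14(i) matches the $I_{m}$-splitting parameter of \S5.1.3(C); this is where hypothesis (3), ``no instanton corrections,'' enters essentially. Once that identification is secured, the remainder is the elementary symmetric-power computation above. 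As a reality check, the quintic mirror pencil — whose $\mathcal{H}$ has genuinely $Sp_{4}$-type monodromy and is not a $\mathrm{Sym}^{3}$ — has, by the computation of \cite{GGK2}, an irrational ($\zeta(3)$-type) value of $\xi$, which by the present argument is incompatible with modularity; this is the ``dramatic failure'' alluded to in \S8.6.
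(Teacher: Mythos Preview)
Your argument is correct and is essentially the same as the paper's: the sentence immediately preceding the Proposition already gives the whole proof of (ii), namely that modularity forces $\H=\mathrm{Sym}^{3}\H^{[1]}$, that $\tilde{\beta}(0)=\lim_{q\to0}[dz]$ makes $\Omega^{[1]}$ the identity, and hence $\Omega=\mathrm{Sym}^{3}\Omega^{[1]}=\mathrm{id}$ up to $W_{\bullet}$-preserving unimodular transformations. Your elaboration on the parameter-matching and on how unimodular changes affect $\xi$ only by rationals is more careful than the paper's one-line justification, but the route is identical (one minor point: the $\QQ$-splitting in $\S5.1.3$(C) is attributed there to the choice of local parameter $\tilde{q}$, not to Shokurov's result, which concerns holomorphic forms).
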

In the language of \cite{Do1}, $\xi\in\CC/\QQ$ detects the presence
of \emph{instanton corrections}: in fact $\xi$ is nothing but $-\frac{1}{2}F(0)$
where $F$ is the prepotential. This is considered in \cite{CdOGP}
for the quintic mirror, which in our setup is\[
\phi=x+y+z+w+\frac{1}{xyzw}.\]
(Obviously this satisfies Cor. 1.9 for $n=4$.) Indeed, for this most
fundamental example (by \cite{GGK2})\[
\Omega=\left(\begin{array}{cccc}
1 & 0 & \frac{25}{12} & -\frac{200\zeta(3)}{(2\pi i)^{3}}\\
 & 1 & \frac{-11}{2} & \frac{25}{12}\\
 &  & 1 & 0\\
 &  &  & 1\end{array}\right)\]
 tells us that $\X$ is $not$ modular.

Now consider the $5$ Laurent polynomials\\

\begin{tabular}{|c|c|}
\hline 
$\phi(\underline{x})$ & corresponding CY family $\{\tilde{X}_{t}\}$\tabularnewline
\hline
\hline 
$x_{1}+x_{2}+x_{3}+x_{4}+\frac{1}{x_{1}x_{2}x_{3}x_{4}}$ & quintic mirror\tabularnewline
\hline 
$x_{1}+x_{2}+x_{3}+x_{4}+\frac{1}{x_{1}^{2}x_{2}x_{3}x_{4}}$ & sextic mirror\tabularnewline
\hline 
$x_{1}+x_{2}+x_{3}+x_{4}+\frac{1}{x_{1}^{4}x_{2}x_{3}x_{4}}$ & octic mirror\tabularnewline
\hline 
$x_{1}+x_{2}+x_{3}+x_{4}+\frac{1}{x_{1}^{5}x_{2}^{2}x_{3}x_{4}}$ & dectic mirror\tabularnewline
\hline 
$x_{1}+x_{2}+x_{3}+x_{1}x_{2}^{2}x_{3}^{3}x_{4}^{5}+\frac{1}{x_{1}^{2}x_{2}^{3}x_{3}^{4}x_{4}^{5}}$ & quintic twin mirror\tabularnewline
\hline
\end{tabular}\\
\\
all of which fall under the aegis of Corollary 1.9 ($n=4$). These
are the $only$ families%
\footnote{In particular, the corresponding polytopes $\Delta$ have only $6$
integral points, so the anticanonical hypersurfaces in $\PP_{\tilde{\Delta}}$
have one modulus and modifying the monomial coefficients yields isomorphic
families.%
} of smooth $h^{2,1}=1$ Calabi-Yau anticanonical hypersurfaces in
Gorenstein toric Fano fourfolds, and their Picard-Fuchs equations
are all classical generalized hypergeometric equations \cite{DM1}.
Moreover, none of these is a symmetric cube of a second-order ODE
whose projective normal form is the uniformizing differential equation
for a modular curve \cite{Do1}. We conclude:

\begin{prop}
There are no anticanonical toric modular families of CY $3$-folds
in the precise sense of $(5)$ from $\S8.3$.
\end{prop}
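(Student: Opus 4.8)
The plan is to show that the conditions packaged into $(5)$ of \S8.3 cannot all be met by a toric anticanonical Calabi--Yau threefold pencil, by combining a finiteness input with the classification of the relevant Picard--Fuchs operators. First I would unwind what modularity in the sense of $(5)$ demands: it presupposes hypotheses $(1)$--$(3)$, so in particular $\PDT$ is smooth, $\X\to\PP^1$ is the smooth semistable model of the anticanonical pencil $\tilde{\X}\subset\PP^1\times\PDT$ obtained by blowing up the base locus, and (hypothesis $(3)$) $h^{2,1}(X_\eta)=1$ with $H^3(X_t)$ the $\text{Sym}^3$ of a weight-one, rank-two variation carrying no instanton corrections. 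In particular the generic fiber $X_\eta$ is a smooth anticanonical hypersurface, with a single complex modulus, in an MPCP-desingularized Gorenstein toric Fano fourfold. By the classification quoted after the table of five (\cite{DM1}, resting on the Batyrev--Kreuzer enumeration of reflexive $4$-polytopes together with the Hodge-number count), the only such pencils are the quintic, sextic, octic and dectic mirrors and the ``quintic twin'', realized by the five Laurent polynomials tabulated; since each such $\Delta$ has only six lattice points, altering the (nonzero) monomial coefficients produces isomorphic pencils, so there is genuinely a single family in each case.

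The sticking point is hypothesis $(3)$ itself --- as the footnote to $(5)$ already indicates. Concretely, ``no instanton corrections'' is precisely the statement that the limiting-mixed-Hodge-structure invariant $\xi$ of Proposition $8.14$ vanishes in $\CC/\QQ$ (cf.\ Prop.\ $8.14$(ii): a modular family forces $\xi\in\QQ$). So I would verify, for each of the five pencils, that $\xi\notin\QQ$. For this I would invoke the mirror-symmetry computation of the instanton-corrected prepotential (\cite{CdOGP} for the quintic mirror, and its generalizations together with \cite{GGK2}, \cite{DM1} for the others): in each case $\xi = c\cdot\zeta(3)/(2\pi i)^3$ with $c\in\QQ^{\times}$ --- for the quintic $c=-200$, as read off from the displayed period matrix $\Omega$. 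Since $(2\pi i)^3=-8\pi^3 i$ is purely imaginary and $\zeta(3)\neq 0$ is real, $\xi$ is a nonzero purely imaginary number, hence not rational; so hypothesis $(3)$ fails for every one of the five, and \emph{a fortiori} no pencil satisfies $(1)$--$(3)$ together with the existence of $\theta$, which proves Proposition $8.15$.

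An alternative, purely differential-algebraic route --- the one the statement's reference to \cite{Do1} points at --- is to use that each of the five Picard--Fuchs operators is a classical generalized hypergeometric operator \cite{DM1}, and then apply the modularity criterion of \cite{Do1} (refined in \cite{Do2}): such a pencil is modular in our sense if and only if the second-order operator underlying it (the ``cube root'' in the $\text{Sym}^3$ picture, after the usual projective normalization) has projective normal form equal to the uniformizing differential equation of a modular curve --- equivalently, by the analogue of Lemma $8.7$ for that second-order operator, only if the corresponding $J$-map has the prescribed ramification over $J=0,1,\infty$. Running through the five, one checks that none of the associated second-order operators appears on the finite list of modular uniformizing equations; this is a mechanical comparison of accessory parameters (equivalently of Schwarzian triangle data), and it is also the conceptual explanation of why $\xi\neq 0$ in the previous paragraph.

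The main obstacle is twofold. First, one must be sure that ``toric anticanonical with $h^{2,1}=1$'' really collapses to exactly the five tabulated families --- i.e.\ that the classification of \cite{DM1} is complete and that no case is lost because $\Delta^{\circ}$ is nonsimplicial, because $\PDT$ fails to be smooth while still supporting the \S8.3 construction, or because the generic anticanonical fiber degenerates. Second, and more delicate in the \cite{Do1} approach, is the modularity check itself: excluding all five second-order operators requires the complete list of modular uniformizing equations from \cite{Do1,Do2} and a careful comparison of accessory parameters, and this is where most of the genuine work lies even though each individual comparison is elementary. The $\zeta(3)$-route circumvents the second obstacle, at the price of needing the explicit nonvanishing constant $c$ in $\xi=c\,\zeta(3)/(2\pi i)^3$ for each of the five cases --- a known but non-vacuous mirror-symmetry input.
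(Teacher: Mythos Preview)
Your proposal is correct, and the route you label ``alternative'' is in fact the paper's own argument. The paper dispatches Proposition~8.15 in the two sentences immediately preceding it: the five tabulated pencils exhaust the smooth $h^{2,1}=1$ anticanonical toric hypersurface families (citing \cite{DM1}), their Picard--Fuchs operators are classical generalized hypergeometric, and none is a symmetric cube of a second-order ODE whose projective normal form uniformizes a modular curve (citing \cite{Do1}). The paper does \emph{not} compute $\xi$ for any family beyond the quintic mirror; that computation appears only as an illustration of Proposition~8.14, not as a step in the proof of 8.15.

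Your primary route --- showing $\xi=c\,\zeta(3)/(2\pi i)^{3}$ with $c\in\QQ^{\times}$ for each of the five and hence $\xi\notin\QQ$ --- is a valid alternative, with the pleasant feature that it does not invoke Ap\'ery (the value is purely imaginary, so nonrational regardless). Its cost is exactly what you flag: one must import the explicit nonzero constant $c$ for all five families from the mirror-symmetry literature, whereas the paper's route packages the entire exclusion into a single appeal to \cite{Do1}. On the other hand, your route makes the obstruction concrete --- one sees directly that hypothesis~(3) (``no instanton corrections'') is what fails, which is precisely what the footnote to~(5) asserts.
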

There are a couple of ways to relax the toric hypotheses that would
likely lead to modular examples. What does $not$ work is relaxing
the rank $4$ ($h^{2,1}=1$) hypothesis on $H^{3}(X_{t})$ (e.g. to
$H^{3}$ having a rank $4$ level $3$ $sub$-Hodge-structure), since
the geometric information of $\theta:\,\overline{\E}^{[\ell]}(N)\dashrightarrow\X$
is crucial and birational (smooth) CY's have equal Hodge numbers \cite{Ba2}.

One possibility is to consider a toric $4$-fold $\PP_{\tilde{\Delta}}$
whose anticanonical hypersurfaces have multiple moduli, and choose
our $1$-parameter family ($1-t\phi=0$) to have (fiberwise) crepant
singularities on its generic member. Resolving the singularities would
then yield a family of CY's with $h^{p,q}$'s distinct from those
of the generic (smooth) anticanonical hypersurface. This approach
will require a generalization of Theorem 1.7 to treat such singularities.
Alternately, one could try to extend the construction of motivic cohomology
classes from $\S1$ to families of complete intersections in toric
$\geq5$-folds. The generation of such families by way of nef-partitions
of polytopes \cite{BB} yields an as-yet unknown number of $h^{2,1}=1$
examples.

\address{\noun{Department of Mathematical and Statistical Sciences,}}

\address{\noun{University of Alberta}}

\email{doran@math.ualberta.ca\\
}

\address{\noun{Department of Mathematical Sciences, University of Durham}}

\email{matthew.kerr@durham.ac.uk}
\end{document}